\newtheorem{theorem}{Theorem}[section]
\theoremstyle{definition}
\newtheorem{definition}[theorem]{Definition}
\newtheorem{example}[theorem]{Example}
\newtheorem{remark}[theorem]{Remark}
\newtheorem{question}[theorem]{Question}
\newtheorem{assumption}[theorem]{Assumption}
\newtheorem{setup}[theorem]{Setup}
\newtheorem{keyremark}[theorem]{Key Remark}
\newtheorem{keyexample}[theorem]{Key Example}
\theoremstyle{plain}
\newtheorem{proposition}[theorem]{Proposition}
\newtheorem{corollary}[theorem]{Corollary}
\newtheorem{lemma}[theorem]{Lemma}
\newtheorem{conjecture}[theorem]{Conjecture}
\newtheorem{conjecture/theorem}[theorem]{Conjecture/Theorem}
\newtheorem{proposition/definition}[theorem]{Proposition/Definition}
\newtheorem{tentative definition}[theorem]{Tentative Definition}
\newenvironment{proofsketch}{%
  \proof}{\endproof}
\newenvironment{explanation of the claim}{%
  \proof}{\endproof}
\title{De Rham-Betti Groups of Type IV Abelian Varieties}
\author{Zekun Ji}
\date{}
\begin{document}
\newcommand\scalemath[2]{\scalebox{#1}{\mbox{\ensuremath{\displaystyle #2}}}}

\newcommand{\gdrb}{$\mathrm{G}_{\mathrm{dRB}}$ }
\newcommand{\gdrbmath}{\mathrm{G}_{\mathrm{dRB}}}
\newcommand{\gm}{$\mathbb{G}_{\mathrm{m}}$ }
\newcommand{\resgm}{$\mathrm{Res}_{E/Q}\mathbb{G}_{\mathrm{m}}$  }
\newcommand{\resgmmath}{\mathrm{Res}_{E/\mathbb{Q}}\mathbb{G}_{\mathrm{m}}}
\newcommand{\gmmath}{\mathbb{G}_{\mathrm{m}} }
\newcommand{\gdrbh}{$\mathrm{G}_{\mathrm{dRB}}^{\mathrm{h}}$ }
\newcommand{\gdrbhmath}{\mathrm{G}_{\mathrm{dRB}}^{\mathrm{h}} }

\newcommand{\absgalois}{\mathrm{Gal}(\overline{\mathbb{Q}}/\mathbb{Q})}
\newcommand{\galoisL}{\mathrm{Gal}(L/\mathbb{Q})}
\newcommand{\charUE}{X^{*}(\mathrm{U}_{E})}
\newcommand{\qbar}{\overline{\mathbb{Q}}}

\newcommand{\liegdrbmath}{\mathrm{g}_{\mathrm{dRB}}}
\newcommand{\liegdrbssmath}{\mathrm{g}_{\mathrm{dRB}}^{\mathrm{ss}}}
\newcommand{\liegdrbhmath}{\mathrm{g}_{\mathrm{dRB}}^{\mathrm{h}}}
\newcommand{\algebraicderham}{\mathcal{V}_{\mathrm{dR}}}
\newcommand{\localsystem}{\mathbb{V}_{\mathrm{B}}}

\newcommand{\flatholomorphic}{\mathcal{V}^{\mathrm{an}}_{\mathrm{B}}}
\newcommand{\holomorphicderham}{\mathcal{V}^{\mathrm{an}}_{\mathrm{dR}}}
\newcommand{\analyticu}{U^{\mathrm{an}}}
\newcommand{\familygdrb}{\mathrm{G}_{\mathrm{dRB}}^{Y \rightarrow U}}
\newcommand{\betti}{\mathrm{H}^1(A,\mathbb{Q})}
\newcommand{\derham}{\mathrm{H}^1_{\mathrm{dR}}(A/\overline{\mathbb{Q}})}
\newcommand{\sigmabar}{\overline{\sigma}}

\maketitle
\begin{abstract}
We study the de Rham-Betti structure of a simple abelian variety of type IV. We will take a Tannakian point of view inspired by Andr{\'e} (\cite{andre2004introduction}). The main results are that the de Rham-Betti groups of simple CM abelian fourfolds and simple abelian fourfolds defined over $\qbar$ whose endomorphism algebra is a degree 4 CM-field coincide with their Mumford-Tate groups. The method of proof involves a thorough investigation of the reductive subgroups of the Mumford-Tate groups of these abelian varieties, inspired by Kreutz-Shen-Vial (\cite{kreutz2023rhambetti}). The condition that the underlying abelian variety is simple and the condition that the de Rham-Betti group is a $\mathbb{Q}$-algebraic group are also used in a crucial way. The proof is different from the method of computing Mumford-Tate groups of these abelian varieties by Moonen-Zarhin in \cite{mz-4-folds}. We will also study a family of de Rham-Betti structures, in the formalism proposed by Saito-Terasoma in \cite{saito1997determinant}. For such families with geometric origin, we will characterize properties of fixed tensors of the de Rham-Betti group associated with such a family.
\end{abstract}
\tableofcontents

\section{Introduction}
Given $\{p_1,\dots,p_{n}\}$ an arbitrary set of $n$ numbers in $\mathbb{C}$, it is a fascinating problem to find $\qbar$-polynomial relations among $p_{i}$s i.e. determining polynomials $f$ in $n$ variables with coefficients in $\qbar$ such that $$f(p_1,\dots,p_{n})=0$$ or proving such nonzero $f$ does not exist. This question becomes more tractable if we require that $p_{i}$s are ``periods" on a fixed smooth projective variety $X$ defined over $\qbar$. Very roughly speaking, we require that $$p_{i}=\int_{\gamma}\omega$$ where $\gamma \in \mathrm{H}_{i}(X,\mathbb{Q})$ is the representative of a closed chain on $X(\mathbb{C})$ with $\mathbb{Q}$-coefficients and $\omega$ is a rational differential form on $X$ with $\qbar$-coefficients. 

To put things in a more precise setting, let $X$ be a smooth projective variety defined over $\qbar$. In the famous paper \cite{grothendieck1966rham}, Grothendieck established the following isomorphism $$\rho_{m}: \mathrm{H}^{n}_{\mathrm{B}}(X,\mathbb{Q})\otimes_{\mathbb{Q}} \mathbb{C} \cong \mathrm{H}^{n}_{\mathrm{dR}}(X/\qbar) \otimes_{\qbar} \mathbb{C}$$ which we call the Grothendieck comparison isomorphism. Then once we fix a $\mathbb{Q}$-basis for $\mathrm{H}^{n}_{\mathrm{B}}(X,\mathbb{Q})$ and a $\qbar$-basis for $\mathrm{H}^{n}_{\mathrm{dR}}(X/\qbar)$, we obtain a matrix whose entries we call \textit{periods} on $X$. Inspired by this setup, in \cite[Section 7.1.6]{andre2004introduction}, Andr{\'e} makes the following definition. 
\begin{definition}\label{introdef1}
A triple $$(V_{\mathrm{B}},V_{\mathrm{dR}},\rho_{m})$$ where $V_{\mathrm{B}}$ is a finite dimensional $\mathbb{Q}$-vector space,  $V_{\mathrm{dR}}$ is a finite dimensional $\qbar$-vector space and $\rho_{m}:V_{\mathrm{B}}\otimes_{\mathbb{Q}}\mathbb{C}\cong V_{\mathrm{dR}}\otimes_{\qbar}\mathbb{C}$ an isomorphism between $\mathbb{C}$-vector spaces is called a \textit{de Rham-Betti structure}. 
\end{definition}
Moreover, in \cite[Section 7.5]{andre2004introduction}, the notion of a de Rham-Betti class is introduced.
\begin{definition}
    Given a de Rham-Betti structure $(V_{\mathrm{B}},V_{\mathrm{dR}},\rho_{m})$, an element $\alpha\in V_{\mathrm{B}}$ is called a \textit{de Rham-Betti class} if $\rho_{m}(\alpha\otimes 1)\in V_{\mathrm{dR}}\otimes 1$.
\end{definition}
In \cite[Section 7.1.6]{andre2004introduction}, Andr{\'e} made the observation that the category of all such triples, denoted by $\mathcal{C}_{\mathrm{dRB}}$, has a canonical tensor category structure. Furthermore $\mathcal{C}_{\mathrm{dRB}}$ admits a natural forgetful functor $\omega$ to $\mathrm{Vec}_{\mathbb{Q}}$, the category of finite dimensional $\mathbb{Q}$-vector spaces. Then $\mathcal{C}_{\mathrm{dRB}}$ together with $\omega$ forms a neutral Tannakian category.

This formalism is closely related to the question posed in the beginning. For a single de Rham-Betti structure $V_{0}=(V_{\mathrm{B}},V_{\mathrm{dR}},\rho_{m})\in \mathrm{ob}\mathcal{C}_{\mathrm{dRB}}$, denote the Tannakian category generated by $V_{0}$ in $C_{\mathrm{dRB}}$ by $\langle V_{0}\rangle^{\otimes}$. Recall that objects in $\langle V_{0}\rangle^{\otimes}$ are subquotient de Rham-Betti structures of $\oplus_{n_{i},m_{i}\in \mathbb{Z}_{\geq 0}}V_{0}^{\otimes n_{i}} \otimes V_{0}^{*\otimes m_{i}}$. Given $W\in\mathrm{ob}\langle V_{0}\rangle^{\otimes}$, one can see that de Rham-Betti classes in $W$ indeed give $\qbar$-algebraic relations among entries of comparison isomorphism i.e. \textit{periods} associated with the de Rham-Betti structure $V_{0}$.

The abstract formalism by Andr{\'e} has the following powerful application. By Tannakian duality (see \cite[Theorem 2.11]{deligne2012tannakian} for example), de Rham-Betti classes in $\langle V_{0}\rangle^{\otimes}$ are 
precisely fixed tensors of the deRham-Betti group (see Definition \ref{gdrbdefn}) associated with $V_{0}$ which we denote by $\gdrbmath(V_{0})$. By construction it is a linear algebraic group defined over $\mathbb{Q}$. Moreover, it admits a canonical inclusion $\gdrbmath(V_{0})\xhookrightarrow{}\mathrm{GL}(V_{\mathrm{B}})$. Therefore the question of determining certain algebraic relations among transcendental numbers can now be described in the language of algebraic group theory. 
\begin{question}\label{coarsequestion}
We may pose the following two questions which turn out to be main theme of this article.

\begin{enumerate}
    \item\label{question1} Given a de Rham-Betti structure $V_{0}$ and limited amount of information about the $\qbar$-algebraic relations of the periods of $V_{0}$, can we already determine the algebraic group $\gdrbmath(V_{0})$?
    \item\label{question2}On the other hand, without knowing $\gdrbmath(V_{0})$ explicitly, suppose we can extract certain properties of $\gdrbmath(V_{0})$ as an algebraic group, what kind of information can we gather about $\qbar$-algebraicity and transcendence among periods of $V_0$?
\end{enumerate}   
\end{question}

We now present the following two results addressing Question \ref{question1} from \ref{coarsequestion}.

\begin{theorem}[=Theorem \ref{mainthmcm}]\label{introthm1}
  Suppose $A$ is a simple CM abelian fourfold. Then the de Rham-Betti group associated with the following dRB structure $$\mathrm{H}^1_{\mathrm{dRB}}(A,\mathbb{Q}):=(\betti,\mathrm{H}^1_{\mathrm{dR}}(A/\qbar),\rho_{m})$$ is equal to the Mumford-Tate group of $A$.
\end{theorem}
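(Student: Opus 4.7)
The plan is to establish the equality by proving both inclusions $\gdrbmath(A) \subseteq \mathrm{MT}(A)$ and $\mathrm{MT}(A) \subseteq \gdrbmath(A)$. The inclusion $\gdrbmath(A) \subseteq \mathrm{MT}(A)$ should follow from Deligne's theorem that every Hodge class on an abelian variety is absolute Hodge, together with the standard observation that any absolute Hodge class is \emph{a fortiori} a de Rham-Betti class; extended to tensor constructions, this says that every $\mathrm{MT}(A)$-invariant tensor in $V_{\mathrm{B}}^{\otimes a}\otimes V_{\mathrm{B}}^{*\otimes b}$ is $\gdrbmath(A)$-invariant, and Tannakian duality then gives the desired containment.

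The bulk of the work goes into the reverse inclusion. Since $A$ is CM, $\mathrm{MT}(A)$ is a torus $T$, more precisely a subtorus of the Serre-style torus $\resgmmath$, where $E$ is the degree $8$ CM field of $A$; the precise shape of $T$ is cut out by the CM type $\Phi$ of $A$. As $\gdrbmath(A)$ is a closed $\mathbb{Q}$-subgroup of the torus $T$, it is itself a $\mathbb{Q}$-subtorus, so the problem reduces to classifying $\mathbb{Q}$-rational subtori $T' \subseteq T$. By the usual anti-equivalence between tori over $\mathbb{Q}$ and $\absgalois$-lattices, such subtori correspond to $\absgalois$-stable quotients of the character lattice $X^{*}(T)$, which one can enumerate explicitly from the combinatorics of $\Phi$.

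To eliminate each proper candidate $T' \subsetneq T$, the strategy, following the spirit of \cite{kreutz2023rhambetti}, is to produce for each such $T'$ a tensor $\alpha \in V_{\mathrm{B}}^{\otimes a}\otimes V_{\mathrm{B}}^{*\otimes b}$ that is fixed by $T'$ but not by $T$, and then to derive a contradiction from the hypothesis that $\alpha$ is a de Rham-Betti class. The tools available are: (i) the simplicity of $A$, which prevents such $\alpha$ from defining an idempotent endomorphism-type class that would split $A$ into lower-dimensional factors; (ii) the $\mathbb{Q}$-rationality of $\gdrbmath(A)$, which imposes a $\absgalois$-equivariance incompatible with many candidates $T'$; and (iii) known transcendence results for periods of CM abelian varieties (Chowla--Selberg--Shimura--Deligne in flavor), which forbid the $\qbar$-linear period relations that a non-Hodge de Rham-Betti class would encode.

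The main obstacle I anticipate is the intermediate case analysis. The CM field $E$ need not be Galois over $\mathbb{Q}$, so the Galois action on $\Phi$ and on the full set of embeddings can be intricate, and the enumeration of Galois-stable intermediate subtori is correspondingly delicate. Moreover, the primitivity of $\Phi$ (equivalent to the simplicity of $A$ when $\mathrm{End}^0(A)=E$) and the $\mathbb{Q}$-rationality of $\gdrbmath(A)$ must be deployed jointly to eliminate the intermediate tori: neither condition alone suffices. Organizing the argument uniformly across the possible CM types of a simple abelian fourfold, and in each case pinpointing a tensor whose would-be de Rham-Betti-ness produces a contradiction, will be the crux of the proof.
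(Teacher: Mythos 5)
Your outline does reproduce the paper's strategy: the inclusion $\gdrbmath(A)\subseteq\mathrm{MT}(A)$ via absolute Hodge classes (the paper uses Andr\'e's motivated cycles, to the same effect), and the reverse inclusion by classifying $\mathbb{Q}$-rational subtori of the CM Mumford--Tate torus through Galois-stable sublattices of the character group, then eliminating the proper ones using simplicity, $\mathbb{Q}$-rationality, and CM period transcendence. The genuine gap is in your tool (iii). There is no transcendence theorem that ``forbids the $\qbar$-linear period relations that a non-Hodge de Rham--Betti class would encode'' for CM abelian varieties in general -- that statement is precisely the (open) Grothendieck period conjecture. The unconditional inputs are much narrower: W\"ustholz's analytic subgroup theorem (via Bost) gives only that $\mathrm{End}_{\mathrm{dRB}}(\mathrm{H}^1)=E$ and that de Rham--Betti classes in $\mathrm{H}^2\otimes\mathbb{Q}_{\mathrm{dRB}}(1)$ are algebraic (a $4$-dimensional space, since the Rosati involution is complex conjugation), while the Chowla--Selberg/Gross period formula together with Chudnovsky's algebraic independence of $b_k$ and $2\pi i$ decides de Rham--Betti-ness only for the specific Weil classes $\wedge_{k}^{4}\mathrm{H}^1\otimes\mathbb{Q}_{\mathrm{dRB}}(2)$ attached to imaginary quadratic $k\subset E$. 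Consequently the elimination must be engineered so that every proper Galois-stable candidate is detected by an invariant tensor landing in one of these few spots: in the paper, one- and two-dimensional candidates for the ``Hodge part'' $\gdrbhmath(A)$ are forced by Galois symmetry to have kernel lattices containing elements of the form $\pm f_i\pm f_j$, whose invariant tensors sit in $\mathrm{End}$ or $\wedge^2$ and contradict the Bost/W\"ustholz description, and the codimension-one subtori, which by Moonen--Zarhin are exactly the $\mathrm{SU}_{E/k}$, are handled by the Gross--Chudnovsky computation.

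Moreover, your ``produce a tensor fixed by $T'$ but not by $T$ and contradict'' scheme fails outright in one residual configuration: when the Galois image is the dihedral group $\mathrm{D}_4$ (equivalently $E/\mathbb{Q}$ is Galois with group $\mathrm{D}_4$), there is a two-dimensional candidate subtorus admitting no detecting tensor in any of the accessible spots. The paper disposes of this case by an argument of a different nature: a multiplicity/primitivity analysis of CM types restricted to the two quartic CM subfields of $E$, showing that no \emph{simple} CM abelian fourfold has such an endomorphism field, so the configuration never arises. This is where simplicity does its real work, not merely as the generic auxiliary hypothesis you describe. Two smaller points you elide: connectedness of $\gdrbmath(A)$ (known via the period torsor) is needed to call it a subtorus; and to convert ``invariant tensor of a candidate subtorus'' into ``de Rham--Betti class'' one must control the Tate twists, which the paper does by first proving that the homotheties $\gmmath$ lie in $\gdrbmath(A)$ (using that no subtorus of $\mathrm{U}_E$ surjects onto $\gmmath$, plus Goursat's lemma) and splitting off the auxiliary group $\gdrbhmath(A)\subseteq\mathrm{Hdg}(A)$.
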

Recall that $\mathrm{MT}(A)$ is already computed in \cite[Theorem 7.6]{mz-4-folds}. Hence one can explicitly write down the de Rham-Betti groups in this setting.
\begin{theorem}[=Proposition \ref{weiltype} and Theorem \ref{deg4theorem}]\label{introthm2}
  Suppose $A$ is a simple abelian fourfold of type IV defined over $\qbar$(see Definition \ref{type4defn}), which is not of anti-Weil type (see Definition \ref{defnweiltype}). Then the de Rham-Betti group associated with $\mathrm{H}^1_{\mathrm{dRB}}(A,\mathbb{Q})$ is equal to the Mumford-Tate group of $A$.
\end{theorem}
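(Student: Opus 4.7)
The plan, in the spirit of Kreutz-Shen-Vial \cite{kreutz2023rhambetti}, is first to establish an a priori inclusion $\gdrbmath(A) \subseteq \mathrm{MT}(A)$, then to classify the reductive $\mathbb{Q}$-subgroups of $\mathrm{MT}(A)$ compatible with the endomorphism-algebra action of $E = \mathrm{End}^0(A)$, and finally to exclude every proper candidate.

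For the inclusion $\gdrbmath(A) \subseteq \mathrm{MT}(A)$, I would invoke Deligne's theorem that every Hodge class on an abelian variety is absolute Hodge; in particular, every Hodge tensor on $A^n$ is a de Rham-Betti tensor. By Tannakian duality this forces $\gdrbmath(A) \subseteq \mathrm{MT}(A)$ unconditionally, and reduces the theorem to showing that $\gdrbmath(A)$ is not a proper subgroup.

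For the classification step, I would feed in Moonen-Zarhin's explicit computation of $\mathrm{MT}(A)$ \cite{mz-4-folds} in each subcase. In the Weil-type subcase treated by Proposition \ref{weiltype}, $\mathrm{MT}(A)$ is a unitary similitude group of small rank, and once one restricts to subgroups that (i) commute with the $E$-action (forced by Tannakian functoriality of endomorphisms), (ii) are defined over $\mathbb{Q}$, and (iii) act so that $V_{\mathrm{B}}$ remains irreducible as an $E$-module (forced by the simplicity of $A$), the list of proper reductive candidates is extremely short and can be ruled out directly. In the degree 4 CM field subcase treated by Theorem \ref{deg4theorem}, $\mathrm{MT}(A)$ sits inside $\mathrm{Res}_{E_0/\mathbb{Q}}$ of a form of $\mathrm{GU}(2)$, with $E_0 = E \cap \mathbb{R}$; the non-anti-Weil hypothesis fixes the signature datum, and the centralizer of the $E$-action again cuts the candidates to a manageable list.

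The main obstacle lies in the degree 4 CM subcase: several intermediate reductive $\mathbb{Q}$-subgroups of $\mathrm{MT}(A)$ commuting with $E$ are a priori allowed, and each would yield purported de Rham-Betti tensors lying outside the Hodge tensors. Eliminating these candidates requires combining three ingredients: the $\mathbb{Q}$-rationality of $\gdrbmath(A)$, which forces descent from the natural $E_0$-structure of $\mathrm{MT}(A)$ down to $\mathbb{Q}$ and excludes subgroups visible only after extension of scalars; the non-anti-Weil hypothesis, which forbids the exceptional invariants that could shrink $\gdrbmath(A)$ below $\mathrm{MT}(A)$; and the simplicity of $A$, which keeps $V_{\mathrm{B}}$ irreducible as an $E$-module and prevents the unwanted splittings of its tensor algebra that intermediate subgroups would require. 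These constraints together force $\gdrbmath(A) = \mathrm{MT}(A)$, yielding both Proposition \ref{weiltype} and Theorem \ref{deg4theorem}.
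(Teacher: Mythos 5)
Your global architecture --- establish $\gdrbmath(A)\subseteq\mathrm{MT}(A)$, then eliminate proper reductive $\mathbb{Q}$-subgroups compatible with the $\mathrm{End}^{\circ}(A)$-action using Moonen--Zarhin's computation of $\mathrm{MT}(A)$ --- is indeed the paper's strategy, and your first step (via Deligne's absolute Hodge theorem rather than Andr\'e's motivated cycles, as in Theorem \ref{keyinclusion}) is an acceptable variant. The gap lies in the elimination step: the three constraints you impose (centralizing $E$, $\mathbb{Q}$-rationality, irreducibility of $V_{\mathrm{B}}$ coming from simplicity) do not exclude any of the dangerous candidates, and you never name the mechanism that does. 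In the Weil-type case the surviving possibilities for $\liegdrbssmath(A)\otimes\qbar$ acting on the four-dimensional eigenspace $V_{\sigma}$ are $\mathrm{sl}(2)$ via $V(3)$, $\mathrm{sl}(2)\times\mathrm{sl}(2)$ via $V(1)\boxtimes V(1)$, $\mathrm{sp}(4)$, and $\mathrm{sl}(4)$ (Proposition \ref{reprclassification}); the first three all commute with $E$, admit $\mathbb{Q}$-forms, and act irreducibly, so your criteria leave them standing. The paper kills them by exhibiting, for each, an extra invariant tensor in $\wedge^{2}V_{\qbar}$ or $\mathrm{End}(V_{\qbar})$ and contradicting the transcendence input from the analytic subgroup theorem, namely Bost's Theorem \ref{mainfact} that $\mathrm{End}_{\mathrm{dRB}}(\betti)=\mathrm{End}^{\circ}(A)$ and that dRB classes in $\mathrm{H}^{2}\otimes\mathbb{Q}_{\mathrm{dRB}}(1)$ are algebraic; and this presupposes the homothety statement $\gmmath\subseteq\mathrm{Z}(\gdrbmath(A))$ (Proposition \ref{gmintype4}), itself a nontrivial character-group/Goursat argument. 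Without these inputs, ``ruled out directly'' carries no force.

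In the quartic CM case the omissions are more serious, on two counts. First, before any discussion of semisimple parts one needs the centre: the paper proves $\mathrm{Z}(\liegdrbmath(A))=\mathrm{Z}(\mathrm{mt}(A))$ (Proposition \ref{twocentressame}) by computing the de Rham--Betti group of the Weil structure $\wedge_{E}^{2}\mathrm{H}^{1}_{\mathrm{dRB}}(A,\mathbb{Q})$, and that computation rests on Gross's Chowla--Selberg period formula together with Chudnovsky's algebraic-independence theorem (Lemma \ref{gross}, Lemma \ref{deg4weil}); your outline contains no substitute for this period-theoretic step, and none of your three listed ingredients produces it. Second, your claim that $\mathbb{Q}$-rationality ``excludes subgroups visible only after extension of scalars'' misidentifies the hard candidate: after Lemma \ref{deg4repclassification} the only dangerous case is a diagonally embedded $\mathrm{sl}(2)$, and this \emph{does} descend to $\mathbb{Q}$, as a quaternionic form $\mathbb{Q}(a,\lambda)^{\circ}$, so rationality of the subgroup alone excludes nothing. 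It is eliminated by the interplay of rationality with positivity: the explicit $\mathrm{sl}_{2}$-triple inside $\mathbb{Q}(a,\lambda)^{\circ}$ forces linear relations among the values of any polarization on weight vectors, and these relations are incompatible with the positivity of the polarization given the multiplicity datum $\{2,0,1,1\}$ of Lemma \ref{deg4keylemma} (Proposition \ref{deg4nosuchabelian4fold}). Finally, the non-anti-Weil hypothesis plays no role inside the quartic case (a quartic endomorphism field is automatically neither Weil nor anti-Weil); it only delimits which type IV fourfolds the theorem covers, since for anti-Weil fourfolds the analogous elimination fails (Proposition \ref{existenceofsl2timessl2}).
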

Also in this case $\mathrm{MT}(A)$ is already computed in \cite[Section 7.4 and 7.5]{mz-4-folds}. The current state of the art concerning the de Rham-Betti group of an abelian variety is as follows. It is shown in \cite[Theorem 7.12]{kreutz2023rhambetti} that for an arbitrary elliptic curve $E$ defined over $\qbar$, we have that $\gdrbmath(E)=\mathrm{MT}(E)$. For higher dimensional abelian varieties, it is shown in \cite[Theorem 7.15]{kreutz2023rhambetti} that for an arbitrary simple abelian surface $A$ defined over $\qbar$ which satisfies that $\mathrm{End}^{\circ}(A)\neq\mathbb{Q}$, we have that $\gdrbmath(A)=\mathrm{MT}(A)$. Moreover, one can easily adapt the proof of \cite[Case (d) of Theorem 7.15]{kreutz2023rhambetti} to show that for a simple CM abelian threefold we have that $\gdrbmath(A)=\mathrm{MT}(A)$. 

The starting point of proving Theorem \ref{introthm1} and Theorem \ref{introthm2} actually already employs the philosophy of Question \ref{question2} of \ref{coarsequestion}. We show the following general property concerning the de Rham-Betti group of a simple type IV abelian variety.

\begin{proposition}[=Proposition \ref{gm in CM gdrb} and Proposition \ref{gmintype4}]\label{introprop1}
Given a simple type IV abelian variety $A$ defined over $\qbar$, we have a homomorphism of $\mathbb{Q}$-algebraic groups $\gmmath\xhookrightarrow{}\gdrbmath(A)$. Moreover, the image of $\gmmath$ in $\gdrbmath(A)$ is precisely the set of homotheties in $\mathrm{GL}(\betti)$.
\end{proposition}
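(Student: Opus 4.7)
The plan is to translate the statement, via Tannakian duality, into a claim about de~Rham--Betti classes in tensor powers of $\mathrm{H}^1_{\mathrm{dRB}}(A,\mathbb{Q})$, and then verify it using the polarization of $A$ combined with the type~IV endomorphism structure. By the Tannakian formalism recalled in the introduction, producing a homomorphism $\gmmath\hookrightarrow\gdrbmath(A)$ whose image is the set of homotheties of $\mathrm{GL}(\betti)$ is equivalent to showing that, for every pair of nonnegative integers with $m\neq n$, the space of de~Rham--Betti classes in $\betti^{\otimes m}\otimes\betti^{\vee\otimes n}$ is zero; equivalently, that every de~Rham--Betti morphism $\betti^{\otimes n}\to\betti^{\otimes m}$ with $m\neq n$ vanishes.

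First I would use the polarization of $A$: by the Riemann bilinear relations, the Betti polarization form $\omega_B\in\Lambda^2\betti^\vee$ and the de~Rham polarization $\omega_{\mathrm{dR}}\in\Lambda^2\mathrm{H}^1_{\mathrm{dR}}(A/\qbar)^\vee$ correspond under $\rho_m$ up to a factor of $2\pi i$, so the $\mathbb{Q}$-line $\mathbb{Q}\cdot\omega_B\subseteq\Lambda^2\betti^\vee$ generates a sub-dRB-structure isomorphic to a Tate twist. Every Tate object $\mathbb{Q}(j)$ therefore belongs to $\langle\mathrm{H}^1_{\mathrm{dRB}}(A,\mathbb{Q})\rangle^\otimes$, and iterating the construction yields $\det\mathrm{H}^1_{\mathrm{dRB}}(A,\mathbb{Q})\cong\mathbb{Q}(-g)$ with $g=\dim A$. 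I would then bring in the endomorphism algebra: in the CM case $\mathrm{End}^0(A)=E$ is a CM field of degree $2g$ acting on $\betti$ as a rank-one module, so $\gdrbmath(A)$ commutes with $E$ and is contained in the $\mathbb{Q}$-torus $\resgmmath$. Inside this torus the homotheties are exactly the diagonal cocharacter $\gmmath\hookrightarrow\resgmmath$. Decomposing $\betti\otimes_{\mathbb{Q}}\mathbb{C}$ along the embeddings $\sigma\in\Sigma=\mathrm{Hom}(E,\qbar)$, the comparison $\rho_m$ becomes multiplication by complex scalars $p_\sigma\in\mathbb{C}^\times$, and the task reduces to showing that the sublattice $L\subseteq\mathbb{Z}[\Sigma]$ of characters $\chi=\sum n_\sigma[\sigma]$ with $\prod_\sigma p_\sigma^{n_\sigma}\in\qbar^\times$ is contained in the weight-zero sublattice $\{\chi:\sum n_\sigma=0\}$.

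The heart of the argument is the verification of this weight-zero containment. For each pair of complex-conjugate embeddings $\{\sigma,c\sigma\}$ the Riemann bilinear relations yield $p_\sigma p_{c\sigma}\in 2\pi i\cdot\qbar^\times$; combined with the Galois-equivariance of $L$ (forced by the $\mathbb{Q}$-algebraicity of $\gdrbmath(A)$) and the transcendence of $(2\pi i)^k$ over $\qbar$ for $k\neq 0$, this rules out characters of nonzero weight in $L$. For a simple type~IV abelian variety that is not CM, the argument is analogous: replace $E$ by the centre of $D=\mathrm{End}^0(A)$, the torus $\resgmmath$ by the reductive group $\mathrm{GL}_r(D^{\mathrm{op}})$ where $r$ is the $D$-rank of $\betti$, and rewrite the Riemann relations in terms of the $D$-Hermitian form induced by the polarization. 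The hard part will be the character-lattice analysis: transferring the transcendence of $(2\pi i)^k$ into a constraint on arbitrary monomial relations among the $p_\sigma$ using only the pair relations and Galois-equivariance, without invoking deeper (and in general unproven) transcendence statements.
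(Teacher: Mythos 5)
Your reduction of the statement to a character--lattice claim is attractive, but its central step has a genuine gap. In the CM case you define $L\subseteq\mathbb{Z}[\mathrm{Hom}(E,\qbar)]$ as the set of exponent vectors $(n_\sigma)$ with $\prod_\sigma p_\sigma^{n_\sigma}\in\qbar^{\times}$ and assert that $L$ is Galois-equivariant because $\gdrbmath(A)$ is a $\mathbb{Q}$-group. What the $\mathbb{Q}$-rationality of $\gdrbmath(A)$ actually gives is Galois stability of the annihilator lattice $M\subseteq X^{*}(\resgmmath)$ of $\gdrbmath(A)$; identifying $M$ with your $L$ (i.e.\ proving that a monomial in the $p_\sigma$ is algebraic \emph{if and only if} the corresponding character is trivial on $\gdrbmath(A)_{\qbar}$) is precisely the period-torsor/invariant-theory input you never supply. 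Without Galois stability, the pair relations $p_\sigma p_{\overline{\sigma}}\in 2\pi i\,\qbar^{\times}$ together with transcendence of $\pi$ cannot force $L$ into the weight-zero sublattice: nothing in those constraints excludes, say, $p_\sigma^{3}\in 2\pi i\,\qbar^{\times}$, and excluding all such nonzero-weight relations unconditionally would be a new transcendence statement, not a formal consequence. The correct way to run your argument is to work throughout with $M$ (which is Galois-stable, and whose characters do have algebraic period monomials by an invariants-over-$\qbar$ argument as in Lemma \ref{invarintslemma}); then Galois-averaging a nonzero-weight element of $M$ produces a nonzero multiple of the norm character, whose period is a nonzero power of $2\pi i$ times an algebraic number, giving the contradiction. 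So repaired, your CM argument is a genuinely different and rather economical route compared with the paper's Proposition \ref{gm in CM gdrb}, which instead goes through $\gdrbmath(A)\hookrightarrow\mathrm{MT}(A)$, the auxiliary group $\gdrbhmath(A)$, Goursat's lemma, and the key fact that no $\mathbb{Q}$-subgroup of $\mathrm{U}_E$ surjects onto $\gmmath$.

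The second, larger gap is the non-CM type IV case (the paper's Proposition \ref{gmintype4}). There the eigenspaces of the centre $E$ of $D=\mathrm{End}^{\circ}(A)$ are not lines, the comparison isomorphism is not described by scalars $p_\sigma$, and $\gdrbmath(A)$ is not contained in a torus, so the character-lattice formulation you rely on does not exist as stated; your one-sentence substitution of $\mathrm{GL}_r(D^{\mathrm{op}})$ for $\resgmmath$ does not restore it. The paper's proof of this case uses machinery absent from your proposal: reductivity of $\gdrbmath(A)$ and the equality $\mathrm{End}_{\mathrm{dRB}}(\betti)=\mathrm{End}^{\circ}(A)$ (both resting on W\"ustholz's analytic subgroup theorem), the inclusion $\gdrbmath(A)\hookrightarrow\mathrm{MT}(A)$, the fact that the Rosati involution restricted to the centre $E$ is complex conjugation (so that the relevant centres land in $\mathrm{U}_E$), the determinant surjection $\gdrbmath(A)\twoheadrightarrow\gmmath$ pushed to the centre via Lemma \ref{surjectiontotorus}, and then the same $\mathrm{U}_E$/Goursat argument as in the CM case. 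Either you import this centre-based mechanism, or you must explain how to extract a torus-type statement from matrix-valued periods; as written the proposal does not prove the second half of the proposition.
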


We remark that simply from the definition of the de Rham-Betti group in the Tannakian formalism, there is no telling whether it contains $\gmmath$ as homotheties or not. The proof of the above proposition relies on the observation from \cite[Theorem 7.11]{kreutz2023rhambetti} that for an arbitrary abelian variety $A$ defined over $\qbar$, we have that \begin{equation}\label{introinclusion}
  \gdrbmath(A)\xhookrightarrow{}\mathrm{MT}(A)  
\end{equation} which is deduced from the equality $\mathrm{MT}(A)=G_{\mathrm{And}}(A)$ proven by Andr{\'e} in \cite{andre2004introduction}. Furthermore, properties of the Hodge group of $A$ and CM number fields are also used to prove Proposition \ref{introprop1}. Immediately from Proposition \ref{introprop1}, we also obtain the following two corollaries. Both of them address Question \ref{question2} of \ref{coarsequestion}.
\begin{corollary}
    Given a simple type IV abelian variety $A$ defined over $\qbar$, there is no de Rham-Betti class in $\mathrm{H}^{j}(A,\mathbb{Q})$ where $j$ is an odd number.
\end{corollary}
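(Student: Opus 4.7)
The plan is to derive the corollary as an immediate consequence of Proposition \ref{introprop1} via Tannakian duality. First I would note that the cup product induces an isomorphism
$$\mathrm{H}^{j}(A,\mathbb{Q})\cong \wedge^{j}\betti$$
of de Rham-Betti structures; this holds separately on Betti cohomology and on algebraic de Rham cohomology, and Grothendieck's comparison isomorphism $\rho_{m}$ is compatible with cup products, so $\mathrm{H}^{j}_{\mathrm{dRB}}(A,\mathbb{Q})$ is genuinely an object of the Tannakian subcategory $\langle \mathrm{H}^{1}_{\mathrm{dRB}}(A,\mathbb{Q})\rangle^{\otimes}$. By Tannakian duality (as recalled in the introduction), the de Rham-Betti classes in $\mathrm{H}^{j}(A,\mathbb{Q})$ then correspond bijectively to the $\gdrbmath(A)$-fixed vectors in the representation $\wedge^{j}\betti$.

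Next I would invoke Proposition \ref{introprop1}, which supplies an inclusion $\gmmath\xhookrightarrow{}\gdrbmath(A)$ whose image acts on $\betti$ through the standard homothety character $t\mapsto t\cdot\mathrm{id}$. On the $j$-th exterior power, $\gmmath$ therefore acts through the character $t\mapsto t^{j}$, and for odd $j$ the only vector fixed by this action is $0$. A fortiori no nonzero element of $\mathrm{H}^{j}(A,\mathbb{Q})$ can be fixed by the whole group $\gdrbmath(A)$, and this is precisely the statement of the corollary.

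The only step that requires any care is the identification of $\mathrm{H}^{j}(A,\mathbb{Q})$ with $\wedge^{j}\betti$ as an object of $\mathcal{C}_{\mathrm{dRB}}$, i.e.\ the verification that the cup product is a morphism of de Rham-Betti structures; this is classical and needs no new input. I therefore do not anticipate any genuine obstacle once Proposition \ref{introprop1} is in hand, as the remainder of the argument reduces to a one-line weight computation on the $\gmmath$-action.
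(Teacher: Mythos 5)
Your argument is correct and is essentially the paper's: the corollary is stated as an immediate consequence of Proposition \ref{introprop1}, and the explicit proof given for the CM analogue (via $\mathrm{H}^{m}_{\mathrm{dRB}}(A,\mathbb{Q})\cong\wedge^{m}\mathrm{H}^{1}_{\mathrm{dRB}}(A,\mathbb{Q})$ and the nontrivial $\gmmath$-character on odd exterior powers) is exactly your computation. The only cosmetic discrepancy is the sign convention — the paper's homothety embedding sends $t$ to $t^{-1}\cdot\mathrm{id}$ rather than $t\cdot\mathrm{id}$ — which does not affect the conclusion, since the induced character $t\mapsto t^{\pm j}$ is nontrivial for odd $j$ either way.
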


With some more work we can deduce
\begin{corollary}[=Corollary \ref{onecaseofconj}]
 Suppose $A$ is a simple abelian variety of type IV defined over $\qbar$. Moreover, suppose that the centre of $\mathrm{End}^{\circ}(A)$ is a CM-field of degree 2 or 4 and that the centre of the de Rham-Betti group of $A$ has dimension bigger than or equal to 2. Then the one-dimensional deRham-Betti structures in $\langle\mathrm{H}^1_{\mathrm{dRB}}(A,\mathbb{Q})\rangle^{\otimes}$ are of the form $\mathbb{Q}_{\mathrm{dRB}}(k)(k\in\mathbb{Z})$. 
\end{corollary}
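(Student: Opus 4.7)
The plan is to recast the statement via Tannakian duality as a computation of $\mathbb{Q}$-rational characters of $\gdrbmath(A)$, and then to constrain this character group using the CM structure of $E = Z(\mathrm{End}^\circ(A))$ together with the hypothesis on the centre. By Tannakian duality, one-dimensional sub-objects of $\langle \mathrm{H}^1_{\mathrm{dRB}}(A,\mathbb{Q})\rangle^{\otimes}$ correspond bijectively to $\mathbb{Q}$-rational characters $\chi:\gdrbmath(A)\to\mathbb{G}_m$, and the Tate twists $\mathbb{Q}_{\mathrm{dRB}}(k)$ correspond to integer powers of a distinguished character $\chi_{\mathrm{Tate}}$ whose restriction to the homothety $\mathbb{G}_m\hookrightarrow\gdrbmath(A)$ of Proposition \ref{introprop1} is nontrivial. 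The goal therefore reduces to showing $X^{*}(\gdrbmath(A))^{\mathrm{Gal}(\overline{\mathbb{Q}}/\mathbb{Q})}=\mathbb{Z}\cdot\chi_{\mathrm{Tate}}$.

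I first observe that $E\subseteq\mathrm{End}^\circ(A)$ commutes with $\gdrbmath(A)$, since every endomorphism of $A$ gives a dRB class, and hence $\gdrbmath(A)\subseteq\mathrm{GL}_E(\mathrm{H}^1(A,\mathbb{Q}))$. Any $\mathbb{Q}$-rational character of $\gdrbmath(A)$ factors through its abelianization, which is isogenous to the connected centre $Z^{\circ}(\gdrbmath(A))$, so the task reduces to understanding $X^{*}(Z^{\circ}(\gdrbmath(A)))^{\mathrm{Gal}}$. The key geometric step is to show $Z^{\circ}(\gdrbmath(A))\hookrightarrow\resgmmath(E)$. I would obtain this by combining the inclusion $\gdrbmath(A)\subseteq\mathrm{MT}(A)$ from (\ref{introinclusion}) with the classical fact that the connected centre of $\mathrm{MT}(A)$ for a simple type IV abelian variety is a subtorus of $\resgmmath(E)$; the hypothesis $\dim Z(\gdrbmath(A))\geq 2$ rules out the degenerate case in which $Z^{\circ}(\gdrbmath(A))$ reduces to the homothety $\mathbb{G}_m$, and the degree-$2$-or-$4$ restriction on $E$ renders the resulting case analysis finite.

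Once $Z^{\circ}(\gdrbmath(A))\hookrightarrow\resgmmath(E)$ is in hand, the character-group computation is tractable. Since $\mathrm{Hom}(E,\overline{\mathbb{Q}})$ forms a single Galois orbit, $X^{*}(\resgmmath(E))^{\mathrm{Gal}}=\mathbb{Z}\cdot\mathrm{Nm}_{E/\mathbb{Q}}$, and the restriction to $Z^{\circ}(\gdrbmath(A))$ sends $\mathrm{Nm}_{E/\mathbb{Q}}$ to a nontrivial character (because $Z^{\circ}\supseteq\mathbb{G}_m$). The remaining task is to verify that the quotient $X^{*}(\resgmmath(E))\twoheadrightarrow X^{*}(Z^{\circ}(\gdrbmath(A)))$ creates no new Galois-invariants; here the small-degree hypothesis keeps the list of possible subtori short enough to check case-by-case. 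Matching the restricted norm with $\chi_{\mathrm{Tate}}$ up to an integer power then identifies every $\mathbb{Q}$-rational character of $\gdrbmath(A)$ as a power of $\chi_{\mathrm{Tate}}$, yielding the corollary.

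The principal obstacle I anticipate is the centre identification $Z^{\circ}(\gdrbmath(A))\hookrightarrow\resgmmath(E)$, since in general the centre of a subgroup need not lie inside the centre of the ambient group. In particular, the inclusion $\gdrbmath(A)\subseteq\mathrm{MT}(A)$ does not automatically transport the known structure of $Z(\mathrm{MT}(A))$ down to $\gdrbmath(A)$, and the argument must exploit the dimension hypothesis on the centre of $\gdrbmath(A)$ together with the low-degree restriction on $E$ in an essential way. I expect the non-Galois degree-$4$ case, where the Galois closure of $E$ has Galois group of type $D_4$, to demand the most care.
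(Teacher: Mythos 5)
Your Tannakian reformulation is sound and is in fact the same frame the paper uses: Lemma \ref{groupinterpret} translates the statement into the group-theoretic condition that $\gmmath \cap \gdrbhmath(A)$ has order exactly $2$, and the proof runs through the centre and the torus $\mathrm{U}_{E}$ exactly as you envisage. The first obstacle you flag --- that $\mathrm{Z}(\gdrbmath(A))$ need not sit inside $\mathrm{Z}(\mathrm{MT}(A))$ --- is real as stated in your sketch, but it is not where the difficulty lies: it is disposed of by Bost's theorem $\mathrm{End}_{\mathrm{dRB}}(\betti)=\mathrm{End}_{\mathrm{Hdg}}(\betti)=\mathrm{End}^{\circ}(A)$ (Theorem \ref{mainfact}), which gives $\mathrm{Z}(\gdrbmath(A))=\gdrbmath(A)\cap\mathrm{End}_{\mathrm{dRB}}(\betti)\subseteq\mathrm{MT}(A)\cap\mathrm{End}_{\mathrm{Hdg}}(\betti)=\mathrm{Z}(\mathrm{MT}(A))$ (Lemma \ref{centreincentre}); combined with Proposition \ref{gmintype4} this places $\mathrm{Z}(\gdrbmath(A))^{\circ}$ inside $\gmmath\cdot\mathrm{U}_{E}\subset\resgmmath$. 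So that gap is fillable, but your proposal leaves it open and misjudges it as the hard point.

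The genuine gap is in the character computation itself. Knowing that $X^{*}(\resgmmath)^{\mathrm{Gal}}=\mathbb{Z}\cdot\mathrm{Nm}_{E/\mathbb{Q}}$ and that the invariant character lattice of $\mathrm{Z}^{\circ}(\gdrbmath(A))$ has rank one does not yield the corollary: you must exclude a $\mathbb{Q}$-rational character of $\gdrbmath(A)$ whose restriction to the homotheties is an \emph{odd} power of $t\mapsto t$ (a ``square root'' of $\mathbb{Q}_{\mathrm{dRB}}(1)$, whose comparison constant would not be $(2\pi i)^{k}q$ with $q\in\qbar^{\times}$), and a rank count cannot see this parity. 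Your phrase ``matching the restricted norm with $\chi_{\mathrm{Tate}}$ up to an integer power'' has the quantifier backwards: what is needed is that $\chi_{\mathrm{Tate}}$ restricted to the centre \emph{generates} the group of characters that extend to $\gdrbmath(A)$, not that it is some power of a generator. In the degree-$2$ case this works out because $\chi_{\mathrm{Tate}}|_{E^{\times}}$ is precisely $\mathrm{Nm}_{E/\mathbb{Q}}$; but in the degree-$4$ case with $\dim\mathrm{Z}(\gdrbmath(A))=2$ the parity question comes down to whether $-\mathrm{id}$ lies in the one-dimensional torus $T'$, which by Lemma \ref{keylemmamz} must be $\mathrm{SU}_{E/k}$ for an imaginary quadratic $k\subset E$ --- and this is exactly the explicit verification the paper performs, together with Lemma \ref{chartrivial} (every character of $\gdrbmath(A)$ kills $\gdrbhmath(A)$, using Lemma \ref{nosurjectiontogm} and semisimplicity of $\gdrbmath^{\mathrm{ad}}(A)$) and Corollary \ref{gmweights} pinning the restriction of $\chi_{\mathrm{Tate}}$ to the homotheties as $t\mapsto t^{-2}$. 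Your sketch never reaches this step, and your expectation that the non-Galois quartic case with $D_4$ closure is the delicate one is misplaced: the Galois structure of $E$ plays no role here beyond Lemma \ref{keylemmamz}; the whole content is the order-$2$ gluing $\gmmath\cap\gdrbhmath(A)=\{\pm 1\}$.
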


See Definition \ref{drbweights} for the definition of the one-dimensional de Rham-Betti structure $\mathbb{Q}_{\mathrm{dRB}}(k)$. From Theorem \ref{introthm1} and Theorem \ref{introthm2}, one catches a glimpse of the relation between deRham-Betti theory and Hodge theory. However, simply from the definition, it is not so clear how these two are related. Nevertheless, we do have the following conjecture, whose origin can be traced back to Grothendieck.

\begin{conjecture}[\cite{andre2004introduction}, Conjecture 7.5.1.1]\label{conj1intro}
  Let $X$ be a smooth projective variety defined over $\qbar$. Then consider the following dRB structure $$(V_{\mathrm{B}},V_{\mathrm{dR}},\rho_{m}')=(\mathrm{H}^{2p}_{\mathrm{B}}(X,\mathbb{Q}),\mathrm{H}^{2p}_{\mathrm{dR}}(X/\qbar),\rho_{m}) \otimes \mathbb{Q}_{\mathrm{dRB}}(p)$$ Then every dRB class in $V_{\mathrm{B}}$ comes from a $\mathbb{Q}$-coefficient algebraic cycle on $X$ for $0\leq p \leq \mathrm{dim}(X)$.
\end{conjecture}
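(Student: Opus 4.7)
The natural strategy decomposes Conjecture \ref{conj1intro} into two logically independent pieces. From the containment $\gdrbmath(X) \hookrightarrow \mathrm{MT}(X)$ (the analogue of \eqref{introinclusion} for smooth projective $X$, valid by the Tannakian formalism together with Andr\'e's identification of the motivic Galois group), the fixed-vector subspace of the larger group is contained in that of the smaller, so every Hodge class in the twisted $V_B$ is already a de Rham-Betti class; the reverse inclusion, namely that every de Rham-Betti class is Hodge, amounts to the equality $\gdrbmath(X) = \mathrm{MT}(X)$ on the tensor subcategory generated by $\mathrm{H}^{*}(X)$. Call this step (i). The Hodge conjecture for $X$ then identifies Hodge classes with classes of algebraic cycles; call this step (ii). Assuming (i) and (ii), Conjecture \ref{conj1intro} follows at once, since a dRB class would be an $\mathrm{MT}(X)$-invariant in $\mathrm{H}^{2p}_B(X,\mathbb{Q})(p)$, i.e.\ a Hodge class of the correct type, and hence algebraic.

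For step (i) I would follow the template of the present paper. The first input is the inclusion $\gmmath \hookrightarrow \gdrbmath(X)$ as homotheties, extending Proposition \ref{introprop1}; for a projective $X$ this can plausibly be secured by realising the polarisation class as a de Rham-Betti class via the Lefschetz $(1,1)$ theorem, which produces a rational de Rham-Betti class in $\mathrm{H}^{2}_{\mathrm{dRB}}(X,\mathbb{Q})(1)$ and hence forces a copy of $\gmmath$. One would then classify the reductive $\mathbb{Q}$-subgroups of $\mathrm{MT}(X)$ that can serve as $\gdrbmath(X)$, using the three rigidity sources exploited here: the inclusion in $\mathrm{MT}$, the fact that $\gdrbmath(X)$ is a $\mathbb{Q}$-algebraic group, and whatever endomorphism or simplicity structure is available. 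Reduction to the abelian variety case, via Kuga-Satake or motivic correspondences, would be essential in order to reuse Theorems \ref{introthm1} and \ref{introthm2} as building blocks.

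The main obstacle is of course that the conjecture subsumes the Hodge conjecture and is therefore at least as hard as it; any unconditional proof is beyond current reach. Even granting (ii), step (i) is highly nontrivial: the results of this paper cover only simple abelian fourfolds of CM type or with a degree $4$ CM endomorphism algebra, and the case-by-case classification of reductive subgroups of $\mathrm{MT}$ does not obviously globalise. A more realistic intermediate goal is therefore to establish (i) for all abelian varieties, conditional on the Hodge conjecture, and then use the formalism of motives together with the standard conjectures to propagate the equality $\gdrbmath = \mathrm{MT}$ from abelian motives to arbitrary smooth projective varieties, reducing Conjecture \ref{conj1intro} to a purely motivic statement about Tannakian equivalence between the de Rham-Betti and Mumford-Tate formalisms.
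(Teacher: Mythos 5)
There is no proof in the paper to compare against: the statement you are addressing is stated there only as a conjecture (the weak Grothendieck period conjecture, quoted from Andr\'e), and the paper explicitly emphasizes that very little is known about it -- the only unconditional case it uses is Bost--Charles' Theorem~\ref{introbost1} for $\mathrm{H}^2$ of abelian varieties, plus the determinations of $\gdrbmath(A)$ for special abelian fourfolds in Theorems~\ref{introthm1} and~\ref{introthm2}. Your text is therefore not a proof but a conditional reduction, and you acknowledge as much; as it stands it establishes nothing beyond what is already conjectural.

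Beyond that, two concrete gaps in the reduction itself. First, your step (i) starts from ``$\gdrbmath(X)\hookrightarrow \mathrm{MT}(X)$ \dots valid by the Tannakian formalism together with Andr\'e's identification of the motivic Galois group,'' but that identification $G_{\mathrm{And}}=\mathrm{MT}$ is a theorem only for abelian varieties (this is exactly how (\ref{introinclusion}) and Theorem~\ref{keyinclusion} are obtained in the paper); for a general smooth projective $X$ one only knows $\gdrbmath(X)\subseteq G_{\mathrm{And}}(X)$ and $\mathrm{MT}(X)\subseteq G_{\mathrm{And}}(X)$, so neither the inclusion $\gdrbmath(X)\subseteq\mathrm{MT}(X)$ nor the assertion ``every Hodge class is a de Rham--Betti class'' is available; the latter is itself an open statement equivalent to every Hodge class being motivated/absolute in the relevant sense. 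Second, the proposed globalisation -- securing $\gmmath\subseteq\gdrbmath(X)$ from a polarisation class, classifying reductive subgroups of $\mathrm{MT}(X)$ for arbitrary $X$, and propagating $\gdrbmath=\mathrm{MT}$ from abelian motives via Kuga--Satake and the standard conjectures -- is not carried out and each link requires major open inputs (reductivity of $\gdrbmath(X)$ is not known outside the abelian case, Lefschetz~$(1,1)$ in the de Rham--Betti setting is only known for abelian varieties via Theorem~\ref{bostoriginal}, and the motivic transfer presupposes the standard conjectures). So the proposal should be presented as a reduction of Conjecture~\ref{conj1intro} to the Hodge conjecture together with period-conjecture-strength statements, not as a proof strategy that could be completed with the techniques of this paper.
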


On the other hand, we can compare with the renowned Hodge conjecture.
\begin{conjecture}[Hodge Conjecture]\label{conjintro2}
 Let $X$ be a smooth projective variety defined over $\mathbb{C}$. Then every Hodge class in $ \mathrm{H}^{2p}(X,\mathbb{Q})\otimes \mathbb{Q}(p)$ for $0\leq p \leq \mathrm{dim}(X)$ comes from a $\mathbb{Q}$-coefficient algebraic cycle on $X$.
\end{conjecture}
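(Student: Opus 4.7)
The statement is the rational Hodge Conjecture, a Clay Millennium Problem open in full generality since Hodge's 1950 ICM address, so my plan can only outline the standard partial strategies and pinpoint the obstruction to pushing each of them through. I should be upfront that none of the routes below is expected to close with the techniques available today, and that the present paper cites the conjecture here only for comparison with Grothendieck's de Rham-Betti analogue rather than as a target of proof.

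I would stratify by the codimension $p$. For $p=1$ I would apply the Lefschetz $(1,1)$ theorem: using the exponential short exact sequence $0\to\mathbb{Z}\to\mathcal{O}_X\to\mathcal{O}_X^{\times}\to 0$, an integral $(1,1)$-class lifts to a line bundle whose first Chern class realises it, after which one tensors with $\mathbb{Q}$. For $p=\mathrm{dim}(X)-1$ I would deduce the statement from the $p=1$ case by the hard Lefschetz theorem together with Poincar\'e duality. For the genuinely open middle codimensions the strategy I would pursue is the Mumford--Tate reduction, in the spirit of the present paper: first compute $\mathrm{MT}(X)\subseteq\mathrm{GL}(\mathrm{H}^{*}(X,\mathbb{Q}))$, then identify its fixed tensors in $\mathrm{H}^{2p}(X,\mathbb{Q})\otimes\mathbb{Q}(p)$, and finally attempt to construct algebraic self-correspondences on some power $X^{n}$ whose cohomology classes exhaust these $\mathrm{MT}(X)$-invariants. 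For abelian varieties, extending the line of attack used for $\gdrbmath$ in this paper, I would exploit the cycles produced by endomorphisms, Hodge classes of Weil type, divided powers of polarisations, and norm-type constructions, and would try to import Tate-style arguments from the $\ell$-adic to the Hodge-theoretic setting.

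The hard part, and the reason this plan cannot be completed, is that beyond codimension one we possess no systematic way of manufacturing a subvariety, or even a Chow class, whose cohomology class equals a prescribed Hodge class. All known tools are cohomological or representation-theoretic: they locate candidate Hodge classes and constrain them, but they never construct the cycle realising them. Every route above therefore terminates at the same step, the absence of a geometric output, and a genuine resolution would seem to require a fundamentally new mechanism for producing algebraic cycles from Hodge-theoretic data, something no current technology supplies.
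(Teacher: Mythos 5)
You have correctly identified that this statement is the classical rational Hodge Conjecture, which the paper states only as a well-known open conjecture for comparison with Conjecture \ref{conj1intro}; the paper offers no proof of it, and none is expected. Your proposal is appropriately honest about this: the survey of partial results (Lefschetz $(1,1)$, hard Lefschetz for $p=\mathrm{dim}(X)-1$, Mumford--Tate reductions) is accurate, and your identification of the fundamental obstruction --- the lack of any mechanism for producing an algebraic cycle realising a prescribed Hodge class --- matches the accepted state of the art, so there is nothing to compare against in the paper itself.
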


Very roughly speaking, algebraic cycles and Andr{\'e}'s motivated cycles are the bridge between deRham-Betti theory and Hodge theory. Indeed this is already used to deduce formula (\ref{introinclusion}) by the authors of \cite{kreutz2023rhambetti}. We refer the reader to \cite[Section 6.2-6.3]{kreutz2023rhambetti} for a detailed exposition. However, compared with the Hodge conjecture, very little is known about Conjecture \ref{conj1intro}. Nevertheless, we do have the following theorem.

\begin{theorem}[\cite{bost2016some}, Theorem 3.8]\label{introbost1}
 For an abelian variety $A$ defined over $\qbar$, denote the dRB structure $(\betti,\derham,\rho_{m})$ by $\mathrm{H}^1_{\mathrm{dRB}}(A,\mathbb{Q})$. Then every dRB class on $\mathrm{H}^2_{\mathrm{dRB}}(A,\mathbb{Q})\otimes \mathbb{Q}_{\mathrm{dRB}}(1)$ comes from a $\mathbb{Q}$-coefficient algebraic cycle on $A$. Moreover, we have that $$\mathrm{End}_{\mathrm{dRB}}(\mathrm{H}^1_{\mathrm{dRB}}(A,\mathbb{Q}))\cong\mathrm{End}^{\circ}(A)$$
\end{theorem}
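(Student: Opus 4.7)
The plan is to derive the endomorphism identity as a formal consequence of the first clause, and to attack the first clause by proving that any dRB class in $\mathrm{H}^2_{\mathrm{dRB}}(A,\mathbb{Q})\otimes\mathbb{Q}_{\mathrm{dRB}}(1)$ is automatically a Hodge class, at which point the classical Lefschetz $(1,1)$-theorem for abelian varieties produces the algebraic cycle.

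For the endomorphism statement, by Tannakian duality $\mathrm{End}_{\mathrm{dRB}}(\mathrm{H}^1_{\mathrm{dRB}}(A,\mathbb{Q}))$ is the space of dRB classes in $\mathrm{H}^1(A,\mathbb{Q})\otimes \mathrm{H}^1(A,\mathbb{Q})^{\vee}$. Choosing a polarisation of $A$ and invoking Poincar\'e duality give a dRB isomorphism $\mathrm{H}^1(A,\mathbb{Q})^{\vee}\cong\mathrm{H}^1(\widehat{A},\mathbb{Q})(1)$, so via K\"unneth this space sits inside the dRB classes of $\mathrm{H}^2(A\times\widehat{A},\mathbb{Q})(1)$. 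Applying the first clause to $A\times\widehat{A}$ then promotes such a class to a divisor, and the standard correspondence between divisors on $A\times\widehat{A}$ and homomorphisms $A\to A$ (modulo isogeny) identifies the $(1,1)$-K\"unneth piece with $\mathrm{End}^{\circ}(A)$. The reverse inclusion $\mathrm{End}^{\circ}(A)\hookrightarrow\mathrm{End}_{\mathrm{dRB}}(\mathrm{H}^1_{\mathrm{dRB}}(A,\mathbb{Q}))$ is automatic, since algebraic cycle classes are compatible with both the Betti and the algebraic de Rham realisations.

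For the first clause, the classical Lefschetz $(1,1)$-theorem for abelian varieties already realises every Hodge class in $\mathrm{H}^2(A,\mathbb{Q})(1)$ as a divisor, so it suffices to prove the dRB $\Rightarrow$ Hodge implication: a Betti class $\alpha\in\mathrm{H}^2(A,\mathbb{Q})$ with $\rho_m(\alpha)\in 2\pi i\cdot \mathrm{H}^2_{\mathrm{dR}}(A/\qbar)$ must have vanishing $(0,2)$-component in $\mathrm{H}^2(A,\mathbb{C})$. This step is the main obstacle, and is inherently transcendental. I would encode $\alpha$ as an extension of commutative algebraic groups over $\qbar$ built from $A$, $\widehat{A}$ and $\mathbb{G}_{\mathrm{m}}$, designed so that the $(0,2)$-part of $\rho_m(\alpha)$ precisely measures the obstruction for the corresponding analytic Lie subspace to be defined over $\qbar$. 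The dRB condition then becomes a $\qbar$-rationality statement about this Lie subspace, and invoking W\"ustholz's analytic subgroup theorem produces a genuine algebraic subgroup realising the relation. Tracking this algebraic subgroup back through the construction forces the $(0,2)$-component to vanish, yielding the theorem. The technical crux is selecting the correct commutative algebraic group whose periods encode $\alpha$ and verifying the field-of-definition hypotheses in W\"ustholz's theorem; modulo these, the argument is structurally standard.
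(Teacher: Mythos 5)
First, a point of calibration: the paper does not prove this statement at all; it is imported verbatim from \cite[Theorem 3.8]{bost2016some}, with the remark that the proof rests on W\"ustholz's analytic subgroup theorem \cite{wustholz1989algebraische}. So the benchmark is the Bost--Charles argument, and relative to it your proposal has the right ingredients (W\"ustholz, the $A\times\widehat{A}$ trick, Lefschetz $(1,1)$) but inverts the logical architecture and leaves the genuinely hard step unproved. In the cited proof the transcendence input is used to show that any morphism of dRB structures between the $\mathrm{H}^1$'s of abelian varieties over $\qbar$ is induced by a homomorphism of abelian varieties up to isogeny; a dRB class in $\mathrm{H}^2_{\mathrm{dRB}}(A,\mathbb{Q})\otimes\mathbb{Q}_{\mathrm{dRB}}(1)$ is then read as a dRB morphism $\mathrm{H}^1_{\mathrm{dRB}}(A^{\vee},\mathbb{Q})\to\mathrm{H}^1_{\mathrm{dRB}}(A,\mathbb{Q})$ (equivalently a symmetric homomorphism $A\to A^{\vee}$ up to isogeny), hence an element of $\mathrm{NS}(A)_{\mathbb{Q}}$ via $\mathrm{NS}(A)\cong\mathrm{Hom}^{\mathrm{sym}}(A,A^{\vee})$ --- exactly the identification this paper recalls in Lemma \ref{rosaticmcoincides}. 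In particular, Hodgeness of the class is a \emph{consequence} of its algebraicity, not an intermediate step, and the endomorphism statement is the primary output rather than a corollary of the divisor statement.

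The gap in your route is the step ``dRB $\Rightarrow$ Hodge'' in degree $2$, which you correctly identify as the crux but whose proposed mechanism does not match how the analytic subgroup theorem operates. W\"ustholz's theorem takes as input a $\qbar$-rational Lie subspace of a commutative algebraic group over $\qbar$ (or an algebraic point lying on the associated analytic subgroup) and outputs an algebraic subgroup; it does not certify that an analytically defined subspace is ``defined over $\qbar$,'' which is what your sketch asks of it when you say the $(0,2)$-component should measure an obstruction to $\qbar$-rationality of an analytic Lie subspace. The only known way to convert the dRB hypothesis into admissible W\"ustholz input is to use the $\qbar$-rationality you already have --- the class viewed as a $\qbar$-rational ``graph'' inside the de Rham realization (equivalently the Lie algebra data) of $A\times A^{\vee}$, through which the Betti lattice points pass --- and that argument produces the homomorphism, hence the divisor, directly; attempting to extract only the vanishing of the $(0,2)$-part first does not shortcut anything and, as sketched, is not a proof. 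Your deduction of $\mathrm{End}_{\mathrm{dRB}}(\mathrm{H}^1_{\mathrm{dRB}}(A,\mathbb{Q}))\cong\mathrm{End}^{\circ}(A)$ from the divisor statement on $A\times A^{\vee}$ is fine (the K\"unneth projectors for abelian varieties are algebraic, so restricting to the $(1,1)$-K\"unneth piece preserves algebraicity, and divisorial correspondences on $A\times A^{\vee}$ give $\mathrm{Hom}(A,A)\otimes\mathbb{Q}$), but it is conditional on the first clause, whose proof as proposed is incomplete.
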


The above theorem about invariants of de Rham-Betti group at degree 2 cohomology groups is crucial to prove Theorem \ref{introthm1} and Theorem \ref{introthm2}, analogous to the method of computing Mumford-Tate groups of abelian varieties, where the Lefschetz (1,1) Theorem is used. Also crucial is the following observation first made by Andr{\'e} in \cite[Section 7.5.3]{andre2004introduction} and whose proof can be found in \cite[Theorem 7.4]{kreutz2023rhambetti}.
\begin{theorem}[Andr{\'e},Bost,W{\"u}stholz]\label{introreductiveness}
    Given an abelian variety $A$ defined over $\qbar$. Then the de Rham-Betti group associated with $\mathrm{H}^1_{\mathrm{dRB}}(A,\mathbb{Q})$ is a reductive algebraic group. 
\end{theorem}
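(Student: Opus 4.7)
The plan is to verify the Tannakian criterion: since $\mathrm{H}^1_{\mathrm{dRB}}(A,\mathbb{Q})$ is by construction a faithful representation of $\gdrbmath(A)$, it suffices to prove this representation is semisimple, after which reductiveness of the identity component, and hence of $\gdrbmath(A)$ itself, will follow from standard structure theory. Equivalently, I need to show that every sub-de Rham-Betti structure $W\subset\mathrm{H}^1_{\mathrm{dRB}}(A,\mathbb{Q})$ admits a complement in the category $\mathcal{C}_{\mathrm{dRB}}$.

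The crux of the argument, and its main obstacle, is to establish a geometric-origin statement for sub-dRB-structures, analogous to the endomorphism statement in Theorem \ref{introbost1}: every sub-dRB-structure $W\subset\mathrm{H}^1_{\mathrm{dRB}}(A,\mathbb{Q})$ is of the form $\mathrm{H}^1(A',\mathbb{Q})$ for some abelian variety quotient $A\twoheadrightarrow A'$ defined over $\qbar$. The essential transcendence input here is W\"ustholz's analytic subgroup theorem, applied to the exponential uniformization $\mathrm{Lie}(A)_{\mathbb{C}}\twoheadrightarrow A(\mathbb{C})$: a hypothetical ``exotic'' sub-$\mathbb{Q}$-vector space $W\subset\mathrm{H}^1(A,\mathbb{Q})$ whose complexification descends to a $\qbar$-subspace of $\mathrm{H}^1_{\mathrm{dR}}(A/\qbar)$ but which does not arise geometrically would produce a $\qbar$-linear period relation that the analytic subgroup theorem forbids. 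To organize this argument, I would bring in a polarization $\lambda:A\to A^{\vee}$, whose first Chern class is algebraic and hence dRB by Theorem \ref{introbost1}; Poincar\'e duality then promotes it to a non-degenerate alternating pairing $\psi:\mathrm{H}^1_{\mathrm{dRB}}(A,\mathbb{Q})\otimes\mathrm{H}^1_{\mathrm{dRB}}(A,\mathbb{Q})\to\mathbb{Q}_{\mathrm{dRB}}(-1)$ in $\mathcal{C}_{\mathrm{dRB}}$, supplying a symplectic framework and the orthogonal complement $W^{\perp}$, which is itself a sub-dRB-structure.

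Once the geometric-origin statement is in hand, classical Poincar\'e complete reducibility supplies a complement to $W$: the quotient $A\twoheadrightarrow A'$ admits an isogeny complement so that $A$ is isogenous to $A'\times A''$ for some abelian subvariety $A''\subset A$, yielding a direct sum decomposition $\mathrm{H}^1_{\mathrm{dRB}}(A,\mathbb{Q})=\mathrm{H}^1_{\mathrm{dRB}}(A',\mathbb{Q})\oplus\mathrm{H}^1_{\mathrm{dRB}}(A'',\mathbb{Q})$ in $\mathcal{C}_{\mathrm{dRB}}$ that exhibits a dRB-complement of $W$. Thus $\mathrm{H}^1_{\mathrm{dRB}}(A,\mathbb{Q})$ is a semisimple faithful $\gdrbmath(A)$-module. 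To conclude, I would restrict to the identity component $\gdrbmath(A)^{0}$, where semisimplicity persists because we are in characteristic zero and the quotient $\gdrbmath(A)/\gdrbmath(A)^{0}$ is finite, and invoke Lie-Kolchin: the unipotent radical $U$ of $\gdrbmath(A)^{0}$ admits a non-zero fixed vector in each simple summand of $\mathrm{H}^1_{\mathrm{dRB}}(A,\mathbb{Q})$, and by normality this fixed subspace is $\gdrbmath(A)^{0}$-stable, forcing it to equal the entire summand. Hence $U$ acts trivially on a faithful representation, $U$ is trivial, $\gdrbmath(A)^{0}$ is reductive, and therefore so is $\gdrbmath(A)$.
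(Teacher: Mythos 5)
Your proposal is correct and follows essentially the same route as the argument this paper relies on: the theorem is not proved in the paper itself but imported from Kreutz--Shen--Vial (Theorem 7.4/7.11, going back to Andr\'e, Bost and W\"ustholz), whose proof is exactly your chain — the analytic subgroup theorem forces every sub-dRB structure of $\mathrm{H}^1_{\mathrm{dRB}}(A,\mathbb{Q})$ to be of geometric origin, Poincar\'e complete reducibility then gives semisimplicity of this faithful representation of $\gdrbmath(A)$, and in characteristic zero this kills the unipotent radical. The only cosmetic remark is that the detour through the identity component is unnecessary, since $\gdrbmath(A)$ is already connected by Lemma \ref{gdrbconnectedness}.
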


To determine the de Rham-Betti group of simple abelian fourfolds, however, Theorem \ref{introbost1} and Theorem \ref{introreductiveness} still do not suffice. The major obstacle is that de Rham-Betti groups are only defined in the Tannakian formalism. Mumford-Tate groups, however, can be defined to be the smallest algebraic group over $\mathbb{Q}$ whose $\mathbb{R}$-points contain the image of the Deligne torus (see Definition \ref{delignetorusmt}). This formalism is used in a crucial way by authors of \cite{mz-4-folds} and 
\cite{ribet1983hodge} to compute Mumford-Tate groups of abelian fourfolds. We lack such formalism for de Rham-Betti groups.

Therefore for the proof of Theorem \ref{introthm1} and Theorem \ref{introthm2}, we perform a detailed analysis of reductive algebraic subgroups of $\mathrm{MT}(A)$. The properties that $A$ is a simple abelian variety and that $\gdrbmath(A)$ is defined over $\mathbb{Q}$ are exploited in a thorough manner. This is done in Section \ref{longcomp} and Section \ref{cmnonsimplicity} as well as Section \ref{deg4section}. This forms the most technical part of the article.

The method mentioned in the previous paragraph eventually fails for anti-Weil type abelian fourfolds. See Proposition \ref{existenceofsl2timessl2} and Remark \ref{existenceexplained} for the precise obstruction. Roughly speaking we construct a family of anti-Weil type abelian fourfolds whose Mumford-Tate groups properly contain a particular algebraic subgroup, which we do not know how to rule out as their de Rham-Betti groups. Therefore we are left with the following open question.
\begin{question}
  For a simple anti-Weil type abelian variety that falls in the family constructed in Proposition \ref{existenceofsl2timessl2}, how do we compute its de Rham-Betti group if it is defined over $\qbar$?
\end{question}

The last section of this article is devoted to 
the study of de Rham-Betti systems (Definition \ref{defndrbsystem}), which formalizes the notion of a family of de Rham-Betti structures parameterized
by a fixed quasi-projective smooth algebraic variety defined over $\qbar$, which we denote by $U$. This formalism is already proposed in \cite{saito1997determinant}. Given a de Rham-Betti system $\mathcal{V}$ on $U$, which satisfies certain geometric constraints (Assumption \ref{keyassumption}), we show 
\begin{proposition}[=Proposition \ref{mainprincipleB}]\label{thmintro3}
Deligne's ``Principle B" holds for de Rham-Betti systems.
\end{proposition}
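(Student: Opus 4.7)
The plan is to adapt Deligne's classical proof of Principle B from variations of Hodge structure to de Rham-Betti systems of geometric origin. The statement I aim to prove is that if $s$ is a flat global section of $\localsystem$ on $U^{\mathrm{an}}$ and $s(u_0)$ is a de Rham-Betti class on the fiber at some $u_0 \in U(\qbar)$, then $s(u)$ is a de Rham-Betti class at every $u \in U(\qbar)$. From $s$ I first produce the associated holomorphic section $\widetilde{s} := \rho_m(s \otimes 1)$ of $\holomorphicderham$; since $s$ is $\mathbb{Q}$-rational and flat, hence monodromy-invariant and single-valued on $U^{\mathrm{an}}$, so is $\widetilde{s}$, and it is horizontal for the analytified Gauss--Manin connection $\nabla^{\mathrm{an}}$.

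Next I algebraize $\widetilde{s}$. Under Assumption \ref{keyassumption}, $(\algebraicderham, \nabla)$ is the algebraic de Rham realization of a geometric family, so the connection is regular singular over $\qbar$. By Deligne's Riemann--Hilbert equivalence for regular holonomic $\mathcal{D}$-modules together with the theorem of the fixed part, the sub-local-system $\mathbb{Q}\cdot s \subset \localsystem$ is trivial and corresponds to a trivial $\mathcal{O}_U$-summand of $\algebraicderham$; consequently $\widetilde{s}$ arises from an algebraic horizontal section $\sigma$ of $\algebraicderham \otimes_{\qbar} \mathbb{C}$ over $U$, realized as a $\mathbb{C}$-multiple of a $\qbar$-rational generator of that summand.

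Now I descend $\sigma$ to $\qbar$. The space $W_\qbar := \ker\bigl(\nabla : \Gamma(U, \algebraicderham) \to \Gamma(U, \algebraicderham \otimes \Omega^1_{U/\qbar})\bigr)$ is a finite-dimensional $\qbar$-vector space whose base change is $W_\mathbb{C}$, the corresponding space of $\mathbb{C}$-rational horizontal algebraic sections, which contains $\sigma$. Evaluation at $u_0$ defines a $\qbar$-linear injection $\mathrm{ev}_{u_0}: W_\qbar \hookrightarrow \algebraicderham|_{u_0}$ (injective by uniqueness of horizontal extensions); its base change remains injective, and the preimage of $\algebraicderham|_{u_0} \otimes 1 \subset \algebraicderham|_{u_0} \otimes \mathbb{C}$ is precisely $W_\qbar$. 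The hypothesis $\sigma(u_0) = \rho_m(s(u_0) \otimes 1) \in \algebraicderham|_{u_0}$ therefore forces $\sigma \in W_\qbar$. Consequently, for every $u \in U(\qbar)$ we have $\rho_m(s(u) \otimes 1) = \sigma(u) \in \algebraicderham|_u$, so $s(u)$ is a de Rham-Betti class.

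The main obstacle is the algebraization step: realizing the monodromy-invariant horizontal holomorphic section $\widetilde{s}$ as the analytification of an algebraic section. This is exactly where Assumption \ref{keyassumption} enters decisively, through the regular singularity of Gauss--Manin and Deligne's canonical extension; absent geometric origin, flat analytic sections can exhibit transcendental growth and need not be algebraic.
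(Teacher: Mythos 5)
Your proof is correct, but it takes a genuinely different route from the paper's proof of Proposition \ref{mainprincipleB}. The paper stays on the Betti/geometric side: each component of the class is embedded, via the family K\"unneth formula and Poincar\'e duality, into $\mathrm{H}^{jn_i+(2n-j)m_i}$ of the self-fibre product $Y^{n_i+m_i}\rightarrow U$; Deligne's global invariant cycle theorem lifts the monodromy-invariant fibre class to the cohomology of the smooth projective compactification $\overline{Y}^{n_i+m_i}$; and the polarization compatible with both the Hodge and the de Rham--Betti structure (Lemma \ref{polarizationdrb}, Lemma \ref{orthogonalprojs}, Corollaries \ref{drbclassliftable} and \ref{liftablewhenquasiproj}) is used to choose a lift that is itself a de Rham--Betti class; restricting that class at any other $\qbar$-point gives the conclusion. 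You instead work on the de Rham side with the flat section itself: you algebraize $\rho_m(s\otimes 1)$ over $\mathbb{C}$ using regular singularities and Riemann--Hilbert --- this is exactly the paper's Lemma \ref{rhcorrespondence}, which the paper only deploys later, in the proof of Proposition \ref{principleB} --- and then descend from $\mathbb{C}$ to $\qbar$ by the elementary observation that the horizontal algebraic sections form a $\qbar$-form of their complex counterpart, evaluation at $u_0$ is injective, and $\qbar$-rationality of the value at $u_0$ forces the section into the $\qbar$-form. Your route avoids the invariant cycle theorem, the semisimplicity of the geometric monodromy category, the compactification of the total space and the entire polarization apparatus, using geometricity only through Katz's regularity of the Gauss--Manin connection; the paper's heavier route buys an actual de Rham--Betti class on $\overline{Y}^{n_i+m_i}$ lifting the fibre class, which it reuses (via the Leray restriction map) to produce the $\qbar$-algebraic flat section needed for Proposition \ref{principleB} --- a $\qbar$-algebraicity statement that your descent step delivers directly.

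Two small repairs. First, the statement is about global sections of identity objects of $\langle\localsystem\rangle^{\otimes}$, so the argument should be run on $\bigoplus\mathcal{V}^{\otimes n_i}\otimes\mathcal{V}^{*\otimes m_i}$ rather than on $\mathcal{V}$ itself; this is harmless, since regular singularity is stable under tensor products, duals and direct sums, and the fibrewise comparison isomorphism is the corresponding tensor construction of $\rho_{m,u}$. Second, in the algebraization step the appeal to the theorem of the fixed part and the claim that $\mathbb{Q}\cdot s$ corresponds to a trivial $\mathcal{O}_U$-summand of $\algebraicderham$ with a $\qbar$-rational generator are not justified at that stage: the $\qbar$-rationality of the associated de Rham line is precisely what your final descent proves, and without the rationality hypothesis at $u_0$ it fails in general (already for constant systems with a transcendental period matrix, cf.\ Remark \ref{uncountablymany}). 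Fortunately your descent only needs that $\sigma$ is an algebraic horizontal section over $U_{\mathbb{C}}$ with $\sigma^{\mathrm{an}}=\rho_m(s\otimes 1)$, which follows from the full faithfulness of the Riemann--Hilbert functor applied to the morphism $(\mathcal{O}_{U_{\mathbb{C}}},d)\rightarrow(\algebraicderham\otimes_{\qbar}\mathbb{C},\nabla)$, i.e.\ from Lemma \ref{rhcorrespondence}; so the issue is one of phrasing, not of substance.
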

Roughly speaking, let $\mathcal{V}=(\mathbb{V}_{\mathrm{B}},(\mathcal{V}_{\mathrm{dR}},\nabla),\rho_{m})$ be a deRham-Betti system satisfying Assumption \ref{keyassumption}. Then for a $\qbar$-point $u_0$ of $U$, we may define the restriction of $\mathcal{V}$ at $u_0$ (see Definition \ref{drbsystemrestriction}). It is a de Rham-Betti structure in the sense of Definition \ref{introdef1} and denote it by $\mathcal{V}|_{u_0}$. If an element $\alpha\in\mathbb{V}_{\mathrm{B},u_0}$ is monodromy invariant and it is a de Rham-Betti class with respect to $\mathcal{V}|_{u_0}$, then the parallel transform of $\alpha$ is a de Rham-Betti class above every $\qbar$-point in $U$. The proof of Proposition \ref{thmintro3} closely follows the proof for the ``Deligne's Principle B" in the setting of Hodge theory.

Inspired by the notion of generic Mumford-Tate group, we also give the notion of the de Rham-Betti group associated with a deRham-Betti system $\mathcal{V}$ (Definition \ref{drbsystemtannakian}). We denote this group by $\mathcal{G}_{\mathrm{dRB}}(\mathcal{V})$. It is defined in the abstract Tannakian formalism. Using Proposition \ref{thmintro3}, we have the following equivalent characterization
\begin{proposition}[=Proposition \ref{principleB}]
 Let $\mathcal{V}$ be a deRham-Betti system satisfying Assumption \ref{keyassumption}. Moreover, suppose that $\mathcal{G}_{\mathrm{dRB}}(\mathcal{V})$ is a reductive algebraic group. Then the fixed tensors of $\mathcal{G}_{\mathrm{dRB}}(\mathcal{V})$ are precisely monodromy invariant classes which are fiberwise de Rham-Betti classes. 
\end{proposition}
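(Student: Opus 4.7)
The plan is to mimic the classical Hodge-theoretic argument identifying fixed tensors of the generic Mumford-Tate group with monodromy-invariant flat Hodge classes, substituting Principle B for dRB systems (Proposition \ref{mainprincipleB}) for its Hodge-theoretic counterpart and the reductiveness assumption for the semisimplicity of the category of polarizable VHS. The organizing device will be Tannakian duality: for any tensor construction $T(\mathcal{V}) = \mathcal{V}^{\otimes n} \otimes \mathcal{V}^{*\otimes m}$, fixed tensors of $\mathcal{G}_{\mathrm{dRB}}(\mathcal{V})$ inside $T(\mathbb{V}_{\mathrm{B}})$ correspond bijectively to morphisms $\mathbb{Q}_{\mathrm{dRB},U} \to T(\mathcal{V})$ from the trivial dRB system on $U$ into $T(\mathcal{V})$.

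I would first dispatch the easy inclusion. Given such a morphism, its Betti realization is a flat (hence monodromy-invariant) global section of $T(\mathbb{V}_{\mathrm{B}})$, and restriction at any $\qbar$-point $u_0$ yields a morphism of dRB structures $\mathbb{Q}_{\mathrm{dRB}} \to T(\mathcal{V})|_{u_0}$, i.e. a fiberwise dRB class in the sense of Definition \ref{drbsystemrestriction}.

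For the substantive direction, I start with a monodromy-invariant section $\alpha$ of $T(\mathbb{V}_{\mathrm{B}})$ every fiber of which is a dRB class, and aim to manufacture a sub-dRB system $L \subset T(\mathcal{V})$ isomorphic to $\mathbb{Q}_{\mathrm{dRB},U}$ with Betti component $\mathbb{Q}\alpha$. Monodromy invariance promotes $L_{\mathrm{B}} := \mathbb{Q}\alpha$ to a constant local sub-system. To produce its de Rham partner, I apply Principle B: since $\alpha$ is dRB at every $\qbar$-point, the images of $\alpha$ under $\rho_m$ organize into a $\nabla$-flat algebraic sub-bundle $L_{\mathrm{dR}} \subset T(\mathcal{V}_{\mathrm{dR}})$ defined over $\qbar$, and the restriction of $\rho_m$ identifies $L_{\mathrm{B}} \otimes \mathbb{C}$ with $L_{\mathrm{dR}} \otimes \mathbb{C}$, producing the desired sub-dRB system $L$.

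The final step invokes reductiveness: since $\mathcal{G}_{\mathrm{dRB}}(\mathcal{V})$ is reductive, its representation category is semisimple, so the inclusion $L \hookrightarrow T(\mathcal{V})$ admits a retraction, which under Tannakian duality produces the projector exhibiting $\alpha$ as a fixed tensor. The main obstacle I anticipate is the gluing in the third paragraph: Principle B delivers fiberwise compatibility at $\qbar$-points, but I need to argue that the corresponding algebraic line $L_{\mathrm{dR}}$ is honestly $\nabla$-flat and defined over $\qbar$, not merely pointwise well-defined. I expect this to go through by observing that flatness of the Betti section combined with the holomorphic comparison $\flatholomorphic \simeq \holomorphicderham$ forces the de Rham partner to be flat on the analytification, after which a GAGA-type descent using the algebraic connection $\nabla$ returns $L_{\mathrm{dR}}$ to the algebraic category over $\qbar$. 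Without reductiveness the final splitting step would fail outright, which is what forces the hypothesis into the statement.
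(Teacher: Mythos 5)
There is a genuine gap, and it sits exactly at the step you flag yourself: the descent of the flat de Rham partner from $\mathbb{C}$ to $\qbar$. Your chain ``Betti-flat $\Rightarrow$ holomorphic-flat $\Rightarrow$ algebraic'' is fine and coincides with the paper's Setup \ref{keysetup} and Lemma \ref{rhcorrespondence} (note that this already needs regularity of $\nabla$ and Deligne's Riemann--Hilbert correspondence rather than plain GAGA, since $U$ is not proper), but it only produces a flat algebraic section of $\mathcal{V}_{\mathrm{dR},\mathbb{C}}^{\otimes n_i}\otimes\mathcal{V}_{\mathrm{dR},\mathbb{C}}^{*\otimes m_i}$ over $U_{\mathbb{C}}$. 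The announced ``GAGA-type descent using the algebraic connection $\nabla$'' cannot return this section to $\qbar$: GAGA compares analytic with algebraic objects over $\mathbb{C}$ and is silent about the field of definition. More tellingly, the justification you give for this step never uses the hypothesis that $\alpha$ is a de Rham--Betti class on the fibres; since an arbitrary monodromy-invariant class produces a flat section over $\mathbb{C}$ that is in general not $\qbar$-rational, any correct argument must use that hypothesis, so the sketch as written cannot be repaired by merely sharpening the GAGA reference. This is precisely where the paper's proof does its work: it reruns the proof of Proposition \ref{mainprincipleB}, using K\"unneth and Poincar\'e duality to view each component $\alpha^{i}_{\mathrm{B},u_0}$ inside $\mathrm{H}^{jn_i+(2n-j)m_i}(Y_{u_0}^{n_i+m_i},\mathbb{Q})$ (suitably twisted), Deligne's global invariant cycle theorem together with the dRB-lifting machinery (Lemma \ref{polarizationdrb}, Lemma \ref{orthogonalprojs}, Corollaries \ref{drbclassliftable} and \ref{liftablewhenquasiproj}) to lift it to a de Rham--Betti class on $\overline{Y}^{n_i+m_i}$ whose de Rham component is therefore $\qbar$-rational, and finally pullback along $h$ and the Leray edge map $\mathrm{H}^{*}_{\mathrm{dR}}(Y^{n_i+m_i}/\qbar)\rightarrow\mathrm{H}^{0}(U,\mathcal{V}_{\mathrm{dR}}^{\otimes n_i}\otimes\mathcal{V}_{\mathrm{dR}}^{*\otimes m_i})$ to obtain the flat $\qbar$-algebraic section $t_\alpha$. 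This is also the only place where the geometric Assumption \ref{keyassumption} enters, and your proposal does not use it either.

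A secondary issue is the placement of the reductiveness hypothesis. Once you have an identity sub-system of $\bigoplus\mathcal{V}^{\otimes n_i}\otimes\mathcal{V}^{*\otimes m_i}$ whose Betti fibre at $u_0$ is $\mathbb{Q}\alpha$ (and identity objects are characterized by the ``$\qbar$-algebraic extendable'' property of Lemma \ref{idenitiysystem}), the class $\alpha$ spans a trivial subrepresentation and is automatically a fixed vector; no retraction or semisimple splitting is needed there. Where the paper actually uses reductivity is the opposite containment: in its formalism a fixed tensor is a one-dimensional subquotient, and semisimplicity of $\langle\mathcal{V}\rangle^{\otimes}$ is what turns subquotients into subobjects, i.e.\ into honest invariant elements of the tensor spaces, to which the inclusions $G_{\mathrm{Mon}}\xhookrightarrow{}\mathcal{G}_{\mathrm{dRB}}(\mathcal{V})$ and $\gdrbmath(\mathcal{V}|_{u_0})\xhookrightarrow{}\mathcal{G}_{\mathrm{dRB}}(\mathcal{V})$ of Corollary \ref{twoimmmersions} can then be applied (compare the paper's closing Remark). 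So your easy direction is essentially right, but the hypothesis you say ``forces'' the statement is needed on that side, not in your final splitting step, and the substantive direction still lacks the arithmetic descent argument.
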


However, the following question remains open.
\begin{question}
    Given a de Rham-Betti system $\mathcal{V}$ satisfying the geometric constraint from Assumption \ref{keyassumption}, does there exist a $\qbar$-point such that $\mathcal{G}_{\mathrm{dRB}}(\mathcal{V})=\gdrbmath(\mathcal{V}|_{u_0})$?
\end{question}\noindent\textbf{Notations and Conventions}
\begin{enumerate}
    \item We fix an algebraic closure of $\mathbb{Q}$ and denote it by $\qbar$. We also fix an embedding of fields $\qbar\xhookrightarrow{}\mathbb{C}$.
    \item In this article, every algebraic variety is assumed to be defined over a field of characteristics zero.
    \item Given an abelian variety $A$, we denote by $\mathrm{End}^{\circ}(A)$ its endomorphism algebra $\mathrm{End}(A)\otimes_{\mathbb{Z}}\mathbb{Q}$. 
\end{enumerate}

\section*{Acknowledgment}
I would like to thank my advisor Mingmin Shen for introducing me to this topic and his guidance and encouragement along the way. I also would like to thank Javier Fresán, Tobias Kreutz, Charles Vial, Yichen Qin, Ziyang Gao and Mingyu Ni for useful discussions. The research was carried out as part of my PhD project and was supported by an NWO cluster grant with project number 613.009.153.

\section{Preliminaries}
\subsection{De Rham-Betti Structures and Groups}\label{drbsect}
We start by introducing the de Rham-Betti structures and de Rham-Betti groups. The main references are \cite{bost2016some} and \cite{kreutz2023rhambetti}.
\begin{definition}[Section 2.1.1, \cite{bost2016some}]\label{drb}
A \textit{de Rham-Betti structure} is a triple $$(V_{\mathrm{B}},V_{\mathrm{dR}},\rho_{m})$$ such that
\begin{enumerate}
    \item $V_{\mathrm{B}}$ is a finite dimensional $\mathbb{Q}$-vector space,
    \item $V_{\mathrm{dR}}$ is a finite dimensional $\qbar$-vector space, and
    \item $\rho_{m}$ is an isomorphism of $\mathbb{C}$-vector spaces $\rho_{m}: V_{\mathrm{B}} \otimes_{\mathbb{Q}}\mathbb{C} \cong V_{\mathrm{dR}} \otimes_{\qbar} \mathbb{C}$.
\end{enumerate}
In the sequel we sometimes abbreviate de Rham-Betti structure as dRB structure. 
\end{definition} 
One can define a morphism between two dRB structures.
\begin{definition}
    Given two dRB structures $$V=(V_{\mathrm{B}},V_{\mathrm{dR}},\rho_{m})$$ and $$V'=(V'_{\mathrm{B}},V'_{\mathrm{dR}},\rho'_{m})$$ a \textit{morphism $f$ of dRB structures} between $V$ and $V'$ consists of the following data
\begin{enumerate}
    \item $f_{\mathrm{B}}:V_{\mathrm{B}}\rightarrow V'_{\mathrm{B}}$ is a homomorphism between $\mathbb{Q}$-vector spaces.
    \item $f_{\mathrm{dR}}:V_{\mathrm{dR}}\rightarrow V'_{\mathrm{dR}}$ is a homomorphism between $\qbar$-vector spaces.
\end{enumerate}
Moreover, we require that $f_{\mathrm{B}}$ and $f_{\mathrm{dR}}$ make the following diagram commute. $$\begin{tikzcd}
V_{\mathrm{B}}\otimes\mathbb{C} \arrow[r, "\rho_{m}"] \arrow[d, "f_{\mathrm{B}}\otimes1"'] & V_{\mathrm{dR}}\otimes\mathbb{C} \arrow[d, "f_{\mathrm{dR}}\otimes1"] \\
V'_{\mathrm{B}}\otimes\mathbb{C} \arrow[r, "\rho'_{m}"]                                    & V'_{\mathrm{dR}}\otimes\mathbb{C}                                    
\end{tikzcd}$$
\end{definition}
\begin{definition}\label{drbclassdefinition}
Given a dRB structure $V$, a \textit{de Rham-Betti class} in $V$ is an element $\alpha \in V_{\mathrm{B}}$ such that $\rho_{m}(\alpha\otimes 1)\in V_{\mathrm{dR}}\otimes 1$. 
\end{definition}
\begin{example}
There are already ample examples of one-dimensional dRB structures. Fixing a $\mathbb{Q}$-basis for $V_{\mathrm{B}}$ and a $\qbar$-basis for $V_{\mathrm{dR}}$, the triple $(V_{\mathrm{B}},V_{\mathrm{dR}},\rho_{m})=(\mathbb{Q},\qbar,\rho_{m})$ where $\rho_{m}:\mathbb{C}\rightarrow \mathbb{C},z\rightarrow cz$ with $c \in \mathbb{C}^{\times}$ gives an example of a dRB structure.
\end{example}
In the above example, when $c=(2\pi i)^{k},k\in \mathbb{Z}$, such dRB structures are worthy of a proper and explicit definition below.
\begin{definition}\label{drbweights}
Given a one-dimensional dRB structure $(V_{\mathrm{B}},V_{\mathrm{dR}},\rho_{m})$, if we can find a basis for $V_{\mathrm{B}}$ and a basis for $V_{\mathrm{dR}}$ such that $\rho_{m}$ is the scalar multiplication by $(2\pi i)^{k}$ then we denote this dRB structure by $\mathbb{Q}_{\mathrm{dRB}}(k)$. It is clear that for a fixed $k\in\mathbb{Z}$ all such dRB structures are isomorphic. \end{definition}
Continuing with examples of dRB structures we have the following
\begin{example}
Let $X$ be a smooth projective variety defined over $\qbar$. Then the famous Grothendieck comparison isomorphism states that (See \cite{grothendieck1966rham})$$\rho_{m}: \mathrm{H}^{j}_{\mathrm{B}}(X,\mathbb{Q})\otimes_{\mathbb{Q}} \mathbb{C} \cong \mathrm{H}^{j}_{\mathrm{dR}}(X/\qbar) \otimes_{\qbar} \mathbb{C}$$ Therefore the triple $$(\mathrm{H}^{j}_{\mathrm{B}}(X,\mathbb{Q}),\mathrm{H}^{j}_{\mathrm{dR}}(X/\qbar),\rho_{m})$$ is a dRB structure which we abbreviate as $\mathrm{H}^{j}_{\mathrm{dRB}}(X,\mathbb{Q})$.
\end{example}
We can also define the tensor product of two dRB structures.
\begin{definition}
Given two dRB structures $$V=(V_{\mathrm{B}},V_{\mathrm{dR}},\rho_{m})$$ and $$V'=(V'_{\mathrm{B}},V'_{\mathrm{dR}},\rho'_{m})$$ their tensor product $V\otimes V'$ is given by $$(V_{\mathrm{B}}\otimes V'_{\mathrm{B}}, V_{\mathrm{dR}}\otimes V'_{\mathrm{dR}}, \rho_{m}\otimes \rho'_{m})$$
\end{definition}
\begin{example}
    We have that $\mathbb{Q}_{\mathrm{dRB}}(k)=\mathbb{Q}_{\mathrm{dRB}}(1)^{\otimes k}$. For an abelian variety $A$ defined over $\qbar$, we have that $$\mathrm{H}^{2}_{\mathrm{dRB}}(A,\mathbb{Q})\cong \wedge^2\mathrm{H}^{1}_{\mathrm{dRB}}(A,\mathbb{Q})$$
\end{example}
\begin{example}
For $X$ a smooth projective variety defined over $\qbar$, consider the de Rham-Betti structure $$V:=\mathrm{H}^{2j}_{\mathrm{dRB}}(X,\mathbb{Q})\otimes \mathbb{Q}_{\mathrm{dRB}}(j)$$ If $\alpha \in \mathrm{cl}_{\mathrm{B}}(Z^{k}_{\mathbb{Q}}(X)) \subset \mathrm{H}^{2j}_{\mathrm{B}}(X,\mathbb{Q})$, then $\alpha$ is a de Rham-Betti class with respect to the dRB structure $V$. See \cite[Section 1.1.2]{bost2016some} for a detailed explanation of this and also \cite[Page 6]{charles2011notes} for an explanation about the compatibility of the Betti cycle class map and the de Rham cycle class map under the Grothendieck comparison isomorphism, up to a power of $2\pi i$.
\end{example}

\begin{remark}\label{uncountablymany}
There are uncountably many isomorphism classes of one-dimensional dRB structures. Suppose $V_1=(\mathbb{Q},\qbar,\rho_{m})$ where $\rho_{m}:\mathbb{C}\rightarrow \mathbb{C},z\rightarrow c_1z$ with $c_1 \in \mathbb{C}^{\times}$ and $V_2=(\mathbb{Q},\qbar,\rho'_{m})$ where $\rho'_{m}:\mathbb{C}\rightarrow \mathbb{C},z\rightarrow c_2z$ with $c_2 \in \mathbb{C}^{\times}$ Then $V_1$ and $V_2$ are isomorphic as de Rham-Betti structures if and only if $c_1=qc_2$ with $q\in \qbar^{\times}$. Since $\mathbb{C}$ is an uncountable set and $\qbar$ is a countable set, we deduce that there are uncountably many isomorphism classes of one-dimensional dRB structures. Compared with the setting of Hodge structures, it is clear we have only countably many isomorphism classes of one-dimensional Hodge structures.
\end{remark}

We also have a natural notion of the dual of a dRB structure.
\begin{definition}
    Given a dRB structure $V=(V_{\mathrm{B}},V_{\mathrm{dR}},\rho_{m})$, its dual dRB structure $V^{*}$ is defined by $$(\mathrm{Hom}_{\mathbb{Q}}(V_{\mathrm{B}},\mathbb{Q}),\mathrm{Hom}_{\qbar}(V_{\mathrm{dR}},\qbar),(\rho_{m}^{*})^{-1})$$
\end{definition}
\begin{example}
    Given an abelian variety $A$ defined over $\qbar$, the de Rham-Betti structure of $A^{\vee}$ satisfies $$\mathrm{H}^1_{\mathrm{dRB}}(A^{\vee},\mathbb{Q})\otimes\mathbb{Q}_{\mathrm{dRB}}(1)\cong\mathrm{H}^1_{\mathrm{dRB}}(A,\mathbb{Q})^{*}$$ See Section 3.3 of \cite{bost2016some}.
\end{example}

In \cite[Section 7.1.6]{andre2004introduction}, Andr\'e proposes to study the category of all dRB structures $C_{\mathrm{dRB}}$ in the Tannakian formalism. Recall from above that $C_{\mathrm{dRB}}$ has a natural tensor structure. Then denote by $\omega_{\mathrm{B}}$ the forgetful functor from $C_{\mathrm{dRB}}$ to the category of $\mathbb{Q}$-vector spaces which remembers only the Betti part of the dRB structure. The main property of $C_{\mathrm{dRB}}$ is summarized in the following proposition from \cite{kreutz2023rhambetti}.
\begin{proposition} [Section 4, \cite{kreutz2023rhambetti}]
The category of all dRB structures $C_{\mathrm{dRB}}$ together with the forgetful functor $\omega_{\mathrm{B}}$ to the category of $\mathbb{Q}$-vector spaces forms a neutral Tannakian category.   
\end{proposition}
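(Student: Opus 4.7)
The plan is to verify the axioms of a neutral Tannakian category in the sense of Deligne--Milne: $C_{\mathrm{dRB}}$ must be a rigid abelian $\mathbb{Q}$-linear tensor category with $\mathrm{End}(\mathbf{1})=\mathbb{Q}$, and $\omega_{\mathrm{B}}$ must be a $\mathbb{Q}$-linear exact faithful tensor functor to finite dimensional $\mathbb{Q}$-vector spaces. The tensor product and duals have already been defined in the excerpt, so most of the work reduces to componentwise checks on the Betti and the de Rham parts, with the comparison isomorphism $\rho_{m}$ providing the only nontrivial compatibility.

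First I would equip $C_{\mathrm{dRB}}$ with its abelian and $\mathbb{Q}$-linear structure. Morphism sets are naturally $\mathbb{Q}$-vector spaces, cut out inside $\mathrm{Hom}_{\mathbb{Q}}(V_{\mathrm{B}},V'_{\mathrm{B}}) \times \mathrm{Hom}_{\qbar}(V_{\mathrm{dR}},V'_{\mathrm{dR}})$ by the commuting-square condition. Kernels and cokernels are constructed componentwise: for $f=(f_{\mathrm{B}},f_{\mathrm{dR}})$, set $K_{\mathrm{B}} := \ker f_{\mathrm{B}}$ and $K_{\mathrm{dR}} := \ker f_{\mathrm{dR}}$. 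The compatibility $(f_{\mathrm{dR}} \otimes 1) \circ \rho_{m} = \rho'_{m} \circ (f_{\mathrm{B}} \otimes 1)$, together with flatness of $\mathbb{C}$ over $\mathbb{Q}$ and over $\qbar$, forces $\rho_{m}$ to restrict to an isomorphism $K_{\mathrm{B}} \otimes_{\mathbb{Q}} \mathbb{C} \cong K_{\mathrm{dR}} \otimes_{\qbar} \mathbb{C}$, and analogously for cokernels; this is what guarantees that the candidate kernel and cokernel actually live in $C_{\mathrm{dRB}}$. The unit object for the tensor structure is $\mathbf{1} := (\mathbb{Q},\qbar,\mathrm{id}_{\mathbb{C}})$, and associativity, symmetry, and unit constraints descend from the corresponding ones on $\mathrm{Vec}_{\mathbb{Q}}$ and $\mathrm{Vec}_{\qbar}$.

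Rigidity is then routine: the dual $V^{*}=(V_{\mathrm{B}}^{*},V_{\mathrm{dR}}^{*},(\rho_{m}^{*})^{-1})$ given in the excerpt comes with evaluation and coevaluation maps inherited from the underlying Betti and de Rham vector spaces, and the transpose inverse is chosen precisely so that the snake identities on both components assemble into morphisms of dRB structures. For the fiber functor, $\mathbb{Q}$-linearity, exactness, and monoidality of $\omega_{\mathrm{B}}$ are immediate from the componentwise constructions above. The one axiom that is not purely formal is faithfulness: given a morphism $f=(f_{\mathrm{B}},f_{\mathrm{dR}})$ with $f_{\mathrm{B}}=0$, the commuting square yields $(f_{\mathrm{dR}} \otimes 1) \circ \rho_{m} = 0$, and since $\rho_{m}$ is an isomorphism of $\mathbb{C}$-vector spaces we deduce $f_{\mathrm{dR}} \otimes 1 = 0$, hence $f_{\mathrm{dR}}=0$ by faithful flatness of $\mathbb{C}$ over $\qbar$.

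The main conceptual point, modest though it is, will be to keep track of the interaction between the three scalar fields $\mathbb{Q}\subset\qbar\subset\mathbb{C}$ and to verify the neutrality condition $\mathrm{End}(\mathbf{1})=\mathbb{Q}$. An endomorphism of $\mathbf{1}$ is a pair $(a,b)\in\mathbb{Q}\times\qbar$ with $b\otimes 1 = a\otimes 1$ as endomorphisms of $\mathbb{C}$, which forces $b=a$ inside $\mathbb{C}$ and hence $b\in\mathbb{Q}$, so $\mathrm{End}(\mathbf{1})=\mathbb{Q}$ as required. With the abelian and rigid tensor structure in place and $\omega_{\mathrm{B}}$ a $\mathbb{Q}$-linear exact faithful tensor functor to $\mathrm{Vec}_{\mathbb{Q}}^{\mathrm{fd}}$, the Deligne--Milne criterion applies and $(C_{\mathrm{dRB}},\omega_{\mathrm{B}})$ is a neutral Tannakian category over $\mathbb{Q}$.
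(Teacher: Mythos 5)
Your proposal is correct, and it is the expected argument: the paper itself gives no proof of this proposition, deferring entirely to the citation of Section 4 of Kreutz--Shen--Vial, so your direct verification of the Deligne--Milne axioms is exactly what is being delegated there. The only step that genuinely needs care — that componentwise kernels and cokernels are again dRB structures, i.e.\ that $\rho_{m}$ restricts to an isomorphism $\ker f_{\mathrm{B}}\otimes_{\mathbb{Q}}\mathbb{C}\cong\ker f_{\mathrm{dR}}\otimes_{\overline{\mathbb{Q}}}\mathbb{C}$ — you state correctly; it follows since the commuting square gives $f_{\mathrm{dR}}\otimes 1=\rho'_{m}\circ(f_{\mathrm{B}}\otimes 1)\circ\rho_{m}^{-1}$, which also shows at once that injectivity, surjectivity, and exactness are detected on the Betti component, yielding exactness and faithfulness of $\omega_{\mathrm{B}}$ in one stroke.
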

\begin{remark}
One can easily write down the identity objects in $C_{\mathrm{dRB}}$. They are one-dimensional de Rham-Betti structures whose comparison isomorphism is given by the scalar multiplication by an element in $\qbar^{\times}$.
\end{remark}
Given a dRB structure $V$, we denote the Tannakian category generated by $V$ in $C_{\mathrm{dRB}}$ by $\langle V\rangle^{\otimes}$. Objects in $\langle V\rangle^{\otimes}$ are subquotient de Rham-Betti structures of $\oplus_{n_{i},m_{i}\in \mathbb{Z}_{\geq0}}V^{\otimes n_{i}} \otimes V^{*\otimes m_{i}}$. We now define the de Rham-Betti group of $V$ via the Tannakian formalism. 
\begin{definition}\label{drbgdefn}
The \textit{de Rham-Betti group} of $V$ is defined as $\underline{\mathrm{Aut}}(\omega_{\mathrm{B}}|_{\langle V\rangle^{\otimes}})$ (see \cite[Page 20]{deligne2012tannakian} for a definition). We denote it by $\gdrbmath(V)$. It is an algebraic subgroup of $\mathrm{GL}(V_{\mathrm{B}})$.
\end{definition}

By the Tannakian duality (see \cite[Theorem 2.11]{deligne2012tannakian}), we have an equivalence of tensor categories between $\langle V\rangle^{\otimes}$ and $\mathrm{Rep}_{\mathbb{Q}}(\gdrbmath(V))$. Therefore we have the following equivalent definition of the de Rham-Betti group of $V$.
\begin{definition}\label{gdrbdefn}
The \textit{de Rham-Betti group} of $V$ is the subgroup of $\mathrm{GL}(V_{\mathrm{B}})$ which fixes all the de Rham-Betti classes in $\langle V\rangle^{\otimes}$. A de Rham-Betti class in the category $\langle V\rangle^{\otimes}$ is an element of one of the identity objects in $\langle V\rangle^{\otimes}$.
\end{definition}
The de Rham-Betti group $\gdrbmath(V)$ of a de Rham-Betti structure $V$ is closely related to the torsor of periods associated with $V$. We do not recall the complete setup but refer the reader to \cite[Chapter 4]{kreutz2023rhambetti} or \cite[Section 2]{bost2016some} for more details. Nevertheless, we need the following two results that come out of this formalism.

\begin{lemma}[Theorem 4.7, \cite{kreutz2023rhambetti}]\label{gdrbconnectedness}
For an arbitrary dRB structure $V$, we have that $\gdrbmath(V)$ is a connected algebraic group. 
\end{lemma}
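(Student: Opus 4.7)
The plan is to apply the standard Tannakian criterion for connectedness: a neutral Tannakian category has connected Tannakian group if and only if it contains no non-trivial Tannakian subcategory of the form $\mathrm{Rep}(H)$ with $H$ a finite non-trivial group. Equivalently, $\gdrbmath(V)$ is connected if and only if no object $W \in \langle V \rangle^{\otimes}$ has finite non-trivial Tannakian group $\gdrbmath(W)$.

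Suppose for contradiction that there exists $W = (W_{\mathrm{B}}, W_{\mathrm{dR}}, \rho_W) \in \langle V \rangle^{\otimes}$ whose Tannakian group $G_W := \gdrbmath(W)$ is finite and non-trivial. Since we are in characteristic zero, $G_W$ is an \'etale $\mathbb{Q}$-group scheme. The torsor of periods $P(W)$ associated with $W$ is an affine $\qbar$-scheme which is a $G_{W,\qbar}$-torsor, and it admits $\rho_W$ as a distinguished $\mathbb{C}$-point; in particular $P(W)$ is finite \'etale over $\qbar$. Because $\qbar$ is algebraically closed, any non-empty finite \'etale $\qbar$-scheme is a disjoint union of copies of $\mathrm{Spec}(\qbar)$, so the natural map $P(W)(\qbar) \to P(W)(\mathbb{C})$ is a bijection. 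Applying this to $\rho_W$, we conclude that the comparison isomorphism $\rho_W : W_{\mathrm{B}} \otimes_{\mathbb{Q}} \mathbb{C} \to W_{\mathrm{dR}} \otimes_{\qbar} \mathbb{C}$ is the base change to $\mathbb{C}$ of a $\qbar$-linear isomorphism $\phi : W_{\mathrm{B}} \otimes_{\mathbb{Q}} \qbar \to W_{\mathrm{dR}}$.

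It follows that every $v \in W_{\mathrm{B}}$ is a dRB class: indeed $\rho_W(v \otimes 1) = \phi(v) \otimes 1 \in W_{\mathrm{dR}} \otimes 1$. The same argument applied to the comparison isomorphisms on tensor constructions $W^{\otimes a} \otimes (W^{*})^{\otimes b}$, which are tensor combinations of $\rho_W$ and hence equally $\qbar$-rational, shows that every vector in every tensor construction of $W$ is a dRB class. Hence $G_W$, which by Tannakian duality is precisely the subgroup of $\mathrm{GL}(W_{\mathrm{B}})$ fixing all dRB classes in $\langle W \rangle^{\otimes}$, acts trivially on $W_{\mathrm{B}}$, forcing $G_W = 1$ and contradicting our assumption.

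The main obstacle lies in the Tannakian reduction step at the beginning and in making sense of the torsor of periods as a $G_{W,\qbar}$-torsor with the advertised properties; once these formal inputs from the torsor-of-periods framework of \cite[Chapter 4]{kreutz2023rhambetti} and \cite[Section 2]{bost2016some} are in place, the crucial geometric input is that $\qbar$ has no proper finite extensions, forcing every $\mathbb{C}$-point of a finite \'etale $\qbar$-scheme to already be $\qbar$-rational. An alternative route is to work directly with $P(V)$, showing it is an integral $\qbar$-scheme by identifying its defining ideal with the prime ideal of $\qbar$-polynomial relations among the entries of the period matrix of $V$, and then transferring connectedness to $\gdrbmath(V)$ through the trivialization $P(V)_{\mathbb{C}} \simeq \gdrbmath(V)_{\mathbb{C}}$ provided by $\rho_m$.
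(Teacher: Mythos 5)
Your proposal is correct, and it is essentially the argument behind the result the paper merely cites: the paper itself gives no proof of Lemma \ref{gdrbconnectedness}, deferring entirely to \cite[Theorem 4.7]{kreutz2023rhambetti}, and your reduction (non-connectedness would produce an object $W$ with finite non-trivial $\gdrbmath(W)$, whose period torsor is then a non-empty finite \'etale $\qbar$-scheme with a $\mathbb{C}$-point, forcing the comparison isomorphism of $W$ to be defined over $\qbar$ and hence $\gdrbmath(W)$ to be trivial) is exactly the torsor-of-periods mechanism set up in \cite[Chapter 4]{kreutz2023rhambetti} and \cite[Section 2]{bost2016some} that the paper points to. One caution about your closing aside only: identifying the defining ideal of $P(V)$ with the full prime ideal of $\qbar$-polynomial relations among the periods is not a known fact but essentially the Grothendieck period conjecture (the torsor is cut out by the tensor-functorial relations, and the period point could a priori satisfy more), so that ``alternative route'' should not be relied upon; your main argument, however, does not need it.
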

\begin{lemma}[Chapter 5, \cite{kreutz2023rhambetti}]\label{drbgroupdim}
The dimension of $\gdrbmath(V)$ as an algebraic group is greater than or equal to $\mathrm{trdeg}_{\qbar}\qbar(\alpha_{i,j})$, where $\alpha_{i,j}$s are the coefficients of the matrix of comparison isomorphism $\rho_{m}$ with respect to a $\mathbb{Q}$-basis for $V_{\mathrm{B}}$ and a $\qbar$-basis for $V_{\mathrm{dR}}$.
\end{lemma}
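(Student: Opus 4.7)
The plan is to exhibit the period matrix as a point on an explicit $\qbar$-variety whose dimension coincides with $\dim \gdrbmath(V)$, then invoke the standard bound on transcendence degree of coordinates of a point on an algebraic variety. Concretely, I would introduce the \emph{torsor of periods} $P(V)$ as the $\qbar$-scheme parametrising $\otimes$-isomorphisms between the de Rham fiber functor and the Betti fiber functor restricted to $\langle V\rangle^{\otimes}$. This is constructed as the closed subscheme of the $\qbar$-scheme $\underline{\mathrm{Isom}}(V_{\mathrm{B}}\otimes_{\mathbb{Q}}\qbar,V_{\mathrm{dR}})$ cut out by the condition of sending every de Rham-Betti class (in every object of $\langle V\rangle^{\otimes}$, which one can present as subquotients of $V^{\otimes n}\otimes V^{*\otimes m}$ by de Rham-Betti morphisms) to the corresponding de Rham class. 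That $P(V)$ is non-empty is automatic since it acquires a $\mathbb{C}$-point, namely $\rho_{m}$ itself, which tautologically preserves all de Rham-Betti tensors.

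The next step is to show that $P(V)$ is a torsor under $\gdrbmath(V)_{\qbar}$. By the Tannakian duality \cite[Theorem 3.2]{deligne2012tannakian} applied to the two fiber functors $\omega_{\mathrm{B}}$ and $\omega_{\mathrm{dR}}$ on $\langle V\rangle^{\otimes}$, the functor $\underline{\mathrm{Isom}}^{\otimes}(\omega_{\mathrm{B}},\omega_{\mathrm{dR}})|_{\langle V\rangle^{\otimes}}$ is representable by an fpqc $\gdrbmath(V)_{\qbar}$-torsor; unwinding definitions identifies this torsor with $P(V)$. In particular $P(V)$ is smooth over $\qbar$ of dimension exactly $\dim_{\mathbb{Q}}\gdrbmath(V)$.

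To conclude, I would use the following elementary fact: for any $\mathbb{C}$-point $x$ of a finite type $\qbar$-scheme $X$, the transcendence degree over $\qbar$ of the subfield of $\mathbb{C}$ generated by the coordinates of $x$ (in any affine chart) equals $\dim\overline{\{x\}}$, the Krull dimension of the Zariski closure of $x$ in $X$; this in turn is bounded above by $\dim X$. Fixing a $\mathbb{Q}$-basis of $V_{\mathrm{B}}$ and a $\qbar$-basis of $V_{\mathrm{dR}}$ identifies the ambient scheme $\underline{\mathrm{Isom}}(V_{\mathrm{B}}\otimes_{\mathbb{Q}}\qbar,V_{\mathrm{dR}})$ with an open subscheme of affine space on matrix entries, and the coordinates of the point $\rho_{m}\in P(V)(\mathbb{C})$ are precisely the $\alpha_{i,j}$. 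Thus
\[
\mathrm{trdeg}_{\qbar}\qbar(\alpha_{i,j}) \;=\; \dim\overline{\{\rho_{m}\}}^{P(V)} \;\leq\; \dim P(V) \;=\; \dim\gdrbmath(V),
\]
which is the claimed inequality.

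The main subtlety I anticipate is verifying that $P(V)$ really is representable as a finite-type $\qbar$-scheme and that it is a torsor under $\gdrbmath(V)_{\qbar}$ rather than under a larger group scheme: one must check that imposing the tensor-preserving conditions coming from all objects of $\langle V\rangle^{\otimes}$ cuts out exactly the torsor predicted by Tannakian duality, and in particular that finitely many such conditions suffice. This is handled by the general Tannakian formalism as in \cite[Chapter 4]{kreutz2023rhambetti}, but it is where the real content lies; the transcendence-degree bound itself is then immediate.
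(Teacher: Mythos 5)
Your proposal is correct and takes essentially the approach the paper relies on: the paper does not prove this lemma itself but cites the period-torsor formalism of \cite[Chapters 4--5]{kreutz2023rhambetti} (see also \cite[Section 2]{bost2016some}), which is exactly the route you reconstruct — view $\rho_{m}$ as a $\mathbb{C}$-point of the $\gdrbmath(V)_{\qbar}$-torsor of tensor isomorphisms between the two fiber functors and bound $\mathrm{trdeg}_{\qbar}\qbar(\alpha_{i,j})$ by the dimension of the Zariski closure of that point, hence by $\dim\gdrbmath(V)$. The representability/torsor subtlety you flag is indeed settled by Deligne's theorem on $\underline{\mathrm{Isom}}^{\otimes}$ of fiber functors, so there is no gap.
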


\begin{example}
In dimension one, if $V=(\mathbb{Q},\qbar,\rho_{m}=c\in\qbar^{\times})$, then $\gdrbmath(V)=\{\mathrm{id}\}$. If $V=(\mathbb{Q},\qbar,\rho_{m}=c)$ with $c$ being a transcendental number, then $\gdrbmath(V)=\gmmath$. 
\end{example}

Finally we recall one version of the famous Grothendieck period conjecture, which is one of the motivations of the theory of de Rham-Betti structures.
\begin{conjecture}[1.1.3, \cite{bost2016some}]\label{wgpc}
  Let $X$ be a smooth projective variety defined over $\qbar$. Then consider the dRB structure $$(V_{\mathrm{B}},V_{\mathrm{dR}},\rho_{m}')=\mathrm{H}^{2j}_{\mathrm{dRB}}(X,\mathbb{Q}) \otimes \mathbb{Q}_{\mathrm{dRB}}(j)$$ for some integer $0\leq j\leq \mathrm{dim}(X)$. If an element $v \in V_{\mathrm{B}}$ satisfies that $\rho'_{m}(v\otimes 1)\in V_{\mathrm{dR}} \otimes 1$ then we call such $v$ a dRB class on $X$. The conjecture states that every dRB class on $X$ comes from a $\mathbb{Q}$-coefficient algebraic cycle on $X$. 
\end{conjecture}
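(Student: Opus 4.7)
The statement is the de Rham-Betti analogue of the Hodge conjecture and is known to be wide open in this generality; any plan short of proving the Hodge conjecture itself will at best amount to a conditional reduction, so what follows is a roadmap rather than an attempt at a full proof.

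First I would recast dRB classes via the Tannakian formalism of Definition \ref{gdrbdefn}: the dRB classes in $V_{\mathrm{B}}$ are precisely the vectors in $\mathrm{H}^{2j}_{\mathrm{B}}(X,\mathbb{Q})\otimes\mathbb{Q}(j)$ fixed by the action of $\gdrbmath(\mathrm{H}^{*}_{\mathrm{dRB}}(X,\mathbb{Q}))$. The inclusion \eqref{introinclusion} gives $\gdrbmath(X)\subset\mathrm{MT}(X)$, so every Hodge class on $X$ is automatically a dRB class, and the content of the conjecture is the converse inclusion together with Hodge-type algebraicity.

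Next I would try to prove $\gdrbmath(X)=\mathrm{MT}(X)$, following the pattern used in this article and in \cite{kreutz2023rhambetti}. In the abelian case I would leverage three inputs: reductivity of $\gdrbmath(\mathrm{H}^1_{\mathrm{dRB}}(A,\mathbb{Q}))$ from Theorem \ref{introreductiveness}; control of degree-two dRB classes from Theorem \ref{introbost1}, which identifies the dRB-endomorphism algebra with $\mathrm{End}^{\circ}(A)$ and pins down divisor classes; and the presence of the homotheties $\gmmath\subset\gdrbmath(A)$ provided by Proposition \ref{introprop1}. Together these constrain $\gdrbmath(A)$ to be a reductive subgroup of $\mathrm{MT}(A)$ that contains the centre, and then a case analysis of such subgroups — the technical core of this paper — would close the gap. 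Combined with the classical Hodge conjecture for $X$, this would yield the statement.

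The main obstacle is essentially everything beyond the abelian case: analogues of Theorems \ref{introreductiveness} and \ref{introbost1} are unavailable for arbitrary smooth projective $X$, so neither reductivity of $\gdrbmath(X)$ nor a workable handle on its small tensors is accessible in general. Even inside the abelian world the method breaks down for anti-Weil type fourfolds, as explained in Proposition \ref{existenceofsl2timessl2}. Consequently what the plan really delivers is a conditional reduction of Conjecture \ref{wgpc} for $X$ to the conjunction of the comparison $\gdrbmath(X)=\mathrm{MT}(X)$, which is precisely Question \ref{coarsequestion}(\ref{question1}) and the central theme of this paper, with the ordinary Hodge conjecture for $X$.
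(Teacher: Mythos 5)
This statement is a conjecture (quoted from Bost--Charles); the paper neither proves it nor claims to, so there is no internal proof to compare against, and your decision to offer only a conditional roadmap rather than a proof is the right call and matches how the paper itself uses Conjecture~\ref{wgpc} (it is invoked as motivation and verified only in the special degree-two abelian case of Theorem~\ref{bostoriginal}).

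One concrete caveat in your reduction: you assert that the inclusion $\gdrbmath(X)\xhookrightarrow{}\mathrm{MT}(X)$ of~(\ref{introinclusion}) holds for an arbitrary smooth projective $X$, i.e.\ that every Hodge class is automatically a dRB class. In the paper this inclusion (Theorem~\ref{keyinclusion}) is established only for abelian varieties, because it rests on Andr\'e's theorem that $G_{\mathrm{And}}(A)=\mathrm{MT}(A)$, i.e.\ that all Hodge tensors on an abelian variety are motivated and hence de Rham--Betti; for general $X$ this is itself open. So your conditional reduction of Conjecture~\ref{wgpc} should be stated as: (i) every Hodge class on $X$ is motivated (or at least a dRB class), (ii) $\gdrbmath(X)=\mathrm{MT}(X)$, and (iii) the Hodge conjecture for $X$ --- three unknowns rather than two. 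Likewise, reductivity of $\gdrbmath$ (Theorem~\ref{reductive}) and the degree-two input (Theorem~\ref{mainfact}) are abelian-specific, as you note, so the "case analysis of reductive subgroups of $\mathrm{MT}$" strategy has no current analogue beyond the abelian setting. With that adjustment your outline is consistent with the paper's philosophy in Question~\ref{coarsequestion}.
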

\begin{remark}
The above is called the Grothendieck Period conjecture in \cite{bost2016some}. However, there is an even stronger conjecture about periods of smooth algebraic varieties defined over $\qbar$, which would imply the above conjecture. See \cite[Conjecture 6.6]{kreutz2023rhambetti} for the precise statement and a detailed exposition. This stronger conjecture is called the Grothendieck period conjecture by Andr\'e (\cite[Conjecture 7.5.2.1]{andre2004introduction}) and in \cite{kreutz2023rhambetti}. In the sequel, we will refer to Conjecture \ref{wgpc} as the weak Grothendieck period conjecture. Already very little is known about Conjecture \ref{wgpc}. For example, it is not yet known whether it holds for $\mathrm{H}^{2}_{\mathrm{dRB}}(X,\mathbb{Q}) \otimes \mathbb{Q}_{\mathrm{dRB}}(1)$ when $X$ is an arbitrary smooth projective variety defined over $\qbar$. 
\end{remark}

However, the weak Grothendieck period conjecture is known to hold in the following case, which will be used throughout the entire article. The proof of the following theorem relies on a deep result from transcendental number theory called the ``Analytic Subgroup Theorem" proven by W{\"u}stholz in \cite{wustholz1989algebraische} (see also \cite{bost2016some} for a detailed exposition).

\begin{theorem}[Theorem 3.8, \cite{bost2016some}]\label{bostoriginal}
 For an abelian variety $A$ defined over $\qbar$,  every dRB class in $\mathrm{H}^2_{\mathrm{dRB}}(A,\mathbb{Q})\otimes \mathbb{Q}_{\mathrm{dRB}}(1)$ comes from a $\mathbb{Q}$-coefficient algebraic cycle on $A$. Moreover, we have that $\mathrm{End}(\mathrm{H}^1_{\mathrm{dRB}}(A,\mathbb{Q}))=\mathrm{End}^{\circ}(A)$.
\end{theorem}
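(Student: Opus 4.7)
The plan is to deduce both statements from the Analytic Subgroup Theorem of W\"ustholz, following Bost's original strategy. The two assertions are not independent: the identification of $\mathrm{End}(\mathrm{H}^1_{\mathrm{dRB}}(A,\mathbb{Q}))$ reduces to the first statement applied to the product $A\times A^{\vee}$, so the core of the argument is the claim about de Rham-Betti classes in $\mathrm{H}^2_{\mathrm{dRB}}(A,\mathbb{Q})\otimes \mathbb{Q}_{\mathrm{dRB}}(1)$.

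For the first part, let $\alpha\in \mathrm{H}^2_{\mathrm{B}}(A,\mathbb{Q})$ be a de Rham-Betti class in the twisted structure. Unwinding the Tate twist, the condition means that $\rho_m(\alpha\otimes 1)$ lies in $(2\pi i)^{-1}\cdot \mathrm{H}^2_{\mathrm{dR}}(A/\qbar)\otimes 1$ inside $\mathrm{H}^2_{\mathrm{dR}}(A/\qbar)\otimes_{\qbar}\mathbb{C}$, which forces the image of $\alpha$ in the Hodge decomposition to lie in $\mathrm{H}^{1,1}(A)$. Thus $\alpha$ is a Hodge class, and by the Lefschetz $(1,1)$ theorem it equals the Betti cycle class of a $\mathbb{Q}$-linear combination of divisors on $A_{\mathbb{C}}$; equivalently, $\alpha$ corresponds via the Appell-Humbert description of $\mathrm{NS}(A_{\mathbb{C}})\otimes \mathbb{Q}$ to a symmetric homomorphism $\phi:A_{\mathbb{C}}\to A^{\vee}_{\mathbb{C}}$. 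I would then apply W\"ustholz's Analytic Subgroup Theorem to the commutative algebraic $\qbar$-group $A\times A^{\vee}$: the graph of $\phi$ is an analytic subgroup of $(A\times A^{\vee})(\mathbb{C})$, and the de Rham-Betti condition translates, through the exponential description of periods of abelian varieties and the identification of $F^{1}\mathrm{H}^1_{\mathrm{dR}}$ with the cotangent space at the origin, into the statement that the Lie algebra of this analytic subgroup is defined over $\qbar$. W\"ustholz's theorem then produces an algebraic $\qbar$-subgroup of $A\times A^{\vee}$ whose complexification is the graph of $\phi$, exhibiting $\phi$ and hence $\alpha$ as coming from an algebraic cycle defined over $\qbar$.

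For the second part, I would exploit the isomorphism $\mathrm{H}^1_{\mathrm{dRB}}(A^{\vee},\mathbb{Q})\otimes \mathbb{Q}_{\mathrm{dRB}}(1)\cong \mathrm{H}^1_{\mathrm{dRB}}(A,\mathbb{Q})^{*}$ recalled in the excerpt. An element of $\mathrm{End}(\mathrm{H}^1_{\mathrm{dRB}}(A,\mathbb{Q}))$ is precisely a de Rham-Betti class in $\mathrm{H}^1_{\mathrm{dRB}}(A,\mathbb{Q})\otimes \mathrm{H}^1_{\mathrm{dRB}}(A,\mathbb{Q})^{*}$, which via the above duality and the K\"unneth formula embeds as a de Rham-Betti class in the appropriate K\"unneth summand of $\mathrm{H}^2_{\mathrm{dRB}}(A\times A^{\vee},\mathbb{Q})\otimes \mathbb{Q}_{\mathrm{dRB}}(1)$. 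By the first part, this class is algebraic over $\qbar$; the standard identification of divisorial correspondences on $A\times A^{\vee}$ with morphisms $A\to A$ then realizes it as an element of $\mathrm{Hom}(A,A)\otimes \mathbb{Q}=\mathrm{End}^{\circ}(A)$. The reverse inclusion $\mathrm{End}^{\circ}(A)\hookrightarrow \mathrm{End}(\mathrm{H}^1_{\mathrm{dRB}}(A,\mathbb{Q}))$ is pure functoriality of the de Rham-Betti realization.

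The main obstacle is the descent step via the Analytic Subgroup Theorem: one must verify that the de Rham-Betti condition on $\alpha$, as a numerical compatibility between a $\qbar$-subspace of $\mathrm{H}^2_{\mathrm{dR}}$ and a $\mathbb{Q}$-subspace of $\mathrm{H}^2_{\mathrm{B}}$ under $\rho_m$, is exactly equivalent to the $\qbar$-rationality of the Lie algebra of the graph of $\phi$ inside $\mathrm{Lie}(A\times A^{\vee})$. This requires a careful translation between the cohomological period matrix and the transcendence-theoretic hypothesis of W\"ustholz's theorem; once this input is in place, the reduction of the second statement to the first and the application of Lefschetz $(1,1)$ are essentially formal.
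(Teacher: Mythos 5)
The paper does not prove this statement at all: it is imported verbatim as Theorem 3.8 of Bost--Charles, so there is no internal argument to compare with; what matters is whether your sketch would actually constitute a proof, and it would not. The fatal step is the very first one: from $\rho_m(\alpha\otimes 1)\in (2\pi i)\cdot\mathrm{H}^2_{\mathrm{dR}}(A/\qbar)\otimes 1$ you conclude that $\alpha$ lies in $\mathrm{H}^{1,1}(A)$, i.e.\ that every de Rham--Betti class is a Hodge class. This is not a formal consequence of the definitions: $\qbar$-rationality of the de Rham realization imposes no a priori constraint on the position of $\alpha$ with respect to the Hodge decomposition of $\mathrm{H}^2(A,\mathbb{C})$, and the statement ``dRB classes in $\mathrm{H}^2_{\mathrm{dRB}}(A,\mathbb{Q})\otimes\mathbb{Q}_{\mathrm{dRB}}(1)$ are Hodge'' is precisely (a consequence of) the hard content of the theorem, not an input to it. Indeed, if that step were legitimate, transcendence theory would be superfluous: once $\alpha$ is Hodge, Lefschetz $(1,1)$ plus the invariance of $\mathrm{Hom}(A,A^{\vee})$ under extension of algebraically closed fields already makes $\alpha$ algebraic over $\qbar$, so your subsequent appeal to the Analytic Subgroup Theorem to ``descend the graph of $\phi$ to $\qbar$'' is solving a non-problem (that descent is automatic by rigidity of homomorphisms of abelian varieties), while the genuine difficulty has been assumed away.

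The actual argument of Bost--Charles runs in the opposite direction. One first proves, via W\"ustholz's Analytic Subgroup Theorem, that $\mathrm{Hom}_{\mathrm{dRB}}(\mathrm{H}^1_{\mathrm{dRB}}(B,\mathbb{Q}),\mathrm{H}^1_{\mathrm{dRB}}(A,\mathbb{Q}))=\mathrm{Hom}(A,B)\otimes\mathbb{Q}$: a $\mathbb{Q}$-linear map on Betti $\mathrm{H}^1$ (equivalently on $\mathrm{H}_1$) whose complexification respects the $\qbar$-structures on de Rham cohomology has graph with $\qbar$-rational Lie algebra inside $\mathrm{Lie}(A\times B)$, and the subgroup theorem forces this graph to come from an algebraic subgroup, i.e.\ from an actual homomorphism. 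The degree-two statement is then deduced: a dRB class in $\wedge^2\mathrm{H}^1_{\mathrm{dRB}}(A,\mathbb{Q})\otimes\mathbb{Q}_{\mathrm{dRB}}(1)$ is reinterpreted as a morphism of dRB structures between $\mathrm{H}^1$ of $A$ and of $A^{\vee}$, hence by the Hom-theorem comes from a symmetric element of $\mathrm{Hom}(A,A^{\vee})\otimes\mathbb{Q}$, i.e.\ from $\mathrm{NS}(A)\otimes\mathbb{Q}$, which gives both algebraicity and, only a posteriori, the Hodge property; the equality $\mathrm{End}_{\mathrm{dRB}}(\mathrm{H}^1_{\mathrm{dRB}}(A,\mathbb{Q}))=\mathrm{End}^{\circ}(A)$ is the case $B=A$ of the same Hom-theorem rather than a corollary of the $\mathrm{H}^2$ statement. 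Your reduction of the endomorphism claim to the $\mathrm{H}^2$ statement for $A\times A^{\vee}$ is reasonable in outline, but it rests on the flawed first part, so as written the proposal does not prove either assertion.
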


It is well known that if $A$ is a simple abelian variety, then the Hodge structure $\betti$ is also simple. Using the analytic subgroup theorem, the authors of \cite{kreutz2023rhambetti} show that the following is true.

\begin{theorem}[Theorem 7.11, \cite{kreutz2023rhambetti}]\label{reductive}
  If $A$ is a simple abelian variety defined over $\qbar$, then the de Rham-Betti structure $\mathrm{H}^1_{\mathrm{dRB}}(A,\mathbb{Q})$ is simple. Moreover, for any abelian variety $A$ defined over $\qbar$, we have that $\gdrbmath(A)$ is a reductive algebraic group defined over $\mathbb{Q}$.
\end{theorem}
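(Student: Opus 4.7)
I would prove the reductivity statement first and then deduce the simplicity claim as a corollary.

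For reductivity of $\gdrbmath(A)$, set $V := \mathrm{H}^1_{\mathrm{dRB}}(A, \mathbb{Q})$ and recall that reductivity in characteristic zero amounts to semisimplicity of the Tannakian subcategory $\langle V \rangle^{\otimes}$. The key input is the polarization on $A$: its class $\lambda$ is algebraic and so, by Theorem \ref{bostoriginal}, gives rise to a non-degenerate alternating dRB-morphism $E \colon V \otimes V \to \mathbb{Q}_{\mathrm{dRB}}(-1)$ that $\gdrbmath(A)$ preserves up to the Tate twist. The Rosati involution $\phi \mapsto \phi^{*}$ on $\mathrm{End}^{\circ}(A) = \mathrm{End}_{\mathrm{dRB}}(V)$ (again using Theorem \ref{bostoriginal}) is positive in the sense that the trace form $(\phi, \psi) \mapsto \mathrm{tr}(\phi \psi^{*})$ is positive definite on $\mathrm{End}^{\circ}(A) \otimes \mathbb{R}$. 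This positive involution extends naturally to every tensor construction $T^{n,m}V$ and restricts to $\mathrm{End}_{\mathrm{dRB}}(T^{n,m}V)$. Since any finite-dimensional $\mathbb{Q}$-algebra carrying a positive involution is semisimple (the radical is a $*$-stable nilpotent ideal, but nilpotent elements have zero trace form, contradicting positivity), each $\mathrm{End}_{\mathrm{dRB}}(T^{n,m}V)$ is semisimple, from which I would deduce that every object in $\langle V \rangle^{\otimes}$ decomposes as a direct sum of simples. This semisimplicity is equivalent to reductivity of $\gdrbmath(A)$.

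For simplicity of $V$ when $A$ is simple, with reductivity established $\langle V \rangle^{\otimes}$ is semisimple, and by Theorem \ref{bostoriginal} we have $\mathrm{End}_{\mathrm{dRB}}(V) \cong \mathrm{End}^{\circ}(A)$, which is a division algebra. A standard Schur-Wedderburn argument in a semisimple $\mathbb{Q}$-linear abelian category then shows that any object whose endomorphism ring is a division algebra must be simple: if $V = \bigoplus S_i^{n_i}$ were its isotypic decomposition, then $\mathrm{End}(V) = \prod M_{n_i}(\mathrm{End}(S_i))$, and this is a division algebra only when there is a single summand with $n_i = 1$, i.e.\ $V = S_i$ is simple.

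The step I expect to be the hardest is ensuring that the Rosati involution, originally defined on $\mathrm{End}^{\circ}(A)$ via the polarization of $A$, extends compatibly to $\mathrm{End}_{\mathrm{dRB}}(T^{n,m}V)$ for all tensor constructions. While this is immediate on $V$ itself via Bost's theorem, making it precise on arbitrary tensor constructions appears to require knowing that dRB-endomorphisms of tensor powers of $V$ are controlled by motivated endomorphisms of self-products of $A$, a fact that likely needs a tensor-construction enhancement of Bost's argument based on the Analytic Subgroup Theorem of W{\"u}stholz. Once the positive involution is available at every level, the chain positivity $\Rightarrow$ semisimplicity $\Rightarrow$ reductivity goes through cleanly.
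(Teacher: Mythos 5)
Your proposal has a genuine gap, and it sits exactly at the step you rely on most. Semisimplicity (even positivity of an involution) of the endomorphism algebras $\mathrm{End}_{\mathrm{dRB}}(T^{n,m}V)$ does \emph{not} imply that the objects of $\langle V\rangle^{\otimes}$ are direct sums of simples: endomorphism algebras are blind to non-split extensions between non-isomorphic simple objects. Concretely, let $B\subset\mathrm{GL}_2$ be the Borel subgroup and $V$ its standard two-dimensional representation; then $\mathrm{End}_B(V)=\mathbb{Q}$ is a field carrying a (trivially positive) involution, yet $V$ is a non-split extension of two characters and $B$ is not reductive. So the chain ``positive involution $\Rightarrow$ semisimple endomorphism algebras $\Rightarrow$ semisimple objects $\Rightarrow$ reductive group'' breaks at the second arrow. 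The correct criterion in characteristic zero is that the single faithful representation $V=\mathrm{H}^1_{\mathrm{dRB}}(A,\mathbb{Q})$ itself be semisimple as a $\gdrbmath(A)$-module, and that is precisely what your argument never establishes. Nor can the polarization supply it directly: a dRB sub-object of $V$ need not be a Hodge sub-structure, so the Hodge--Riemann positivity needed to see that the restricted form is non-degenerate (and hence to split off an orthogonal complement) is unavailable --- this very failure is flagged in the paper in the remark following Lemma \ref{orthogonalprojs}.

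Moreover, the step you call the hardest --- putting a positive involution on $\mathrm{End}_{\mathrm{dRB}}(T^{n,m}V)$ for all tensor constructions --- is not merely technical but out of reach with the available inputs: Theorem \ref{bostoriginal}/\ref{mainfact} (W{\"u}stholz--Bost) controls dRB classes and endomorphisms only for $\mathrm{H}^1$ and for $\mathrm{H}^2\otimes\mathbb{Q}_{\mathrm{dRB}}(1)$; knowing that dRB endomorphisms of higher tensor constructions are motivated (hence Rosati-stable and positive) is essentially an instance of the weak Grothendieck period conjecture, and the inclusion $\gdrbmath(A)\subseteq\mathrm{MT}(A)$ only bounds these algebras from \emph{below} by Hodge endomorphisms, which is the wrong direction. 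The argument the paper actually relies on (Theorem 7.11 of \cite{kreutz2023rhambetti}, quoted here without proof) runs in the opposite logical order: first one shows, via the Analytic Subgroup Theorem, that every dRB sub-object of $\mathrm{H}^1_{\mathrm{dRB}}(A,\mathbb{Q})$ arises from an abelian subvariety, so simplicity of $A$ forces simplicity of the dRB structure; then Poincar\'e complete reducibility gives semisimplicity of $\mathrm{H}^1_{\mathrm{dRB}}(A,\mathbb{Q})$ for an arbitrary abelian variety; and reductivity of $\gdrbmath(A)$ follows because a $\mathbb{Q}$-algebraic group with a faithful semisimple representation has trivial unipotent radical. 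Your second paragraph (division-algebra endomorphisms plus semisimplicity force simplicity) is correct as linear algebra, but in the actual proof simplicity is the transcendental input, not a corollary of reductivity.
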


\subsection{Mumford-Tate Groups}
The Mumford-Tate group is the fundamental symmetry group associated with a Hodge structure. In this section, we collect some well known facts about the Mumford-Tate group. We add the proof for certain facts for usage in later sections. The main reference is \cite{moonen2004introduction}.

Given a $\mathbb{Q}$-Hodge structure $V$ of weight $m$, i.e. $$V \otimes_{\mathbb{Q}} \mathbb{C}=\oplus_{p+q=m}V^{p,q}$$ such that $$\overline{V^{p,q}}=V^{q,p}$$ we will give three equivalent definitions of its Mumford-Tate group (Definition \ref{delignetorusmt}, Definition \ref{tannakianformalism1} and Definition \ref{tannakianformalism2}).

We first associate $V$ with an $\mathbb{R}$-representation of the Deligne torus \begin{equation}\label{delignetorus}\mu:\mathbb{S}:=\mathrm{Res}_{\mathbb{C}/\mathbb{R}}\gmmath \rightarrow \mathrm{GL}(V_{\mathbb{R}})\end{equation} The representation $\mu$ is defined as follows. Given $z \in \mathbb{S}(\mathbb{R})=\mathbb{C}^{\times}$, its image under $\mu$ acts via scalar multiplication by $z^{-p}\overline{z}^{-q}$ on the summand $V^{p,q}$. One can check that the image $\mu(z)$ sends $V_{\mathbb{R}}$ to $V_{\mathbb{R}}$.

\begin{definition}\label{delignetorusmt}
The \textit{Mumford-Tate group} of the $\mathbb{Q}$-Hodge structure $V$ is the smallest $\mathbb{Q}$-algebraic subgroup of $\mathrm{GL}(V)$ whose set of $\mathbb{R}$-points contains the image of $\mu$. And we denote it by $\mathrm{MT}(V)$.
\end{definition}
\begin{remark}
    One can alternatively define $\mathrm{MT}(V)$ as smallest $\mathbb{Q}$-algebraic subgroup of $\mathrm{GL}(V)$ whose set of $\mathbb{C}$-points contains the image of $\mu_{\mathbb{C}}$. More explicitly the image of $\mu_{\mathbb{C}}$ is the set of diagonal matrices which acts as scalar multiplication by $z_1^{-p}z_2^{-q}$ on the Hodge summand $V^{p,q}$ where $z_1,z_2\in\mathbb{C}^{\times}$.
\end{remark}
Immediately from this definition, we have the following observation about Mumford-Tate groups.
\begin{lemma}\label{existenceofhomothety}
    We have the following commutative diagram of $\mathbb{Q}$-algebraic groups $$\begin{tikzcd}
\gmmath \arrow[r, "i_{w}"] \arrow[rd] & \mathrm{MT}(V) \arrow[d, hook] \\
                                      & \mathrm{GL}(V)                
\end{tikzcd}$$ For  $t \in \gmmath(\mathbb{Q})=\mathbb{Q}^{\times}$, $i_{w}(t)$ is equal to the scalar multiplication by $t^{-m}$ on $V$.
\end{lemma}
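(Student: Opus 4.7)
The plan is to exhibit $i_w$ as the restriction of the Deligne-torus representation $\mu$ to the canonical cocharacter $\mathbb{G}_{m,\mathbb{R}} \hookrightarrow \mathbb{S}$, verify that the resulting $\mathbb{R}$-morphism descends to $\mathbb{Q}$, and then deduce containment in $\mathrm{MT}(V)$ from its defining minimality property.

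First I would fix the weight cocharacter $w: \mathbb{G}_{m,\mathbb{R}} \hookrightarrow \mathbb{S}$ corresponding on $\mathbb{R}$-points to the inclusion $\mathbb{R}^{\times} \hookrightarrow \mathbb{C}^{\times} = \mathbb{S}(\mathbb{R})$. Composing with $\mu$ from (\ref{delignetorus}) yields an $\mathbb{R}$-morphism $\mu \circ w : \mathbb{G}_{m,\mathbb{R}} \to \mathrm{GL}(V_{\mathbb{R}})$. On the summand $V^{p,q}$, a real point $t \in \mathbb{R}^{\times}$ (for which $\overline{t}=t$) acts by scalar multiplication by $t^{-p}\overline{t}^{-q} = t^{-(p+q)} = t^{-m}$, so $(\mu \circ w)(t)$ acts on all of $V_{\mathbb{R}}$ by the scalar $t^{-m}$.

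Next I would observe that the map $t \mapsto t^{-m}\,\mathrm{id}_{V}$ is patently defined over $\mathbb{Q}$ (indeed over $\mathbb{Z}$), since it is given by a single Laurent monomial times the identity matrix in any $\mathbb{Q}$-basis of $V$. Let $i_w: \mathbb{G}_{m,\mathbb{Q}} \to \mathrm{GL}(V)$ denote this $\mathbb{Q}$-morphism; by construction $(i_w)_{\mathbb{R}} = \mu \circ w$, and the image $W := i_w(\mathbb{G}_m)$ is a closed $\mathbb{Q}$-algebraic subgroup of $\mathrm{GL}(V)$.

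It remains to show $W \subset \mathrm{MT}(V)$. Since $W_{\mathbb{R}}$ is the image of $\mu \circ w$, it is contained in the image of $\mu$, which by Definition \ref{delignetorusmt} lies inside $\mathrm{MT}(V)(\mathbb{R}) \subset \mathrm{MT}(V)_{\mathbb{R}}$. A $\mathbb{Q}$-algebraic subgroup of $\mathrm{GL}(V)$ is determined by its base change to $\mathbb{R}$ (equivalently, the intersection $W \cap \mathrm{MT}(V)$ is a $\mathbb{Q}$-subgroup of $W$ whose $\mathbb{R}$-points fill out all of $W_{\mathbb{R}}$, forcing it to equal $W$), so $W \subset \mathrm{MT}(V)$ as $\mathbb{Q}$-algebraic subgroups of $\mathrm{GL}(V)$. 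This yields the desired factorization and completes the proof; there is no real obstacle here, the only mildly subtle point being the descent of the $\mathbb{R}$-inclusion $W_{\mathbb{R}} \subset \mathrm{MT}(V)_{\mathbb{R}}$ to a $\mathbb{Q}$-inclusion, which is handled by the standard fact above.
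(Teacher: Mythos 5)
Your proposal is correct and follows essentially the same route the paper has in mind: it treats the lemma as an immediate consequence of Definition \ref{delignetorusmt}, namely restricting $\mu$ to the real weight cocharacter $\mathbb{R}^{\times}\subset\mathbb{S}(\mathbb{R})$, noting the action is by $t^{-m}$, and invoking the minimality/$\mathbb{Q}$-algebraicity of $\mathrm{MT}(V)$ to get the factorization (this is exactly the argument the paper spells out for Corollary \ref{existenceofhomothetycor}). The only nuance is that for $m$ even the set $\{t^{-m}\,\mathrm{id}: t\in\mathbb{R}^{\times}\}$ is not all of $W(\mathbb{R})$, but your argument still goes through since an infinite set of real homotheties already forces $W\cap\mathrm{MT}(V)=W$.
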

\begin{corollary}\label{existenceofhomothetycor}
Given any Hodge structure $V$ which is not of weight 0, its Mumford-Tate group contains the homotheties in $\mathrm{GL}(V)$ i.e. diagonal matrices whose entries along the diagonal are the same. In other words, we have a homomorphism of $\mathbb{Q}$-algebraic groups $i:\gmmath\xhookrightarrow{}\mathrm{MT}(V)$ which sends $t\in\gmmath$ to the scalar multiplication by $t^{-1}$ in $\mathrm{GL}(V)$.
\end{corollary}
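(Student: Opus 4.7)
The plan is to deduce the corollary directly from Lemma~\ref{existenceofhomothety}. Let $V$ be pure of weight $m\neq 0$, and recall that the lemma supplies a $\mathbb{Q}$-algebraic group homomorphism $i_{w}\colon \mathbb{G}_{\mathrm{m}}\to\mathrm{MT}(V)$ whose composition with the inclusion $\mathrm{MT}(V)\hookrightarrow\mathrm{GL}(V)$ is $t\mapsto t^{-m}\cdot\mathrm{id}_{V}$. In particular, the image of $i_{w}$ in $\mathrm{GL}(V)$ lies inside the subgroup $H\subset\mathrm{GL}(V)$ of homotheties, which is itself a $\mathbb{Q}$-algebraic subgroup isomorphic to $\mathbb{G}_{\mathrm{m}}$ via $\lambda\mapsto\lambda\cdot\mathrm{id}_{V}$.

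The key observation is that, although for $m\neq 0$ the map $t\mapsto t^{-m}$ on $\mathbb{Q}$-points has image only $(\mathbb{Q}^{\times})^{m}$, the corresponding morphism $\mathbb{G}_{\mathrm{m}}\to\mathbb{G}_{\mathrm{m}}$ of $\mathbb{Q}$-algebraic groups is nonetheless surjective as a morphism of group schemes (its kernel being the finite group $\mu_{|m|}$, and it is dominant because it is nonconstant between one-dimensional connected groups). Consequently the scheme-theoretic image of $i_{w}$ is all of $H$, and since $\mathrm{MT}(V)$ is closed in $\mathrm{GL}(V)$ and contains the image of $i_{w}$, it contains $H$ itself.

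Having shown $H\subset\mathrm{MT}(V)$, the desired map $i\colon\mathbb{G}_{\mathrm{m}}\hookrightarrow\mathrm{MT}(V)$ is defined as the composition of the isomorphism $\mathbb{G}_{\mathrm{m}}\xrightarrow{\sim}H$, $t\mapsto t^{-1}\cdot\mathrm{id}_{V}$, with the inclusion $H\hookrightarrow\mathrm{MT}(V)$. There is no genuine obstacle here; the only subtlety worth flagging is that one must argue at the level of algebraic groups rather than $\mathbb{Q}$-points, since $i_{w}$ is typically not surjective on $\mathbb{Q}$-points even though its scheme-theoretic image exhausts $H$.
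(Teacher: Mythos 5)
Your proof is correct and is essentially the paper's own argument: both rely on the fact that the scheme-theoretic image of $i_{w}$ from Lemma \ref{existenceofhomothety} is the full group of homotheties (you simply spell out why the $m$-th power map of $\gmmath$ is surjective as a morphism of algebraic groups), and then conclude by closedness of $\mathrm{MT}(V)$. The remark that one must work at the level of algebraic groups rather than $\mathbb{Q}$-points is a valid clarification but not a departure from the paper's route.
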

\begin{proof}
The scheme theoretic image of $i_{w}$ is precisely the homotheties in $\mathrm{GL}(V)$. Since $\mathrm{MT}(V)$ is an algebraic group, we deduce that $\mathrm{MT}(V)$ contains the homotheties. The map $i$ defined in the lemma then indeed factors through $\mathrm{MT}(V)$.
\end{proof}

\begin{remark}
We stress that the theory of de Rham-Betti structures does not have a formalism similar to Definition \ref{delignetorusmt}. One reason is the lack of a notion of weights in the theory of de Rham-Betti structures. 
\end{remark}

One can also define the Mumford-Tate group of $V$ in the Tannakian formalism. By \cite{deligne2012tannakian}, the category of $\mathbb{Q}$-Hodge structures together with the forgetful functor $\omega$, which remembers the underlying $\mathbb{Q}$-vector space, forms a neutral Tannakian category. Within it, we can consider the Tannakian subcategory generated by $V$, which we denote by $\langle V\rangle^{\otimes}$. Recall that the objects in $\langle V\rangle^{\otimes}$ are subquotient Hodge structures of $\oplus_{n_{i},m_{i}\in \mathbb{Z}_{\geq0}}V^{\otimes n_{i}} \otimes V^{*\otimes m_{i}}$.
\begin{definition}\label{tannakianformalism1}
    The Mumford-Tate group of a weight $m$ $\mathbb{Q}$-Hodge structure $V$ is precisely $\underline{\mathrm{Aut}}(\omega|_{\langle V\rangle^{\otimes}})$ (See \cite[Page 20]{deligne2012tannakian} for a definition), which is a $\mathbb{Q}$-algebraic subgroup of $\mathrm{GL}(V)$.
\end{definition} 
Immediately from the Tannakian formalism, we can give the third equivalent definition of Mumford-Tate group.
\begin{definition}\label{tannakianformalism2}
 The Mumford-Tate group of a weight $m$ $\mathbb{Q}$-Hodge structure $V$ is precisely the group in $\mathrm{GL}(V)$ which fixes the Hodge classes in  $\langle V\rangle^{\otimes}$.
\end{definition}

Given an abelian variety $A$, its first singular cohomology group $\mathrm{H}^1(A,\mathbb{Q})$ can be endowed with a weight 1 Hodge structure. Then we define
\begin{definition}
    $\mathrm{MT}(A):=\mathrm{MT}(\mathrm{H}^1(A,\mathbb{Q}))$.
\end{definition}

Using the formalism from Definition \ref{delignetorusmt}, we can also define the Hodge group $\mathrm{Hdg}(V)$ associated with the Hodge structure $V$. Denote by $\mathbb{U}$ the $\mathbb{R}$-subtorus of $\mathbb{S}$ whose set of $\mathbb{R}$-points is $\{z \in \mathbb{C}^{\times}|z\overline{z}=1\}$.
\begin{definition}\label{hodgegroupdefn}
    The \textit{Hodge group} of $V$, denoted by $\mathrm{Hdg}(V)$, is the smallest $\mathbb{Q}$-algebraic subgroup of $\mathrm{GL}(V)$ whose set of $\mathbb{R}$-points contains the image of $\mathbb{U}$ under $\mu$ (see formula (\ref{delignetorus})). 
\end{definition}
\begin{remark}\label{hdgroupcdefn}
    One can also define $\mathrm{Hdg}(V)$ as the smallest $\mathbb{Q}$-algebraic subgroup of $\mathrm{GL}(V)$ whose set of $\mathbb{C}$-points contains the image of $\mathbb{U}(\mathbb{C})$ under $\mu_{\mathbb{C}}$.
\end{remark}
Immediately from the definition, we have the following convenient lemma.
\begin{lemma}\label{hodgetimesgm}
Let $V$ be a Hodge structure of weight $k$ where $k\neq0$. The Hodge group $\mathrm{Hdg}(V)$ is a connected algebraic subgroup of $\mathrm{MT}(V)$. Moreover, there is an isogeny $\Psi: \gmmath \times \mathrm{Hdg}(V) \rightarrow \mathrm{MT}(V)$ which takes $(t,g) \in \gmmath \times \mathrm{Hdg}(V)$ to $t^{-1}g \in \mathrm{MT}(V)$.
\end{lemma}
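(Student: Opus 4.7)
The plan is to handle the three claims (connectedness, inclusion into $\mathrm{MT}(V)$, and the isogeny property of $\Psi$) in order, in each case combining the minimality clauses of Definition \ref{hodgegroupdefn} and Definition \ref{delignetorusmt} with the decomposition $\mathbb{S}(\mathbb{R}) = \mathbb{R}^{\times} \cdot \mathbb{U}(\mathbb{R})$ of the Deligne torus.

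For connectedness, I would consider the identity component $\mathrm{Hdg}(V)^{\circ}$, which is again a $\mathbb{Q}$-algebraic subgroup of $\mathrm{GL}(V)$. Its set of $\mathbb{R}$-points is Zariski-clopen, hence analytically clopen, inside $\mathrm{Hdg}(V)(\mathbb{R})$. The continuous image $\mu(\mathbb{U}(\mathbb{R}))$ of the connected group $S^1$ is analytically connected and contains the identity, so it lies in $\mathrm{Hdg}(V)^{\circ}(\mathbb{R})$; the minimality in Definition \ref{hodgegroupdefn} then forces $\mathrm{Hdg}(V)^{\circ} = \mathrm{Hdg}(V)$. The inclusion $\mathrm{Hdg}(V) \subset \mathrm{MT}(V)$ is equally formal, since $\mu(\mathbb{U}(\mathbb{R})) \subset \mu(\mathbb{S}(\mathbb{R})) \subset \mathrm{MT}(V)(\mathbb{R})$ and $\mathrm{MT}(V)$ is a $\mathbb{Q}$-algebraic subgroup of $\mathrm{GL}(V)$.

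Next, invoking the hypothesis $k \neq 0$, Corollary \ref{existenceofhomothetycor} supplies an embedding $i \colon \gmmath \hookrightarrow \mathrm{MT}(V)$ with $i(t) = t^{-1} \cdot \mathrm{id}$. Since $i(\gmmath)$ is central in $\mathrm{GL}(V)$, the assignment $\Psi(t,g) := i(t) \cdot g$ is a $\mathbb{Q}$-homomorphism of algebraic groups. Let $H \subset \mathrm{MT}(V)$ be its scheme-theoretic image. Any $z \in \mathbb{S}(\mathbb{R}) = \mathbb{C}^{\times}$ can be written as $r \cdot u$ with $r \in \mathbb{R}^{\times}$ and $u \in S^1 = \mathbb{U}(\mathbb{R})$; because $V$ is pure of weight $k$, $\mu(r)$ acts on each $V^{p,q}$ by $r^{-k}$ and therefore equals $i(r^k)$, while $\mu(u) \in \mathrm{Hdg}(V)(\mathbb{R})$. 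Hence $\mu(z) \in H(\mathbb{R})$, and Definition \ref{delignetorusmt} gives $H = \mathrm{MT}(V)$.

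Finally, $\ker \Psi$ is isomorphic to $i(\gmmath) \cap \mathrm{Hdg}(V)$, namely the group of scalar matrices contained in $\mathrm{Hdg}(V)$. The relation $\overline{V^{p,q}} = V^{q,p}$ forces $\dim V^{p,q} = \dim V^{q,p}$, so $\det \mu(u) = u^{\sum_{p,q}(q-p)\dim V^{p,q}} = 1$ for $u \in \mathbb{U}$, giving $\mathrm{Hdg}(V) \subset \mathrm{SL}(V)$. The scalars in $\mathrm{SL}(V)$ form the finite group $\mu_{\dim V}$, so $\Psi$ has finite kernel and is an isogeny. The only genuine subtlety in the whole argument is the interplay of Zariski and analytic topologies used in the connectedness step; the rest reduces to formal manipulation of the standard decomposition of $\mathbb{S}$.
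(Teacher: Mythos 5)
Your proof is correct and follows essentially the same strategy as the paper: embed $\gmmath$ into $\mathrm{MT}(V)$ as homotheties via Corollary \ref{existenceofhomothetycor}, write a point of the Deligne torus as a homothety times a point of $\mathbb{U}$ landing in $\mathrm{Hdg}(V)$, and conclude surjectivity of $\Psi$ from the minimality clause in Definition \ref{delignetorusmt}, with a finite kernel. The only differences are cosmetic and in your favor: you perform the decomposition on $\mathbb{R}$-points ($z=|z|\cdot z/|z|$) where the paper uses $\mathbb{C}$-points with chosen square roots, and you supply the connectedness of $\mathrm{Hdg}(V)$ and the inclusion $\mathrm{Hdg}(V)\subset\mathrm{SL}(V)$ (hence finiteness of $\ker\Psi$), which the paper asserts without proof.
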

\begin{proof}
It follows from the definition that $\mathrm{Hdg}(V)$ is a connected algebraic subgroup of $\mathrm{MT}(V)$. By Corollary \ref{existenceofhomothetycor}, we also have the inclusion of $\gmmath$ inside $\mathrm{MT}(V)$ as homotheties. Therefore we have a well-defined homomorphism of algebraic groups $\Psi: \gmmath \times \mathrm{Hdg}(V) \rightarrow \mathrm{MT}(V)$ which sends $(t,g) \in \gmmath \times \mathrm{Hdg}(V)$ to $t^{-1}g \in \mathrm{MT}(V)$. Now we claim that $\mathrm{Im}(\Psi_{\mathbb{C}})$ contains the image of the $\mathbb{C}$-points of the Deligne torus under $\mu_{\mathbb{C}}$. Denote the diagonal matrix in  $\mathrm{Im}(\mu_{\mathbb{C}})$ which acts as scalar multiplication by $z_1^{-p}z_2^{-q}$ on $V^{p,q}$s by $(z_1^{-p}z_2^{-q}|_{V^{p,q}})$. Note that $p+q=k$ and $z_1,z_2\in\mathbb{C}^{\times}$. Fix two square roots for $z_1$ and $z_2$ and denote them as $z_1^{1/2}$ and $z_2^{1/2}$ respectively. Let $m=z_1^{1/2}z_2^{-1/2}$. Then one can check that $(z_1z_2)^{-\frac{k}{2}}(m^{-p}m^{q}|_{V^{p,q}})=(z_1^{-p}z_2^{-q}|_{V^{p,q}})$. Note that $(z_1z_2)^{\frac{k}{2}}\in\gmmath(\mathbb{C})$ and $(m^{-p}m^{q}|_{V^{p,q}})\in \mu_{\mathbb{C}}(\mathbb{U}(\mathbb{C}))\subset\mathrm{Hdg}(V)(\mathbb{C})$. By the definition of the Mumford-Tate group (see Definition \ref{delignetorusmt}), we can conclude that $\mathrm{Im}(\Psi)=\mathrm{MT}(V)$. The kernel of $\Psi$ is a proper $\mathbb{Q}$-algebraic subgroup of $\gmmath$, hence is a finite group.
\end{proof}
Within the tensor category $\langle V\rangle^{\otimes}$, we can describe the fixed tensors of $\mathrm{Hdg}(V)$ as follows.
\begin{lemma}\label{hodgefixedtensors}
   The fixed tensors of $\mathrm{Hdg}(V)$ in $\langle V\rangle^{\otimes}$ are precisely the elements in one-dimensional objects in $\langle V\rangle^{\otimes}$. These are precisely the so called $(p,p)$-Hodge classes for $p \in \mathbb{Z}$.
\end{lemma}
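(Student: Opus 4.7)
The plan is to exploit the near-splitting $\Psi:\gmmath\times\mathrm{Hdg}(V)\twoheadrightarrow\mathrm{MT}(V)$ from Lemma \ref{hodgetimesgm} together with the explicit description of the $\mathbb{U}$-action through $\mu$. First I would compute the Hdg($V$)-fixed subspace of an arbitrary $W\in\langle V\rangle^{\otimes}$. Writing the Hodge decomposition $W\otimes\mathbb{C}=\bigoplus_{p,q}W^{p,q}$, a point $z\in\mathbb{U}(\mathbb{R})$ (so $z\bar z=1$) acts on $W^{p,q}$ as $z^{-p}\bar z^{-q}=z^{q-p}$, which is trivial precisely on the diagonal pieces $W^{p,p}$. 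Thus the $\mu(\mathbb{U})$-fixed locus inside $W_{\mathbb{R}}$ is $W_{\mathbb{R}}\cap\bigoplus_{p}W^{p,p}$. Since the pointwise stabilizer of a vector is a $\mathbb{Q}$-algebraic subgroup of $\mathrm{GL}(W)$, the minimality in Definition \ref{hodgegroupdefn} forces the Hdg($V$)-fixed locus in $W$ to coincide with the $\mu(\mathbb{U})$-fixed locus, namely $W\cap\bigoplus_{p}W^{p,p}$.

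Next I would identify these elements with elements of one-dimensional sub-objects. Given nonzero $v\in W\cap W^{p,p}$, Step~1 shows $\mathbb{Q}v$ is Hdg($V$)-fixed, and by Corollary \ref{existenceofhomothetycor} it is stable under the homothety copy $\gmmath\hookrightarrow\mathrm{MT}(V)$. The isogeny $\Psi$ being surjective, $\gmmath$ and Hdg($V$) scheme-theoretically generate $\mathrm{MT}(V)$, so $\mathbb{Q}v$ is $\mathrm{MT}(V)$-stable. Under the Tannakian equivalence of Definition \ref{tannakianformalism1}, this means $\mathbb{Q}v$ is a one-dimensional sub-object of $W$ in $\langle V\rangle^{\otimes}$. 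Conversely, any one-dimensional $L\in\langle V\rangle^{\otimes}$ must be pure of some type $(p,p)$: the reality constraint $\overline{L^{p,q}}=L^{q,p}$ applied to the one-dimensional space $L\otimes\mathbb{C}$ forces $p=q$. Elements of $L$ therefore sit inside $L^{p,p}$ and are $\mu(\mathbb{U})$-fixed. Combining the two directions, the Hdg($V$)-fixed tensors in $\langle V\rangle^{\otimes}$ are exactly the elements lying in (sums of) one-dimensional sub-objects, and by definition these are the $(p,p)$-Hodge classes as $p$ ranges over $\mathbb{Z}$.

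The main subtlety I foresee is the treatment of non-pure objects: since a general $W\in\langle V\rangle^{\otimes}$ decomposes as a sum of pure Hodge structures of different weights, a Hdg($V$)-fixed tensor $v$ is a priori a sum of $(p_{j},p_{j})$-classes for distinct $p_{j}$, so each summand individually spans a one-dimensional sub-object while $v$ itself need not lie in a single one. This is harmless for the statement because each component is a Hodge class and the equality is one of sets of classes, but one has to be careful with the formulation when asserting the bijection. Once this bookkeeping is done, the proof is a direct unwinding of the definitions together with Lemma \ref{hodgetimesgm}.
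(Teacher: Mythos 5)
Your argument is correct in substance, and in fact the paper offers no proof of this lemma at all — it is stated as a standard fact — so there is nothing to compare it against; your write-up is the standard argument one would give. Two points of precision are worth fixing, though neither is a genuine gap. First, the minimality in Definition \ref{hodgegroupdefn} concerns $\mathbb{Q}$-subgroups of $\mathrm{GL}(V)$, not of $\mathrm{GL}(W)$: the object $W$ carries an action of $\mathrm{MT}(V)$ (via Tannakian duality), not of all of $\mathrm{GL}(V)$, so the group you should invoke is the stabilizer $\{g\in \mathrm{MT}(V)\mid g\cdot w=w\}$ of the \emph{rational} vector $w$, which is a $\mathbb{Q}$-algebraic subgroup of $\mathrm{MT}(V)\subset \mathrm{GL}(V)$ whose real points contain $\mu(\mathbb{U})$; minimality then gives $\mathrm{Hdg}(V)\subset \mathrm{Stab}(w)$. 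Second, the sentence asserting that the $\mathrm{Hdg}(V)$-fixed locus coincides with the $\mu(\mathbb{U})$-fixed locus is only true on rational vectors (which is all the lemma needs, since Hodge classes are rational): over $\mathbb{R}$ or $\mathbb{C}$ the fixed locus of $\mathrm{Hdg}(V)$ can be strictly smaller than $\bigoplus_p W^{p,p}$ — for a non-CM elliptic curve $E$ and $W=\mathrm{End}(\mathrm{H}^1(E,\mathbb{Q}))$ the $(0,0)$-part is two-dimensional while the $\mathrm{SL}_2$-invariants are the scalars — precisely because the stabilizer of a non-rational vector need not be defined over $\mathbb{Q}$. With the stabilizer taken as above and the statement read on rational classes, your two directions (a rational $(p,p)$-class spans a line stable under both $\mathrm{Hdg}(V)$ and the homotheties, hence under $\mathrm{MT}(V)$ by the surjectivity in Lemma \ref{hodgetimesgm}, so it lies in a one-dimensional sub-object; conversely a one-dimensional object is forced to be of type $(p,p)$ by $\overline{L^{p,q}}=L^{q,p}$ and is therefore fixed) are sound, and your remark about fixed tensors of mixed weight lying in a sum of one-dimensional sub-objects is the right reading of the lemma's loose formulation.
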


We end this section with the following Theorem from \cite{bost2016some} and \cite{kreutz2023rhambetti} relating the de Rham-Betti group and Mumford-Tate group of an abelian variety defined over $\qbar$. For an abelian variety $A$, it is proven by Andr{\'e} in \cite{andrearticle} that every Hodge tensor in $\langle \betti\rangle^{\otimes}$ is motivated and in fact $G_{\mathrm{And}}(A)=\mathrm{MT}(A)$. By definition, every motivated tensor is a de Rham-Betti tensor. Therefore every Hodge tensor is a de Rham-Betti tensor. Since $\gdrbmath(A)$ and $\mathrm{MT}(A)$ are reductive algebraic subgroups of $\mathrm{GL}(\betti)$, using Definition \ref{tannakianformalism2}, we can deduce the following.
\begin{theorem}[Theorem 7.11, \cite{kreutz2023rhambetti}]\label{keyinclusion}
    For an abelian variety $A$ defined over $\qbar$, we have the following commutative diagram $$\begin{tikzcd}
\gdrbmath(A) \arrow[r, hook] \arrow[rd, hook] & \mathrm{MT}(A) \arrow[d, hook]            \\
                                              & {\mathrm{GL}(\mathrm{H}^1(A,\mathbb{Q}))}
\end{tikzcd}$$
\end{theorem}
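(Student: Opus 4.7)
The plan is to realize both groups as stabilizers inside $\mathrm{GL}(\mathrm{H}^1(A,\mathbb{Q}))$ of suitable collections of tensors living in the Tannakian category generated by $\mathrm{H}^1(A,\mathbb{Q})$, and then to show by inclusion of these collections that $\gdrbmath(A)$ sits inside $\mathrm{MT}(A)$. The two characterizations I will use are Definition \ref{tannakianformalism2}, which identifies $\mathrm{MT}(A)$ with the subgroup of $\mathrm{GL}(\mathrm{H}^1(A,\mathbb{Q}))$ fixing every Hodge class in $\langle \mathrm{H}^1(A,\mathbb{Q})\rangle^{\otimes}$, and Definition \ref{gdrbdefn}, which identifies $\gdrbmath(A)$ with the subgroup of $\mathrm{GL}(\mathrm{H}^1(A,\mathbb{Q}))$ fixing every de Rham-Betti class in $\langle \mathrm{H}^1_{\mathrm{dRB}}(A,\mathbb{Q})\rangle^{\otimes}$. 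Since both groups are reductive (by Theorem \ref{reductive} for the de Rham-Betti side, and by the well-known reductivity of Mumford-Tate groups of polarizable Hodge structures), in either case it suffices to check the containment on tensors, with no further subquotient issues.

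The core input is André's theorem that, for an abelian variety $A$, the motivated Galois group $G_{\mathrm{And}}(A)$ coincides with $\mathrm{MT}(A)$. Equivalently, every Hodge tensor in $\langle \mathrm{H}^1(A,\mathbb{Q})\rangle^{\otimes}$ is a motivated cycle. I would then invoke the fact that motivated cycles are defined using algebraic cycles on smooth projective $\qbar$-varieties, so their Betti and de Rham realizations correspond under the Grothendieck comparison isomorphism $\rho_m$ up to the appropriate power of $2\pi i$ which is absorbed by a Tate twist $\mathbb{Q}_{\mathrm{dRB}}(p)$. Consequently, every motivated tensor in $\langle \mathrm{H}^1(A,\mathbb{Q})\rangle^{\otimes}$ is a de Rham-Betti class in the sense of Definition \ref{drbclassdefinition} when viewed inside $\langle \mathrm{H}^1_{\mathrm{dRB}}(A,\mathbb{Q})\rangle^{\otimes}$.

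Combining these two steps, every Hodge class in $\langle \mathrm{H}^1(A,\mathbb{Q})\rangle^{\otimes}$ is a de Rham-Betti class in the corresponding tensor construction of $\mathrm{H}^1_{\mathrm{dRB}}(A,\mathbb{Q})$. Hence the stabilizer of all de Rham-Betti classes is contained in the stabilizer of all Hodge classes, which by the two Tannakian descriptions means $\gdrbmath(A)\subset \mathrm{MT}(A)$ as subgroups of $\mathrm{GL}(\mathrm{H}^1(A,\mathbb{Q}))$. The commutativity of the triangle is then automatic from the fact that both inclusions into $\mathrm{GL}(\mathrm{H}^1(A,\mathbb{Q}))$ are the tautological ones coming from each group's definition as an algebraic subgroup of the general linear group of the Betti realization.

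The only nontrivial ingredient is André's identification $G_{\mathrm{And}}(A)=\mathrm{MT}(A)$, which I would cite from \cite{andrearticle}; this is the conceptual heart of the argument and the place where the special nature of abelian varieties (rather than arbitrary smooth projective $\qbar$-varieties, where the analogous statement is open) is used. Everything else is formal Tannakian bookkeeping together with the observation that motivated cycles furnish de Rham-Betti classes, which follows directly from the definitions of motivated cycles and of the comparison isomorphism.
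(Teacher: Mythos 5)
Your proposal is correct and follows essentially the same route as the paper: André's identification $G_{\mathrm{And}}(A)=\mathrm{MT}(A)$ so that every Hodge tensor in $\langle \mathrm{H}^1(A,\mathbb{Q})\rangle^{\otimes}$ is motivated, the observation that motivated tensors are de Rham-Betti tensors, and then the reductivity of both groups together with their tensor-stabilizer (Tannakian) descriptions to conclude $\gdrbmath(A)\subset\mathrm{MT}(A)$ inside $\mathrm{GL}(\mathrm{H}^1(A,\mathbb{Q}))$. The only difference is cosmetic: you spell out why motivated cycles give de Rham-Betti classes via the cycle class maps and Tate twists, where the paper simply cites this as holding by definition.
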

For abelian varieties defined over $\qbar$, the authors of \cite{bost2016some} proved that the Weak Grothendieck Period Conjecture holds for degree 2 cohomology groups.

\begin{theorem}[Theorem 3.8, \cite{bost2016some}]\label{mainfact}
Given an abelian variety $A$ defined over $\qbar$
\begin{enumerate}
\item The following chain of equalities holds $$\mathrm{End}_{\mathrm{dRB}}(\betti)=\mathrm{End}_{\mathrm{Hdg}}(\betti)=\mathrm{End}^{\circ}(A)$$
\item The Lefschetz (1,1) theorem holds true in this setting: every dRB class in $\mathrm{H}^2_{\mathrm{dRB}}(A,\mathbb{Q})\otimes\mathbb{Q}_{\mathrm{dRB}}(1)$ is the image of some $\mathbb{Q}$-coefficient algebraic cycle under the cycle class map.
\end{enumerate}
\end{theorem}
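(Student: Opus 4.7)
The plan is to prove part (2) first and then deduce part (1) as a consequence. The statement is the Bost--Charles theorem, and the central input must be W\"ustholz's Analytic Subgroup Theorem, since this is essentially the only tool that bridges a transcendental comparison condition (dRB) with an algebraic conclusion (coming from a cycle).

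For part (2), the natural setup is to work with the universal vector extension. Let $E(A^{\vee})$ be the universal vector extension of $A^{\vee}$ by the vector group $\omega_{A}$, a commutative algebraic group defined over $\qbar$. There is a canonical identification $\mathrm{Lie}(E(A^{\vee}))\cong \mathrm{H}^{1}_{\mathrm{dR}}(A/\qbar)$, and the complex exponential $\exp:\mathrm{H}^{1}_{\mathrm{dR}}(A/\mathbb{C})\to E(A^{\vee})(\mathbb{C})$ has kernel equal to the period lattice $\mathrm{H}_{1}(A,\mathbb{Z})\hookrightarrow \mathrm{H}^{1}_{\mathrm{dR}}(A/\mathbb{C})$, where the embedding is the one provided by the Grothendieck comparison $\rho_{m}$. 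A dRB class $\alpha\in \mathrm{H}^{2}(A,\mathbb{Q})\otimes\mathbb{Q}_{\mathrm{dRB}}(1)$ can be viewed, via $\mathrm{H}^{2}(A,\mathbb{Q})\cong\wedge^{2}\mathrm{H}^{1}(A,\mathbb{Q})$, as a $\mathbb{Q}$-rational alternating form on $\mathrm{H}_{1}(A,\mathbb{Q})$, and the dRB condition translates into saying that the $\mathbb{C}$-subspace of $\mathrm{H}^{1}_{\mathrm{dR}}(A/\mathbb{C})\oplus \mathrm{H}^{1}_{\mathrm{dR}}(A/\mathbb{C})$ cut out by $\alpha$ is already defined over $\qbar$. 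This subspace contains the $\qbar$-rational points coming from the period lattice via $\exp$, so W\"ustholz's theorem applied to a suitable power of $E(A^{\vee})$ produces a nontrivial $\qbar$-algebraic subgroup, from which one extracts either an isogeny factor or, more precisely, a line bundle on $A$ whose first Chern class is $\alpha$. This realizes $\alpha$ as the cycle class of a divisor, which is the content of (2).

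Part (1) is then an application of (2) to $A\times A$. A dRB endomorphism $f\in\mathrm{End}_{\mathrm{dRB}}(\betti)$ yields an element $[f]\in\mathrm{H}^{1}(A,\mathbb{Q})^{*}\otimes\mathrm{H}^{1}(A,\mathbb{Q})$, which by Poincar\'e duality on $A$ (and an appropriate twist by $\mathbb{Q}_{\mathrm{dRB}}(1)$) sits inside $\mathrm{H}^{2}(A\times A,\mathbb{Q})\otimes \mathbb{Q}_{\mathrm{dRB}}(1)$ via the K\"unneth decomposition. The compatibility of $f$ with $\rho_{m}$ on both factors is precisely the statement that this K\"unneth class is a dRB class on $A\times A$. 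Applying (2), the class is represented by a $\mathbb{Q}$-coefficient algebraic cycle on $A\times A$; the K\"unneth component means this cycle is a correspondence of the right bidegree, and classical results on abelian varieties (graphs of homomorphisms span the $(1,1)$-K\"unneth component of divisor classes, up to $\mathbb{Q}$-coefficients) identify it with an element of $\mathrm{End}^{\circ}(A)$. This gives the reverse inclusion to the tautological containment $\mathrm{End}^{\circ}(A)\hookrightarrow \mathrm{End}_{\mathrm{Hdg}}(\betti)\hookrightarrow \mathrm{End}_{\mathrm{dRB}}(\betti)$, closing the chain of equalities.

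The main obstacle, and the only non-formal ingredient, is the W\"ustholz step in part (2). The difficulty is not the invocation of the theorem itself but the bookkeeping required to set up the right commutative algebraic group (here $E(A^{\vee})^{n}$ for suitable $n$), the right $\qbar$-rational analytic subspace of its Lie algebra encoding the dRB condition on $\alpha$, and the verification that this subspace contains a nontrivial $\qbar$-point of the algebraic group. All subsequent reductions --- from (2) to (1), and from algebraicity in degree $2$ to equality of endomorphism algebras --- are formal consequences of K\"unneth, Poincar\'e duality, and the (classical) structure theory of abelian varieties.
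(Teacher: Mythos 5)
This statement is not proved in the paper at all: it is quoted verbatim as Theorem 3.8 of Bost--Charles, so there is no internal argument to compare against. Measured against the actual external proof, your sketch has the right skeleton: the only non-formal input is indeed W\"ustholz's analytic subgroup theorem applied to universal vector extensions. The organization differs, though. Bost--Charles first prove the general statement that every morphism of dRB structures $\mathrm{H}^1_{\mathrm{dRB}}(B,\mathbb{Q})\to\mathrm{H}^1_{\mathrm{dRB}}(A,\mathbb{Q})$ comes from an element of $\mathrm{Hom}(A,B)\otimes\mathbb{Q}$, and then both items of the theorem fall out: part (1) is the case $B=A$, and part (2) follows because a dRB class in $\mathrm{H}^2_{\mathrm{dRB}}(A,\mathbb{Q})\otimes\mathbb{Q}_{\mathrm{dRB}}(1)$ is an alternating form on $\mathrm{H}_1(A,\mathbb{Q})$, hence a dRB morphism to $\mathrm{H}_{1,\mathrm{dRB}}(A^{\vee})$, hence a symmetric homomorphism $A\to A^{\vee}$, i.e.\ an element of $\mathrm{NS}(A)\otimes\mathbb{Q}$. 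You instead prove (2) directly and deduce (1) via K\"unneth on $A\times A$; that reduction is legitimate (it is the same formal argument used for Hodge classes), with the caveat that the identification $\mathrm{H}^1(A,\mathbb{Q})^{*}\cong\mathrm{H}^1(A,\mathbb{Q})(1)$ should be effected by a polarization (algebraic, hence a dRB morphism) or by passing to $A\times A^{\vee}$, not by raw Poincar\'e duality, which lands you in degree $2g-1$.

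Two bookkeeping points in your W\"ustholz step would need repair in a full write-up. First, the kernel of $\exp$ on $\mathrm{Lie}\,E(A^{\vee})_{\mathbb{C}}\cong\mathrm{H}^1_{\mathrm{dR}}(A/\mathbb{C})$ is the image of $\mathrm{H}^1(A,\mathbb{Z})(1)\cong \mathrm{H}_1(A^{\vee},\mathbb{Z})$ under the comparison, not of $\mathrm{H}_1(A,\mathbb{Z})$; the Tate twist in the statement is exactly what makes the relevant de Rham data $\qbar$-rational, so it cannot be elided. Second, the $\qbar$-subspace fed to the analytic subgroup theorem is not a subspace ``cut out by $\alpha$'' in $\mathrm{H}^1_{\mathrm{dR}}\oplus\mathrm{H}^1_{\mathrm{dR}}$, but the graph of the $\qbar$-linear map that $\alpha$ induces inside $\mathrm{Lie}(E(A)\times E(A^{\vee}))$ (or $E(A^{\vee})^2$ after a polarization identification); and the theorem only hands you, for each lattice period, an algebraic subgroup whose Lie algebra sits inside that graph. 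One must still argue that, running over all periods, the resulting subgroup has Lie algebra equal to the whole graph (because the Betti lattice spans the de Rham space over $\mathbb{C}$) and meets $\{0\}\times E(A^{\vee})$ in a finite group (because the graph meets $0\oplus\mathrm{Lie}\,E(A^{\vee})$ trivially), so that it is the graph of an isogeny-level homomorphism inducing $\alpha$. These are exactly the details you flagged as ``bookkeeping,'' and they are where the actual work lives, but the strategy you propose is the correct one.
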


\subsection{Algebraic Tori}\label{algtori}
In this section we summarize some basic properties of algebraic tori. The main reference is \cite[Chapter 14]{milne2014algebraic}. 
\begin{definition}
An algebraic group $T$ defined over $\mathbb{Q}$ is a (connected) \textit{$\mathbb{Q}$-algebraic torus} if $T_{\qbar}$ is isomorphic to a finite product of $\mathbb{G}_{m,\qbar}$.
\end{definition}
It is essential to study the character group of a $\mathbb{Q}$-algebraic torus, which we now define.
\begin{definition}\label{chardefn}
  Given an algebraic torus $T$, we define its \textit{character group} to be $\mathrm{Hom}(T_{\qbar},\mathbb{G}_{m,\qbar})$. We denote it by $X^{*}(T)$. It is a finite rank free $\mathbb{Z}$-module.  
\end{definition}
\begin{remark}
By definition, we have $T_{\qbar}\cong {\displaystyle \prod_{i=1}^{r} \mathbb{G}_{m,\qbar}}$. Fixing a coordinate $t_{i}$ for each $\mathbb{G}_{m,\qbar}$ in the product, then an element $f\in \mathrm{Hom}(T_{\qbar},\mathbb{G}_{m,\qbar})$ is of the form $f(t)={\displaystyle \prod_{i=1}^{r}t_{i}^{n_{i}}}$ with $n_{i} \in \mathbb{Z}$. The rank of $X^{*}(T)$ is $r$. 

\end{remark}

The finite rank $\mathbb{Z}$-module $X^{*}(T)$ contains more information. Namely, it admits an action of the absolute Galois group $\mathrm{Gal}(\qbar/\mathbb{Q})$. The action can be described as follows. Because both $\gmmath$ and $T$ are defined over $\mathbb{Q}$, an element $g \in \mathrm{Gal}(\qbar/\mathbb{Q})$ induces morphisms of algebraic groups $g_{T}: T_{\qbar} \rightarrow T_{\qbar}$ and $g_{0}: \mathbb{G}_{m,\qbar} \rightarrow \mathbb{G}_{m,\qbar}$.  Then for a character $f \in X^{*}(T)$, $g\circ f$ is equal to the composition $g_{0} \circ f \circ g_{T}^{-1}\in X^{*}(T)$. By \cite[Chapter 14.f]{milne2014algebraic} this is a continuous group action on the left where we equip $\mathrm{Gal}(\qbar/\mathbb{Q})$ with the Krull topology and $X^{*}(T)$ with the discrete topology. 

\begin{theorem}[Theorem 14.17, \cite{milne2014algebraic}] \label{chartorus}
 Taking the character group $X^{*}$ defines a contravariant functor from the category of $\mathbb{Q}$-algebraic tori to the category of finite rank free $\mathbb{Z}$-modules equipped with a continuous $\mathrm{Gal}(\overline{\mathbb{Q}}/\mathbb{Q})$-action. This is an equivalence of categories. Moreover, under this equivalence, exact sequences of algebraic tori correspond to exact sequences of $\mathrm{Gal}(\overline{\mathbb{Q}}/\mathbb{Q})$-modules.
     
\end{theorem}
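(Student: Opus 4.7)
The plan is to prove this via Galois descent, reducing the classification of $\mathbb{Q}$-tori to the classification of split tori over $\qbar$ together with compatible Galois data. First I would verify functoriality (immediate from the definition) and continuity of the $\mathrm{Gal}(\qbar/\mathbb{Q})$-action on $X^*(T)$: any $\mathbb{Q}$-torus $T$ splits over some finite Galois extension $L/\mathbb{Q}$, since a chosen isomorphism $T_{\qbar} \cong \mathbb{G}_{m,\qbar}^r$ and a finite generating set of characters are all defined over some finitely generated subfield of $\qbar$, which we may enlarge to a finite Galois extension. Then $\mathrm{Gal}(\qbar/L)$ acts trivially on $X^*(T)$, so the action factors through the finite quotient $\mathrm{Gal}(L/\mathbb{Q})$, establishing continuity.

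To construct a quasi-inverse, given a finite rank free $\mathbb{Z}$-module $M$ with continuous Galois action, pick $L/\mathbb{Q}$ finite Galois through which the action factors. Form the group algebra $L[M]$, which is a Hopf algebra over $L$, and equip it with a $\mathrm{Gal}(L/\mathbb{Q})$-action combining the Galois action on coefficients with the given action on $M$. Let $A := L[M]^{\mathrm{Gal}(L/\mathbb{Q})}$; standard Galois descent for Hopf algebras yields that $A$ is a Hopf algebra over $\mathbb{Q}$ with $A \otimes_{\mathbb{Q}} L \cong L[M]$, so $T_M := \mathrm{Spec}(A)$ is a $\mathbb{Q}$-torus that splits over $L$. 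A direct computation then shows $X^*(T_M) \cong M$ as Galois modules, and the evaluation map $T \to T_{X^*(T)}$ is a natural isomorphism.

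The main obstacle I expect is verifying full faithfulness of $X^*$ on morphisms. For $\mathbb{Q}$-tori $T$ and $T'$, the induced map $\mathrm{Hom}_{\mathbb{Q}}(T, T') \to \mathrm{Hom}_{\mathrm{Gal}}(X^*(T'), X^*(T))$ is injective because a homomorphism of affine group schemes is determined by its effect on the coordinate ring, and surjective because any Galois-equivariant homomorphism of character modules extends by $L$-linearity to a $\mathrm{Gal}(L/\mathbb{Q})$-equivariant morphism of $L$-Hopf algebras $L[X^*(T')] \to L[X^*(T)]$, which descends by Galois descent to a morphism of $\mathbb{Q}$-Hopf algebras. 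Finally, for the exactness statement, a short exact sequence $1 \to T' \to T \to T'' \to 1$ of $\mathbb{Q}$-tori becomes, after base change to $\qbar$, a short exact sequence of split tori in which the closed immersion $T'_{\qbar} \hookrightarrow T_{\qbar}$ admits a direct complement (an elementary fact reducing to Smith normal form, using that $X^*(T'_{\qbar})$ is a free quotient of $X^*(T_{\qbar})$); this yields a split short exact sequence of free $\mathbb{Z}$-modules $0 \to X^*(T'') \to X^*(T) \to X^*(T') \to 0$ that is automatically Galois-equivariant, and the converse direction follows from applying the quasi-inverse construction to short exact sequences of Galois modules.
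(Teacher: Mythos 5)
The paper does not prove this statement at all; it is quoted directly from Milne's book (Theorem 14.17 of the cited reference), so there is no in-paper argument to compare against. Your Galois-descent proof — continuity via splitting over a finite Galois extension, the quasi-inverse $M \mapsto \mathrm{Spec}\,\bigl(L[M]^{\mathrm{Gal}(L/\mathbb{Q})}\bigr)$, full faithfulness by descent of Hopf-algebra morphisms, and exactness checked after base change to the split case — is essentially the standard argument given in that reference and is correct.
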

An essential example of a non-split $\mathbb{Q}$-algebraic torus is $T=\mathrm{Res}_{E/\mathbb{Q}}\gmmath$, where $E$ is a number field. We give a description of the $\mathrm{Gal}(\qbar/\mathbb{Q})$ structure of $X^{*}(T)$. More details can be found in \cite[Chapter 14]{milne2014algebraic}.

\begin{example} \label{firstexample}
Recall that $T=\mathrm{Res}_{E/\mathbb{Q}}\gmmath$ is a $\mathbb{Q}$-algebraic torus such that for a $\mathbb{Q}$-algebra $R$, we have that $\mathrm{Res}_{E/\mathbb{Q}}\gmmath(R)=(E\otimes_{\mathbb{Q}}R)^{*}$. In particular, we have that $$\mathrm{Res}_{E/\mathbb{Q}}\gmmath(\qbar)=\prod_{\mathrm{Hom}(E,\overline{\mathbb{Q}})}\qbar^{\times}$$
Then one can identify $X^{*}(T)$ with $\mathbb{Z}^{\mathrm{Hom}(E,\overline{\mathbb{Q}})}$, which has a $\mathbb{Z}$-basis labeled by the set of embeddings of $E$ into $\overline{\mathbb{Q}}$. Under this identification, an element $$(n_{\sigma})_{\sigma\in \mathrm{Hom}(E,\qbar)}\in \mathbb{Z}^{\mathrm{Hom}(E,\overline{\mathbb{Q}})}$$ is sent to $$f \in \mathrm{Hom}(\resgmmath(\qbar),\mathbb{G}_{m}(\qbar)):(t_{\sigma})_{\sigma\in \mathrm{Hom}(E,\qbar)} \rightarrow \prod_{\sigma\in \mathrm{Hom}(E,\qbar)}t_{\sigma}^{n_{\sigma}}$$ Note that $\absgalois$
acts on the left on $\mathrm{Hom}(E,\overline{\mathbb{Q}})$, and hence this induces a left group action on $X^{*}(\resgmmath)$ by permuting the labeled basis for $$X^{*}(\resgmmath)\cong\mathbb{Z}^{\mathrm{Hom}(E,\overline{\mathbb{Q}})}$$ Let $L$ be the Galois closure of $E/\mathbb{Q}$ in $\overline{\mathbb{Q}}$.
We then note that $T_{L}$ already splits as a product of $\mathbb{G}_{\mathrm{m},L}$. Hence the action of $\absgalois$ on  $X^{*}(\resgmmath)$ in fact descends to the action of $\galoisL$ on $\mathbb{Z}^{\mathrm{Hom}(E,L)}$. 
\end{example}
Now suppose $E$ is a CM-field (see Definition \ref{CMdefinitions}). Another important example of a $\mathbb{Q}$-algebraic torus is $\mathrm{U}_E$ which can be viewed naturally a algebraic subtorus of $\resgmmath$.
\begin{example}\label{uexample}
Suppose $E$ is a CM-field, then $\mathrm{U}_E$ is an algebraic torus defined over $\mathbb{Q}$ such that for a $\mathbb{Q}$-algebra $R$, we have $$\mathrm{U}_E(R)=\{x \in (E\otimes_{\mathbb{Q}}R)^{*}|x\overline{x}=1\}$$ where the complex conjugation on $E$ extends $\mathbb{Q}$-linearly to $E\otimes_{\mathbb{Q}}R$. Then $$\mathrm{U}_E(\qbar)=\{(z_{\sigma})_{\sigma \in \mathrm{Hom}(E,\qbar)}\in\prod_{\mathrm{Hom}(E,\overline{\mathbb{Q}})}\qbar^{\times}|z_{\sigma}z_{\overline{\sigma}}=1\} \subset \resgmmath(\qbar)$$ 
Using the explicit description of the Galois action on $X^{*}(\resgmmath)$ from Example \ref{firstexample}, one can see that for any element $$f \in X^{*}(\resgmmath)=\mathrm{Hom}(\resgmmath(\qbar),\mathbb{G}_{m}(\qbar))$$ and for any element $x\in \mathrm{U}_{E}(\qbar)$ we have that $$f(x)=\overline{f}(x)^{-1}$$ where the complex conjugation on $f$ is induced by the action of $\absgalois$ on $X^{*}(\resgmmath)$. Now we denote by $M$ the $\mathbb{Z}$-submodule of $X^{*}(\resgmmath)$ generated by elements of the form $m+\overline{m}, m\in X^{*}(\resgmmath)$. Because $E$ is a CM-field, by Lemma \ref{comjugationcommuting}, the action of the complex conjugation in $\absgalois$ on $\mathrm{Hom}(E,\qbar)$ commutes with the action of every element of $\absgalois$ on $\mathrm{Hom}(E,\qbar)$. Therefore we have that $M$ is in fact a $\absgalois$ submodule. One can then show that
$X^{*}(\mathrm{U}_E)$ is precisely the quotient of $X^{*}(\resgmmath)$ by $M$. In particular the complex conjugation in $\absgalois$ acts as scalar multiplication by -1 on $X^{*}(\mathrm{U}_E)$.   \end{example}

\subsection{Simple Abelian Varieties with Complex Multiplication}\label{cmintrosection}
In this section we give a brief introduction to simple abelian varieties with complex multiplication. The main reference is \cite[Chapter 1.1-1.2]{milne2006complex}. Along the way, we collect some well-known properties of the Hodge structure associated with an abelian variety whose endomorphism algebra contains a number field.

We start by recalling some basic and well known properties of a CM-field.
\begin{definition} [Proposition 1.1.4, \cite{milne2006complex}]\label{CMdefinitions}
A number field $E$ is a \textit{CM-field} if one of the following three equivalent definitions holds true
\begin{enumerate}
    \item $E$ is a totally imaginary degree two extension of a totally real subfield $F$ of $E$ i.e. there is no embedding of $E$ into $\mathbb{R}$ but every embedding from $F$ into $\mathbb{C}$ lies inside $\mathbb{R}$
    \item There exists a unique nontrivial field automorphism $\tau \in \mathrm{Aut}(E/\mathbb{Q})$ such that for any embedding $i: E \rightarrow \mathbb{C}$ and any $e\in E$, we have $\overline{i(e)}=i(\tau e)$
    \item $E=F[x]/(x^2-\alpha)$ where $F$ is a totally real number field and $\alpha$ is an element of $F$ such that for every embedding $i: F \xhookrightarrow{} \mathbb{R}$, $i(\alpha)<0$.
\end{enumerate} We will call $\tau$ the complex conjugation on $E$.
\end{definition}

\begin{proof}
See the proof of \cite[Proposition 1.1.4]{milne2006complex} for a proof that the above three definitions are equivalent to each other.  
\end{proof}

\begin{lemma} \label{comjugationcommuting}
    For a CM-field $E$, its complex conjugation commutes with any other field automorphism of $E$.
\end{lemma}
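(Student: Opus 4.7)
The plan is to use characterization (2) of a CM-field in Definition \ref{CMdefinitions}: the complex conjugation $\tau \in \mathrm{Aut}(E/\mathbb{Q})$ is the unique nontrivial automorphism satisfying $\overline{i(e)} = i(\tau(e))$ for every embedding $i: E \to \mathbb{C}$ and every $e \in E$. Given any other $\sigma \in \mathrm{Aut}(E/\mathbb{Q})$, the plan is to show that $\sigma \tau \sigma^{-1}$ satisfies the same defining property and therefore, by uniqueness, must equal $\tau$.

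Concretely, fix an arbitrary embedding $i: E \to \mathbb{C}$ and an element $x \in E$. Because $\sigma$ is a $\mathbb{Q}$-automorphism of $E$, the composition $i \circ \sigma: E \to \mathbb{C}$ is itself an embedding, so the characterization of $\tau$ applies to $i \circ \sigma$ and to the element $\sigma^{-1}(x)$. This gives
\[
i\bigl(\sigma(\tau(\sigma^{-1}(x)))\bigr) = (i\sigma)\bigl(\tau(\sigma^{-1}(x))\bigr) = \overline{(i\sigma)(\sigma^{-1}(x))} = \overline{i(x)} = i(\tau(x)).
\]
Since $i$ is injective, $\sigma\tau\sigma^{-1}(x) = \tau(x)$ for every $x \in E$, so $\sigma\tau\sigma^{-1} = \tau$, which is equivalent to $\sigma \tau = \tau \sigma$.

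There is no real obstacle here; the only point requiring care is observing that a field automorphism of a number field automatically fixes $\mathbb{Q}$, so that postcomposing an embedding $E \to \mathbb{C}$ with $\sigma$ yields another embedding, allowing characterization (2) to be invoked. An alternative route would use characterization (1): let $F \subset E$ be the maximal totally real subfield, show that $\sigma(F) = F$ (because composing any embedding $F \to \mathbb{C}$ with $\sigma|_F$ yields another real embedding of $F$), and conclude that $\sigma \tau \sigma^{-1}$ is an order-two element of $\mathrm{Gal}(E/F) = \{1,\tau\}$. I prefer the first route because it is completely uniform and avoids the intermediate step of verifying that $F$ is preserved.
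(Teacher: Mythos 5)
Your proof is correct, and it takes a genuinely different route from the one in the paper. You argue via characterization (2) of Definition \ref{CMdefinitions}: since $\tau$ intertwines with complex conjugation through \emph{every} embedding $E\to\mathbb{C}$, you may apply that property to the embedding $i\circ\sigma$ and to $\sigma^{-1}(x)$, and the one-line computation already gives $\sigma\tau\sigma^{-1}(x)=\tau(x)$ directly (you do not even need to invoke the uniqueness clause, and in any case $\sigma\tau\sigma^{-1}$ is nontrivial because $\tau$ is, so the uniqueness route would also close). The paper instead works with characterization (3), writing $E=F[x]/(x^2-\alpha)$ with $F$ totally real and $\alpha$ totally negative: it first shows that any automorphism $\sigma$ preserves $F$ and sends $x$ to $g''x$ with $g''\in F$ (both steps by contradiction, using embeddings and the fact that $i(x)$ is purely imaginary), and then checks $\sigma\circ\tau=\tau\circ\sigma$ on a general element $f_1+f_2x$. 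Your argument is shorter and completely uniform, at the price of using the ``for all embeddings'' characterization as a black box (which is legitimate here, since the paper cites Milne for the equivalence of the three characterizations); the paper's more computational proof has the side benefit of explicitly exhibiting how automorphisms interact with the totally real subfield and the chosen quadratic generator, structural information in the spirit of what is reused in Corollaries \ref{galoisclosureofCMisCM} and \ref{subofcm}. Your sketched alternative via $\sigma(F)=F$ and $\mathrm{Gal}(E/F)=\{1,\tau\}$ is essentially a streamlined version of the paper's route.
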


\begin{proof}
According to the third definition above $E=F[x]/(x^2-\alpha)$ with $F$ a totally real number field. We first prove that any field automorphism $\sigma$ of $E$ preserves the totally real subfield $F:=E^{\tau}$ and sends $x$ to $fx$ for some $f \in F-\{0\}$. Suppose $\sigma \in \mathrm{Aut}(E/\mathbb{Q})$ sends an element $f\in F-\{0\}$ to $f'+f''x \in E-F$ where $f'' \neq 0$. Then for any embedding $$i: E \xhookrightarrow{} \mathbb{C}$$ precomposing it with $\sigma$ we obtain another embedding $$i'=i \circ \sigma: E \xhookrightarrow{} \mathbb{C}$$ Then we have that $i'(f)=i(f')+i(f'')i(x) \in \mathbb{C}$. Now $i(f'), i(f'') \in \mathbb{R}$ because $F$ is a totally real field and $i(x)$ is a purely imaginary number in $\mathbb{C}$ because $i(\alpha)<0$. Therefore $i'(f) \in \mathbb{C}-\mathbb{R}$, which gives a contradiction.

Suppose now $x$ is sent to $g'+g''x \in E$ by $\sigma$ where $g',g''\in F$. Then note that $\sigma(x)^2=g^{'2}+g^{''2}\alpha+2g'g''x=\sigma(\alpha) \in F$, hence either $g'=0$ or $g''=0$. Suppose $g''=0$, i.e. $\sigma(x)=g'$. Then for any embedding $i: E \xhookrightarrow{} \mathbb{C}$, precomposing it with $\sigma$ we obtain another embedding $i'=i \circ \sigma: E \xhookrightarrow{} \mathbb{C}$. But then for any element $e=f_1+f_2x \in E$, we have $i'(e)=i(\sigma(f_1))+i(\sigma(f_2))g' \in \mathbb{R}$, which contradicts the condition that $E$ does not admit an embedding into $\mathbb{R}$. Hence $\sigma(x)=g''x$ for some $g'' \in F$.

Denote by $\tau$ the complex conjugation on $E$. For any field automorphism $\sigma \in \mathrm{Aut}(E/\mathbb{Q})$ and any element $e=f_1+f_2 x$ in $E$, we have $\tau \circ \sigma(e)=\tau(\sigma(f_1)+\sigma(f_2)g''x)=\sigma(f_1)-\sigma(f_2)g''x$ while $\sigma \circ \tau(e)=\sigma(f_1-f_2x)=\sigma(f_1)-\sigma(f_2)g''x$. Hence $\sigma \circ \tau=\tau \circ \sigma$, and therefore the complex conjugation on $E$ commute with every field automorphism of $E$.
\end{proof}

\begin{corollary}\label{galoisclosureofCMisCM}
    The splitting closure $L$ of a CM-field $E/\mathbb{Q}$ in $\qbar$ is also a CM-field and moreover the totally real subfield $L^{\tau}$ is also a Galois extension over $\mathbb{Q}$. 
\end{corollary}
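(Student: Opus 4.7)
The plan is to produce the totally real subfield of $L$ explicitly and to verify both assertions in one stroke. Using the third characterization in Definition \ref{CMdefinitions}, I would write $E=F(\sqrt{\alpha})$, where $F$ is the totally real subfield of $E$ and $\alpha\in F$ is totally negative. Let $F'$ denote the Galois closure of $F$ inside $\qbar$. Since every embedding of the totally real field $F$ into $\qbar$ lands in $\mathbb{R}$, the compositum $F'$ lies in $\mathbb{R}$, hence $F'$ is itself totally real and Galois over $\mathbb{Q}$. Enumerate the $\absgalois$-conjugates of $\alpha$ inside $F'$ as $\alpha_1=\alpha,\alpha_2,\dots,\alpha_m$; each $\alpha_i$ remains totally negative in $F'$ since total negativity is preserved under every embedding into $\mathbb{R}$. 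A short check would then identify
\[
L=F'\bigl(\sqrt{\alpha_1},\dots,\sqrt{\alpha_m}\bigr):
\]
the inclusion ``$\supset$'' holds because the right-hand side contains $E=F(\sqrt{\alpha_1})$ and is stable under $\absgalois$ (Galois permutes the $\alpha_i$, and hence the $\sqrt{\alpha_i}$ up to sign); ``$\subset$'' holds because for each embedding $\sigma_i\colon E\hookrightarrow\qbar$ one has $\sigma_i(E)=\sigma_i(F)(\sqrt{\sigma_i(\alpha)})\subset L$, so $L$ contains $F'$ together with all the $\sqrt{\alpha_i}$.

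I would next set
\[
L_0:=F'\bigl(\sqrt{\alpha_i\alpha_j}:1\le i,j\le m\bigr)\subset L.
\]
Because each product $\alpha_i\alpha_j$ is totally positive in $F'$ (a product of two totally negative elements), the field $L_0$ is a totally real extension of $F'$, hence totally real. The identity $(\sqrt{\alpha_1\alpha_i})^{2}=\alpha_1\alpha_i$ yields $\sqrt{\alpha_i}=\pm\sqrt{\alpha_1\alpha_i}/\sqrt{\alpha_1}\in L_0(\sqrt{\alpha_1})$, so $L=L_0(\sqrt{\alpha_1})$, and $\sqrt{\alpha_1}\notin L_0$ since $L_0$ is totally real while $\alpha_1$ is totally negative. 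Hence $[L:L_0]=2$, and for any real embedding of $L_0$ the image of $\alpha_1$ is negative, so neither of its square roots is real, showing that $L/L_0$ is totally imaginary. This exhibits $L$ as a CM-field with totally real subfield $L_0$; a dimension count then forces $L_0=L\cap\mathbb{R}=L^{\tau}$, yielding the first assertion.

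For the second assertion I would invoke the following general principle: a totally real subfield of a Galois extension of $\mathbb{Q}$ is automatically Galois-stable. Indeed, for any $\sigma\in\mathrm{Gal}(L/\mathbb{Q})$ the restriction $\sigma|_{L_0}\colon L_0\to L\subset\mathbb{C}$ is an embedding of the totally real field $L_0$ into $\mathbb{C}$, and total reality forces the image to land in $\mathbb{R}$, so $\sigma(L_0)\subset L\cap\mathbb{R}=L_0$. Thus $L_0$ is stable under $\mathrm{Gal}(L/\mathbb{Q})$ and is therefore Galois over $\mathbb{Q}$. The only mildly delicate point in the whole argument is guessing the correct explicit form of $L_0$, which reduces to the sign bookkeeping that products $\alpha_i\alpha_j$ of totally negative conjugates are totally positive; Lemma \ref{comjugationcommuting} is not strictly needed here, though it does supply an alternative route by showing that complex conjugation is central in $\mathrm{Gal}(L/\mathbb{Q})$.
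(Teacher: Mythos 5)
Your proof is correct, but it takes a genuinely different route from the paper's. The paper handles the first assertion by citing \cite[Corollary 1.1.5]{milne2006complex}, and gets the second from Lemma \ref{comjugationcommuting}: complex conjugation $\tau$ is central in $\mathrm{Gal}(L/\mathbb{Q})$, so $\{\mathrm{id},\tau\}$ is a normal subgroup and $L^{\tau}$ is Galois. You instead make everything explicit: writing $E=F(\sqrt{\alpha})$ with $\alpha$ totally negative, you identify $L=F'(\sqrt{\alpha_1},\dots,\sqrt{\alpha_m})$ and exhibit its maximal totally real subfield $L_0=F'(\sqrt{\alpha_i\alpha_j})$ via the sign bookkeeping that products of totally negative conjugates are totally positive, getting $[L:L_0]=2$ with $L$ totally imaginary; normality of $L_0$ then follows because every $\sigma\in\mathrm{Gal}(L/\mathbb{Q})$ carries $L_0$ into $L\cap\mathbb{R}=L_0$. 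What your approach buys is self-containment (no external citation) and an explicit radical description of $L$ and $L^{\tau}$; what the paper's buys is brevity and re-use of the centrality of $\tau$, which it needs elsewhere anyway (e.g. in Example \ref{uexample} and Setup \ref{setupcontinued}).

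Two presentational points, neither a genuine gap. First, your labels ``$\supset$'' and ``$\subset$'' for the two inclusions in $L=F'(\sqrt{\alpha_1},\dots,\sqrt{\alpha_m})$ are swapped relative to the justifications you attach to them: Galois-stability of the right-hand side together with $E\subset F'(\sqrt{\alpha_1},\dots,\sqrt{\alpha_m})$ gives $L\subset F'(\sqrt{\alpha_1},\dots,\sqrt{\alpha_m})$, while running over the embeddings $\sigma_i$ of $E$ gives the reverse inclusion; both directions are in fact proved. Second, the ``general principle'' you invoke --- that a totally real subfield of a Galois extension of $\mathbb{Q}$ is automatically Galois-stable --- is false as stated (a totally real Galois field can contain non-normal totally real subfields); what your displayed argument actually proves, and all you need, is that the \emph{maximal} real subfield $L\cap\mathbb{R}$ is preserved by $\mathrm{Gal}(L/\mathbb{Q})$, and you have already verified $L\cap\mathbb{R}=L_0$ from $[L:L_0]=2$ and the total imaginarity of $L$, so the application is sound.
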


\begin{proof}
    See \cite[Corollary 1.1.5]{milne2006complex} for a proof of the first statement. For the second one, note that $F=L^{\tau}$ but $\{\mathrm{id},\tau\}$ is a normal subgroup of $G=\mathrm{Gal}(L/\mathbb{Q})$ because $\tau$ commutes with every element of $G$.
\end{proof}

\begin{corollary} \label{subofcm}
    A subfield of a CM-field is either a CM-field or a totally real field.
\end{corollary}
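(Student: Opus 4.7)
The plan is to let $K$ be any subfield of the CM-field $E$, denote by $\tau$ the complex conjugation on $E$ and by $F = E^{\tau}$ the totally real subfield, and split the argument based on the restriction $\tau|_K$.

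First I would handle the case $\tau|_K = \mathrm{id}$. Here $K \subseteq F$, and since any embedding $i : K \hookrightarrow \mathbb{C}$ extends to an embedding $\tilde{i} : F \hookrightarrow \mathbb{C}$ which lands in $\mathbb{R}$ by the assumption that $F$ is totally real, the image $i(K)$ sits inside $\mathbb{R}$. Hence $K$ is totally real, giving the second alternative of the corollary.

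Next I would handle the case $\tau|_K \neq \mathrm{id}$, aiming to verify definition (1) of Definition \ref{CMdefinitions} for $K$. Set $K_0 = K^{\tau|_K} = K \cap F$. Then $K_0 \subseteq F$ is totally real by the argument in the previous paragraph, and $[K : K_0] = 2$ since $\tau|_K$ is a nontrivial involution on $K$. It remains to show that $K$ is totally imaginary. Given any embedding $i : K \hookrightarrow \mathbb{C}$, I would extend it to $\tilde{i} : E \hookrightarrow \mathbb{C}$; the defining property of $\tau$ from part (2) of Definition \ref{CMdefinitions} then yields $\overline{\tilde{i}(e)} = \tilde{i}(\tau e)$ for every $e \in E$. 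If we assumed $i(K) \subseteq \mathbb{R}$, then for every $k \in K$ we would have $\tilde{i}(k) = \overline{\tilde{i}(k)} = \tilde{i}(\tau k)$, and injectivity of $\tilde{i}$ would force $k = \tau k$ for all $k \in K$, contradicting $\tau|_K \neq \mathrm{id}$.

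The proof is essentially a routine case analysis based on the behaviour of the complex conjugation on $E$ when restricted to $K$, and I do not expect any serious obstacle. The only subtlety to be mindful of is that ``totally imaginary'' must be verified across \emph{every} embedding of $K$ into $\mathbb{C}$; this is precisely why extending an arbitrary embedding $i$ of $K$ to an embedding $\tilde{i}$ of $E$ and invoking the intrinsic characterization $\overline{\tilde{i}(e)} = \tilde{i}(\tau e)$ from Definition \ref{CMdefinitions} is the right tool.
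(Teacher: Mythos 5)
There is a genuine gap at the very first step: your case split on $\tau|_K$ presupposes that the complex conjugation $\tau$ of $E$ restricts to an automorphism of $K$, i.e.\ that $\tau(K)=K$. This is not automatic. A subfield $K$ of $E$ need not be normal over $\mathbb{Q}$, and an automorphism of a larger field can in principle move a non-normal subfield off itself; a priori $\tau$ could map $K$ onto a different, merely isomorphic, subfield $\tau(K)\subseteq E$. Your ``totally imaginary'' verification survives this (if some embedding put $K$ inside $\mathbb{R}$, then indeed $\tau$ would fix $K$ pointwise), but the structural step in the second case does not: the claims that $\tau|_K$ is ``a nontrivial involution on $K$'' and hence that $[K:K_0]=2$ with $K_0=K\cap F$ use Artin's theorem for the subgroup $\langle\tau|_K\rangle\subseteq\mathrm{Aut}(K)$, which only makes sense once $\tau(K)=K$ is known. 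Without that, you have not verified alternative (1) of Definition \ref{CMdefinitions} for $K$.

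This stability of $K$ under conjugation is exactly the content the paper's proof supplies before running essentially your case analysis: it first replaces $E$ by its Galois closure $L$ in $\qbar$, which is again a CM-field by Corollary \ref{galoisclosureofCMisCM}, writes $K=L^{H}$ for a subgroup $H\subseteq\mathrm{Gal}(L/\mathbb{Q})$, and uses Lemma \ref{comjugationcommuting} (complex conjugation commutes with every element of $\mathrm{Gal}(L/\mathbb{Q})$) to conclude $\tau(L^{H})=L^{\tau H\tau^{-1}}=L^{H}=K$. After that, the dichotomy ``$\tau|_K=\mathrm{id}$ gives $K$ totally real, $\tau|_K\neq\mathrm{id}$ gives $K$ CM'' proceeds as you describe (the paper verifies characterization (2) of Definition \ref{CMdefinitions} by extending embeddings of $K$ to $L$, which parallels your extension argument). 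So your proposal becomes correct once you insert the Galois-closure/centrality step establishing $\tau(K)=K$; as written, that key point is assumed rather than proved.
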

\begin{proof}
    By Corollary \ref{galoisclosureofCMisCM}, it suffices to show any subfield $K$ of a Galois CM-field $L$ is either totally real or CM. Since the complex conjugation on $L$ commute with every element of $\mathrm{Gal}(L/\mathbb{Q})$ by Lemma \ref{comjugationcommuting}, and $K$ is of the form $L^{H}$ for some subgroup $H$ of $\mathrm{Gal}(L/\mathbb{Q})$, the complex conjugation $\tau$ on $L$ preserves the field $K$. Then we claim that $\tau$ restricted to $K$ is either a complex conjugation on $K$ or is the identity on $K$. Note that given any embedding $i':K \rightarrow \mathbb{C}$, it factors through an embedding $i: L \rightarrow \mathbb{C}$ because $L/K$ is a Galois extension. Hence for any $k \in K \subset L$, we have $\overline{i'(k)}=\overline{i(k)}=i(\tau(k))=i'(\tau|_{K}(k))$. Therefore if $\tau|_{K}$ is not the identity, then $K$ is a CM-field according to Definition \ref{CMdefinitions}. If $\tau|_{K}$ is equal to the identity, then $K$ lies inside the totally real Galois subfield $L^{\tau}$. This implies that $K$ is totally real since any embedding of $K$ into $\mathbb{C}$ factors through the embedding of $L^{\tau}$ into $\mathbb{C}$.
\end{proof}
We now recall the definition of a simple CM abelian variety. 

\begin{definition}\label{cmavdefinition}
Given a simple abelian variety $A$ and a CM-field $E$ with $\mathrm{deg}(E/\mathbb{Q})=2\mathrm{dim}(A)$, we say $A$ is a \textit{simple abelian variety with complex multiplication by $E$} or a \textit{simple CM abelian variety} if $E \cong\mathrm{End}^{\circ}(A)$ as $\mathbb{Q}$-algebras.
\end{definition}

The datum of a simple CM abelian variety $A$ with complex multiplication $E$ induces a morphism of $\mathbb{Q}$-algebras
\begin{equation*}
    i: E \xhookrightarrow{} \mathrm{End}(\mathrm{H}^1(A,\mathbb{Q}))
\end{equation*} and the image of $i$ preserves the weight 1 Hodge structure on $\mathrm{H}^1(A,\mathbb{Q})$. This gives rise to the notion of CM-type (see Definition \ref{cmtypedefn}) on $\mathrm{Hom}(E,\mathbb{C})$. To explain how this comes about, we will put things in a more general setting for usage in later sections. In Setup \ref{firstkeysetup} we will explore properties of the Hodge structure associated with an abelian variety whose endomorphism algebra contains a number field.
\begin{setup}\label{firstkeysetup}
Let $A$ be an abelian variety and $E$ be a number field. Denote the Galois closure $E/\mathbb{Q}$ in $\qbar$ by $L$. We denote $\mathrm{H}^1(A,\mathbb{Q})$ by $V$.    

Suppose we have a morphism of $\mathbb{Q}$-algebras
\begin{equation*}
    i: E \xhookrightarrow{} \mathrm{End}_{\mathrm{Hdg}}(V)
\end{equation*} 
Then we have the following eigenspace decomposition induced by $i$.
\begin{lemma}\label{neweigenspacetranslate}
We keep the same notations as above. Then we have \begin{equation}\label{eigendecomp}V\otimes_{\mathbb{Q}} L=\bigoplus_{\sigma \in \mathrm{Hom}(E,L)}V_{\sigma}\end{equation} where $V_{\sigma}$'s are $L$-vector spaces such that for any element $v \in V_{\sigma}$, we have $e \circ v=\sigma(e)v$. Moreover, $V_{\sigma}$s have dimensions equal to $\frac{\mathrm{dim}(V)}{\mathrm{deg}(E/\mathbb{Q})}$.
\end{lemma}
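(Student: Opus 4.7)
The plan is to view $V$ as a module over $E$ via $i$ and then base-change to $L$, exploiting the fact that since $L$ contains all embeddings of $E$, the algebra $E\otimes_\mathbb{Q} L$ splits as a product of copies of $L$ indexed by $\mathrm{Hom}(E,L)$. Because $E$ is a field, the $E$-module structure makes $V$ an $E$-vector space of dimension $d:=\dim_\mathbb{Q}(V)/[E:\mathbb{Q}]$; this dimension is an integer precisely because $E\subset\mathrm{End}_{\mathrm{Hdg}}(V)$ acts faithfully on $V$.

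The key structural input is the canonical isomorphism of $L$-algebras
\[
E\otimes_\mathbb{Q} L \;\xrightarrow{\;\sim\;}\; \prod_{\sigma\in\mathrm{Hom}(E,L)} L, \qquad e\otimes \ell \;\longmapsto\; (\sigma(e)\ell)_\sigma,
\]
which holds because $L/\mathbb{Q}$ is Galois and contains $E$, so $E\otimes_\mathbb{Q} L$ is an \'etale $L$-algebra of rank $[E:\mathbb{Q}]$ with exactly $[E:\mathbb{Q}]$ distinct $L$-algebra maps to $L$. Writing $e_\sigma\in E\otimes_\mathbb{Q} L$ for the preimage of the idempotent supported at $\sigma$, the $e_\sigma$ form a complete system of orthogonal idempotents summing to $1$.

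Now $V\otimes_\mathbb{Q} L$ is naturally a module over $E\otimes_\mathbb{Q} L$, so these idempotents induce a direct sum decomposition
\[
V\otimes_\mathbb{Q} L \;=\; \bigoplus_{\sigma\in\mathrm{Hom}(E,L)} V_\sigma, \qquad V_\sigma := e_\sigma\cdot(V\otimes_\mathbb{Q} L).
\]
By construction, on $V_\sigma$ the element $e\otimes 1$ acts by the same scalar that $e\otimes 1$ acts by on the $\sigma$-component of $E\otimes_\mathbb{Q} L$, namely $\sigma(e)$; this verifies the eigenspace property.

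Finally, for the dimension count I would argue as follows. Since $V$ is free of rank $d$ over $E$, the $L$-vector space $V\otimes_\mathbb{Q} L = V\otimes_E(E\otimes_\mathbb{Q} L)$ is free of rank $d$ over $E\otimes_\mathbb{Q} L\cong\prod_\sigma L$, and multiplying by the idempotent $e_\sigma$ picks out a free $L$-module of rank $d$. Thus $\dim_L V_\sigma = d = \dim_\mathbb{Q}(V)/[E:\mathbb{Q}]$, as required. There is no serious obstacle here; the only subtle point worth stating explicitly is that freeness of $V$ over $E$ uses only that $E$ is a field and that the action of $E$ on $V$ is $\mathbb{Q}$-linear (which is part of $E\hookrightarrow\mathrm{End}_{\mathrm{Hdg}}(V)\subset\mathrm{End}_\mathbb{Q}(V)$), so no appeal to the Hodge structure is needed for this lemma — the Hodge-theoretic compatibility of $i$ will only matter when the decomposition is later refined into Hodge components.
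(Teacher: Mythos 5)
Your proof is correct, and the first half (the decomposition itself) is essentially the paper's argument: both exploit the splitting $E\otimes_{\mathbb{Q}}L\cong\prod_{\sigma\in\mathrm{Hom}(E,L)}L$, whether phrased via the induced $L$-algebra map into $\mathrm{End}(V\otimes_{\mathbb{Q}}L)$ (the paper) or via the system of orthogonal idempotents $e_\sigma$ (you); these are the same decomposition. Where you genuinely diverge is the dimension count. The paper lets $\mathrm{Gal}(L/\mathbb{Q})$ act semilinearly on $V\otimes_{\mathbb{Q}}L$, checks that $g$ carries $V_\sigma$ to $V_{g\circ\sigma}$, and uses transitivity of the Galois action on $\mathrm{Hom}(E,L)$ to conclude the eigenspaces are equidimensional, hence each of dimension $\dim(V)/\deg(E/\mathbb{Q})$. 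You instead observe that $V$ is an $E$-vector space of dimension $d=\dim_{\mathbb{Q}}(V)/[E:\mathbb{Q}]$, write $V\otimes_{\mathbb{Q}}L\cong V\otimes_E(E\otimes_{\mathbb{Q}}L)$, and read off that each idempotent cuts out a free $L$-module of rank $d$. Your route is more direct and avoids Galois theory entirely for this lemma; the paper's route buys, as a byproduct, the fact that the Galois group permutes the eigenspaces $V_\sigma\mapsto V_{g\circ\sigma}$, which is used repeatedly later (e.g.\ in the descent defining the Weil structures), so the paper's choice is a matter of setting up that machinery rather than of necessity here. One cosmetic remark: the integrality of $d$ follows simply from $V$ being a vector space over the field $E$ (faithfulness is automatic for a unital action of a field), and your observation that the Hodge-compatibility of $i$ plays no role in this lemma matches the paper, whose proof likewise uses only the $\mathbb{Q}$-algebra embedding.
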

\begin{proof}
We first explain how to obtain the decomposition (\ref{eigendecomp}). The field structure on $E$ gives the following isomorphism of \textit{$\mathbb{Q}$-algebras}
\begin{equation}
    E\otimes_{\mathbb{Q}}L\cong \prod_{\sigma\in\mathrm{Hom}(E,L)}L
\end{equation} where a basic tensor $e\otimes l\in E\otimes_{\mathbb{Q}}L$ is sent to $\prod_{\sigma\in\mathrm{Hom}(E,L)}l(\sigma(e))$. Then the $L$-linear extension of $i$ induces a morphism of $L$-\textit{algebras}
$$i_{L}: \prod_{\sigma\in\mathrm{Hom}(E,L)}L\rightarrow \mathrm{End}(V\otimes_{\mathbb{Q}} L)$$ In particular, this induces the desired decomposition (\ref{eigendecomp}).

As for the second statement, note that $\mathrm{Gal}(L/\mathbb{Q})$ acts $\mathbb{Q}$-linearly on $V\otimes_{\mathbb{Q}} L$ by acting on the extended scalars. Given $g\in\mathrm{Gal}(L/\mathbb{Q})$, for any $v \in V_{\sigma}$ and $e \in E$ we have \begin{equation}\label{galoistranslateeq}
i(e) \circ (g \circ v)=g \circ (i(e) \circ v)=g \circ (\sigma(e)v)=(g \circ \sigma(e))(g\circ v)    
\end{equation} where the first equality holds because $i(e)$ is a linear transformation with $\mathbb{Q}$-coefficients. Therefore formula (\ref{galoistranslateeq}) implies that the $\mathbb{Q}$-linear transformation $g \in \mathrm{Gal}(L/\mathbb{Q})$ on $V \otimes_{\mathbb{Q}} L$ maps $V_{\sigma}$ to $V_{g \circ \sigma}$ and henceforth $g^{-1} \in \mathrm{Gal}(L/\mathbb{Q})$ maps $V_{g \circ \sigma}$ to $V_{\sigma}$. Therefore $V_{\sigma}$ and $V_{g \circ \sigma}$ are isomorphic as $\mathbb{Q}$-vector spaces. Hence they also have the same dimension as $L$-vector spaces. Since the action of $\mathrm{Gal}(L/\mathbb{Q})$ on $\mathrm{Hom}(E,L)$ is transitive and $V_{L}$ contains at least one nonzero eigenspace $V_{\sigma_0}$, the eigenspaces $V_{\sigma}$s have the same dimension. 
\end{proof}
Now we explore how the above eigenspace decomposition interacts with the Hodge decomposition on $V$. 
\begin{lemma}\label{qbarhodge}
 We keep the same notations as above. Each $V_{\sigma}$ is stable under the action of the $\mathrm{MT}(A)(\qbar)$.
\end{lemma}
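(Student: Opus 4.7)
The plan is to prove the stability by showing that $\mathrm{MT}(A)(\qbar)$ commutes with the endomorphisms $i(e)$ for $e\in E$, and then using the fact that $V_\sigma$ is by construction a simultaneous eigenspace for the family $\{i(e)\}_{e\in E}$ acting on $V\otimes_{\mathbb{Q}} L$ (hence on $V\otimes_{\mathbb{Q}}\qbar$ after extending scalars).

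First I would invoke the Tannakian characterization of the Mumford–Tate group from Definition \ref{tannakianformalism2}: $\mathrm{MT}(A)$ is the subgroup of $\mathrm{GL}(V)$ that fixes every Hodge class in $\langle V\rangle^{\otimes}$. Under the canonical identification $\mathrm{End}(V)\cong V\otimes V^{*}$, the subspace $\mathrm{End}_{\mathrm{Hdg}}(V)$ corresponds exactly to the Hodge classes of type $(0,0)$ inside $V\otimes V^{*}$. Since the hypothesis of Setup \ref{firstkeysetup} gives $i(E)\subset\mathrm{End}_{\mathrm{Hdg}}(V)$, each $i(e)$ is such a Hodge class, hence it is fixed by $\mathrm{MT}(A)$ under the natural action on $V\otimes V^{*}$. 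Translating this fixedness back to $\mathrm{End}(V)$, it says precisely that every element of $\mathrm{MT}(A)(\mathbb{Q})$—and therefore of $\mathrm{MT}(A)(\qbar)$ by extension of scalars—commutes with $i(e)$ for all $e\in E$.

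Next, I would upgrade this commutation to the extended action on $V\otimes_{\mathbb{Q}}\qbar$. The $\mathbb{Q}$-endomorphism $i(e)$ extends $\qbar$-linearly to $V\otimes_{\mathbb{Q}}\qbar$, and this extension still commutes with the $\qbar$-points of $\mathrm{MT}(A)$ because commutation is a closed polynomial condition stable under base change. Since $L\subset\qbar$, the decomposition \eqref{eigendecomp} further base-changes to $V\otimes_{\mathbb{Q}}\qbar=\bigoplus_{\sigma}(V_{\sigma}\otimes_{L}\qbar)$, and $V_{\sigma}\otimes_{L}\qbar$ is characterized as the simultaneous eigenspace where $i(e)$ acts as multiplication by $\sigma(e)$ for every $e\in E$.

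Finally, for $g\in\mathrm{MT}(A)(\qbar)$ and $v\in V_{\sigma}$, the commutation yields
\[
i(e)\bigl(g\cdot v\bigr)=g\cdot i(e)(v)=g\cdot\sigma(e)v=\sigma(e)\bigl(g\cdot v\bigr)
\]
for all $e\in E$, so $g\cdot v\in V_{\sigma}$, proving the stability claim. The conceptual step that does the real work is the first one: once one sees that Hodge endomorphisms are literally Hodge tensors fixed by $\mathrm{MT}(A)$, the rest is just bookkeeping with eigenspaces. The only mild subtlety to watch for is the passage between $\mathbb{Q}$-commutation and $\qbar$-commutation, which is immediate but should be mentioned explicitly since the eigenspace decomposition lives only after base change to $L$.
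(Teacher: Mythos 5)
Your proposal is correct and follows essentially the same route as the paper: the paper likewise observes that since $i(E)$ preserves the Hodge structure its image commutes with $\mathrm{MT}(A)(\qbar)$, and then runs exactly your eigenvector computation $i(e)(g\cdot v)=g\cdot i(e)(v)=\sigma(e)(g\cdot v)$. Your extra Tannakian justification of the commutation and the remark on base change are fine elaborations of steps the paper treats as immediate.
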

\begin{proof}
 Since $E$ preserves the Hodge structure on $V$, the image of each element $e\in E$ under $i$ in $\mathrm{End}(V)$ commutes with $\mathrm{MT}(A)(\qbar)$. For any $v\in V_{\sigma}$ and any $g\in\mathrm{MT}(A)(\qbar)$ we therefore have $i(e)\circ(g\circ v)=g\circ (i(e)\circ v)=g\circ (\sigma(e)v)=\sigma(e)(g\circ v)$. Hence $g\circ v\in V_{\sigma}$.
\end{proof}
The following lemma states that ``weight one $\qbar$-Hodge structure'' admits a decomposition as expected.
\begin{lemma}\label{qbarhodgedecomp}
Suppose $V$ is a weight one Hodge structure. Let $W$ be a $\qbar$-vector subspace of $V\otimes{\qbar}$ and furthermore suppose $W$ is stable under the action of $\mathrm{MT}(V)(\qbar)$. Then $W_{\mathbb{C}}:=W\otimes_{\qbar}\mathbb{C}$ has the following direct sum decomposition
    \begin{equation*}
    W_{\mathbb{C}}= W_{\mathbb{C}} \cap  V^{0,1} \oplus W_{\mathbb{C}} \cap  V^{1,0}
\end{equation*} 
\end{lemma}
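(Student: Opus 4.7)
The plan is to use that the image of the Deligne torus $\mu_{\mathbb{C}}$ lies in $\mathrm{MT}(V)(\mathbb{C})$ and acts on the weight one Hodge summands by independent scalars. The argument has two conceptual steps: first, propagate the $\mathrm{MT}(V)(\qbar)$-stability hypothesis to $\mathrm{MT}(V)(\mathbb{C})$-stability of $W_{\mathbb{C}}$; then use the explicit action of $\mu_{\mathbb{C}}$ in the weight one case to split $W_{\mathbb{C}}$.

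For the first step, I would argue as follows. The stabilizer $\mathrm{Stab}(W)\subset\mathrm{GL}(V_{\qbar})$ is a $\qbar$-algebraic subgroup (it is cut out by the polynomial conditions that a basis of $W$ is mapped into $W$). The hypothesis says $\mathrm{MT}(V)(\qbar)\subset\mathrm{Stab}(W)(\qbar)$; since $\mathrm{MT}(V)_{\qbar}$ is a reduced $\qbar$-algebraic group and $\qbar$ is algebraically closed, the $\qbar$-points are Zariski dense, hence this yields a closed immersion $\mathrm{MT}(V)_{\qbar}\hookrightarrow\mathrm{Stab}(W)$ of algebraic groups. Base-changing to $\mathbb{C}$ and taking $\mathbb{C}$-points gives $\mathrm{MT}(V)(\mathbb{C})\subset\mathrm{Stab}(W)(\mathbb{C})=\mathrm{Stab}(W_{\mathbb{C}})(\mathbb{C})$.

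Second, by the remark following Definition \ref{delignetorusmt}, the image of $\mu_{\mathbb{C}}:\mathbb{S}(\mathbb{C})\to\mathrm{GL}(V_{\mathbb{C}})$ lies inside $\mathrm{MT}(V)(\mathbb{C})$. Because $V$ has weight $1$, we have $V\otimes\mathbb{C}=V^{1,0}\oplus V^{0,1}$, and $\mu_{\mathbb{C}}(z_1,z_2)$ acts as multiplication by $z_1^{-1}$ on $V^{1,0}$ and by $z_2^{-1}$ on $V^{0,1}$. Pick any $w\in W_{\mathbb{C}}$ and decompose it as $w=w^{1,0}+w^{0,1}$ with respect to the Hodge decomposition. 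Then for every pair $(z_1,z_2)\in(\mathbb{C}^{\times})^2$, the element
\begin{equation*}
\mu_{\mathbb{C}}(z_1,z_2)\cdot w \;=\; z_1^{-1}\,w^{1,0}+z_2^{-1}\,w^{0,1}
\end{equation*}
lies in $W_{\mathbb{C}}$. Taking $(z_1,z_2)=(1,1)$ and $(z_1,z_2)=(1,2)$ and subtracting yields that a nonzero scalar multiple of $w^{0,1}$ lies in $W_{\mathbb{C}}$, so $w^{0,1}\in W_{\mathbb{C}}\cap V^{0,1}$ and consequently $w^{1,0}=w-w^{0,1}\in W_{\mathbb{C}}\cap V^{1,0}$. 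This gives the claimed direct sum decomposition.

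The main subtle point is the first step, since the statement only assumes stability under the $\qbar$-rational points of $\mathrm{MT}(V)$; once one upgrades this to stability of $W_{\mathbb{C}}$ under $\mathrm{MT}(V)(\mathbb{C})$ via the algebraic group stabilizer argument above, the rest of the proof is a direct computation with the explicit action of $\mu_{\mathbb{C}}$ on the weight one Hodge decomposition.
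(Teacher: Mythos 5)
Your proof is correct and follows essentially the same route as the paper: extend stability from $\mathrm{MT}(V)(\qbar)$ to $\mathrm{MT}(V)(\mathbb{C})$, then exploit the explicit action $\mu_{\mathbb{C}}(z_1,z_2)\cdot w = z_1^{-1}w^{1,0}+z_2^{-1}w^{0,1}$ and subtract two specializations of $(z_1,z_2)$ to isolate the Hodge components. The only difference is that you carefully justify the first step via the $\qbar$-algebraic stabilizer of $W$ and Zariski density of $\qbar$-points, a point the paper simply asserts; this is a welcome filling-in of detail rather than a different argument.
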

\begin{proof}
 Since $W$ is stable under the action of $\mathrm{MT}(V)(\qbar)$, its $\mathbb{C}$-linear extension $W_{\mathbb{C}}$ is stable under $\mathrm{MT}(V)(\mathbb{C})$. Therefore  $W_{\mathbb{C}}$ is stable under the image of the $\mathbb{C}$-points of the Deligne torus under the homomorphism (\ref{delignetorus}) $$\mu_{\mathbb{C}}:\mathrm{Res}_{\mathbb{C}/\mathbb{R}}\gmmath(\mathbb{C})\rightarrow\mathrm{MT}(V)(\mathbb{C})$$ Given any $w\in W_{\mathbb{C}}$ we have the Hodge decomposition $w=w^{1,0}+w^{0,1}$ where $w^{1,0}\in V^{1,0}$ and $w^{0,1}\in V^{0,1}$. Then for any $(z_1,z_2)\in \mathrm{Res}_{\mathbb{C}/\mathbb{R}}\gmmath(\mathbb{C})=\mathbb{C}^{*}\times\mathbb{C}^{*}$ we have $$\mu_{\mathbb{C}}(z_1,z_2)\circ w=z_1^{-1}w^{1,0}+z_2^{-1}w^{0,1}\in W_{\mathbb{C}}$$ Hence $z_1^{-1}w^{1,0}+z_2^{-1}w^{0,1}-w^{1,0}-w^{0,1}\in W_{\mathbb{C}}$. Swapping $(z_1,z_2)=(\frac{1}{2},1)$ and $(z_1,z_2)=(1,\frac{1}{2})$ in this formula, we obtain that $w^{1,0}\in W_{\mathbb{C}}\cap V^{1,0}$ and $w^{0,1}\in W_{\mathbb{C}}\cap V^{0,1}$. 
\end{proof}
\begin{remark}
    We warn the reader that there is no reason for $W_{\mathbb{C}}\cap V^{1,0}$ and $W_{\mathbb{C}}\cap V^{0,1}$ to have the same dimension.
\end{remark}
\begin{definition}
\label{multiplicitydefn}
Back to the setup of Lemma \ref{neweigenspacetranslate} and Lemma \ref{qbarhodge} i.e. $K$ a number field together with a morphism of $\mathbb{Q}$-algebras $$i: K \xhookrightarrow{} \mathrm{End}_{\mathrm{Hdg}}(V)$$ 
   we define the \textit{multiplicity} $m_{\sigma}$ of an embedding $$\sigma: K \rightarrow \qbar$$ to be the dimension of the $\mathbb{C}$-vector space $V_{\sigma}^{1,0}=V_{\sigma}\otimes_{\qbar}\mathbb{C}\cap V^{1,0}$.
\end{definition}

\begin{remark} \label{conjugatedecomp}
    If $K$ is a CM-field, we have $\overline{V_{\sigma}\otimes_{\qbar}\mathbb{C}}=V_{\sigmabar}\otimes_{\qbar}\mathbb{C}$, where the complex conjugation is the canonical one on $V\otimes_{\mathbb{Q}}\mathbb{C}$. Hence we have $\overline{V_{\sigma}^{0,1}}=V_{\overline{\sigma}}^{1,0}$, which implies that $m_{\sigma}+m_{\overline{\sigma}}=\frac{\mathrm{dim}(V)}{\mathrm{deg}(K/\mathbb{Q})}$.
\end{remark}

\end{setup}
We now apply the discoveries made in Setup \ref{firstkeysetup} to a simple CM abelian variety $A$ i.e. $\mathrm{End}^{\circ}(A)=E$ with $\mathrm{deg}(E/\mathbb{Q})=\mathrm{dim}(V)$ and $E$ is a CM-field.
\begin{lemma}\label{famouscmtype}
We assume the setup of Definition \ref{cmavdefinition}. For any $\sigma \in \mathrm{Hom}(E,\qbar)$, $V_{\sigma} \otimes_{\qbar} \mathbb{C}$ lies either in $\mathrm{H}^{1,0}(A)$ or in $\mathrm{H}^{0,1}(A)$. Moreover, if $V_{\sigma}\otimes_{\qbar} \mathbb{C} \subset \mathrm{H}^{1,0}(A)$, then $V_{\overline{\sigma}}\otimes_{\qbar} \mathbb{C} \in \mathrm{H}^{0,1}(A)$ and vice versa.
\end{lemma}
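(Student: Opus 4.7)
The plan is to combine Lemma \ref{neweigenspacetranslate}, Lemma \ref{qbarhodge}, Lemma \ref{qbarhodgedecomp} and Remark \ref{conjugatedecomp}, noting crucially that the CM hypothesis forces every eigenspace $V_\sigma$ to be one-dimensional over $L$, which makes the general decomposition in Lemma \ref{qbarhodgedecomp} collapse to one summand.

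First I would invoke Lemma \ref{neweigenspacetranslate} with $K=E$. Since $A$ is a simple CM abelian variety with complex multiplication by $E$, we have $\deg(E/\mathbb{Q})=2\dim(A)=\dim_{\mathbb{Q}}V$, so the uniform dimension formula in Lemma \ref{neweigenspacetranslate} gives $\dim_{L}V_\sigma=1$ for every $\sigma\in\mathrm{Hom}(E,L)=\mathrm{Hom}(E,\qbar)$. Hence $V_\sigma\otimes_{\qbar}\mathbb{C}$ is a one-dimensional $\mathbb{C}$-vector space.

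Next I would apply Lemma \ref{qbarhodge}: the inclusion $i\colon E\hookrightarrow\mathrm{End}_{\mathrm{Hdg}}(V)$ is automatic since $E\cong\mathrm{End}^{\circ}(A)$ acts by Hodge endomorphisms, so each $V_\sigma$ is stable under $\mathrm{MT}(A)(\qbar)$. Because $V$ has weight one, Lemma \ref{qbarhodgedecomp} then yields
\[
V_\sigma\otimes_{\qbar}\mathbb{C}=\bigl(V_\sigma\otimes_{\qbar}\mathbb{C}\cap V^{1,0}\bigr)\oplus\bigl(V_\sigma\otimes_{\qbar}\mathbb{C}\cap V^{0,1}\bigr).
\]
Since the left-hand side is one-dimensional over $\mathbb{C}$, exactly one of the two summands vanishes, proving the first assertion.

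For the second assertion I would use Remark \ref{conjugatedecomp}, which gives $\overline{V_\sigma\otimes_{\qbar}\mathbb{C}}=V_{\overline{\sigma}}\otimes_{\qbar}\mathbb{C}$ under the canonical complex conjugation on $V\otimes_{\mathbb{Q}}\mathbb{C}$. Since $\overline{V^{1,0}}=V^{0,1}$, if $V_\sigma\otimes_{\qbar}\mathbb{C}\subset V^{1,0}$ then $V_{\overline{\sigma}}\otimes_{\qbar}\mathbb{C}=\overline{V_\sigma\otimes_{\qbar}\mathbb{C}}\subset V^{0,1}$, and symmetrically. There is no real obstacle here; the only subtle point is noting that the CM condition on the degree of $E$ is precisely what forces the one-dimensionality that makes the general Hodge decomposition of $V_\sigma$ degenerate.
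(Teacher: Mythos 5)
Your proposal is correct and follows essentially the same route as the paper: one-dimensionality of each $V_{\sigma}$ via Lemma \ref{neweigenspacetranslate}, the degenerate case of Lemma \ref{qbarhodgedecomp} (whose stability hypothesis is supplied by Lemma \ref{qbarhodge}), and Remark \ref{conjugatedecomp} for the conjugate statement. You merely spell out a few steps the paper leaves implicit; no gaps.
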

\begin{proof}
For the first statement, note that By Lemma \ref{neweigenspacetranslate}, we have $\mathrm{dim}V_{\sigma}=1$ for any $\sigma\in\mathrm{Hom}(E,\qbar)$. We can then directly apply Lemma \ref{qbarhodgedecomp} to conclude. The second statement follows from Remark \ref{conjugatedecomp}.
\end{proof}
Inspired by the above lemma, given a simple abelian variety $A$ with complex multiplication by $E$, we can define the following function on $\mathrm{Hom}(E,\qbar)$
\begin{equation*}
  \Phi_{A}: \mathrm{Hom}(E,\qbar) \rightarrow \{0,1\}
\end{equation*}
\begin{equation*}
\Phi_{A}(\sigma)=
\begin{cases}
    1,& \text{if } V_{\sigma} \subset \mathrm{H}^{1,0}\\
    0,              & \text{if } V_{\sigma} \subset \mathrm{H}^{0,1}
\end{cases}
\end{equation*}
Inspired by the geometric scenario, we have the following definition.
\begin{definition}\label{cmtypedefn}
  For any CM-field $E$, a \textit{CM-type} on $\mathrm{Hom}(E,\qbar)$ is a function \begin{equation*}
  \Phi: \mathrm{Hom}(E,\qbar) \rightarrow \{0,1\}
\end{equation*} such that $\Phi(\sigma)+\Phi(\overline{\sigma})=1$.
\end{definition}
\begin{remark}
The famous notion of the CM-type associated with a CM abelian variety is a special case of the notion of the \textit{multiplicity} from Definition \ref{multiplicitydefn}. 
\end{remark}
We end this section with a general lemma about the Hodge group of a Hodge structure admitting a CM-field of largest degree in its ring of endomorphism. The $\mathrm{H}^1(A,\mathbb{Q})$ of a CM abelian variety $A$ is one example of such Hodge structures.
\begin{lemma} \label{generalizedcm}
    Given a $\mathbb{Q}$-Hodge structure $W$ of weight $k$ whose Hodge endomorphism algebra contains a CM-field $E$ with $\mathrm{dim}_{\mathbb{Q}}(W)=\mathrm{deg}(E/\mathbb{Q})$, then the Hodge group of $W$ is contained in the $\mathbb{Q}$-algebraic torus $\mathrm{U}_{E}$.
\end{lemma}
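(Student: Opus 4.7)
The plan is to apply the Deligne-torus description of the Hodge group given in Remark \ref{hdgroupcdefn}: it suffices to exhibit $\mathrm{U}_E$ as a $\mathbb{Q}$-algebraic subgroup of $\mathrm{GL}(W)$ whose $\mathbb{C}$-points contain the image of $\mathbb{U}(\mathbb{C})$ under $\mu_{\mathbb{C}}$.

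First, I would observe that the image of $E$ in $\mathrm{End}_{\mathbb{Q}}(W)$ centralizes $\mathrm{MT}(W)$, and hence $\mathrm{Hdg}(W)$, because each element of $E$ acts on $W$ as a Hodge endomorphism. The hypothesis $\dim_{\mathbb{Q}}W = \deg(E/\mathbb{Q})$ turns $W$ into a free $E$-module of rank one, so the centralizer of $E$ inside $\mathrm{GL}(W)$ equals $E^{\times}$, viewed as the $\mathbb{Q}$-algebraic group $\mathrm{Res}_{E/\mathbb{Q}}\mathbb{G}_{\mathrm{m}}$; thus $\mathrm{Hdg}(W) \subset \mathrm{Res}_{E/\mathbb{Q}}\mathbb{G}_{\mathrm{m}}$, the ambient torus containing $\mathrm{U}_E$.

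Next, extending Lemma \ref{neweigenspacetranslate} to $\mathbb{C}$, I would decompose $W \otimes_{\mathbb{Q}}\mathbb{C} = \bigoplus_{\sigma \in \mathrm{Hom}(E,\mathbb{C})} W_{\sigma,\mathbb{C}}$ with each summand one-dimensional. Since each Hodge piece $W^{p,q}$ is $E$-stable, each $W_{\sigma,\mathbb{C}}$ sits in a unique Hodge summand $W^{p_{\sigma}, q_{\sigma}}$. Combining Remark \ref{conjugatedecomp} (which pairs $W_{\sigma,\mathbb{C}}$ with $W_{\bar{\sigma},\mathbb{C}}$ under complex conjugation on $W\otimes\mathbb{C}$) with the relation $\overline{W^{p,q}} = W^{q,p}$ forces $p_{\bar{\sigma}} = q_{\sigma}$ and $q_{\bar{\sigma}} = p_{\sigma}$. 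Parametrizing $\mathbb{U}(\mathbb{C}) \subset \mathbb{S}(\mathbb{C})$ by $(z_1, z_2) = (z, z^{-1})$, the element $\mu_{\mathbb{C}}(z)$ acts on $W_{\sigma,\mathbb{C}}$ as multiplication by $z^{q_{\sigma} - p_{\sigma}}$, so under the identification $(E \otimes_{\mathbb{Q}}\mathbb{C})^{\times} \cong \prod_{\sigma}\mathbb{C}^{\times}$ its $\sigma$-th coordinate is $z^{q_{\sigma} - p_{\sigma}}$.

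To conclude, I would check that the $\mathbb{C}$-linear extension of the complex conjugation on $E$ permutes the coordinates of $\prod_{\sigma}\mathbb{C}^{\times}$ by $(a_{\sigma})_{\sigma} \mapsto (a_{\bar{\sigma}})_{\sigma}$; this is precisely where the CM hypothesis $\sigma \circ c = \bar{\sigma}$ enters (Definition \ref{CMdefinitions}). The defining relation $x \cdot \bar{x} = 1$ of $\mathrm{U}_E(\mathbb{C})$ then reduces, for $x = \mu_{\mathbb{C}}(z)$, to $z^{(q_{\sigma} - p_{\sigma}) + (q_{\bar{\sigma}} - p_{\bar{\sigma}})} = 1$ for every $\sigma$, which holds automatically since $p_{\sigma} + p_{\bar{\sigma}} = q_{\sigma} + q_{\bar{\sigma}} = k$. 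Invoking Remark \ref{hdgroupcdefn} then yields $\mathrm{Hdg}(W) \subset \mathrm{U}_E$. The only real subtlety I anticipate is the coordinate-permutation description of the $\mathbb{C}$-linearly extended conjugation on $E \otimes \mathbb{C}$; once this is established using the CM property of $E$, all remaining verifications are immediate.
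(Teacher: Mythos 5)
Your proposal is correct and takes essentially the same route as the paper's proof: decompose $W\otimes_{\mathbb{Q}}\mathbb{C}$ into one-dimensional $E$-eigenlines, observe each eigenline carries a single Hodge type with $(p_{\bar{\sigma}},q_{\bar{\sigma}})=(q_{\sigma},p_{\sigma})$, check that $\mu_{\mathbb{C}}(\mathbb{U}(\mathbb{C}))$ lands inside $\mathrm{U}_{E}(\mathbb{C})$, and conclude via the minimality characterization of Remark \ref{hdgroupcdefn}. Your preliminary centralizer step ($\mathrm{Hdg}(W)\subset\mathrm{Res}_{E/\mathbb{Q}}\mathbb{G}_{\mathrm{m}}$) is correct but not logically needed, since minimality alone gives the containment once the image of $\mathbb{U}(\mathbb{C})$ is shown to lie in $\mathrm{U}_{E}(\mathbb{C})$.
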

\begin{proof}
We first show that there is generalized notion of CM-type on $\mathrm{Hom}(E,\mathbb{C})$. Tensoring the eigenspace decomposition from Lemma \ref{neweigenspacetranslate} by $\mathbb{C}$ we obtain $$W \otimes_{\mathbb{Q}} \mathbb{C}=\oplus_{\sigma \in \mathrm{Hom}(E,\mathbb{C})} W_{\sigma}$$ where each $W_{\sigma}$ is one-dimensional. Bringing the Hodge decomposition into the picture, for any $w_{\sigma}\in W_{\sigma}$ we have $w_{\sigma}=\Sigma_{p+q=k}w^{p,q}_{\sigma}$ where $w^{p,q}_{\sigma}\in W^{p,q}$. Then for any $e\in E$ we have that $i(e) \circ w_{\sigma}=\Sigma_{p+q=k}i(e) \circ w^{p,q}_{\sigma}=\Sigma_{p+q=k}\sigma(e)w^{p,q}_{\sigma}$ for $w_{\sigma}\in W_{\sigma}$. Note that the image of $E$ preserves the Hodge structure, we have $i(e)\circ w^{p,q}_{\sigma}\in W^{p,q}$. Since the Hodge decomposition $W\otimes{\mathbb{C}}=\oplus_{p+q=k} W^{p,q}$ is a direct sum, we have $i(e) \circ w^{p,q}_{\sigma}=\sigma(e) \circ w^{p,q}_{\sigma}$. Hence $w^{p,q}_{\sigma} \in W_{\sigma}$. But $W_{\sigma}$ is one-dimensional, hence there is a unique tuple $(p,q)$ such that $W_{\sigma} \subset W^{p,q}$. Denote the $(p,q)$ associated with $\sigma$ by $(p_{\sigma},q_{\sigma})$. Moreover, since $E$ is a CM-field, we also have $W_{\overline{\sigma}} \subset W^{q,p}$.

Hence with respect to the same basis given by the eigenspace decomposition, the image of $\mathbb{U}(\mathbb{C})$ under the group homomorphism (see Definition \ref{hodgegroupdefn}) $$\mu_{\mathbb{C}}: \mathbb{U}(\mathbb{C}) \xhookrightarrow{} \mathrm{Res}_{\mathbb{C}/\mathbb{R}}\mathbb{G}_{m}(\mathbb{C}) \rightarrow \mathrm{GL}(W \otimes_{\mathbb{Q}}\mathbb{C})$$ is equal to the set of following diagonal matrices
$$\left\{\begin{pmatrix}
    z_1^{-p_{\sigma_1}}z_2^{-q_{\sigma_1}} & & \\
    & \ddots & \\
    & & z_1^{-p_{\sigma_n}}z_2^{-q_{\sigma_{n}}}
  \end{pmatrix}|z_1,z_2 \in \mathbb{C}, z_1z_2=1,(p_{\sigma_{i}},q_{\sigma_{i}})=(q_{\overline{\sigma_{i}}},p_{\overline{\sigma_{i}}})\right\}
$$ where $n$ is the dimension of $W$. Moreover, the image of $\mathrm{U}_{E}(\mathbb{C})$ inside $\mathrm{GL}(W \otimes_{\mathbb{Q}} \mathbb{C})$ is equal to the following set of diagonal matrices 
$$\left\{\begin{pmatrix}
    g_{\sigma_1} & & \\
    & \ddots & \\
    & & g_{\sigma_{n}}
  \end{pmatrix}|g_{\sigma_{i}} \in \mathbb{C}, g_{\sigma_{i}}g_{\overline{\sigma_{i}}}=1\right\}
$$ Hence the image of $\mathbb{U}(\mathbb{C})$ inside $\mathrm{GL}(W \otimes_{\mathbb{Q}}\mathbb{C})$ is contained in the image of $\mathrm{U}_{E}(\mathbb{C})$. Recall by Remark \ref{hdgroupcdefn}, the Hodge group of $W$ is the smallest $\mathbb{Q}$-algebraic subgroup of $\mathrm{GL}(W)$ whose set of $\mathbb{C}$-points contains the image of $\mathbb{U}(\mathbb{C})$ under $\mu_{\mathbb{C}}$, and $\mathrm{U}_{E}$ is an algebraic group defined over $\mathbb{Q}$, we have that the Hodge group of $W$ is contained in $\mathrm{U}_{E}$.
\end{proof}

\subsection{Weil Structures}\label{weilstruc}
In this section we are going to recall a well known construction in linear algebra made by Weil (for example see \cite{Weil1979AbelianVA}). See Section 5.5 of \cite{charles2011notes} or Lemma 12 of \cite{moonen1998weil} for a detailed discussion. In order to suit our purpose better, we present the construction of Weil structure in a slightly different but equivalent way to the standard sources. Lemmas in this section unless specified are well known but their proofs are recorded for the sake of completeness or the usefulness in later sections.

This section should be seen as a natural continuation of Setup \ref{firstkeysetup}. Recall we are given an abelian variety $A$ and a number field $K$ such that $K\xhookrightarrow{}\mathrm{End}^{\circ}(A)$. This induces a map of $\mathbb{Q}$-algebras $$i: K \rightarrow \mathrm{End}(\mathrm{H}^1(A,\mathbb{Q}))$$ which preserves the weight $1$ Hodge structure on $\mathrm{H}^1(A,\mathbb{Q})$. We denote $\mathrm{H}^1(A,\mathbb{Q})$ by $V$.    

Denote the splitting closure of $K/\mathbb{Q}$ in $\qbar$ by $L$. Recall by Lemma \ref{neweigenspacetranslate}, we have the eigenspace decomposition
\begin{equation} \label{eigenspacedecompatC}
V_{L}:=V\otimes_{\mathbb{Q}} L=\bigoplus_{\sigma \in \mathrm{Hom}(K,L)} V_{\sigma}
\end{equation}
Letting $n=\frac{\mathrm{dim}(V)}{\mathrm{deg}(K/\mathbb{Q})}$, we have a natural inclusion of $L$-vector spaces
\begin{equation*}
    W:=\bigoplus_{\sigma \in \mathrm{Hom}(E,L)}{\bigwedge}^n V_{\sigma} \xhookrightarrow{} {\bigwedge}^n V_{L}
\end{equation*} According to the proof of Lemma \ref{neweigenspacetranslate}, the action of $\mathrm{Gal}(L/\mathbb{Q})$ on $V_{L}$ permutes the eigenspace $V_{\sigma}$s. Therefore, the $L$-subspace $W$ is preserved by the action of $\mathrm{Gal}(L/\mathbb{Q})$, which implies that $W$ descends to a $\mathbb{Q}$-subspace $W'$ of $\bigwedge^n V$. Namely, $W'$ is the set of elements in $W$ fixed by the action of $\mathrm{Gal}(L/\mathbb{Q})$. In the sequel, we will denote $W'$ by $\bigwedge_{K}^n V$.

\begin{definition} \label{weilstructure}
 Following the construction from above, we call $$W'=\wedge_{K}^n V$$ the \textit{Weil structure} associated with $i: K \xhookrightarrow{} \mathrm{End}(V)$.
\end{definition}
\begin{remark}
The construction and definition of Weil structures from above differs from standard ones for example from Lemma 12 of \cite{moonen1998weil}.
\end{remark}
We have the following lemma stating that if $K$ preserves the Hodge or dRB structure on $V$, then the Weil structure associated with $K$ is a sub-Hodge or sub-dRB structure.
\begin{lemma} \label{weilhodge}
Suppose the image of $i: K \xhookrightarrow{} \mathrm{End}(\mathrm{H}^1(A,\mathbb{Q}))$ preserves the Hodge structure or the dRB structure. Then the inclusion of $\mathbb{Q}$-vector space
$\bigwedge_{K}^n \mathrm{H}^1(A,\mathbb{Q}) \xhookrightarrow{} \bigwedge_{\mathbb{Q}}^n \mathrm{H}^1(A,\mathbb{Q})$ induces a natural weight $n$ Hodge structure or a de Rham-Betti structure on $\bigwedge_{K}^n \mathrm{H}^1(A,\mathbb{Q})$.
\end{lemma}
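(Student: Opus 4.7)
The plan is to verify separately that the Weil structure $W' = \bigwedge_K^n V$, where $V = \mathrm{H}^1(A,\mathbb{Q})$, inherits a sub-Hodge (respectively sub-dRB) structure by working eigenspace-by-eigenspace in $V_L$ and reassembling via Galois descent on the Betti side. In the Hodge case, since $i(K)$ consists of Hodge endomorphisms it commutes with $\mathrm{MT}(A)$, so each eigenspace $V_\sigma \subset V\otimes_\mathbb{Q} L$ from \eqref{eigenspacedecompatC} is stable under $\mathrm{MT}(A)(\qbar)$ by Lemma \ref{qbarhodge}. I would then invoke Lemma \ref{qbarhodgedecomp} to obtain a splitting $V_\sigma \otimes_L \mathbb{C} = V_\sigma^{1,0} \oplus V_\sigma^{0,1}$, where $V_\sigma^{p,q} := (V_\sigma \otimes_L \mathbb{C}) \cap V^{p,q}$. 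Taking $n$-th exterior powers gives $\bigwedge_L^n V_\sigma \otimes_L \mathbb{C} = \bigoplus_{p+q=n} \bigwedge^p V_\sigma^{1,0} \otimes \bigwedge^q V_\sigma^{0,1}$, with each summand landing inside $(\bigwedge^n V)^{p,q}$. Summing over $\sigma$ shows that $W\otimes_L \mathbb{C}$ is stable under the Hodge decomposition, and since $W'\otimes_\mathbb{Q} \mathbb{C} = W \otimes_L \mathbb{C}$ by Galois descent, this exhibits $\bigwedge_K^n V$ as a weight-$n$ sub-Hodge structure of $\bigwedge^n V$.

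For the dRB case, I would run a parallel argument on both realizations. The $K$-eigenspace decomposition $V_\mathrm{dR} = \bigoplus_{\sigma \in \mathrm{Hom}(K,\qbar)} V_{\mathrm{dR},\sigma}$ is already $\qbar$-rational, since $V_\mathrm{dR}$ is a $\qbar$-vector space on which $K$ acts $\qbar$-linearly through the fixed embedding $L \subset \qbar$. I would then set $W'_\mathrm{dR} := \bigoplus_\sigma \bigwedge^n V_{\mathrm{dR},\sigma} \subset \bigwedge^n V_\mathrm{dR}$, which is automatically a $\qbar$-subspace. The crucial input is that the hypothesis that $i$ lands in the dRB endomorphisms forces $\rho_m$ to be $K$-equivariant, and therefore sends $V_\sigma \otimes_L \mathbb{C}$ isomorphically onto $V_{\mathrm{dR},\sigma}\otimes_\qbar \mathbb{C}$ for each $\sigma$. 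Passing to $n$-th exterior powers and summing, $\bigwedge^n \rho_m$ restricts to an isomorphism $W' \otimes_\mathbb{Q} \mathbb{C} \cong W'_\mathrm{dR} \otimes_\qbar \mathbb{C}$, yielding the required sub-dRB structure on $\bigwedge_K^n V$.

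The only step requiring care is the bookkeeping of scalar extensions, in particular matching the Galois descent used to obtain the $\mathbb{Q}$-rational Betti subspace with the direct $\qbar$-rationality of the de Rham subspace through the complex comparison isomorphism. I expect this to reduce entirely to the $K$-equivariance of $\rho_m$, after which the remaining verifications are formal manipulations with eigenspaces and exterior powers. Conceptually, the mechanism throughout is that $K$ commuting with $\mathrm{MT}(A)$, or equivalently acting by dRB endomorphisms, is exactly what allows the $\sigma$-isotypic decomposition to be compatible with the extra structure on $V$; once this is articulated on both realizations the conclusion is immediate.
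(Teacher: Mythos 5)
Your proposal is correct, but it concludes by a different mechanism than the paper. The paper's proof is a uniform Tannakian argument: writing $G$ for either $\mathrm{MT}(A)$ or $\gdrbmath(A)$, it observes (as you do, via Lemma \ref{qbarhodge} and the fact that $K$ commutes with $G$) that each eigenspace $V_{\sigma}$, hence $\bigwedge_{K}^n \betti \otimes_{\mathbb{Q}}\qbar=\bigoplus_{\sigma}\bigwedge^n V_{\sigma}$, is preserved by $G(\qbar)$, and then invokes the dictionary between $G$-subrepresentations of $\omega(\cdot)$ and subobjects of the Tannakian category to conclude both cases at once, without ever touching the Hodge decomposition or the de Rham realization. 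You instead verify the two structures by hand: in the Hodge case you split each $V_{\sigma}\otimes\mathbb{C}$ via Lemma \ref{qbarhodgedecomp}, take exterior powers, and check that $W'\otimes\mathbb{C}$ is compatible with the Hodge decomposition (essentially the computation the paper records afterwards in Remark \ref{weilhodgenumber}); in the dRB case you build the $\qbar$-rational de Rham eigenspace decomposition, set $W'_{\mathrm{dR}}=\bigoplus_{\sigma}\bigwedge^n V_{\mathrm{dR},\sigma}$, and use $K$-equivariance of $\rho_{m}$ to match the two sides (essentially Remark \ref{explicityweildrb} promoted to a proof). Both routes are valid; the paper's is shorter and treats the two cases uniformly, while yours is more elementary in that the dRB half needs only the definition of a sub-dRB structure rather than the Tannakian duality for $\gdrbmath$, and it has the side benefit of producing the explicit de Rham realization of the Weil structure that the paper needs later anyway. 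The one point to keep tidy, which you flag yourself, is the identification $W'\otimes_{\mathbb{Q}}\mathbb{C}=W\otimes_{L}\mathbb{C}=\bigoplus_{\sigma}\bigwedge^n(V_{\sigma}\otimes_{L}\mathbb{C})$ coming from Galois descent, together with the dimension count showing $\rho_{m}$ maps each Betti $\sigma$-eigenspace \emph{onto} the corresponding de Rham one; both are routine and your reduction to $K$-equivariance of $\rho_{m}$ is the right way to organize them.
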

\begin{proof}
If the image of $i: K \xhookrightarrow{} \mathrm{End}(\betti)$ preserves the Hodge structure or the dRB structure, then it commutes with the Mumford-Tate group or the dRB group inside $\mathrm{GL}(\betti)$, which we denote by $G$ for both cases. Then it suffices to show that the 
$\mathbb{Q}$-vector subspace $\bigwedge_{K}^n \betti$ inside $\bigwedge_{\mathbb{Q}}^n \betti$ is a $G$-subrepresentation. But then it suffices to show that $\bigwedge_{K}^n \betti \otimes_{\mathbb{Q}} \qbar=\bigoplus_{\sigma \in \mathrm{Hom}(K,\qbar)}{\bigwedge}^n V_{\sigma}$ inside $(\bigwedge_{\mathbb{Q}}^n \mathrm{H}^1(A,\mathbb{Q}))\otimes_{\mathbb{Q}} \qbar$ is preserved by the $\qbar$-points of $G$. We abuse the notation by denoting $V_{\sigma}\otimes_{L}\qbar$ by $V_{\sigma}$ as well. But since $K$ commutes with $G$, each eigenspace $V_{\sigma}$ is preserved by $G(\qbar)$ by Lemma \ref{qbarhodge}. Hence indeed the $\qbar$-linear extension of the Weil structure is preserved by the $\qbar$-points of $G$.
\end{proof}

\begin{remark}\label{weilhodgenumber}
Denote the multiplicity of $\sigma \in \mathrm{Hom}(K,\mathbb{C})$ (see Definition \ref{multiplicitydefn}) by $m_{\sigma}$. Then the Hodge decomposition of the weight $n$ Hodge structure $\bigwedge_{K}^n \mathrm{H}^1(A,\mathbb{Q})$ can be written in the following manner \begin{equation*}
\begin{split}
\wedge_{K}^n \mathrm{H}^1(A,\mathbb{Q}) \otimes_{\mathbb{Q}} {\mathbb{C}}&=\bigoplus_{\sigma \in \mathrm{Hom}(K,\qbar)}\wedge^n V_{\sigma}\otimes_{\qbar}\mathbb{C}=\bigoplus_{\sigma \in \mathrm{Hom}(K,\qbar)}\wedge ^n(V_{\sigma}^{1,0} \oplus V_{\sigma}^{0,1})\\
&=\bigoplus_{\sigma \in \mathrm{Hom}(K,\qbar)}\wedge ^{m_{\sigma}}V_{\sigma}^{1,0}\otimes \wedge ^{n-m_{\sigma}}V_{\sigma}^{0,1}
\end{split}
\end{equation*} where each summand $\wedge ^{m_{\sigma}}V_{\sigma}^{1,0}\otimes \wedge ^{n-m_{\sigma}}V_{\sigma}^{0,1}$ is a subspace of $\mathrm{H}^{m_{\sigma},n-m_{\sigma}}(A)$.
\end{remark}

\begin{remark}\label{explicityweildrb}
We can also write out explicitly the dRB structure on $\wedge_{K}^{n}\mathrm{H}^1(A,\mathbb{Q})$ as follows. Note that if $K \xhookrightarrow{} \mathrm{End}(\mathrm{H}^1(A,\mathbb{Q}))$ preserves the dRB structure, then for any element $k \in K$, the morphism $$\rho_{m} \circ k \circ \rho^{-1}_{m}: \mathrm{H}^1_{\mathrm{dR}}(A/\mathbb{C}) \rightarrow \mathrm{H}^1_{\mathrm{dR}}(A/\mathbb{C})$$ takes $\mathrm{H}^1_{\mathrm{dR}}(A/\qbar)$ to $\mathrm{H}^1_{\mathrm{dR}}(A/\qbar)$ and is $\qbar$-linear. And in this way we also obtain a natural embedding of $K$ into $\mathrm{End}_{\qbar}(\mathrm{H}^1_{\mathrm{dR}}(A/\qbar))$. With respect to this embedding, $\mathrm{H}^1_{\mathrm{dR}}(A/\qbar)$ also admits an eigenspace decomposition $$\mathrm{H}^1_{\mathrm{dR}}(A/\qbar)=\bigoplus_{\sigma \in \mathrm{Hom}(K,\qbar)}W_{\sigma}$$ Now by construction we have $\rho_{m}^{-1}(W_{\sigma} \otimes_{\qbar}\mathbb{C}) \subset V_{\sigma} \otimes_{\qbar} \mathbb{C}$. By Lemma \ref{neweigenspacetranslate}, $V_{\sigma}$s are equidimensional, hence all $W_{\sigma}$s have the same dimension. Again letting $n=\frac{\mathrm{dim}(\mathrm{H}^1(A,\mathbb{Q}))}{\mathrm{deg}(K/\mathbb{Q})}$, then we may define $$\wedge_{K \otimes \qbar}^{n}\mathrm{H}^1_{\mathrm{dR}}(A/\qbar):=\bigoplus_{\sigma \in \mathrm{Hom}(K,\qbar)}\wedge^{n}_{\qbar}W_{\sigma} \xhookrightarrow{} \wedge^{n}_{\qbar}\mathrm{H}^1_{\mathrm{dR}}(A,\qbar)$$ Then by Lemma \ref{weilhodge}, the triple $(\wedge_{K}^{n}\mathrm{H}^1(A,\mathbb{Q}), \wedge_{K \otimes \qbar}^{n}\mathrm{H}^1_{\mathrm{dR}}(A/\qbar), \rho_{m})$ is a sub-dRB structure of $(\wedge^{n}_{\mathbb{Q}}\mathrm{H}^1(A,\mathbb{Q}), \wedge^{n}_{\qbar}\mathrm{H}^1_{\mathrm{dR}}(A/\qbar), \rho_{m})$.
\end{remark}
\begin{lemma}\label{morphismofweilstruc}
   If we are in the same setup as Lemma \ref{weilhodge}, then we have an embedding $K \xhookrightarrow{} \mathrm{End}_{\mathbb{Q}}(\wedge_{K}^{n}\mathrm{H}^1(A,\mathbb{Q}))$ as a morphism of $\mathbb{Q}$-algebras. Moreover, the image of $K$ preserves the Hodge structure or the dRB structure on $\wedge_{K}^{n}\mathrm{H}^1(A,\mathbb{Q})$. 
\end{lemma}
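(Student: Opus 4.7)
The plan is to construct the embedding $K \hookrightarrow \mathrm{End}_{\mathbb{Q}}(\wedge_{K}^{n}V)$ (writing $V := \mathrm{H}^1(A,\mathbb{Q})$) by endowing $\wedge_{K}^{n}V$ with the structure of a one-dimensional $K$-vector space. Since $\dim_{\mathbb{Q}} V = n\cdot[K:\mathbb{Q}]$ and $i$ makes $V$ a $K$-module, $V$ is free of rank $n$ over $K$, so its top $K$-exterior power is canonically a one-dimensional $K$-vector space of $\mathbb{Q}$-dimension $[K:\mathbb{Q}]$; this coincides with the Weil structure of Definition \ref{weilstructure}.

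Concretely, I would describe this $K$-action after base change to $L$. On $\wedge_{K}^{n}V \otimes_{\mathbb{Q}} L = \bigoplus_{\sigma\in\mathrm{Hom}(K,L)} \wedge_{L}^{n} V_{\sigma}$, let $k \in K$ act on the summand $\wedge^{n}V_{\sigma}$ as scalar multiplication by $\sigma(k)$. I stress that this is \textit{not} the restriction of the natural diagonal action $\wedge^{n} i(k)$ on $\wedge_{\mathbb{Q}}^{n}V$ (which would act as $\sigma(k)^{n}$ on $\wedge^{n}V_{\sigma}$ and is not even additive in $k$); it is a genuinely new action, and clarifying this point is the main conceptual subtlety. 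To descend the action to $\wedge_{K}^{n}V$ itself I verify Galois equivariance: for $g \in \mathrm{Gal}(L/\mathbb{Q})$ and $v \in \wedge^{n}V_{\sigma}$, the proof of Lemma \ref{neweigenspacetranslate} gives $g(v) \in \wedge^{n}V_{g\circ\sigma}$, and both $g(k\cdot v)$ and $k\cdot g(v)$ equal $g(\sigma(k))\cdot g(v) = (g\circ\sigma)(k)\cdot g(v)$. Hence the action descends to a $\mathbb{Q}$-algebra morphism $K \to \mathrm{End}_{\mathbb{Q}}(\wedge_{K}^{n}V)$, which is injective because $K$ is a field.

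For preservation of the Hodge and dRB structures, I would appeal to the eigenspace decompositions already worked out in Remarks \ref{weilhodgenumber} and \ref{explicityweildrb}. In the Hodge setting, each summand $\wedge^{n}V_{\sigma}\otimes_{L}\mathbb{C}$ of $\wedge_{K}^{n}V \otimes_{\mathbb{Q}}\mathbb{C}$ lies entirely inside a single Hodge piece $\mathrm{H}^{m_{\sigma},\,n-m_{\sigma}}$, so scalar multiplication by $\sigma(k)$ preserves the Hodge decomposition summand-wise, whence the $\mathbb{Q}$-linear endomorphism induced by $k$ is a morphism of Hodge structures. In the dRB case, one defines the analogous $K$-action on the de Rham side by the same formula on $\bigoplus_{\sigma}\wedge^{n}_{\qbar}W_{\sigma}$; compatibility with the Betti action is automatic because $\rho_m$ sends $W_{\sigma}\otimes_{\qbar}\mathbb{C}$ onto $V_{\sigma}\otimes_{\qbar}\mathbb{C}$, so it intertwines the two actions summand by summand. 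The only genuinely delicate step is the initial identification of the correct $K$-action; once that is fixed, Galois descent and compatibility with both structures reduce to unpacking the eigenspace formalism already developed in Setup \ref{firstkeysetup}.
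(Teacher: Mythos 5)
Your construction of the embedding is exactly the paper's: let $k\in K$ act as scalar multiplication by $\sigma(k)$ on each one-dimensional summand $\wedge^{n}V_{\sigma}$ of $\wedge_{K}^{n}V\otimes_{\mathbb{Q}}L$, check Galois equivariance using that $\mathrm{Gal}(L/\mathbb{Q})$ permutes the summands, and descend to $\mathbb{Q}$; your explicit remark that this is not the diagonal action $\wedge^{n}i(k)$ is a welcome clarification and causes no issue. Where you diverge is in the second assertion: the paper does not touch the Hodge decomposition or the comparison isomorphism directly, but instead notes that each $\wedge^{n}V_{\sigma}$ is preserved by the $\qbar$-points of $G$ (with $G=\mathrm{MT}(A)$ or $\gdrbmath(A)$), so the scalar-by-summand action of $K$ commutes with $G(\qbar)$, and preservation of the Hodge, respectively dRB, structure follows from the Tannakian identification of structure-preserving endomorphisms with $G$-equivariant ones. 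Your route instead verifies the two cases by hand: for Hodge, each $\wedge^{n}V_{\sigma}\otimes\mathbb{C}$ lies in a single piece $\mathrm{H}^{m_{\sigma},\,n-m_{\sigma}}$ (Remark \ref{weilhodgenumber}), so the action respects the induced bigrading; for dRB, you transport the same formula to $\bigoplus_{\sigma}\wedge^{n}_{\qbar}W_{\sigma}$ and use that $\rho_{m}$ matches $V_{\sigma}\otimes\mathbb{C}$ with $W_{\sigma}\otimes\mathbb{C}$ summand by summand (Remark \ref{explicityweildrb}). Both arguments rest on the same summand-wise structure and both are correct; the paper's version is more uniform (one argument covers both cases at once), while yours is more explicit and exhibits the de Rham-side action concretely. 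One cosmetic slip: $\rho_{m}$ goes from the Betti side to the de Rham side, so it sends $V_{\sigma}\otimes_{\qbar}\mathbb{C}$ onto $W_{\sigma}\otimes_{\qbar}\mathbb{C}$, not the other way around; this does not affect the argument.
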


\begin{proof}
By construction we have $\wedge_{K}^{n}\mathrm{H}^1(A,\mathbb{Q}) \otimes_{\mathbb{Q}} L=\oplus_{\sigma \in \mathrm{Hom}(K,L)} \wedge^nV_{\sigma}$ viewed as an $L$-subspace of $\wedge^n\mathrm{H}^1(A,\mathbb{Q}) \otimes_{\mathbb{Q}} L$. Then 
we can define a morphism of $\mathbb{Q}$-algebras 
\begin{equation*}
    K \rightarrow \mathrm{End}_{L}(\wedge^n_{K}\mathrm{H}^1(A,\mathbb{Q}) \otimes_{\mathbb{Q}} L)
\end{equation*} by letting an element $k \in K$ act as scalar multiplication by $\sigma(k)\in L$ on each 1-dimensional $L$-vector space $\wedge^n V_{\sigma}$. Using that the action of the Galois group $\mathrm{Gal}(L/\mathbb{Q})$ permutes the set of summands $\wedge^n V_{\sigma}$, one can verify that the image of $K$ inside $\mathrm{End}_{L}(\wedge^n_{K}\mathrm{H}^1(A,\mathbb{Q}) \otimes_{\mathbb{Q}} L)$ is in fact $\mathrm{Gal}(L/\mathbb{Q})$-equivariant. Hence, we in fact have a morphism of $\mathbb{Q}$-algebras
\begin{equation*}
    K \rightarrow \mathrm{End}(\wedge^n_{K}\mathrm{H}^1(A,\mathbb{Q}))
\end{equation*} As for the second statement in the lemma, similar to the proof of Lemma \ref{weilhodge} we denote both the Mumford-Tate group and the de Rham-Betti group by $G$. Then each $V_{\sigma}$ is preserved by $\qbar$-points of $G$. Therefore each one-dimensional $\qbar$-vector space $\wedge^n V_{\sigma}$ is also preserved by the induced action of $G(\qbar)$. Hence by construction the image of $K$ in $\mathrm{End}(\wedge^n_{K}\mathrm{H}^1(A,\mathbb{Q})\otimes_{\mathbb{Q}}\qbar)$ commutes with the induced action of $G(\qbar)$. We conclude that the action of $K$ on $\wedge^n_{K}\mathrm{H}^1(A,\mathbb{Q})$ preserves the underlying Hodge or dRB structure.
\end{proof}
\begin{lemma} \label{Weilstructurelayer}
Under the setup of Lemma \ref{morphismofweilstruc}, for any subfield $k \xhookrightarrow{} K$, we denote $m=\frac{\mathrm{dim}(\mathrm{H}^1(A,\mathbb{Q}))}{\mathrm{deg}(k/\mathbb{Q})}$, $n=\frac{\mathrm{dim}(\mathrm{H}^1(A,\mathbb{Q}))}{\mathrm{deg}(K/\mathbb{Q})}$ and $l=\frac{\mathrm{deg}(K/\mathbb{Q})}{\mathrm{deg}(k/\mathbb{Q})}$. Then within $\wedge_{\mathbb{Q}}^m\mathrm{H}^1(A,\mathbb{Q})$ we have a natural isomorphism of Hodge structures or dRB structures
\begin{equation*}
 \wedge_{k}^{l}((\wedge_{K}^n \mathrm{H}^1(A,\mathbb{Q})))\cong\wedge_{k}^m\mathrm{H}^1(A,\mathbb{Q})
\end{equation*}
\end{lemma}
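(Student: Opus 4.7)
The plan is to verify the isomorphism after extending scalars to the Galois closure $L$ of $K/\mathbb{Q}$, which automatically contains the Galois closure of $k/\mathbb{Q}$ since $k \subset K$. On both sides we have natural inclusions into $\wedge^{m}_{\mathbb{Q}} V$, where $V = \mathrm{H}^1(A,\mathbb{Q})$: the right hand side by construction (Definition \ref{weilstructure}), and the left hand side via the natural wedge product map $\wedge^{l}(\wedge^{n} V) \to \wedge^{ln} V = \wedge^{m} V$. The strategy is to exhibit both subspaces of $\wedge^{m}_{\mathbb{Q}} V \otimes L$ as the same direct sum of one-dimensional $L$-lines, verify the identification is Galois-equivariant so that it descends to $\mathbb{Q}$, and then observe that both sides are Weil structures whose $L$-linear eigenspace decomposition is preserved by $\mathrm{MT}(A)(\overline{\mathbb{Q}})$ (respectively $\mathrm{G}_{\mathrm{dRB}}(A)(\overline{\mathbb{Q}})$) by the argument of Lemma \ref{weilhodge}.

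First I would set up the two relevant eigenspace decompositions of $V_L = V \otimes_{\mathbb{Q}} L$: the fine one
\[
V_L = \bigoplus_{\sigma \in \mathrm{Hom}(K,L)} V_\sigma, \qquad \dim_L V_\sigma = n,
\]
from Lemma \ref{neweigenspacetranslate} applied to $K$, and the coarser one
\[
V_L = \bigoplus_{\tau \in \mathrm{Hom}(k,L)} W_\tau, \qquad W_\tau = \bigoplus_{\sigma \in \mathrm{Hom}(K,L),\, \sigma|_k = \tau} V_\sigma,
\]
obtained by applying the same lemma to $k$. Since each $\tau$ has exactly $l = [K:k]$ extensions to an embedding of $K$ into $L$, we have $\dim_L W_\tau = nl = m$, and consequently $\wedge^{m} W_\tau$ is one-dimensional, canonically isomorphic via the wedge product to the tensor product $\bigotimes_{\sigma|_k = \tau} \wedge^{n} V_\sigma$. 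This gives
\[
(\wedge_{k}^{m} V) \otimes_{\mathbb{Q}} L \;=\; \bigoplus_{\tau \in \mathrm{Hom}(k,L)} \wedge^{m} W_\tau \;\cong\; \bigoplus_{\tau} \bigotimes_{\sigma|_k = \tau} \wedge^{n} V_\sigma
\]
as a subspace of $\wedge^{m} V_L$.

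Next I would analyze the left hand side. By Lemma \ref{morphismofweilstruc}, $K$, and hence $k$, acts on $\wedge_{K}^{n} V$ preserving its Hodge/dRB structure, and the action of $k$ on $(\wedge_{K}^{n} V) \otimes L = \bigoplus_\sigma \wedge^{n} V_\sigma$ is by scalar multiplication by $\sigma|_k$ on each summand. Thus the $\tau$-eigenspace of $(\wedge_{K}^{n} V) \otimes L$ for the $k$-action is exactly $\bigoplus_{\sigma|_k = \tau} \wedge^{n} V_\sigma$, an $l$-dimensional $L$-space. Applying Definition \ref{weilstructure} to $k \hookrightarrow \mathrm{End}(\wedge_{K}^{n} V)$ therefore gives
\[
\bigl(\wedge_{k}^{l}(\wedge_{K}^{n} V)\bigr) \otimes_{\mathbb{Q}} L \;=\; \bigoplus_{\tau} \wedge^{l}\Bigl(\bigoplus_{\sigma|_k = \tau} \wedge^{n} V_\sigma\Bigr) \;\cong\; \bigoplus_{\tau} \bigotimes_{\sigma|_k = \tau} \wedge^{n} V_\sigma,
\]
since each $\wedge^{n} V_\sigma$ is one-dimensional over $L$. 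Comparing with the previous display yields a canonical $L$-linear isomorphism between the two scalar extensions sitting inside $\wedge^{m} V_L$, realized by the wedge product map.

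Finally I would check descent and compatibility. Galois equivariance follows because $\mathrm{Gal}(L/\mathbb{Q})$ permutes the summands $V_\sigma$ (via its action on $\mathrm{Hom}(K,L)$) compatibly with the induced action on $\mathrm{Hom}(k,L)$ by restriction, and the wedge product map is functorial; hence the isomorphism descends to $\mathbb{Q}$. The resulting $\mathbb{Q}$-linear isomorphism is an isomorphism of Hodge (resp.~dRB) structures because, by Lemma \ref{weilhodge} applied to both $k$-Weil structures, each side is a sub-object of $\wedge^{m}_{\mathbb{Q}} V$ in the Tannakian category of Hodge structures (resp.~dRB structures), and a morphism of $\mathbb{Q}$-vector spaces inside this ambient object that respects the defining inclusion is automatically a morphism of Hodge (resp.~dRB) structures. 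The step most likely to require care is the last one: tracking the wedge product map $\wedge^{l}(\wedge^{n} V) \to \wedge^{m} V$ precisely enough to show that both $\mathbb{Q}$-subspaces coincide, rather than merely being isomorphic, so that the ``inside $\wedge^{m}_{\mathbb{Q}} V$'' assertion in the statement is genuinely verified.
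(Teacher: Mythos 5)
Your proposal is correct and follows essentially the same route as the paper: extend scalars (the paper uses $\qbar$, you use the Galois closure $L$, which makes no difference), compare the eigenspace decompositions induced by $K$ and by $k$, and identify both sides with $\bigoplus_{\tau}\bigotimes_{\sigma|_{k}=\tau}\wedge^{n}V_{\sigma}$ using that each $\wedge^{n}V_{\sigma}$ is one-dimensional, so that the two $\mathbb{Q}$-subspaces of $\wedge^{m}_{\mathbb{Q}}\mathrm{H}^1(A,\mathbb{Q})$ coincide. The final concern you flag about tracking the wedge product map precisely is exactly the point the paper's proof handles (implicitly) by verifying equality of the scalar-extended subspaces rather than a mere abstract isomorphism.
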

\begin{proof}
We will show that as $\mathbb{Q}$-vector subspaces of  $\wedge_{\mathbb{Q}}^m V$, there is an equality
\begin{equation}\label{equalityneeded}\wedge_{k}^{l}((\wedge_{K}^n V))=\wedge_{k}^m V\end{equation}
For this it suffices to show the equality of $\qbar$-vector subspaces
$$\wedge_{k}^{l}((\wedge_{K}^n V)) \otimes_{\mathbb{Q}} \qbar=\wedge_{k}^m\mathrm{H}^1(A,\mathbb{Q}) \otimes_{\mathbb{Q}} \qbar$$ 
Note that we have a canonical map $$\mathrm{Hom}(K,\qbar) \rightarrow \mathrm{Hom}(k,\qbar)$$ by restriction. It is a surjective map with the size of each fiber precisely $l$. Hence if $V \otimes \qbar=\bigoplus_{\sigma \in \mathrm{Hom}(K,\qbar)}V_{\sigma}$ is the eigenspace decomposition with respect to the embedding $K\xhookrightarrow{}\mathrm{End}(V)$, then with respect to the action of $k$ on $V$, we have the eigenspace decomposition \begin{equation}\label{keigenspace}
   V\otimes_{\mathbb{Q}}\qbar=\bigoplus_{\tau \in \mathrm{Hom}(k,\qbar)}(\oplus_{\sigma \in \mathrm{Hom}(K,\qbar),\sigma|_{k}=\tau}V_{\sigma})  
\end{equation} Therefore applying the Weil structure construction to decomposition (\ref{keigenspace}) we have that
\begin{equation*}
\begin{split}\wedge_{k}^m V\otimes_{\mathbb{Q}} \qbar&=\bigoplus_{\tau \in \mathrm{Hom}(k,\qbar)}(\wedge^{m=nl}_{\qbar}(\oplus_{\sigma \in \mathrm{Hom}(K,\qbar),\sigma|_{k}=\tau}V_{\sigma}))\\&=\bigoplus_{\tau \in \mathrm{Hom}(k,\qbar)}(\otimes_{\sigma \in \mathrm{Hom}(K,\qbar),\sigma|_{k}=\tau}\wedge^n_{\qbar}V_{\sigma})\end{split}\end{equation*} On the other hand, applying the Weil structure construction to the embedding $K \xhookrightarrow{} \mathrm{End}(V)$ we have that $$\wedge_{K}^n V\otimes_{\mathbb{Q}} \qbar=\bigoplus_{\sigma \in \mathrm{Hom}(K,\qbar)}\wedge^{n}V_{\sigma}=\bigoplus_{\tau \in \mathrm{Hom}(k,\qbar)}(\oplus_{\sigma \in \mathrm{Hom}(K,\qbar),\sigma|_{k}=\tau}\wedge^n_{\qbar}V_{\sigma})$$Recall that in Lemma \ref{morphismofweilstruc}, we have constructed an embedding $K \xhookrightarrow{} \mathrm{End}((\wedge_{K}^n V))$.  This induces a morphism of $\mathbb{Q}$-algebras \begin{equation}\label{kembedding}k \xhookrightarrow{} K \xhookrightarrow{} \mathrm{End}(\wedge_{K}^n V)\end{equation} Checking the proof of Lemma \ref{morphismofweilstruc}, with respect to formula (\ref{kembedding}) the eigenspace associated to $\tau\in \mathrm{Hom}(k,\qbar)$ is precisely $\oplus_{\sigma \in \mathrm{Hom}(K,\qbar),\sigma|_{k}=\tau}\wedge^n_{\qbar}V_{\sigma}$. Hence applying the Weil structure construction to (\ref{kembedding}) we obtain that \begin{equation*}
\begin{split}
\wedge_{k}^{l}((\wedge_{K}^n V)) \otimes_{\mathbb{Q}} \qbar&=\bigoplus_{\tau \in \mathrm{Hom}(k,\qbar)}\wedge^l_{\qbar}(\oplus_{\sigma \in \mathrm{Hom}(K,\qbar),\sigma|_{k}=\tau}\wedge^n_{\qbar}V_{\sigma})\\
&=\bigoplus_{\tau \in \mathrm{Hom}(k,\qbar)}(\otimes_{\sigma \in \mathrm{Hom}(K,\qbar),\sigma|_{k}=\tau}\wedge^n_{\qbar}V_{\sigma})
\end{split}   
\end{equation*} Therefore we obtain the desired equality (\ref{equalityneeded}).
\end{proof}

We conclude this section with a lemma relating elements in a Weil structure to invariants of a certain algebraic group, which will be used in future.
\begin{lemma}\label{weilstructurefixer}
Let $K$ be a number field and suppose we are given $K \xhookrightarrow{}\mathrm{End}(V)$ an inclusion of $\mathbb{Q}$-algebras. Denote by $\mathrm{SL}_{K}(V)$ the $\mathbb{Q}$-algebraic subgroup of $\mathrm{GL}(V)$ whose $\mathbb{Q}$-points are $\{g \in \mathrm{GL}_{K}(V)|\mathrm{det}_{K}(V \xrightarrow{\cdot g} V)=1\}$ where we view $V$ as a $K$-vector space. Then elements in the Weil structure $W:=\wedge_{K}^{n:=\frac{\mathrm{dim}(V)}{\mathrm{deg}(K/\mathbb{Q})}}\xhookrightarrow{}\wedge_{\mathbb{Q}}^{n}V$ are fixed by the induced action of $\mathrm{SL}_{K}(V)$ on $\wedge_{\mathbb{Q}}^{n}V$.
\end{lemma}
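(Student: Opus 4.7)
The plan is to prove the statement by base change to the Galois closure $L$ of $K/\mathbb{Q}$ and then exploiting the eigenspace decomposition developed in Lemma \ref{neweigenspacetranslate}. Applied to the $\mathbb{Q}$-algebra inclusion $K \hookrightarrow \mathrm{End}(V)$, that lemma yields a decomposition
\[
V \otimes_{\mathbb{Q}} L = \bigoplus_{\sigma \in \mathrm{Hom}(K,L)} V_{\sigma},
\]
with each $V_{\sigma}$ an $L$-vector space of dimension $n$; and by the very definition of the Weil structure we have $W \otimes_{\mathbb{Q}} L = \bigoplus_{\sigma} \wedge^{n}_{L} V_{\sigma}$ inside $(\wedge^{n}_{\mathbb{Q}} V) \otimes_{\mathbb{Q}} L$, each summand being one-dimensional over $L$.

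The key reduction is then to identify $\mathrm{SL}_{K}(V)(L)$ explicitly. Using the canonical isomorphism of $L$-algebras $K \otimes_{\mathbb{Q}} L \cong \prod_{\sigma} L$ sending $k \otimes l \mapsto (l\sigma(k))_{\sigma}$, a $(K \otimes L)$-linear automorphism of $V \otimes_{\mathbb{Q}} L$ is the same datum as a tuple $(g_{\sigma})_{\sigma} \in \prod_{\sigma} \mathrm{GL}_{L}(V_{\sigma})$, so $\mathrm{GL}_{K}(V)(L) \cong \prod_{\sigma} \mathrm{GL}_{L}(V_{\sigma})$. I would verify, as a routine base-change computation for determinants over the product ring $K \otimes_{\mathbb{Q}} L$, that the $K$-determinant of $g \in \mathrm{GL}_{K}(V)(L)$ corresponds under the above isomorphism to the tuple $(\det_{L}(g_{\sigma}))_{\sigma}$. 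Consequently the condition $\det_{K}(g) = 1$ translates to $\det_{L}(g_{\sigma}) = 1$ for every $\sigma$, i.e.\ $\mathrm{SL}_{K}(V)(L) = \prod_{\sigma} \mathrm{SL}_{L}(V_{\sigma})$.

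Granting this identification, for any $g = (g_{\sigma})_{\sigma} \in \mathrm{SL}_{K}(V)(L)$ the induced action on the one-dimensional $L$-vector space $\wedge^{n}_{L} V_{\sigma}$ is multiplication by $\det_{L}(g_{\sigma}) = 1$. Hence $g$ fixes every element of each summand and thus of $W \otimes_{\mathbb{Q}} L$, and in particular every element of the $\mathbb{Q}$-subspace $W$. Since this holds for all $L$-points and the fixer in $\mathrm{SL}_{K}(V)$ of any given element of $W$ is a closed $\mathbb{Q}$-subgroup, we conclude scheme-theoretically that $\mathrm{SL}_{K}(V)$ fixes $W$ pointwise. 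The only nontrivial step is the bookkeeping that matches $\det_{K}$ with the tuple $(\det_{L}(g_{\sigma}))_{\sigma}$ across $K \otimes_{\mathbb{Q}} L \cong \prod_{\sigma} L$; no input beyond Lemma \ref{neweigenspacetranslate} and elementary linear algebra is required.
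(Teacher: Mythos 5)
Your proposal is correct and follows essentially the same route as the paper: decompose $V$ over an extension field into the eigenspaces of Lemma \ref{neweigenspacetranslate}, identify the points of $\mathrm{SL}_{K}(V)$ there as block-diagonal tuples $(g_{\sigma})_{\sigma}$ with $\det(g_{\sigma})=1$, and observe that each such tuple acts trivially on the one-dimensional summands $\wedge^{n}V_{\sigma}$ of the Weil structure. The only cosmetic difference is that you base change to the Galois closure $L$ while the paper works directly over $\qbar$.
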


\begin{proof}
We will show that elements in $W \otimes \qbar$ are fixed by elements in $\mathrm{SL}_{K}(V)(\qbar)$. As usual we decompose $V_{\qbar}$ into direct sum of eigenspaces $\bigoplus_{\sigma \in \mathrm{Hom}(K,\qbar)}V_{\sigma}$ with respect to the embedding $K \rightarrow \mathrm{End}(V)$. Recall from Lemma \ref{neweigenspacetranslate} that $\mathrm{dim}(V_{\sigma})=n$. Now by definition $\mathrm{GL}_{K}(V)(\qbar)$ consists of invertible $\qbar$-linear transformations of $V_{\qbar}$ which commute with the action of $K$ on $V_{\qbar}$. Therefore each element $g\in \mathrm{GL}_{K}(V)(\qbar)$ preserves each direct summand $V_{\sigma}$. In fact $\mathrm{GL}_{K}(V)(\qbar)$ consists of block diagonal matrices $$\left\{\begin{pmatrix}
    g_{\sigma_1} & & \\
    & \ddots & \\
    & & g_{\sigma_{n}}
  \end{pmatrix}|\forall \sigma_{i}\in\mathrm{Hom}(K,\qbar), g_{\sigma_{i}} \in \mathrm{GL}_{\qbar}(V_{\sigma_{i}})\right\}
$$ Then by definition $\mathrm{SL}_{K}(V)(\qbar)$ consists of block diagonal matrices $$\left\{\begin{pmatrix}
    g_{\sigma_1} & & \\
    & \ddots & \\
    & & g_{\sigma_{n}}
  \end{pmatrix}|\forall \sigma_{i}\in\mathrm{Hom}(K,\qbar), g_{\sigma_{i}} \in \mathrm{SL}_{\qbar}(V_{\sigma_{i}})\right\}$$ Moreover we have that $$W\otimes_{\mathbb{Q}}\qbar=\bigoplus_{\sigma \in \mathrm{Hom}(K,\qbar)} \wedge^n_{\qbar}V_{\sigma}$$ therefore every element in $W\otimes_{\mathbb{Q}}\qbar$ is fixed by elements in $\mathrm{SL}_{K}(V)(\qbar)$.
\end{proof}

\subsection{Rosati Involution}
In this section we collect results about the Rosati involution associated with a polarization form on the endomorphism ring of an abelian variety $A$. The main reference is \cite{mumford1970abelian}.

Given a non-degenerate bilinear form $\phi$ on a finite dimensional vector space $V$, both defined over $\mathbb{Q}$, we can define an involution on $\mathrm{End}(V)$ as follows. Denote the isomorphism of vector spaces $$V \rightarrow V^{\vee}, w \rightarrow (v \rightarrow \phi(v,w))$$ by $\lambda_{\phi}$. Given $f\in \mathrm{End}(V)$, then we define $f^{\dag}$ to be $$\lambda^{-1}_{\phi} \circ f^{\vee} \circ \lambda_{\phi}$$ The $\dag$ construction satisfies that for any $v,w \in V$ we have that 
$$\phi(f(v),w)=\phi(v,f^{\dag}(w))$$ 
Now suppose $V=\betti$ and $\phi$ is a polarization form on $V$. Denote the image of $\mathrm{End}^{\circ}(A) \xhookrightarrow{} \mathrm{End}(V)$ by $E$. Then if $f\in E$, we have that $f^{\dag}=\lambda^{-1}_{\phi} \circ f^{\vee} \circ \lambda_{\phi}$ is also a morphism of Hodge structures. Hence $f^{\dag}\in E$.
\begin{definition}\label{rosatidefn}
In the setup of above, we call the involution $\dag: E\rightarrow E$ the \textit{Rosati involution} on $E$ associated with the polarization form $\phi$.     
\end{definition}
Now we restrict further to the case where $E$ is a field.
\begin{lemma}
   The Rosati involution satisfies the following positivity condition $$\mathrm{Tr}_{E/\mathbb{Q}}(ee^{\dag})>0$$ when $e\neq 0$.
\end{lemma}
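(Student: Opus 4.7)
The plan is to reduce the trace positivity statement to the classical fact that a polarization yields a positive definite inner product on $V_{\mathbb{R}}$, via the Weil operator $J$, and to identify the Rosati involution with the adjoint with respect to this inner product.

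First, I would introduce the positive definite real form associated with the polarization. Let $J \in \mathrm{End}(V_{\mathbb{R}})$ be the Weil operator of the weight $1$ Hodge structure on $V = \mathrm{H}^1(A,\mathbb{Q})$, so that $J$ acts as multiplication by $i$ on $V^{1,0}$ and as multiplication by $-i$ on $V^{0,1}$. Because $\phi$ is a polarization, the bilinear form
$$\psi: V_{\mathbb{R}} \times V_{\mathbb{R}} \to \mathbb{R},\qquad \psi(x,y) := \phi(x, Jy)$$
is symmetric and positive definite. This is the only input needed from the polarization beyond non-degeneracy.

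Second, I would check that on $E$ the Rosati involution $\dag$ coincides with the $\psi$-adjoint. Since $E$ sits inside $\mathrm{End}_{\mathrm{Hdg}}(V)$, every $e \in E$ commutes with the action of the Mumford--Tate group on $V_{\mathbb{R}}$, hence in particular with $J$. Combining this with the defining identity $\phi(ex,y) = \phi(x, e^{\dag}y)$ gives
$$\psi(ex,y) = \phi(ex, Jy) = \phi(x, e^{\dag}Jy) = \phi(x, J e^{\dag}y) = \psi(x, e^{\dag}y),$$
so $\dag$ is indeed the $\psi$-adjoint of $e$ restricted to $E$.

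Third, I would compute the $\mathbb{Q}$-trace of $ee^{\dag}$ in a $\psi$-orthonormal basis $f_1,\dots,f_N$ of $V_{\mathbb{R}}$. Using cyclicity of the trace,
$$\mathrm{Tr}_{V/\mathbb{Q}}(ee^{\dag}) = \mathrm{Tr}_{V_{\mathbb{R}}/\mathbb{R}}(e^{\dag}e) = \sum_{i=1}^{N}\psi(e^{\dag}e f_i, f_i) = \sum_{i=1}^{N}\psi(ef_i, ef_i) \geq 0,$$
with equality if and only if $ef_i = 0$ for every $i$, i.e.\ $e = 0$ in $\mathrm{End}(V)$, and hence $e = 0$ in $E$ by injectivity of $E \hookrightarrow \mathrm{End}(V)$.

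Finally, I would pass from the $\mathbb{Q}$-trace on $\mathrm{End}(V)$ to the field trace on $E$. Since $E$ is a field acting faithfully on $V$, the space $V$ is an $E$-vector space; let $r = \dim_{E}(V) > 0$. For any $\alpha \in E$ one has $\mathrm{Tr}_{V/\mathbb{Q}}(\alpha) = r \cdot \mathrm{Tr}_{E/\mathbb{Q}}(\alpha)$. Applying this with $\alpha = ee^{\dag} \in E$ yields
$$r \cdot \mathrm{Tr}_{E/\mathbb{Q}}(ee^{\dag}) = \mathrm{Tr}_{V/\mathbb{Q}}(ee^{\dag}) > 0$$
for $e \neq 0$, which is exactly the required positivity. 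The only delicate step is the second one, where the fact that $E$ acts by Hodge endomorphisms (hence commutes with $J$) is used to identify the purely algebraic $\dag$ with the analytic $\psi$-adjoint; once this identification is in place, the remaining arguments are bookkeeping.
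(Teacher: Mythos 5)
The paper states this lemma without proof (it is quoted as a classical fact, with \cite{mumford1970abelian} as the reference), so there is no in-text argument to compare against; your proof is correct and is the standard one, identifying $\dag$ on $E$ with the adjoint for the positive definite symmetric form $\psi(x,y)=\phi(x,Jy)$ and then comparing $\mathrm{Tr}_{V/\mathbb{Q}}$ with $\mathrm{Tr}_{E/\mathbb{Q}}$. All steps check out in the paper's conventions: elements of $E$ and their Rosati transforms are Hodge endomorphisms, hence commute with the Weil operator $J$, the positivity of $\psi$ is exactly the paper's polarization condition $-2\sqrt{-1}\,\phi_{\mathbb{C}}(v,\overline{v})>0$, and the field case assumed in the lemma makes the final reduction $\mathrm{Tr}_{V/\mathbb{Q}}(\alpha)=r\cdot\mathrm{Tr}_{E/\mathbb{Q}}(\alpha)$ valid.
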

If $E$ is a CM-field, there is another involution which satisfies a similar positivity condition, namely, the complex conjugation. We end this section with the following well known lemma which will be used later. 

\begin{lemma}\label{rosaticmcoincides}
    For an abelian variety $A$ with $\mathrm{End}^{\circ}(A)=E$ where $E$ is a CM-field, the Rosati involution associated with any polarization form $\phi$ coincides with the complex conjugation on $E$. Moreover, we have $\mathrm{dim}_{\mathbb{Q}}(\mathrm{H}^{1,1}(A)\cap \mathrm{H}^{2}(A,\mathbb{Q}))=\frac{\mathrm{deg}(E/\mathbb{Q})}{2}$.
\end{lemma}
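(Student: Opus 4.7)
The plan is to deduce both assertions from the positivity property of the Rosati involution $\dag$ on $E=\mathrm{End}^{\circ}(A)$ together with the structure of CM fields recalled in Section \ref{cmintrosection}. The guiding principle is classical: on a CM field, the only positive involution is complex conjugation.

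First I would show that $\dag$ cannot be the identity. Since $E$ is CM, it is a quadratic extension $F[x]/(x^2-\alpha)$ of the totally real subfield $F$ with $\alpha\in F$ totally negative. Then $\mathrm{Tr}_{E/\mathbb{Q}}(x^2)=\mathrm{Tr}_{E/\mathbb{Q}}(\alpha)=2\mathrm{Tr}_{F/\mathbb{Q}}(\alpha)<0$, contradicting positivity if $x^{\dag}=x$. Hence $\dag$ is a nontrivial involution of $E$, and its fixed field $E^{\dag}$ is a subfield of index $2$ in $E$. By Corollary \ref{subofcm}, $E^{\dag}$ is either totally real or CM, and the same trace computation inside $E^{\dag}$ (using that $\mathrm{Tr}_{E/\mathbb{Q}}(e^2)=2\,\mathrm{Tr}_{E^{\dag}/\mathbb{Q}}(e^2)$ for $e\in E^{\dag}$) excludes the CM case. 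So $E^{\dag}$ is a totally real subfield of index $2$ in $E$.

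The next step is to argue that such a subfield is unique, so $E^{\dag}=F$ and $\dag=\tau$. If $F'\subset E$ is another totally real subfield of index $2$ and $\sigma$ generates $\mathrm{Gal}(E/F')$, then every embedding $\iota:E\hookrightarrow\mathbb{C}$ restricts to a real embedding on $F'$, so $\overline{\iota}|_{F'}=\iota|_{F'}$; hence $\overline{\iota}\in\{\iota,\iota\circ\sigma\}$. Since $E$ has no real embedding, $\overline{\iota}=\iota\circ\sigma$ for every $\iota$, so $\sigma$ satisfies the defining property of complex conjugation on $E$ (Definition \ref{CMdefinitions}(2)), giving $\sigma=\tau$ and $F'=F$. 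This completes the first assertion.

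For the second assertion, I would invoke the standard identification $\mathrm{NS}(A)\otimes\mathbb{Q}\cong\{e\in\mathrm{End}^{\circ}(A):e^{\dag}=e\}$: fix a polarization $\lambda_{0}:A\to A^{\vee}$, then $\lambda\mapsto\lambda_{0}^{-1}\circ\lambda$ identifies symmetric homomorphisms $A\to A^{\vee}$ (i.e.\ $\mathrm{NS}(A)\otimes\mathbb{Q}$) with the $\dag$-fixed part of $\mathrm{End}^{\circ}(A)$ (see \cite{mumford1970abelian}). By the first part this fixed part is $E^{\tau}=F$, which is $\mathbb{Q}$-vector space of dimension $\deg(E/\mathbb{Q})/2$. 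Combining with the Lefschetz $(1,1)$ theorem, which gives $\mathrm{NS}(A)\otimes\mathbb{Q}=\mathrm{H}^{1,1}(A)\cap\mathrm{H}^{2}(A,\mathbb{Q})$, yields the desired equality of dimensions.

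The only potentially delicate point is the uniqueness of the totally real subfield of index $2$, but this is handled by the embedding argument above; everything else is either a direct trace/positivity computation or a citation to standard facts in \cite{mumford1970abelian} and Lefschetz $(1,1)$. No separate obstacle is expected.
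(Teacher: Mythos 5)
Your proposal is correct, and it diverges from the paper only in how the first assertion is handled. For the statement that the Rosati involution is complex conjugation, the paper simply cites the analysis in Mumford (pp.~201--202), whereas you give a self-contained argument from the positivity condition $\mathrm{Tr}_{E/\mathbb{Q}}(ee^{\dag})>0$: the trace computation with a totally imaginary generator rules out $\dag=\mathrm{id}$ and (via Corollary \ref{subofcm} and trace transitivity) rules out a CM fixed field, so $E^{\dag}$ is totally real of index $2$; your uniqueness argument for such a subfield, using that $E$ is totally imaginary and the uniqueness clause in Definition \ref{CMdefinitions}(2), then pins down $\dag=\tau$. All of these steps check out, and the argument buys independence from Mumford's classification of algebras with positive involution, at the cost of a bit more text; it leans only on facts already proved or stated in the paper's preliminaries. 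For the second assertion your route is essentially the paper's: identify $\mathrm{NS}(A)\otimes\mathbb{Q}$ with the symmetric homomorphisms $A\to A^{\vee}$ and hence with the $\dag$-fixed part of $\mathrm{End}^{\circ}(A)$ (the paper cites Bost--Charles and Moonen--Zarhin for this, you cite Mumford via $\lambda\mapsto\lambda_0^{-1}\circ\lambda$, which is the same identification), use the first part to see this fixed part is the maximal totally real subfield $F$ of dimension $\deg(E/\mathbb{Q})/2$, and invoke the Lefschetz $(1,1)$ theorem to equate $\mathrm{NS}(A)\otimes\mathbb{Q}$ with $\mathrm{H}^{1,1}(A)\cap\mathrm{H}^2(A,\mathbb{Q})$ — a step the paper uses implicitly and you make explicit. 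No gaps.
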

\begin{proof}
The first statement of the lemma follows by the analysis in \cite[Page 201-202]{mumford1970abelian}. We now prove the second statement. By \cite[Proposition 3.7]{bost2016some}, we have the following identification $$\mathrm{NS}(A)=\mathrm{Hom}^{\mathrm{sym}}(A,A^{\vee}):=\{\phi\in\mathrm{Hom}(A,A^{\vee})|\phi^{\vee}=\phi\}$$ where $A^{\vee}$ denotes the dual abelian variety of $A$. Let $\lambda_{\phi}: A\rightarrow A^{\vee}$ be the isogeny associated with the polarization form $\phi$. According to \cite[Section 2.9]{mz-4-folds}, there is an isomorphism between $F:=\{e\in\mathrm{End}^{\circ}(A)|e^{\dag}=e\}$ and $\mathrm{Hom}^{\mathrm{sym}}(A,A^{\vee})$ where $e\in F$ is sent to $\lambda_{\phi}\circ e$. Therefore we have an isomorphism $\mathrm{NS}(A)\cong F$. The first part of the lemma tells us that the Rosati involution $\dag$ associated with $\phi$ is the complex conjugation on $E=\mathrm{End}^{\circ}(A)$. Hence the set $F$ can in fact be identified with the maximal totally real subfield of $E$. Hence $\mathrm{dim}(F)=\mathrm{dim}(\mathrm{H}^{1,1}(A)\cap\betti)=\frac{\mathrm{deg}(E/\mathbb{Q})}{2}$.
\end{proof}

\section{De Rham-Betti Groups of Simple CM Abelian Fourfolds}\label{cmgdrbsection}
In this section, we are going to show that for a simple CM abelian fourfold $A$, its de Rham-Betti group coincides with its Mumford-Tate group (Theorem \ref{mainthmcm}). 
\subsection{Existence of Homotheties inside De Rham-Betti Groups of CM-Abelian Varieties} \label{gmincmsection}
As the first step for the proof of Theorem \ref{mainthmcm}, in this section we will show that for a simple CM abelian variety $A$ (see Section \ref{cmintrosection} for a brief introduction) the canonical inclusion of the homotheties $\gmmath \xhookrightarrow{} \mathrm{MT}(A)$ (see Lemma \ref{existenceofhomothety}) factors through the inclusion of $\gdrbmath(A) \xhookrightarrow{} \mathrm{MT}(A)$ (see Proposition \ref{gm in CM gdrb}). The proof method of this section is inspired by \cite[Section 7.5]{kreutz2023rhambetti}.

To begin with, we will construct an algebraic subgroup of $\gdrbmath(A)$ called $\gdrbhmath(A)$ which serves the role of Hodge group in the setting of de Rham-Betti structures. Fix an abelian variety $A$ over $\overline{\mathbb{Q}}$. Denote by $$\phi: \gmmath\times \mathrm{Hdg}(A) \twoheadrightarrow \mathrm{MT}(A)$$ the natural isogeny relating its Hodge group and Mumford-Tate group (see Lemma \ref{hodgetimesgm}). By Theorem \ref{keyinclusion}, there is a natural inclusion of the $\gdrbmath(A)$ into $\mathrm{MT}(A)$. Take the identity component of the preimage of $\gdrbmath(A)$ by $\phi$ within the product $\gmmath\times \mathrm{Hdg}(A)$ and then project it down to $\mathrm{Hdg}(A)$. We then obtain a connected algebraic subgroup inside the Hodge group, which we call $\gdrbhmath(A)$. Observe that $\gdrbhmath(A)$ is also a reductive algebraic group.  To summarize, we have constructed $\gdrbhmath(A)$ such that the following diagram commutes
\begin{equation}\label{gdrbhdiagram1}
\begin{tikzcd}
                                                                                         &                                                                                                & \gdrbmath(A) \arrow[d, hook] \\
(\phi^{-1}\gdrbmath(A))^{\circ} \arrow[r, hook] \arrow[d, two heads] \arrow[rru, two heads] & \gmmath \times \mathrm{Hdg}(A) \arrow[r, "\phi", two heads] \arrow[d, "\mathrm{pr}_2", two heads] & \mathrm{MT}(A)               \\
\gdrbhmath(A) \arrow[r, hook]                                                               & \mathrm{Hdg}(A)                                                                                   &                          
\end{tikzcd}
\end{equation}
\begin{definition}\label{defgdrbh}
Following the setup and notations from the above, we define $$\gdrbhmath(A):=\mathrm{pr}_{2}((\phi^{-1}\gdrbmath(A))^{\circ})$$    
\end{definition}
For later usage, we single out an easy observation from the diagram above.
\begin{lemma}\label{keysurjection}
The algebraic group homomorphism from
 $$(\phi^{-1}\gdrbmath(A))^{\circ}\rightarrow\gdrbmath(A)$$ in the above diagram (\ref{gdrbhdiagram1}) is indeed surjective. 
\end{lemma}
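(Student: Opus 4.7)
The plan is to deduce the surjectivity from two facts already on the table: the map $\phi : \gmmath \times \mathrm{Hdg}(A) \to \mathrm{MT}(A)$ is an isogeny (Lemma \ref{hodgetimesgm}), and $\gdrbmath(A)$ is connected (Lemma \ref{gdrbconnectedness}). Together, these will force the restriction of $\phi$ to the identity component of the preimage to hit everything.

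First, I would observe that, by construction, $\phi$ carries $\phi^{-1}(\gdrbmath(A))$ onto $\gdrbmath(A)$, and that the kernel of this restriction is contained in $\ker \phi$, which is finite because $\phi$ is an isogeny. Consequently $\phi^{-1}(\gdrbmath(A))$ is an algebraic group with only finitely many connected components (for instance, the component group injects into $\ker \phi$ via the quotient by $(\phi^{-1}\gdrbmath(A))^{\circ}$, because $\gdrbmath(A)$ is connected). In particular, $(\phi^{-1}\gdrbmath(A))^{\circ}$ has finite index in $\phi^{-1}(\gdrbmath(A))$.

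Next, I would consider the image $H := \phi\bigl((\phi^{-1}\gdrbmath(A))^{\circ}\bigr) \subseteq \gdrbmath(A)$. As the image of an algebraic-group homomorphism, $H$ is a closed subgroup of $\gdrbmath(A)$; as the image of a connected algebraic group, it is connected. Moreover, because $(\phi^{-1}\gdrbmath(A))^{\circ}$ has finite index in $\phi^{-1}(\gdrbmath(A))$ and the latter surjects onto $\gdrbmath(A)$, the subgroup $H$ has finite index in $\gdrbmath(A)$.

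To conclude, I would invoke the standard fact that a closed subgroup of finite index in an algebraic group contains its identity component. Since $\gdrbmath(A)$ is connected by Lemma \ref{gdrbconnectedness}, its identity component equals $\gdrbmath(A)$ itself, so $H = \gdrbmath(A)$, which is exactly the desired surjectivity. There is no real obstacle here: the lemma is a formal consequence of the connectedness of $\gdrbmath(A)$ together with the isogeny property of $\phi$, and the argument works uniformly over $\mathbb{Q}$ since we are in characteristic zero.
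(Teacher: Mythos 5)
Your argument is correct and is essentially the paper's own (one-line) proof spelled out: $\phi$ is an isogeny and $\gdrbmath(A)$ is connected, so the image of $(\phi^{-1}\gdrbmath(A))^{\circ}$ is a closed connected finite-index subgroup of $\gdrbmath(A)$ and hence all of it. One small caveat: your parenthetical justification that the component group of $\phi^{-1}(\gdrbmath(A))$ injects into $\ker\phi$ is circular as stated (seeing that the preimage equals $(\phi^{-1}\gdrbmath(A))^{\circ}\cdot\ker\phi$ already uses the surjectivity being proved), but this is harmless since any algebraic group of finite type over a field has finitely many connected components, which is all your finite-index step needs.
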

\begin{proof}This is because $\phi$ is an isogeny and $\gdrbmath(A)$ is a connected algebraic group by Lemma \ref{gdrbconnectedness}.\end{proof}

The main goal of this section is to prove the following proposition.
\begin{proposition} \label{gm in CM gdrb}
For a simple CM abelian variety $A$, we have the following commutative diagram of algebraic groups defined over $\mathbb{Q}$ 
$$\begin{tikzcd}
                                                 & \gdrbmath(A) \arrow[d]         \\
\gmmath \arrow[r, "i_{w}"] \arrow[rd] \arrow[ru] & \mathrm{MT}(A) \arrow[d, hook] \\
                                                 & \mathrm{GL}(V)                
\end{tikzcd}$$
where $i_{w}$ sends $t\in\gmmath$ to scalar multiplication by $t^{-1}$ on $V$ (see Lemma \ref{existenceofhomothety}).
Moreover we can upgrade (\ref{gdrbhdiagram1}) to the following commutative diagram 
\begin{equation}\label{gdrbhdiagram}
\begin{tikzcd}
\gmmath \times \gdrbhmath(A) \arrow[r, two heads] \arrow[d, hook] & \gdrbmath(A) \arrow[d, hook] \\
\gmmath \times \mathrm{Hdg}(A) \arrow[r, two heads]               & \mathrm{MT}(A)              
\end{tikzcd}
\end{equation}
\end{proposition}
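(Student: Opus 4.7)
My plan is to first establish the main inclusion $\gmmath \hookrightarrow \gdrbmath(A)$ via homotheties, and then deduce the upgraded diagram (\ref{gdrbhdiagram}) as a formal consequence. Indeed, once $\gmmath\times \{e\}$ is known to lie in $H := (\phi^{-1}\gdrbmath(A))^{\circ}$, combining this inclusion with $\mathrm{pr}_2(H)=\gdrbhmath(A)$ and the group law in $H$ forces $H = \gmmath \times \gdrbhmath(A)$; the restriction of $\phi$ then realises the required surjection $\gmmath \times \gdrbhmath(A) \twoheadrightarrow \gdrbmath(A)$.

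For the main inclusion, I first observe that $\gdrbmath(A)$ is a subtorus of $\mathrm{MT}(A)$: Lemma \ref{generalizedcm} places $\mathrm{Hdg}(A)$ inside $\mathrm{U}_E$, so $\mathrm{MT}(A)$ is a subtorus of $\mathrm{Res}_{E/\mathbb{Q}}\gmmath$, and combined with Theorem \ref{reductive} (connected reductivity of $\gdrbmath(A)$) and Theorem \ref{keyinclusion}, this makes $\gdrbmath(A)$ itself a subtorus. The key input is then Theorem \ref{mainfact}: the polarization provides a sub-object $\mathbb{Q}_{\mathrm{dRB}}(-1) \hookrightarrow \wedge^2 \mathrm{H}^1_{\mathrm{dRB}}(A,\mathbb{Q})$ inside $\langle \mathrm{H}^1_{\mathrm{dRB}}(A,\mathbb{Q})\rangle^{\otimes}$, and since $\mathbb{Q}_{\mathrm{dRB}}(-1)$ is not isomorphic to the unit object (as $(2\pi i)^{-1}\notin \qbar^{\times}$), the induced character $\chi_{-1}\colon \gdrbmath(A) \to \gmmath$ is nontrivial, hence surjective by connectedness of $\gdrbmath(A)$. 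This character is the restriction of the Mumford-Tate character attached to the Hodge line $\mathbb{Q}(-1)\subset \wedge^2 \mathrm{H}^1(A,\mathbb{Q})$: it is trivial on $\mathrm{Hdg}(A)$ (a $(1,1)$-class is Hodge-fixed) and sends the homotheties $t\mapsto t^{-2}$. A dimension count then gives $\ker(\chi_{-1}|_{\mathrm{MT}(A)})^{\circ}=\mathrm{Hdg}(A)$, so $\gdrbmath(A)$ fits into the short exact sequence $1\to \gdrbmath(A)\cap \mathrm{Hdg}(A)\to \gdrbmath(A) \to \gmmath \to 1$ over $\mathbb{Q}$.

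The final step is an anisotropy argument, which I expect to be the main obstacle. Because $\mathrm{U}_E$ is anisotropic over $\mathbb{R}$ (its $\mathbb{R}$-points form a product of unit circles), the kernel $(\gdrbmath(A)\cap\mathrm{Hdg}(A))_{\mathbb{R}}$ is anisotropic and admits no nontrivial homomorphism to $\mathbb{G}_{m,\mathbb{R}}$; consequently, the maximal $\mathbb{R}$-split subtorus of $\gdrbmath(A)_{\mathbb{R}}$ must surject onto $\gmmath$ under $\chi_{-1}$, and so is at least one-dimensional. Applying the same anisotropy reasoning to $\mathrm{MT}(A)$ shows that its own maximal $\mathbb{R}$-split subtorus is precisely the one-dimensional group of homotheties. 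Comparing the two, the maximal $\mathbb{R}$-split subtori of $\gdrbmath(A)_{\mathbb{R}}$ and $\mathrm{MT}(A)_{\mathbb{R}}$ must coincide, whence $\gmmath(\mathbb{R})\subset \gdrbmath(A)(\mathbb{R})$; faithful flatness of $\mathbb{R}/\mathbb{Q}$ then descends this inclusion to $\gmmath \hookrightarrow \gdrbmath(A)$ over $\mathbb{Q}$. The real difficulty is bridging the surjection $\chi_{-1}\colon \gdrbmath(A) \twoheadrightarrow \gmmath$ (which merely gives $\gdrbmath(A)\cdot \mathrm{Hdg}(A)=\mathrm{MT}(A)$) and the stronger statement that $\gdrbmath(A)$ contains the distinguished section by homotheties; the $\mathbb{R}$-anisotropy of $\mathrm{U}_E$ is precisely what pins down this splitting canonically.
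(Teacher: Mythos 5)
Your argument is correct, but the route to the key inclusion $\gmmath\xhookrightarrow{}\gdrbmath(A)$ is genuinely different from the paper's. The paper obtains the surjection onto $\gmmath$ from $\wedge^{2g}\mathrm{H}^{1}_{\mathrm{dRB}}(A,\mathbb{Q})\cong\mathbb{Q}_{\mathrm{dRB}}(-g)$ (Lemma \ref{surjectiontogmcompatible}), passes to the identity component of the preimage of $\gdrbmath(A)$ under the isogeny $\phi:\gmmath\times\mathrm{Hdg}(A)\to\mathrm{MT}(A)$, proves $\mathrm{pr}_1$ is surjective there (Lemma \ref{pr1surj}), and then applies Goursat's lemma together with the $\mathbb{Q}$-Galois-theoretic Lemma \ref{nosurjectiontogm} (no $\mathbb{Q}$-subgroup of $\mathrm{U}_{E}$ surjects onto $\gmmath$) to force $\gmmath\times\{\mathrm{id}\}\subset(\phi^{-1}\gdrbmath(A))^{\circ}$; you instead stay inside $\mathrm{MT}(A)$, get the surjective character from the polarization line $\mathbb{Q}_{\mathrm{dRB}}(-1)$ (Lemma \ref{weightsinav}, transcendence of $2\pi i$, connectedness of $\gdrbmath(A)$), and pin down the homotheties as the maximal $\mathbb{R}$-split subtorus of both $\gdrbmath(A)_{\mathbb{R}}$ and $\mathrm{MT}(A)_{\mathbb{R}}$, using that $\mathrm{U}_{E,\mathbb{R}}$ is anisotropic, before descending from $\mathbb{R}$ to $\mathbb{Q}$ by faithfully flat base change. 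Both proofs ultimately rest on the same two inputs — a one-dimensional dRB object with transcendental period, and complex conjugation acting by $-1$ on $X^{*}(\mathrm{U}_{E})$ (which is exactly the content of your $\mathbb{R}$-anisotropy claim and of the paper's Lemma \ref{nosurjectiontogm}) — so your version trades Goursat and the isogeny cover for the split-times-anisotropic structure theory of real tori plus a descent step, whereas the paper's Corollary \ref{gminside} has the advantage of being reused later (e.g. in Proposition \ref{gmintype4}). Two points to tighten, neither fatal: your sequence $1\to\gdrbmath(A)\cap\mathrm{Hdg}(A)\to\gdrbmath(A)\to\gmmath\to1$ is only exact up to finite components of $\ker(\chi_{-1}|_{\mathrm{MT}(A)})$, which is harmless since the argument only needs that the maximal $\mathbb{R}$-anisotropic subtorus of $\gdrbmath(A)_{\mathbb{R}}$ has no nontrivial $\mathbb{R}$-character; and in the final upgrade to diagram (\ref{gdrbhdiagram}) the surjectivity of $\gmmath\times\gdrbhmath(A)\to\gdrbmath(A)$ should be recorded as in Lemma \ref{keysurjection} ($\phi$ is an isogeny and $\gdrbmath(A)$ is connected), which your sketch uses implicitly.
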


A immediate corollary of the above proposition is the following.

\begin{corollary}
For a simple CM abelian variety $A$, we have a natural inclusion $\gdrbhmath(A)\xhookrightarrow{}\gdrbmath(A)$.    
\end{corollary}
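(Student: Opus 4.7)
The plan is to extract the desired inclusion directly from the commutative diagram (\ref{gdrbhdiagram}) of Proposition \ref{gm in CM gdrb}. Writing $\Psi: \gmmath \times \mathrm{Hdg}(A) \twoheadrightarrow \mathrm{MT}(A)$ for the isogeny $(t,g) \mapsto t^{-1}g$ from Lemma \ref{hodgetimesgm}, I would first introduce the homomorphism of $\mathbb{Q}$-algebraic groups
\[
\iota: \gdrbhmath(A) \longrightarrow \gdrbmath(A), \qquad g \longmapsto \Psi(1, g),
\]
which is well defined because, by Proposition \ref{gm in CM gdrb}, the restriction of $\Psi$ to $\gmmath \times \gdrbhmath(A)$ surjects onto $\gdrbmath(A)$; in particular $\Psi(1, g) \in \gdrbmath(A)$ for every $g \in \gdrbhmath(A)$.

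To verify the injectivity of $\iota$, I would combine the explicit formula $\Psi(1, g) = g$ with the commutativity of diagram (\ref{gdrbhdiagram}). Post-composing $\iota$ with the inclusion $\gdrbmath(A) \hookrightarrow \mathrm{MT}(A)$ recovers the tautological chain of embeddings $\gdrbhmath(A) \hookrightarrow \mathrm{Hdg}(A) \hookrightarrow \mathrm{MT}(A)$ sending $g$ to $g$. Since this composite is injective, $\iota$ must be injective as well, which yields the claimed natural inclusion $\gdrbhmath(A) \hookrightarrow \gdrbmath(A)$ as $\mathbb{Q}$-algebraic subgroups of $\mathrm{MT}(A)$.

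The argument is a short diagram chase and presents no genuine obstacle: all the substantive content has been placed into Proposition \ref{gm in CM gdrb}, so the corollary merely rearranges its conclusion. The only minor subtlety to keep in mind is that the surjection in (\ref{gdrbhdiagram}) is the restriction of the isogeny $\Psi$, so that it agrees with $\Psi$ on $\{1\} \times \gdrbhmath(A)$ and hence acts as the identity on the second factor.
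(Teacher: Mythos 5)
Your proposal is correct and is essentially the paper's own argument: the paper also obtains the inclusion by restricting the surjection in diagram (\ref{gdrbhdiagram}) to $\{\mathrm{id}\}\times\gdrbhmath(A)$, noting that the isogeny sends $(1,g)$ to $g$, so the image is precisely $\gdrbhmath(A)$ inside $\gdrbmath(A)$. Your extra remark about injectivity via the composite into $\mathrm{MT}(A)$ is a harmless elaboration of the same diagram chase.
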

\begin{proof}
    We note that from diagram (\ref{gdrbhdiagram}), the subgroup $\{\mathrm{id}\}\times\gdrbhmath(A)$ of $\gmmath\times\gdrbhmath(A)$ naturally maps to $\gdrbmath(A)$ under $\phi$, and its image is precisely $\gdrbhmath(A)$.
\end{proof}

The proof of Proposition \ref{gm in CM gdrb} and the preparation for it will occupy the rest of this subsection. It is based on a detailed analysis of (\ref{gdrbhdiagram1}) and the theory of algebraic tori.

We start with the following lemma from \cite{kreutz2023rhambetti}.
\begin{lemma}[Theorem 7.11 (iii), \cite{kreutz2023rhambetti}]\label{surjectiontogmcompatible}
For any abelian variety defined over $\qbar$, there exist surjective algebraic group homomorphisms from $\mathrm{MT}(A)$ and $\gdrbmath(A)$ to $\gmmath$. Moreover, they are compatible with each other. In other words, the following diagram commutes 
$$\begin{tikzcd}
\gdrbmath(A) \arrow[r, two heads] \arrow[d, hook] & \gmmath \\
\mathrm{MT}(A) \arrow[ru, two heads]              &        
\end{tikzcd}$$
\end{lemma}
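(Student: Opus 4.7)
The plan is to exhibit both characters by restricting the Tannakian group to a Tate sub-object produced from a polarization of $A$. Fix a polarization and let $[\omega]\in\mathrm{H}^2(A,\mathbb{Q})=\wedge^2\betti$ be its Betti cycle class. On the Hodge side, $[\omega]$ is a class of type $(1,1)$ and hence gives an injection $\mathbb{Q}(-1)\hookrightarrow\wedge^2\betti$ of $\mathbb{Q}$-Hodge structures with image $\mathbb{Q}\cdot[\omega]$. On the de Rham-Betti side, Theorem \ref{bostoriginal} tells us that $[\omega]$ is simultaneously a de Rham-Betti class in $\mathrm{H}^2_{\mathrm{dRB}}(A,\mathbb{Q})\otimes\mathbb{Q}_{\mathrm{dRB}}(1)$; this is equivalent to an injection $\mathbb{Q}_{\mathrm{dRB}}(-1)\hookrightarrow\wedge^2\mathrm{H}^1_{\mathrm{dRB}}(A,\mathbb{Q})$ of de Rham-Betti structures whose underlying Betti line is again $\mathbb{Q}\cdot[\omega]$. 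Consequently $\mathbb{Q}(1)\in\langle\betti\rangle^{\otimes}$ and $\mathbb{Q}_{\mathrm{dRB}}(1)\in\langle\mathrm{H}^1_{\mathrm{dRB}}(A,\mathbb{Q})\rangle^{\otimes}$.

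The Tannakian dictionary then translates these inclusions of sub-categories into surjections of Tannakian groups $\mathrm{MT}(A)\twoheadrightarrow\mathrm{MT}(\mathbb{Q}(1))$ and $\gdrbmath(A)\twoheadrightarrow\gdrbmath(\mathbb{Q}_{\mathrm{dRB}}(1))$. I would then identify both targets with $\gmmath$: for the first, the image of the Deligne torus acting on $\mathbb{Q}(1)$ is the Zariski-dense subset $\mathbb{R}_{>0}\subset\gmmath(\mathbb{R})$ (since on the $(-1,-1)$ component the action is $z\mapsto z\bar z = |z|^2$), so Definition \ref{delignetorusmt} forces $\mathrm{MT}(\mathbb{Q}(1))=\gmmath$; for the second, the example immediately following Lemma \ref{drbgroupdim} gives $\gdrbmath(\mathbb{Q}_{\mathrm{dRB}}(1))=\gmmath$ because $2\pi i$ is transcendental.

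Finally, compatibility is tautological. For any $g\in\gdrbmath(A)\subset\mathrm{MT}(A)\subset\mathrm{GL}(\betti)$, the induced action on $\wedge^2\betti$ preserves the common sub-line $\mathbb{Q}\cdot[\omega]$ and acts on it by a single scalar $\chi(g)\in\mathbb{Q}^{\times}$, which by construction equals the image of $g$ under both surjections, so the required triangle commutes. The argument needs no novel input beyond Theorem \ref{bostoriginal} together with the standard Tannakian dictionary; no substantive obstacle arises, the only point requiring care being to keep the Tate-twist conventions aligned so that the Hodge and de Rham-Betti sub-objects sitting inside $\wedge^2\betti$ really share the same Betti realization $\mathbb{Q}\cdot[\omega]$.
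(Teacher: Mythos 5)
Your argument is correct: producing a one-dimensional object that lies simultaneously in $\langle\betti\rangle^{\otimes}$ and $\langle \mathrm{H}^1_{\mathrm{dRB}}(A,\mathbb{Q})\rangle^{\otimes}$, applying \cite[Proposition 2.21]{deligne2012tannakian} to get surjections onto its (one-dimensional) Tannakian groups, and checking compatibility on the common Betti line is a complete proof of the lemma. The paper reaches the same conclusion by a slightly different route: it takes the canonical top exterior power $\wedge^{2g}\mathrm{H}^1_{\mathrm{dRB}}(A,\mathbb{Q})\cong\mathbb{Q}_{\mathrm{dRB}}(-g)$ (the fundamental class being algebraic), so no choice of polarization is needed, and compatibility is immediate because both induced surjections are restrictions of the determinant on $\mathrm{GL}(\betti)$. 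Your version instead uses the polarization class $[\omega]\in\wedge^2\betti$, which buys the finer fact that $\mathbb{Q}_{\mathrm{dRB}}(\pm1)$ itself (not just $\mathbb{Q}_{\mathrm{dRB}}(\pm g)$) lies in the category — exactly the content the paper later extracts in Lemma \ref{weightsinav} — at the cost of invoking the polarization; note also that your character differs from the paper's (it is, up to the twist conventions, a ``weight'' character rather than $\det$), which is harmless since the lemma only asserts the existence of compatible surjections. One small attribution point: the input you need is the easy direction that an algebraic class is a de Rham--Betti class (compatibility of the Betti and de Rham cycle class maps under $\rho_m$ up to a power of $2\pi i$), not the converse statement that constitutes Theorem \ref{bostoriginal}; the paper's own proof of this lemma uses precisely that easy direction, so this is a citation slip rather than a gap.
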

\begin{proof}
We include the proof for later usage. Suppose the dimension of the abelian variety $A$ is $g$. Then $\wedge^{2g}\mathrm{H}^1(A,\mathbb{Q})$ is a weight $2g$ Hodge structure of dimension 1. Hence $$\mathrm{MT}(\wedge^{2g}\mathrm{H}^1(A,\mathbb{Q}))=\gmmath$$ On the other hand, notice that elements in the one-dimensional de Rham-Betti structure $$\mathrm{H}_{\mathrm{dRB}}^{2g}(A,\mathbb{Q}) \otimes \mathbb{Q}_{\mathrm{dRB}}(g)$$ are algebraic classes and hence are de Rham-Betti classes. Therefore we obtain $$\wedge^{2g}\mathrm{H}_{\mathrm{dRB}}^1(A,\mathbb{Q})=\mathrm{H}_{\mathrm{dRB}}^{2g}(A,\mathbb{Q}) \cong \mathbb{Q}_{\mathrm{dRB}}(-g)$$ Thus $\gdrbmath(\wedge^{2g}\mathrm{H}_{\mathrm{dRB}}^1(A,\mathbb{Q}))=\gmmath$. By \cite[Proposition 2.21]{deligne2012tannakian} and using the Tannakian formalism for Mumford-Tate groups (Definition \ref{tannakianformalism1}) the inclusion of neutral Tannakian categories $$\langle\wedge^{2g}\mathrm{H}^1(A,\mathbb{Q})\rangle^{\otimes} \xhookrightarrow{} \langle \mathrm{H}^1(A,\mathbb{Q})\rangle^{\otimes}$$ and $$\langle\wedge^{2g}\mathrm{H}^1_{\mathrm{dRB}}(A,\mathbb{Q})\rangle^{\otimes} \xhookrightarrow{} \langle \mathrm{H}_{\mathrm{dRB}}^1(A,\mathbb{Q})\rangle^{\otimes}$$ induces surjective algebraic group homomorphisms $\mathrm{MT}(A) \twoheadrightarrow \gmmath$ and $\gdrbmath(A) \twoheadrightarrow \gmmath$. Upon close inspections, these two homomorphisms are the restrictions of the usual determinant maps on $\mathrm{GL}(\betti)$, hence they are compatible with each other.
\end{proof}

In preparation for the proof of Proposition \ref{gm in CM gdrb}, we will use the following lemma concerning properties of certain algebraic tori defined over $\mathbb{Q}$. We refer the reader to Section \ref{algtori} for a brief introduction of the relevant theory.
 \begin{lemma}\label{nosurjectiontogm}
Let $E$ be a CM number field. Then no $\mathbb{Q}$-algebraic subgroup of $\mathrm{U}_E$ admits a surjective homomorphism to $\gmmath$. Recall from Section \ref{algtori} that $\mathrm{U}_E$ is an algebraic subtorus of \resgm satisfying $\mathrm{U}_E(\mathbb{Q})=\{x \in E^{*}|x\overline{x}=1\}$.
 \end{lemma}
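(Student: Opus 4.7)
The plan is to argue by duality via character groups, exploiting the key structural feature of $\mathrm{U}_E$ recorded in Example \ref{uexample}: complex conjugation acts as multiplication by $-1$ on $X^{*}(\mathrm{U}_E)$.

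First I would reduce to the case of a subtorus. Suppose $H \subseteq \mathrm{U}_E$ is an arbitrary $\mathbb{Q}$-algebraic subgroup admitting a surjection $f \colon H \twoheadrightarrow \mathbb{G}_{\mathrm{m}}$. Since $\mathbb{G}_{\mathrm{m}}$ is connected, the image $f(H^{\circ})$ is a connected closed subgroup of $\mathbb{G}_{\mathrm{m}}$, hence either trivial or all of $\mathbb{G}_{\mathrm{m}}$. If trivial, then $f$ would factor through the finite group $H/H^{\circ}$, contradicting surjectivity onto $\mathbb{G}_{\mathrm{m}}$. Therefore $H^{\circ}$ already surjects onto $\mathbb{G}_{\mathrm{m}}$, and since $H^{\circ}$ is a connected closed subgroup of the torus $\mathrm{U}_E$, it is itself a subtorus. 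So without loss of generality $H$ is a subtorus of $\mathrm{U}_E$.

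Next I would apply the anti-equivalence of Theorem \ref{chartorus}. The inclusion $H \hookrightarrow \mathrm{U}_E$ corresponds to a surjection of $\mathrm{Gal}(\qbar/\mathbb{Q})$-modules $X^{*}(\mathrm{U}_E) \twoheadrightarrow X^{*}(H)$. By Example \ref{uexample}, the complex conjugation $c \in \mathrm{Gal}(\qbar/\mathbb{Q})$ acts as multiplication by $-1$ on $X^{*}(\mathrm{U}_E)$; since this action descends to quotients, $c$ also acts as $-1$ on $X^{*}(H)$. On the other hand, a surjection $H \twoheadrightarrow \mathbb{G}_{\mathrm{m}}$ corresponds to an injective $\mathrm{Gal}(\qbar/\mathbb{Q})$-equivariant map $X^{*}(\mathbb{G}_{\mathrm{m}}) = \mathbb{Z} \hookrightarrow X^{*}(H)$, where $\mathbb{Z}$ carries the trivial Galois action (as $\mathbb{G}_{\mathrm{m}}$ is split over $\mathbb{Q}$).

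The contradiction is then immediate: let $\chi \in X^{*}(H)$ be the image of $1 \in \mathbb{Z}$ under this injection. Galois-equivariance combined with triviality of the action on $\mathbb{Z}$ forces $c \cdot \chi = \chi$, while the computation above forces $c \cdot \chi = -\chi$. Hence $2\chi = 0$, and since $X^{*}(H)$ is a finite rank free $\mathbb{Z}$-module (Definition \ref{chardefn}), we must have $\chi = 0$, contradicting injectivity. This completes the argument. The only step requiring any care is the reduction to the connected case, which is standard; the heart of the proof is really just the sign computation on characters, made possible by the explicit description of $X^{*}(\mathrm{U}_E)$ in Example \ref{uexample}.
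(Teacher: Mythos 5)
Your proof is correct and follows essentially the same route as the paper: dualize via Theorem \ref{chartorus}, use that complex conjugation acts as $-1$ on $X^{*}(\mathrm{U}_E)$ (hence on any quotient) while acting trivially on $X^{*}(\mathbb{G}_{\mathrm{m}})$, and derive the sign contradiction. Your preliminary reduction to a subtorus is a small extra precaution (the paper applies the character-group formalism directly to an arbitrary subgroup), but the core argument is identical.
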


 \begin{proof}[Proof of lemma \ref{nosurjectiontogm}]
Denote $L$ the Galois closure of $E/\mathbb{Q}$ in $\qbar$ which is a CM-field by Lemma \ref{galoisclosureofCMisCM}. Given $T$ an arbitrary $\mathbb{Q}$-algebraic subgroup of $\mathrm{U}_E$, by Theorem \ref{chartorus}, there is a $\mathrm{Gal}(L/\mathbb{Q})$-equivariant surjection of $\mathbb{Z}$-modules: $$X^{*}(\mathrm{U}_E) \xrightarrow{f} X^{*}(T)$$ In particular, denote by $\sigma$ be the complex conjugation of the CM-field $L$, then for any $m \in X^{*}(\mathrm{U}_E)$ we have $$\sigma \circ f(m)=f(\sigma \circ m)=f(-m)=-f(m)$$ Because $f$ is surjective, $\sigma$ acts as -1 on $X^{*}(T)$. Suppose $T$ admits a surjective homomorphism to $\gmmath$. Then by Theorem \ref{chartorus} we have a $\mathrm{Gal}(L/\mathbb{Q})$-equivariant inclusion $$X^{*}(\gmmath) \xhookrightarrow{i} X^{*}(T)$$ Note that \gm is already split over $\mathbb{Q}$, hence $\sigma \in \mathrm{Gal}(L/\mathbb{Q})$ acts as identity on $X^{*}(\gmmath)$. Therefore, for $a \in X^{*}(\gmmath)$, we have that $$i(a)=i(\sigma \circ a)=\sigma \circ i(a)=-i(a)$$ which is a contradiction. 
 \end{proof}

According to the construction in diagram (\ref{gdrbhdiagram1}), we have that $\mathrm{pr}_2:(\phi^{-1}\gdrbmath(A))^{\circ}\rightarrow \gdrbhmath(A)$ is a surjective algebraic group homomorphism. We also have the following.
\begin{lemma}\label{pr1surj}
Under the first projection map from $\gmmath \times \mathrm{Hdg}(A)$ to $\gmmath$, we have that $(\phi^{-1}\gdrbmath(A))^{\circ}$ surjects to $\gmmath$.
\end{lemma}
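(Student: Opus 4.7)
The plan is to exploit the compatible surjections to $\gmmath$ provided by Lemma \ref{surjectiontogmcompatible} together with the explicit form of the isogeny $\phi$. First I would recall, from the proof of Lemma \ref{surjectiontogmcompatible}, that both surjections $\mathrm{MT}(A) \twoheadrightarrow \gmmath$ and $\gdrbmath(A) \twoheadrightarrow \gmmath$ are obtained by restricting the usual determinant character $\det : \mathrm{GL}(V) \to \gmmath$, where $V = \betti$ has dimension $2g$.

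Next I would compute the pullback of $\det$ along $\phi$. By Lemma \ref{hodgetimesgm}, $\phi$ sends $(t,g)$ to $t^{-1} g$, interpreted as scalar multiplication by $t^{-1}$ composed with $g \in \mathrm{Hdg}(A)$. The crucial observation is that $\mathrm{Hdg}(A) \subset \mathrm{SL}(V)$: indeed, since $V$ has weight $1$, for $z \in \mathbb{U}(\mathbb{R})$ the endomorphism $\mu(z)$ acts on $V^{1,0}$ by $z^{-1}$ and on $V^{0,1}$ by $\overline{z}^{-1}$, so its determinant equals $(z\overline{z})^{-g} = 1$; as $\mathrm{SL}(V)$ is a $\mathbb{Q}$-algebraic subgroup of $\mathrm{GL}(V)$ and $\mathrm{Hdg}(A)$ is by Definition \ref{hodgegroupdefn} the smallest such subgroup whose $\mathbb{R}$-points contain $\mu(\mathbb{U})$, the inclusion follows. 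Therefore $\det \circ \phi (t,g) = t^{-2g}$, i.e. the pulled-back character $\chi = \det \circ \phi$ factors through the first projection $\mathrm{pr}_1 : \gmmath \times \mathrm{Hdg}(A) \to \gmmath$.

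With this in hand, consider the composition
\[
(\phi^{-1}\gdrbmath(A))^{\circ} \twoheadrightarrow \gdrbmath(A) \twoheadrightarrow \gmmath ,
\]
where the first arrow is surjective by Lemma \ref{keysurjection} and the second by Lemma \ref{surjectiontogmcompatible}. The compatibility encoded in diagram (\ref{gdrbhdiagram1}) shows that this composition equals the restriction of $\chi$ to $(\phi^{-1}\gdrbmath(A))^{\circ}$, and hence factors as
\[
(\phi^{-1}\gdrbmath(A))^{\circ} \xrightarrow{\mathrm{pr}_1} \gmmath \xrightarrow{t \mapsto t^{-2g}} \gmmath .
\]
Since this composition is surjective and the outer map $t \mapsto t^{-2g}$ is an isogeny, $\mathrm{pr}_1((\phi^{-1}\gdrbmath(A))^{\circ})$ is a $\mathbb{Q}$-algebraic subgroup of $\gmmath$ whose image under this isogeny is all of $\gmmath$. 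Moreover, by construction $(\phi^{-1}\gdrbmath(A))^{\circ}$ is connected, so its image in $\gmmath$ is a connected $\mathbb{Q}$-algebraic subgroup, hence either trivial or $\gmmath$; triviality is ruled out by surjectivity of the composition. We conclude $\mathrm{pr}_1((\phi^{-1}\gdrbmath(A))^{\circ}) = \gmmath$.

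There is no genuinely hard step here: the whole argument is a diagram chase once one observes that the weight-$1$ Hodge group lies in $\mathrm{SL}(V)$, so that the $\gmmath$-factor in $\mathrm{MT}(A)$ is accurately tracked by the determinant character. The only point requiring care is verifying that the surjection $\gdrbmath(A) \twoheadrightarrow \gmmath$ of Lemma \ref{surjectiontogmcompatible} really does agree with the restriction of the determinant, so that it is compatible with $\chi$ under $\phi$; this is exactly the content of the final sentence of that lemma's proof.
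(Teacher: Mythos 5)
Your proof is correct, but it follows a genuinely different route from the paper's. The paper argues by contradiction: since $(\phi^{-1}\gdrbmath(A))^{\circ}$ is connected, the image of $\mathrm{pr}_1$ is either trivial or all of $\gmmath$; if it were trivial, the whole group would sit inside $\{\mathrm{id}\}\times\mathrm{Hdg}(A)\subset\{\mathrm{id}\}\times\mathrm{U}_{E}$ (using Lemma \ref{generalizedcm}, i.e.\ the CM hypothesis), and then the surjection $(\phi^{-1}\gdrbmath(A))^{\circ}\twoheadrightarrow\gdrbmath(A)\twoheadrightarrow\gmmath$ would contradict Lemma \ref{nosurjectiontogm}, the Galois-character-group statement that no $\mathbb{Q}$-subgroup of $\mathrm{U}_E$ surjects onto $\gmmath$. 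You instead identify the surjection $\gdrbmath(A)\twoheadrightarrow\gmmath$ with the restriction of the determinant (which the paper does assert at the end of the proof of Lemma \ref{surjectiontogmcompatible}, so this dependence is legitimate and you correctly flag it), observe that $\mathrm{Hdg}(A)\subset\mathrm{SL}(V)$ for a weight-one Hodge structure, and conclude that $\det\circ\phi$ factors through $\mathrm{pr}_1$ followed by the isogeny $t\mapsto t^{-2g}$, after which the connectedness dichotomy finishes the argument. What your approach buys is generality and directness: it makes no use of the CM structure (Lemmas \ref{generalizedcm} and \ref{nosurjectiontogm} are not needed), so it would prove the statement for an arbitrary abelian variety over $\qbar$; what the paper's approach buys is that it stays entirely within the torus and character-group machinery ($\mathrm{U}_E$, $X^{*}$, Galois action) that the rest of Section \ref{gmincmsection} and the later corollaries (Goursat, Corollary \ref{gminside}) reuse, so no new structural input about $\det$ and $\mathrm{SL}(V)$ has to be introduced.
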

\begin{proof}
    Because $(\phi^{-1}\gdrbmath(A))^{\circ}$ is a connected algebraic group, the image of $\mathrm{pr}_1:(\phi^{-1}\gdrbmath(A))^{\circ}\rightarrow\gmmath$ is either $\gmmath$ or $\{\mathrm{id}\}$. Suppose it is $\{\mathrm{id}\}$. Then $$(\phi^{-1}\gdrbmath(A))^{\circ}\xhookrightarrow{}\{\mathrm{id}\}\times\mathrm{Hdg}(A)\xhookrightarrow{}\{\mathrm{id}\}\times\mathrm{U}_{E}$$ by Lemma \ref{generalizedcm}. However, we have the following composition of surjective homomorphism of algebraic groups $$(\phi^{-1}\gdrbmath(A))^{\circ} \twoheadrightarrow \gdrbmath(A) \twoheadrightarrow \gmmath$$ This is then a contradiction to Lemma \ref{nosurjectiontogm}. Hence the image of $$\mathrm{pr}_1:(\phi^{-1}\gdrbmath(A))^{\circ}\rightarrow\gmmath$$ is the entire $\gmmath$.
\end{proof}

We warn the reader that the above lemma does not ensure that $\gmmath \times \{\mathrm{id}\}\xhookrightarrow{} (\phi^{-1}\gdrbmath(A))^{\circ}$. However, combining Lemma \ref{pr1surj} and Lemma \ref{nosurjectiontogm} with a well known fact from group theory called Goursat's lemma, we will obtain Corollary \ref{gminside} which is used throughout this section. We first state Goursat's lemma following \cite[Proposition 2.16]{lewis1999survey}.

\begin{lemma}[Goursat's lemma] \label{goursat}
Let $G_1$ and $G_2$ be two algebraic groups, and $H$ be an algebraic subgroup of $G_1 \times G_2$ whose projections to both $G_1$ and $G_2$ are surjective. Denote $H \cap G_1 \times \{\mathrm{id}_{G_2}\}$ by $N_1$ and $H \cap \{\mathrm{id}_{G_1}\}\times G_2$ by $N_2$. Then $N_1$ is a normal algebraic subgroup of $G_1$ and $N_2$ is a normal algebraic subgroup of $G_2$. Moreover, the image of $H$ in $G_1/N_1 \times G_2/N_2$ is the graph of an algebraic group isomorphism between $G_1/N_1$ and $G_2/N_2$.

\end{lemma}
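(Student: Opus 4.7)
The plan is to follow the classical set-theoretic proof of Goursat's lemma and then argue that every construction in sight can be performed in the category of algebraic groups. I work throughout with $\mathbb{Q}$-points (or $\overline{\mathbb{Q}}$-points), noting that all subgroups considered are defined by group-theoretic conditions on $H$ and its projections, hence closed in the Zariski sense; normality and morphism properties then follow by continuity/functoriality of the group operations.

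First I would establish that $N_1$ and $N_2$ are closed normal subgroups. Closedness is immediate since $N_1 = H \cap (G_1 \times \{\mathrm{id}_{G_2}\})$ is the intersection of two closed subgroups. For normality of $N_1$, take any $g_1 \in G_1$; surjectivity of $\mathrm{pr}_1\colon H \to G_1$ provides $g_2 \in G_2$ with $(g_1,g_2) \in H$. For $n \in N_1$, we have $(n,\mathrm{id}_{G_2}) \in H$, and since $H$ is a subgroup,
\[
(g_1,g_2)\,(n,\mathrm{id}_{G_2})\,(g_1,g_2)^{-1} = (g_1 n g_1^{-1},\,\mathrm{id}_{G_2}) \in H,
\]
so $g_1 n g_1^{-1} \in N_1$. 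Exactly the symmetric argument handles $N_2$.

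Next I would exhibit the image $\overline{H}$ of $H$ in $G_1/N_1 \times G_2/N_2$ as the graph of a well-defined map. The key computation: suppose $(g_1,g_2)$ and $(g_1',g_2')$ both lie in $H$ with $g_1 N_1 = g_1' N_1$. Then $g_1^{-1}g_1' \in N_1$ gives $(g_1^{-1}g_1',\mathrm{id}) \in H$, while the subgroup property of $H$ yields $(g_1^{-1}g_1',\,g_2^{-1}g_2') \in H$. Multiplying the inverse of the former with the latter gives $(\mathrm{id},\,g_2^{-1}g_2') \in H$, hence $g_2^{-1}g_2' \in N_2$, i.e.\ $g_2 N_2 = g_2' N_2$. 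Thus the assignment $\psi\colon g_1 N_1 \mapsto g_2 N_2$, where $(g_1,g_2)$ is any lift in $H$, is a well-defined set-theoretic map $G_1/N_1 \to G_2/N_2$, and $\overline{H}$ is precisely its graph. Since the construction is entirely symmetric in the two factors, $\psi$ is bijective, and a direct check shows that $\psi$ is a group homomorphism (hence an isomorphism of abstract groups).

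Finally I would promote this to the algebraic category. Because $N_1$ is a closed normal subgroup of $G_1$, the quotient $G_1/N_1$ exists as an algebraic group and the projection $\pi_1\colon G_1 \to G_1/N_1$ is a faithfully flat morphism of algebraic groups (similarly for $N_2$). Composing $H \hookrightarrow G_1 \times G_2$ with $\pi_1 \times \pi_2$ gives a morphism of algebraic groups $H \to G_1/N_1 \times G_2/N_2$ whose scheme-theoretic image $\overline{H}$ is the graph of $\psi$. The composition $G_1/N_1 \xrightarrow{\ \sim\ } \overline{H} \xrightarrow{\mathrm{pr}_2} G_2/N_2$, in which the first map is the inverse of the projection $\overline{H}\to G_1/N_1$ (an isomorphism of algebraic groups because it is bijective with inverse arising from the graph structure on $\overline{H}$), realizes $\psi$ as a morphism of algebraic groups, whose inverse $\psi^{-1}$ is given symmetrically by the second projection. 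The only nontrivial step is this last one: ensuring that the abstract group isomorphism $\psi$ is a morphism of schemes. The main obstacle is therefore to verify that the projection $\overline{H} \to G_1/N_1$ is an isomorphism of algebraic groups rather than merely a bijective morphism; this follows from the observation that its inverse is given by $g_1 N_1 \mapsto (g_1 N_1,\psi(g_1 N_1))$, which is constructible from the morphism $H \to \overline{H}$ via the universal property of the quotient by $N_1 \times N_2$, so the map is a closed immersion with a regular inverse.
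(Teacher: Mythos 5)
The paper itself does not prove this lemma: it is quoted from \cite[Proposition 2.16]{lewis1999survey} and used as a black box, so there is no in-paper argument to compare with; your proof is correct and follows the standard route (set-theoretic Goursat on points, then a descent of all constructions to the category of algebraic groups). Two points deserve to be made explicit. First, surjectivity of $H\to G_1$ should only be invoked on $\overline{\mathbb{Q}}$-points, as you parenthetically allow — on $\mathbb{Q}$-points it can fail — and in characteristic zero checking normality of the closed subgroup $N_1$ on $\overline{\mathbb{Q}}$-points does suffice. Second, your last step (that $\mathrm{pr}_1\colon \overline{H}\to G_1/N_1$ is an isomorphism of algebraic groups and not merely a bijective homomorphism) is the only delicate one, and the cleanest way to justify it is to observe that the kernel of $H\to G_1/N_1$ is exactly $N_1\times N_2$: your well-definedness computation already shows that $(g_1,g_2)\in H$ with $g_1\in N_1$ forces $g_2\in N_2$, and $N_1\times N_2\subseteq H$ since $H$ contains both $N_1\times\{\mathrm{id}\}$ and $\{\mathrm{id}\}\times N_2$; hence $\overline{H}\cong H/(N_1\times N_2)\to G_1/N_1$ is an isomorphism by the isomorphism theorem for algebraic groups, which is what your appeal to the universal property of the quotient by $N_1\times N_2$ amounts to once this kernel identification is spelled out. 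Finally, note that the paper's standing characteristic-zero convention is genuinely used here: in characteristic $p$ the subgroup $\{(x,x^p)\}\subseteq\mathbb{G}_a\times\mathbb{G}_a$ has trivial (reduced) $N_1$ and $N_2$, and the image of $H$ is the graph of Frobenius, a bijective homomorphism that is not an isomorphism of algebraic groups, so the passage from a bijective homomorphism to an isomorphism — equivalently, the trivial-kernel-implies-closed-immersion argument — would need the scheme-theoretic kernels and would fail as stated; in characteristic zero, where all the groups are smooth, your argument is complete.
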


\begin{corollary} \label{gminside}
Suppose $E$ is a CM-field. Given a connected $\mathbb{Q}$-algebraic subgroup $$T \subset \gmmath \times \mathrm{U}_{E}$$ such that $\mathrm{pr}_1: T \rightarrow \gmmath$ is a surjective homomorphism, we have that $$T \cap \gmmath \times \{\mathrm{id}\}=\gmmath \times \{\mathrm{id}\}\xhookrightarrow{} \gmmath \times \mathrm{U}_{E}$$
\end{corollary}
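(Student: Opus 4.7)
The plan is to apply Goursat's lemma (Lemma \ref{goursat}) to $T$, viewed as a subgroup of $\gmmath \times G_2$, where $G_2 := \mathrm{pr}_2(T) \subset \mathrm{U}_E$ is the (connected) $\mathbb{Q}$-algebraic subgroup obtained as the image of $T$ under the second projection. With $G_1 = \gmmath$, both projections of $T$ onto $G_1$ and $G_2$ are surjective by hypothesis and by construction respectively, so Goursat applies. Set $N_1 := T \cap (\gmmath \times \{\mathrm{id}\})$ and $N_2 := T \cap (\{\mathrm{id}\} \times G_2)$; then $N_1 \trianglelefteq \gmmath$, $N_2 \trianglelefteq G_2$, and there is an isomorphism of $\mathbb{Q}$-algebraic groups $\gmmath/N_1 \cong G_2/N_2$.

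The goal is precisely to prove $N_1 = \gmmath$, so suppose for contradiction that $N_1 \subsetneq \gmmath$. Since $\gmmath$ is a one-dimensional torus, every proper closed subgroup of $\gmmath$ is finite (of the form $\mu_n$), and the $n$-th power map gives an isomorphism $\gmmath/\mu_n \cong \gmmath$ over $\mathbb{Q}$. Hence $\gmmath/N_1$ is again isomorphic to $\gmmath$. Composing the quotient $G_2 \twoheadrightarrow G_2/N_2$ with the isomorphism $G_2/N_2 \cong \gmmath/N_1 \cong \gmmath$ produces a surjective homomorphism $G_2 \twoheadrightarrow \gmmath$ of $\mathbb{Q}$-algebraic groups. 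But $G_2$ is a $\mathbb{Q}$-algebraic subgroup of $\mathrm{U}_E$, and Lemma \ref{nosurjectiontogm} rules out any surjection from such a subgroup onto $\gmmath$. This contradiction forces $N_1 = \gmmath$, i.e.\ $\gmmath \times \{\mathrm{id}\} \subset T$, which combined with the reverse inclusion $N_1 \subset \gmmath \times \{\mathrm{id}\}$ coming from the definition of $N_1$ yields the claim.

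The argument is short and the only real content lies in invoking Lemma \ref{nosurjectiontogm}, so I do not anticipate a genuine obstacle; the one small point worth being explicit about is that $G_2 = \mathrm{pr}_2(T)$ is indeed a $\mathbb{Q}$-algebraic subgroup of $\mathrm{U}_E$ (it is the scheme-theoretic image of a $\mathbb{Q}$-morphism between $\mathbb{Q}$-algebraic groups) and that the quotient $\gmmath/\mu_n \cong \gmmath$ is taken over $\mathbb{Q}$ so the isomorphism produced by Goursat is compatible with the $\mathbb{Q}$-structure needed to apply Lemma \ref{nosurjectiontogm}.
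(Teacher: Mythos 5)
Your proof is correct and follows essentially the same route as the paper: apply Goursat's lemma to $T$ inside $\gmmath \times \mathrm{pr}_2(T)$ and use Lemma \ref{nosurjectiontogm} to rule out a surjection from the subgroup of $\mathrm{U}_E$ onto $\gmmath$. The only cosmetic difference is that you argue by contradiction and spell out explicitly that proper closed subgroups of $\gmmath$ are finite with $\gmmath/\mu_n \cong \gmmath$ over $\mathbb{Q}$, where the paper just notes $\gmmath/N$ is either $\gmmath$ or trivial.
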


\begin{proof}
Denote the image of $T$ under $\mathrm{pr}_2: T \rightarrow \mathrm{U}_{E}$ by $T'$, which is a connected algebraic subgroup of $\mathrm{U}_{E}$. Then we have $$T \xhookrightarrow{} \gmmath \times T'$$ where both $\mathrm{pr}_1$ and $\mathrm{pr}_2$ when restricted to $T$ are surjective algebraic group homomorphisms. Denote $T \cap \gmmath \times \{\mathrm{id}\}$ by $N$ which is a normal algebraic subgroup of $\gmmath$ by Lemma \ref{goursat}. By the same lemma we then have a surjection of $\mathbb{Q}$-algebraic groups $$T' \twoheadrightarrow \gmmath/N$$ Note that $\gmmath/N$ is either isomorphic to $\gmmath$ or is equal to a trivial algebraic group. However according to Lemma \ref{nosurjectiontogm}, $T'$ being a connected subtorus of $\mathrm{U}_{E}$, cannot admit a surjective algebraic group homomorphism to $\gmmath$. Hence $\gmmath/N$ is equal to the trivial algebraic group. Hence we have $N=T \cap \gmmath \times \{\mathrm{id}\}=\gmmath \times \{\mathrm{id}\}$.
\end{proof}
\begin{proof}[Proof of Proposition \ref{gm in CM gdrb}]\label{gdrbhmakesense}
Applying Lemma \ref{pr1surj} and Corollary \ref{gminside} to the following diagram 
$$\begin{tikzcd}
\phi^{-1}(\gdrbmath(A))^{\circ}  \arrow[rr, hook] &         & \gmmath \times \gdrbhmath(A) \arrow[ld, two heads] \arrow[rd, two heads] \arrow[r, hook] & \gmmath \times \mathrm{U}_{E} \\
                                               & \gmmath &                                                                                          & \gdrbhmath(A)                
\end{tikzcd}$$
we obtain that $\gmmath \times \{\mathrm{id}\} \xhookrightarrow{} \phi^{-1}(\gdrbmath(A))^{\circ}$. By the construction of $\gdrbhmath(A)$, we then obtain the following commutative diagram of algebraic groups $$\begin{tikzcd}
\gmmath \arrow[rd, hook] \arrow[r, hook] & \gdrbmath(A) \arrow[d, hook] \\
                                                       & \mathrm{MT}(A)              
\end{tikzcd}$$
Now because $\gmmath \times \{\mathrm{id}\}\xhookrightarrow{}\phi^{-1}(\gdrbmath(A))^{\circ}$ and by definition $\gdrbhmath(A)=\mathrm{pr}_2(\phi^{-1}(\gdrbmath(A))^{\circ})\subset \mathrm{Hdg}(A)$, we obtain the following commutative diagram
$$\begin{tikzcd}
\gmmath \times \gdrbhmath(A) \arrow[r, two heads] \arrow[d, hook] & \gdrbmath(A) \arrow[d, hook] \\
\gmmath \times \mathrm{Hdg}(A) \arrow[r, two heads]               & \mathrm{MT}(A)              
\end{tikzcd}$$ Note that the homomorphism from $\gmmath \times \gdrbhmath(A)$ to $\gdrbmath(A)$ is indeed surjective because of Lemma \ref{keysurjection}. 
\end{proof}

In fact Proposition \ref{gm in CM gdrb} tells us that for simple abelian varieties with complex multiplication, invariant tensors of $\gdrbmath(A)$ are related to invariant tensors under the smaller torus $\gdrbhmath(A)$, up to twisting by $\mathbb{Q}_{\mathrm{dRB}}(k)$ for some $k \in \mathbb{Z}$. This is analogous to the relations between Hodge groups and Mumford-Tate groups. This observation is made more precise in Corollary \ref{gdrbhinv}.
\begin{lemma}[Lemma 2.8 \cite{bost2016some}]\label{weightsinav}
For any abelian variety $A$ defined over $\qbar$, the dRB structure $\mathbb{Q}_{\mathrm{dRB}}(1)$ is an object in the Tannakian category $\langle\mathrm{H}_{\mathrm{dRB}}^1(A,\mathbb{Q})\rangle^{\otimes}$. 
\end{lemma}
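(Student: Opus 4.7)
Plan: The natural strategy is to exploit the existence of a polarization on $A$ together with Theorem \ref{bostoriginal}, which places the polarization class in the realm of de Rham-Betti classes. The key idea is to construct an injection of $\mathbb{Q}_{\mathrm{dRB}}(-1)$ into $\wedge^2 \mathrm{H}^1_{\mathrm{dRB}}(A,\mathbb{Q})$ and then use the closure properties of the Tannakian subcategory.

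First, fix any polarization $\phi$ of $A$ and consider its first Chern class $c_1(\phi) \in \mathrm{H}^2(A,\mathbb{Q})$. Because $c_1(\phi)$ is an algebraic class, Theorem \ref{bostoriginal} tells us that $c_1(\phi)$ is a de Rham-Betti class in the dRB structure $\mathrm{H}^2_{\mathrm{dRB}}(A,\mathbb{Q}) \otimes \mathbb{Q}_{\mathrm{dRB}}(1)$.

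Second, translate this into Tannakian language. Directly from Definition \ref{drbclassdefinition}, a de Rham-Betti class in a dRB structure $W$ is the same data as a morphism $\mathbb{Q}_{\mathrm{dRB}}(0) \to W$ in $\mathcal{C}_{\mathrm{dRB}}$. Applying this to $c_1(\phi)$ and untwisting by $\mathbb{Q}_{\mathrm{dRB}}(-1)$ yields a morphism
$$\iota : \mathbb{Q}_{\mathrm{dRB}}(-1) \longrightarrow \mathrm{H}^2_{\mathrm{dRB}}(A,\mathbb{Q}) \cong \wedge^2 \mathrm{H}^1_{\mathrm{dRB}}(A,\mathbb{Q}).$$
Because $\phi$ is a polarization, $c_1(\phi)$ is non-zero, so $\iota$ is non-zero; since $\mathbb{Q}_{\mathrm{dRB}}(-1)$ is one-dimensional and hence simple, $\iota$ is injective.

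Third, conclude by invoking the closure properties of $\langle \mathrm{H}^1_{\mathrm{dRB}}(A,\mathbb{Q})\rangle^{\otimes}$. This Tannakian subcategory contains $\wedge^2 \mathrm{H}^1_{\mathrm{dRB}}(A,\mathbb{Q})$ as a subquotient of $\mathrm{H}^1_{\mathrm{dRB}}(A,\mathbb{Q})^{\otimes 2}$, and is stable under taking subobjects and duals. The image of $\iota$ is a sub-dRB-structure of $\wedge^2 \mathrm{H}^1_{\mathrm{dRB}}(A,\mathbb{Q})$ isomorphic to $\mathbb{Q}_{\mathrm{dRB}}(-1)$, so $\mathbb{Q}_{\mathrm{dRB}}(-1)$ itself is an object of $\langle \mathrm{H}^1_{\mathrm{dRB}}(A,\mathbb{Q})\rangle^{\otimes}$. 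Taking duals, $\mathbb{Q}_{\mathrm{dRB}}(1) \cong \mathbb{Q}_{\mathrm{dRB}}(-1)^{*}$ lies in $\langle \mathrm{H}^1_{\mathrm{dRB}}(A,\mathbb{Q})\rangle^{\otimes}$ as well.

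Expected main obstacle: there is no genuine obstacle. The only conceptual input beyond formal manipulations is the identification, coming from Theorem \ref{bostoriginal}, of the algebraic class $c_1(\phi)$ with a dRB class in the appropriate Tate twist; once this is in hand, the argument is a routine exercise in Tannakian formalism.
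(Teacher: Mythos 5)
Your proof is correct and essentially the paper's: the paper uses the polarization pairing $\mathrm{H}^1_{\mathrm{dRB}}(A,\mathbb{Q})\otimes\mathrm{H}^1_{\mathrm{dRB}}(A,\mathbb{Q})\to\mathbb{Q}_{\mathrm{dRB}}(-1)$ to realize $\mathbb{Q}_{\mathrm{dRB}}(-1)$ as a quotient object and then dualizes, whereas you realize it as a subobject of $\wedge^2\mathrm{H}^1_{\mathrm{dRB}}(A,\mathbb{Q})$ via $c_1(\phi)$ --- the same polarization input packaged dually. Note only that the fact you need (algebraic classes are de Rham--Betti classes) is the elementary compatibility of the cycle class maps with the comparison isomorphism, not the deep statement of Theorem \ref{bostoriginal}, which asserts the converse direction.
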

\begin{proof}
Any polarization form on $A$ induces a nonzero morphism of dRB structures $$\mathrm{H}_{\mathrm{dRB}}^1(A,\mathbb{Q})\otimes \mathrm{H}_{\mathrm{dRB}}^1(A,\mathbb{Q}) \rightarrow \mathbb{Q}_{\mathrm{dRB}}(-1)$$ which realizes $\mathbb{Q}_{\mathrm{dRB}}(-1)$ as a quotient object. But by definition $\mathbb{Q}_{\mathrm{dRB}}(1)$ is the dual object of $\mathbb{Q}_{\mathrm{dRB}}(-1)$ and a Tannakian category is closed under taking the dual of an object.
\end{proof}
Recall by Tannakian duality $\langle\mathrm{H}_{\mathrm{dRB}}^1(A,\mathbb{Q})\rangle^{\otimes}$ is tensor equivalent to $\mathrm{Rep}_{\mathbb{Q}}\gdrbmath(A)$. The action of $\gmmath\subset\gdrbmath(A)$ on the de Rham-Betti structure $\mathbb{Q}_{\mathrm{dRB}}(-1)$ is stated as follows.
\begin{corollary}\label{gmweights}
Under the setup of Proposition \ref{gm in CM gdrb}, $\gmmath\subset\gdrbmath(A)$ acts as $\gmmath \rightarrow \gmmath:t \rightarrow t^{-2}$ on $\mathbb{Q}_{\mathrm{dRB}}(-1) \in \mathrm{ob}\langle\mathrm{H}_{\mathrm{dRB}}^1(A,\mathbb{Q})\rangle^{\otimes}$.
\end{corollary}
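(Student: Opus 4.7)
The plan is to trace the action of $\gmmath \subset \gdrbmath(A)$ through the construction that realizes $\mathbb{Q}_{\mathrm{dRB}}(-1)$ as an object of $\langle \mathrm{H}_{\mathrm{dRB}}^1(A,\mathbb{Q})\rangle^{\otimes}$, and then invoke the fact that this construction is $\gdrbmath(A)$-equivariant by Tannakian duality.

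First, recall from Proposition \ref{gm in CM gdrb} that the map $\gmmath \xhookrightarrow{} \gdrbmath(A)$ factors through the inclusion $i_{w}: \gmmath \xhookrightarrow{} \mathrm{MT}(A)$ of Lemma \ref{existenceofhomothety} (with $m=1$ for $\mathrm{H}^1$), so an element $t \in \gmmath$ acts as scalar multiplication by $t^{-1}$ on $V = \mathrm{H}^1(A,\mathbb{Q})$. Consequently, on $V \otimes V$ the element $t$ acts as scalar multiplication by $t^{-2}$.

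Second, by the proof of Lemma \ref{weightsinav}, a polarization of $A$ gives a surjection of de Rham-Betti structures
\[
\mathrm{H}_{\mathrm{dRB}}^1(A,\mathbb{Q}) \otimes \mathrm{H}_{\mathrm{dRB}}^1(A,\mathbb{Q}) \twoheadrightarrow \mathbb{Q}_{\mathrm{dRB}}(-1),
\]
which is a morphism in the Tannakian category $\langle \mathrm{H}_{\mathrm{dRB}}^1(A,\mathbb{Q})\rangle^{\otimes}$. Under the Tannakian equivalence with $\mathrm{Rep}_{\mathbb{Q}}\gdrbmath(A)$, this map is $\gdrbmath(A)$-equivariant, and in particular $\gmmath$-equivariant.

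Finally, since the action of $t \in \gmmath$ on the source is scalar multiplication by $t^{-2}$, and the surjection is equivariant and nonzero, the action on the quotient $\mathbb{Q}_{\mathrm{dRB}}(-1)$ must also be scalar multiplication by $t^{-2}$. This is exactly the homomorphism $\gmmath \to \gmmath$, $t \mapsto t^{-2}$, giving the claim. There is no real obstacle here; the only subtle point is to remember that the inclusion $\gmmath \xhookrightarrow{} \gdrbmath(A)$ provided by Proposition \ref{gm in CM gdrb} is the one with the $t \mapsto t^{-1}$ normalization on $V$, not $t \mapsto t$, so the exponent on $\mathbb{Q}_{\mathrm{dRB}}(-1)$ is $-2$ rather than $+2$.
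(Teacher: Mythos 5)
Your proof is correct and follows essentially the same route as the paper: the polarization-induced $\gdrbmath(A)$-equivariant morphism $\mathrm{H}_{\mathrm{dRB}}^1(A,\mathbb{Q})\otimes\mathrm{H}_{\mathrm{dRB}}^1(A,\mathbb{Q})\rightarrow\mathbb{Q}_{\mathrm{dRB}}(-1)$ together with the $t\mapsto t^{-1}$ action of $\gmmath$ on $\mathrm{H}^1(A,\mathbb{Q})$ forces the $t\mapsto t^{-2}$ action on the quotient. You merely spell out the normalization and nonvanishing points that the paper leaves implicit.
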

\begin{proof}
  The polarization form induces a $\gdrbmath(A)$-equivariant morphism $\mathrm{H}_{\mathrm{dRB}}^1(A,\mathbb{Q}) \otimes \mathrm{H}_{\mathrm{dRB}}^1(A,\mathbb{Q}) \rightarrow \mathbb{Q}_{\mathrm{dRB}}(-1)$. In particular, the action of the $\gmmath$ on both sides have to be compatible, which implies the second statement.
\end{proof}
We now record a well known convenient lemma from invariant theory, which follows directly from the definition of invariants of a linear algebraic group.
\begin{lemma}\label{invarintslemma}
Suppose $W$ is a finite dimensional $\mathbb{Q}$-vector space. Let $G \rightarrow \mathrm{GL}(W)$ be a morphism of algebraic groups defined over $\mathbb{Q}$. Then we have $W^{G}\otimes_{\mathbb{Q}}\qbar=(W \otimes_{\mathbb{Q}}\qbar)^{G(\qbar)}$.
\end{lemma}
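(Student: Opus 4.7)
The plan is to realize both sides as kernels of the same linear map, viewed over $\mathbb{Q}$ versus $\qbar$, and then invoke Zariski density of $\qbar$-points of a smooth algebraic group.

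First, I would recall that giving a morphism of algebraic groups $G \to \mathrm{GL}(W)$ over $\mathbb{Q}$ is the same as giving a coaction $\rho: W \to W \otimes_{\mathbb{Q}} \mathcal{O}(G)$, and that the (scheme-theoretic) invariants $W^{G}$ are by definition the kernel of the $\mathbb{Q}$-linear map $\rho - \iota$, where $\iota: W \to W \otimes_{\mathbb{Q}} \mathcal{O}(G)$ is $w \mapsto w \otimes 1$. Since $\qbar/\mathbb{Q}$ is faithfully flat, the formation of this kernel commutes with base change to $\qbar$, and the base-changed map $\rho_{\qbar} - \iota_{\qbar}$ is precisely the one defining the scheme-theoretic invariants of the $G_{\qbar}$-module $W \otimes_{\mathbb{Q}} \qbar$. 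This immediately gives
$$W^{G} \otimes_{\mathbb{Q}} \qbar \;=\; (W \otimes_{\mathbb{Q}} \qbar)^{G_{\qbar}}.$$

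Second, I would identify the scheme-theoretic invariants $(W \otimes \qbar)^{G_{\qbar}}$ with the naive invariants $(W \otimes \qbar)^{G(\qbar)}$ that appear in the statement. The paper works throughout in characteristic zero, so $G$ is smooth over $\mathbb{Q}$ by Cartier's theorem, hence $G_{\qbar}$ is a smooth algebraic group over an algebraically closed field. In particular $G(\qbar)$ is Zariski dense in $G_{\qbar}$. The condition $\rho_{\qbar}(v) = v \otimes 1$ inside $W \otimes_{\mathbb{Q}} \qbar \otimes \mathcal{O}(G_{\qbar})$ translates, upon evaluation at $\qbar$-points, into the condition that $g \cdot v = v$ for every $g \in G(\qbar)$; and density ensures the two conditions are equivalent, giving the desired equality.

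The only subtle point, and the one worth flagging, is the interpretation of $W^{G}$ itself: the equality is false in general if $W^{G}$ is read as the set of $G(\mathbb{Q})$-fixed vectors (since $G(\mathbb{Q})$ need not be Zariski dense in $G_{\mathbb{Q}}$). Using the scheme-theoretic/Hopf-algebraic definition makes both steps above essentially formal, which matches the paper's remark that the lemma follows \emph{directly from the definition of invariants of a linear algebraic group}.
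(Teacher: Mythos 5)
Your proof is correct. The paper in fact supplies no argument at all for this lemma --- it is recorded as a well-known fact that ``follows directly from the definition of invariants of a linear algebraic group'' --- so there is no step-by-step comparison to make; your two-step argument is the standard justification and fills in exactly what the paper leaves implicit. Both steps are sound: realizing $W^{G}$ as the kernel of the $\mathbb{Q}$-linear map $\rho-\iota$ makes the compatibility with the (flat) base change $\mathbb{Q}\to\qbar$ immediate, and the passage from scheme-theoretic invariants of $G_{\qbar}$ to $G(\qbar)$-fixed vectors is legitimate because $G_{\qbar}$ is smooth (hence reduced) in characteristic zero, so a morphism to $W_{\qbar}$ vanishing on all $\qbar$-points vanishes identically. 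Your caveat about reading $W^{G}$ scheme-theoretically rather than as $G(\mathbb{Q})$-fixed vectors is also well taken; one might only add that for the connected groups actually used in the paper (e.g.\ $\gdrbmath(A)$, Hodge groups, tori) the two readings happen to agree, since a connected linear algebraic group over $\mathbb{Q}$ is unirational and so has Zariski-dense rational points, whereas for a general (possibly non-connected) $G$, such as $\mu_{3}$ over $\mathbb{Q}$, the naive $G(\mathbb{Q})$-invariants can indeed be strictly larger.
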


\begin{corollary} \label{gdrbhinv}
Let $M:=H^1_{\mathrm{dRB}}(A,\mathbb{Q})$ be the de Rham-Betti structure of a simple CM abelian variety $A$  and let $m$ and $n$ be two non negative integers such that $m-n$ is an even number. Then
    $$(M^{\otimes m}\otimes M^{*\otimes n})^{\gdrbhmath(A)} \otimes \mathbb{Q}_{\mathrm{dRB}}({\frac{m-n}{2}}) = (M^{\otimes m}\otimes M^{*\otimes n}\otimes \mathbb{Q}_{\mathrm{dRB}}({\frac{m-n}{2}}))^{\gdrbmath(A)}$$ In particular, we have $\mathrm{End}_{\mathrm{dRB}}(M)=\mathrm{End}_{\gdrbhmath(A)}(M)$.
    
\end{corollary}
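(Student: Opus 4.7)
The plan is to exploit the surjection $\gmmath \times \gdrbhmath(A) \twoheadrightarrow \gdrbmath(A)$ furnished by Proposition \ref{gm in CM gdrb}, which lets me rewrite $\gdrbmath(A)$-invariants as invariants of the product, and then to verify that the Tate twist by $\mathbb{Q}_{\mathrm{dRB}}(\tfrac{m-n}{2})$ is precisely the character correction needed to trivialize the $\gmmath$-action on $M^{\otimes m}\otimes M^{*\otimes n}$. Setting $N := M^{\otimes m}\otimes M^{*\otimes n}\otimes \mathbb{Q}_{\mathrm{dRB}}(\tfrac{m-n}{2})$, the main identity I am after is the chain
$$(M^{\otimes m}\otimes M^{*\otimes n})^{\gdrbhmath(A)} \otimes \mathbb{Q}_{\mathrm{dRB}}(\tfrac{m-n}{2}) \;=\; N^{\gdrbhmath(A)} \;=\; N^{\gmmath\times\gdrbhmath(A)} \;=\; N^{\gdrbmath(A)},$$
each step corresponding to one of the observations below.

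First I would compute the two relevant characters. On $M$, the subgroup $\gmmath\subset\gdrbmath(A)$ acts as $t\mapsto t^{-1}\cdot\mathrm{id}$ (the content of Proposition \ref{gm in CM gdrb}), so on $M^{\otimes m}\otimes M^{*\otimes n}$ it acts as $t^{n-m}$; by Corollary \ref{gmweights} it acts on $\mathbb{Q}_{\mathrm{dRB}}(-1)$ as $t^{-2}$, hence as $t^{m-n}$ on $\mathbb{Q}_{\mathrm{dRB}}(\tfrac{m-n}{2})$. The two contributions cancel, so $\gmmath$ acts trivially on $N$. Next I would show that $\gdrbhmath(A)$ acts trivially on every Tate twist $\mathbb{Q}_{\mathrm{dRB}}(k)$. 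Note that $\gdrbhmath(A)$ is connected, being the image under $\mathrm{pr}_2$ of the connected group $(\phi^{-1}\gdrbmath(A))^{\circ}$, and lies in $\mathrm{Hdg}(A)\subset\mathrm{SL}(\betti)$ (the latter inclusion is a one-line character computation on $V^{p,q}$ in weight $1$). Hence the determinant character $\det:\gdrbmath(A)\twoheadrightarrow\gmmath$ of Lemma \ref{surjectiontogmcompatible} vanishes on $\gdrbhmath(A)$. Since $\wedge^{2g}M = \mathbb{Q}_{\mathrm{dRB}}(-g) = \mathbb{Q}_{\mathrm{dRB}}(-1)^{\otimes g}$, the character $\chi$ by which $\gdrbmath(A)$ acts on $\mathbb{Q}_{\mathrm{dRB}}(-1)$ satisfies $g\chi=\det$, so $\chi|_{\gdrbhmath(A)}$ has image in $\mu_g$; connectedness then forces $\chi|_{\gdrbhmath(A)}$ to be trivial.

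With both trivialities in hand, the assembly is formal. The surjection of Proposition \ref{gm in CM gdrb} identifies $\gdrbmath(A)$-invariants with $\gmmath\times\gdrbhmath(A)$-invariants; the product-group identity $W^{G_1\times G_2} = (W^{G_1})^{G_2}$ together with the first triviality collapses this to $N^{\gdrbhmath(A)}$; and the second triviality allows the tensor factor $\mathbb{Q}_{\mathrm{dRB}}(\tfrac{m-n}{2})$ to be pulled outside the invariants. The final statement follows by setting $m=n=1$ and using $\mathbb{Q}_{\mathrm{dRB}}(0)=\mathbb{Q}$. The main delicate point, I expect, is the second triviality: one must verify genuinely that all of $\gdrbhmath(A)$ (and not merely some finite-index subgroup) acts trivially on $\mathbb{Q}_{\mathrm{dRB}}(-1)$, which is what the connectedness argument above is designed to secure.
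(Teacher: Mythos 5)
Your proof is correct, and its overall skeleton — the surjection $\gmmath\times\gdrbhmath(A)\twoheadrightarrow\gdrbmath(A)$ from Proposition \ref{gm in CM gdrb}, the cancellation of the $\gmmath$-weights via Corollary \ref{gmweights}, and the triviality of the $\gdrbhmath(A)$-action on the one-dimensional twist — is the same as the paper's. Where you genuinely diverge is the sub-step you yourself flagged as delicate. The paper kills the $\gdrbhmath(A)$-action on one-dimensional objects by noting that $\gdrbhmath(A)$ is a connected subtorus of $\mathrm{U}_{E}$ (this uses the CM hypothesis through Lemma \ref{generalizedcm}) and then invoking the Galois-theoretic Lemma \ref{nosurjectiontogm}, which forbids any $\mathbb{Q}$-subgroup of $\mathrm{U}_{E}$ from surjecting onto $\gmmath$; this gives the stronger statement that \emph{every} character of $\gdrbhmath(A)$ is trivial. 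You instead observe $\mathrm{Hdg}(A)\subset\mathrm{SL}(\betti)$ in weight one, deduce $\det|_{\gdrbhmath(A)}=1$, use $\wedge^{2g}M\cong\mathbb{Q}_{\mathrm{dRB}}(-g)\cong\mathbb{Q}_{\mathrm{dRB}}(-1)^{\otimes g}$ to get $\chi^{g}=\det$ as characters of $\gdrbmath(A)$, where $\chi$ is the character of $\mathbb{Q}_{\mathrm{dRB}}(-1)$, and conclude by connectedness of $\gdrbhmath(A)$. This is valid and somewhat more elementary: it bypasses the character theory of $\mathrm{U}_{E}$ entirely and would apply to any abelian variety for which an analog of Proposition \ref{gm in CM gdrb} is available, at the price of trivializing only the Tate twists $\mathbb{Q}_{\mathrm{dRB}}(k)$ rather than all one-dimensional objects of the Tannakian category — which is exactly what the corollary needs. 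Your implicit uses of the inclusion $\gdrbhmath(A)\hookrightarrow\gdrbmath(A)$ (the corollary to Proposition \ref{gm in CM gdrb}, needed to restrict $\chi$ and $\det$) and of Lemma \ref{invarintslemma} to transport invariance through the surjection on $\qbar$-points are both legitimate and are the same devices the paper employs.
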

\begin{proof}
Unwinding Proposition \ref{gm in CM gdrb}, we have that $\gmmath\xhookrightarrow{}\gdrbmath(A)$ and $\gdrbhmath(A)\xhookrightarrow{}\gdrbmath(A)$. Moreover, there is a surjective algebraic group homomorphism $\phi:\gmmath \times \gdrbhmath(A)\rightarrow\gdrbmath(A)$ sending $(t,g)$ to $t^{-1}g$. In particular it is surjective on $\qbar$-points. Therefore by Lemma \ref{invarintslemma} we have $$(M^{\otimes m}\otimes M^{*\otimes n}\otimes \mathbb{Q}_{\mathrm{dRB}}({\frac{m-n}{2}}))^{\gdrbmath(A)}=(M^{\otimes m}\otimes M^{*\otimes n}\otimes \mathbb{Q}_{\mathrm{dRB}}({\frac{m-n}{2}}))^{\gmmath \times \gdrbhmath(A)}$$ The RHS of the above is equal to $
(M^{\otimes m}\otimes M^{*\otimes n}\otimes \mathbb{Q}_{\mathrm{dRB}}({\frac{m-n}{2}}))^{\gdrbhmath(A)}$ because of Lemma \ref{gmweights}. But note that $\gdrbhmath(A)$ is a connected subtorus of $\mathrm{U}_{E}$ by construction. Hence by Lemma \ref{nosurjectiontogm}, its action on any one-dimensional de Rham-Betti object is trivial. We thus obtain the first equality in the corollary. The second statement follows by taking $m=n=1$.
\end{proof}

\begin{corollary}
For any simple CM abelian variety $A$, there is no dRB class in $$\mathrm{H}^{m}_{\mathrm{dRB}}(A,\mathbb{Q}) \otimes \mathbb{Q}_{\mathrm{dRB}}(n)$$ if $m \neq 2n$.
\end{corollary}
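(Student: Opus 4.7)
The plan is to reduce the question to a $\gmmath$-weight computation. By Definition \ref{gdrbdefn}, a de Rham-Betti class in $\mathrm{H}^m_{\mathrm{dRB}}(A,\mathbb{Q}) \otimes \mathbb{Q}_{\mathrm{dRB}}(n)$ corresponds via Tannakian duality to a $\gdrbmath(A)$-fixed element of the underlying $\mathbb{Q}$-vector space $\wedge^m \mathrm{H}^1(A,\mathbb{Q})$. By Proposition \ref{gm in CM gdrb}, such an invariant must \emph{a fortiori} be fixed by the subgroup $\gmmath \hookrightarrow \gdrbmath(A)$ of homotheties. So it suffices to show that $\gmmath$ has no nonzero invariants in this space when $m \ne 2n$.

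Next I would determine the character through which $\gmmath$ acts. By Proposition \ref{gm in CM gdrb}, $t \in \gmmath$ acts on $\mathrm{H}^1_{\mathrm{dRB}}(A,\mathbb{Q})$ as scalar multiplication by $t^{-1}$, hence on $\mathrm{H}^m_{\mathrm{dRB}}(A,\mathbb{Q}) \cong \wedge^m \mathrm{H}^1_{\mathrm{dRB}}(A,\mathbb{Q})$ by $t^{-m}$. By Corollary \ref{gmweights}, $t$ acts on $\mathbb{Q}_{\mathrm{dRB}}(-1)$ by $t^{-2}$, so it acts on the dual $\mathbb{Q}_{\mathrm{dRB}}(1)$ by $t^{2}$ and on $\mathbb{Q}_{\mathrm{dRB}}(n)$ by $t^{2n}$ for every $n \in \mathbb{Z}$. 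Tensoring these two actions, $\gmmath$ acts on $\mathrm{H}^m_{\mathrm{dRB}}(A,\mathbb{Q}) \otimes \mathbb{Q}_{\mathrm{dRB}}(n)$ through the character $t \mapsto t^{2n-m}$.

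Finally, if $m \neq 2n$ this character is nontrivial, so the space of $\gmmath$-invariants in the underlying $\mathbb{Q}$-vector space vanishes; hence no nonzero $\gdrbmath(A)$-invariant exists, which by the first paragraph means no nonzero de Rham-Betti class exists. The only thing to be careful about is the bookkeeping of signs and normalizations of the $\gmmath$-action, and these are supplied directly by Proposition \ref{gm in CM gdrb} and Corollary \ref{gmweights}; I do not expect any serious obstacle beyond this.
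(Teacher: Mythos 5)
Your proposal is correct and is essentially the paper's own argument: the paper likewise invokes Proposition \ref{gm in CM gdrb} to place the homotheties $\gmmath$ inside $\gdrbmath(A)$ and then uses the weight computation of Corollary \ref{gmweights} to conclude that the $\gmmath$-invariants of $\mathrm{H}^{m}_{\mathrm{dRB}}(A,\mathbb{Q})\otimes\mathbb{Q}_{\mathrm{dRB}}(n)$ vanish when $m\neq 2n$. Your write-up just makes the character bookkeeping ($t\mapsto t^{2n-m}$) explicit.
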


\begin{proof}
    Note that $\mathrm{H}^{m}_{\mathrm{dRB}}(A,\mathbb{Q})\cong \wedge^{m}\mathrm{H}^{1}_{\mathrm{dRB}}(A,\mathbb{Q})$. Therefore if $m \neq 2n$, then by Lemma \ref{gmweights} the invariants in $\mathrm{H}^{m}_{\mathrm{dRB}}(A,\mathbb{Q}) \otimes \mathbb{Q}_{\mathrm{dRB}}(n)$ under the induced action of $\gmmath \subset\gdrbmath(A)$ are $\{0\}$.
\end{proof}

\subsection{A Non-De Rham-Betti Group Criterion for Algebraic Tori}
For a simple abelian fourfold $A$ with complex multiplication by $E$, we present a simple computational criterion which will help us rule out the possibility of certain (connected) subtorus of $\mathrm{U}_{E}$ being $\gdrbhmath(A)$. We will be using notations and constructions from Section \ref{algtori}. We begin with the following setup. 
\begin{setup}\label{torusinvsetup}
Fix a simple abelian fourfold $A$ with complex multiplication by $E$. Denote $\betti$ by $V$. By Lemma \ref{neweigenspacetranslate} the morphism of $\mathbb{Q}$-algebras \begin{equation}\label{endoinclusion}E \xhookrightarrow{} \mathrm{End}(V)\end{equation} induces a homomorphism between $\mathbb{Q}$-algebraic groups $$\mathrm{U}_E \xhookrightarrow{} \resgmmath\xhookrightarrow{}\mathrm{GL}(V)$$ Given a $\mathbb{Q}$-algebraic subtorus $T$ of $\mathrm{U}_E$, by the basic correspondence between $\mathbb{Q}$-algebraic tori and their character groups (see Theorem \ref{chartorus}), we have the following short exact sequence of $\absgalois$ modules 
\begin{equation}
\begin{tikzcd}\label{charseq}
0 \arrow[r] & \Delta \arrow[r, "i"] & X^{*}(\mathrm{U}_{E}) \arrow[r, "j"] & X^{*}(T) \arrow[r] & 0
\end{tikzcd}
\end{equation} where $\Delta$ is the kernel of the surjective homomorphism $X^{*}(\mathrm{U}_{E})\twoheadrightarrow X^{*}(T)$.
Moreover, if $T$ is a connected algebraic torus, then $X^{*}(T)$ is a free abelian group. 
Then the following paragraph and Lemma \ref{torusinvariants} describe how the position of $\Delta$ inside $X^{*}(\mathrm{U}_{E})$ gives information about invariant tensors of $T(\qbar)$ under the representation $$T \xhookrightarrow{}
\mathrm{U}_{E}\xhookrightarrow{}\mathrm{GL}(V)$$
Recall from Lemma \ref{neweigenspacetranslate}, we have the eigenspace decomposition induced by formula (\ref{endoinclusion}) $$V\otimes_{\mathbb{Q}}\qbar=\oplus_{\sigma\in\mathrm{Hom}(E,\qbar)}V_{\sigma}$$ where $V_{\sigma}$ is a one-dimensional $\qbar$-vector space and an element $e\in E$ acts on $V_{\sigma}$ via scalar multiplication by $\sigma(e)$. We fix a $\qbar$-basis $v_{\sigma}$ for each $V_{\sigma}$. 

Moreover, coming from the geometry of the CM abelian variety $A$, we have a CM-type on $E$ i.e. a set of embeddings $S$ from $E$ to $\overline{\mathbb{Q}}$ such that $S\coprod\overline{S}=\mathrm{Hom}(E,\overline{\mathbb{Q}})$ (see Definition \ref{cmtypedefn}). We now label elements of $S$ and $\overline{S}$ by $(\sigma_1,\sigma_2,\sigma_3,\sigma_4)$ and $(\sigma_5:=\overline{\sigma_1},\sigma_6:=\overline{\sigma_2},\sigma_7:=\overline{\sigma_4},\sigma_8:=\overline{\sigma_4})$ respectively. Also label the corresponding basis in $$X^{*}(\resgmmath)\cong \mathbb{Z}^{\mathrm{Hom}(E,\overline{\mathbb{Q}})}$$ (see Definition \ref{chardefn} from Section \ref{algtori}) by $(e_{\sigma_1},e_{\sigma_2},e_{\sigma_3},e_{\sigma_4})$ and $(e_{\sigma_5},e_{\sigma_6},e_{\sigma_7},e_{\sigma_8})$ respectively. Denote the image of elements $(e_{\sigma_1},e_{\sigma_2},e_{\sigma_3},e_{\sigma_4})$ under the surjection from $X^{*}(\resgmmath)$ to $X^{*}(\mathrm{U}_{E})$ by $(f_{\sigma_1},f_{\sigma_2},f_{\sigma_3},f_{\sigma_4})$. They then form a $\mathbb{Z}$-basis of $X^{*}(\mathrm{U}_{E})$. This is because the kernel of the canonical surjective map $$X^{*}(\resgmmath)\twoheadrightarrow X^{*}(\mathrm{U}_{E})$$ is spanned by $\{e_{\sigma_{i}}+e_{\sigma_{i+4}}|i\in\{1,2,3,4\}\}$ by Example \ref{uexample}.  We are now ready to state the following lemma promised at the beginning of this setup
\end{setup}
\begin{lemma}\label{torusinvariants}
Using notations from Setup \ref{torusinvsetup}, suppose $$n_1f_{\sigma_1}+n_2f_{\sigma_2}+n_3f_{\sigma_3}+n_4f_{\sigma_4} \in i(\Delta), n_{i} \in \mathbb{Z}$$ Then $v':=v_{\sigma_1}^{\otimes n_1} \otimes v_{\sigma_2}^{\otimes n_2} \otimes v_{\sigma_3}^{\otimes n_3} \otimes v_{\sigma_4}^{\otimes n_4}$ is an invariant tensor under the induced action of $T({\qbar})$ on $V':=(V^{\otimes n_1} \otimes V^{\otimes n_2} \otimes V^{\otimes n_3} \otimes V^{\otimes n_4}) \otimes \overline{\mathbb{Q}}$. We adopt the convention that wherever $n<0$, $V^{\otimes n}:=V^{*\otimes (-n)}$ and $v_{\sigma_{i}}^{\otimes n}:=v_{\sigma_{i}}^{*\otimes (-n)}$, where $v_{\sigma_{i}}^{*}$ is the dual of $v_{\sigma_{i}}$.

 \end{lemma}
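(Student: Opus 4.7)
The plan is to translate the character-theoretic hypothesis $n_1 f_{\sigma_1} + n_2 f_{\sigma_2} + n_3 f_{\sigma_3} + n_4 f_{\sigma_4} \in i(\Delta)$ into a direct calculation of how $\qbar$-points of $T$ act on the tensor $v'$, and then read off the invariance from exactness of the sequence (\ref{charseq}).

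First I would make explicit how $\mathrm{U}_{E}(\qbar)$, viewed inside $\resgmmath(\qbar) = \prod_{\sigma \in \mathrm{Hom}(E,\qbar)} \qbar^\times$, acts on the eigenspace decomposition $V \otimes_{\mathbb{Q}} \qbar = \bigoplus_{\sigma} V_\sigma$ of Lemma \ref{neweigenspacetranslate}. A point $t = (t_\sigma)_\sigma \in \mathrm{U}_{E}(\qbar)$ (so with $t_\sigma t_{\overline{\sigma}} = 1$) acts on $v_{\sigma_i}$ by scalar multiplication by $t_{\sigma_i}$, by the very definition of how $\resgmmath$ acts on $V_{\sigma_i}$ compatibly with the inclusion $E \hookrightarrow \mathrm{End}(V)$. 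Extending this multiplicatively and using the convention that $v_{\sigma_i}^{\otimes n}$ for $n < 0$ denotes the $|n|$-th tensor power of the dual vector, on which $t$ acts by $t_{\sigma_i}^{-|n|} = t_{\sigma_i}^n$, I obtain the uniform formula that $t$ acts on $v'$ by the scalar $\prod_{i=1}^{4} t_{\sigma_i}^{n_i}$.

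Second, I would identify this scalar with the evaluation of a character. By the description of $X^{*}(\resgmmath)$ in Example \ref{firstexample}, the character $n_1 e_{\sigma_1} + n_2 e_{\sigma_2} + n_3 e_{\sigma_3} + n_4 e_{\sigma_4}$ evaluated at $(t_\sigma)_\sigma$ is exactly $\prod_{i=1}^4 t_{\sigma_i}^{n_i}$. Restricting to $\mathrm{U}_{E} \subset \resgmmath$ and using that the $f_{\sigma_i}$ are by definition the images of the $e_{\sigma_i}$ under the surjection $X^{*}(\resgmmath) \twoheadrightarrow X^{*}(\mathrm{U}_{E})$, the same scalar equals $(n_1 f_{\sigma_1} + n_2 f_{\sigma_2} + n_3 f_{\sigma_3} + n_4 f_{\sigma_4})(t)$ for $t \in \mathrm{U}_{E}(\qbar)$.

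Finally I would combine these two observations with the exactness of (\ref{charseq}). The hypothesis $n_1 f_{\sigma_1} + \cdots + n_4 f_{\sigma_4} \in i(\Delta) = \ker(j)$ means that this character becomes the trivial character after restriction along $T \hookrightarrow \mathrm{U}_{E}$. Hence for every $t \in T(\qbar)$, the scalar $\prod_{i=1}^4 t_{\sigma_i}^{n_i}$ equals $1$, and $v'$ is fixed by $T(\qbar)$ acting on $V'$. Honestly, this statement is essentially a translation between two equivalent languages (characters on tori versus scalar actions on one-dimensional eigenlines), so there is no real obstacle — the only point that requires care is checking that the sign convention for negative exponents is consistent with the dualization both on the vector side ($v_{\sigma_i}^{*}$) and on the group action side (inverse scalar), so that the formula $t \cdot v_{\sigma_i}^{\otimes n_i} = t_{\sigma_i}^{n_i} v_{\sigma_i}^{\otimes n_i}$ holds uniformly for $n_i \in \mathbb{Z}$.
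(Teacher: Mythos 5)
Your proposal is correct and follows essentially the same route as the paper's own proof: compute that a point of $\mathrm{U}_{E}(\qbar)$ acts on $v'$ by the scalar $\prod_i t_{\sigma_i}^{n_i}$ (with duals contributing inverse characters), identify this with evaluation of $n_1f_{\sigma_1}+\cdots+n_4f_{\sigma_4}$, and conclude from the hypothesis that this character lies in $i(\Delta)=\ker(X^{*}(\mathrm{U}_{E})\twoheadrightarrow X^{*}(T))$ that it is trivial on $T(\qbar)$. No gaps; your care with the sign convention for negative exponents matches the paper's observation that $e_{\chi}^{*}$ is an eigenvector for $\chi^{-1}$.
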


\begin{proof}
Suppose $e_{\chi}\in V\otimes_{\mathbb{Q}}\qbar$ is an eigenvector associated to a character $\chi\in X^{*}(\resgmmath)$. Then $e_{\chi}^{*}$, the dual of $e_{\chi}$, is an eigenvector associated with the character $\chi^{-1}$. Therefore given any element $$(z_{\sigma_1},z_{\sigma_2},z_{\sigma_3},z_{\sigma_4},z_{\sigma_1}^{-1},z_{\sigma_2}^{-1},z_{\sigma_3}^{-1},z_{\sigma_4}^{-1})\in\mathrm{U}_{E}(\qbar)\subset\resgmmath(\qbar)=\prod_{\sigma_{i}\in\mathrm{Hom}(E,\qbar)}\qbar^{\times}$$ it acts on $v'$ by scalar multiplication by $z_{\sigma_1}^{n_1}z_{\sigma_2}^{n_2}z_{\sigma_3}^{n_3}z_{\sigma_4}^{n_4}$. 
Hence $v'$ is an eigenvector of the induced action of $\mathrm{U}_{E}(\qbar)$ on $V'$ with eigencharacter $\alpha:=n_1f_{\sigma_1}+n_2f_{\sigma_2}+n_3f_{\sigma_3}+n_4f_{\sigma_4}$. Since the representation of $T({\qbar})$ on $V'$ factors through the inclusion $T({\qbar}) \xhookrightarrow{} \mathrm{U}_{E}(\qbar)$, $v'$ is also an eigenvector of $\alpha|_{T({\qbar})}$. Then because $\alpha$ lies in the kernel of the surjective map $X^{*}(\mathrm{U}_{E}) \twoheadrightarrow X^{*}(T)$, we have that for any $t \in T({\qbar})$, $\alpha(t)=1$. Therefore the induced action of $T({\qbar})$ fixes the tensor $v'$. 
\end{proof}

\begin{corollary} \label{divisortest}
We adopt notations from Setup \ref{torusinvsetup}. If $i(\Delta)$ contains an element of the form $n_1f_{\sigma_1}+n_2f_{\sigma_2}+n_3f_{\sigma_3}+n_4f_{\sigma_4}$ where two of the elements in $\{n_{1},n_{2},n_{3},n_{4}\}$ are equal to zero and the other two elements in $\{n_{1},n_{2},n_{3},n_{4}\}$ belong to $\{-1,1\}$ then $T$ cannot be $\gdrbhmath(A)$.
\end{corollary}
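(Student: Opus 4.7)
The plan is to argue by contradiction: assume $T = \gdrbhmath(A)$, use Corollary \ref{gdrbhinv} to promote the torus-theoretic hypothesis into the existence of a de Rham-Betti class, then transport everything into $\mathrm{End}(V)$ via the polarization and derive a contradiction from the equality $\mathrm{End}_{\mathrm{dRB}}(V) = E$ of Theorem \ref{mainfact}. Throughout, let $\sigma_{a}, \sigma_{b}$ denote the two indices at which $n_{a}, n_{b} \in \{\pm 1\}$ are nonzero.

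First, by Lemma \ref{torusinvariants} the tensor $v' := v_{\sigma_{a}}^{\otimes n_{a}} \otimes v_{\sigma_{b}}^{\otimes n_{b}}$ is a $T(\qbar)$-invariant of $V' \otimes \qbar$, where $V'$ is one of $V \otimes V$, $V \otimes V^{*}$, $V^{*} \otimes V$, or $V^{*} \otimes V^{*}$ according to the signs of $n_{a}, n_{b}$. Letting $m$ and $n$ count the number of $V$ and $V^{*}$ factors respectively, $m-n$ is even, so Corollary \ref{gdrbhinv} (applied under the assumption $T = \gdrbhmath(A)$) shows that $v' \otimes 1$ is a $\gdrbmath(A)(\qbar)$-invariant in $V' \otimes \mathbb{Q}_{\mathrm{dRB}}(\tfrac{m-n}{2}) \otimes \qbar$, equivalently (after scalar extension) a de Rham-Betti class in the twisted space.

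Next, the polarization form $\phi \colon V \otimes V \to \mathbb{Q}_{\mathrm{dRB}}(-1)$ is a morphism of de Rham-Betti structures and induces an isomorphism $\lambda_{\phi} \colon V \otimes \mathbb{Q}_{\mathrm{dRB}}(1) \xrightarrow{\sim} V^{*}$. By Lemma \ref{rosaticmcoincides} the Rosati involution on $E$ coincides with complex conjugation; combined with the identity $\phi(ev, w) = \phi(v, \overline{e}w)$ this yields $\phi(V_{\sigma}, V_{\tau}) = 0$ unless $\tau = \overline{\sigma}$, so $\lambda_{\phi}$ sends $v_{\sigma} \otimes 1$ to a nonzero $\qbar$-multiple of $v_{\overline{\sigma}}^{*}$. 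Applying $\lambda_{\phi}$ (or its inverse) on appropriate factors of $V'$, the de Rham-Betti class represented by $v' \otimes 1$ becomes a de Rham-Betti class in $V \otimes V^{*} \cong \mathrm{End}(V)$, which by Theorem \ref{mainfact} must lie in $E \otimes \qbar \subset \mathrm{End}(V) \otimes \qbar$. Up to a nonzero scalar, its image is of the form $v_{\mu} \otimes v_{\nu}^{*}$, that is, a rank-one endomorphism sending $V_{\nu}$ into $V_{\mu}$.

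Every element of $E \otimes \qbar$ preserves each eigenspace $V_{\sigma}$, so such a rank-one endomorphism lies in $E \otimes \qbar$ only if $\mu = \nu$. A case check on the four sign patterns shows $(\mu, \nu)$ is always a pair $(\alpha, \beta)$ with $\alpha \in \{\sigma_{a}, \overline{\sigma_{a}}\}$ and $\beta \in \{\sigma_{b}, \overline{\sigma_{b}}\}$. Since $\sigma_{a} \neq \sigma_{b}$ both lie in the CM-type $S$ and $S \cap \overline{S} = \emptyset$, each candidate $\alpha$ is distinct from each candidate $\beta$, forcing $\mu \neq \nu$ in every case. This contradicts membership in $E \otimes \qbar$, so $T \neq \gdrbhmath(A)$. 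The hard part will be the sign-by-sign bookkeeping of the polarization isomorphism on eigenspaces; the substantive input is entirely supplied by Lemma \ref{torusinvariants}, Corollary \ref{gdrbhinv}, Lemma \ref{rosaticmcoincides}, and Theorem \ref{mainfact}.
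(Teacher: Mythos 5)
Your argument is correct, and it reaches the contradiction by a route that differs from the paper's in the case where the two nonzero coefficients have the same sign. The paper splits into two cases: for $n_{i}=1,\,n_{j}=-1$ it places $v_{\sigma_i}\otimes v_{\sigma_j}^{-1}$ directly in $\mathrm{End}_{\mathrm{dRB}}(V)\otimes\qbar=E\otimes\qbar$ (exactly as you do), but for $n_{i}=n_{j}=1$ it works inside $\wedge^2 V$, using Corollary \ref{gdrbhinv} together with the degree-two part of Theorem \ref{mainfact} (the Lefschetz $(1,1)$ statement for dRB classes) to identify $(\wedge^2 V)^{\gdrbhmath(A)}\otimes\qbar$ with $(\wedge^2 V)^{\mathrm{Hdg}(A)}\otimes\qbar$, and then the Picard-rank computation of Lemma \ref{rosaticmcoincides} to see that this space is the $\qbar$-span of $\{v_{\sigma_i}\wedge v_{\overline{\sigma_i}}\}$, which excludes $v_{\sigma_i}\wedge v_{\sigma_j}$ for $i\neq j$ in the CM-type. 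You instead exploit the polarization isomorphism $\lambda_{\phi}\colon V\otimes\mathbb{Q}_{\mathrm{dRB}}(1)\xrightarrow{\sim}V^{*}$ together with the orthogonality $\phi_{\qbar}(V_{\sigma},V_{\tau})=0$ for $\tau\neq\overline{\sigma}$ (which you rederive from Lemma \ref{rosaticmcoincides}; in the paper this is Lemma \ref{phicompatiblewithE}, stated later but logically independent, so there is no circularity) to convert every sign pattern into an off-diagonal rank-one element of $\mathrm{End}(V)\otimes\qbar$, and then you only need the endomorphism part $\mathrm{End}_{\mathrm{dRB}}(V)=E$ of Theorem \ref{mainfact}. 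This is more uniform and avoids the Picard-rank input, at the cost of the bookkeeping that $\lambda_{\phi}$ sends $v_{\sigma}$ to a nonzero multiple of $v_{\overline{\sigma}}^{*}$ and that the resulting indices $(\mu,\nu)$ with $\mu\in\{\sigma_a,\overline{\sigma_a}\}$, $\nu\in\{\sigma_b,\overline{\sigma_b}\}$ are always distinct because $\sigma_a\neq\sigma_b$ both lie in $S$ and $S\cap\overline{S}=\emptyset$. One small point of hygiene: since $v'$ is only a $\qbar$-vector, what Corollary \ref{gdrbhinv} and Lemma \ref{invarintslemma} give you is membership in the $\qbar$-span of the space of de Rham--Betti classes rather than a dRB class with $\mathbb{Q}$-coefficients; your final contradiction is with that $\qbar$-span (which equals $E\otimes\qbar$), exactly as in the paper, so nothing breaks, but the phrase ``is a de Rham--Betti class'' should be read in that extended-scalars sense.
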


\begin{proof}
Suppose $T =\gdrbhmath(A)$ and suppose that $i(\Delta)$ contains elements of the form mentioned in the lemma. First we assume that $$n_{i}f_{\sigma_{i}}+n_{j}f_{\sigma_{j}}\in i(\Delta)$$ with $n_{i}=1,n_{j}=-1$ and $i\neq j$. Then according to Lemma \ref{invarintslemma} and Lemma \ref{torusinvariants} we would have that $v_{\sigma_{i}} \otimes v^{-1}_{\sigma_{j}}\in(V \otimes V^{*} \otimes \qbar)^{T(\qbar)} = \mathrm{End}(V_{\qbar})^{T(\qbar)} = \mathrm{End}_{\gdrbhmath(A)}(V) \otimes \qbar =\mathrm{End}_{\gdrbmath(A)}(V) \otimes \qbar = \mathrm{End}_{\mathrm{MT}(A)}(V) \otimes \qbar$ where the third equality follows from Lemma \ref{gdrbhinv} and the last equality follows from Theorem \ref{mainfact} and Corollary \ref{gdrbhinv}. Since $A$ is a simple abelian variety with complex multiplication, we have that $\mathrm{End}_{\mathrm{MT}(A)}(V)=E$ and therefore $\mathrm{End}_{\mathrm{MT}(A)}(V) \otimes \qbar$ is precisely equal to the $\qbar$-span of $\{v_{\sigma_{i}} \otimes v^{-1}_{\sigma_{i}}|i \in \{1,\dots,8\}\}$. Thus $v_{\sigma_{i}} \otimes v^{-1}_{\sigma_{j}}(i\neq j)$ cannot lie in it, which gives a contradiction.

Now suppose $$n_{i}f_{\sigma_{i}}+n_{j}f_{\sigma_{j}}\in i(\Delta)$$ with $n_{i}=n_{j}=1$ and $i\neq j$. Then by Lemma \ref{torusinvariants}, $v_{\sigma_{i}} \wedge v_{\sigma_{j}}$ would lie in $$(\wedge^2V_{\qbar})^{T(\qbar)} = (\wedge^2 V)^{T} \otimes_{\mathbb{Q}}\qbar=(\wedge^2 V)^{\gdrbhmath(A)} \otimes_{\mathbb{Q}}\qbar$$ However, by Lemma \ref{gdrbhinv} and Theorem \ref{mainfact} we have that \begin{align*}&(\wedge^2 V \otimes \mathbb{Q}_{\mathrm{dRB}}(1))^{\gdrbmath(A)}=(\wedge^2 V)^{\gdrbhmath(A)} \otimes \mathbb{Q}_{\mathrm{dRB}}(1)\\&=(\wedge^2 V \otimes \mathbb{Q}(1))^{\mathrm{MT}(A)}=(\wedge^2 V)^{\mathrm{Hdg}(A)} \otimes \mathbb{Q}(1)\end{align*} From which we deduce that $(\wedge^2 V)^{\gdrbhmath(A)} \otimes_{\mathbb{Q}}\qbar=(\wedge^2 V)^{\mathrm{Hdg}(A)} \otimes_{\mathbb{Q}} \qbar$. But by Lemma \ref{rosaticmcoincides}, the Picard rank of a simple CM abelian fourfold is 4 and recall that we have fixed the CM-type $S$ to be precisely $\{\sigma_1,\sigma_2,\sigma_3,\sigma_4\}$. Therefore $(\wedge^2 V)^{\mathrm{Hdg}(A)}\otimes_{\mathbb{Q}}\qbar$ is precisely the $\qbar$-span of $\{v_{\sigma_{i}} \wedge v_{\overline{\sigma_{i}}}|1 \leq i \leq 4\}=\{v_{\sigma_{i}} \wedge v_{\sigma_{i+4}}|1 \leq i \leq 4\}$.
Hence for $i,j\in \{1,2,3,4\},i\neq j$, $v_{\sigma_{i}} \wedge v_{\sigma_{j}}$ never appears in $(\wedge^2 V)^{\mathrm{Hdg}(A)} \otimes_{\mathbb{Q}}\qbar$. Again we have a contradiction.
\end{proof}
\subsection{Classification of One and Two-Dimensional Subtori of \texorpdfstring{$\mathrm{U}_{E}$}{UE}}\label{longcomp}
Fix an arbitrary simple CM abelian fourfold $A$ with complex multiplication by $E$. In this section we will study subtori of $\mathrm{U}_{E}$ which are of dimension 1 and 2. We will rule out most of them as a candidate for $\gdrbhmath(A)$ (see Corollary \ref{biggerthan-one}, Lemma \ref{nolength4cycle}, Proposition \ref{noklein4} and Lemma \ref{A_4notpossible}). The main tools are the Galois theoretical analysis of the short exact sequence (\ref{charseq}) from Setup \ref{torusinvsetup} and the criterion from Corollary \ref{divisortest}.

Most of the contents in this section will be elementary computations. Roughly speaking, the main principle guiding these computations is that certain symmetry underlying the Galois action on $i(\Delta)$ from short exact sequence (\ref{charseq}) forces $i(\Delta)$ to contain elements that violate Corollary \ref{divisortest}. Eventually the key conclusion that inevitably leads to the next section is Key Remark \ref{keyremarktotakeaway}.

The reasons for the long analysis of one-dimensional and two-dimensional subtori of $\mathrm{U}_{E}$ in this section are partially explained in Remark \ref{ribetinequalitu}. 

\begin{setup}\label{setupcontinued}
We continue with Setup \ref{torusinvsetup} from the previous section. Moreover, we are going to use the labeled basis of $X^{*}(\resgmmath)$ and $X^{*}(\mathrm{U}_{E})$ from Example \ref{firstexample} and Example \ref{uexample}. To facilitate the discussion, we still denote the Galois closure of $E/\mathbb{Q}$ in $\qbar$ by $L$. 

 Since  $\galoisL$ acts on $\mathrm{Hom}(E,L)$ by permutation and recall that we have labeled a basis of $X^{*}(\resgmmath)$ by elements in $\mathrm{Hom}(E,L)$, the image of $\galoisL$ in $\mathrm{Aut}_{\mathbb{Z}}(X^{*}(\resgmmath))$ consists of permutation matrices. Denote the image by $G'$. By Lemma \ref{comjugationcommuting}, the complex conjugation in $\galoisL$ commutes with every other element of $\galoisL$. By Example \ref{uexample}, the kernel of the surjection from $X^{*}(\resgmmath)$ to $X^{*}(\mathrm{U}_{E})$ is spanned by $\{e_{\sigma_{i}}+e_{\sigma_{i+4}}|i\in\{1,2,3,4\}\}$. Therefore we have a well defined group homomorphism from $G'$ to $\mathrm{Aut}_{\mathbb{Z}}(X^{*}(\mathrm{U}_{E}))$. Denote the image of $G'$ in $\mathrm{Aut}_{\mathbb{Z}}(X^{*}(\mathrm{U}_{E}))$ by $G''$. With respect to the basis $(f_{\sigma_1},f_{\sigma_2},f_{\sigma_3},f_{\sigma_4})$ of $X^{*}(\mathrm{U}_{E})$ constructed in Setup \ref{torusinvsetup}, elements of $G''$ are generalized permutations matrices where both 1 and -1 are allowed as possible non-zero entries.  Also note that $G' \cong G''$. For each element of $G''$, flipping -1 entry to 1, we obtain a forgetful group homomorphism $F: G'' \rightarrow \mathrm{S}_4$, the symmetric group on four elements. We  summarize the above construction in the following lemma.
 \begin{lemma} \label{galois_permutation}
     We have the following sequence of group homomorphisms:
\begin{equation*}
\begin{tikzcd}
\galoisL \arrow[r] & G' \arrow[r, "\cong"'] \arrow[d, hook]       & G'' \arrow[r,"F"] \arrow[d, hook]                    & H \arrow[d, hook] \\
                   & \mathrm{Aut}_{\mathbb{Z}}(X^{*}(\resgmmath)) & \mathrm{Aut}_{\mathbb{Z}}(X^{*}(\mathrm{U}_{E})) & \mathrm{S}_4     
\end{tikzcd}   
\end{equation*}    
and H is a transitive subgroup of $\mathrm{S}_4$.
\end{lemma}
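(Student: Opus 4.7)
The plan is to observe first that Setup \ref{setupcontinued} already constructs almost all of the diagram: the homomorphism $\galoisL \to G'$ is the natural permutation action on the basis of $X^{*}(\resgmmath)$ indexed by $\mathrm{Hom}(E,L)$ (Example \ref{firstexample}); the map $G' \to G''$ is induced by the fact that the kernel of $X^{*}(\resgmmath) \twoheadrightarrow X^{*}(\mathrm{U}_{E})$ is Galois-stable, which is exactly the content of Example \ref{uexample} combined with Lemma \ref{comjugationcommuting}; and the forgetful map $F : G'' \to \mathrm{S}_{4}$ is well-defined because elements of $G''$ are generalized permutation matrices with entries in $\{0, \pm 1\}$. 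What remains to verify is the isomorphism $G' \cong G''$ and the fact that $H = F(G'')$ acts transitively on four letters.

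For the isomorphism I would argue injectivity as follows (surjectivity is built into the definition of $G''$). Suppose $g \in G'$ comes from an element of $\galoisL$ acting trivially on $X^{*}(\mathrm{U}_{E})$, so $g(f_{\sigma_{i}}) = f_{\sigma_{i}}$ for $i \in \{1,2,3,4\}$. Unpacking the definitions, $g(e_{\sigma_{i}}) = e_{g \sigma_{i}}$ maps in the quotient to $f_{g\sigma_{i}}$, where by construction $f_{\sigma_{j+4}} = -f_{\sigma_{j}}$. Hence the equality $f_{g\sigma_{i}} = f_{\sigma_{i}}$ forces $g\sigma_{i} = \sigma_{i}$ (the only other option $g\sigma_{i} = \overline{\sigma_{i}}$ produces the sign $-f_{\sigma_{i}}$). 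Applying Lemma \ref{comjugationcommuting}, complex conjugation commutes with $g$, so $g\overline{\sigma_{i}} = \overline{g\sigma_{i}} = \overline{\sigma_{i}}$, and $g$ fixes all eight embeddings of $E$ into $L$. This shows that $g$ is the identity already in $G'$.

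For transitivity I would use that $L$ is the Galois closure of $E/\mathbb{Q}$, so $\galoisL$ acts transitively on $\mathrm{Hom}(E,L)$. Given $i, j \in \{1,2,3,4\}$, pick $g \in \galoisL$ with $g\sigma_{i} \in \{\sigma_{j},\overline{\sigma_{j}}\}$; in either case $F$ applied to the image of $g$ in $G''$ sends $i$ to $j$, so $H$ acts transitively on $\{1,2,3,4\}$.

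I do not anticipate a technical obstacle: the lemma is a bookkeeping statement collecting how the Galois action on the character lattice of $\resgmmath$ descends to $X^{*}(\mathrm{U}_{E})$, and its content is entirely used to prepare the permutation-theoretic case analysis that will appear in the next subsection. The only place where a genuine input beyond definition-chasing is needed is Lemma \ref{comjugationcommuting}, which guarantees that the Galois action interacts well with complex conjugation precisely because $E$ is a CM-field.
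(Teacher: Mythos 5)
Your proposal is correct and follows essentially the same route as the paper: the diagram itself is assembled in Setup \ref{setupcontinued} (via Example \ref{firstexample}, Example \ref{uexample} and Lemma \ref{comjugationcommuting}), and transitivity of $H$ is deduced, exactly as you do, from the transitivity of the $\galoisL$-action on $\mathrm{Hom}(E,L)$. Your injectivity check for $G'\cong G''$ (using that $f_{\sigma_{j+4}}=-f_{\sigma_j}$ and that the $f_{\sigma_i}$ form a basis of $X^{*}(\mathrm{U}_{E})$) is a correct verification of a point the paper merely asserts in the setup, so it is a welcome but harmless addition.
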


 \begin{proof}
     We only need to verify that $H$ is indeed a transitive subgroup. But this is because the action of $\galoisL$ on $\mathrm{Hom}(E,L)$ is already transitive. 
 \end{proof}
\end{setup}
\begin{remark}
    Tracing through the construction in Section \ref{algtori}, the group $\mathrm{Gal}(L/\mathbb{Q})$ acts on $X^{*}(\resgmmath)$ on the left. We keep this convention throughout this whole subsection.
\end{remark}

Immediately from Corollary \ref{divisortest} and Setup \ref{torusinvsetup} and Setup \ref{setupcontinued} we have the following Corollary. 
\begin{corollary}\label{biggerthan-one}
The algebraic group $\gdrbhmath(A)$ of a simple CM abelian fourfold $A$ cannot be a one-dimensional algebraic torus.   
\end{corollary}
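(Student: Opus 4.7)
The plan is to suppose for contradiction that $T := \gdrbhmath(A)$ is a one-dimensional connected $\mathbb{Q}$-algebraic subtorus of $\mathrm{U}_E$ and derive a violation of Corollary~\ref{divisortest}. Since $T$ is connected, $X^{*}(T) \cong \mathbb{Z}$ as a $\mathbb{Z}$-module, and the quotient map $\pi: X^{*}(\mathrm{U}_E) \twoheadrightarrow X^{*}(T)$ from the short exact sequence (\ref{charseq}) is surjective. Writing $a_i := \pi(f_{\sigma_i}) \in \mathbb{Z}$ for $i=1,2,3,4$, the surjectivity of $\pi$ forces $\gcd(a_1,a_2,a_3,a_4) = 1$.

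Next I would exploit Galois equivariance of $\pi$. The Galois action on $X^{*}(T) \cong \mathbb{Z}$ must be through a sign character $\chi: \mathrm{Gal}(L/\mathbb{Q}) \to \{\pm 1\}$, while Setup~\ref{setupcontinued} and Lemma~\ref{galois_permutation} tell us that for each $g \in \mathrm{Gal}(L/\mathbb{Q})$ we have $g \cdot f_{\sigma_i} = \epsilon_i(g)\, f_{\sigma_{F(g)(i)}}$ with $\epsilon_i(g) \in \{\pm 1\}$ and $F(g)$ ranging over a transitive subgroup $H \subset \mathrm{S}_4$. Applying $\pi$ to both sides and equating yields $\chi(g) a_i = \epsilon_i(g) a_{F(g)(i)}$, so $|a_i| = |a_{F(g)(i)}|$ for all $g,i$. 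Transitivity of $H$ then forces $|a_1| = |a_2| = |a_3| = |a_4|$, and combining this with $\gcd(a_i) = 1$ gives $a_i \in \{\pm 1\}$ for every $i$.

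The final step is to produce an element of $\Delta = \ker \pi$ of the form forbidden by Corollary~\ref{divisortest}. For any $i \neq j$, since $a_j^2 = 1$, the element
\[
\alpha_{ij} := f_{\sigma_i} - a_i a_j\, f_{\sigma_j}
\]
satisfies $\pi(\alpha_{ij}) = a_i - a_i a_j^2 = 0$, hence lies in $i(\Delta)$. Its coefficients are $1,\, -a_i a_j \in \{\pm 1\}$ in the positions $i,j$ and $0$ elsewhere, i.e.\ exactly two of the four coefficients vanish and the other two belong to $\{\pm 1\}$. Corollary~\ref{divisortest} then rules out $T = \gdrbhmath(A)$, a contradiction.

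I do not expect a genuine obstacle here; the argument is essentially a consequence of the transitivity statement in Lemma~\ref{galois_permutation} together with the structural constraint that $\mathrm{rank}\, X^{*}(T) = 1$. The one subtle point to check carefully is the Galois equivariance computation giving $|a_i| = |a_{F(g)(i)}|$, where one must pass from the signed permutation action on $X^{*}(\mathrm{U}_E)$ through to the sign-character action on $X^{*}(T)$; once that is in place, surjectivity of $\pi$ and Corollary~\ref{divisortest} do the rest.
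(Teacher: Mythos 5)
Your proof is correct and follows essentially the same route as the paper: exploit Galois equivariance of the surjection $X^{*}(\mathrm{U}_E)\twoheadrightarrow X^{*}(T)$ (on which the Galois group acts by $\pm1$), use transitivity from Lemma~\ref{galois_permutation}, and produce an element of $i(\Delta)$ with two zero entries and two entries in $\{\pm1\}$, contradicting Corollary~\ref{divisortest}. The only difference is cosmetic: the paper deduces directly from equivariance that $f_{\sigma_i}\pm f_{\sigma_j}\in i(\Delta)$ for each pair, whereas you first pin down the images $a_i=\pm1$ via the gcd argument — a correct but unnecessary extra step.
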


\begin{proof}
Recall by construction, $\gdrbhmath(A)$ is a connected subtorus of $\mathrm{U}_{E}$. Suppose it is one-dimensional and denote it by $T$. Then $X^{*}(T)$ is a rank one free $\mathbb{Z}$-module and therefore any $g \in G:=\galoisL$ acts on it as scalar multiplication by 1 or -1. Note that $\phi: X^{*}(\mathrm{U}_{E}) \rightarrow X^{*}(T)$ is a $G$-equivariant surjective map, we therefore have $\phi(g \circ e)=g \circ \phi(e)=\pm\phi(e)$ for any $e\in X^{*}(\mathrm{U}_{E})$. Also by Lemma \ref{galois_permutation}, given any two elements $e_{i}, e_{j}$ from the set of basis $(f_{\sigma_1},f_{\sigma_2},f_{\sigma_3},f_{\sigma_4})$ of $X^{*}(\mathrm{U}_{E})$ fixed in Setup \ref{torusinvsetup}, there exists $g \in G$ such that either $g \circ e_{i}=e_{j}$ or $g \circ e_{i}=-e_{j}$.  Hence for any pair $e_{i},e_{j}\in (f_{\sigma_1},f_{\sigma_2},f_{\sigma_3},f_{\sigma_4})$ with $i\neq j$, either $e_{i}-e_{j} $ or $e_{i}+e_{j}$ lies in $ i(\Delta)=\mathrm{ker}(\phi)$. But both cases violate Corollary \ref{divisortest}. Hence the dimension of $\gdrbhmath(A)$ cannot be 1.   
\end{proof}

To study two-dimensional $\mathbb{Q}$-algebraic subtori of $\mathrm{U}_{E}$, we will perform a detailed analysis of the short exact sequence from Lemma \ref{galois_permutation}, based on the property of the group $G''$ and $H$. Recall from Lemma \ref{galois_permutation} that $H$ is a transitive subgroup of $\mathrm{S}_4$. Below we have a classification of all transitive subgroups of $\mathrm{S}_4$:

\begin{lemma}[Ex 3.51 of \cite{rotman2012introduction}]\label{transitiveclassification}
The possible transitive subgroups of $\mathrm{S}_4$ are:  $\mathrm{S}_4$; $\mathrm{A}_4$; $V:=\{\mathrm{id},(12)(34),(13)(24),(14)(23)\}$; $\langle g \rangle \cong\mathbb{Z}/4\mathbb{Z}$; $\langle g,a \rangle\cong\mathrm{D}_4$(where $a\in V$ and $a \neq id_{V}$ and $g$ is an element of order 4).
     
 \end{lemma}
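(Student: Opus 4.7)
The plan is to argue by the orbit--stabilizer theorem and then perform a case analysis on $|H|$. If $H \leq \mathrm{S}_4$ acts transitively on $\{1,2,3,4\}$, then by orbit--stabilizer $|H| = 4 \cdot |\mathrm{Stab}_H(1)|$, so $4 \mid |H|$. Combined with Lagrange, this forces $|H| \in \{4, 8, 12, 24\}$, and I will handle each possible order in turn.

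For $|H| = 24$ we immediately get $H = \mathrm{S}_4$. For $|H| = 12$ the subgroup $H$ has index $2$ in $\mathrm{S}_4$, hence is normal and contains every square; since the squares include all $3$-cycles (because $(abc) = (acb)^2$) and these generate $\mathrm{A}_4$, the order count $|\mathrm{A}_4| = 12 = |H|$ forces $H = \mathrm{A}_4$. For $|H| = 8$ the subgroup $H$ is a Sylow $2$-subgroup of $\mathrm{S}_4$; a concrete Sylow is $\langle (1234), (12)(34)\rangle$, which is isomorphic to $\mathrm{D}_4$, and by Sylow's theorems all subgroups of order $8$ are conjugate to this one, so $H \cong \mathrm{D}_4$, matching the description $\langle g, a\rangle$ with $g$ a $4$-cycle and $a$ a double transposition not lying in $\langle g\rangle$.

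The delicate case is $|H| = 4$, where $H$ acts regularly, so every non-identity element of $H$ is fixed-point-free on $\{1,2,3,4\}$. The fixed-point-free elements of $\mathrm{S}_4$ are exactly the six $4$-cycles and the three double transpositions $(12)(34)$, $(13)(24)$, $(14)(23)$. If $H$ contains some $4$-cycle $g$, then $\langle g\rangle$ already has order $4$, forcing $H = \langle g\rangle \cong \mathbb{Z}/4\mathbb{Z}$. Otherwise every non-identity element of $H$ is a double transposition, so $H \subseteq V$, and the order count $|H| = 4 = |V|$ gives $H = V$.

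The proof involves no serious obstacle; the only subtlety worth noting is that $\mathrm{S}_4$ contains several Klein-four subgroups other than $V$, such as $\{\mathrm{id}, (12), (34), (12)(34)\}$, and these are \emph{not} transitive (they have orbits $\{1,2\}$ and $\{3,4\}$). The transitivity hypothesis, encoded through the fixed-point-free requirement, is precisely what singles out $V$ among the Klein-four subgroups of $\mathrm{S}_4$. One should finally observe that each group listed in the statement is indeed transitive, which is immediate for $\mathrm{S}_4$, $\mathrm{A}_4$, and $\langle g\rangle$ with $g$ a $4$-cycle, and follows for $V$ and $\langle g, a\rangle$ since both contain a $4$-cycle or three distinct non-identity orbit generators.
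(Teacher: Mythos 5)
Your proof is correct. The paper gives no argument for this lemma --- it is simply quoted from Rotman's exercise --- so there is no in-paper proof to compare against; your case analysis by order (orbit--stabilizer forcing $4\mid |H|$, the index-$2$/squares argument for $\mathrm{A}_4$, Sylow conjugacy for order $8$, and regularity plus the fixed-point-free classification for order $4$) is a complete and standard way to establish the classification. The only cosmetic wrinkle is the closing sentence: $V$ contains no $4$-cycle, so its transitivity should be justified solely by the observation that the three double transpositions send $1$ to $2$, $3$, $4$ respectively, which is what you clearly intend.
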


Hence if $H$ is one of the transitive subgroups of $\mathrm{S}_4$, then it satisfies at least one of the following conditions 
\begin{enumerate}
    \item $H$ contains an element of order 4 in $\mathrm{S}_4$.
    \item $H$ is equal to the Klein four-group $V$. 
    \item $H$ is equal to $\mathrm{A}_4$.
\end{enumerate}
Using the information that $i(\Delta)$ is stable under the action of $F^{-1}(H)=G''$, we are going to analyze properties of elements in $i(\Delta)$ case by case based on the above classification of $H$. This is done in Lemma \ref{nolength4cycle}, Proposition \ref{noklein4} and Lemma \ref{A_4notpossible}. We first record a convenient lemma to analyze properties of submodules of a free module.

\begin{lemma}\label{integralelement}
    Given $i:\Delta \xhookrightarrow{} X$ an inclusion of finite rank free $\mathbb{Z}$-modules, suppose $X/i(\Delta)$ is torsion free. Then we have $i(\Delta)=i(\Delta)\otimes_{\mathbb{Z}}\mathbb{Q}\cap X$.
\end{lemma}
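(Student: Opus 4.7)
The plan is to verify the two inclusions in the equality $i(\Delta) = i(\Delta)\otimes_{\mathbb{Z}}\mathbb{Q} \cap X$, where the intersection is taken inside the $\mathbb{Q}$-vector space $X\otimes_{\mathbb{Z}}\mathbb{Q}$ (into which $X$ embeds because $X$ is torsion free, being a free $\mathbb{Z}$-module).

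The inclusion $i(\Delta) \subseteq i(\Delta)\otimes_{\mathbb{Z}}\mathbb{Q} \cap X$ is immediate: $i(\Delta)$ sits inside $X$ by hypothesis, and the map $i(\Delta) \to i(\Delta)\otimes_{\mathbb{Z}}\mathbb{Q}$ given by $d \mapsto d\otimes 1$ is injective since $i(\Delta)$ is free, hence in particular torsion free. So no work is needed here.

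For the reverse inclusion, I would take an element $x \in X$ lying in $i(\Delta)\otimes_{\mathbb{Z}}\mathbb{Q}$. By clearing denominators in a $\mathbb{Q}$-linear expression of $x$ as a combination of elements of $i(\Delta)$, there exists a nonzero integer $n$ such that $nx \in i(\Delta)$. The image of $x$ in the quotient $X/i(\Delta)$ therefore satisfies $n\,\overline{x} = 0$. Since by assumption $X/i(\Delta)$ is torsion free, this forces $\overline{x}=0$, i.e.\ $x \in i(\Delta)$.

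There is no essential obstacle: the statement is a routine fact about free $\mathbb{Z}$-modules and saturation, and the only input used is the absence of torsion in $X/i(\Delta)$, which converts the divisibility relation $nx \in i(\Delta)$ into membership $x \in i(\Delta)$. This lemma will be applied in the sequel to conclude that a $\mathbb{Z}$-submodule $i(\Delta) \subseteq X^{*}(\mathrm{U}_E)$ obtained as the kernel of a surjection onto the free module $X^{*}(T)$ (which makes the cokernel free, hence torsion free) is recoverable from its $\mathbb{Q}$-span intersected with $X^{*}(\mathrm{U}_E)$, a convenient criterion for detecting membership in $i(\Delta)$ by working rationally.
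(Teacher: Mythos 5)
Your proof is correct. It differs slightly from the paper's argument: the paper uses that the torsion-free finitely generated quotient $X/i(\Delta)$ is in fact free, so the exact sequence splits and one gets a direct sum decomposition $X = i(\Delta)\oplus N$; tensoring this with $\mathbb{Q}$ and intersecting back with $X$ then yields the equality. You instead argue directly from torsion-freeness: clearing denominators gives $nx\in i(\Delta)$ for some nonzero integer $n$, so the class of $x$ in $X/i(\Delta)$ is torsion and hence zero. Your route is the more elementary one, as it avoids invoking the structure theory of finitely generated $\mathbb{Z}$-modules (freeness of the quotient and the resulting splitting) and would work verbatim without any finiteness hypothesis; the paper's splitting argument buys nothing extra here beyond being the form in which the hypothesis (connectedness of the torus, i.e.\ freeness of $X^{*}(T)$) naturally presents itself in the application. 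Your closing remark about how the lemma is used for $i(\Delta)\subset X^{*}(\mathrm{U}_E)$ matches the paper's intended use exactly.
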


\begin{proof}
Since $X/i(\Delta)$ is a free $\mathbb{Z}$-module, we have the following decomposition of abelian groups $$X=i(\Delta)\oplus N$$ for some free submodule $N$ of $X^{*}(\mathrm{U}_{E})$. This implies that $$X\otimes_{\mathbb{Z}}\mathbb{Q}=i(\Delta)\otimes_{\mathbb{Z}}\mathbb{Q}\oplus N\otimes_{\mathbb{Z}}\mathbb{Q}$$ Therefore \begin{equation*}\label{integrallocation}i(\Delta)=i(\Delta)\otimes_{\mathbb{Z}}\mathbb{Q}\cap X\end{equation*}   
\end{proof}

\begin{lemma} \label{nolength4cycle}
  We adopt notations from Setup \ref{setupcontinued} and Lemma \ref{galois_permutation}. Assuming that $T=\gdrbhmath(A)$ is a dimension 2 (connected) subtorus of $\mathrm{U}_{E}$, it is impossible for the subgroup $H$ to contain a cycle of order 4.
\end{lemma}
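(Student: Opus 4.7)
The plan is to assume $H$ contains a $4$-cycle and derive a contradiction with Corollary \ref{divisortest} by analyzing how the rank $2$ sublattice $i(\Delta)$ sits inside $X^*(\mathrm{U}_{E}) \cong \mathbb{Z}^4$ under the induced action of a lift $g \in G''$ of this $4$-cycle.

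First, after relabeling $\sigma_1, \ldots, \sigma_4$ if necessary, I would assume the $4$-cycle is $(1234)$. A lift $g \in G''$ then acts on $X^*(\mathrm{U}_{E})$ by $g(f_{\sigma_i}) = \epsilon_i f_{\sigma_{i+1}}$ (indices modulo $4$) for some signs $\epsilon_i \in \{\pm 1\}$. Writing $\epsilon := \epsilon_1 \epsilon_2 \epsilon_3 \epsilon_4$, one computes $g^4 = \epsilon \cdot \mathrm{id}$, so the characteristic polynomial of $g$ acting on $X^*(\mathrm{U}_{E}) \otimes \mathbb{Q}$ is $\lambda^4 - \epsilon$. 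This splits the argument into two cases depending on the sign of $\epsilon$.

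If $\epsilon = -1$, then $\lambda^4 + 1$ is irreducible over $\mathbb{Q}$, making $X^*(\mathrm{U}_{E}) \otimes \mathbb{Q}$ an irreducible $\mathbb{Q}[g]$-module with no proper $g$-stable $\mathbb{Q}$-subspace; this already contradicts the existence of the $2$-dimensional $g$-stable subspace $i(\Delta) \otimes \mathbb{Q}$. If $\epsilon = 1$, then $\lambda^4 - 1 = (\lambda - 1)(\lambda + 1)(\lambda^2 + 1)$ is squarefree over $\mathbb{Q}$, and I would write $X^*(\mathrm{U}_{E}) \otimes \mathbb{Q} = V_1 \oplus V_{-1} \oplus V_{\lambda^2 + 1}$, where the first two summands are $1$-dimensional eigenspaces and $V_{\lambda^2 + 1}$ is an irreducible $\mathbb{Q}[g]$-module of dimension $2$. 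The only $2$-dimensional $g$-stable $\mathbb{Q}$-subspaces are then $V_{\lambda^2 + 1}$ and $V_1 \oplus V_{-1}$.

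A short coordinate computation will show that $V_{\lambda^2 + 1}$ contains the vector $f_{\sigma_1} - \epsilon_1 \epsilon_2 f_{\sigma_3}$, while $V_1 \oplus V_{-1}$ contains the vector $f_{\sigma_1} + \epsilon_1 \epsilon_2 f_{\sigma_3}$ (obtained as half the sum of the generators of $V_{\pm 1}$, which happens to be integral because the relation $\epsilon = 1$ forces the generators to differ only in the even-indexed coordinates). Because $X^*(T)$ is torsion-free as $T$ is connected, Lemma \ref{integralelement} gives $i(\Delta) = (i(\Delta) \otimes \mathbb{Q}) \cap X^*(\mathrm{U}_{E})$, so whichever of the two subspaces equals $i(\Delta) \otimes \mathbb{Q}$, the corresponding short integral vector lies in $i(\Delta)$ itself. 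Since this vector has the form forbidden by Corollary \ref{divisortest} (two zero entries among $n_1, n_2, n_3, n_4$ and the remaining two in $\{-1, 1\}$), we reach the desired contradiction. The main difficulty I anticipate is keeping the signs $\epsilon_i$ straight through the eigenvector computations; conceptually, the remainder is routine linear algebra over $\mathbb{Q}$.
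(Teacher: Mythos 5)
Your proposal is correct; I checked the coordinate claims and they hold: with $g(f_{\sigma_i})=\epsilon_i f_{\sigma_{i+1}}$ one gets $\ker(g^2+1)\ni f_{\sigma_1}-\epsilon_1\epsilon_2 f_{\sigma_3}$, and when $\epsilon=\epsilon_1\epsilon_2\epsilon_3\epsilon_4=1$ the eigenvectors are $v_{\pm}=f_{\sigma_1}\pm\epsilon_1 f_{\sigma_2}+\epsilon_1\epsilon_2 f_{\sigma_3}\pm\epsilon_1\epsilon_2\epsilon_3 f_{\sigma_4}$, whose half-sum is the integral vector $f_{\sigma_1}+\epsilon_1\epsilon_2 f_{\sigma_3}$; both vectors have the shape forbidden by Corollary \ref{divisortest}, and Lemma \ref{integralelement} (with torsion-freeness of $X^{*}(T)$) puts them in $i(\Delta)$ exactly as you argue, while the relabeling to $(1234)$ is harmless because the hypothesis of Corollary \ref{divisortest} is symmetric in the coordinates (the paper likewise dispatches the other $4$-cycles ``by symmetry''). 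Your route reaches the same endgame as the paper's — stability of $i(\Delta)$ under $G''$, integrality via Lemma \ref{integralelement}, then Corollary \ref{divisortest} — but organizes the case analysis differently: the paper picks a lift of the $4$-cycle with exactly $0$, $1$ or $2$ entries equal to $-1$ (using $-\mathrm{id}\in G''$ to reduce to these), diagonalizes each matrix over $\qbar$, and checks by hand, via the Galois action on eigenlines, which two-dimensional spans descend to $\mathbb{Q}$; you instead key everything to the single sign $\epsilon$, so that for $\epsilon=-1$ the characteristic polynomial $\lambda^4+1$ is irreducible over $\mathbb{Q}$ and no rational $g$-stable plane exists at all (this absorbs the paper's one- and three-sign cases without needing $-\mathrm{id}\in G''$), while for $\epsilon=+1$ the rational primary decomposition leaves only the two candidate planes $V_1\oplus V_{-1}$ and $\ker(g^2+1)$, each containing a forbidden integral vector (absorbing the zero-, two- and four-sign cases). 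What your version buys is uniformity and brevity, and independence from the fact that complex conjugation acts as $-1$ on $X^{*}(\mathrm{U}_E)$; what the paper's buys is fully explicit eigenvector data, in the computational style that sets the template for the remaining lemmas of Section \ref{longcomp}.
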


\begin{proof}
 Suppose $H$ contains a cycle of order 4. Then $H$ contains the cycle $g=(1432)$ or the cycle $(1324)$ or the cycle $(1243)$. Suppose the first case. Denote $(1432)$ by $g$. Then the preimage $F^{-1}(g)$ of $g$ under the group homomorphism $G'' \xrightarrow{F} \mathrm{S}_4$ has the possibility that $-1$ appears $k$ times among its entries where $0\leq k \leq 4$. But the element $-\mathrm{id} \in G''$, it suffices to consider the cases where $F^{-1}(g)$ contains a generalized permutation matrix whose entries have exactly zero, one or two $-1$s.  
\begin{enumerate}
\item \label{computations case_1M_1} First suppose $F^{-1}(g)$ contains the usual permutation matrix with no $-1$ among its entries, i.e. $$M_1:=\begin{pmatrix} 0 & 1 & 0 & 0\\0 & 0 & 1 & 0\\0 & 0 & 0 & 1\\1 & 0 & 0 & 0 \end{pmatrix} \in F^{-1}(g)$$ We will study rank two submodules $i(\Delta)$ of $X^{*}(\mathrm{U}_{E})$ stable under the action of matrix $M_1$ by extending scalars. Then $i(\Delta) \otimes_{\mathbb{Z}}\mathbb{Q}$ is a two dimensional $\mathbb{Q}$-vector subspace of $X^{*}(\mathrm{U}_{E}) \otimes_{Z} \mathbb{Q}$ stable under $M_1$. Diagonalizing $M_1$ we obtain that it has four distinct eigenvalues with the following eigenbasis: $\lambda_1=-1,v_1=(-1,1,-1,1); \lambda_2=i,v_2=(i,-1,-i,1); \lambda_3=-i,v_3=(-i,-1,i,1); \lambda_4=1,v_4=(1,1,1,1)$. Note that the eigenvalues are distinct hence $i(\Delta) \otimes_{\mathbb{Z}} \qbar$ is spanned by two eigenvectors of different eigenvalues. But $i(\Delta) \otimes_{\mathbb{Z}} \qbar$ is stable under the action of $\mathrm{Gal}(\mathbb{Q}(i)/\mathbb{Q})$. Therefore $i(\Delta) \otimes_{\mathbb{Z}} \mathbb{Q}$ is either equal to \begin{equation}\label{situ1}\mathrm{span}_{\mathbb{Q}}(\{v_{1},v_{4}\})=\mathrm{span}_{\mathbb{Q}}(\{(0,1,0,1),(1,0,1,0)\})\end{equation} or equal to \begin{equation}\label{situ2}\mathrm{span}_{\mathbb{Q}}(\{ v_2+v_3,\frac{v_2-v_{3}}{2i}\})=\mathrm{span}_{\mathbb{Q}}(\{(0,-1,0,1),(1,0,-1,0)\})\end{equation} Note that by construction $\gdrbhmath(A)$ is a connected torus, therefore $i(\Delta)$ satisfies the condition that $X^{*}(\mathrm{U}_{\mathrm{E}})/i(\Delta)$ is torsion free. Recall that we have fixed an integral basis for $X^{*}(\mathrm{U}_{\mathrm{E}})$ in Setup \ref{torusinvsetup}, hence by Lemma \ref{integralelement} elements in $i(\Delta)$ are of the form $(x,y,x,y)$ with $x,y \in \mathbb{Z}$. Therefore in situation (\ref{situ1}) we have that $(1,0,1,0) \in i(\Delta)$. Similarly in situation (\ref{situ2}), we have that both $(0,-1,0,1)$ and $(1,0,-1,0)$ lie in $i(\Delta)$. And both situations contradict Corollary \ref{divisortest}.
\item \label{computations for Case1M2}
Suppose $F^{-1}(g)$ contains a generalized permutation matrix with exactly one $-1$ among its entries i.e. $$M_{2}:=\begin{pmatrix} 0 & 1 & 0 & 0\\0 & 0 & 1 & 0\\0 & 0 & 0 & 1\\-1 & 0 & 0 & 0\end{pmatrix} \in F^{-1}(g)$$ We follow the same recipe as above. Diagonalizing $M_2$ we obtain that it has four distinct eigenvalues $\lambda_1=e^{\frac{\pi i}{4}}=\frac{\sqrt{2}}{2}+\frac{\sqrt{2}}{2}i, \lambda_2=e^{\frac{7\pi i}{4}}=\frac{\sqrt{2}}{2}-\frac{\sqrt{2}}{2}i, \lambda_3=-\lambda_2=-\frac{\sqrt{2}}{2}+\frac{\sqrt{2}}{2}i, \lambda_4=-\lambda_1=-\frac{\sqrt{2}}{2}-\frac{\sqrt{2}}{2}i$ with the corresponding eigenbasis: $v_1=(\lambda_4,-i,\lambda_2,1), v_2=(\lambda_3,i,\lambda_1,1), v_3=(\lambda_2,i,\lambda_4,1), v_4=(\lambda_1,-i,\lambda_3,1)$. Note that in this case the eigenvalues are also distinct. Hence $i(\Delta) \otimes_{\mathbb{Z}} \qbar$ is spanned by two eigenvectors associated with two distinct eigenvalues. Note the eigenspace decomposition descend to $X^{*}(\mathrm{U}_{E}) \otimes_{\mathbb{Z}}\mathbb{Q}(\frac{\sqrt{2}}{2},i)$. Denote $\mathbb{Q}(\frac{\sqrt{2}}{2},i)$ by $F$. Therefore $i(\Delta) \otimes_{\mathbb{Z}} F$ is simultaneously stable under the action of $M_2$ and under the action of the Galois group $\mathrm{Gal}(F/\mathbb{Q})$.  The Galois group $\mathrm{Gal}(F/\mathbb{Q}) \cong \mathbb{Z}/2 \times \mathbb{Z}/2$ has generators: $\alpha: \frac{\sqrt{2}}{2} \rightarrow -\frac{\sqrt{2}}{2}$ and $\beta:i \rightarrow -i$. Then the action of the Galois group on the eigenvectors is as follows: $\alpha \circ v_1=v_4, \beta \circ v_1=v_2; \alpha \circ v_2=v_3,  \beta \circ v_2=v_1; \alpha \circ v_3=v_2, \beta \circ v_3=v_4; \alpha \circ v_4=v_1, \beta \circ v_4=v_3$. Therefore no two dimensional subspaces of $X^{*}(\mathrm{U}_{E}) \otimes F$ stable under $M_2$ can descend to $\mathbb{Q}$. Other generalized permutation matrices contained in $F^{-1}(g)$ with exactly one $-1$ among its entries have similar eigenvalues and eigenspace decompositions. Therefore they can be excluded in a similar manner.

\item \label{computations for case1m3}
Suppose $F^{-1}(g)$ contains a generalized permutation matrix with exactly two $-1$s among its entries i.e. $$M_3:=\begin{pmatrix} 0 & 1 & 0 & 0\\0 & 0 & 1 & 0\\0 & 0 & 0 & -1\\-1 & 0 & 0 & 0\end{pmatrix} \in F^{-1}(g)$$ Diagonalizing $M_3$, it has the following distinct eigenvalues and eigenvectors:
$\lambda_1=-1,v_1=(1,-1,1,1); \lambda_2=i,v_2=(-i,1,i,1); \lambda_3=-i,v_3=(i,1,-i,1); \lambda_4=1,v_4=(-1,-1,-1,1)$. By reasons similar to the first case, $i(\Delta) \otimes_{\mathbb{Z}} \mathbb{Q}$ is either equal to $$\mathrm{span}_{\mathbb{Q}}(\{\frac{v_{1}+v_{4}}{2}=(0,-1,0,1), \frac{v_{1}-v_{4}}{2}=(1,0,1,0)\})$$ or equal to $$\mathrm{span}_{\mathbb{Q}}(\{\frac{v_2+v_3}{2}=(0,1,0,1),\frac{v_2-v_{3}}{2i}=(-1,0,1,0)\})$$ Again because $X^{*}(T) \cong X^{*}(\mathrm{U}_{E})/\Delta$ is torsion free and because of Lemma \ref{integralelement}, we have that either $$\{(0,-1,0,1),(1,0,1,0)\} \in i(\Delta)$$ or $$\{(0,1,0,1),(1,0,-1,0)\} \in i(\Delta)$$ Both cases contradict Corollary \ref{divisortest}. Other generalized permutation matrices contained in $F^{-1}(g)$ with exactly two $-1$s among their entries have similar eigenvalues and eigenspace decompositions. Hence they can be excluded similarly.
\end{enumerate}
By symmetry, the case where $(1324)\in H$ or $(1243)\in H$ can be dealt with in a similar way.
\end{proof}
\begin{proposition} \label{noklein4}
We use notations from Setup \ref{setupcontinued} and Lemma \ref{galois_permutation}. Suppose $T=\gdrbhmath(A)$ is a two-dimensional (connected) algebraic torus and the image $H$ of $G''$ in $\mathrm{S}_4$ is isomorphic to the Klein four-group. Then $G''$ is isomorphic to the dihedral group $\mathrm{D}_4$.
\end{proposition}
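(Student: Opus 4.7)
My plan proceeds in two stages: first I will show that $|G''|=8$, then identify its isomorphism type as $D_4$. The starting observation is that since $L$ is a CM-field by Corollary \ref{galoisclosureofCMisCM}, the complex conjugation lies in $\galoisL$ and acts as $-\mathrm{id}$ on $X^{*}(\mathrm{U}_E)$ by Example \ref{uexample}. Consequently $-\mathrm{id}\in G''$, $\ker(F)\supseteq\{\pm\mathrm{id}\}$, and $|G''|\geq 2|H|=8$.

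To establish $|G''|=8$, I will argue that any $D\in\ker(F)\setminus\{\pm\mathrm{id}\}$ leads to a contradiction. Such a $D$ is a nontrivial diagonal $\pm 1$ matrix; since $\ker(F)$ is normal in $G''$ (and therefore $V$-stable under conjugation), up to multiplying by $-\mathrm{id}$ and applying a $V$-conjugation we may assume $D$ equals one of $\mathrm{diag}(1,1,-1,-1)$, $\mathrm{diag}(1,-1,1,-1)$, or $\mathrm{diag}(1,-1,-1,1)$. In each case the $D$-stability of $i(\Delta)$ forces the direct-sum decomposition $i(\Delta)=(i(\Delta)\cap V_{+})\oplus(i(\Delta)\cap V_{-})$ along the two coordinate $2$-planes $V_{\pm}$ that are the $\pm 1$-eigenspaces of $D$. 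Analyzing the action of a lift $M_r\in G''$ of a generator $g_r\in V$ on each summand (in the spirit of the case-$(1,1)$ subargument in Lemma \ref{nolength4cycle}) and invoking the saturation property of Lemma \ref{integralelement}, I expect to produce a primitive lattice element of $i(\Delta)$ of the form $f_{\sigma_i}\pm f_{\sigma_j}$, contradicting Corollary \ref{divisortest}. This forces $\ker(F)=\{\pm\mathrm{id}\}$ and hence $|G''|=8$.

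Now $G''$ is a central extension of $V$ by $\{\pm\mathrm{id}\}$, so it is isomorphic to one of $(\mathbb{Z}/2)^{3}$, $\mathbb{Z}/4\times\mathbb{Z}/2$, $D_4$, or $Q_8$. I rule out $Q_8$ first: its unique faithful rational representation has dimension $4$ and is irreducible (because the faithful $2$-dimensional complex representation of $Q_8$ has Schur index $2$ over $\mathbb{Q}$), so any embedding $Q_8\hookrightarrow\mathrm{GL}(X^{*}(\mathrm{U}_E)\otimes\mathbb{Q})$ admits no proper nonzero $G''$-stable rational subspace, contradicting the $G''$-invariance of $i(\Delta)\otimes\mathbb{Q}$. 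For the two abelian candidates, $X^{*}(\mathrm{U}_E)\otimes\mathbb{Q}$ decomposes as a direct sum of rational $G''$-isotypic components on which $-\mathrm{id}$ must act as $-1$, so the $G''$-stable $2$-planes form a small finite list: the two $(\pm i)$-isotypic pieces of the order-$4$ generator in the $\mathbb{Z}/4\times\mathbb{Z}/2$ case, or the $\binom{4}{2}=6$ pairwise sums of the four rational character lines in the $(\mathbb{Z}/2)^{3}$ case. After normalizing the sign patterns of the lifts $M_r$ by diagonal $\pm 1$ conjugation to a few representative configurations, I would compute the joint eigenvectors explicitly and verify that each such $2$-plane intersects $X^{*}(\mathrm{U}_E)$ in a lattice containing some $f_{\sigma_i}\pm f_{\sigma_j}$, again violating Corollary \ref{divisortest}. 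Eliminating all three non-$D_4$ candidates forces $G''\cong D_4$.

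The principal obstacle is the finite but delicate case analysis in the two abelian alternatives, where different sign patterns for the lifts of $g_1,g_2,g_3$ alter the joint-eigenvector structure; the decisive simplification is that conjugation by diagonal $\pm 1$ matrices normalizes most such patterns to a standard form, reducing the verifications to explicit computations parallel to those in Lemma \ref{nolength4cycle}.
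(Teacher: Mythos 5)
Your route is genuinely different from the paper's (which fixes a lift of $g_1$, splits into cases by how many $-1$'s it carries, and grinds through eigenvector computations lift by lift in Lemmas \ref{bigcase1lemma}, \ref{p_0inpreimagelemma}, \ref{bigcase3lemma}), and the skeleton — first force $\ker F=\{\pm\mathrm{id}\}$ so $|G''|=8$, then classify the central extensions of $V$ by $\{\pm\mathrm{id}\}$ and kill $Q_8$, $(\mathbb{Z}/2)^3$ and $\mathbb{Z}/4\times\mathbb{Z}/2$ — is attractive and, I believe, can be made to work. But as written there are two concrete gaps. First, your normalization of a kernel element $D\in\ker F\setminus\{\pm\mathrm{id}\}$ is false: multiplication by $-\mathrm{id}$ and conjugation by lifts of $V$ (which only permutes the diagonal entries by the Klein group) both preserve the parity of the number of $-1$'s, so $D=\mathrm{diag}(-1,1,1,1)$ (and its three-$(-1)$ companions) cannot be brought to any of your three even representatives and is simply missing from your case list. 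It is fixable — e.g. such a $D$ times any lift of $g_1$ produces an element of $F^{-1}(g_1)$ with exactly one or three $-1$'s, which is excluded exactly as in Lemma \ref{bigcase1lemma}; or argue directly that the induced splitting of $i(\Delta)_{\mathbb{Q}}$ into a piece of the $f_1$-line and a piece of the complementary $3$-plane, together with stability under a lift of $g_1$ and Lemma \ref{integralelement}, again forces an element $f_{\sigma_i}\pm f_{\sigma_j}\in i(\Delta)$ — but the fix has to be supplied.

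Second, in the $\mathbb{Z}/4\times\mathbb{Z}/2$ elimination your candidate list is wrong: the $(\pm i)$-eigenspaces of the order-$4$ element are not rational subspaces, and over $\mathbb{Q}$ that element alone has infinitely many stable $2$-planes (a $\mathbb{Q}(i)$-line's worth), so they do not give a finite list. The correct mechanism is to use the non-central order-$2$ generator $N$ (after your Stage 1, any order-$4$ element squares to $-\mathrm{id}$, so $N$ exists, lifts a double transposition, and satisfies $N^2=+\mathrm{id}$): any $G''$-stable rational $2$-plane is a $\mathbb{Q}(i)$-line on which the $\mathbb{Q}(i)$-linear involution $N$ acts as $\pm1$, hence equals one of the two $\pm1$-eigenplanes of $N$, and those eigenplanes are spanned by vectors of the form $f_{\sigma_i}\pm f_{\sigma_j}$, giving the contradiction with Corollary \ref{divisortest}. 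Relatedly, in the $(\mathbb{Z}/2)^3$ case your ``$\binom{4}{2}=6$ pairwise sums of character lines'' tacitly assumes the character decomposition is multiplicity-free; this is true here (joint eigenvectors of commuting signed double-transposition involutions have all four coordinates $\pm1$ and any two of them differ in exactly two coordinates), but it needs to be checked rather than asserted. Finally, note one thing your streamlined argument would not deliver that the paper's exhaustive treatment does: the explicit parametrization of the surviving rank-$2$ modules $i(\Delta)$ (the relations $x_1y_1=\pm x_2y_2$ in Lemma \ref{p_0inpreimagelemma}), which the paper reuses in the $\mathrm{A}_4$ case (Lemma \ref{A_4notpossible}); if you adopt your approach you would still need those explicit families later.
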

We make the following preparations for the proof of Proposition \ref{noklein4}.
\begin{setup}\label{klein4setup}
Using the same notation from Lemma \ref{galois_permutation}, we have that $$H=\{\mathrm{id},(12)(34),(13)(24),(14)(23)\}$$ and we denote $g_1=(12)(34)$, $g_2=(14)(23)$ and $g_3=(13)(24)$. Since the positions of the three order 2 elements in $H$ are symmetric and the element $-1$ lies in $G'' \subset \mathrm{Aut}_{\mathbb{Z}}(X^{*}(\mathrm{U}_{E}))$, it suffices to consider submodules $i(\Delta)\subset X^{*}(\mathrm{U}_{E})$ stable under the $G''$s matching one of the following descriptions, which are not mutually exclusive.

\begin{enumerate}\label{bigcases}
    \item\label{bigcase1} One of the matrices in $F^{-1}(g_1) \subset G''$ under the forgetful homomorphism $F:G''\rightarrow H$  contains exactly one $-1$ among its entries
    \item\label{bigcase2} One of the matrices in $F^{-1}(g_1) \subset G''$ is an ordinary permutation matrix
    \item\label{bigcase3} Every matrix in $F^{-1}(g_1)$ contains exactly two $-1$s among their entries 
\end{enumerate}
\end{setup}
Then we have the following lemma dealing with Case (\ref{bigcase1}) from Setup \ref{klein4setup}.
\begin{lemma}\label{bigcase1lemma}
 We keep the same notations of Proposition \ref{noklein4} and Setup \ref{klein4setup}. Furthermore, if one of the matrices in $F^{-1}(g_1) \subset G''$ contains exactly one $-1$ among its entries, then $\gdrbhmath(A)$ cannot be an (connected) algebraic torus of dimensional two.
\end{lemma}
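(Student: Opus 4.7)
The plan is to mimic the strategy of Lemma \ref{nolength4cycle}: pick a specific representative $M \in F^{-1}(g_1) \subset G''$ with exactly one $-1$ entry, diagonalise it over a suitable quadratic extension, and classify the rank-two $\mathbb{Q}$-subspaces of $X^{*}(\mathrm{U}_E) \otimes_{\mathbb{Z}} \mathbb{Q}$ that are simultaneously $M$-stable and Galois-stable. The concluding step will be to saturate each such subspace back to a $\mathbb{Z}$-lattice via Lemma \ref{integralelement} and exhibit an element violating Corollary \ref{divisortest}.

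First I would observe that, because $g_1 = (12)(34)$, every element of $F^{-1}(g_1)$ is block diagonal, with one $2\times 2$ block swapping $f_{\sigma_1}, f_{\sigma_2}$ and another swapping $f_{\sigma_3}, f_{\sigma_4}$; hence exactly four entries are nonzero, and by hypothesis precisely one of them equals $-1$. Using $-\mathrm{id} \in G''$ (which globally flips signs) together with the symmetry between the two $2$-cycle blocks of $g_1$, the analysis reduces up to relabelling to a single model, say
\[
M \;=\; \begin{pmatrix} 0 & 1 & 0 & 0 \\ -1 & 0 & 0 & 0 \\ 0 & 0 & 0 & 1 \\ 0 & 0 & 1 & 0 \end{pmatrix},
\]
whose spectrum is $\{i, -i, 1, -1\}$ with corresponding eigenvectors $(1, \pm i, 0, 0)$ and $(0, 0, 1, \pm 1)$.

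Assuming $T = \gdrbhmath(A)$ is two-dimensional, so $i(\Delta)$ has rank two, I would extend scalars to $\mathbb{Q}(i)$ and note that $i(\Delta) \otimes \mathbb{Q}(i)$ must decompose as a sum of two of the four eigenlines of $M$. Imposing stability under the nontrivial element of $\mathrm{Gal}(\mathbb{Q}(i)/\mathbb{Q})$, which swaps the $\pm i$ eigenlines and fixes the $\pm 1$ ones, forces $i(\Delta) \otimes \mathbb{Q}$ to be either $\mathrm{span}_{\mathbb{Q}}\{f_{\sigma_1}, f_{\sigma_2}\}$ or $\mathrm{span}_{\mathbb{Q}}\{f_{\sigma_3}, f_{\sigma_4}\}$. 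Invoking Lemma \ref{integralelement} (valid since $T$ is connected, so the cokernel of $i$ is torsion-free) promotes each alternative to the corresponding integral lattice, and then $f_{\sigma_1} + f_{\sigma_2}$ or $f_{\sigma_3} + f_{\sigma_4}$ lies in $i(\Delta)$, directly contradicting Corollary \ref{divisortest}.

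The main obstacle, and in fact the only delicate point, is justifying the reduction to a single model matrix. This rests on the two symmetries identified above, namely negation by $-\mathrm{id} \in G''$ and the interchangeability of the two disjoint $2$-cycle blocks of $g_1$, both of which preserve the stability class of $i(\Delta)$ and transport any configuration of a lone $-1$ to the fixed model $M$. Once $M$ is pinned down, the remaining linear algebra is routine and directly parallels Cases (\ref{computations case_1M_1}) and (\ref{computations for case1m3}) of the proof of Lemma \ref{nolength4cycle}.
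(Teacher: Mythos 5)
Your proposal is correct and takes essentially the same route as the paper: pick a representative matrix in $F^{-1}(g_1)$ with a single $-1$, diagonalize it (eigenvalues $\{\pm i,\pm1\}$), use the distinctness of eigenvalues plus Galois stability over $\mathbb{Q}(i)$ to force $i(\Delta)_{\mathbb{Q}}$ to be the span of $\{f_{\sigma_1},f_{\sigma_2}\}$ or of $\{f_{\sigma_3},f_{\sigma_4}\}$, saturate via Lemma \ref{integralelement}, and contradict Corollary \ref{divisortest}. The only cosmetic inaccuracy is your justification of the reduction to one model: multiplication by $-\mathrm{id}$ sends a matrix with exactly one $-1$ to one with three $-1$s rather than permuting the four one-$-1$ candidates among themselves; the paper instead simply observes (as you could) that all four such matrices have the same eigenvalue and eigenspace structure, so the identical computation excludes each of them.
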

\begin{proof}
Suppose the matrix $M:=\begin{pmatrix} 0 & -1 & 0 & 0\\1 & 0 & 0 & 0\\0 & 0 & 0 & 1\\0 & 0 & 1 & 0 \end{pmatrix} \in F^{-1}(g_1)$. Then $i(\Delta)$ from the $\mathrm{Gal}(\qbar/\mathbb{Q})$-equivariant short exact sequence (\ref{charseq}) is a rank two free $\mathbb{Z}$-module stable under the action of $M$. Now we perform a computation similar to Lemma \ref{nolength4cycle}. Diagonalizing the matrix $M$, it has four distinct eigenvalues $\lambda_1=i,\lambda_2=-i,\lambda_3=1,\lambda_4=-1$ and the corresponding eigenvectors are $v_1=(1,-i,0,0),v_2=(1,i,0,0),v_3=(0,0,1,1),v_4=(0,0,-1,1)$. Hence by a similar analysis to Lemma \ref{nolength4cycle} either $(0,0,1,1)$ or $(1,1,0,0)$ is contained in $i(\Delta)$ and both cases violate Corollary \ref{divisortest}. Because other generalized permutation matrices which contain exactly one $-1$ whose image under $F$ is equal to $g_1$ have similar eigenvalues and eigenspace decompositions to $M$, they can be excluded in a similar manner. Hence $\gdrbhmath(A)$ cannot be a two dimensional algebraic subtorus of $\mathrm{U}_{E}$.     
\end{proof}
The following lemma deals with Case (\ref{bigcase2}) from Setup \ref{klein4setup}.
\begin{lemma}\label{p_0inpreimagelemma}
We keep the assumptions of Proposition \ref{noklein4} and Setup \ref{klein4setup}. Furthermore, if one of the matrices in $F^{-1}(g_1) \subset G''$ is an ordinary permutation matrix and if $\gdrbhmath(A)$ is a (connected) two-dimensional subtorus of $\mathrm{U}_{E}$, then $G''$ is isomorphic to $\mathrm{D}_4:=\langle a,x |a^4=1;x^2=1;axa=x\rangle$.
\end{lemma}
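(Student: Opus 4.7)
The plan is to exploit the diagonalization of $P_0$ together with the constraints from Corollary \ref{divisortest} to pin down the possible $i(\Delta)$, then determine which generalized permutation matrices mapping to $g_2$ and $g_3$ can appear in $G''$, and finally read off the group structure of $G''$.

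First, I would pass to the $\mathbb{Q}$-basis
\begin{equation*}
  e_1=f_{\sigma_1}+f_{\sigma_2},\quad e_2=f_{\sigma_3}+f_{\sigma_4},\quad e_3=f_{\sigma_1}-f_{\sigma_2},\quad e_4=f_{\sigma_3}-f_{\sigma_4},
\end{equation*}
in which $P_0$ is diagonal with eigenvalues $(+1,+1,-1,-1)$. A two-dimensional $P_0$-stable $\mathbb{Q}$-subspace is either a full eigenspace of $P_0$ (ruled out, since each eigenspace is spanned by the forbidden vectors $\pm f_{\sigma_i}\pm f_{\sigma_j}$) or meets each eigenspace in a line, giving $V:=i(\Delta)\otimes\mathbb{Q}=\mathrm{span}_{\mathbb{Q}}(\alpha e_1+\beta e_2,\,\gamma e_3+\delta e_4)$ for some $\alpha,\beta,\gamma,\delta\in\mathbb{Q}^{\times}$. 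Expressing the remaining forbidden vectors $(\pm e_1\pm e_2\pm e_3\pm e_4)/2$ (which are the $\pm f_{\sigma_i}\pm f_{\sigma_j}$ with mixed indices) in terms of the generators of $V$, and invoking Lemma \ref{integralelement} to transfer information from $V$ back to the lattice $i(\Delta)$, I obtain the constraint $(\beta/\alpha,\delta/\gamma)\neq(\pm 1,\pm 1)$.

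Next, for an arbitrary $M_2\in F^{-1}(g_2)\cap G''$ with sign pattern $(\epsilon_1,\epsilon_2,\epsilon_3,\epsilon_4)\in\{\pm 1\}^4$ I would compute its action on the $e$-basis. If $\epsilon_1=\epsilon_2$ and $\epsilon_3=\epsilon_4$, then $M_2$ stabilizes each $P_0$-eigenspace and preservation of $V$ forces $\beta=\pm\alpha$ and $\delta=\pm\gamma$ simultaneously, violating the constraint; if exactly one of these equalities holds, a short check shows that $V$ cannot be $M_2$-stable at all. The only remaining case is $\epsilon_1=-\epsilon_2$ and $\epsilon_3=-\epsilon_4$, in which $M_2$ interchanges the two $P_0$-eigenspaces and compatibility with $V$ reduces to the single equation $\beta\delta=\pm\alpha\gamma$. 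The same case analysis applied to diagonal $\pm 1$-matrices preserving $V$ shows that the only such matrices in $G''$ are $\pm I$, so $\ker F=\{\pm I\}$ and $|G''|=|\ker F|\cdot|H|=8$.

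Finally, I would compute $M_2^{2}=\epsilon_{1}\epsilon_{4}\cdot I$, which lies in $\{\pm I\}$. If $M_2^2=-I$, then $M_2$ already has order $4$; if instead $M_2^2=I$, then one checks directly that $P_0M_2\neq M_2P_0$ (evaluate both sides on $f_{\sigma_1}$ and use $\epsilon_3=-\epsilon_4$), which forces $(P_0M_2)^2\neq I$ and hence $(P_0M_2)^2=-I$, so that $P_0M_2$ has order $4$. In either case $G''$ contains an order-$4$ element together with the non-central involution $P_0$, and a two-line computation shows that $P_0$ acts on that order-$4$ element by inversion; this rules out $Q_8$ (whose only order-$2$ element is central) and the two abelian groups of order $8$ with a Klein-four quotient, leaving $G''\cong\mathrm{D}_4$. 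The main obstacle is the case analysis in the third paragraph: one must carefully verify that each of the sixteen sign patterns for $M_2$, and the analogous sixteen for $M_3$, either prevents the matrix from lying in $G''$ or reduces it to the single normal form identified above, so that the description of $F^{-1}(g_2)\cap G''$ is forced and not merely consistent.
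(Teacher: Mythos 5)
Your strategy---diagonalize $P_0$ in the basis $e_1,e_2,e_3,e_4$, translate Corollary \ref{divisortest} via Lemma \ref{integralelement} into the single constraint that $\beta/\alpha$ and $\delta/\gamma$ are not both in $\{\pm1\}$, and then sort the signed lifts of $g_2$ by sign pattern---is sound, and is essentially a streamlined reorganization of the paper's matrix-by-matrix computations (\ref{p_0p_1})--(\ref{p_0p_4}); your ``exactly one equality'' case also absorbs the odd-sign matrices that the paper dispatches by Lemma \ref{bigcase1lemma}. One imprecision in your first case: when $M_2$ stabilizes each $P_0$-eigenspace the outcome depends on whether $M_2^2=I$ or $M_2^2=-I$. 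If $M_2^2=-I$ (the paper's $P_4$), the restriction of $M_2$ to each eigenspace satisfies $\lambda^2=-1$ and has no rational eigenline, so no stable $V$ exists at all; it does not ``force $\beta=\pm\alpha$, $\delta=\pm\gamma$''. The case is excluded either way, so nothing breaks there.

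The genuine gap is the sentence claiming that ``the same case analysis applied to diagonal $\pm1$-matrices preserving $V$ shows that the only such matrices in $G''$ are $\pm I$.'' That is false as stated: the diagonal matrices $\pm\mathrm{diag}(1,-1,1,-1)$ and $\pm\mathrm{diag}(1,-1,-1,1)$ interchange the two $P_0$-eigenspaces (they are of the ``swap'' type in your own trichotomy) and preserve $V=\mathrm{span}_{\mathbb{Q}}(\alpha e_1+\beta e_2,\ \gamma e_3+\delta e_4)$ exactly when $\alpha\delta=\pm\beta\gamma$, a condition with many solutions compatible with your constraint (e.g.\ $\alpha=\gamma=1$, $\beta=\delta=2$). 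So $V$-stability alone does not yield $\ker F=\{\pm I\}$, and without that you cannot conclude $|G''|=8$, on which your identification of $\mathrm{D}_4$ rests. The repair stays inside your framework: if such a $D$ and the surviving $M_2$ both lay in $G''$, then either combine $\beta\delta=\pm\alpha\gamma$ with $\alpha\delta=\pm\beta\gamma$ to force $\beta=\pm\alpha$ and $\delta=\pm\gamma$, contradicting Corollary \ref{divisortest}, or note that $DM_2\in F^{-1}(g_2)\cap G''$ has the eigenspace-stabilizing sign pattern you already excluded. This step is not optional; it is precisely the content of the paper's branch-case analysis of the extra elements $Q_1,Q_2,Q_3$ (resp.\ $Q_1',Q_2',Q_3'$) of $F^{-1}(g_3)$, i.e.\ the possibility $|\mathrm{Ker}(F)|>2$, which your sketch otherwise skips. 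With that supplied, your concluding group-theoretic identification of $\mathrm{D}_4$ (order-$4$ element, non-central involution acting by inversion, order $8$) goes through.
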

\begin{proof}
 We claim it suffices to study rank two submodules of $X^{*}(\mathrm{U}_{E})$ stable under the action of a ordinary permutation matrix $$P_0:=\begin{pmatrix} 0 & 1 & 0 & 0\\1 & 0 & 0 & 0\\0 & 0 & 0 & 1\\0 & 0 & 1 & 0 \end{pmatrix}\in F^{-1}(g_1)$$ and preserved by at least one element from the following list of possible preimages of $g_2$ under the forgetful map $F$. 
\[\left
(
\begin{pmatrix} 0 & 0 & 0 & 1\\0 & 0 & 1 & 0\\0 & 1 & 0 & 0\\1 & 0 & 0 & 0 \end{pmatrix},
\begin{pmatrix} 0 & 0 & 0 & 1\\0 & 0 & -1 & 0\\0 & 1 & 0 & 0\\-1 & 0 & 0 & 0 \end{pmatrix},
\begin{pmatrix} 0 & 0 & 0 & 1\\0 & 0 & -1 & 0\\0 & -1 & 0 & 0\\1 & 0 & 0 & 0 \end{pmatrix},
\begin{pmatrix} 0 & 0 & 0 & -1\\0 & 0 & -1 & 0\\0 & 1 & 0 & 0\\1 & 0 & 0 & 0 \end{pmatrix}
\right)
\] This is because we can use Lemma \ref{bigcase1lemma} to exclude the scenario where one of the generalized permutation matrices in $F^{-1}(g_2)$ contains exactly one $-1$ among its entries or exactly three $-1$s among its entries. Label the above list of matrices by $(P_1,P_2,P_3,P_4)$. 

We first write down all possible $\mathbb{Q}$-linear extensions of rank 2 submodules of $X^{*}(\mathrm{U}_{E})$ stable under $P_{0}$. Diagonalizing $P_{0}$, viewed as a $\mathbb{Q}$-linear transformation on $X^{*}({\mathrm{U}_{E}}) \otimes_{\mathbb{Z}}\mathbb{Q}$, its eigenvalues and eigenvectors are: $\lambda_1=-1, v_1=(0,0,-1,1); \lambda_2=-1, v_2=(-1,1,0,0); \lambda_3=1, v_3=(0,0,1,1); \lambda_4=1, v_4=(1,1,0,0)$. Since $i(\Delta) \otimes_{\mathbb{Z}} \mathbb{Q}$ is stable under $P_0$, $P_0|_{i(\Delta) \otimes_{\mathbb{Z}} \mathbb{Q}}$ remains semisimple. Hence its set of eigenvalues on $i(\Delta) \otimes_{\mathbb{Z}} \mathbb{Q}$ is either $\{-1,1\}$ or $\{1\}$ or $\{-1\}$. Therefore possible basis for $i(\Delta) \otimes_{\mathbb{Z}} \mathbb{Q}$ are $\{x_1v_1+x_2v_2,y_1v_3+y_2v_4\}=\{(-x_1,x_1,-x_2,x_2),(y_1,y_1,y_2,y_2)\}(x_{i},y_{i} \in \mathbb{Q})$ or $\{v_1,v_2\}$ or $\{v_3,v_4\}$. Note that the $\mathbb{Q}$-span of $\{(-x_1,x_1,-x_2,x_2),(y_1,y_1,y_2,y_2)\}$ is equal to the $\mathbb{Q}$-span of $\{(-x'_1,x'_1,-x'_2,x'_2),(y'_1,y'_1,y'_2,y'_2)\}$ for some $x'_{i},y'_{i}\in\mathbb{Z}$. Thus for potential candidates of $i(\Delta)\otimes_{\mathbb{Z}}\mathbb{Q}$ which are stable under $P_0$, it suffices to consider $$\mathrm{span}_{\mathbb{Q}}(\{(-x_1,x_1,-x_2,x_2),(y_1,y_1,y_2,y_2)\})(x_{i},y_{i} \in \mathbb{Z})$$ or $$\mathrm{span}_{\mathbb{Q}}(\{(0,0,-x,x),(-y,y,0,0)\})(x,y \in \mathbb{Z})$$ or $$\mathrm{span}_{\mathbb{Q}}(\{(0,0,x,x),(y,y,0,0)\})(x,y \in \mathbb{Z})$$ In the latter two cases, because $X^{*}(\mathrm{U}_{E})/i(\Delta)$ is torsion free and $i(\Delta)=i(\Delta)\otimes_{\mathbb{Z}}\mathbb{Q}\cap X^{*}(\mathrm{U}_{E})$ by Lemma \ref{integralelement}, the submodule $i(\Delta)$ would in fact be spanned by $\{(0,0,-1,1),(-1,1,0,0)\}$ or $\{(0,0,1,1),(1,1,0,0)\}$. This contradicts Corollary \ref{divisortest}. Hence it suffices to consider the scenario where $i(\Delta)\otimes_{\mathbb{Z}}\mathbb{Q}$ is spanned by $$\{x_1v_1+x_2v_2,y_1v_3+y_2v_4\}=\{(-x_1,x_1,-x_2,x_2),(y_1,y_1,y_2,y_2)\}(x_{i},y_{i} \in \mathbb{Z})$$ 

By the analysis in the beginning of the proof, we now consider the scenario where $i(\Delta)$ is preserved by $P_0$ and one of the elements from $\{P_1,P_2,P_3,P_4\}$. We start by summarizing the results for the relevant computations done below. Suppose $P_0, P_1$ are contained in $G''$. By computations done in (\ref{p_0p_1}), one can conclude such $i(\Delta)$ stable under $G''$ contradicts Corollary \ref{divisortest}. Suppose $P_0, P_2$ are contained in $G''$. By Case (\ref{p_0p_2}), submodules spanned by $$\{(-x_1,x_1,-x_2,x_2),(x_2,x_2,x_1,x_1)\}$$ where $x_1,x_2\in\mathbb{Z}$ are stable under the action of $P_0$ and $P_2$. Thus upon close inspections, for suitably chosen $x_1,x_2\in\mathbb{Z}$, such modules cannot be ruled out using Corollary \ref{divisortest}. Similarly, if $P_0, P_3$ are contained in $G''$, then by Case (\ref{p_0p_3}), submodules spanned by $$\{(-x_1,x_1,-x_2,x_2),(x_2,x_2,-x_1,-x_1)\}$$ where $x_1,x_2\in\mathbb{Z}$ are stable under the action of $P_0$ and $P_3$. For suitably chosen $x_1,x_2\in\mathbb{Z}$, they cannot be ruled out using Corollary \ref{divisortest} either. By the isomorphism (\ref{p0p2d4}) and isomorphism (\ref{p0p3d4}), the group generated by $P_0,P_2$ or $P_0,P_3$ is isomorphic to $\mathrm{D}_4$, the dihedral group with 8 elements. Finally, if $P_0, P_4$ are contained in $G''$, then by Case (\ref{p_0p_4}), there does not exist a rank two submodule of $X^{*}(\mathrm{U}_{E})$ stable under both $P_0,P_4$, thus giving a contradiction.

In the two cases where $P_0, P_2$ or $P_0, P_3$ are contained in $G''$, we will further consider branch cases where $F^{-1}(g_3)$ contains elements other than $\pm P_0P_2$ or $\pm P_0P_3$. Using Lemma \ref{bigcase1lemma}, we need not consider the case where $F^{-1}(g_3)$ contains a matrix which has exactly one $-1$ or three $-1$s among its entries.

In the case where $P_0\in F^{-1}(g_1)$ and $P_2\in F^{-1}(g_2)$, label elements in $F^{-1}(g_3)$ which differ from $\pm P_0P_2$ and contain exactly two $-1$s or no $-1$ among its entries by $$(Q_{1},Q_2,Q_3):=(\begin{pmatrix} 0 & 0 & -1 & 0\\0 & 0 & 0 & -1\\1 & 0 & 0 & 0\\0 & 1 & 0 & 0\end{pmatrix}, \begin{pmatrix} 0 & 0 & -1 & 0\\0 & 0 & 0 & 1\\1 & 0 & 0 & 0\\0 & -1 & 0 &0\end{pmatrix},\begin{pmatrix} 0 & 0 & 1 & 0\\0 & 0 & 0 & 1\\1 & 0 & 0 & 0\\0 & 1 & 0 &0\end{pmatrix})$$

In the case where $P_0\in F^{-1}(g_1)$ and $P_3\in F^{-1}(g_2)$, label elements in $F^{-1}(g_3)$ which differ from $\pm P_0P_3$ and contain exactly two $-1$s or no $-1$ among its entries by $$(Q'_{1},Q'_2,Q'_3):=(\begin{pmatrix} 0 & 0 & -1 & 0\\0 & 0 & 0 & -1\\1 & 0 & 0 & 0\\0 & 1 & 0 & 0\end{pmatrix}, \begin{pmatrix} 0 & 0 & -1 & 0\\0 & 0 & 0 & 1\\-1 & 0 & 0 & 0\\0 & 1 & 0 &0\end{pmatrix},\begin{pmatrix} 0 & 0 & 1 & 0\\0 & 0 & 0 & 1\\1 & 0 & 0 & 0\\0 & 1 & 0 &0\end{pmatrix})$$
The computations of these branch cases are summarized in the following table.
\begin{table}[H]
    \centering
    \begin{tabular}{|c|c|c|}
    \hline
    Elements in $G''$  & Properties of $i(\Delta)$ & Can we rule out such $T$ \\
    \hline
    $P_0, P_2, Q_1$ & Can only be of rank 0 & Yes, See (\ref{p_0p_2q_1})\\
    \hline
    $P_0, P_2, Q_2$ & Contradicts Corollary \ref{divisortest} & Yes, See (\ref{p_0p_2q_2})\\
    \hline
    $P_0, P_2, Q_3$ & Similar to (\ref{p_0p_1})  & Yes, See (\ref{p_0p_2q_3}) \\
    \hline  
    $P_0, P_3, Q'_1$ & Can only be of rank 0 & Yes, See (\ref{p_0p_3q'_1})\\
    \hline  
    $P_0, P_3, Q'_2$ & Contradicts Corollary \ref{divisortest} & Yes, See (\ref{p_0p_3q'_2})\\
    \hline
    $P_0, P_3, Q'_3$ & Similar to (\ref{p_0p_1}) & Yes, See (\ref{p_0p_3q'_3})\\
    \hline
    \end{tabular}
    \caption{Branch Cases of (\ref{p_0p_2}) and (\ref{p_0p_3})}
    \label{p_0table2}
\end{table}
Below are the detailed computations of the above discussion.

\begin{enumerate}
    \item\label{p_0p_1} Suppose the vector space $\mathrm{span}_{\mathbb{Q}}(\{(-x_1,x_1,-x_2,x_2),(y_1,y_1,y_2,y_2)\})$ preserved by $P_0$ is also preserved by the matrix \[P_{1}=
\begin{pmatrix} 0 & 0 & 0 & 1\\0 & 0 & 1 & 0\\0 & 1 & 0 & 0\\1 & 0 & 0 & 0 \end{pmatrix}\] Then $P_1\circ(-x_1,x_1,-x_2,x_2)=(x_2,-x_2,x_1,-x_1); P_1\circ(y_1,y_1,y_2,y_2)=(y_2,y_2,y_1,y_1)$ and hence there exist $\alpha, \beta, \alpha^{'}, \beta^{'} \in \mathbb{Q}$ such that  
\begin{equation*}
\begin{split}
-\alpha x_1+\beta y_1 =x_2;  -\alpha^{'} x_1+\beta^{'} y_1 =y_2 \\
\alpha x_1+\beta y_1 =-x_2;   \alpha^{'} x_1+\beta^{'} y_1 =y_2 \\
-\alpha x_2+\beta y_2 =x_1;   -\alpha^{'} x_2+\beta^{'} y_2 =y_1\\
\alpha x_2+\beta y_2 =-x_1;    \alpha^{'} x_2+\beta^{'} y_2 =y_1
\end{split}
\end{equation*}

Then the left column implies that: $x_1^2=x_2^2$ and the right column implies that $y_1^2=y_2^2$. Hence we have $x_1=\pm x_2$ and $y_1=\pm y_2$. Since $i(\Delta)=i(\Delta)\otimes_{\mathbb{Z}}\mathbb{Q}\cap X^{*}(\mathrm{U}_{E})$ by Lemma \ref{integralelement} this implies $i(\Delta)$ contains $(0,2x_1y_1,0,2x_1y_1)$ or $(0,2x_1y_1,-2x_1y_1,0)$ or $(0,2x_1y_1,2x_1y_1,0)$ or $(0,2x_1y_1,0,-2x_1y_1)$. But $ X^{*}(\mathrm{U}_{E})/i(\Delta)$ is torsion free, this implies $(0,1,0,1)$ or $(0,1,-1,0)$ or $(0,1,1,0)$ or $(0,1,0,-1)\in i(\Delta)$. This gives a contradiction to Corollary \ref{divisortest}. 
\item\label{p_0p_2} Suppose $\mathrm{span}_{\mathbb{Q}}(\{(-x_1,x_1,-x_2,x_2),(y_1,y_1,y_2,y_2)\})$ is preserved by the matrix \[P_{2}=
\begin{pmatrix} 0 & 0 & 0 & 1\\0 & 0 & -1 & 0\\0 & 1 & 0 & 0\\-1 & 0 & 0 & 0 \end{pmatrix}\]Then $P_2\circ(-x_1,x_1,-x_2,x_2)=(x_2,x_2,x_1,x_1)$ and $P_2\circ(y_1,y_1,y_2,y_2)=(y_2,-y_2,y_1,-y_1)$. Thus there exist $\alpha, \beta, \alpha^{'}, \beta^{'} \in \mathbb{Q}$ such that
\begin{equation*}
\begin{split}
-\alpha x_1+\beta y_1 =-x_2;  -\alpha^{'} x_1+\beta^{'} y_1 =-y_2 \\
\alpha x_1+\beta y_1 =-x_2;   \alpha^{'} x_1+\beta^{'} y_1 =y_2 \\
-\alpha x_2+\beta y_2 =-x_1;   -\alpha^{'} x_2+\beta^{'} y_2 =-y_1\\
\alpha x_2+\beta y_2 =-x_1;    \alpha^{'} x_2+\beta^{'} y_2 =y_1
\end{split}
\end{equation*} the above simplifies to 
\begin{equation*}
\begin{split}
\beta y_1 =-x_2;  \alpha^{'} x_1=y_2 \\
\beta y_2 =-x_1;  \alpha^{'} x_2=y_1              \\  
\end{split}
\end{equation*}
 Therefore every two-dimensional vector subspace of $X^{*}(\mathrm{U}_{E})\otimes_{\mathbb{Z}}\mathbb{Q}$ stable under the action of $P_0$ and $P_2$ is of the form $$\mathrm{span}_{\mathbb{Q}}(\{(-x_1,x_1,-x_2,x_2),(y_1,y_1,y_2,y_2)\})$$ where $x_{j},y_{j}$s satisfy the relation \begin{equation}\label{firstkeyrelation}x_1y_1=x_2y_2\end{equation}
In particular if we let $y_1=x_2$, $y_2=x_1$, one can check that there exist $x_1,x_2\in\mathbb{Z}$ such that $X^{*}(\mathrm{U}_{E})/i(\Delta)$ is torsion free and no ``divisorial" elements from Corollary \ref{divisortest} appear in $i(\Delta)$.

Suppose $F^{-1}(g_1)=\pm P_0$ and $F^{-1}(g_2)=\pm P_2$ and $F^{-1}(g_3)=\pm P_0P_2$(note that $P_0P_2=-P_2P_0$) i.e. $P_0 \in F^{-1}(g_1)$ and $P_2 \in F^{-1}(g_2)$ and $\mathrm{Ker}(F)=\{\pm 1\}$. Then simply from Corollary \ref{divisortest}, one cannot exclude the scenario where $$G''=\{\mathrm{id}, -\mathrm{id}, P_0, -P_0, P_2, -P_2, P_0P_2,-P_0P_2\}$$ and $\gdrbhmath(A)$ is a two-dimensional subtorus of $\mathrm{U}_{E}$ such that $$i(\Delta)=\mathrm{Ker}(X^{*}(\mathrm{U}_{E})\twoheadrightarrow X^{*}(\gdrbhmath(A)))$$ is of the form specified above.

However, in this case the group $G''$ is indeed isomorphic to the dihedral group $\mathrm{D}_4=\langle a,x |a^4=1;x^2=1;axa=x\rangle$ via the group isomorphism \begin{equation}\label{p0p2d4}x=P_{0}=\begin{pmatrix} 0 & 1 & 0 & 0\\1 & 0 & 0 & 0\\0 & 0 & 0 & 1\\0 & 0 & 1 & 0 \end{pmatrix}; a=P_2=\begin{pmatrix} 0 & 0 & 0 & 1\\0 & 0 & -1 & 0\\0 & 1 & 0 & 0\\-1 & 0 & 0 & 0 \end{pmatrix}\end{equation}

If, on the other hand, $F^{-1}(g_3)$ contains elements other than $\pm P_0P_2$ i.e. $P_0 \in F^{-1}(g_1)$ and $P_2 \in F^{-1}(g_2)$ but $|\mathrm{Ker}(F)|>2$. In this setting, $G''$ will not be isomorphic to $\mathrm{D}_4$. However, the presence of extra elements in $F^{-1}(g_3)$ will put more conditions on 
$i(\Delta)$. It suffices to consider the branch cases where one of the following elements is contained in $F^{-1}(g_3)$
\begin{equation}\label{q_1q_2q_3}
    (Q_{1},Q_2,Q_3):=(\begin{pmatrix} 0 & 0 & -1 & 0\\0 & 0 & 0 & -1\\1 & 0 & 0 & 0\\0 & 1 & 0 & 0\end{pmatrix}, \begin{pmatrix} 0 & 0 & -1 & 0\\0 & 0 & 0 & 1\\1 & 0 & 0 & 0\\0 & -1 & 0 &0\end{pmatrix},\begin{pmatrix} 0 & 0 & 1 & 0\\0 & 0 & 0 & 1\\1 & 0 & 0 & 0\\0 & 1 & 0 &0\end{pmatrix})
\end{equation}
This is because by the computations in Lemma \ref{bigcase1lemma}, we have ruled out the case where the preimage of one of the $g_{i}$s contains a generalized permutation matrix with exactly one $-1$ or three $-1$s among its entries.

The detailed computations of these branch cases are listed below.
\begin{enumerate}
\item\label{p_0p_2q_1} Suppose $F^{-1}(g_3)$ contains $$Q_{1}:=\begin{pmatrix} 0 & 0 & -1 & 0\\0 & 0 & 0 & -1\\1 & 0 & 0 & 0\\0 & 1 & 0 & 0\end{pmatrix}$$ then $Q_{1}$ has to preserve the vector space $$\mathrm{span}_{\mathbb{Q}}(\{(-x_1,x_1,-x_2,x_2),(y_1,y_1,y_2,y_2)\})$$ Therefore, there exist $\alpha,\beta,\alpha',\beta' \in \mathbb{Q}$ such that \begin{equation*}\label{toplefttwominus1}
\begin{split}
-\alpha x_1+\beta y_1 =x_2;  -\alpha^{'} x_1+\beta^{'} y_1 =-y_2 \\
\alpha x_1+\beta y_1 =-x_2;   \alpha^{'} x_1+\beta^{'} y_1 =-y_2 \\
-\alpha x_2+\beta y_2 =-x_1;   -\alpha^{'} x_2+\beta^{'} y_2 =y_1\\
\alpha x_2+\beta y_2 =x_1;    \alpha^{'} x_2+\beta^{'} y_2 =y_1
\end{split}
\end{equation*} from which we deduce that \begin{equation*}
\begin{split}
\\\alpha  x_1 =-x_2;  \beta ^{'} y_1=-y_2 \\
\alpha  x_2 =x_1;  \beta ^{'} y_2=y_1              \\ \\   
\end{split}
\end{equation*} Since $x_1,x_2,y_1,y_2\in\mathbb{Q}$, this implies that $i(\Delta)=\{0\}$, which is a contradiction. 

\item\label{p_0p_2q_2} If $F^{-1}(g_3)$ contains $$Q_{2}:=\begin{pmatrix} 0 & 0 & -1 & 0\\0 & 0 & 0 & 1\\1 & 0 & 0 & 0\\0 & -1 & 0 &0\end{pmatrix}$$ then $Q_{2}$ has to preserve the vector space $$\mathrm{span}_{\mathbb{Q}}(\{(-x_1,x_1,-x_2,x_2),(y_1,y_1,y_2,y_2)\})$$ Hence there exist $\alpha,\beta,\alpha',\beta' \in \mathbb{Q}$ such that \begin{equation*}
\begin{split}
-\alpha x_1+\beta y_1 =x_2;  -\alpha^{'} x_1+\beta^{'} y_1 =-y_2 \\
\alpha x_1+\beta y_1 =x_2;   \alpha^{'} x_1+\beta^{'} y_1 =y_2 \\
-\alpha x_2+\beta y_2 =-x_1;   -\alpha^{'} x_2+\beta^{'} y_2 =y_1\\
\alpha x_2+\beta y_2 =-x_1;    \alpha^{'} x_2+\beta^{'} y_2 =-y_1
\end{split}
\end{equation*} from which we deduce that \begin{equation*}
\begin{split}
\\\beta  y_1 =x_2;  \alpha^{'} x_1=y_2 \\
\beta  y_2 =-x_1;  \alpha^{'} x_2=-y_1              \\ \\   
\end{split}
\end{equation*} We then deduce that $x_1y_1=-x_2y_2$. But recall that $i(\Delta)_{\mathbb{Q}}$ is stable under both $P_0$ and $P_2$ which implies that $x_1y_1=x_2y_2$ (see formula (\ref{firstkeyrelation})). Hence this implies that $x_1y_1=x_2y_2=0$. Since we require that $i(\Delta)$ is of rank two, this implies that either $x_1=y_2=0$ or $x_2=y_1=0$. Hence by Lemma \ref{integralelement} and the requirement that $X^{*}({\mathrm{U}_{E}})/i(\Delta)$ is torsion free, we deduce that $i(\Delta)$ either contains $(0,0,-1,1)$ or contains $(-1,1,0,0)$, which gives a contradiction to Corollary \ref{divisortest}. 
\item\label{p_0p_2q_3} If $F^{-1}(g_3)$ contains $$Q_{3}:=\begin{pmatrix} 0 & 0 & 1 & 0\\0 & 0 & 0 & 1\\1 & 0 & 0 & 0\\0 & 1 & 0 &0\end{pmatrix}$$ then the submodule $i(\Delta)$ will be stable under the action of $P_0$ and $Q_3$, both of which contain no -1 among their entries. However, recall that previously we have excluded the case where the submodule $i(\Delta)$ is stable under the action of $P_0$ and $P_1$, both of which contain no -1 among their entries. Because the position of the order two elements in $H$ are symmetric, this case can also be excluded.

\end{enumerate}
Hence we have excluded all cases where $F^{-1}(g_3)$ contains elements other than $\pm P_0P_2$ or $\pm P_2P_0$.
To sum it up, suppose $H$ is isomorphic to the Klein four-group in $\mathrm{S}_4$ and $P_0\in F^{-1}(g_1)$ and $P_2\in F^{-1}(g_2)$, then $\gdrbhmath(A)$ cannot be a two-dimensional algebraic torus unless $G''$ is isomorphic to $\mathrm{D}_4$.

\item\label{p_0p_3} Now suppose the vector space $\mathrm{span}_{\mathbb{Q}}(\{(-x_1,x_1,-x_2,x_2),(y_1,y_1,y_2,y_2)\})$ preserved by $P_0$ is also preserved by the matrix \[P_{3}=
\begin{pmatrix} 0 & 0 & 0 & -1\\0 & 0 & 1 & 0\\0 & 1 & 0 & 0\\-1 & 0 & 0 & 0 \end{pmatrix}\] Then $P_3\circ(-x_1,x_1,-x_2,x_2)=(-x_2,-x_2,x_1,x_1); P_3\circ(y_1,y_1,y_2,y_2)=(-y_2,y_2,y_1,-y_1)$ and hence there exist $\alpha, \beta, \alpha^{'}, \beta^{'} \in \mathbb{Q}$ such that  
\begin{equation}
\begin{split}
-\alpha x_1+\beta y_1 =-x_2;  -\alpha^{'} x_1+\beta^{'} y_1 =-y_2 \\
\alpha x_1+\beta y_1 =-x_2;   \alpha^{'} x_1+\beta^{'} y_1 =y_2 \\
-\alpha x_2+\beta y_2 =x_1;   -\alpha^{'} x_2+\beta^{'} y_2 =y_1\\
\alpha x_2+\beta y_2 =x_1;    \alpha^{'} x_2+\beta^{'} y_2 =-y_1
\end{split}
\end{equation} from which we deduce that 
\begin{equation*}
    \begin{split}
        \beta y_1=-x_2; \alpha'x_1=y_2\\
        \beta y_2=x_1;  \alpha'x_2=-y_1
    \end{split}
\end{equation*}

Therefore $\mathrm{span}_{\mathbb{Q}}(\{(-x_1,x_1,-x_2,x_2),(y_1,y_1,y_2,y_2)\})$ is preserved by $P_3$ if and only if 
\begin{equation}\label{p3}
    x_1y_1=-x_2y_2
\end{equation} 

Similar to Case (\ref{p_0p_2}), if $F^{-1}(g_1)=\pm P_0$, $F^{-1}(g_2)=\pm P_3$ and $F^{-1}(g_3)=\pm P_0P_3$, then $G''$ in this case is equal to $$\{\mathrm{id}, -\mathrm{id}, P_0, -P_0, P_3, -P_3, P_0P_3,-P_0P_3\}$$ It is isomorphic to $\mathrm{D}_4$ via the group isomorphism \begin{equation}\label{p0p3d4}
    a=P_3P_0;x=P_0
\end{equation}

Now suppose $F^{-1}(g_3)$ contains elements other than $\pm P_0P_3$ i.e. the case where $P_0 \in F^{-1}(g_1)$, $P_3 \in F^{-1}(g_2)$ but $|\mathrm{Ker}(F)|>2$. Then analogous to the above Case (\ref{p_0p_2q_1}),Case (\ref{p_0p_2q_2}) and Case (\ref{p_0p_2q_3}) it suffices to consider the following cases. 
\begin{enumerate}
    \item\label{p_0p_3q'_1} Suppose $F^{-1}(g_3)$ contains $$Q'_{1}:=Q_{1}=\begin{pmatrix} 0 & 0 & -1 & 0\\0 & 0 & 0 & -1\\1 & 0 & 0 & 0\\0 & 1 & 0 & 0\end{pmatrix}$$ then $Q'_{1}$ has to preserve the vector space $$\mathrm{span}_{\mathbb{Q}}(\{(-x_1,x_1,-x_2,x_2),(y_1,y_1,y_2,y_2)\})$$ Then by the same computation as Case (\ref{toplefttwominus1}), there does not exist a two-dimensional $\mathbb{Q}$-vector space stable under $P_0,P_3,Q'_{1}$. 
    
\item\label{p_0p_3q'_2} Next suppose $F^{-1}(g_3)$ contains $$Q'_{2}:=\begin{pmatrix} 0 & 0 & -1 & 0\\0 & 0 & 0 & 1\\-1 & 0 & 0 & 0\\0 & 1 & 0 & 0\end{pmatrix}$$ Then since $Q'_{2}$ has to preserve the vector space $$\mathrm{span}_{\mathbb{Q}}(\{(-x_1,x_1,-x_2,x_2),(y_1,y_1,y_2,y_2)\})$$ there exist $\alpha,\beta,\alpha',\beta' \in \mathbb{Q}$ such that \begin{equation*}
\begin{split}
-\alpha x_1+\beta y_1 =x_2;  -\alpha^{'} x_1+\beta^{'} y_1 =-y_2 \\
\alpha x_1+\beta y_1 =x_2;   \alpha^{'} x_1+\beta^{'} y_1 =y_2 \\
-\alpha x_2+\beta y_2 =x_1;   -\alpha^{'} x_2+\beta^{'} y_2 =-y_1\\
\alpha x_2+\beta y_2 =x_1;    \alpha^{'} x_2+\beta^{'} y_2 =y_1
\end{split}
\end{equation*} from which we deduce that \begin{equation*}
\begin{split}
\\\beta  y_1 =x_2;  \alpha^{'} x_1=y_2 \\
\beta  y_2 =x_1;  \alpha^{'} x_2=y_1              \\ \\   
\end{split}
\end{equation*} which implies that $x_1y_1=x_2y_2$. But recall that $i(\Delta)_{\mathbb{Q}}$ has to be preserved by $P_0$ and $P_3$, and we required that $x_1y_1=-x_2y_2$ (see formula (\ref{p3})). Then by the same reasoning as Case (\ref{p_0p_2q_2}), this gives a contradiction to Corollary \ref{divisortest}. 
\item\label{p_0p_3q'_3} If $F^{-1}(g_3)$ contains $$Q'_{3}:=Q_3=\begin{pmatrix} 0 & 0 & 1 & 0\\0 & 0 & 0 & 1\\1 & 0 & 0 & 0\\0 & 1 & 0 &0\end{pmatrix}$$ then $i(\Delta)$ is stable under the action of $P_0$ and $Q_3$ simultaneously, both of which are ordinary permutation matrices. But then we can directly apply the reasoning from Case (\ref{p_0p_2q_3}) to exclude this case.
\end{enumerate}
To sum it up, suppose $H$ is isomorphic to the Klein four-group in $\mathrm{S}_4$ and $P_0\in F^{-1}(g_1)$ and $P_3\in F^{-1}(g_3)$, then unless $G''=F^{-1}(H)$ is isomorphic to $\mathrm{D}_4$, $\gdrbhmath(A)$ cannot be a two-dimensional algebraic torus . 

\item\label{p_0p_4} Suppose that $\mathrm{span}_{\mathbb{Q}}(\{(-x_1,x_1,-x_2,x_2),(y_1,y_1,y_2,y_2)\})$ preserved by $P_0$ is stable under the action of the matrix \[P_{4}=
\begin{pmatrix} 0 & 0 & 0 & -1\\0 & 0 & -1 & 0\\0 & 1 & 0 & 0\\1 & 0 & 0 & 0 \end{pmatrix}\]Then $$P_4\circ(-x_1,x_1,-x_2,x_2) =(-x_2,x_2,x_1,-x_1)$$ and $$P_4\circ(y_1,y_1,y_2,y_2)=(-y_2,-y_2,y_1,y_1)$$ Hence there exist $\alpha, \beta, \alpha^{'}, \beta^{'} \in \mathbb{Q}$ such that
\begin{equation*}
\begin{split}
-\alpha x_1+\beta y_1 =-x_2;  -\alpha^{'} x_1+\beta^{'} y_1 =-y_2 \\
\alpha x_1+\beta y_1 =x_2;   \alpha^{'} x_1+\beta^{'} y_1 =-y_2 \\
-\alpha x_2+\beta y_2 =x_1;   -\alpha^{'} x_2+\beta^{'} y_2 =y_1\\
\alpha x_2+\beta y_2 =-x_1;    \alpha^{'} x_2+\beta^{'} y_2 =y_1
\end{split}
\end{equation*} from which we deduce that
\begin{equation*}
    \begin{split}
        \alpha x_1=x_2; \beta' y_1=-y_2\\
        \alpha x_2=-x_1;
        \beta' y_2=y_1\\
    \end{split}
\end{equation*} Because $x_1,x_2,y_1,y_2\in\mathbb{Q}$ the only possible solution is that $x_1=x_2=y_1=y_2=0$ which is a contradiction. 

\end{enumerate}   
\end{proof}
The following lemma deals with Case (\ref{bigcase3}) from Setup \ref{klein4setup}.
\begin{lemma}\label{bigcase3lemma}
 We keep the assumptions of Proposition \ref{noklein4} and Setup \ref{klein4setup}. If every matrix in $F^{-1}(g_1)$ contains exactly two $-1$s among their entries, then $\gdrbhmath(A)$ cannot be a two-dimensional (connected) algebraic torus.
\end{lemma}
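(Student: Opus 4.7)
The plan is to run a case analysis analogous to that of Lemma \ref{p_0inpreimagelemma}, enumerating the generalized permutation matrices with exactly two $-1$s whose image under $F$ equals $g_1$. After factoring out the $-\mathrm{id}$ symmetry of $G''$, the possibilities for such a representative fall into three equivalence classes: either both $-1$s lie in the same $2\times 2$ block (producing a matrix with real eigenvalues $\pm 1$), or the two $-1$s are split one per block (producing two inequivalent matrices, both with eigenvalues $\pm i$). Before treating these cases, the symmetry of $g_1,g_2,g_3$ in $H\cong V$, realized by the outer $\mathrm{S}_3$-action coming from conjugation in $\mathrm{S}_4/V$, combined with Lemma \ref{p_0inpreimagelemma}, allows me to assume that $F^{-1}(g_2)$ and $F^{-1}(g_3)$ also consist only of matrices with exactly two $-1$s; otherwise a renaming of $g_1,g_2,g_3$ places an ordinary permutation matrix into $F^{-1}(g_1)$ and the previous lemma applies. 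Together with Lemma \ref{bigcase1lemma}, this excludes one- and three-$-1$ matrices from every $F^{-1}(g_i)$.

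For each representative $M_1\in F^{-1}(g_1)$, I would diagonalize $M_1$ over $\mathbb{Q}$ or over $\mathbb{Q}(i)$ and enumerate the rank-two $\mathbb{Q}$-subspaces of $X^{*}(\mathrm{U}_E)\otimes\mathbb{Q}$ that are both stable under $M_1$ and stable under the action of $\mathrm{Gal}(\mathbb{Q}(i)/\mathbb{Q})$, so as to descend to $\mathbb{Q}$. I would then impose the analogous stability condition with respect to a chosen representative $M_2\in F^{-1}(g_2)$ (also with two $-1$s), solving the resulting relations on the parameters of the subspace just as in the scalar equations $x_1 y_1 = \pm x_2 y_2$ that arose in Lemma \ref{p_0inpreimagelemma}. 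Using Lemma \ref{integralelement} together with the torsion-freeness of $X^{*}(\mathrm{U}_E)/i(\Delta)$ coming from the connectedness of $\gdrbhmath(A)$, each surviving candidate either degenerates to rank strictly less than two or contains an integral generator of the form $\pm f_{\sigma_i}\pm f_{\sigma_j}$ excluded by Corollary \ref{divisortest}. When residual scenarios remain, they are closed off by adding the stability condition for an additional element of $F^{-1}(g_3)$ not of the form $\pm M_1 M_2$, following exactly the branch-case pattern documented in the proof of Lemma \ref{p_0inpreimagelemma}.

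The main obstacle is the combinatorial bookkeeping: the three classes of $M_1$ interact with up to three classes of $M_2$, producing nine primary subcases, each possibly requiring a further branch analysis over $F^{-1}(g_3)$. The technically subtlest subcases are those in which $M_1$ and $M_2$ both have eigenvalues $\pm i$, since the admissible rational stable subspaces then arise only via Galois averaging of complex eigenvectors, and one must carefully compute an integral $\mathbb{Z}$-basis for each such subspace before invoking Corollary \ref{divisortest}. The whole argument is streamlined by a uniform observation: in every subcase, the surviving two-dimensional stable subspace, once saturated in $X^{*}(\mathrm{U}_E)$ by Lemma \ref{integralelement}, contains a vector supported on exactly two of the coordinates $f_{\sigma_1},\ldots,f_{\sigma_4}$, which is precisely the configuration prohibited by Corollary \ref{divisortest}; consequently no rank-two $i(\Delta)$ compatible with two anticommuting two-$-1$ involutions can exist, and $\gdrbhmath(A)$ cannot be two-dimensional.
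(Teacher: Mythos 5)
Your plan is essentially the paper's own proof: the same reduction (via Lemma~\ref{bigcase1lemma}, Lemma~\ref{p_0inpreimagelemma} and the symmetry of $g_1,g_2,g_3$) to the situation where all three preimages consist only of two-$-1$ matrices, the same three sign-classes of representatives over $g_1$ (one with real eigenvalues $\pm1$, two with eigenvalues $\pm i$), and the same pairwise stability computations combined with Lemma~\ref{integralelement} and Corollary~\ref{divisortest}. The only cosmetic differences are that the paper needs no $F^{-1}(g_3)$ branch cases here — its six surviving pairings all terminate directly in rank zero, failure to descend to $\mathbb{Q}$, or a vector prohibited by Corollary~\ref{divisortest} — so your closing ``uniform observation'' slightly overstates the mechanism, but the workflow you describe covers exactly these outcomes.
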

\begin{proof}
Suppose each element in $F^{-1}(g_1)$ is a generalized permutation matrix with exactly two $-1$s among its entries. Recall that in Lemma \ref{bigcase1lemma} we have ruled out the scenario where $F^{-1}(g_1)$ contains a matrix with exactly \textit{one} $-1$ among its entries and in Lemma \ref{p_0inpreimagelemma}, we have considered all situations where $F^{-1}(g_1)$ contains an ordinary permutation matrix. Then due to the symmetry of the position of $g_1,g_2,g_3$ in $H$ we can assume that every element in $F^{-1}(g_2)$ and $F^{-1}(g_3)$ is a generalized permutation matrices with exactly \textit{two} $-1$s among their entries. Label the following elements which could potentially lie in $F^{-1}(g_1)$ $$(\begin{pmatrix} 0 & -1 & 0 & 0\\-1 & 0 & 0 & 0\\0 & 0 & 0 & 1\\0 & 0 & 1 & 0 \end{pmatrix}, \begin{pmatrix} 0 & -1 & 0 & 0\\1 & 0 & 0 & 0\\0 & 0 & 0 & -1\\0 & 0 & 1 & 0 \end{pmatrix}, \begin{pmatrix} 0 & -1 & 0 & 0\\1 & 0 & 0 & 0\\0 & 0 & 0 & 1\\0 & 0 & -1 & 0 \end{pmatrix})$$ by $(P_0',P_1',P_2')$. Also label the following elements which could potentially lie in $F^{-1}(g_2)$ $$(\begin{pmatrix} 0 & 0 & 0 & 1\\0 & 0 & -1 & 0\\0 & -1 & 0 & 0\\1 & 0 & 0 & 0 \end{pmatrix}, \begin{pmatrix} 0 & 0 & 0 & 1\\0 & 0 & -1 & 0\\0 & 1 & 0 & 0\\-1 & 0 & 0 & 0 \end{pmatrix}, \begin{pmatrix} 0 & 0 & 0 & -1\\0 & 0 & -1 & 0\\0 & 1 & 0 & 0\\1 & 0 & 0 & 0 \end{pmatrix})$$ by $(Q_0',Q_1',Q_2')$. The computations are summarized in the following Table \ref{tableeachtwo-1}.

\begin{table}[H]
    \centering
    \begin{tabular}{|c|c|c|}
    \hline
    Elements in $G''$  & Properties of $i(\Delta)$ & Can we rule out such $T$ \\
    \hline
    $P_{0}', Q_{0}'$ & Contradicts Corollary \ref{divisortest} & Yes, See (\ref{p'_0q'_0})\\
    \hline
    $P_{0}', Q_{1}'$ & Can only be of rank 0 & Yes, See (\ref{p'_0q'_1})\\
    \hline
    $P_{1}', Q_{0}'$ & Can only be of rank 0 & Yes, See (\ref{p'_1q'_0})\\
    \hline
    $P_1',Q_2'$ &Can only be of rank 0& Yes, see (\ref{p'_1q'_2})\\
    \hline
    $P_2',Q_2'$ &Contradicts Corollary \ref{divisortest} & Yes, see (\ref{p'_2q'_2})\\
    \hline
    $P_2',Q_1'$ &Can only be of rank 0& Yes, see (\ref{p'_2q'_1})\\
    \hline
    \end{tabular}
    \caption{Results for Lemma \ref{bigcase3lemma}}
    \label{tableeachtwo-1}
\end{table}
Below are the detailed computations summarized in Table \ref{tableeachtwo-1}.
\begin{enumerate}
\item\label{p'_0q'_0} The preimage of $g_1$ contains $P_{0}'$ and the preimage of $g_2$ contains $Q_{0}'$ i.e.  
$$P_{0}'=\begin{pmatrix} 0 & -1 & 0 & 0\\-1 & 0 & 0 & 0\\0 & 0 & 0 & 1\\0 & 0 & 1 & 0 \end{pmatrix}\in F^{-1}(g_1); Q_{0}'=\begin{pmatrix} 0 & 0 & 0 & 1\\0 & 0 & -1 & 0\\0 & -1 & 0 & 0\\1 & 0 & 0 & 0 \end{pmatrix}\in F^{-1}(g_2)$$

Diagonalizing $P_0'$, it has eigenvalue $\lambda_1=1$ whose eigenspace is spanned by $(0,0,1,1)$ and $(-1,1,0,0)$ and eigenvalue $\lambda_2=-1$ whose eigenspace is spanned by $(0,0,-1,1)$ and $(1,1,0,0)$.  Recall that $i(\Delta)_{\mathbb{Q}}$ is the kernel of the surjective homomorphism $X^{*}(\mathrm{U}_{E})\rightarrow X^{*}(\gdrbhmath(A))$. Then by the same argument for Lemma \ref{p_0inpreimagelemma}, if $i(\Delta)_{\mathbb{Q}}$ is equal to the eigenspace associated with $\lambda_1$ or $\lambda_2$, then one can check this is a contradiction to Corollary \ref{divisortest}. Hence $i(\Delta)_{\mathbb{Q}}$ is equal to \begin{equation}\label{p_0'space}\mathrm{span}_{\mathbb{Q}}(\{(-x_1,x_1,x_2,x_2),(y_1,y_1,-y_2,y_2)\})\end{equation} for some $x_{i},y_{i} \in \mathbb{Z}$. The condition that  $i(\Delta)_{\mathbb{Q}}$ is stable under the action of $Q_{0}'$ indicates that there exist $\alpha, \beta, \alpha^{'}, \beta^{'} \in \mathbb{Q}$ such that
\begin{equation*}
\begin{split}
-\alpha x_1+\beta y_1 =x_2;  -\alpha^{'} x_1+\beta^{'} y_1 =y_2 \\
\alpha x_1+\beta y_1 =-x_2;   \alpha^{'} x_1+\beta^{'} y_1 =y_2 \\
\alpha x_2-\beta y_2 =-x_1;   \alpha^{'} x_2-\beta^{'} y_2 =-y_1\\
\alpha x_2+\beta y_2 =-x_1;    \alpha^{'} x_2+\beta^{'} y_2 =y_1
\end{split}
\end{equation*} from which we deduce that
\begin{equation*}
    \begin{split}
        \alpha x_1=-x_2; \beta' y_1=y_2\\
        \alpha x_2=-x_1;
        \beta' y_2=y_1\\
    \end{split}
\end{equation*} and we obtain that $x_1^2=x_2^2$ and $y_1^2=y_2^2$. But then the same argument as (\ref{p_0p_1}) from the proof of Lemma \ref{bigcase1lemma} can be applied to rule out this case.
\item\label{p'_0q'_1} The preimage of $g_1$ contains $P_{0}'$ and the preimage of $g_2$ contains $Q_{1}'$. i.e. $$P_{0}'=\begin{pmatrix} 0 & -1 & 0 & 0\\-1 & 0 & 0 & 0\\0 & 0 & 0 & 1\\0 & 0 & 1 & 0 \end{pmatrix}\in F^{-1}(g_1); Q_{1}'=\begin{pmatrix} 0 & 0 & 0 & 1\\0 & 0 & -1 & 0\\0 & 1 & 0 & 0\\-1 & 0 & 0 & 0 \end{pmatrix}\in F^{-1}(g_2)$$ Then by computations done in (\ref{p'_0q'_0}), it suffices to consider subspaces of the form $$\mathrm{span}_{\mathbb{Q}}(\{(-x_1,x_1,x_2,x_2),(y_1,y_1,-y_2,y_2)\})$$ from formula (\ref{p_0'space}) which are stable under the action $Q_1'$. Thus there exist $\alpha, \beta, \alpha^{'}, \beta^{'} \in \mathbb{Q}$ such that
\begin{equation*}
\begin{split}
-\alpha x_1+\beta y_1 =x_2;  -\alpha^{'} x_1+\beta^{'} y_1 =y_2 \\
\alpha x_1+\beta y_1 =-x_2;   \alpha^{'} x_1+\beta^{'} y_1 =y_2 \\
\alpha x_2-\beta y_2 =x_1;   \alpha^{'} x_2-\beta^{'} y_2 =y_1\\
\alpha x_2+\beta y_2 =x_1;    \alpha^{'} x_2+\beta^{'} y_2 =-y_1
\end{split}
\end{equation*} from which we deduce that
\begin{equation*}
    \begin{split}
        \alpha x_1=-x_2; \beta' y_1=y_2\\
        \alpha x_2=x_1;
        \beta' y_2=-y_1\\
    \end{split}
\end{equation*} Because $x_{i},y_{i}\in\mathbb{Q}$, this holds only when $x_1=x_2=y_1=y_2=0$, which gives a contradiction. 
\item\label{p'_1q'_0} The preimage of $g_1$ contains $P_{1}'$ and the preimage of $g_2$ contains $Q_{0}'$ i.e. $$P_{1}'=\begin{pmatrix} 0 & -1 & 0 & 0\\1 & 0 & 0 & 0\\0 & 0 & 0 & -1\\0 & 0 & 1 & 0 \end{pmatrix}\in F^{-1}(g_1); Q_{0}'=\begin{pmatrix} 0 & 0 & 0 & 1\\0 & 0 & -1 & 0\\0 & -1 & 0 & 0\\1 & 0 & 0 & 0 \end{pmatrix} \in F^{-1}(g_2)$$
For this case, $Q_{0}'$ has the following eigenspace decomposition: $\lambda_1=1$ and its corresponding eigenspace is spanned by $(1,0,0,1)$ and $(0,-1,1,0)$ and $\lambda_2=-1$ whose eigenspace is spanned by $(-1,0,0,1)$ and $(0,1,1,0)$. Hence for the same reason as (\ref{p'_0q'_0}), for candidates of $i(\Delta)_{\mathbb{Q}}$, it suffices to consider two-dimensional vector subspaces of the form $$\mathrm{span}_{\mathbb{Q}}(\{(x_1,-x_2,x_2,x_1),(-y_1,y_2,y_2,y_1)\})$$ where $x_i,y_i\in \mathbb{Z}$. Then since $i(\Delta)_{\mathbb{Q}}$ is stable under the action of $P_1'$, there exist $\alpha, \beta, \alpha^{'}, \beta^{'} \in \mathbb{Q}$ such that
\begin{equation*}
\begin{split}
\alpha x_1-\beta y_1 =x_2;  \alpha^{'} x_1-\beta^{'} y_1 =-y_2 \\
-\alpha x_2+\beta y_2 =x_1;   -\alpha^{'} x_2+\beta^{'} y_2 =-y_1 \\
\alpha x_2+\beta y_2 =-x_1;   \alpha^{'} x_2+\beta^{'} y_2 =-y_1\\
\alpha x_1+\beta y_1 =x_2;    \alpha^{'} x_1+\beta^{'} y_1 =y_2
\end{split}
\end{equation*} from which we deduce that
\begin{equation*}
    \begin{split}
        \alpha x_1=x_2; \beta' y_1=y_2\\
        \alpha x_2=-x_1;
        \beta' y_2=-y_1\\
    \end{split}
\end{equation*} which only holds when $x_1=x_2=y_1=y_2=0$ and this gives a contradiction. 
\item\label{p'_1q'_2} Suppose $i(\Delta)_{\mathbb{Q}}$ is stable under the action of matrices $$P_1'=\begin{pmatrix} 0 & -1 & 0 & 0\\1 & 0 & 0 & 0\\0 & 0 & 0 & -1\\0 & 0 & 1 & 0 \end{pmatrix}\in F^{-1}(g_1); Q_2'=\begin{pmatrix} 0 & 0 & 0 & -1\\0 & 0 & -1 & 0\\0 & 1 & 0 & 0\\1 & 0 & 0 & 0 \end{pmatrix}\in F^{-1}(g_2)$$ Diagonalizing $Q_2'$, it has eigenvalue $\lambda_1=i$ whose eigenspace is equal to $$\mathrm{span}_{\mathbb{Q}(i)}(\{(i,0,0,1),(0,i,1,0)\})$$ and eigenvalue $\lambda_2=-i$ whose eigenspace is equal to $$\mathrm{span}_{\mathbb{Q}(i)}(\{(-i,0,0,1),(0,-i,1,0)\})$$ The two eigenspaces cannot descend to $\mathbb{Q}$. Therefore it suffices to consider $\mathbb{Q}(i)$-vector subspaces of the form $$\mathrm{span}_{\mathbb{Q}(i)}(\{(ix_1,ix_2,x_2,x_1),(-iy_1,-iy_2,y_2,y_1)\}),(x_1,x_2,y_1,y_2\in \mathbb{Q}(i))$$ as candidates for $i(\Delta)\otimes_{\mathbb{Z}}\mathbb{Q}(i)$. Such $\mathbb{Q}(i)$-vector subspaces are stable under the action of $P_1'$, i.e. there exist $\alpha,\beta,\alpha',\beta'\in \mathbb{Q}(i)$ such that the following equations hold
\begin{equation*}
\begin{split}
      \alpha ix_1-i\beta y_1=-ix_2; \alpha' ix_1-i\beta' y_1=iy_2\\
      \alpha i x_2-i\beta y_2=i x_1; \alpha' i x_2-i\beta'y_2=-iy_1\\
      \alpha x_2+\beta y_2=-x_1; \alpha' x_2+\beta' y_2=-y_1\\
      \alpha x_1+\beta y_1=x_2; \alpha' x_1+\beta' y_1=y_2
    \end{split}
\end{equation*} from which we deduce that \begin{equation*}
\begin{split}
      \alpha x_1=0; \beta'y_2=0\\
      \alpha x_2=0; \beta'y_1=0\\
      \beta y_2=-x_1; \alpha'x_2=-y_1\\
      \beta y_1=x_2; \alpha'x_1=y_2
    \end{split}
\end{equation*}
The only situation where this can hold such that $i(\Delta)_{\mathbb{Q}(i)}$ is a two-dimensional $\mathbb{Q}(i)$-vector space is when $\alpha=\beta'=0$. In this case we have that $x_1y_1=-x_2y_2$. We claim that the $\mathbb{Q}(i)$-vector subspace $i(\Delta)_{\mathbb{Q}(i)}$ spanned by $$\{(ix_1,ix_2,x_2,x_1),(ix_2,-ix_1,x_1,-x_2)\}$$ where $x_1,x_2\in \mathbb{Q}(i)$ cannot descend to $\mathbb{Q}$. This is because if it were defined over $\mathbb{Q}$ then there would exist $m,n\in \mathbb{Q}(i)$ such that the following equations hold 
\begin{equation*}
\begin{split}
      imx_1+inx_2=-i\overline{x_1}\\
      imx_2-inx_1=-i\overline{x_2}\\
      mx_2+nx_1=\overline{x_2}\\
      mx_1-nx_2=\overline{x_1}
    \end{split}
\end{equation*} 
From which we deduce that $mx_1=mx_2=0$ and $nx_1=\overline{x_2}, -nx_2=\overline{x_1}$. But this implies that $\overline{n}\overline{x_1}=x_2$ and $-n\overline{n}\overline{x_1}=-nx_2=\overline{x_1}$. Since $n\overline{n}\geq0$, this implies that $x_1=x_2=0$. But then this contradicts that $i(\Delta)_{\mathbb{Q}(i)}$ is a two-dimensional vector space.
\item\label{p'_2q'_2} Suppose $i(\Delta)_{\mathbb{Q}}$ is stable under the action of matrix $$P_2'=\begin{pmatrix} 0 & -1 & 0 & 0\\1 & 0 & 0 & 0\\0 & 0 & 0 & 1\\0 & 0 & -1 & 0 \end{pmatrix}\in F^{-1}(g_1); Q_2'=\begin{pmatrix} 0 & 0 & 0 & -1\\0 & 0 & -1 & 0\\0 & 1 & 0 & 0\\1 & 0 & 0 & 0 \end{pmatrix}\in F^{-1}(g_2)$$ By (\ref{p'_1q'_2}), it suffices to consider $\mathbb{Q}(i)$-vector subspaces of the form $$\mathrm{span}_{\mathbb{Q}(i)}(\{(ix_1,ix_2,x_2,x_1),(-iy_1,-iy_2,y_2,y_1)\}),x_1,x_2,y_1,y_2\in \mathbb{Q}(i)$$ which are further stable under the action of $P_2'$, i.e. there exist $\alpha,\beta,\alpha',\beta'\in \mathbb{Q}(i)$ such that the following equations hold
\begin{equation*}
\begin{split}
      \alpha ix_1-i\beta y_1=-ix_2; \alpha' ix_1-i\beta' y_1=iy_2\\
      \alpha i x_2-i\beta y_2=i x_1; \alpha' i x_2-i\beta'y_2=-iy_1\\
      \alpha x_2+\beta y_2=x_1; \alpha' x_2+\beta' y_2=y_1\\
      \alpha x_1+\beta y_1=-x_2; \alpha' x_1+\beta' y_1=-y_2
    \end{split}
\end{equation*} which implies that \begin{equation*}
\begin{split}
      \beta y_2=0; \alpha'x_2=0\\
      \beta y_1=0; \alpha'x_1=0\\
      \alpha x_2=x_1; \beta'y_1=-y_2\\
      \alpha x_1=-x_2; \beta'y_2=y_1
    \end{split}
\end{equation*}
from which we deduce that \begin{equation*}
    x_1^2+x_2^2=0; y_1^2+y_2^2=0
\end{equation*} This implies that $x_2=\pm ix_1$ and $y_2=\pm iy_1$.

First suppose $x_2=ix_1$ and $y_2=iy_1$, then \begin{equation*}
\begin{split}
i(\Delta)_{\mathbb{Q}(i)}&=\mathrm{span}_{\mathbb{Q}(i)}(\{(ix_1,ix_2,x_2,x_1),(-iy_1,-iy_2,y_2,y_1)\})\\
&=\mathrm{span}_{\mathbb{Q}(i)}(\{(ix_1,-x_1,ix_1,x_1),(-iy_1,y_1,iy_1,y_1)\})\\
&=\mathrm{span}_{\mathbb{Q}(i)}(\{(0,0,i,1),(-i,1,0,0)\})
\end{split}
\end{equation*} which clearly cannot descend to $\mathbb{Q}$.

If $x_2=ix_1$ and $y_2=-iy_1$, then \begin{equation*}
\begin{split}
i(\Delta)_{\mathbb{Q}(i)}&=\mathrm{span}_{\mathbb{Q}(i)}(\{(ix_1,-x_1,ix_1,x_1),(-iy_1,-y_1,-iy_1,y_1)\})\\
&=\mathrm{span}_{\mathbb{Q}(i)}(\{(0,-1,0,1),(1,0,1,0)\})
\end{split}
\end{equation*} Because $X^{*}(\mathrm{U}_{E})/i(\Delta)$ is torsion free, this implies that $(0,-1,0,1)\in i(\Delta)$. This contradicts Corollary \ref{divisortest}.

When $x_2=-ix_1$ and $y_2=iy_1$, we have \begin{equation*}
\begin{split}
i(\Delta)_{\mathbb{Q}(i)}&=\mathrm{span}_{\mathbb{Q}(i)}(\{(ix_1,x_1,-ix_1,x_1),(-iy_1,y_1,iy_1,y_1)\})\\
&=\mathrm{span}_{\mathbb{Q}(i)}(\{(0,1,0,1),(-1,0,1,0)\})
\end{split}
\end{equation*} which contradicts Corollary \ref{divisortest} as well.

Finally if $x_2=-ix_1$ and $y_2=-iy_1$, then \begin{equation*}
\begin{split}
i(\Delta)_{\mathbb{Q}(i)}&=\mathrm{span}_{\mathbb{Q}(i)}(\{(ix_1,x_1,-ix_1,x_1),(-iy_1,-y_1,-iy_1,y_1)\})\\
&=\mathrm{span}_{\mathbb{Q}(i)}(\{(0,0,-i,1),(i
,1,0,0)\})
\end{split}
\end{equation*} which does not descend to $\mathbb{Q}$.
\item\label{p'_2q'_1} Finally suppose $i(\Delta)_{\mathbb{Q}}$ is stable under the action of matrices $$P_2'=\begin{pmatrix} 0 & -1 & 0 & 0\\1 & 0 & 0 & 0\\0 & 0 & 0 & 1\\0 & 0 & -1 & 0 \end{pmatrix}\in F^{-1}(g_1); Q_1'=\begin{pmatrix} 0 & 0 & 0 & 1\\0 & 0 & -1 & 0\\0 & 1 & 0 & 0\\-1 & 0 & 0 & 0 \end{pmatrix}\in F^{-1}(g_2)$$ Now $P_2'$ has eigenvalue $\lambda_1=i$ with eigenbasis $\{(0,0,-i,1),(i,1,0,0)\}$ and eigenvalue $\lambda_2=-i$ with eigenbasis $\{(0,0,i,1), (-i,1,0,0)\}$. These two eigenspaces themselves do not descend to $\mathbb{Q}$. Therefore it suffices to consider $\mathbb{Q}(i)$-vector subspaces spanned by $$\{(ix_2,x_2,-ix_1,x_1),(-iy_2,y_2,iy_1,y_1)\}$$ as candidates for $i(\Delta)_{\mathbb{Q}(i)}$ where $x_1,x_2,y_1,y_2\in \mathbb{Q}(i)$. Now such $\mathbb{Q}(i)$-vector spaces have to be stable under the action of $Q_1'$, hence there exist $\alpha,\beta,\alpha',\beta'\in \mathbb{Q}(i)$ such that the following equations hold
\begin{equation*}
    \begin{split}
      \alpha ix_2-i\beta y_2=x_1; \alpha' ix_2-i\beta' y_2=y_1\\
      \alpha x_2+\beta y_2=i x_1;\alpha' x_2+\beta' y_2=-iy_1\\
      -\alpha ix_1+i\beta y_1=x_2; -\alpha' ix_1+i\beta' y_1=y_2\\
      \alpha x_1+\beta y_1=-ix_2; \alpha' x_1+\beta' y_1=iy_2
    \end{split}
\end{equation*}
from which we deduce that $$\alpha=0,\beta y_1=-ix_2,\beta y_2=ix_1$$ Therefore we have that $x_1y_1=-x_2y_2$. We claim that the $\mathbb{Q}(i)$-vector subspace $i(\Delta) \otimes_{\mathbb{Z}} \mathbb{Q}(i)$ spanned by $$\{(ix_2,x_2,-ix_1,x_1),(-ix_1,x_1,-ix_2,-x_2)\}, x_1,x_2\in \mathbb{Q}(i)$$ cannot descend to $\mathbb{Q}$. This is because if it were defined over $\mathbb{Q}$ there would exist $m,n\in \mathbb{Q}(i)$ such that the following equations hold 
\begin{equation*}
\begin{split}
imx_2-inx_1=-i\overline{x_2}\\
mx_2+nx_1=\overline{x_2}\\
-imx_1-inx_2=i\overline{x_1}\\
mx_1-nx_2=\overline{x_1}
    \end{split}
\end{equation*} from which we deduce that $m=0;nx_1=\overline{x_2};nx_2=-\overline{x_1}$. This implies that $x_1=x_2=0$ which gives a contradiction.
\end{enumerate}
By the above results (see Table (\ref{tableeachtwo-1}) for the summary), we can rule out the possibility of Case (\ref{bigcase3}) from Setup \ref{klein4setup} where $F^{-1}(g_1),F^{-1}(g_2),F^{-1}(g_3)$ contain only the generalized permutation matrices with exactly two -1's among their entries.   
\end{proof}

\begin{proof}[Proof of Proposition \ref{noklein4}]
 The three cases from Setup \ref{klein4setup} have been considered by Lemma \ref{bigcase1lemma}, Lemma \ref{p_0inpreimagelemma} and Lemma \ref{bigcase3lemma} and we can therefore finish the proof of Proposition \ref{noklein4}.   
\end{proof}
The final lemma of this section deals with the scenario where $H$, the image of $G''$ in $\mathrm{S}_4$ under $F$, is isomorphic to $\mathrm{A}_4$.
\begin{lemma} \label{A_4notpossible}
We use notations from Setup \ref{setupcontinued} and Lemma \ref{galois_permutation}. If $T=\gdrbhmath(A)$ is a two-dimensional (connected) algebraic torus, then the image $H$ of $G''$ in $\mathrm{S}_4$ cannot be $\mathrm{A}_4$.  
\end{lemma}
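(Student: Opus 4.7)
The plan is to argue by contradiction along the lines of the preceding lemmas. Suppose $T = \gdrbhmath(A)$ is a two-dimensional connected subtorus of $\mathrm{U}_E$ and $H = \mathrm{A}_4$. Since $V \subset \mathrm{A}_4$, the subgroup $F^{-1}(V) \cap G''$ surjects onto $V$ and preserves $i(\Delta)$. The computations in the proof of Proposition \ref{noklein4} depend only on this $V$-preimage data, so they apply verbatim and force
\[
i(\Delta) \otimes_{\mathbb{Z}} \mathbb{Q} = \mathrm{span}_{\mathbb{Q}}\bigl\{(-x_1, x_1, -x_2, x_2),\, (y_1, y_1, y_2, y_2)\bigr\}, \qquad x_1 y_1 = \pm x_2 y_2,
\]
as in cases (\ref{p_0p_2}) and (\ref{p_0p_3}) of that proof; otherwise Corollary \ref{divisortest} is already violated.

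Next, because $\mathrm{A}_4 = \langle V, (234) \rangle$, the group $G''$ contains a signed permutation matrix $M$ whose underlying permutation is the $3$-cycle $(234)$. Such an $M$ preserves the line $\mathbb{Q} e_1$ with eigenvalue $\pm 1$, and its restriction to $\mathrm{span}_{\mathbb{Q}}\{e_2, e_3, e_4\}$ is a cyclic signed permutation of order $3$ or $6$ with eigenvalues $\{\zeta, \zeta \omega, \zeta \omega^{2}\}$ for some $\zeta \in \{\pm 1\}$ and $\omega$ a primitive cube root of unity. Consequently the $2$-dimensional $M$-stable $\mathbb{Q}$-subspaces of $X^{*}(\mathrm{U}_E) \otimes \mathbb{Q}$ are exactly two: the plane $\mathbb{Q} e_1 \oplus \mathbb{Q} v$, where $v = (0, v_2, v_3, v_4)$ is the unique (up to scalar) $\mathbb{Q}$-rational eigenvector of $M$ inside $\mathrm{span}\{e_2, e_3, e_4\}$; and the $\mathbb{Q}$-irreducible $2$-dimensional subrepresentation associated with the complex conjugate pair $\{\zeta\omega, \zeta\omega^{2}\}$, every vector of which has vanishing first coordinate.

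It remains to check that neither candidate is of the form dictated in the first paragraph. For the first candidate, writing a generic element as $(\alpha, \beta v_2, \beta v_3, \beta v_4)$ and requiring both a nonzero $(-x_1, x_1, -x_2, x_2)$ and a nonzero $(y_1, y_1, y_2, y_2)$ inside it yields, respectively, $v_3 + v_4 = 0$ and $v_3 - v_4 = 0$, forcing $v_3 = v_4 = 0$; but then $v = v_2 e_2$ cannot be an $M$-eigenvector because $M e_2 \in \pm \mathbb{Q} e_3$. For the second candidate, the vanishing of the first coordinate forces $x_1 = y_1 = 0$, and the compatibility form then forces $i(\Delta) \otimes \mathbb{Q} = \mathrm{span}\{e_3, e_4\}$; but $M e_4 \in \pm \mathbb{Q} e_2$ lies outside $\mathrm{span}\{e_3, e_4\}$, contradicting $M$-stability. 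The main obstacle is carefully keeping track of signs across the possible preimages of the $3$-cycle, but the representation-theoretic classification of $2$-dimensional $M$-stable subspaces makes the argument uniform across all such sign patterns; by symmetry the same argument works for every $3$-cycle in $\mathrm{A}_4$.
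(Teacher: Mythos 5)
Your proof is correct, and it reaches the contradiction by a genuinely different route in the second half. The first half coincides with the paper's: both reduce, via stability under the preimage of the Klein four-group and the computations behind Proposition \ref{noklein4} (Lemmas \ref{bigcase1lemma}, \ref{p_0inpreimagelemma}, \ref{bigcase3lemma}), to the candidates $\mathrm{span}_{\mathbb{Q}}\{(-x_1,x_1,-x_2,x_2),(y_1,y_1,y_2,y_2)\}$ with $x_1y_1=\pm x_2y_2$ coming from cases (\ref{p_0p_2}) and (\ref{p_0p_3}). From there the paper enumerates the eight signed lifts $R_0,\dots,R_7$ of the $3$-cycle $(123)$ and solves a separate linear system for each, whereas you exploit a lift $M$ of a $3$-cycle fixing one index and classify, once and for all, the two-dimensional $M$-stable rational subspaces using that $M$ is semisimple with characteristic polynomial $(t\mp1)(t^3\mp1)$ on the fixed line plus the moved block: the only candidates are $\mathbb{Q}e_1\oplus\mathbb{Q}v$ (with $v$ the rational eigenvector in the moved block) and the irreducible two-dimensional piece, and neither is compatible with the constrained shape. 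This buys uniformity over all sign patterns and replaces eight computations by one structural argument; the paper's version is more pedestrian but requires no representation-theoretic input. Two small imprecisions, neither a gap: first, not all excluded Klein-four configurations are ruled out by Corollary \ref{divisortest} alone (some are ruled out because no rank-two stable submodule exists at all), though the surviving form is as you state; second, in your treatment of the second candidate the relation $x_1y_1=\pm x_2y_2$ with $x_1=y_1=0$ actually forces $x_2y_2=0$, i.e.\ an immediate rank contradiction — the identification $i(\Delta)\otimes\mathbb{Q}=\mathrm{span}\{e_3,e_4\}$ follows from the shape of the two nonzero spanning vectors alone, and one could also conclude there directly via Corollary \ref{divisortest} since $(0,0,1,1)$ would then lie in $i(\Delta)$; in any reading the contradiction stands.
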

\begin{proof}
    Note that $H=\mathrm{A}_4$ contains the Klein four-group $$V:=(\mathrm{id},(12)(34),(13)(24),(14)(23))=(\mathrm{id}, g_1, g_2, g_3)$$ and all length 3 cycles in $\mathrm{S}_4$. Therefore any submodule $i(\Delta)$ from the short exact sequence (\ref{charseq}) stable under the action of $G''=F^{-1}(H)$ is also stable under $F^{-1}(V)$. Then by Proposition \ref{noklein4} and the proof of Lemma \ref{p_0inpreimagelemma}, it suffices to examine cases where for some $i \neq i'$, $F^{-1}(g_{i})$ contains an ordinary permutation matrix and $F^{-1}(g_{i'})$ contains a generalized permutation matrix with exactly two $-1$s among its entries. By the computations done in Lemma \ref{p_0inpreimagelemma} and symmetry of the positions of $g_1,g_2,g_3$, it suffices to consider the following two scenario. 
    \begin{enumerate}
        \item $i(\Delta)$ is stable under the action of $$P_0=\begin{pmatrix} 0 & 1 & 0 & 0\\1 & 0 & 0 & 0\\0 & 0 & 0 & 1\\0 & 0 & 1 & 0 \end{pmatrix}\in F^{-1}(g_1), Q:=\begin{pmatrix} 0 & 0 & 0 & 1\\0 & 0 & -1 & 0\\0 & 1 & 0 & 0\\-1 & 0 & 0 & 0 \end{pmatrix}\in F^{-1}(g_2), R\in F^{-1}((123))$$
        \item $i(\Delta)$ is stable under the action of $$P_0, Q':= \begin{pmatrix} 0 & 0 & 0 & -1\\0 & 0 & 1 & 0\\0 & 1 & 0 & 0\\-1 & 0 & 0 & 0 \end{pmatrix}\in F^{-1}(g_2), R\in F^{-1}((123))$$ 
    \end{enumerate}

\begin{enumerate}
\item\label{ccccase1}Suppose $i(\Delta)$ is stable under the action of $P_0$ and $Q$. According to (\ref{p_0p_2}) in the proof of Lemma \ref{p_0inpreimagelemma}, it suffices to consider the following set of two dimensional $\mathbb{Q}$-vector spaces stable under $P_0$ and $Q$ as candidates for $i(\Delta)_{\mathbb{Q}}$. $$\mathcal{S}:=\{\mathrm{span}_{\mathbb{Q}}(\{(-x_1,x_1,-x_2,x_2),(y_1,y_1,y_2,y_2)\})|x_{i},y_{j}\in \mathbb{Z},x_1y_1=x_2y_2\}$$ As expected, the extra symmetry brought by an element in $F^{-1}((123))$ obstructs the existence of a two-dimensional vector space simultaneous stable under $P_0,Q,R\in F^{-1}((123))$. The results are summarized in the following table.
\begin{table}[H]
    \centering
    \begin{tabular}{|c|c|c|}
    \hline
    Elements in $G''$  & Properties of $i(\Delta)$ & Can we rule out such $T$ \\
    \hline
    $P_0, Q, R_0$ & Cannot be of rank 2 & Yes, See (\ref{P_0QR_0})\\
    \hline
    $P_0, Q, R_1$ & Cannot be of rank 2 & Yes, See (\ref{P_0QR_1})\\
    \hline
    $P_0, Q, R_2$ & Cannot be of rank 2 & Yes, See (\ref{P_0QR_2})\\
    \hline
    $P_0, Q, R_3$ & Cannot be of rank 2 & Yes, See (\ref{P_0QR_3})\\
    \hline
    $P_0, Q, R_4$ & Cannot be of rank 2 & Yes, See (\ref{P_0QR_4})\\
    \hline
    $P_0, Q, R_5$ & Cannot be of rank 2 & Yes, See (\ref{P_0QR_5})\\
    \hline
    $P_0, Q, R_6$ & Cannot be of rank 2 & Yes, See (\ref{P_0QR_6})\\
    \hline
    $P_0, Q, R_7$ & Cannot be of rank 2 & Yes, See (\ref{P_0QR_7})\\
    \hline
    \end{tabular}
    \caption{Results for Case (\ref{ccccase1}) of Proposition \ref{A_4notpossible}}
    \label{a4table}
\end{table}
Below are the detailed computations summarized in Table \ref{a4table}.
\begin{enumerate}
\item\label{P_0QR_0} Given a two-dimensional $\mathbb{Q}$-vector space from the set $\mathcal{S}$ which is further stable under the action of $$R_0:=\begin{pmatrix} 0 & 0 & 1 & 0\\1 & 0 & 0 & 0\\0 & 1 & 0 & 0\\0 & 0 & 0 & 1 \end{pmatrix}\in F^{-1}((123))$$ There exist $\alpha,\beta,\alpha',\beta' \in \mathbb{Q}$ such that \begin{equation}\label{eqs}
\begin{split}
-\alpha x_1+\beta y_1 =-x_2;  -\alpha^{'} x_1+\beta^{'} y_1 =y_2 \\
\alpha x_1+\beta y_1 =-x_1;   \alpha^{'} x_1+\beta^{'} y_1 =y_1 \\
-\alpha x_2+\beta y_2 =x_1;   -\alpha^{'} x_2+\beta^{'} y_2 =y_1\\
\alpha x_2+\beta y_2 =x_2;    \alpha^{'} x_2+\beta^{'} y_2 =y_2
\end{split}
\end{equation} from which we deduce that \begin{equation}
\begin{split}
&\alpha(x_1-x_2)+\beta(y_1+y_2)=0;\alpha(x_2-x_1)+\beta(y_1+y_2)=0\\
&\alpha'(x_1+x_2)+\beta'(y_1-y_2)=0;\alpha'(x_1+x_2)+\beta'(y_2-y_1)=0\\
\end{split}
\end{equation} 
And we deduce that 
\begin{equation}\label{directfromeqs}
\alpha(x_1-x_2)=\beta(y_1+y_2)=0;\alpha'(x_2+x_1)=\beta'(y_1-y_2)=0
\end{equation}
From (\ref{eqs}), because $i(\Delta)$ is of rank two, we cannot have $\alpha=\beta=0$ or $\alpha'=\beta'=0$. Hence either $x_1=x_2,y_1=y_2$ or $x_1=-x_2,y_1=-y_2$. Suppose the first case, then $\alpha'=\beta=0$. Swapping $x_1=x_2$ into (\ref{eqs}), we obtain that $\alpha x_1=-x_1;\alpha x_2=x_2$ thus giving a contradiction. Suppose the second case, then $\alpha=\beta'=0$. Swapping $x_1=-x_2,y_1=-y_2$ into (\ref{eqs}), we obtain that $\beta y_2=x_1;\beta y_2=x_2=-x_1$. Again this contradicts that $i(\Delta)$ is of rank two.

Hence we can conclude that there does not exist a two-dimensional $\mathbb{Q}$-vector space stable under the action of $P_0,Q,R_0$.
\item \label{P_0QR_1} Given a two-dimensional vector space from the set $\mathcal{S}$ which is further stable under the action of $$R_1:=\begin{pmatrix} 0 & 0 & 1 & 0\\-1 & 0 & 0 & 0\\0 & 1 & 0 & 0\\0 & 0 & 0 & 1 \end{pmatrix}\in F^{-1}((123))$$
There then exist $\alpha,\beta,\alpha',\beta' \in \mathbb{Q}$ such that \begin{equation*}
\begin{split}
-\alpha x_1+\beta y_1 =-x_2;  -\alpha^{'} x_1+\beta^{'} y_1 =y_2 \\
\alpha x_1+\beta y_1 =x_1;   \alpha^{'} x_1+\beta^{'} y_1 =-y_1 \\
-\alpha x_2+\beta y_2 =x_1;   -\alpha^{'} x_2+\beta^{'} y_2 =y_1\\
\alpha x_2+\beta y_2 =x_2;    \alpha^{'} x_2+\beta^{'} y_2 =y_2
\end{split}
\end{equation*} from which we deduce that 
\begin{equation*}
2\alpha x_1=x_1+x_2;2\alpha x_2=x_2-x_1
\end{equation*}  
which implies that 
\begin{equation*}
    x_1^2+x_2^2=0
\end{equation*}
Since $x_1,x_2\in\mathbb{Q}$, we conclude that $x_1=x_2=0$. This is a contradiction to $i(\Delta)$ being of rank 2.
\item \label{P_0QR_2}Suppose we are given a two-dimensional vector space from the set $\mathcal{S}$ which is further stable under the action of $$R_2:=\begin{pmatrix} 0 & 0 & 1 & 0\\1 & 0 & 0 & 0\\0 & -1 & 0 & 0\\0 & 0 & 0 & 1 \end{pmatrix}\in F^{-1}((123))$$
There then exist $\alpha,\beta,\alpha',\beta' \in \mathbb{Q}$ such that \begin{equation*}
\begin{split}
-\alpha x_1+\beta y_1 =-x_2;  -\alpha^{'} x_1+\beta^{'} y_1 =y_2 \\
\alpha x_1+\beta y_1 =-x_1;   \alpha^{'} x_1+\beta^{'} y_1 =y_1 \\
-\alpha x_2+\beta y_2 =-x_1;   -\alpha^{'} x_2+\beta^{'} y_2 =-y_1\\
\alpha x_2+\beta y_2 =x_2;    \alpha^{'} x_2+\beta^{'} y_2 =y_2
\end{split}
\end{equation*} from which we deduce that \begin{equation*}
2\beta'y_1=y_1+y_2; 2\beta'y_2=y_2-y_1
\end{equation*}  
which further implies that 
\begin{equation*}
    y_1=y_2=0
\end{equation*}
but then this is a contradiction to the requirement that $i(\Delta)$ is a rank two submodule.
\item \label{P_0QR_3}Suppose we are given a two-dimensional vector space from the set $\mathcal{S}$ which is further stable under the action of $$R_3:=\begin{pmatrix} 0 & 0 & -1 & 0\\1 & 0 & 0 & 0\\0 & 1 & 0 & 0\\0 & 0 & 0 & 1 \end{pmatrix}\in F^{-1}((123))$$ There then exist $\alpha,\beta,\alpha',\beta' \in \mathbb{Q}$ such that 
\begin{equation*}
\begin{split}
-\alpha x_1+\beta y_1 =x_2;  -\alpha^{'} x_1+\beta^{'} y_1 =-y_2 \\
\alpha x_1+\beta y_1 =-x_1;   \alpha^{'} x_1+\beta^{'} y_1 =y_1 \\
-\alpha x_2+\beta y_2 =x_1;   -\alpha^{'} x_2+\beta^{'} y_2 =y_1\\
\alpha x_2+\beta y_2 =x_2;    \alpha^{'} x_2+\beta^{'} y_2 =y_2
\end{split}
\end{equation*}
from which we deduce that \begin{equation*}
2\beta'y_1=y_1-y_2;2\beta'y_2=y_1+y_2
\end{equation*}  
which further implies that 
\begin{equation*}
    y_1=y_2=0
\end{equation*}
but then this is a contradiction to the requirement that $i(\Delta)$ is a rank two submodule.

\item \label{P_0QR_4}Suppose we are given a two-dimensional vector space from the set $\mathcal{S}$ which is further stable under the action of $$R_4=\begin{pmatrix} 0 & 0 & 1 & 0\\1 & 0 & 0 & 0\\0 & 1 & 0 & 0\\0 & 0 & 0 & -1 \end{pmatrix}\in F^{-1}((123))$$
There then exist $\alpha,\beta,\alpha',\beta' \in \mathbb{Q}$ such that \begin{equation*}
\begin{split}
-\alpha x_1+\beta y_1 =-x_2;  -\alpha^{'} x_1+\beta^{'} y_1 =y_2 \\
\alpha x_1+\beta y_1 =-x_1;   \alpha^{'} x_1+\beta^{'} y_1 =y_1 \\
-\alpha x_2+\beta y_2 =x_1;   -\alpha^{'} x_2+\beta^{'} y_2 =y_1\\
\alpha x_2+\beta y_2 =-x_2;    \alpha^{'} x_2+\beta^{'} y_2 =-y_2
\end{split}
\end{equation*} from which we deduce that 
\begin{equation*}
    2\alpha x_1=x_2-x_1; 2\alpha x_2=-x_2-x_1
\end{equation*}
which implies that 
\begin{equation*}
    x_1=x_2=0
\end{equation*} and this contradicts that $i(\Delta)$ is a rank two submodule.

\item\label{P_0QR_5} Suppose we are given a two-dimensional vector space from the set $\mathcal{S}$ which is further stable under the action of $$R_5=\begin{pmatrix} 0 & 0 & 1 & 0\\-1 & 0 & 0 & 0\\0 & -1 & 0 & 0\\0 & 0 & 0 & 1 \end{pmatrix}\in F^{-1}((123))$$
There then exist $\alpha,\beta,\alpha',\beta' \in \mathbb{Q}$ such that \begin{equation}\label{eqss}
\begin{split}
-\alpha x_1+\beta y_1 =-x_2;  -\alpha^{'} x_1+\beta^{'} y_1 =y_2 \\
\alpha x_1+\beta y_1 =x_1;   \alpha^{'} x_1+\beta^{'} y_1 =-y_1 \\
-\alpha x_2+\beta y_2 =-x_1;   -\alpha^{'} x_2+\beta^{'} y_2 =-y_1\\
\alpha x_2+\beta y_2 =x_2;    \alpha^{'} x_2+\beta^{'} y_2 =y_2
\end{split}
\end{equation} from which we deduce that \begin{equation*}
\begin{split}
\\\alpha(x_2-x_1)=\beta(y_1+y_2)=0 ;  \alpha'(x_1+x_2)=\beta'(y_1-y_2)=0    
\end{split}
\end{equation*}  
Then for reasons similar to Case (\ref{P_0QR_0}), it suffices to consider $x_2=x_1,y_1=y_2$ or $x_1+x_2=0,y_1+y_2=0$. Suppose the first case, then $\beta=\alpha'=0$. Swapping $y_1=y_2$ into (\ref{eqss}), we obtain that $\beta'y_1=-y_1,\beta'y_2=y_2$. This contradicts that $i(\Delta)$ is of rank two. Suppose the second case, then $\beta'=\alpha=0$. Swapping $y_1=-y_2$ into (\ref{eqss}), we obtain that $-\alpha'x_1=y_2,\alpha'x_1=-y_1=y_2$. This implies that $y_1=y_2=0$. Hence no two-dimensional $\mathbb{Q}$-vector space is stable under $P_0,Q,R_5$.
\item\label{P_0QR_6} Suppose we are given a two-dimensional vector space from the set $\mathcal{S}$ which is further stable under the action of $$R_6:=\begin{pmatrix} 0 & 0 & -1 & 0\\-1 & 0 & 0 & 0\\0 & 1 & 0 & 0\\0 & 0 & 0 & 1 \end{pmatrix}\in F^{-1}((123))$$
There then exist $\alpha,\beta,\alpha',\beta' \in \mathbb{Q}$ such that \begin{equation}\label{eqsss}
\begin{split}
-\alpha x_1+\beta y_1 =x_2;  -\alpha^{'} x_1+\beta^{'} y_1 =-y_2 \\
\alpha x_1+\beta y_1 =x_1;   \alpha^{'} x_1+\beta^{'} y_1 =-y_1 \\
-\alpha x_2+\beta y_2 =x_1;   -\alpha^{'} x_2+\beta^{'} y_2 =y_1\\
\alpha x_2+\beta y_2 =x_2;    \alpha^{'} x_2+\beta^{'} y_2 =y_2
\end{split}
\end{equation} from which we deduce that \begin{equation*}
\begin{split}
\\\alpha(x_2+x_1)=\beta(y_1-y_2)=0 ;  \alpha'(x_1-x_2)=\beta'(y_1+y_2)=0    
\end{split}
\end{equation*} 
For reasons similar to above, it suffices to consider $x_2=x_1,y_1=y_2$ or $x_1+x_2=0,y_1+y_2=0$. Suppose the first case, then $\beta'=\alpha=0$. Swapping $x_1=x_2,y_1=y_2$ into (\ref{eqsss}), we obtain that $\alpha'x_1=-y_1,\alpha'x_2=\alpha'x_1=y_2=y_1$. Thus this contradicts that $i(\Delta)$ is of rank two. Suppose the second case, then $\beta=\alpha'=0$. Swapping $x_1=-x_2$ into (\ref{eqsss}), we obtain that $\beta y_1=x_2,\beta y_1=x_1=-x_2$. Again this gives a contradiction. Hence no two-dimensional $\mathbb{Q}$-vector space is stable under $P_0,Q,R_6$.

\item\label{P_0QR_7} Suppose we are given a two-dimensional vector space from the set $\mathcal{S}$ which is further stable under the action of $$R_7:=\begin{pmatrix} 0 & 0 & 1 & 0\\-1 & 0 & 0 & 0\\0 & 1 & 0 & 0\\0 & 0 & 0 & -1 \end{pmatrix}\in F^{-1}((123))$$
There then exist $\alpha,\beta,\alpha',\beta' \in \mathbb{Q}$ such that \begin{equation}\label{eqssss}
\begin{split}
-\alpha x_1+\beta y_1 =-x_2;  -\alpha^{'} x_1+\beta^{'} y_1 =y_2 \\
\alpha x_1+\beta y_1 =x_1;   \alpha^{'} x_1+\beta^{'} y_1 =-y_1 \\
-\alpha x_2+\beta y_2 =x_1;   -\alpha^{'} x_2+\beta^{'} y_2 =y_1\\
\alpha x_2+\beta y_2 =-x_2;    \alpha^{'} x_2+\beta^{'} y_2 =-y_2
\end{split}
\end{equation} from which we deduce that \begin{equation*}
\begin{split}
\\\alpha(x_2+x_1)=\beta(y_1-y_2)=0 ;  \alpha'(x_1-x_2)=\beta'(y_1+y_2)=0    
\end{split}
\end{equation*}  
For reasons similar to above, it suffices to consider $x_1=x_2,y_1=y_2$ or $x_1+x_2=0,y_1+y_2=0$. Suppose the first case, then $\alpha=\beta'=0$. Swapping $x_1=x_2$ into (\ref{eqssss}), we obtain that $\beta y_1=-x_2=-x_1,\beta y_1=x_1$ which contradicts that $i(\Delta)$ is of rank two. Suppose the second case, then $\beta=\alpha'=0$. Swapping $x_1=-x_2$ into (\ref{eqssss}), we obtain that $\alpha x_1=x_1,\alpha x_2=-x_2$. Again this gives a contradiction. Hence no two-dimensional $\mathbb{Q}$-vector space is stable under $P_0,Q,R_7$.
\end{enumerate}
\item  Suppose $i(\Delta)$ is stable under $$P_0= \begin{pmatrix} 0 & 1 & 0 & 0\\1 & 0 & 0 & 0\\0 & 0 & 0 & 1\\0 & 0 & 1 & 0 \end{pmatrix}; Q'=\begin{pmatrix} 0 & 0 & 0 & -1\\0 & 0 & 1 & 0\\0 & 1 & 0 & 0\\-1 & 0 & 0 & 0 \end{pmatrix}$$ Now according to the computations done in (\ref{p_0p_3}) of the proof of Lemma \ref{p_0inpreimagelemma}, it suffices to consider the following set of two dimensional $\mathbb{Q}$-vector spaces stable under $P_0$ and $Q'$ as candidates for $i(\Delta)_{\mathbb{Q}}$. $$\mathcal{S}'=\{\mathrm{span}_{\mathbb{Q}}(\{(-x_1,x_1,-x_2,x_2),(y_1,y_1,y_2,y_2)\})|x_{i},y_{j}\in \mathbb{Z},x_1y_1=-x_2y_2\}$$ In Case (\ref{ccccase1}), we have ruled out all vector spaces in $$\mathcal{S}=\{\mathrm{span}_{\mathbb{Q}}(\{(-x_1,x_1,-x_2,x_2),(y_1,y_1,y_2,y_2)\})|x_{i},y_{j}\in \mathbb{Z},x_1y_1=x_2y_2\}$$ stable under the action of any $R\in F^{-1}((123))$ as candidates for $i(\Delta)_{\mathbb{Q}}$. A simple inspection of the computations done in Case (\ref{ccccase1}) reveals that the condition $x_1y_1=x_2y_2$ is not used at all. Hence the arguments from Case (\ref{ccccase1}) can be applied directly to rule out elements in $\mathcal{S}'$ which are stable under any $R\in F^{-1}((123))$ as candidates for $i(\Delta)_{\mathbb{Q}}$. 
\end{enumerate} 
\end{proof}

\begin{keyremark}\label{keyremarktotakeaway}
Let $A$ be a simple CM abelian fourfold. Suppose $\gdrbhmath(A)$ is a two-dimensional subtorus of $\mathrm{U}_E$, by the combined efforts of Lemma \ref{nolength4cycle}, Proposition \ref{noklein4} and Lemma \ref{A_4notpossible}, it remains to consider the case where $G''\cong \mathrm{D}_4$ (see Setup \ref{setupcontinued} for the notation).
\end{keyremark}

\subsection{Non-Simplicity of Certain CM Abelian Fourfolds}\label{cmnonsimplicity}
The goal of this section is to fill the caveat mentioned in Key Remark \ref{keyremarktotakeaway} by proving the following lemma. 
\begin{lemma}
\label{nosuchsimpleAV}
Let $E$ be a CM-field of degree 8 over $\mathbb{Q}$. Denote the Galois closure of $E/\mathbb{Q}$ inside $\qbar$ by $L$. Suppose that the image of the group homomorphism (see Example \ref{firstexample}) $$\phi: \mathrm{Gal}(L/\mathbb{Q}) \rightarrow \mathrm{Aut}_{\mathbb{Z}}(X^{*}(\resgmmath))$$ induced by the extension $L/E/\mathbb{Q}$ is isomorphic to the dihedral group $$\mathrm{D}_4=\langle a,x |a^4=1;x^2=1;axa=x\rangle$$
Then there does not exist a simple CM abelian fourfold whose endomorphism field is $E$.
\end{lemma}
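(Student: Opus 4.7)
The plan is first to observe that the hypothesis forces $L = E$ and $\mathrm{Gal}(E/\mathbb{Q}) \cong \mathrm{D}_4$. Indeed, $\mathrm{Gal}(L/\mathbb{Q})$ acts faithfully on $\mathrm{Hom}(E, L)$ because $L$ is the Galois closure of $E$, so the representation $\phi$ is injective; hence $|\mathrm{Gal}(L/\mathbb{Q})| = |\mathrm{D}_4| = 8 = [E:\mathbb{Q}]$, which forces $L = E$. Write $\mathrm{Gal}(E/\mathbb{Q}) = \langle a, x \mid a^4 = x^2 = 1,\ axa = x \rangle$. By Lemma \ref{comjugationcommuting}, the complex conjugation $c \in \mathrm{Gal}(E/\mathbb{Q})$ commutes with every element, so it lies in the centre $\{1, a^2\}$; being non-trivial it must equal $a^2$.

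Next I would classify the CM subfields of $E$. By Corollary \ref{subofcm}, a subfield $E^H$ is CM if and only if $a^2 \notin H$, and a glance at the subgroup lattice of $\mathrm{D}_4$ shows that the only such proper subgroups are the four order-two reflections $H_i := \{1,\, xa^i\}$ for $i = 0,1,2,3$. In particular every proper CM subfield of $E$ has degree $4$, and $E$ contains no imaginary quadratic subfield. The key reduction is now classical: a simple CM abelian variety $A$ in the sense of Definition \ref{cmavdefinition} corresponds to a \emph{primitive} CM type on $E$, one that is not induced from any CM type on a proper CM subfield (if $(E, \Phi)$ were induced from $(E', \Phi')$ with $[E:E'] = d \geq 2$, then $A$ would be isogenous to the $d$-th power of the CM abelian variety for $(E', \Phi')$, hence non-simple). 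It therefore suffices to show that every CM type on $E$ is induced from some $E^{H_i}$.

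To this end I would parametrise CM types combinatorially. Since $c = a^2$, the four conjugate pairs of embeddings are the cosets $\{1, a^2\}$, $\{a, a^3\}$, $\{x, xa^2\}$, $\{xa, xa^3\}$ (identifying $\mathrm{Hom}(E, \qbar)$ with $\mathrm{Gal}(E/\mathbb{Q})$), so a CM type $\Phi$ is specified by a tuple $(\epsilon_0, \epsilon_1, \epsilon_2, \epsilon_3) \in \{0,1\}^4$ recording which element of each pair is selected. Induction from $E^{H_i}$ is equivalent to $\Phi$ being a union of left cosets of $H_i$, i.e.\ $\Phi \cdot H_i = \Phi$. A direct computation using the dihedral relation $ax = xa^{-1}$ translates this into a Boolean condition on the $\epsilon_j$'s: $\Phi$ is induced from $E^{H_0}$ iff $\epsilon_0 = \epsilon_2$ and $\epsilon_1 \neq \epsilon_3$; from $E^{H_2}$ iff $\epsilon_0 \neq \epsilon_2$ and $\epsilon_1 = \epsilon_3$; from $E^{H_1}$ iff $\epsilon_0 = \epsilon_3$ and $\epsilon_1 = \epsilon_2$; and from $E^{H_3}$ iff $\epsilon_0 \neq \epsilon_3$ and $\epsilon_1 \neq \epsilon_2$.

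Finally, setting $\alpha := \epsilon_0 \oplus \epsilon_2$ and $\beta := \epsilon_1 \oplus \epsilon_3$ (with $\oplus$ denoting addition modulo $2$), I would show that the four conditions exhaust $\{0,1\}^4$: when $\alpha \neq \beta$, either the $H_0$- or the $H_2$-condition holds; when $\alpha = \beta$, the identity $(\epsilon_0 \oplus \epsilon_3) \oplus (\epsilon_1 \oplus \epsilon_2) = \alpha \oplus \beta = 0$ forces $\epsilon_0 \oplus \epsilon_3 = \epsilon_1 \oplus \epsilon_2$, so either the $H_1$- or the $H_3$-condition applies. Hence every CM type on $E$ is non-primitive, and no simple CM abelian fourfold has endomorphism field $E$. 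The only real hurdle is keeping the conventions straight — matching Galois elements to embeddings and left cosets to the direction of induction — which I would pin down once at the outset so that the four Boolean conditions above can be read off cleanly from the dihedral relations.
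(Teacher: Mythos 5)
Your proof is correct, but it takes a different route from the paper's. The paper fixes the two quartic CM subfields $K_1=E^{\{1,x\}}$ and $K_2=E^{\{1,ax\}}$, proves in Lemma \ref{multiplicity-quartic-CM} that for a \emph{simple} CM abelian fourfold the induced multiplicities on each quartic CM subfield must be $\{2,0,1,1\}$ (ruling out $\{1,1,1,1\}$ via the Weil structure and the Picard-rank computation of Lemma \ref{rosaticmcoincides}, and $\{2,0,2,0\}$ via primitivity), and then enumerates the four CM types on $E$ compatible with this constraint for $K_1$, checking that each violates it for $K_2$. You instead bypass the Hodge-theoretic input entirely: after the same reduction to $E=L$ Galois with group $\mathrm{D}_4$ and conjugation $a^2$, you show directly that \emph{every} CM type on $E$ is stable under right multiplication by one of the four reflection subgroups $H_i=\{1,xa^i\}$ (your coset computations and the $\oplus$-bookkeeping check out, and the four Boolean conditions do exhaust $\{0,1\}^4$), hence is induced from a quartic CM subfield and is imprimitive; the only geometric input is the standard fact, also cited by the paper (\cite[Proposition 1.3.13]{milne2006complex}), that the CM type of a simple CM abelian variety is primitive. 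This is essentially the ``alternative proof'' the paper alludes to in the remark following the lemma but does not carry out; what it buys is a shorter, purely group-theoretic argument that needs neither the Weil-structure dimension count nor the N\'eron--Severi computation, while the paper's multiplicity lemma has the side benefit of being reused elsewhere in its arguments. The conventions you flag (embeddings as $i_0\circ g$, conjugate pairs $\{g,ga^2\}$, fibers of restriction to $E^{H}$ as left cosets $gH$, induction as $\Phi H=\Phi$) are exactly the ones that need fixing, and with the labelling you describe the stated conditions for $H_0,H_1,H_2,H_3$ are the right ones, so there is no gap.
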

\begin{remark}\label{alreadygalois}
In fact, the field $E$ from the above Lemma \ref{nosuchsimpleAV} is a Galois extension of $\mathbb{Q}$ whose Galois group is isomorphic to $\mathrm{D}_4$. By Example \ref{firstexample}, the group homomorphism $\phi$ in the lemma is in fact induced by the homomorphism $\phi': \mathrm{Gal}(L/\mathbb{Q}) \rightarrow \mathrm{Aut}_{\mathrm{set}}(\mathrm{Hom}(E,L))$. Because $L$ is the Galois closure of $E/\mathbb{Q}$ insider $\qbar$, we have that $\phi'$ is an injective homomorphism.  Hence if the image of $\phi'$ has order 8, then the group $\mathrm{Gal}(L/\mathbb{Q})$ also has order 8, which implies that $E$ is already a Galois extension over $\mathbb{Q}$ whose Galois group is isomorphic to $\mathrm{D}_4$.
\end{remark}
By virtue of Remark \ref{alreadygalois}, for the proof of Lemma \ref{nosuchsimpleAV}, we will work in the following setup.
\begin{setup}\label{Esubfield}
 Let $E$ be a Galois CM-field of degree 8 with \begin{equation*}\label{setupnosuchsimpleav}
G:=\mathrm{Gal}(E/\mathbb{Q})=\mathrm{D}_4    
\end{equation*} Denote the subgroup of $G$ whose elements are $\{1,x\}$ by $G_1$, and the subgroup of $G$ whose elements are $\{1,ax\}$ by $G_{2}$. Let $$K_1=E^{G_1}$$ and $$K_2=E^{G_2}$$ then they are quartic subfields of $E$. Moreover, since $a^2$ is the only order 2 element in $\mathrm{D}_4$ that commutes with every other element, it is the complex conjugation on $E$ by Lemma \ref{comjugationcommuting}. Hence the maximal totally real subfield $F$ of $E$ is given by $E^{\{1,a^2\}}$. By the Galois correspondence, $K_{i}$ is not a totally real subfield of $E$. Therefore both $K_1$ and $K_2$ are in fact CM-subfields of $E$ by Lemma \ref{subofcm}.

We now denote the image of one of the embeddings $i_0: E \rightarrow \qbar$ as $L$. Then $L \cong E$ is a Galois extension over $\mathbb{Q}$ and therefore the image of any other embedding  $E \rightarrow \qbar$ is also $L$. Also any embedding of $K_i$s into $\qbar$ factors through $L\xhookrightarrow{} \qbar$. Now since $E/\mathbb{Q}$ is also a Galois extension, $\mathrm{Gal}(E/\mathbb{Q})$ acts simple transitively on the right on $\mathrm{Hom}(E,L)$ where an embedding $i \in \mathrm{Hom}(E,L)$ is sent to $i \circ g \in \mathrm{Hom}(E,L)$ for $g \in \mathrm{Gal}(E/\mathbb{Q})$. Then upon fixing an embedding $i_{0}:E \xhookrightarrow{}L$, we can and will identify $\mathrm{Hom}(E,L)$ with $D_4$ as a set via the map: $$\mathrm{D}_4 \rightarrow \mathrm{Hom}(E,L): g \rightarrow i_0 \circ g$$ Then in particular, the eigenspace decomposition with respect to the action of $E$ on $\betti$ can be written as 
\begin{equation*}
   \mathrm{H}^1(A,\mathbb{Q}) \otimes_{\mathbb{Q}} L=V_{\mathrm{id}} \oplus V_{a} \oplus V_{a^2} \oplus V_{a^3} \oplus V_{x} \oplus V_{ax} \oplus V_{a^2 x} \oplus V_{a^3 x}
\end{equation*}
 For $v \in V_{g}, g\in \mathrm{Gal}(E/\mathbb{Q})$, we have $e \circ v=i_0(g(e))v$ for every $e \in E$. Now since $K_1=E^{\{1,x\}}$, for any $i \in \mathrm{Hom}(E,L)$, we have $i|_{K_1}=i \circ x|_{K_1}$. Hence with respect to the action of $K_1 \xhookrightarrow{} E \xhookrightarrow{}\mathrm{End}(\mathrm{H}^1(A,\mathbb{Q}))$, we have the following eigenspace decomposition
 \begin{equation}\label{eigenk1}
\mathrm{H}^1(A,\mathbb{Q}) \otimes_{\mathbb{Q}} L=(V_{\mathrm{id}} \oplus V_{x}) \oplus(V_{a} \oplus V_{ax}) \oplus (V_{a^2} \oplus V_{a^2 x}) \oplus (V_{a^3} \oplus V_{a^3 x})
\end{equation} 
Since $K_2=E^{\{1,ax\}}$, for any $i \in \mathrm{Hom}(E,L)$, we have $i|_{K_2}=i \circ ax|_{K_2}$. We have the following eigenspace decomposition for the action of $K_2$ on $\mathrm{H}^1(A,\mathbb{Q})$
\begin{equation}\label{eigenk2}
\mathrm{H}^1(A,\mathbb{Q}) \otimes_{\mathbb{Q}} L=(V_{\mathrm{id}} \oplus V_{ax}) \oplus( V_{a} \oplus V_{a^2 x}) \oplus (V_{a^2} \oplus V_{a^3 x}) \oplus (V_{a^3} \oplus V_{x})
\end{equation} 
\end{setup}
\begin{remark}
    Here it is crucial that $E/\mathbb{Q}$ is a Galois extension by Remark \ref{alreadygalois}. Otherwise it is difficult to obtain ``sensible"(the meaning of which will be clear below) choice of $K_1$ and $K_2$ inside $E$.
\end{remark}
Now we are ready to state the following lemma inspired by \cite[Formula 7.5.1]{mz-4-folds} controlling the set of multiplicities of $K_{i}$s (see Definition \ref{multiplicitydefn}) with respect to $$K_{i} \xhookrightarrow{} E=\mathrm{End}_{\mathrm{Hdg}}(\mathrm{H}^1(A,\mathbb{Q}))$$ It is also a variant of \cite[Proposition 14]{shimura1963analytic}.
\begin{lemma}\label{multiplicity-quartic-CM}
We work in the same setup as above. For both quartic CM subfields $K_1$ and $K_2$, with respect to the embedding $K_{i} \xhookrightarrow{} E=\mathrm{End}_{\mathrm{Hdg}}(\mathrm{H}^1(A,\mathbb{Q}))$, the only possible set of multiplicities (see Definition \ref{multiplicitydefn}) associated with $\mathrm{Hom}(K_{i},\mathbb{C})$ is $\{2,0,1,1\}$. 
\end{lemma}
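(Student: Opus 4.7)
The plan is to parametrize all possible CM-types $\Phi$ on $E$, compute the possible multiplicity multisets, and then invoke Shimura's primitivity criterion for simplicity of CM abelian varieties to rule out the non-admissible options. By Remark \ref{alreadygalois}, $E$ is already Galois over $\mathbb{Q}$ with $G=\mathrm{Gal}(E/\mathbb{Q})\cong D_4$. Via the identification $\mathrm{Hom}(E,L)\leftrightarrow D_4$ from Setup \ref{Esubfield}, the CM-type $\Phi\subset D_4$ has size $4$, and since the complex conjugation on $E$ is the unique central involution $a^2$ (Lemma \ref{comjugationcommuting}), $\Phi$ is a transversal for the partition of $D_4$ into the four $\{1,a^2\}$-cosets. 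By Lemma \ref{famouscmtype} and Definition \ref{multiplicitydefn}, the multiplicity $m_\sigma$ at an embedding $\sigma\colon K_1\to L$ counts the elements of $\Phi$ lying in the corresponding left coset of $\{1,x\}$ in $D_4$, i.e.\ one of the four cosets appearing in (\ref{eigenk1}). Left multiplication by $a^2$ permutes these cosets in pairs, sending $\{1,x\}\leftrightarrow\{a^2,a^2x\}$ and $\{a,ax\}\leftrightarrow\{a^3,a^3x\}$, while mapping $\Phi$ onto its complement, so the multiplicities pair up as $m_0+m_2=2$ and $m_1+m_3=2$. Consequently the $K_1$-multiplicity multiset must be one of $\{0,0,2,2\},\ \{0,1,1,2\},\ \{1,1,1,1\}$, and the same holds for $K_2$.

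Next we identify the proper CM-subfields of $E$. By the Galois correspondence, $E^H$ is CM (rather than totally real) iff $a^2\notin H$; inspecting the subgroups of $D_4$ one sees that the only such fields of positive codimension are the four quartic subfields $K_i=E^{H_i}$ with $H_i\in\{\{1,x\},\{1,ax\},\{1,a^2x\},\{1,a^3x\}\}$, namely $K_1,K_2$ together with $K_3:=E^{\{1,a^2x\}}$ and $K_4:=E^{\{1,a^3x\}}$. Under our identification, $\Phi$ is induced (in the sense of the Shimura reflex construction) from a CM-type on $K_i$ exactly when $\Phi$ is a union of left $H_i$-cosets, equivalently when all $K_i$-multiplicities of $\Phi$ lie in $\{0,2\}$. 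Shimura's primitivity criterion (cf.\ \cite{shimura1963analytic}) then asserts that $A$ is simple iff $\Phi$ is not induced from any of $K_1,K_2,K_3,K_4$.

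To finish, we rule out the multisets $\{0,0,2,2\}$ and $\{1,1,1,1\}$ for the $K_1$-multiplicities; the $K_2$ case is symmetric. The case $\{0,0,2,2\}$ is immediate, since it forces $\Phi$ to be a union of left $\{1,x\}$-cosets, i.e.\ induced from $K_1$, contradicting simplicity. For the case $\{1,1,1,1\}$ the plan is to show that $\Phi$ must then be induced from the Galois conjugate subfield $K_3=aK_1a^{-1}$: imposing simultaneously the CM-type condition $\Phi\sqcup a^2\Phi=D_4$ and the requirement that $\Phi$ meets each $\{1,x\}$-coset in exactly one element reduces to four explicit possibilities for $\Phi$, and a direct inspection shows that each is a union of two left $\{1,a^2x\}$-cosets, hence induced from $K_3$ and again contradicting simplicity. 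Running the analogous argument for the conjugate pair $(K_2,K_4)$ completes the proof. The main conceptual obstacle is the $\{1,1,1,1\}$ configuration for $K_1$: it is not induced from $K_1$ itself, so ruling it out requires the observation that the non-normal quartic CM-subfields of $E$ come in $G$-conjugate pairs $(K_1,K_3)$ and $(K_2,K_4)$, and that the $(1,1,1,1)$ pattern for one member of a pair automatically forces induction from the other. This dihedral phenomenon has no analogue for abelian Galois groups, and is precisely what pins the multiset down to $\{2,0,1,1\}$.
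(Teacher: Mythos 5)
Your proposal is correct, but it handles the decisive case by a genuinely different route than the paper. For the multiset $\{2,0,2,0\}$ (your $\{0,0,2,2\}$) both arguments coincide: the CM-type $\Phi_A$ descends to a sub-CM-type on $K_i$, contradicting primitivity of the CM-type of a simple CM abelian variety (the paper cites \cite[Proposition 1.3.13]{milne2006complex}; your appeal to Shimura's criterion only needs this same direction, simple $\Rightarrow$ primitive). For the multiset $\{1,1,1,1\}$, however, the paper argues Hodge-theoretically: if all multiplicities were $1$, the Weil structure $\wedge^2_{K_i}\mathrm{H}^1(A,\mathbb{Q})$ would consist entirely of $(1,1)$-classes, while by Lemma \ref{rosaticmcoincides} the space of Hodge classes in $\mathrm{H}^2(A,\mathbb{Q})$ is the $4$-dimensional space whose $\mathbb{C}$-span is $W''=(V_{\mathrm{id}}\otimes V_{a^2})\oplus(V_a\otimes V_{a^3})\oplus(V_x\otimes V_{a^2x})\oplus(V_{ax}\otimes V_{a^3x})$, which meets the Weil structure trivially. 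You instead run a purely combinatorial enumeration in $D_4$: the transversality condition $\Phi\sqcup a^2\Phi=D_4$ together with $|\Phi\cap gH_1|=1$ leaves exactly four CM-types, each of which is a union of two left $\{1,a^2x\}$-cosets, hence induced from the conjugate quartic CM-subfield $K_3$ (and symmetrically $K_4$ for $K_2$), again contradicting primitivity; I have checked this enumeration and it is correct. Your route is more elementary and uniform (both cases reduce to primitivity, in the spirit of the paper's own Remark following Lemma \ref{nosuchsimpleAV}), whereas the paper's Weil-structure argument has the advantage of not using the Galois/dihedral structure at all and is exactly the argument reused in Lemma \ref{deg4keylemma} for non-Galois quartic CM fields and later transported to the de Rham--Betti setting. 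Two small points to tidy up: ``induced'' CM-types arise by pullback along restriction, not via the reflex construction, so drop that parenthetical; and you should record the (routine) verification that the $2$-element subset of $\mathrm{Hom}(K_3,\mathbb{C})$ obtained from your coset decomposition is itself a CM-type on $K_3$, which follows because $a^2\notin\{1,a^2x\}$ makes $K_3$ a CM-field and $\Phi$ contains no conjugate pair.
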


\begin{proof}
Denote $\mathrm{H}^1(A,\mathbb{Q})$ by $V$. Suppose for $K\in \{K_1,K_2\}$ the set of multiplicities associated with $\mathrm{Hom}(K,\mathbb{C})$ is $\{1,1,1,1\}$. Then the $\mathbb{C}$-linear extension of the Weil structure (See Definition \ref{weilstructure}) satisfies $$\wedge^2_{K}V \otimes_{\mathbb{Q}} \mathbb{C} \subseteq \mathrm{H}^{1,1}(A,\mathbb{C})$$ See Remark \ref{weilhodgenumber} for the computation. Hence every class in $\wedge^2_{K}V\otimes\mathbb{Q}(1)$ would be a Hodge class. Now recall the construction of the Weil structure (see Definition \ref{weilstructure}) and use decomposition (\ref{eigenk1}). Then the Weil structure $\wedge^2_{K_1}V \otimes_{\mathbb{Q}} \mathbb{C} $ inside $\mathrm{H}^2(A,\mathbb{Q}) \otimes_{\mathbb{Q}} \mathbb{C}$ is equal to \begin{equation*}
   (V_{\mathrm{id}} \otimes V_{x}) \oplus(V_{\mathrm{a}} \otimes V_{ax}) \oplus (V_{\mathrm{a^2}} \otimes V_{a^2 x}) \oplus (V_{a^3} \otimes V_{a^3 x})
\end{equation*} Using decomposition (\ref{eigenk2}), the Weil structure $\wedge^2_{K_2}V \otimes_{\mathbb{Q}} \mathbb{C} $ inside $\mathrm{H}^2(A,\mathbb{Q}) \otimes_{\mathbb{Q}} \mathbb{C}$ is 
equal to 
\begin{equation*}
(V_{\mathrm{id}} \otimes V_{ax}) \oplus( V_{\mathrm{a}} \otimes V_{a^2 x}) \oplus (V_{a^2} \otimes V_{a^3 x}) \oplus (V_{a^3} \otimes V_{x})
\end{equation*} 
Recall the complex conjugation of the CM-field $E$ is $a^2$, the $\mathbb{C}$-linear expansion of Hodge classes inside $\mathrm{H}^2(A,\mathbb{Q}) \otimes_{\mathbb{Q}} \mathbb{C}$ contains
\begin{equation*}
W''=(V_{\mathrm{id}} \otimes V_{a^2}) \oplus( V_{\mathrm{a}} \otimes V_{a^3}) \oplus (V_{x} \otimes V_{a^2 x}) \oplus (V_{ax} \otimes V_{a^3 x})
\end{equation*} This is because elements in $W''$ are already fixed by the $\mathbb{C}$-points of $\mathrm{U}_{E}$ of which $\mathrm{Hdg}(A)$ is a subgroup.  But we have assumed that $A$ is a simple CM abelian fourfold. Therefore by Lemma \ref{rosaticmcoincides}, the dimension of the space of Hodge class in $\mathrm{H}^2(A,\mathbb{Q})$ is precisely 4. Hence the $\mathbb{C}$-linear expansion of Hodge classes inside $\mathrm{H}^2(A,\mathbb{Q}) \otimes_{\mathbb{Q}} \mathbb{C}$ is precisely the subspace $W''$. But $$W'' \cap \wedge^2_{K_i}\mathrm{H}^1(A,\mathbb{Q}) \otimes_{\mathbb{Q}} \mathbb{C}=\{0\}(i=1,2)$$ Hence this gives a contradiction. 

On the other hand, suppose for $K \in \{K_1,K_2\}$, the set of multiplicities associated with $\mathrm{Hom}(K,\mathbb{C})$ is $\{2,0,2,0\}$. We claim that the CM-type $(E,\Phi_{A})$, which is determined by the structure of the abelian variety $A$, restricts to a sub CM-type on $K$ via the natural embedding $K \xhookrightarrow{} E$. Because the degree of $K$ is half of the degree of $E$, for any pair $\rho,\rho' \in \mathrm{Hom}(E,L)$ such that $\rho|_{K}=\rho'|_{K}$, the eigenspace associated with $\rho|_{K}$ is precisely $V_{\rho} \oplus V_{\rho'}$. And by assumption $\rho|_{K}$ has multiplicity 2 or 0, hence $V_{\rho} \oplus V_{\rho'}$ either lies entirely in $\mathrm{H}^{1,0}(A)$ or $\mathrm{H}^{0,1}(A)$. Therefore we have $\Phi_{A}(\rho)=\Phi_{A}(\rho')$. Since $K_{i}$s are CM subfields of $E$, we have that $(K,\Phi_{A}|_{K})$ is a sub CM-type of $(E,\Phi_{A})$. But $A$ is a simple CM abelian fourfold, and according to \cite[Proposition 1.3.13]{milne2006complex} the CM-type associated with a simple CM-abelian variety is primitive. This gives a contradiction.   
    
\end{proof}
Another variant of \cite[Proposition 14]{shimura1963analytic} is the following lemma which plays a key role in the sequel. We record it here for the sake of continuity of exposition.
\begin{lemma}[Formula 7.5.1, \cite{mz-4-folds}]\label{deg4keylemma}
Suppose $K$ is a degree 4 CM-field and $A$ is a simple abelian fourfold with $K=\mathrm{End}_{\mathrm{Hdg}}(\betti)$. The set of multiplicities associated with $\mathrm{Hom}(K,\qbar)$ is $\{2,0,1,1\}$.  
\end{lemma}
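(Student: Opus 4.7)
Since $K$ is a CM field of degree $4$ and $\dim_{\mathbb{Q}}\betti = 8$, the eigenspaces in the decomposition of Lemma \ref{neweigenspacetranslate} all have $\qbar$-dimension $n=2$, and by Remark \ref{conjugatedecomp} the multiplicities satisfy $m_{\sigma}+m_{\bar\sigma}=2$. Hence the unordered multiset $\{m_{\sigma}\}_{\sigma \in \mathrm{Hom}(K,\qbar)}$ must be one of $\{2,0,2,0\}$, $\{2,0,1,1\}$, or $\{1,1,1,1\}$. The plan is to exclude the first (``degenerate'') and the third (``Weil-type'') possibilities.

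For the pattern $\{2,0,2,0\}$, I would first extract a CM-type of $K$ by setting $\Phi:=\{\sigma \in \mathrm{Hom}(K,\qbar)\,:\,m_{\sigma} = 2\}$; the relation $m_{\sigma}+m_{\bar\sigma}=2$ forces $\Phi$ to meet every conjugate pair in exactly one embedding, so $(K,\Phi)$ is a bona fide CM-type. Let $B$ denote an abelian surface with complex multiplication by $(K,\Phi)$, supplied by Shimura's classical CM theory. Then $\betti$ and $\mathrm{H}^{1}(B^{2},\mathbb{Q})$ are both free $K$-modules of rank $2$, and their $(1,0)$-parts agree as $K\otimes_{\mathbb{Q}}\mathbb{C}$-modules, each being $\bigoplus_{\sigma \in \Phi} V_{\sigma}$ with $V_{\sigma}$ two-dimensional and $K$ acting on the $\sigma$-summand through $\sigma$. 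Invoking the standard equivalence between polarisable weight-one rational Hodge structures and abelian varieties up to isogeny, this agreement upgrades to an isogeny $A \sim B^{2}$, contradicting the simplicity of $A$.

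For the pattern $\{1,1,1,1\}$, the strategy is to combine the Weil structure with the Lefschetz $(1,1)$ theorem. Applying Remark \ref{weilhodgenumber} with $n=2$ and all $m_{\sigma}=1$, every summand $\wedge^{m_{\sigma}}V_{\sigma}^{1,0}\otimes\wedge^{n-m_{\sigma}}V_{\sigma}^{0,1}$ of $\wedge_{K}^{2}\betti \otimes_{\mathbb{Q}}\mathbb{C}$ is purely of Hodge type $(1,1)$, so the Weil structure $W := \wedge^{2}_{K}\betti$ exhibits a $\mathbb{Q}$-subspace of $\mathrm{H}^{2}(A,\mathbb{Q})\cap\mathrm{H}^{1,1}(A)$ of dimension $\deg(K/\mathbb{Q}) = 4$. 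By Lefschetz $(1,1)$, $W$ must sit inside $\mathrm{NS}(A)\otimes\mathbb{Q}$. However, the argument used in the proof of Lemma \ref{rosaticmcoincides}---the Rosati involution on any CM subfield of $\mathrm{End}^{\circ}(A)$ must be a positive involution, and on a CM field the only positive involution is complex conjugation---identifies $\mathrm{NS}(A)\otimes\mathbb{Q}$ with the totally real quadratic subfield $F \subset K$, of $\mathbb{Q}$-dimension $2$, making the inequality $4 \le 2$ absurd. The main technical point I expect to have to spell out is the isogeny step $A \sim B^{2}$ in the degenerate case, where one must upgrade the equality of abstract $K$-module rational Hodge structures into an honest isogeny of abelian varieties; every other step rests purely on results already established in the preceding sections.
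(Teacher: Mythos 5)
Your proof is correct, and it splits into two halves of different character relative to the paper. For the pattern $\{1,1,1,1\}$ you argue exactly as the paper does: the Weil structure $\wedge_{K}^{2}\betti$ is a $4$-dimensional $\mathbb{Q}$-space of $(1,1)$-classes, while Lemma \ref{rosaticmcoincides} caps the space of rational $(1,1)$-classes at $\tfrac{\deg(K/\mathbb{Q})}{2}=2$ (your detour through Lefschetz $(1,1)$ and $\mathrm{NS}(A)\otimes\mathbb{Q}$ is just that same count unwound). For the pattern $\{2,0,2,0\}$, however, you take a genuinely different route: the paper stays with the same tool, observing that $(V_{\sigma}\otimes V_{\overline{\sigma}})\oplus(V_{\tau}\otimes V_{\overline{\tau}})$ is Galois-stable, hence descends to an $8$-dimensional $\mathbb{Q}$-subspace of $\mathrm{H}^{2}(A,\mathbb{Q})$ consisting of $(1,1)$-classes, again contradicting the $2$-dimensional bound; you instead extract the CM-type $\Phi$, produce an abelian surface $B$ with CM by $(K,\Phi)$, and contradict simplicity via $A\sim B^{2}$. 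Your route is sound: the step you flag is precisely the observation that when every multiplicity is $0$ or $2$ the Hodge filtration is the sum of \emph{full} eigenspaces, so any $K$-linear isomorphism $\betti\to\mathrm{H}^{1}(B^{2},\mathbb{Q})$ is automatically compatible with the Hodge decompositions, and the equivalence between polarizable weight-one rational Hodge structures and abelian varieties up to isogeny then delivers the isogeny. The trade-off is that the paper's argument is more elementary and uniform (one dimension count handles both cases, with no appeal to existence of CM abelian varieties or the Riemann equivalence), while yours is heavier on input but more structural, identifying the degenerate case concretely as $A\sim B^{2}$; it is also closer in spirit to the primitive-CM-type reasoning the paper itself uses later in Lemma \ref{nosuchsimpleAV}.
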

\begin{proof}
We attach a proof for the convenience of the reader. Denote $\betti$ by $V$. By Lemma \ref{neweigenspacetranslate}, we have the decomposition $$V\otimes_{\mathbb{Q}}\qbar=V_{\sigma}\oplus V_{\overline{\sigma}}\oplus V_{\tau}\oplus V_{\overline{\tau}}$$ and the dimension of each summand is two. First we suppose the set of multiplicities associated with $\mathrm{Hom}(K,\qbar)$ is $\{1,1,1,1\}$. Then similar to the previous lemma, we may consider the Weil structure $\wedge^2_{K}V$ viewed inside $\mathrm{H}^2(A,\mathbb{Q})$ as a sub Hodge structure. By Remark \ref{weilhodgenumber} and the assumption that the set of multiplicities associated with $\mathrm{Hom}(K,\qbar)$ is $\{1,1,1,1\}$, every element in the four-dimensional $\mathbb{Q}$-vector space $\wedge^2_{K}V$ is a $(1,1)$ class. But because $K=\mathrm{End}_{\mathrm{Hdg}}(V)$ which is a degree 4 CM-field, by Lemma \ref{rosaticmcoincides}, the dimension of the space of $(1,1)$ classes in $\mathrm{H}^2(A,\mathbb{Q})$ is 2. Hence this gives a contradiction.

Next suppose the set of multiplicities associated with $\mathrm{Hom}(K,\qbar)$ is $\{2,0,2,0\}$. Without loss of generality, we may assume that $V_{\sigma}\subset\mathrm{H}^{1,0}$ and $V_{\tau}\subset\mathrm{H}^{1,0}$. Then we have that $V_{\sigmabar}\subset\mathrm{H}^{0,1}$ and $V_{\overline{\tau}}\subset\mathrm{H}^{0,1}$. Consider the subspace $$W=V_{\sigma}\otimes V_{\overline{\sigma}} \oplus V_{\tau}\otimes V_{\overline{\tau}} \subset \wedge^2V\otimes_{\mathbb{Q}}\qbar=\mathrm{H}^2(A,\mathbb{Q})\otimes_{\mathbb{Q}}\qbar$$ Then one can see that $W$ is in fact stable under the action of $\absgalois$, hence descends to an 8-dimensional $\mathbb{Q}$-vector subspace of $\wedge^2V$. Moreover, one can see that every class in $W$ is in fact a $(1,1)$ class by our assumption that $V_{\sigma}\subset\mathrm{H}^{1,0}$ and $V_{\tau}\subset\mathrm{H}^{1,0}$. Again, this poses a contradiction to the fact that the space of $(1,1)$ classes in $\mathrm{H}^2(A,\mathbb{Q})$ is two dimensional.
\end{proof}
\begin{proof}[Proof of Lemma \ref{nosuchsimpleAV}]We adopt notations from Setup \ref{Esubfield}. We will list all possible CM-types on $E$ such that the multiplicity condition from Lemma \ref{multiplicity-quartic-CM} is satisfied for elements in $\mathrm{Hom}(K_1,\mathbb{C})$. We then show that for each of these CM-types the multiplicity condition from Lemma \ref{multiplicity-quartic-CM} fails for $\mathrm{Hom}(K_2,\mathbb{C})$. By symmetry, it suffices to consider all CM-types such that $V_{\mathrm{id}}$ lies in $\mathrm{H}^{1,0}$. And we list them below.
\begin{enumerate}
\item The first possible CM-type on $E$ where the set of multiplicities associated with $\mathrm{Hom}(K_1,\mathbb{C})$ is $\{2,0,1,1\}$ is written in the table below.
\begin{table}[H]
    \centering
    \begin{tabular}{ccccc}
        $\mathrm{H}^{1,0}$ & $V_{\mathrm{id}}$ & $V_{x}$  & $V_{a}$ & $V_{a^3x}$\\
        $\mathrm{H}^{0,1}$ & $V_{a^2}$ & $V_{a^2x}$ & $V_{a^3}$ & $V_{ax}$\\
    \end{tabular}
    \caption{First possible CM-type on $E$ constrained by $K_1$}
    \label{1stcmtype}
\end{table}
But then checking formula (\ref{eigenk2}), every element in $\mathrm{Hom}(K_2,\mathbb{C})$ would have multiplicity 1, which contradicts Lemma \ref{multiplicity-quartic-CM}.
\item Below is the table for the second possible CM-type where the set of multiplicities associated with $\mathrm{Hom}(K_1,\mathbb{C})$ is $\{2,0,1,1\}$
\begin{table}[H]
    \centering
    \begin{tabular}{ccccc}
        $\mathrm{H}^{1,0}$ & $V_{\mathrm{id}}$ & $V_{x}$  & $V_{a^3}$ & $V_{ax}$\\
        $\mathrm{H}^{0,1}$ & $V_{a^2}$ & $V_{a^2x}$ & $V_{a}$ & $V_{a^3x}$\\
    \end{tabular}
    \caption{Second possible CM-type on $E$ constrained by $K_1$}
    \label{2ndcmtype}
\end{table}

If this were the CM-type, then checking formula (\ref{eigenk2}), every element in $\mathrm{Hom}(K_2,\mathbb{C})$ would have multiplicity 0 or 2, which also contradicts Lemma \ref{multiplicity-quartic-CM}.

\item Below is the table for the third possible CM-type on $E$ where the set of multiplicities associated with $\mathrm{Hom}(K_1,\mathbb{C})$ is $\{2,0,1,1\}$
\begin{table}[H]
    \centering
    \begin{tabular}{ccccc}
        $\mathrm{H}^{1,0}$ & $V_{\mathrm{id}}$ & $V_{a^2x}$  & $V_{a}$ & $V_{ax}$\\
        $\mathrm{H}^{0,1}$ & $V_{a^2}$ & $V_{x}$ & $V_{a^3}$ & $V_{a^3x}$\\
    \end{tabular}
    \caption{Third possible CM-type on $E$ constrained by $K_1$}
    \label{3rdcmtype}
\end{table} Then checking formula (\ref{eigenk2}), every element in $\mathrm{Hom}(K_2,\mathbb{C})$ would have multiplicity 0 or 2. This contradicts Lemma \ref{multiplicity-quartic-CM}.
\item Below is the table for the fourth possible CM-type on $E$ for which the set of multiplicities associated with $\mathrm{Hom}(K_1,\mathbb{C})$ is $\{2,0,1,1\}$
\begin{table}[H]
    \centering
    \begin{tabular}{ccccc}
        $\mathrm{H}^{1,0}$ & $V_{\mathrm{id}}$ & $V_{a^2x}$  & $V_{a^3}$ & $V_{a^3x}$\\
        $\mathrm{H}^{0,1}$ & $V_{a^2}$ & $V_{x}$ & $V_{a}$ & $V_{ax}$\\
    \end{tabular}
    \caption{Fourth possible CM-type on $E$ constrained by $K_1$}
    \label{4thcmtype}
\end{table}

Then checking formula (\ref{eigenk2}), every element in $\mathrm{Hom}(K_2,\mathbb{C})$ would have multiplicity 1, contradicting Lemma \ref{multiplicity-quartic-CM}. 
\end{enumerate}
\end{proof}

\begin{remark}
There is an alternative proof of Lemma \ref{nosuchsimpleAV} using Proposition 1.9 of \cite{milne2006complex}. One can directly show that there does not exist a primitive CM-type on $E$ where $E$ is a degree 8 Galois CM-field with $\mathrm{Gal}(E/\mathbb{Q})=\mathrm{D}_4$. 
\end{remark}

\subsection{Proof of Theorem \ref{mainthmcm}}
In this section we finish the proof that for a simple CM abelian fourfold $A$, its de Rham-Betti group coincide with its Mumford-Tate group. First we state the theorem from \cite{mz-4-folds} which computes the Mumford-Tate group of simple CM abelian fourfolds.

\begin{theorem}[Theorem 7.6, \cite{mz-4-folds}]\label{mzcmthm}
  Given a simple CM abelian fourfold $A$ with $\mathrm{End}^{0}(A)=E$, where $E$ is a degree 8 CM-field (not necessary Galois), we have the following two possibilities
\begin{enumerate}
\item There exists $k\xhookrightarrow{}E$ where $k$ is an imaginary quadratic field such that, associated with the embedding $k\xhookrightarrow{}E\xhookrightarrow{} \mathrm{End}_{\mathrm{Hdg}}(\betti)$, each element in $\mathrm{Hom}(k,\mathbb{C})$ has multiplicity 2. Then $\mathrm{Hdg}(A)=\mathrm{SU}_{E/k}$ (see Definition \ref{sudefn}).
\item $E$ does not contain an imaginary quadratic field $k$ such that each element in $\mathrm{Hom}(k,\mathbb{C})$ has multiplicity 2. Then $\mathrm{Hdg}(A)=\mathrm{U}_{E}$.
\end{enumerate} 
\end{theorem}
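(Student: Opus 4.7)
The plan is to exploit the character-theoretic description of $\mathbb{Q}$-algebraic tori (Theorem \ref{chartorus}). Since $E$ is a CM field with $\deg(E/\mathbb{Q}) = \dim_{\mathbb{Q}} \mathrm{H}^1(A,\mathbb{Q}) = 8$, Lemma \ref{generalizedcm} gives $\mathrm{Hdg}(A) \subseteq \mathrm{U}_E$. Determining $\mathrm{Hdg}(A)$ thus reduces to identifying the maximal $\mathrm{Gal}(L/\mathbb{Q})$-stable sublattice $N \subseteq X^{*}(\mathrm{U}_E)$ contained in the kernel of the pullback $\phi\colon X^{*}(\mathrm{U}_E) \to X^{*}(\mathbb{U}) = \mathbb{Z}$ induced on characters by the map $\mu\colon \mathbb{U} \to (\mathrm{U}_E)_{\mathbb{R}}$ of Definition \ref{hodgegroupdefn}; then $X^{*}(\mathrm{Hdg}(A)) = X^{*}(\mathrm{U}_E)/N$.

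Using the basis $(f_\sigma)_{\sigma \in \mathrm{Hom}(E,L)}$ from Example \ref{uexample} and the CM-type $\Phi_A$ of $A$ from Section \ref{cmintrosection}, I would first compute $\phi$ directly. Since a $z \in \mathbb{U}(\mathbb{C})$ acts as $z^{-1}$ on $V_\sigma$ if $\Phi_A(\sigma)=1$ and as $z$ if $\Phi_A(\sigma)=0$, one gets $\phi(f_\sigma) = 1 - 2\Phi_A(\sigma) \in \{-1,1\}$, and the relation $f_\sigma + f_{\bar\sigma} = 0$ is preserved since $\Phi_A(\sigma) + \Phi_A(\bar\sigma) = 1$. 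Next, for any imaginary quadratic subfield $k \subseteq E$, the norm $N_{E/k}\colon \mathrm{U}_E \twoheadrightarrow \mathrm{U}_k$ with kernel $\mathrm{SU}_{E/k}$ corresponds, on characters, to a Galois-stable rank-one sublattice $X^{*}(\mathrm{U}_k) \hookrightarrow X^{*}(\mathrm{U}_E)$, sending the generator associated to an embedding $\tau$ of $k$ to $\sum_{\sigma|_k = \tau} f_\sigma$. Composing with $\phi$ yields
\[
\sum_{\sigma|_k=\tau}\bigl(1 - 2\Phi_A(\sigma)\bigr) \;=\; 4 - 2\,\mathrm{mult}(\tau),
\]
which vanishes exactly when $\mathrm{mult}(\tau) = 2$. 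This immediately gives the inclusion $\mathrm{Hdg}(A) \subseteq \mathrm{SU}_{E/k}$ in case (1).

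To close the argument I would show that $N = X^{*}(\mathrm{U}_k)$ in case (1) and $N = 0$ in case (2). This amounts to classifying the $\mathrm{Gal}(L/\mathbb{Q})$-stable sublattices of $X^{*}(\mathrm{U}_E)$ that sit inside $\ker(\phi)$. The key input is that the CM-type $(E,\Phi_A)$ attached to the simple CM abelian variety $A$ is primitive (cf.~\cite[Proposition 1.3.13]{milne2006complex}): no proper CM subfield $E' \subsetneq E$ carries a CM-type whose lift to $E$ recovers $\Phi_A$. Primitivity precludes any proper Galois-stable quotient of $\mathrm{U}_E$ of the form $\mathrm{U}_{E'}$ from lying inside the kernel of $\mu$, and in case (1) rules out the existence of a strictly larger Galois-stable sublattice containing $X^{*}(\mathrm{U}_k)$ inside $\ker(\phi)$.

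The main obstacle is this last classification step: enumerating the Galois-stable sublattices of $X^{*}(\mathrm{U}_E)$ contained in $\ker(\phi)$ and ensuring no candidate strictly larger than $X^{*}(\mathrm{U}_k)$ (in case (1)) or strictly larger than $0$ (in case (2)) exists. The flavor is analogous to the lattice computations in Section \ref{longcomp} of the paper for the dRB group, but is cleaner here because the primitivity of $\Phi_A$ provides a sharp dichotomy: every Galois-stable quotient torus of $\mathrm{U}_E$ is essentially controlled by a CM subfield, and the only subfield that can contribute a new relation in $\ker(\phi)$ is an imaginary quadratic $k$ on which every embedding has multiplicity exactly $2$. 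The existence or non-existence of such $k$ is precisely the dichotomy separating cases (1) and (2) of the theorem.
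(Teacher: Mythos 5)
First, note that the paper does not prove this statement at all: it is quoted verbatim from Moonen--Zarhin, and the paper's own contribution (Theorem \ref{mainthmcm}) is to adapt their argument to the de Rham--Betti group. Your character-theoretic setup is sound as far as it goes: $\mathrm{Hdg}(A)\subseteq \mathrm{U}_E$ by Lemma \ref{generalizedcm}, the identification of $X^{*}(\mathrm{Hdg}(A))$ with $X^{*}(\mathrm{U}_E)/N$ for $N$ the maximal Galois-stable sublattice of $\ker(\phi)$ is a correct reformulation, the formula $\phi(f_\sigma)=1-2\Phi_A(\sigma)$ is right, and the computation $\sum_{\sigma|_k=\tau}(1-2\Phi_A(\sigma))=4-2\,\mathrm{mult}(\tau)$ correctly yields $\mathrm{Hdg}(A)\subseteq \mathrm{SU}_{E/k}$ when every embedding of $k$ has multiplicity $2$.

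The genuine gap is the step you yourself flag as ``the main obstacle'': showing that $N$ has rank exactly $1$ in case (1) and rank $0$ in case (2), i.e.\ that $\dim\mathrm{Hdg}(A)\geq 3$ and that the only rank-one Galois-stable sublattices of $\ker(\phi)$ are the ones coming from an imaginary quadratic $k$ with all multiplicities $2$. Your proposed remedy --- that primitivity of $(E,\Phi_A)$ forces ``every Galois-stable quotient torus of $\mathrm{U}_E$ to be controlled by a CM subfield'' --- is not true and is not a consequence of primitivity: subtori of $\mathrm{U}_E$ (equivalently, saturated Galois-stable sublattices of $X^{*}(\mathrm{U}_E)$) are in general not cut out by subfields; only the \emph{codimension-one} ones are, and even that is a nontrivial result (Key Lemma 7.3 of Moonen--Zarhin, recorded here as Lemma \ref{keylemmamz}). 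In the actual proof the higher-codimension possibilities are excluded not by primitivity alone but by Ribet's inequality $\dim\mathrm{MT}(A)\geq 2+\log_2(\dim A)=4$ for simple CM abelian varieties (see Remark \ref{ribetinequalitu}), which gives $\dim\mathrm{Hdg}(A)\geq 3$; together with Lemma \ref{keylemmamz} this reduces the problem to the codimension-one case, which your multiplicity computation (equivalently, the Weil-class criterion via Lemma \ref{weilstructurefixer} and Lemma \ref{rosaticmcoincides}) then settles. Without either that dimension bound or an explicit classification of the Galois-stable sublattices of $\ker(\phi)$ --- the analogue of which, in the de Rham--Betti setting where Ribet's inequality is unavailable, occupies all of Sections \ref{longcomp}--\ref{cmnonsimplicity} and Proposition \ref{biggerthan2} --- your argument does not close, e.g.\ it does not exclude a rank-two or rank-three Galois-stable sublattice sitting inside $\ker(\phi)$ in case (1), nor a rank-one one not of the form $X^{*}(\mathrm{U}_k)$ in case (2).
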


\begin{definition}\label{sudefn}
    Given a CM-field $E$ and a quadratic imaginary subfield $k\xhookrightarrow{}E$ we define $\mathrm{SU}_{E/k}$ as the $\mathbb{Q}$-subtorus of $\resgmmath$ whose $\mathbb{Q}$-points satisfy the condition  $\mathrm{SU}_{E/k}(\mathbb{Q})=\{x \in E^{\times}|\mathrm{det}_{k}(E \xrightarrow{\cdot x}E)=1\}$. 
\end{definition}
By 7.2 of \cite{mz-4-folds}, $\mathrm{SU}_{E/k}$ is a connected subtorus of $\mathrm{U}_{E}$ of codimension 1 (one can also see this by the proof of Lemma \ref{weilstructurefixer}).

The goal of this section is to prove the following theorem.

\begin{theorem} \label{mainthmcm}
    For any simple CM abelian fourfold $A$ defined over $\qbar$, we have that $\gdrbhmath(A)=\mathrm{Hdg}(A)$. Then by virtue of Proposition \ref{gm in CM gdrb}, we have that $\mathrm{MT}(A)=\gdrbmath(A)$.
\end{theorem}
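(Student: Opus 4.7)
The plan is to reduce, via Proposition \ref{gm in CM gdrb}, the theorem to showing $\gdrbhmath(A)=\mathrm{Hdg}(A)$: the surjection $\gmmath\times\gdrbhmath(A)\twoheadrightarrow\gdrbmath(A)$ in diagram (\ref{gdrbhdiagram}) will then immediately upgrade to the desired equality $\gdrbmath(A)=\mathrm{MT}(A)$. By Lemma \ref{generalizedcm} applied to $E\xhookrightarrow{}\mathrm{End}_{\mathrm{Hdg}}(\mathrm{H}^1(A,\mathbb{Q}))$, we already have the chain of connected $\mathbb{Q}$-tori $\gdrbhmath(A)\subseteq\mathrm{Hdg}(A)\subseteq\mathrm{U}_{E}$ with $\dim\mathrm{U}_{E}=4$, and by Theorem \ref{mzcmthm}, $\mathrm{Hdg}(A)$ is either $\mathrm{U}_{E}$ (dimension $4$) or $\mathrm{SU}_{E/k}$ (dimension $3$). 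So it is enough to pin down $\dim\gdrbhmath(A)$.

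First I would rule out the low-dimensional possibilities. If $\dim\gdrbhmath(A)=0$, then Proposition \ref{gm in CM gdrb} gives $\gdrbmath(A)=\gmmath$ (the scalars), so $\mathrm{End}_{\mathrm{dRB}}(\mathrm{H}^1(A,\mathbb{Q}))=\mathrm{End}_{\mathbb{Q}}(\mathrm{H}^1(A,\mathbb{Q}))$, contradicting $\mathrm{End}_{\mathrm{dRB}}(\mathrm{H}^1(A,\mathbb{Q}))=E$ from Theorem \ref{mainfact}. The case $\dim\gdrbhmath(A)=1$ is excluded by Corollary \ref{biggerthan-one}. The case $\dim\gdrbhmath(A)=2$ is excluded by combining Lemma \ref{nolength4cycle}, Proposition \ref{noklein4} and Lemma \ref{A_4notpossible} with Key Remark \ref{keyremarktotakeaway}: together they force the Galois image $G''$ of Lemma \ref{galois_permutation} to be isomorphic to $\mathrm{D}_4$, and then Remark \ref{alreadygalois} together with Lemma \ref{nosuchsimpleAV} show that no simple CM abelian fourfold with endomorphism field of this Galois type can exist.

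Next I would finish by a dimension count on $\mathrm{Hdg}(A)$. If $\mathrm{Hdg}(A)=\mathrm{SU}_{E/k}$, which has dimension $3$, then the inclusion $\gdrbhmath(A)\subseteq\mathrm{SU}_{E/k}$ of connected $\mathbb{Q}$-tori together with $\dim\gdrbhmath(A)\geq 3$ forces $\gdrbhmath(A)=\mathrm{SU}_{E/k}=\mathrm{Hdg}(A)$. If $\mathrm{Hdg}(A)=\mathrm{U}_{E}$, the only remaining case is $\gdrbhmath(A)\subsetneq\mathrm{U}_{E}$ of codimension one, so the kernel $\Delta$ in the short exact sequence (\ref{charseq}) is a rank one $G''$-stable submodule of $X^{*}(\mathrm{U}_E)$. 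Writing a generator $\alpha=\sum n_i f_{\sigma_i}$ in the basis of Setup \ref{torusinvsetup}, the transitivity of $H$ on $\{1,\ldots,4\}$ from Lemma \ref{galois_permutation} together with the fact that $G''$ acts on $\Delta$ through a $\pm 1$ character force $|n_1|=\cdots=|n_4|$, so after rescaling $\alpha=(\epsilon_1,\ldots,\epsilon_4)$ with $\epsilon_i\in\{\pm 1\}$. By Lemma \ref{torusinvariants}, this exhibits a $\gdrbhmath(A)$-invariant tensor which is not $\mathrm{U}_E$-invariant. I would then analyse the few $G''$-orbits of sign patterns: a pattern in which all signs agree identifies $\gdrbhmath(A)$ with an $\mathrm{SU}_{E/k}$ for some imaginary quadratic subfield $k\subseteq E$ via the Weil-structure description of Lemma \ref{weilstructurefixer} and Remark \ref{weilhodgenumber}, and via Corollary \ref{gdrbhinv} this forces the multiplicity-$2$ condition of Theorem \ref{mzcmthm}, yielding $\mathrm{Hdg}(A)=\mathrm{SU}_{E/k}\subsetneq\mathrm{U}_E$ which contradicts the case hypothesis; a mixed-sign pattern contradicts $\mathrm{End}_{\mathrm{dRB}}(\mathrm{H}^1(A,\mathbb{Q}))=E$ in the spirit of the computation carried out in Corollary \ref{divisortest}.

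The main obstacle will be this final codimension-one case: it is not covered by the two-coefficient criterion of Corollary \ref{divisortest}, so one needs the extra input from Theorem \ref{mzcmthm} together with a careful orbit analysis of sign patterns on the basis $(f_{\sigma_1},\ldots,f_{\sigma_4})$ governed by the transitive subgroup $H\subseteq\mathrm{S}_4$, and one must match the ``all equal'' pattern with the cutting out of an $\mathrm{SU}_{E/k}$ inside $\mathrm{U}_E$ in order to reach a contradiction via the Moonen–Zarhin dichotomy.
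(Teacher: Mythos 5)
Your reduction and the exclusion of dimensions $0$, $1$ and $2$ follow the paper, and your rank-one Galois argument correctly shows that in the remaining codimension-one case the kernel $\Delta$ of $X^{*}(\mathrm{U}_E)\twoheadrightarrow X^{*}(\gdrbhmath(A))$ is generated by a sign vector $(\epsilon_1,\dots,\epsilon_4)$, $\epsilon_i\in\{\pm1\}$ (equivalently, by Lemma \ref{keylemmamz}, $\gdrbhmath(A)=\mathrm{SU}_{E/k}$ for some imaginary quadratic $k\subset E$, the sign pattern recording which of $\sigma_i,\overline{\sigma_i}$ restricts to a chosen embedding of $k$, so that the number of $+$ signs equals the multiplicity $m_\tau$). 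The gap is in how you dispose of these sign patterns. The pattern with two $+$ and two $-$ corresponds to $k$ acting with multiplicities $\{2,2\}$, and there Theorem \ref{mzcmthm} indeed gives $\mathrm{Hdg}(A)=\mathrm{SU}_{E/k}$, contradicting your case hypothesis $\mathrm{Hdg}(A)=\mathrm{U}_E$; the all-equal pattern corresponds to multiplicities $\{4,0\}$ and is ruled out by primitivity of the CM-type (simplicity of $A$), not by ``forcing the multiplicity-$2$ condition'' as you state. But the pattern with three signs of one kind and one of the other --- multiplicities $\{1,3\}$ --- is not touched by any of your tools. The invariant tensor it produces via Lemma \ref{torusinvariants} lies (after using the Weil structure) in $\wedge_{k}^{4}\betti$, a degree-four tensor space, not in $\mathrm{End}(\betti)$, so it cannot contradict $\mathrm{End}_{\mathrm{dRB}}(\betti)=E$; and Corollary \ref{divisortest} explicitly requires two of the four coefficients to vanish, so arguing ``in its spirit'' does not apply. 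Nor does Hodge theory help: when $m_\tau\in\{1,3\}$ the Weil classes are not Hodge classes, but $\mathrm{SU}_{E/k}\subset\mathrm{U}_E=\mathrm{Hdg}(A)$, so the containment $\gdrbmath(A)\subset\mathrm{MT}(A)$ gives no contradiction, and Bost's theorem (Theorem \ref{mainfact}) only controls dRB classes in degree two.

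What is genuinely needed here --- and what the paper uses --- is a transcendence input: if $\gdrbhmath(A)=\mathrm{SU}_{E/k}$ then Lemma \ref{weilstructurefixer} and Corollary \ref{gdrbhinv} force every element of $\wedge_{k}^{4}\mathrm{H}^1_{\mathrm{dRB}}(A,\mathbb{Q})\otimes\mathbb{Q}_{\mathrm{dRB}}(2)$ to be a de Rham--Betti class, and Gross's period computation (Lemma \ref{gross}) together with Chudnovsky's theorem that the Chowla--Selberg constant $b_k$ is algebraically independent of $2\pi i$ (packaged as Corollary \ref{grosscorollary}) shows that when the multiplicities are $\{1,3\}$ this space contains \emph{no} dRB class at all. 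Without invoking Lemma \ref{gross} and Corollary \ref{grosscorollary} (or some equivalent period estimate), the $\{1,3\}$ sign pattern cannot be excluded, so your final codimension-one step, and hence the proof, is incomplete.
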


Firstly, the previous sections already give a lower bound on the dimension of $\gdrbhmath(A)$.
\begin{proposition} \label{biggerthan2}
   For any simple CM abelian fourfold $A$ defined over $\qbar$, $\mathrm{dim}(\gdrbhmath(A)) > 2$.
\end{proposition}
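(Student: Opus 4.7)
My plan is to argue by contradiction. Since by construction $\gdrbhmath(A)$ is a connected $\mathbb{Q}$-algebraic subtorus of $\mathrm{Hdg}(A)\subseteq\mathrm{U}_E$, in order to establish $\dim\gdrbhmath(A)>2$ I will separately rule out the three possibilities $\dim\gdrbhmath(A)\in\{0,1,2\}$, each of which calls on a different piece of the machinery already developed.

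The case $\dim\gdrbhmath(A)=0$ (that is, $\gdrbhmath(A)$ trivial) I would dispose of by an endomorphism-algebra argument. If $\gdrbhmath(A)$ were trivial, then $\mathrm{End}_{\gdrbhmath(A)}(\mathrm{H}^1(A,\mathbb{Q}))$ would be the full $64$-dimensional algebra $\mathrm{End}_\mathbb{Q}(\mathrm{H}^1(A,\mathbb{Q}))$. But Lemma \ref{gdrbhinv} (with $m=n=1$) identifies $\mathrm{End}_{\gdrbhmath(A)}(\mathrm{H}^1(A,\mathbb{Q}))$ with $\mathrm{End}_{\mathrm{dRB}}(\mathrm{H}^1_{\mathrm{dRB}}(A,\mathbb{Q}))$, and Theorem \ref{mainfact} identifies the latter with $E$, which has $\mathbb{Q}$-dimension $8$; this is the desired contradiction. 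The case $\dim\gdrbhmath(A)=1$ is precisely Corollary \ref{biggerthan-one}, whose proof exploits Galois-equivariance of the character-lattice surjection together with the divisor criterion of Corollary \ref{divisortest}.

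The case $\dim\gdrbhmath(A)=2$ is the hard one, and is exactly where the lengthy analyses of Section \ref{longcomp} and Section \ref{cmnonsimplicity} pay off. Under this hypothesis, Lemma \ref{transitiveclassification} enumerates the possible images $H=F(G'')\subseteq\mathrm{S}_4$; Lemma \ref{nolength4cycle}, Proposition \ref{noklein4}, and Lemma \ref{A_4notpossible} together exclude every case except that recorded in Key Remark \ref{keyremarktotakeaway}, namely $G''\cong\mathrm{D}_4$. Since $G'\cong G''$ is the image of $\mathrm{Gal}(L/\mathbb{Q})$ in $\mathrm{Aut}_\mathbb{Z}(X^*(\resgmmath))$, we are then in the exact setting of Lemma \ref{nosuchsimpleAV}; that lemma, combined with Remark \ref{alreadygalois} (which forces $E/\mathbb{Q}$ itself to be a Galois CM extension with $\mathrm{Gal}(E/\mathbb{Q})=\mathrm{D}_4$), asserts that no simple CM abelian fourfold can have endomorphism field of this shape, contradicting the existence of $A$.

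The main obstacle is clearly the dimension-$2$ case: it is not a short computation but the combined output of the Galois-lattice classification (Section \ref{longcomp}) and the CM-type non-existence argument based on Weil-structure multiplicities (Section \ref{cmnonsimplicity}). The $\dim\in\{0,1\}$ cases, by contrast, are essentially formal consequences of the endomorphism-algebra identity (Theorem \ref{mainfact}, Lemma \ref{gdrbhinv}) and the previously proved Corollary \ref{biggerthan-one} respectively, and so the entire proof amounts to stringing these three exclusions together.
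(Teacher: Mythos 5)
Your proof is correct and follows essentially the same route as the paper: dimension one is excluded by Corollary \ref{biggerthan-one}, and dimension two by the classification of transitive subgroups (Lemma \ref{transitiveclassification}) combined with Lemma \ref{nolength4cycle}, Proposition \ref{noklein4}, Lemma \ref{A_4notpossible}, with the remaining $\mathrm{D}_4$ case of Key Remark \ref{keyremarktotakeaway} disposed of by Lemma \ref{nosuchsimpleAV}. Your explicit exclusion of the dimension-zero case via $\mathrm{End}_{\gdrbhmath(A)}(\mathrm{H}^1(A,\mathbb{Q}))=\mathrm{End}_{\mathrm{dRB}}(\mathrm{H}^1_{\mathrm{dRB}}(A,\mathbb{Q}))=\mathrm{End}^{\circ}(A)$ is a small point the paper leaves implicit, and it is sound.
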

\begin{proof}[Proof of Proposition \ref{biggerthan2}]
This follows from the classification in Lemma \ref{transitiveclassification} and computations in Lemma \ref{nolength4cycle}, Proposition \ref{noklein4} and Lemma \ref{A_4notpossible}. And the caveat mentioned in Remark \ref{keyremarktotakeaway} is filled in by Lemma \ref{nosuchsimpleAV}.
\end{proof}
\begin{remark}\label{ribetinequalitu}
There is a more general analogous statement about lower bound of the dimension of the Mumford-Tate group of simple CM abelian varieties, which is called Ribet's inequality in folklore (see \cite[3.5]{ribet1980division}). The statement is that the dimension of the Mumford-Tate group associated with an irreducible CM-type $(E,S)$ with $\mathrm{deg}(E/\mathbb{Q})=2d$ is bigger than or equal to the number $2+\mathrm{log}_2(d)$. And it is generalized in \cite[Theorem 1.1]{orr2013lowerboundsranksmumfordtate}, which states that for a simple abelian variety $A$ of dimension $d$ whose endomorphism algebra is commutative, we have that $\mathrm{dim}(\mathrm{MT}(A))\geq 2+\mathrm{log}_2(d)$. The proof for both the original inequality and the generalized version used the definition of Mumford-Tate groups as the smallest algebraic defined over $\mathbb{Q}$ whose $\mathbb{R}$-points contain the image of the Deligne torus (see Definition \ref{delignetorusmt}). However, due to the limited knowledge of the transcendence of periods or location of de Rham-Betti classes, it is not clear how to establish such an inequality about the dimension of the de Rham-Betti group of an abelian variety. And it is this lack of knowledge that leads to the lengthy computations in Section \ref{longcomp}. 
\end{remark}
One final ingredient needed in the proof of Theorem \ref{mainthmcm} is a piece of precise information about the comparison isomorphism of the Weil structure associated with $k\xhookrightarrow{}\mathrm{\betti}$ with $k$ a quadratic imaginary field (see Section \ref{weilstruc} for a brief introduction about Weil structures). And this is provided by \cite{gross1978periods}.
\begin{lemma}[Theorem 3, \cite{gross1978periods}]\label{gross}
For a simple abelian variety $A$ defined over $\qbar$ of dimension $n$, suppose we have $k\xhookrightarrow{} \mathrm{End}^{\circ}(A)$ where $k$ is an imaginary quadratic field. Suppose the set of multiplicities associated with $\mathrm{Hom}(k,\qbar)$ (see Definition \ref{multiplicitydefn}) is $\{p,n-p\}$, then the comparison isomorphism of the $\qbar$-linear extension of the Weil de Rham-Betti structure (compare with Remark \ref{explicityweildrb}) $$\wedge_{k}^{n}\mathrm{H}^1_{\mathrm{dRB}}(A,\mathbb{Q})\otimes \qbar:=(\wedge_{k}^{n}\mathrm{H}^1_{\mathrm{B}}(A,\mathbb{Q})\otimes_{\mathbb{Q}} \overline{\mathbb{Q}}, \wedge_{k \otimes \overline{\mathbb{Q}}}^{n}\mathrm{H}^1_{\mathrm{dR}}(A/\qbar),\rho_{m})$$ can be diagonalized to\begin{equation}\label{grosscomparison}\begin{pmatrix} b_{k}^{p} (2\pi i/b_{k})^{n-p} & 0 \\0 & b_{k}^{
n-p} (2\pi i/b_{k})^{p} \end{pmatrix}\end{equation} where $b_{k}$ is the Chowla-Selberg constant associated with the quadratic imaginary field $k$.
\end{lemma}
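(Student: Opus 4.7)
The plan is to recognize $W := \wedge_k^n \mathrm{H}^1_{\mathrm{dRB}}(A,\mathbb{Q})$ as a 2-dimensional dRB structure with complex multiplication by $k$, match it motivically with an auxiliary CM elliptic curve $E_k$, and then invoke the classical Chowla--Selberg formula for elliptic periods to extract the two diagonal entries.

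First I would unpack the Weil structure using Remark \ref{explicityweildrb} and Lemma \ref{morphismofweilstruc}. The induced $k$-action makes $W$ a free rank-one $k$-module; consequently, over $\qbar$, both its Betti side $\wedge^n V_\sigma \oplus \wedge^n V_{\bar\sigma}$ and its de Rham side split into two 1-dimensional eigenlines for the two embeddings $\sigma,\bar\sigma : k \hookrightarrow \qbar$, and the comparison isomorphism $\rho_m$ preserves this eigen-splitting (since $k$ acts by $\mathbb{Q}$-linear dRB endomorphisms). So $\rho_m$ is automatically diagonal in any $k$-eigenbasis, and only two entries need to be computed. By Remark \ref{weilhodgenumber}, the multiplicity hypothesis puts the $\sigma$-eigenline into Hodge bi-type $(p,n-p)$ and the $\bar\sigma$-eigenline into $(n-p,p)$.

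Second, I would identify $W$ with a motivic twist of the dRB structure of a CM elliptic curve. Fix $E_k/\qbar$ with $\mathrm{End}^\circ(E_k) = k$; its $\mathrm{H}^1_{\mathrm{dRB}}(E_k,\mathbb{Q})$ is a 2-dimensional $k$-equivariant dRB structure of weight 1, with Hodge bi-types $(1,0)$ and $(0,1)$ on the two eigenlines. A 2-dimensional rational Hodge structure with $k$-action is classified up to isomorphism by the Hodge bi-types on its $\sigma$- and $\bar\sigma$-eigenlines; consequently, there is a motivic operation built from $\mathrm{H}^1_{\mathrm{dRB}}(E_k,\mathbb{Q})$ and Tate twists $\mathbb{Q}_{\mathrm{dRB}}(j)$ (concretely, a summand of $\mathrm{H}^{\bullet}_{\mathrm{dRB}}(E_k^{\,m})$ cut out by an idempotent in $k\otimes k\otimes\cdots$, twisted by an appropriate power of $\mathbb{Q}_{\mathrm{dRB}}(1)$) that matches $W$ on the Hodge level. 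The key step is to upgrade this Hodge-theoretic identification to one of dRB structures, for which one applies Theorem \ref{bostoriginal} on the product $A\times E_k^{\,N}$: the class implementing the isomorphism lives in a Hodge-Tate-twisted $\mathrm{H}^2$ of this product and is algebraic, hence a dRB class.

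Third, I would apply the Chowla--Selberg formula, which gives that in a $\qbar$-rational $k$-eigenbasis the period matrix of $\mathrm{H}^1_{\mathrm{dRB}}(E_k,\mathbb{Q})\otimes\qbar$ is $\mathrm{diag}(b_k,\,2\pi i / b_k)$, where $b_k$ is the Chowla--Selberg constant assembled from values of $\Gamma$ at rational arguments weighted by the quadratic character of $k$. Propagating this through the motivic identification of step two -- so that the $\sigma$-eigenline becomes the $p$-fold tensor of $b_k$ with the $(n-p)$-fold tensor of $2\pi i / b_k$, and symmetrically on the $\bar\sigma$-eigenline -- yields the matrix (\ref{grosscomparison}). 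The sanity check $b_k^{p}(2\pi i/b_k)^{n-p}\cdot b_k^{n-p}(2\pi i/b_k)^{p} = (2\pi i)^{n}$ is consistent with the canonical $(2\pi i)^{n}$ scaling on $\wedge^n \mathrm{H}^1_{\mathrm{dRB}}(A,\mathbb{Q})$.

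The main obstacle is the second step: the lift of a motivic/Hodge-level identification to a dRB identification is not formal, because the dRB category lacks weights and a general classification of $k$-equivariant objects with prescribed Hodge data. One must either exhibit an auxiliary Hodge cycle on $A\times E_k^{\,N}$ and invoke Theorem \ref{bostoriginal} (whose arithmetic content rests on the Analytic Subgroup Theorem) to promote that Hodge cycle to a dRB cycle, or instead reproduce Gross's original route, which computes the periods directly via complex uniformization and Kronecker's limit formula, thereby sidestepping the motivic dictionary altogether. Either way, the transcendence input of the Analytic Subgroup Theorem is unavoidable, since the conclusion predicts a specific transcendence class for the diagonal entries.
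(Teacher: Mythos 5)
First, a point of reference: the paper does not prove Lemma \ref{gross} at all --- it is imported verbatim as Theorem 3 of Gross's paper \cite{gross1978periods}, so there is no in-paper argument to compare yours against. Your overall strategy (split the Weil structure into $k$-eigenlines, identify it $k$-equivariantly with a twist of the dRB structure of an auxiliary CM elliptic curve $E_k$, then feed in the Chowla--Selberg evaluation of the elliptic periods) is in fact the Gross/Deligne line of attack, and your Steps 1 and 3 are fine.

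The genuine gap is the mechanism you propose in Step 2 to upgrade the Hodge-level identification to a dRB identification. The class implementing an isomorphism between $\wedge_{k}^{n}\mathrm{H}^1(A,\mathbb{Q})$ and a Tate-twisted tensor construction on $\mathrm{H}^1(E_k,\mathbb{Q})$ does not live in $\mathrm{H}^2$ of $A\times E_k^{N}$ twisted by $\mathbb{Q}_{\mathrm{dRB}}(1)$: it lives in a Hom-type space sitting in cohomological degree roughly $2n$ of the product, so Theorem \ref{bostoriginal} (which is exactly the Lefschetz $(1,1)$ statement, degree $2$ only) does not apply. Worse, asserting that this class ``is algebraic, hence a dRB class'' begs the hard point: it is a Weil-type Hodge class whose algebraicity is precisely what is unknown --- if it were known, much of the motivation for Gross's and Deligne's work would evaporate. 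The tool that actually closes this gap inside the paper's toolkit is Andr\'e's theorem that every Hodge tensor on an abelian variety over $\qbar$ (here $A\times E_k$) is motivated and hence a de Rham--Betti tensor, i.e.\ the mechanism behind Theorem \ref{keyinclusion}; alternatively one reproduces Gross's original analytic computation or Deligne's absolute-Hodge-cycle argument. Finally, your closing claim that the Analytic Subgroup Theorem is ``unavoidable'' for this lemma is off target: the lemma is a period identity up to $\qbar^{\times}$ and asserts no transcendence; the transcendence input used downstream (Corollary \ref{grosscorollary}) is Chudnovsky's theorem that $b_k$ and $2\pi i$ are algebraically independent, which is quoted separately.
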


\begin{corollary}\label{grosscorollary}
We adopt the setup and notation of the above lemma and suppose $n=4$. If $p=1$, then none of the elements in $\rho_{m}(\wedge_{k}^4\mathrm{H}^1_{\mathrm{B}}(A,\mathbb{Q}))$ falls inside $$\wedge_{k \otimes \overline{\mathbb{Q}}}^4\mathrm{H}^1_{\mathrm{dR}}(A,\qbar) \otimes \qbar(2\pi i)^2$$ If $p=2$, every element in $\rho_{m}(\wedge_{k}^4\mathrm{H}^1_{\mathrm{B}}(A,\mathbb{Q}))$ falls inside $$\wedge_{k \otimes \overline{\mathbb{Q}}}^4\mathrm{H}^1_{\mathrm{dR}}(A,\qbar) \otimes \qbar(2\pi i)^2$$  
\end{corollary}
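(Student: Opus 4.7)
The plan is to substitute $n=4$ directly into the diagonal form of the comparison isomorphism furnished by Lemma \ref{gross}, and then handle the two cases $p=1$ and $p=2$ separately. In both cases I will fix $\qbar$-bases $\{e_1,e_2\}$ of $\wedge_{k\otimes\qbar}^{4}\mathrm{H}^1_{\mathrm{dR}}(A/\qbar)$ and $\{f_1,f_2\}$ of $\wedge_k^4\mathrm{H}^1_{\mathrm{B}}(A,\mathbb{Q})\otimes_{\mathbb{Q}}\qbar$ in which formula (\ref{grosscomparison}) is diagonal, noting that the $\mathbb{Q}$-rational part of the Weil structure still contains any nonzero $v$ through some $\qbar$-expansion $v=\beta_1 f_1+\beta_2 f_2$ with $\beta_i\in\qbar$ not both zero.

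The case $p=2$ is essentially automatic: both diagonal entries collapse to $b_k^{2}(2\pi i/b_k)^{2}=(2\pi i)^{2}$, so $\rho_m(f_i\otimes 1)=(2\pi i)^{2}e_i$ for $i=1,2$, and therefore $\rho_m(\wedge_k^4\mathrm{H}^1_{\mathrm{B}}(A,\mathbb{Q}))$ is tautologically contained in $\wedge_{k\otimes\qbar}^{4}\mathrm{H}^1_{\mathrm{dR}}(A/\qbar)\otimes\qbar(2\pi i)^{2}$. In effect, the Weil de Rham-Betti structure is isomorphic to $\mathbb{Q}_{\mathrm{dRB}}(2)^{\oplus 2}$.

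The case $p=1$ requires more work. The diagonal entries are now $\lambda_1:=(2\pi i)^{3}/b_k^{2}$ and $\lambda_2:=b_k^{2}\cdot(2\pi i)$. I will argue by contradiction: assume some nonzero $v\in\wedge_k^4\mathrm{H}^1_{\mathrm{B}}(A,\mathbb{Q})$ satisfies $\rho_m(v\otimes 1)\in\wedge_{k\otimes\qbar}^{4}\mathrm{H}^1_{\mathrm{dR}}(A/\qbar)\otimes\qbar(2\pi i)^{2}$. Writing $v=\beta_1 f_1+\beta_2 f_2$, this amounts to the existence of $\gamma_1,\gamma_2\in\qbar$ with $\beta_i\lambda_i=\gamma_i(2\pi i)^{2}$; whichever $\beta_i$ is nonzero, dividing out the identity collapses it to $b_k^{2}\in\qbar\cdot(2\pi i)$.

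The hard part — and the only nontrivial content beyond the bookkeeping above — is to exclude the transcendence relation $b_k^{2}\in\qbar\cdot(2\pi i)$. For this I will invoke Chudnovsky's theorem on periods of CM elliptic curves, which asserts that for any imaginary quadratic field $k$, the Chowla-Selberg constant $b_k$ and $\pi$ are algebraically independent over $\qbar$; in particular $b_k^{2}/(2\pi i)\notin\qbar$. This contradicts the relation derived in the previous paragraph and completes the argument.
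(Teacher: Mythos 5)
Your proof is correct and follows essentially the same route as the paper: expand a rational class in the diagonalizing $\qbar$-bases from Lemma \ref{gross}, observe that for $p=2$ both diagonal entries are $(2\pi i)^{2}$, and for $p=1$ reduce the claim to the impossibility of $b_k^{2}\in\qbar\cdot(2\pi i)$, which is exactly the Chudnovsky algebraic-independence input the paper uses. The only difference is that you spell out the $p=1$ contradiction explicitly where the paper leaves it as "a simple inspection."
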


\begin{proof}
 Denote the choice of $\qbar$-basis of the Betti cohomology group in formula (\ref{grosscomparison}) by $e_1,e_2$ and denote the choice of $\qbar$-basis of the de Rham cohomology group by $f_1,f_2$. Then for any element $\alpha=q_1e_1+q_2e_2 \in \wedge_{k}^4\mathrm{H}^1_{\mathrm{B}}(A,\mathbb{Q})\subset\wedge_{k}^4\mathrm{H}^1_{\mathrm{B}}(A,\mathbb{Q})\otimes \qbar (q_1,q_2\in \qbar)$ we have $$\rho_{m}(\alpha)=q_1b_{k}^{p} (2\pi i/b_{k})^{4-p}f_1+q_2b_{k}^{
4-p} (2\pi i/b_{k})^{p}f_2$$ By the renowned theorem of Chudnovski \cite{chudnovsky1980algebraic}, we have that $b_{k}$ is a transcendental number and it is $\qbar$-\textit{algebraically} independent from $2\pi i$. Then by a simple inspection of the above $\rho_{m}(\alpha)$, we obtain the statement in the corollary.
 \end{proof}
\begin{remark}
Note that this observation is analogous to the statement from \cite{mz-4-folds} that the Hodge structure $\wedge_{k}^4\mathrm{H}^1_{\mathrm{B}}(A,\mathbb{Q}) \otimes \mathbb{Q}(2)$ consists entirely of Hodge classes when the multiplicity of each element of $\mathrm{Hom}(k,\mathbb{C})$ is equal to $2$ and it contains no Hodge class when the set of multiplicities associated to $\mathrm{Hom}(k,\mathbb{C})$ is $\{1,3\}$ (See also Remark \ref{weilhodgenumber}). 
\end{remark}
For the proof of Theorem \ref{mainthmcm}, by virtue of Proposition \ref{biggerthan2}, it suffices to study all subtori of $\mathrm{U}_{E}$ whose dimension is bigger than or equal to 3. We will be using the classification of such tori from Key Lemma 7.3 in \cite{mz-4-folds}, which we record below.
\begin{lemma}[Key Lemma 7.3, \cite{mz-4-folds}]\label{keylemmamz}
   Let $E$ be a CM-field. Suppose $T$ is a (connected) codimension 1 subtorus of $\mathrm{U}_{E}$. Then there exists a quadratic imaginary field $k\xhookrightarrow{}E$ such that $T=\mathrm{SU}_{E/k}$ (see Definition \ref{sudefn}).
\end{lemma}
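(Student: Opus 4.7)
The plan is to translate everything to character lattices via the antiequivalence of Theorem \ref{chartorus}. Let $L$ denote the Galois closure of $E/\mathbb{Q}$ in $\bar{\mathbb{Q}}$, set $G = \mathrm{Gal}(L/\mathbb{Q})$ and $H_E = \mathrm{Gal}(L/E)$, so that $\mathrm{Hom}(E,L) \cong G/H_E$ as a left $G$-set. By Example \ref{uexample}, $X^{*}(\mathrm{U}_E)$ may be described as antisymmetric functions $n: \mathrm{Hom}(E,L) \to \mathbb{Z}$, i.e.\ functions satisfying $n(c\sigma) = -n(\sigma)$ where $c \in G$ is complex conjugation, with $G$ acting by $(g \cdot n)(\sigma) = n(g^{-1}\sigma)$, and with $c$ acting as $-1$ on the whole module. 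Under this dictionary, a connected codimension-one subtorus $T \subset \mathrm{U}_E$ corresponds to a rank-one, Galois-stable, saturated submodule $\Delta \subset X^{*}(\mathrm{U}_E)$.

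Next I would pin down the generator $\chi$ of $\Delta$. Galois-stability of a rank-one module forces $g \cdot \chi = \pm\chi$ for every $g \in G$. Since $c \cdot \chi = -\chi \neq \chi$, the stabilizer $H = \{g \in G : g \cdot \chi = \chi\}$ has index exactly $2$ in $G$; in particular $H$ is automatically normal and $G = H \sqcup cH$. Viewing $\chi$ as a function $\tilde{n}$ on $G$ (by pulling back along $G \twoheadrightarrow G/H_E$), left $H$-invariance together with antisymmetry under $c$ pin it down completely: $\tilde{n}(g) = a$ for $g \in H$ and $\tilde{n}(g) = -a$ for $g \in cH$, where $a = \tilde{n}(1) \in \mathbb{Z}$. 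The main subtlety of the argument is the implication $H_E \subset H$: because $\tilde{n}$ descends from $G/H_E$, it must be right $H_E$-invariant, so $gh_E$ and $g$ must lie in the same left $H$-coset for every $g \in G$ and $h_E \in H_E$, and normality of $H$ collapses this to $h_E \in H$. Consequently $k := L^H$ is a degree-two subfield of $E$, and since $c \notin H$, the field $k$ is imaginary quadratic.

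Finally, I would identify $T$ with $\mathrm{SU}_{E/k}$ by comparing character lattices directly. The norm map $N_{E/k}: \mathrm{U}_E \to \mathrm{U}_k$ dualizes to an injection $X^{*}(\mathrm{U}_k) \hookrightarrow X^{*}(\mathrm{U}_E)$ whose generator, under the identification $\mathrm{Gal}(k/\mathbb{Q}) = G/H$ and pullback along the projection $G/H_E \to G/H$, is exactly the function $\tilde{n}$ above with $a = 1$. Saturation of $\Delta$ forces $\chi$ to equal $\pm$ this primitive element, so $\Delta$ coincides with the image of $X^{*}(\mathrm{U}_k)$ in $X^{*}(\mathrm{U}_E)$. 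It follows that $X^{*}(T) = X^{*}(\mathrm{U}_E)/X^{*}(\mathrm{U}_k) = X^{*}(\mathrm{SU}_{E/k})$, hence $T = \mathrm{SU}_{E/k}$; this uses that $\mathrm{det}_k$ of multiplication by $x \in E$ coincides with $N_{E/k}(x)$, so that $\mathrm{SU}_{E/k}$ is genuinely the kernel of $N_{E/k}$ restricted to $\mathrm{U}_E$.
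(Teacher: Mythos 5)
Your argument is correct, and it is the standard character-lattice argument: the paper itself gives no proof of this statement, citing it directly as Key Lemma 7.3 of \cite{mz-4-folds}, and your proof via Theorem \ref{chartorus} and Example \ref{uexample} (rank-one saturated Galois-stable kernel, conjugation acting as $-1$ forcing an index-two normal stabilizer $H \supseteq \mathrm{Gal}(L/E)$, hence a quadratic subfield $k = L^{H}$ of $E$ which is imaginary by Corollary \ref{subofcm}, then matching the primitive generator with the image of $X^{*}(\mathrm{U}_k)$ under the norm) is essentially the argument one expects behind the cited lemma. The only interpretive point is your reading of Definition \ref{sudefn}: taken literally it defines $\mathrm{SU}_{E/k}$ inside $\mathrm{Res}_{E/\mathbb{Q}}\mathbb{G}_{\mathrm{m}}$ without the unitarity condition, which cannot be a codimension-one subtorus of $\mathrm{U}_E$ for $\deg E>2$; your identification of $\mathrm{SU}_{E/k}$ with $\ker\bigl(N_{E/k}|_{\mathrm{U}_E}\bigr)=\mathrm{U}_E\cap\mathrm{SL}_{k}(E)$ is the intended meaning and is exactly how the group is used in the proof of Theorem \ref{mainthmcm}, so this is a defect of the definition as stated rather than of your proof.
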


\begin{proof}[Proof of Theorem \ref{mainthmcm}]
Equipped with Proposition \ref{biggerthan2} and Corollary \ref{grosscorollary}, we are now ready to apply the argument from 7.6 of \cite{mz-4-folds} verbatim. First suppose the endomorphism field $E$ of the simple CM abelian fourfold $A$ does not contain a quadratic imaginary subfield $k$, then according to Lemma \ref{keylemmamz}, there is no subtorus of codimension 1 inside $\mathrm{U}_{E}$. Therefore, $\gdrbhmath(A)$ is equal to $\mathrm{U}_{E}$ since we know its dimension is bigger than 2 by Proposition \ref{biggerthan2}. Therefore in this case $\gdrbhmath(A)=\mathrm{Hdg}(A)=\mathrm{U}_{E}$.

Now suppose $E$ does contain a quadratic imaginary subfield. First suppose for each quadratic imaginary subfield $k\xhookrightarrow{}E$, the set of multiplicities associated with $\mathrm{Hom}(k,\qbar)$ is $\{1,3\}$. Then by Corollary \ref{grosscorollary}, there is no de Rham-Betti class in $\wedge_{k}^4\mathrm{H}^1_{\mathrm{dRB}}(A,\mathbb{Q})\otimes \mathbb{Q}_{\mathrm{dRB}}(2)$. Hence by Corollary \ref{gdrbhinv}, we have that $(\wedge_{k}^4\mathrm{H}^1_{\mathrm{dRB}}(A,\mathbb{Q}))^{\gdrbhmath(A)}=\{0\}$. Now suppose $\gdrbhmath(A)$ is a codimension 1 subtorus of $\mathrm{U}_{E}$. Then by Lemma \ref{keylemmamz}, it is equal to $\mathrm{SU}_{E/k_{0}}$ for some $k_{0}\xhookrightarrow{}E$. Then according to Lemma \ref{weilstructurefixer}, every element in $\wedge_{k_{0}}^4\mathrm{H}^1(A,\mathbb{Q})$ would be fixed by $\mathrm{SU}_{E/k_{0}}$, which is a contradiction. Therefore $\gdrbhmath(A)=\mathrm{U}_{E}=\mathrm{Hdg}(A)$.

Suppose there exists $k_{0}\xhookrightarrow{}E$ with $k_{0}$ a quadratic imaginary field such that the set of multiplicities associated with $\mathrm{Hom}(k_{0},\qbar)$ is $\{2,2\}$. By Corollary \ref{grosscorollary}, every element in $\wedge_{k}^4\mathrm{H}^1_{\mathrm{dRB}}(A,\mathbb{Q})\otimes \mathbb{Q}_{\mathrm{dRB}}(2)$ is a dRB class. Hence by Corollary \ref{gdrbhinv}, we have that $$(\wedge_{k_0}^4\mathrm{H}^1_{\mathrm{dRB}}(A,\mathbb{Q}))^{\gdrbhmath(A)}=\wedge_{k_0}^4\mathrm{H}^1_{\mathrm{dRB}}(A,\mathbb{Q})$$ Therefore we can deduce that $$\gdrbhmath(A)\subset\mathrm{U}_{E} \cap \mathrm{SL}_{k_{0}}(\mathrm{H}^1(A,\mathbb{Q}))=\mathrm{SU}_{E/k_{0}}$$ But we have by Proposition \ref{biggerthan2} that $\mathrm{dim}(\gdrbhmath(A))\geq 3$ therefore in this case $\gdrbhmath(A)=\mathrm{SU}_{E/k_{0}}=\mathrm{Hdg}(A)$.

\end{proof}

\section{Centre of De Rham-Betti Groups of Simple Type IV Abelian Varieties}\label{chaptercentre}
In this section, we adopt a strategy similar to Section \ref{cmgdrbsection} to prove that the centre of the de Rham-Betti Lie algebra of a simple abelian fourfold of type IV coincides with the centre of its Mumford-Tate Lie algebra. Similar to Section \ref{cmgdrbsection}, we start by showing the existence of homotheties in the centre of the de Rham-Betti group of a simple abelian variety of type IV (see Definition \ref{type4defn}). Moreover, in Section \ref{seconedimobj}, we discuss some interesting corollaries regarding the classification of one-dimensional objects in the Tannakian category generated by the de Rham-Betti structure of a type IV abelian variety. We stress that Theorem \ref{reductive} which states that the de Rham-Betti group of an abelian variety is reductive is used in a crucial way.
\subsection{Existence of Homotheties in the Centre}
We begin this section with the following well-known lemma.
\begin{lemma} \label{centremakessense}
   Given a reductive Lie algebra $L$ defined over $\mathbb{Q}$, we have the following decomposition $$L=\mathrm{Z}(L) \bigoplus L^{\mathrm{ss}}$$ where $\mathrm{Z}(L)$ is the centre of $L$ and $L^{\mathrm{ss}}$=$[L,L]$, the semisimple part of $L$.
\end{lemma}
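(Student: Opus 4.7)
The plan is to invoke the standard structure theory of reductive Lie algebras in characteristic zero. First I would use the well-known characterization that $L$ is reductive (as the Lie algebra of a reductive algebraic group over $\mathbb{Q}$, which has characteristic zero) if and only if the adjoint representation $\mathrm{ad}: L \to \mathrm{End}(L)$ is completely reducible. This allows me to decompose $L$ as an $L$-module under the adjoint action into a direct sum of irreducible $L$-submodules, each of which is automatically an ideal of $L$.

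Next, I would classify these irreducible ideals. Each such ideal $L_i$ is either abelian, in which case $\mathrm{ad}(L)$ acts trivially on $L_i$ (the action factors through the irreducible quotient of a one-dimensional space), so $L_i$ is one-dimensional and contained in $\mathrm{Z}(L)$; or $L_i$ is non-abelian, in which case the irreducibility as an ideal forces it to be a simple Lie algebra. Collecting the abelian summands one recovers $\mathrm{Z}(L)$, and collecting the non-abelian simple summands gives a complementary ideal which I would define to be $L^{\mathrm{ss}}$. By construction this yields a direct-sum decomposition $L = \mathrm{Z}(L) \oplus L^{\mathrm{ss}}$.

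It then remains to identify $L^{\mathrm{ss}}$ with $[L,L]$. The inclusion $[L,L] \subseteq L^{\mathrm{ss}}$ is immediate from $[L,\mathrm{Z}(L)] = 0$, which forces $[L,L] = [L^{\mathrm{ss}}, L^{\mathrm{ss}}] \subseteq L^{\mathrm{ss}}$. For the reverse inclusion, each non-abelian simple summand $L_i$ is semisimple and therefore satisfies $L_i = [L_i, L_i] \subseteq [L,L]$; summing gives $L^{\mathrm{ss}} \subseteq [L,L]$. Since the statement is essentially an application of the classical structure theory, there is no genuine obstacle; the only small subtlety is to ensure that the decomposition into ideals descends to $\mathbb{Q}$ rather than only existing after base change, but this follows because the adjoint action and its complete reducibility are already defined over $\mathbb{Q}$, so the isotypic decomposition is Galois-equivariant and hence defined over $\mathbb{Q}$.
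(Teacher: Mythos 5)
Your proposal is correct, but it proves the decomposition rather than citing it, and so differs from the paper's route. The paper simply invokes the standard fact over an algebraically closed field (Humphreys, Ex.\ 6.5) to get $L_{\qbar}=\mathrm{Z}(L_{\qbar})\oplus[L_{\qbar},L_{\qbar}]$ and then observes that, since the bracket is defined over $\mathbb{Q}$, both $\mathrm{Z}(L_{\qbar})=\mathrm{Z}(L)\otimes\qbar$ and $[L_{\qbar},L_{\qbar}]=[L,L]\otimes\qbar$ are $\mathbb{Q}$-rational, so the decomposition descends. You instead run the structure theory directly: complete reducibility of the adjoint representation, decomposition into minimal ideals, the dichotomy central one-dimensional versus non-abelian, and the identification of the non-abelian part with $[L,L]$. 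This buys a self-contained argument that works over $\mathbb{Q}$ from the start, which in fact makes your closing Galois-equivariance remark unnecessary: once complete reducibility of $\mathrm{ad}$ is known over $\mathbb{Q}$ (true in characteristic zero, since semisimplicity of modules is insensitive to extension of the base field), no descent step is needed at all. Two small points to tighten: the claim that a non-abelian minimal ideal $L_i$ is simple is not immediate from minimality alone, since ideals of $L_i$ need not a priori be ideals of $L$; you should either note that distinct summands of the decomposition commute, so that $[L,L_i]=[L_i,L_i]$ and ideals of $L_i$ are ideals of $L$, or observe that for the lemma you only need perfectness $[L_i,L_i]=L_i$, which follows directly from minimality once $[L,L_i]\neq 0$. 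Similarly, in the abelian case the triviality of the $\mathrm{ad}(L)$-action on $L_i$ should be justified by the same commutation remark (or by minimality of $[L,L_i]$), rather than by the phrase you give in parentheses.
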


\begin{proof}
    By definition, $L_{\qbar}:=L\otimes_{\mathbb{Q}}\qbar$ is a reductive Lie algebra defined over $\qbar$ and therefore $L_{\qbar}=\mathrm{Z}(L_{\qbar}) \bigoplus L_{\qbar}^{\mathrm{ss}}$ (for example see Ex 6.5 of \cite{humphreys2012introduction}). But notice that the Lie bracket on $L_{\qbar}$ is in fact defined in $\mathbb{Q}$, hence the decomposition descends to $\mathbb{Q}$.
\end{proof}
In the sequel we will denote the Lie algebra of $\gdrbmath(A)$ by $\liegdrbmath(A)$. In the case of abelian varieties defined over $\qbar$, by Theorem \ref{reductive}, we have that $\gdrbmath(A)$ is a reductive algebraic group. Hence $\liegdrbmath(A)$ is a reductive Lie algebra and we will denote by $\liegdrbssmath(A)$ its semisimple part i.e. $\liegdrbssmath(A)=[\liegdrbmath(A),\liegdrbmath(A)]$. 
\begin{lemma}\label{centreincentre}
    For any abelian variety $A$ defined over $\qbar$, the centre of its de Rham-Betti group $\mathrm{Z}(\gdrbmath(A))$ naturally lives in the centre of its Mumford-Tate group $\mathrm{Z}(\mathrm{MT}(A))$. Also, $\mathrm{Z}(\gdrbmath(A))$ naturally lives in the centre of the endomorphism algebra of $A$.
\end{lemma}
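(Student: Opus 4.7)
The plan rests on the following elementary observation: for any $\mathbb{Q}$-algebraic subgroup $G \subset \mathrm{GL}(\betti)$, the centralizer of $G$ inside $\mathrm{GL}(\betti)$ coincides with the group of units $(\mathrm{End}_G(\betti))^{\times}$ of the $\mathbb{Q}$-algebra of $G$-equivariant endomorphisms, and the centre of $G$ is then the scheme-theoretic intersection $G \cap (\mathrm{End}_G(\betti))^{\times}$ taken inside $\mathrm{GL}(\betti)$. I would apply this with $G = \gdrbmath(A)$ and with $G = \mathrm{MT}(A)$, and then invoke Bost's Theorem \ref{mainfact}, which gives $\mathrm{End}_{\mathrm{dRB}}(\betti) = \mathrm{End}_{\mathrm{Hdg}}(\betti) = \mathrm{End}^{\circ}(A)$, to obtain the identifications
$$\mathrm{Z}(\gdrbmath(A)) = \gdrbmath(A) \cap (\mathrm{End}^{\circ}(A))^{\times}, \qquad \mathrm{Z}(\mathrm{MT}(A)) = \mathrm{MT}(A) \cap (\mathrm{End}^{\circ}(A))^{\times},$$
both intersections being taken inside $\mathrm{GL}(\betti)$. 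The crucial point is that the ambient centralizer algebra is the \emph{same} in the two cases, so the two centres differ only through the outer factor.

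From here the first assertion of the lemma is immediate from the inclusion $\gdrbmath(A) \hookrightarrow \mathrm{MT}(A)$ supplied by Theorem \ref{keyinclusion}. For the second assertion, any $z \in \mathrm{Z}(\gdrbmath(A))$ already lies in $(\mathrm{End}^{\circ}(A))^{\times}$ by the identification above. Moreover every $f \in \mathrm{End}^{\circ}(A) = \mathrm{End}_{\gdrbmath(A)}(\betti)$ commutes with all of $\gdrbmath(A)$ by definition, and hence in particular with the element $z$. Thus $z$ commutes with the whole of $\mathrm{End}^{\circ}(A)$, so it is a unit in the centre of the algebra $\mathrm{End}^{\circ}(A)$, which is precisely what is claimed.

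I do not anticipate a real obstacle for this lemma: once Theorem \ref{mainfact} is available the argument is essentially bookkeeping. The only mild point to check is that the scheme-theoretic notions of centralizer, intersection and centre inside $\mathrm{GL}(\betti)$ agree with their group-theoretic counterparts on $\mathbb{Q}$-points in the expected way, which is standard for affine algebraic subgroups and does not interact with the deeper content of the paper. The real difficulty in the section lies later, in exploiting this containment to identify the centres precisely for type IV abelian fourfolds.
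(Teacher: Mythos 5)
Your proposal is correct and follows essentially the same route as the paper's proof: both express the centre as the intersection of the group with its commutant algebra inside $\mathrm{GL}(\betti)$, use Theorem \ref{mainfact} to identify both commutants with $\mathrm{End}^{\circ}(A)$, deduce the first containment from the inclusion $\gdrbmath(A)\xhookrightarrow{}\mathrm{MT}(A)$ of Theorem \ref{keyinclusion}, and obtain the second by noting that central elements lie in $\mathrm{End}^{\circ}(A)$ and commute with everything commuting with $\gdrbmath(A)$. No gaps to report.
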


\begin{proof}
By definition we have $$\mathrm{Z}(\mathrm{MT}(A))=\mathrm{End}_{\mathrm{Hdg}}(\mathrm{H}^1(A,\mathbb{Q})) \cap \mathrm{MT}(A)$$ and  $$\mathrm{Z}(\gdrbmath(A))=\mathrm{End}_{\mathrm{dRB}}(\mathrm{H}^1(A,\mathbb{Q})) \cap \gdrbmath(A)$$ Recall that we have the natural inclusion of algebraic groups $$\gdrbmath(A) \xhookrightarrow{} \mathrm{MT}(A)$$ Moreover, by Theorem \ref{mainfact} we have that $$\mathrm{End}_{\mathrm{Hdg}}(\mathrm{H}^1(A,\mathbb{Q})) = \mathrm{End}_{\mathrm{dRB}}(\mathrm{H}^1(A,\mathbb{Q}))$$ Therefore, the inclusion $\mathrm{Z}(\gdrbmath(A))\xhookrightarrow{}\mathrm{Z}(\mathrm{MT}(A))$ follows.  On the other hand, we have that $$\mathrm{Z}(\gdrbmath(A)) \xhookrightarrow{} \mathrm{End}_{\mathrm{dRB}}(\betti)$$ By definition, elements in $\mathrm{End}_{\mathrm{dRB}}(\betti)$ commute with elements in $$\mathrm{Z}(\gdrbmath(A)) \xhookrightarrow{}
\gdrbmath(A)$$ Hence the second statement in the lemma follows.
\end{proof}
In this section we will also use a well known convenient lemma from algebraic group theory.
\begin{lemma}\label{surjectiontotorus}
    Suppose a reductive group $G$ admits a surjective homomorphism $f: G\rightarrow H$ where $H$ is connected commutative algebraic group. Assume both $G$ and $H$ are over a field of characteristics zero. Then the restriction of $f$ to $\mathrm{Z}(G)$ is a also a surjective homomorphism of algebraic groups.
\end{lemma}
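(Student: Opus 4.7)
The plan is to exploit the structure theory of connected reductive groups in characteristic zero. First I would observe that because $H$ is commutative, $f$ must annihilate the derived subgroup: $f([G,G]) \subset [H,H] = \{e\}$. Hence $f$ factors through the abelianization $G/[G,G]$, which in the reductive setting is closely related to the central torus.

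Next I would invoke the almost-direct-product decomposition $G = Z(G)^\circ \cdot [G,G]$ that holds for any connected reductive algebraic group over a field of characteristic zero, where $Z(G)^\circ$ is a central torus and $Z(G)^\circ \cap [G,G]$ is a finite subgroup. Every element of $G$ is therefore a product of a central element and an element of $[G,G]$. Combined with the vanishing of $f$ on $[G,G]$, this gives $f(G) = f(Z(G)^\circ) \cdot f([G,G]) = f(Z(G)^\circ)$. Since $f$ is surjective by hypothesis, $f(Z(G)^\circ) = H$, so \emph{a fortiori} $f|_{Z(G)}\colon Z(G) \to H$ is surjective.

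The only nontrivial input is the almost-direct-product decomposition, which is standard over fields of characteristic zero (see e.g.\ \cite{milne2014algebraic}), so no real obstacle is expected. If $G$ is only assumed reductive rather than connected reductive, the argument applies verbatim to the identity component $G^\circ$ after noting that $Z(G^\circ)\subset Z(G)$ and that the image $f(G^\circ)\subset H$ is a closed subgroup of finite index in the connected group $H$, hence equals $H$.
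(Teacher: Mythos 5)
Your main argument is correct, but it takes a genuinely different route from the paper. You work at the level of the groups themselves: $H$ commutative forces $f([G,G])=\{e\}$, and the almost-direct-product decomposition $G=\mathrm{Z}(G)^{\circ}\cdot[G,G]$ of a connected reductive group in characteristic zero then gives $f(G)=f(\mathrm{Z}(G)^{\circ})$, hence surjectivity of $f|_{\mathrm{Z}(G)}$. The paper instead passes to Lie algebras: $\mathrm{Lie}(f)$ is surjective, it kills $[L,L]$ because $H$ is commutative and $[L,L]$ is semisimple, so $\mathrm{Lie}(f)$ restricted to $\mathrm{Lie}(\mathrm{Z}(G))$ surjects onto $\mathrm{Lie}(H)$; a criterion from Milne (surjectivity on Lie algebras implies the image contains $H^{\circ}$) plus connectedness of $H$ finishes the proof. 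Your version avoids the Lie-algebra criterion entirely and only needs the standard isogeny $\mathrm{Z}(G)^{\circ}\times[G,G]\to G$, which is arguably more elementary; the paper's version transfers the problem to linear algebra, which fits the way the decomposition $L=\mathrm{Z}(L)\oplus[L,L]$ is used elsewhere in the text. Both arguments implicitly assume $G$ connected (the paper through the identification $\mathrm{Z}(\mathrm{Lie}\,G)=\mathrm{Lie}(\mathrm{Z}(G))$, you through the almost-direct-product), which is harmless since the lemma is only applied to de Rham--Betti groups, and these are connected.

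One caveat on your closing remark about disconnected $G$: the inclusion $\mathrm{Z}(G^{\circ})\subset\mathrm{Z}(G)$ is false in general. For instance, for $G=\mathbb{G}_{\mathrm{m}}\rtimes\mathbb{Z}/2$ with the inversion action (an orthogonal-group-type example), $\mathrm{Z}(G^{\circ})=\mathbb{G}_{\mathrm{m}}$ while $\mathrm{Z}(G)=\{\pm1\}$, so surjectivity of $f|_{\mathrm{Z}(G^{\circ})}$ does not yield surjectivity of $f|_{\mathrm{Z}(G)}$ by this route. Since the paper's statement is only ever invoked for connected $G$, this does not affect the substance of your proof, but the remark as written should either be dropped or replaced by a separate argument for the non-connected case.
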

\begin{proof}
Because $f$ is a surjective homomorphism, we have $$\mathrm{Lie}(f): L:=\mathrm{Lie}(G) \twoheadrightarrow \mathrm{Lie}(H)$$
Because $G$ is a reductive algebraic group, we have that  $L$ is a reductive Lie algebra and therefore $L=\mathrm{Z}(L)\oplus L^{\mathrm{ss}}=\mathrm{Lie}(\mathrm{Z}(G))\oplus [L,L]$. Moreover because $H$ is a commutative algebraic group and $[L,L]$ is a semisimple Lie algebra, the image of $\mathrm{Lie}(f)|_{[L,L]}$ is $\{0\}$. We then obtain that $\mathrm{Lie}(f)|_{\mathrm{Lie}(\mathrm{Z}(G))}$ surjects to $\mathrm{Lie}(H)$. By Proposition 3.25 of \cite{milne2011algebraic}, we have that $H^{\circ}\subset f(Z(G))$. Since $H$ is assumed to be a connected algebraic group, we obtain that $$f|_{\mathrm{Z}(G)}:\mathrm{Z}(G) \twoheadrightarrow H$$
\end{proof}
We are now ready to apply the above observations to a simple abelian variety of type IV defined over $\qbar$, which we now define.

\begin{definition}\label{type4defn}
We say $A$ is a \textit{simple abelian variety of type IV} if the endomorphism algebra of the abelian variety $A$ is a division algebra of type IV in the Albert classification. See Theorem 2, Chapter 19 of \cite{mumford1970abelian} for the precise definition of a type IV division algebra. 
\end{definition}
\begin{example}
    A simple CM abelian variety is a special case of simple abelian variety of type IV.
\end{example}
\begin{remark}
In this section, we are going to use that the centre of a type IV division algebra is a CM-field. Moreover, given $A$ a simple abelian variety of type IV, the Rosati involution coming from a polarization form on $A$, when restricted to $E=\mathrm{Z}(\mathrm{End}^{\circ}(A))$, is equal to the complex conjugation on the CM-field $E$. Both properties can be found in Chapter 19 of \cite{mumford1970abelian}.
\end{remark}
We now prove a proposition similar in spirit to Proposition \ref{gm in CM gdrb}.
\begin{proposition} \label{gmintype4}
For any simple abelian variety $A$ of type IV defined over $\qbar$, there exists a natural homomorphism of $\gmmath$ into $\gdrbmath(A)$ such that the following diagram commutes
$$
\begin{tikzcd}
\gmmath \arrow[r] \arrow[rd] & \gdrbmath(A) \arrow[d] \\
                             & \mathrm{MT}(A)        
\end{tikzcd}
$$ The homomorphism from $\gmmath$ to $\mathrm{MT}(A)$ in the diagram is the canonical one determined by the weight of the Hodge structure i.e. on $\mathbb{Q}$-points, it sends $t\in\gmmath$ to scalar multiplication by $t^{-1}$ in $\mathrm{GL}(\betti)$.
\end{proposition}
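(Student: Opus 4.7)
The plan is to adapt the proof of Proposition \ref{gm in CM gdrb} to the broader type IV setting, with one substantive modification in the step that rules out a surjective homomorphism to $\gmmath$. The construction of $\gdrbhmath(A)$ via diagram (\ref{gdrbhdiagram1}) uses only the isogeny $\phi: \gmmath \times \mathrm{Hdg}(A) \twoheadrightarrow \mathrm{MT}(A)$ of Lemma \ref{hodgetimesgm} and the inclusion $\gdrbmath(A) \hookrightarrow \mathrm{MT}(A)$ of Theorem \ref{keyinclusion}, both valid for any abelian variety defined over $\qbar$. Writing $T := (\phi^{-1}\gdrbmath(A))^{\circ}$, Lemma \ref{surjectiontogmcompatible} produces a surjection $T \twoheadrightarrow \gdrbmath(A) \twoheadrightarrow \gmmath$; the argument of Lemma \ref{pr1surj} then shows that surjectivity of $\mathrm{pr}_1: T \to \gmmath$ reduces to verifying that $\mathrm{Hdg}(A)$ admits no surjective homomorphism onto $\gmmath$.

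The crux is therefore in replacing the appeal to Lemma \ref{nosurjectiontogm}, since for a simple type IV (non-CM) abelian variety the Hodge group is a reductive non-torus group and Lemma \ref{generalizedcm} no longer pins it inside $\mathrm{U}_E$. The approach I would take is to argue that the centre $Z(\mathrm{Hdg}(A))^{\circ}$ of this reductive group still lies inside $\mathrm{U}_E$, where $E$ is the CM-field centre of $D := \mathrm{End}^{\circ}(A)$. Indeed, $Z(\mathrm{Hdg}(A))^{\circ}$ centralizes $\mathrm{Hdg}(A)$ inside $\mathrm{GL}(V)$, and the centralizer of $\mathrm{Hdg}(A)$ in $\mathrm{End}(V)$ coincides with $D$ by Theorem \ref{mainfact} and Lemma \ref{hodgetimesgm}; on the other hand, $Z(\mathrm{Hdg}(A))^{\circ}$ is constrained by the unitarity condition imposed by the Rosati involution coming from the polarization, which restricts to complex conjugation on $E$ by the Albert classification for type IV algebras. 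Combining these two constraints identifies $Z(\mathrm{Hdg}(A))^{\circ}$ as a subtorus of $\mathrm{U}_E$. Since $\mathrm{Hdg}(A)$ is connected and reductive, any character $\mathrm{Hdg}(A) \to \gmmath$ factors through its abelianization, which is a quotient of $Z(\mathrm{Hdg}(A))^{\circ}$, and Lemma \ref{nosurjectiontogm} precludes any such surjection.

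Having established the non-existence of a surjection $\mathrm{Hdg}(A) \twoheadrightarrow \gmmath$, the Goursat-style argument of Corollary \ref{gminside} applies verbatim with $\mathrm{U}_E$ replaced by $\mathrm{Hdg}(A)$: the connected subgroup $T \subset \gmmath \times \mathrm{Hdg}(A)$ whose projection to $\gmmath$ is surjective must satisfy $T \cap (\gmmath \times \{\mathrm{id}\}) = \gmmath \times \{\mathrm{id}\}$. Projecting through $\phi$ then yields the required inclusion $\gmmath \hookrightarrow \gdrbmath(A)$ and the commutativity of the triangle, exactly as in the diagram chase at the end of the proof of Proposition \ref{gm in CM gdrb}.

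The main obstacle is the claim that $Z(\mathrm{Hdg}(A))^{\circ} \subset \mathrm{U}_E$: a commutative subgroup of $D^{\times}$ need not a priori lie in the centre $E^{\times}$ of $D^{\times}$, and the argument must use the specific structure of Hodge groups of simple type IV abelian varieties as described, for instance, in \cite{mz-4-folds}. A cleaner but less algebraic fallback is the observation that $\mathrm{Hdg}(A)(\mathbb{R})$ is compact---it lies inside the unitary group of the positive definite Hermitian form $h(v,w) = \phi(Cv, \bar{w})$ built from the polarization and the Weil operator $C$---which already rules out any surjection onto $\gmmath$ since $\gmmath(\mathbb{R}) = \mathbb{R}^{\times}$ is not compact, sidestepping the centre computation entirely.
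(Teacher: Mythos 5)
There is a genuine gap, and it sits exactly where you place the weight of the argument. What the CM proof actually needs from Lemma \ref{nosurjectiontogm} is not that $\mathrm{Hdg}(A)$ itself admits no surjection onto $\gmmath$, but that certain \emph{connected $\mathbb{Q}$-subgroups} of it do not: in Lemma \ref{pr1surj} the group that must not surject onto $\gmmath$ is $(\phi^{-1}\gdrbmath(A))^{\circ}$ sitting inside $\{\mathrm{id}\}\times\mathrm{Hdg}(A)$, and in the Goursat step of Corollary \ref{gminside} it is $T'=\mathrm{pr}_2(T)=\gdrbhmath(A)$. In the CM case this is free because \emph{every} subgroup of $\mathrm{U}_{E}$ has the property. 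Your substitute argument (characters of $\mathrm{Hdg}(A)$ factor through its abelianization, a quotient of $\mathrm{Z}(\mathrm{Hdg}(A))^{\circ}\subset\mathrm{U}_E$) is correct but strictly weaker: for a non-CM type IV variety $\mathrm{Hdg}(A)$ is a non-toral reductive group, and its proper connected $\mathbb{Q}$-subgroups can very well surject onto $\gmmath$ --- any $\mathbb{Q}$-split subtorus does, and such tori exist, e.g.\ for Weil-type fourfolds where $\mathrm{Hdg}(A)$ is a form of $\mathrm{SU}(2,2)$ of positive $\mathbb{Q}$-rank. Worse, the specific subgroup one has to control, $\gdrbhmath(A)$, is precisely the object under investigation: the paper only proves its characters are trivial in Lemma \ref{chartrivial}, \emph{using} Proposition \ref{gmintype4}, so "Corollary \ref{gminside} applies verbatim with $\mathrm{U}_E$ replaced by $\mathrm{Hdg}(A)$" is either false or circular. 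The paper's actual proof avoids the full Hodge group altogether and runs the whole diagram chase at the level of centres: $\mathrm{Z}(\gdrbmath(A))\hookrightarrow\mathrm{Z}(\mathrm{MT}(A))\hookrightarrow\mathrm{Z}(\mathrm{GSp}_{R}(V,\phi))\hookrightarrow\resgmmath$ (Lemma \ref{centreincentre}), a polarization chosen via Kottwitz so that the Rosati involution is complex conjugation on $E$, the verification $\mathrm{Z}(\mathrm{Hdg}(A))\subset\mathrm{U}_E$, and --- the ingredient your plan is missing --- Lemma \ref{surjectiontotorus}, which uses reductivity of $\gdrbmath(A)$ to show the determinant surjection already restricts to a surjection $\mathrm{Z}(\gdrbmath(A))\twoheadrightarrow\gmmath$; only then does the Goursat argument run, applied to a connected subtorus $T'\subset\mathrm{Z}(\mathrm{Hdg}(A))\subset\mathrm{U}_E$, where Lemma \ref{nosurjectiontogm} is legitimately available.

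Your "cleaner fallback" is also false and does not rescue the step. $\mathrm{Hdg}(A)(\mathbb{R})$ is compact essentially only in the CM case: elements of the Hodge group do not commute with the Weil operator $C$ (only the stabilizer of the Hodge filtration does), so they do not preserve the positive definite form $\phi(Cv,\overline{w})$. Concretely, for a non-CM elliptic curve $\mathrm{Hdg}(E)=\mathrm{SL}_2$ with $\mathrm{SL}_2(\mathbb{R})$ noncompact, and for a Weil-type fourfold the real points form a group of type $\mathrm{SU}(2,2)$, again noncompact; and even if compactness held it would only exclude surjections from $\mathrm{Hdg}(A)$ itself, not from the connected subgroups that the Goursat argument actually requires you to control.
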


\begin{proof}
The idea of the proof is to show that we have a natural inclusion $\gmmath \xhookrightarrow{} \mathrm{Z}(\gdrbmath(A))$. Denote the endomorphism algebra of $A$ by $R$ and denote the centre of $R$ by $E$. By Definition \ref{type4defn}, $R$ is a division algebra of type IV and $E$ is a CM-field. To apply the method from Section \ref{gmincmsection}, we aim to establish the connection between $\mathrm{Z}(\gdrbmath(A))$ and $\resgmmath$.

Denote $\betti$ by $V$. Then we have a natural inclusion $R \xhookrightarrow{} \mathrm{End}(V)$ and its image commutes with $\mathrm{MT}(A)$.  Moreover, any polarization form $$\phi: V \times V \rightarrow \mathbb{Q}(-1)$$ is preserved by $\mathrm{MT}(A)$ up to multiplication by scalars. To be more precise, we have the following inclusion of algebraic groups $$\mathrm{MT}(A) \xhookrightarrow{}\mathrm{GSp}_{R}(V,\phi) \xhookrightarrow{} \mathrm{GL}(V)$$ where $\mathrm{GSp}_{R}(V,\phi)$ is the algebraic group defined over $\mathbb{Q}$ whose $\mathbb{Q}$-points are $$\{g \in \mathrm{GL}(V)\cap\mathrm{End}_{R}(V)|\exists \chi(g) \in \mathbb{Q}^{\times}\forall v,v' \in V\phi(g\circ v,g\circ v')=\chi(g)\phi(v,v')\}$$ where $\chi: \mathrm{GSp}_{R}(V,\phi) \rightarrow \gmmath$ is in fact a morphism of algebraic groups defined over $\mathbb{Q}$.
Now by definition we have the natural inclusions $R \subset \mathrm{End}_{\mathrm{GSp}_{R}(V,\phi)}(V)$ and $\mathrm{MT}(A) \xhookrightarrow{}\mathrm{GSp}_{R}(V,\phi)$. By the assumption in the proposition we have that $$\mathrm{End}_{\mathrm{MT}(A)}(V)=R$$ Hence we have $R=\mathrm{End}_{\mathrm{GSp}_{R}(V,\phi)}(V)$. Therefore we have 
$$\mathrm{End}_{\gdrbmath(A)}(V)=\mathrm{End}_{\mathrm{MT}(A)}(V)=\mathrm{End}_{\mathrm{GSp}_{R}(V,\phi)}(V)=R$$
We also have the chain of inclusions $$\gdrbmath(A) \xhookrightarrow{} \mathrm{MT}(A) \xhookrightarrow{} \mathrm{GSp}_{R}(V,\phi)$$ Note that by definition we have
$Z(G)=\mathrm{End}_{G}(V) \cap G$ for any algebraic group $G \xhookrightarrow{} \mathrm{GL}(V)$. Then applying Lemma \ref{centreincentre}, we have the following homomorphisms relating the centre of these algebraic groups \begin{equation}\label{chainofcentres}\mathrm{Z}(\gdrbmath(A)) \xhookrightarrow{} \mathrm{Z}(\mathrm{MT}(A)) \xhookrightarrow{} \mathrm{Z}(\mathrm{GSp}_{R}(V,\phi)) \xhookrightarrow{} \mathrm{Res}_{E/\mathbb{Q}}\gmmath\end{equation}

Next we fix a polarization form $\phi$ such that the Rosati involution associated with $\phi$ on $R$ preserves $E$ and coincides with the complex conjugation on $E$. By \cite[Lemma 9.2]{kottwitz1992points}, such $\phi$ always exists. We claim the following diagram is commutative 
\begin{equation}\label{typeivkeydiagram}
\begin{tikzcd}
\Phi^{-1}(\mathrm{Z}(\gdrbmath(A)))^{\circ} \arrow[d, hook] \arrow[rr,two heads]   &  & \mathrm{Z}(\gdrbmath(A))^{\circ} \arrow[d, hook] \arrow[rr, "\mathrm{det}",two heads]      &                                          & \gmmath \arrow[d, Rightarrow, maps to] \\
\gmmath \times \mathrm{Z}(\mathrm{Hdg}(A)) \arrow[d, hook] \arrow[rr,"\Phi^{'}",two heads] &  & \mathrm{Z}(\mathrm{MT}(A)) \arrow[d, hook] \arrow[rr,"\mathrm{det}",two heads]    &                                          & \gmmath \arrow[d, Rightarrow]          \\
\gmmath\times\mathrm{U}_{E} \arrow[rr, "\Phi"]                   &  & {\mathrm{Z}(\mathrm{GSp}_{R}(V,\phi))} \arrow[rr, "\mathrm{det}", two heads] & & \gmmath                               
\end{tikzcd}
\end{equation} The homomorphism $\Phi$ in the above diagram on $\mathbb{Q}$-points sends $(t,g)\in \gmmath \times \mathrm{U}_{E}$ to $t^{-1}g\in\mathrm{GL}(V)$. Recall we have chosen the polarization form $\phi$ such that its Rosati involution restricted to $E$ coincides with the complex conjugation on $E$. Then for any element $g\in \mathrm{U}_{E}\subset R$ and any $v,v'\in V$, the polarization form $\phi$ satisfies that \begin{equation}\label{eqqs}\phi(gv,gv')=\phi(v,\overline{g}gv')=\phi(v,v')\end{equation} Hence tracing definitions carefully, one can verify that the image of $\Phi$ indeed lies in $\mathrm{Z}(\mathrm{GSp}_{R}(V,\phi))$.  We now verify that $\mathrm{Z}(\mathrm{Hdg}(A))\subset\mathrm{U}_{E}$. The reason is as follows. By Lemma \ref{centreincentre}, we have that $\mathrm{Z}(\mathrm{Hdg}(A))\subset \resgmmath\subset R$. By formula (\ref{eqqs}), for any element $g\in \mathrm{Z}(\mathrm{Hdg}(A))$ and any $v,v'\in V$, we have that the polarization form $\phi$ satisfies that $$\phi(gv,gv')=\phi(v,v')=\phi(v,\overline{g}gv')$$ where the first equality holds because $\phi$ is preserved by $\mathrm{Hdg}(A)$. Since $\phi$ is a non-degenerate form, we deduce that $\overline{g}g=1$. This implies that $\mathrm{Z}(\mathrm{Hdg}(A))\subset\mathrm{U}_{E}$. The map $\Phi'$ comes from the restriction of the canonical isogeny $\gmmath \times \mathrm{Hdg}(A)\rightarrow\mathrm{MT}(A)$. Since the image of $\gmmath \times \{\mathrm{id}\}$ inside $\mathrm{MT}(A)$ is the set of homotheties, the surjectivity of $\Phi'$ follows. As for the surjectivity of the two determinant maps, denote the dimension of $A$ by $n$. Then we have that $\wedge^{2n}_{\mathbb{Q}}\mathrm{H}^{1}_{\mathrm{dRB}}(A,\mathbb{Q})\cong\mathbb{Q}_{\mathrm{dRB}}(-n)$ from which we obtain a surjective morphism of algebraic groups $\mathrm{det}:\gdrbmath(A)\rightarrow\gmmath$. By Lemma \ref{surjectiontotorus}, we obtain that $\mathrm{det}|_{\mathrm{Z}(\gdrbmath(A))}$ is surjective. Then using a similar argument, we have that $\mathrm{det}|_{\mathrm{Z}(\mathrm{MT}(A))}$ is a surjective homomorphism of algebraic groups.

Denote by $\mathrm{pr}_2$ the projection map from $\gmmath \times \mathrm{Z}(\mathrm{Hdg}(A))$ to $\mathrm{Z}(\mathrm{Hdg}(A))$ and denote by $\mathrm{pr}_1$ the projection map to $\gmmath$.
Let $$T'=\mathrm{pr}_2(\Phi^{'-1}(\mathrm{Z}(\gdrbmath(A)))^{\circ})$$ Then $T'$ is a connected subtorus of $\mathrm{Z}(\mathrm{Hdg}(A))\subset\mathrm{U}_{E}$. Now from the above diagram (\ref{typeivkeydiagram}) and a similar argument to Lemma \ref{pr1surj}, we obtain two surjective homomorphisms of algebraic groups $\mathrm{pr}_1: \Phi^{'-1}(\mathrm{Z}(\gdrbmath(A)))^{\circ}\twoheadrightarrow \gmmath$ and $\mathrm{pr}_2: \Phi^{'-1}(\mathrm{Z}(\gdrbmath(A)))^{\circ}\twoheadrightarrow T'$. Then using Corollary \ref{gminside}, we have that $$\Phi^{'-1}(\mathrm{Z}(\gdrbmath(A)))^{\circ}\cap \gmmath \times \{\mathrm{id}\}=\gmmath \times \{\mathrm{id}\} \xhookrightarrow{} \gmmath \times \mathrm{U}_{E}$$ Therefore we have obtained the commutative diagram $$\begin{tikzcd}
\gmmath \arrow[r] \arrow[rd] & \mathrm{Z}(\gdrbmath(A)) \arrow[d] \\
                             & \mathrm{Z}(\mathrm{MT}(A))        
\end{tikzcd}$$ which implies the statement of the proposition.

\end{proof}

\subsection{Centre of De Rham-Betti Groups of Simple Type IV Abelian Fourfolds}
By the Albert classification, for any simple type IV abelian fourfold which is not of CM-type, its endomorphism algebra is either a degree 4 CM-field or a degree 2 CM-field (see \cite[Section 2.9]{moonen1999hodge} for example). When the endomorphism algebra is a degree 2 CM-field i.e. a quadratic imaginary field, we will further distinguish between the next two cases. 
\begin{definition}\label{defnweiltype}
 A simple abelian fourfold whose endomorphism algebra $E$ is a quadratic imaginary field is called an \textit{abelian fourfold of Weil type} if each element of $\mathrm{Hom}(E,\mathbb{C})$ has multiplicity 2 (see Definition \ref{multiplicitydefn}). If, on the other hand, the set of multiplicities associated with $\mathrm{Hom}(E,\mathbb{C})$ is $\{1,3\}$, then it is called an \textit{abelian fourfold of anti-Weil type}.
\end{definition}
\begin{remark}
By Proposition 14 of \cite{shimura1963analytic}, for a simple abelian fourfold whose endomorphism algebra $E$ is a quadratic imaginary field, the set of multiplicities associated with $\mathrm{Hom}(E,\mathbb{C})$ is either $\{2\}$ or $\{1,3\}$. 
\end{remark}
\begin{definition}\label{liegdrbdefn}
Given an abelian variety $A$ defined over $\qbar$, we will call the Lie algebra of the de Rham-Betti group $\gdrbmath(A)$ the \textit{de Rham-Betti Lie algebra of $A$}. We denote it by $\liegdrbmath(A)$. Similarly, we denote the Mumford-Tate Lie algebra of $A$ by $\mathrm{mt}(A)$ and the Hodge Lie algebra of $A$ by $\mathrm{hdg}(A)$. 
\end{definition}
In this section, we will show that for a simple type IV abelian fourfold defined over $\qbar$, the centre of its de Rham-Betti Lie algebra coincides with the centre of its Mumford-Tate Lie algebra. 

We start by summarizing results about the Lie algebra of the Hodge group of a simple type IV abelian fourfold. The reference is \cite[Section 2.6]{mz-4-folds}. Given a simple type IV abelian fourfold $A$ with $E=\mathrm{End}_{\mathrm{Hdg}}(\mathrm{H}^1(A,\mathbb{Q}))$, we fix a polarization $$\phi: \mathrm{H}^1(A,\mathbb{Q}) \times  \mathrm{H}^1(A,\mathbb{Q}) \rightarrow \mathbb{Q}(-1)$$ By Lemma \ref{rosaticmcoincides}, the Rosati involution on $E$ associated with $\phi$ is equal to the complex conjugation on $E$ (compare with Lemma 9.2 in \cite{kottwitz1992points}). Moreover, if we fix an $\alpha \in E$ such that $\overline{\alpha}=-\alpha$, there is a unique $E$-hermitian form $$\psi: \mathrm{H}^1(A,\mathbb{Q}) \times  \mathrm{H}^1(A,\mathbb{Q}) \rightarrow E \otimes_{\mathbb{Q}}\mathbb{Q}(-1)$$ such that $\mathrm{Tr}_{E/\mathbb{Q}}(\alpha\psi)=\phi$. Then the results from \cite{mz-4-folds} are summarized below:

\begin{theorem}[Table 1 in p.578 of \cite{mz-4-folds}]
\label{deg2mz4}
Let $A$ be a simple type IV abelian fourfold and denote $\mathrm{End}^{\circ}(A)$ by $E$. 
\begin{itemize}
    \item If $\mathrm{deg}(E)=4$, then $\mathrm{hdg}(A)$ is equal to: $$u_{E}(V,\psi):=\{M \in \mathrm{End}_{E}(\mathrm{H}^1(A,\mathbb{Q}))|\psi(Mv,w)+\psi(v,Mw)=0, \forall v,w \in \mathrm{H}^1
    (A,\mathbb{Q})\}$$ The centre of $\mathrm{hdg}(A)$ can be identified as the subspace $\{x \in E|x+\overline{x}=0\}$.
    \item If $A$ is an anti-Weil type abelian fourfold then $$\mathrm{hdg}(A)=u_{E}(V,\psi)$$ as well. The centre of $\mathrm{hdg}(A)$ can also be identified as $\{x \in E|x+\overline{x}=0\}$. 
    \item If $A$ is a Weil type abelian fourfold, then $$\mathrm{hdg}(A)=su_{E}(V,\psi):=\{M \in u_{E}(V,\psi)|\mathrm{Tr}_{E}(M)=0\}$$ The centre of $\mathrm{hdg}(A)$ in this case is trivial.
\end{itemize}
 
\end{theorem}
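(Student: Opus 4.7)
The plan is to establish the theorem by a two-step strategy: first produce the upper bound $\mathrm{hdg}(A)\subseteq u_E(V,\psi)$ (or the refinement to $su_E$ in the Weil-type case) from the constraints imposed by the polarization and the $E$-action, and then secure the matching lower bound by producing enough Hodge tensors. Throughout, write $V=\mathrm{H}^1(A,\mathbb{Q})$ and $n=\dim_E V$, which equals $2$ in the degree $4$ CM case and $4$ in the quadratic imaginary cases.

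First I would show the upper bound. By definition $\mathrm{Hdg}(A)$ commutes with the $E$-action on $V$ (since $E\subseteq\mathrm{End}_{\mathrm{Hdg}}(V)$) and preserves the polarization $\phi$ on the nose (the scaling factor being trivial on $\mathrm{Hdg}$ as opposed to $\mathrm{MT}$). An $E$-linear automorphism $g$ satisfying $\phi(gv,gw)=\phi(v,w)$ with $\phi=\mathrm{Tr}_{E/\mathbb{Q}}(\alpha\psi)$ and $\bar\alpha=-\alpha$ automatically preserves $\psi$, since the $E$-bilinear form $\psi(gv,gw)-\psi(v,w)$ has zero trace against every element of $E$. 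Passing to Lie algebras gives $\mathrm{hdg}(A)\subseteq u_E(V,\psi)$. In the Weil-type case one gains the extra constraint: the one-dimensional $\qbar$-subspace $\wedge^n_E V\subseteq\wedge^n V\otimes\qbar$ (the Weil structure of Section~\ref{weilstruc}) is, by Remark~\ref{weilhodgenumber} applied to the multiplicity set $\{2,2\}$, of Hodge type $(2,2)$, and thus every class in it is a Hodge class. As observed in Lemma~\ref{weilstructurefixer}, its pointwise stabilizer inside $\mathrm{GL}_E(V)$ is $\mathrm{SL}_E(V)$, forcing the refinement $\mathrm{hdg}(A)\subseteq su_E(V,\psi)$.

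Next I would establish the matching lower bound. The cleanest route is via a Tannakian/dimension-count argument: compute $\dim_{\mathbb{Q}}\mathrm{End}^\circ(A)$ and the rank of $\mathrm{NS}(A)$ using Lemma~\ref{rosaticmcoincides} and the Weil-class contribution (present exactly in the Weil-type case), then observe that $u_E(V,\psi)$ (respectively $su_E(V,\psi)$) is already the largest $\mathbb{Q}$-algebraic subgroup of $\mathrm{GL}(V)$ whose tensor invariants on degree one and two exhaust the expected Hodge tensors produced by $E$, $\phi$ and (in the Weil case) the Weil class. One then checks by a direct computation with the Deligne torus homomorphism $\mu_{\mathbb{C}}:\mathbb{S}\to\mathrm{GL}(V_{\mathbb{R}})$ attached to the Hodge structure that the image of $\mathbb{U}(\mathbb{C})$ under $\mu_{\mathbb{C}}$ generates, as a $\mathbb{Q}$-algebraic group, precisely this unitary (or special unitary) group: the image already contains a maximal $\mathbb{R}$-torus of it, and the constraints on Hodge numbers given by the multiplicity data in Definition~\ref{multiplicitydefn} ensure that no proper $\mathbb{Q}$-subgroup of $u_E(V,\psi)$ contains this torus. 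The hardest part of the argument is this ``no proper $\mathbb{Q}$-subgroup'' step in the anti-Weil situation, since the multiplicity set $\{1,3\}$ is less symmetric than $\{2,2\}$; it requires ruling out proper $\mathbb{Q}$-subgroups of $u_E(V,\psi)$ whose $\mathbb{R}$-points would still be large enough to contain the Hodge cocharacter, which one does by exploiting the non-commutativity of $u_E(V,\psi)$ for $n\geq 2$ together with the uniqueness of the CM-type up to complex conjugation.

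Finally I would read off the centre. An $E$-scalar $x\in E$ acting on $V$ lies in $u_E(V,\psi)$ iff $(x+\bar x)\psi(v,w)=0$ for all $v,w$, i.e.\ $x+\bar x=0$; these are precisely the $\mathbb{Q}$-points of the Lie subalgebra $\{x\in E:x+\bar x=0\}\subseteq E$, and by irreducibility of the standard $u_E$-representation on $V$ (since $\mathrm{End}_{u_E}(V)=E$) these are all central elements. This gives the description of the centre in the degree $4$ CM and anti-Weil cases. In the Weil-type case the additional trace-zero condition forces $n\cdot x=\mathrm{Tr}_E(x\cdot\mathrm{id}_V)=0$ and hence $x=0$, so the centre of $\mathrm{hdg}(A)=su_E(V,\psi)$ is trivial, as claimed.
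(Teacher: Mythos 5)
First, a point of comparison: the paper itself contains no proof of Theorem \ref{deg2mz4} — it is quoted verbatim from Moonen--Zarhin (\cite{mz-4-folds}, Table 1 and Section 2.6), so your proposal has to be measured against the original argument there rather than against anything in this paper. Within your proposal, the upper bound and the centre computation are essentially fine: $\mathrm{Hdg}(A)$ is $E$-linear and fixes the polarization $\phi=\mathrm{Tr}_{E/\mathbb{Q}}(\alpha\psi)$, hence fixes $\psi$ by nondegeneracy of the trace form, giving $\mathrm{hdg}(A)\subseteq u_E(V,\psi)$; in the Weil case the $(2,2)$ Weil classes in $\wedge^4_E V$ force the refinement to $su_E(V,\psi)$; and, granted the main equality, the commutant argument $\mathrm{End}_{u_E}(V)=E$ correctly identifies the centre as $u_E\cap E=\{x\in E\mid x+\overline{x}=0\}$ (trivial after imposing $\mathrm{Tr}_E=0$).

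The genuine gap is the lower bound, i.e. that $\mathrm{hdg}(A)$ is not a \emph{proper} subalgebra of $u_E(V,\psi)$ (resp. $su_E(V,\psi)$). Two of your assertions there do not hold up. (i) The claim that $u_E(V,\psi)$ is ``the largest $\mathbb{Q}$-algebraic subgroup whose tensor invariants in degrees one and two exhaust the expected Hodge tensors'' is both unproved and, as a criterion, insufficient: the Hodge group is cut out by Hodge tensors of \emph{all} degrees, and this paper itself exhibits the failure of the degree-$\leq 2$ criterion in the anti-Weil case — see Key Remark \ref{whyweilproofdoesnotwork} and Proposition \ref{existenceofsl2timessl2}, where proper subalgebras (forms of $\mathrm{sl}_2\times\mathrm{sl}_2$, and a priori also the $\mathrm{Sym}^3\mathrm{sl}_2$ and $\mathrm{sp}_4$ cases of Proposition \ref{reprclassification}) have exactly the same invariants as $u_E(V,\psi)$ in $\mathrm{End}(V)$ and $\mathrm{H}^2$. (ii) The statement that $\mu_{\mathbb{C}}(\mathbb{U}(\mathbb{C}))$ ``already contains a maximal $\mathbb{R}$-torus'' of $u_E(V,\psi)$ is false: the image of $\mathbb{U}$ is a one-dimensional torus, while $u_E(V,\psi)_{\mathbb{R}}$ has rank $4$ (rank $3$ for $su_E$ in the Weil case). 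Ruling out every proper $\mathbb{Q}$-subgroup whose real points contain this one circle is precisely the content of Moonen--Zarhin's theorem, and it is done there by a case analysis of the possible semisimple parts and their representations, using the multiplicity data $\{2,0,1,1\}$, $\{1,3\}$, $\{2,2\}$ together with the positivity of the polarization on the real Hodge structure to exclude the $\mathrm{Sym}^3$, $\mathrm{sl}_2\times\mathrm{sl}_2$ and $\mathrm{sp}_4$ possibilities — exactly the kind of computation this paper reproduces for the de Rham--Betti analogue in Sections \ref{deg4section} and \ref{antiweilsection}. Appealing to ``non-commutativity of $u_E(V,\psi)$'' and ``uniqueness of the CM-type up to conjugation'' does not supply that analysis; as written, the decisive step of your proof is asserted rather than proved.
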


We will determine the centre of the de Rham-Betti Lie algebras of simple type IV abelian fourfolds using results about de Rham-Betti groups of certain Weil structures (see Section \ref{weilstruc}). This is done in Lemma \ref{13group} and Lemma \ref{deg4weil}.
\begin{lemma} \label{13group}
Let $A$ be a simple anti-Weil type abelian fourfold define over $\qbar$ with endomorphism field $E$, a quadratic imaginary field. Then the de Rham-Betti group of the Weil structure $$W_{\mathrm{Weil}}:=\wedge_{E}^4\mathrm{H}^1_{\mathrm{dRB}}(A,\mathbb{Q})$$ is isomorphic to $\resgmmath$.
\end{lemma}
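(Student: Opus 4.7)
The plan is to combine a centralizer argument with a transcendence lower bound coming from Gross's theorem (Lemma \ref{gross}) and Chudnovski's theorem. First I would record the basic dimension count: after extending scalars to $\qbar$, the eigenspace decomposition $\mathrm{H}^1(A,\mathbb{Q}) \otimes \qbar = V_\sigma \oplus V_{\overline{\sigma}}$ for the action of $E$ has each summand of $\qbar$-dimension $4$, so by construction $W_{\mathrm{Weil}} \otimes \qbar = \wedge^4 V_\sigma \oplus \wedge^4 V_{\overline{\sigma}}$ is two-dimensional over $\qbar$. In particular, via the $E$-action furnished by Lemma \ref{morphismofweilstruc}, the Weil structure $W_{\mathrm{Weil}}$ is a free rank one $E$-module.

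My first step would be to establish the inclusion $\gdrbmath(W_{\mathrm{Weil}}) \subseteq \resgmmath$. By Lemma \ref{morphismofweilstruc} the $E$-action on $W_{\mathrm{Weil}}$ is through dRB endomorphisms, so there is an embedding $E \hookrightarrow \mathrm{End}_{\mathrm{dRB}}(W_{\mathrm{Weil}}) = \mathrm{End}_{\gdrbmath(W_{\mathrm{Weil}})}(W_{\mathrm{Weil}})$. Consequently the $\gdrbmath(W_{\mathrm{Weil}})$-action on $W_{\mathrm{Weil}}$ commutes with $E$, and since $W_{\mathrm{Weil}}$ is of rank one over $E$ the centralizer of $E$ in $\mathrm{GL}(W_{\mathrm{Weil}})$ equals $\mathrm{GL}_E(W_{\mathrm{Weil}}) = \resgmmath$, yielding the desired inclusion.

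The substantive step is a dimension lower bound. I would apply Lemma \ref{gross} with $n = 4$ and $p = 1$, as the anti-Weil type hypothesis forces multiplicities $\{1,3\}$: in a basis adapted to the Weil construction, the comparison isomorphism of $W_{\mathrm{Weil}} \otimes \qbar$ becomes the diagonal matrix with entries $(2\pi i)^3 / b_E^{\,2}$ and $(2\pi i)\, b_E^{\,2}$. The product of these entries is $(2\pi i)^4$ and a suitable ratio gives $b_E^{\,4}/(2\pi i)^2$, from which one sees that the $\qbar$-algebra generated by the diagonal entries has transcendence degree $2$ over $\qbar$, using Chudnovski's theorem that $b_E$ and $2\pi i$ are algebraically independent over $\qbar$. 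Lemma \ref{drbgroupdim} then gives $\dim \gdrbmath(W_{\mathrm{Weil}}) \geq 2$.

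To conclude, $\gdrbmath(W_{\mathrm{Weil}})$ is connected by Lemma \ref{gdrbconnectedness} and is contained in the connected two-dimensional torus $\resgmmath$ while having dimension at least $2$, so the two groups must coincide. The only real obstacle I foresee is a bookkeeping check: confirming that the basis in which Lemma \ref{gross} diagonalizes the comparison isomorphism is genuinely compatible with the Weil construction as described in Remark \ref{explicityweildrb}, so that Lemma \ref{drbgroupdim} applies without further modification.
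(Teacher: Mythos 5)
Your proposal is correct and follows essentially the same route as the paper: the lower bound $\dim\gdrbmath(W_{\mathrm{Weil}})\geq 2$ via Lemma \ref{gross}, Chudnovski's theorem and Lemma \ref{drbgroupdim}, combined with the upper bound $\gdrbmath(W_{\mathrm{Weil}})\subseteq\resgmmath$ coming from the $E$-action of Lemma \ref{morphismofweilstruc} and the fact that $\mathrm{deg}(E/\mathbb{Q})=\dim_{\mathbb{Q}}W_{\mathrm{Weil}}$. The bookkeeping worry you raise is harmless: the diagonalization in Lemma \ref{gross} is with respect to a $\qbar$-basis of the extended Betti space, and a $\qbar$-linear change of basis does not alter the field $\qbar(\alpha_{i,j})$ generated by the entries of the comparison matrix, so Lemma \ref{drbgroupdim} applies exactly as in the paper.
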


\begin{proof}
This is a dimensional argument. By Lemma \ref{drbgroupdim}, for an arbitrary de Rham-Betti structure $(W_{\mathrm{B}},W_{\mathrm{dR}},\rho_{m})$, the dimension of its de Rham-Betti group is bigger than or equal to $\mathrm{trdeg}_{\qbar}\qbar(\alpha_{ij})$, where $\alpha_{ij}$s are entries of the matrix of the comparison isomorphism after choosing a basis for $W_{\mathrm{B}},W_{\mathrm{dR}}$ respectively. By Lemma \ref{gross} the matrix of the comparison isomorphism of $W_{\mathrm{Weil}} \otimes_{\mathbb{Q}} \qbar := (\wedge_{E}^4\mathrm{H}^1(A,\mathbb{Q}) \otimes_{\mathbb{Q}} \qbar, \wedge_{E \otimes \overline{\mathbb{Q}}}^4\mathrm{H}^1_{\mathrm{dR}}(A/\qbar), \rho_{m})$ can be diagonalized as \[\begin{pmatrix} b_{k}^{1} (2\pi i/b_{k})^{3} & 0 \\0 & b_{k}^{3} (2\pi i/b_{k})^{1} \end{pmatrix}\] And because $b_{k}$ is algebraically independent from $2\pi i$ by \cite{chudnovsky1980algebraic}, we have $$\mathrm{dim}(\gdrbmath(W_{\mathrm{Weil}})) \geq 2$$ By Lemma \ref{morphismofweilstruc}, we have a natural morphism of $\mathbb{Q}$-algebras $E \xhookrightarrow{} \mathrm{End}_{\mathrm{dRB}}(W_{\mathrm{Weil}})$. This induces a morphism of $\mathbb{Q}$-algebraic groups: $$\resgmmath \xhookrightarrow{} \mathrm{GL}(W_{\mathrm{Weil}})$$ We abuse the notation by denoting the Betti part of the Weil structure by $W_{\mathrm{Weil}}$ as well. Because $\mathrm{deg}(E/\mathbb{Q})=\mathrm{dim}_{\mathbb{Q}}W_{\mathrm{Weil}}$, this realizes $\resgmmath$ as a maximal tori inside $\mathrm{GL}(W_{\mathrm{Weil}})$. Therefore we have $$\gdrbmath(W_{\mathrm{Weil}}) \xhookrightarrow{} \resgmmath$$ But note that $\mathrm{dim}(\resgmmath)=2$, therefore $\gdrbmath(W_{\mathrm{Weil}})=\resgmmath$.
\end{proof}
Now suppose $A$ is a simple type IV abelian fourfold defined over $\qbar$ with endomorphism field $E$, where $E$ is a degree 4 CM-field. Then we will study the following de Rham-Betti structure \begin{equation}\label{deg4drbweildefn}
   W_{\mathrm{Weil}}^{\mathrm{dRB}}:=\wedge_{E}^2\mathrm{H}^1_{\mathrm{dRB}}(A,\mathbb{Q})
\end{equation} which is viewed as a dRB  substructure of
$\wedge_{\mathbb{Q}}^2\mathrm{H}^1_{\mathrm{dRB}}(A,\mathbb{Q})$. Then the computation of the de Rham-Betti group associated with $W_{\mathrm{Weil}}^{\mathrm{dRB}}$ requires more preparations (Lemma \ref{generalizedcm}, Lemma \ref{phicompatiblewithE}, Lemma \ref{evendegreeweildrb} as well as Lemma \ref{deg4keylemma} from Section \ref{cmgdrbsection}). The method, however, is similar to computing the de Rham-Betti group of a CM abelian variety, once the issue of weights is handled carefully.

Recall that for a simple type IV abelian variety whose endomorphism algebra is a CM-field $E$, we have by Lemma \ref{rosaticmcoincides} the Rosati involution on $E$ associated with any polarization form $\phi$ is given by the complex conjugation. This has the following linear algebraic consequence which will also be used frequently in the sequel. 
\begin{lemma} \label{phicompatiblewithE}
Let $A$ be a simple abelian variety whose endomorphism algebra is isomorphic to a CM-field $E$. Recall the equidimensional eigenspace decomposition $\mathrm{H}^1(A,\mathbb{Q}) \otimes \qbar=\oplus_{\sigma \in \mathrm{Hom}(E,\qbar)}V_{\sigma}$ with respect to $E\xhookrightarrow{}\mathrm{End}(\betti)$ from Lemma \ref{neweigenspacetranslate}. Then the $\qbar$-linear extension of the polarization form $\phi$ 
$$\phi_{\qbar}: \mathrm{H}^1(A,\mathbb{Q}) \otimes \qbar \times  \mathrm{H}^1(A,\mathbb{Q}) \otimes \qbar \rightarrow \qbar(2\pi i)$$ satisfies that $\phi_{\qbar}(V_{\sigma},V_{\sigma'})=0$ if $\sigma' \neq \overline{\sigma}$. Moreover, for each $\sigma\in \mathrm{Hom}(E,\qbar)$, we can find a $\qbar$-basis $\{e_{i}\}$ for $V_{\sigma}$ and a $\qbar$-basis $\{f_{j}\}$ for $V_{\overline{\sigma}}$ such that $\phi_{\qbar}(e_{i},f_{j})=0$ if $i \neq j$ and $\phi_{\qbar}(e_{i},f_{j})=1$ if $i=j$.
\end{lemma}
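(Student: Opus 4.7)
The plan is to extract both statements from a single input: the compatibility of the polarization form with the action of $E$ via the Rosati involution, combined with the fact that the Rosati involution on $E$ coincides with complex conjugation (Lemma \ref{rosaticmcoincides}). Concretely, for any $e\in E$ and $v,w\in \mathrm{H}^{1}(A,\mathbb{Q})$ we have $\phi(ev,w)=\phi(v,\bar{e}w)$. Since $\phi$ is $\mathbb{Q}$-bilinear it extends $\qbar$-bilinearly to a non-degenerate form $\phi_{\qbar}$, and the identity above persists after extension of scalars.

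First I would prove the orthogonality statement. Fix $\sigma,\sigma'\in\mathrm{Hom}(E,\qbar)$, and take $v\in V_{\sigma}$, $w\in V_{\sigma'}$. By the eigenspace characterization, $ev=\sigma(e)v$ and $\bar{e}w=\sigma'(\bar{e})w=\overline{\sigma'}(e)w$ for every $e\in E$. Applying the Rosati identity to $\phi_{\qbar}$ gives
\[
\sigma(e)\,\phi_{\qbar}(v,w)=\phi_{\qbar}(ev,w)=\phi_{\qbar}(v,\bar{e}w)=\overline{\sigma'}(e)\,\phi_{\qbar}(v,w)
\]
for all $e\in E$. If $\sigma\neq\overline{\sigma'}$ one can pick $e\in E$ with $\sigma(e)\neq\overline{\sigma'}(e)$, forcing $\phi_{\qbar}(v,w)=0$. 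This gives the first assertion.

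Next I would deduce the dual-basis statement from non-degeneracy. Because $\phi$ is a polarization it is non-degenerate, and hence so is $\phi_{\qbar}$ on $\mathrm{H}^{1}(A,\mathbb{Q})\otimes\qbar=\bigoplus_{\tau}V_{\tau}$. Combined with the orthogonality just established, the only block of $\phi_{\qbar}$ that pairs $V_{\sigma}$ non-trivially is its restriction
\[
\phi_{\qbar}|_{V_{\sigma}\times V_{\overline{\sigma}}}:V_{\sigma}\times V_{\overline{\sigma}}\longrightarrow \qbar(2\pi i),
\]
which must therefore be non-degenerate; in particular $\dim_{\qbar}V_{\sigma}=\dim_{\qbar}V_{\overline{\sigma}}$ (which is already known from Lemma \ref{neweigenspacetranslate}, giving a consistency check). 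A non-degenerate pairing of two finite-dimensional $\qbar$-vector spaces into a one-dimensional $\qbar$-vector space admits dual bases: choose any $\qbar$-basis $\{e_{i}\}$ of $V_{\sigma}$, let $\{f_{j}\}$ be the basis of $V_{\overline{\sigma}}$ dual to $\{e_{i}\}$ under $\phi_{\qbar}$ (after identifying $\qbar(2\pi i)\cong\qbar$ via the canonical generator $2\pi i$), so that $\phi_{\qbar}(e_{i},f_{j})=\delta_{ij}$.

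I do not anticipate a real obstacle: the argument is essentially linear algebra, and the only input beyond the definition of $\phi_{\qbar}$ is the identification of the Rosati involution with complex conjugation on the CM-field $E$, which is supplied by Lemma \ref{rosaticmcoincides}. The one mild subtlety is the target $\qbar(2\pi i)$, which is a one-dimensional $\qbar$-vector space with a canonical generator; this does not affect the existence of dual bases.
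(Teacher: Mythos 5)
Your proposal is correct and follows essentially the same route as the paper: the Rosati-involution identity $\phi_{\qbar}(ev,w)=\phi_{\qbar}(v,\overline{e}w)$ (via Lemma \ref{rosaticmcoincides}) forces the eigenspace orthogonality, and non-degeneracy of $\phi_{\qbar}$ then yields the dual bases on $V_{\sigma}\times V_{\overline{\sigma}}$. Your explicit remark that the orthogonality is what makes the restricted pairing non-degenerate is a slightly fuller justification of the step the paper states more briefly, but it is the same argument.
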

\begin{proof}
For the first statement, notice that for any $e \in E$, we have $$\phi_{\qbar}(ev,w)=\phi_{\qbar}(v,\overline{e}w)$$ for any $v,w \in \mathrm{H}^1(A,\mathbb{Q}) \otimes \qbar$. Therefore when $v\in V_{\sigma},w\in V_{\sigma'}$, we have $$\sigma(e)\phi_{\qbar}(v,w)=\sigma'(\overline{e})\phi_{\qbar}(v,w)=\overline{\sigma'(e)}\phi_{\qbar}(v,w)$$ for any $e\in E$. The second equality holds because $E$ is a CM-field.   But if $\sigma' \neq \overline{\sigma}$, this implies that $\phi_{\qbar}(v,w)=0$. For the second statement, notice that $\phi$ is a nondegenerate bilinear form, therefore the map $V_{\sigma}\rightarrow V_{\overline{\sigma}}^{*}:v\rightarrow \phi_{\qbar}(v,-)$ is an isomorphism of $\qbar$-vector spaces. Hence we obtain the desired basis $\{e_{i}\}$ for $V_{\sigma}$ and $\{f_{j}\}$ for $V_{\overline{\sigma}}$ satisfying the condition in the lemma.
\end{proof}

From the above lemma we deduce the following.

\begin{lemma} \label{evendegreeweildrb}
We keep the same notations as above. The de Rham-Betti structure $\mathbb{Q}_{\mathrm{dRB}}(-2)$ is an object in the Tannakian subcategory $\langle W_{\mathrm{Weil}}^{\mathrm{dRB}} \rangle^{\otimes}$.
\end{lemma}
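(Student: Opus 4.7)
The plan is to construct an explicit nonzero morphism of de Rham-Betti structures $W_{\mathrm{Weil}}^{\mathrm{dRB}} \otimes W_{\mathrm{Weil}}^{\mathrm{dRB}} \to \mathbb{Q}_{\mathrm{dRB}}(-2)$. Since $\mathbb{Q}_{\mathrm{dRB}}(-2)$ is one-dimensional, nonvanishing of this morphism forces it to be surjective, realizing $\mathbb{Q}_{\mathrm{dRB}}(-2)$ as a quotient object, hence a member, of $\langle W_{\mathrm{Weil}}^{\mathrm{dRB}}\rangle^{\otimes}$.

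First I would fix a polarization form $\phi$ on $A$ of the type produced in Lemma \ref{rosaticmcoincides}, so that the induced Rosati involution on $E$ is the complex conjugation. Viewed as a morphism of de Rham-Betti structures (as in the proof of Lemma \ref{weightsinav}) this is a map $\phi : \mathrm{H}^1_{\mathrm{dRB}}(A,\mathbb{Q}) \otimes \mathrm{H}^1_{\mathrm{dRB}}(A,\mathbb{Q}) \to \mathbb{Q}_{\mathrm{dRB}}(-1)$. Taking the induced pairing on $\wedge^2\mathrm{H}^1_{\mathrm{dRB}}(A,\mathbb{Q})$ via the $2{\times}2$ minor formula $\tilde{\phi}(u_1\wedge u_2,v_1\wedge v_2) = \phi(u_1,v_1)\phi(u_2,v_2) - \phi(u_1,v_2)\phi(u_2,v_1)$ gives a morphism $\tilde{\phi}: \wedge^2\mathrm{H}^1_{\mathrm{dRB}}(A,\mathbb{Q}) \otimes \wedge^2\mathrm{H}^1_{\mathrm{dRB}}(A,\mathbb{Q}) \to \mathbb{Q}_{\mathrm{dRB}}(-2)$ which is symmetric (the alternating nature of $\phi$ makes the $2{\times}2$ determinant symmetric in its row-pairs) and nondegenerate (it is the determinant of a nondegenerate form).

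Next I would restrict $\tilde{\phi}$ to $W_{\mathrm{Weil}}^{\mathrm{dRB}} \otimes W_{\mathrm{Weil}}^{\mathrm{dRB}}$. This is a morphism of de Rham-Betti structures because by Lemma \ref{weilhodge} the Weil structure is a sub-dRB structure of $\wedge^2\mathrm{H}^1_{\mathrm{dRB}}(A,\mathbb{Q})$. It remains to show that this restriction is still nonzero, and for this I will work over $\qbar$. Using the eigenspace decomposition $V_\qbar = \bigoplus_{\sigma\in\mathrm{Hom}(E,\qbar)}V_\sigma$, we have $W_{\mathrm{Weil}}^{\mathrm{dRB}} \otimes \qbar = \bigoplus_{\sigma}\wedge^2 V_\sigma$ with each $V_\sigma$ two-dimensional. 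By Lemma \ref{phicompatiblewithE}, $\phi(V_\sigma,V_{\sigma'}) = 0$ unless $\sigma' = \overline{\sigma}$, and for each $\sigma$ one can choose $\qbar$-bases $\{e_1,e_2\}$ of $V_\sigma$ and $\{f_1,f_2\}$ of $V_{\overline{\sigma}}$ dual under $\phi$. Then $\tilde{\phi}(e_1\wedge e_2, f_1\wedge f_2) = 1$, so $\tilde{\phi}$ pairs $\wedge^2 V_\sigma$ nondegenerately with $\wedge^2 V_{\overline{\sigma}}$ and vanishes on the other components. The restriction of $\tilde{\phi}$ to $W_{\mathrm{Weil}}^{\mathrm{dRB}} \otimes W_{\mathrm{Weil}}^{\mathrm{dRB}}$ is therefore nondegenerate, in particular nonzero.

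Finally, since $\mathbb{Q}_{\mathrm{dRB}}(-2)$ is one-dimensional, the nonzero morphism $\tilde{\phi}|_{W_{\mathrm{Weil}}^{\mathrm{dRB}} \otimes W_{\mathrm{Weil}}^{\mathrm{dRB}}}$ is automatically surjective, exhibiting $\mathbb{Q}_{\mathrm{dRB}}(-2)$ as a quotient of $W_{\mathrm{Weil}}^{\mathrm{dRB}} \otimes W_{\mathrm{Weil}}^{\mathrm{dRB}}$ inside $C_{\mathrm{dRB}}$. Hence $\mathbb{Q}_{\mathrm{dRB}}(-2) \in \mathrm{ob}\langle W_{\mathrm{Weil}}^{\mathrm{dRB}}\rangle^{\otimes}$, as claimed. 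There is no substantive obstacle here; the main content is the routine but essential verification, via Lemma \ref{phicompatiblewithE}, that restricting the induced form to the Weil structure does not kill it. This is the same obstruction one would face for an arbitrary alternating form on $V$, but the CM hypothesis on $E$ together with the Rosati-compatibility of $\phi$ controls the eigenspace behavior exactly as needed.
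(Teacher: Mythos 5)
Your proposal is correct and follows essentially the same route as the paper: take the polarization form as a dRB morphism to $\mathbb{Q}_{\mathrm{dRB}}(-1)$, form the $2\times 2$ minor pairing on $\wedge^2\mathrm{H}^1_{\mathrm{dRB}}(A,\mathbb{Q})$, restrict to $W_{\mathrm{Weil}}^{\mathrm{dRB}}$, and verify nonvanishing via Lemma \ref{phicompatiblewithE} with dual bases of $V_{\sigma}$ and $V_{\overline{\sigma}}$. The only cosmetic difference is that the paper spells out the $\gdrbmath(A)$-equivariance of the induced pairing (the $\chi^2(g)$ computation), which you assert implicitly; that step is routine in the Tannakian formalism, so there is no gap.
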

\begin{proof}
Recall that we have a non-zero morphism of dRB structures induced by a polarization form
    $$\phi: \mathrm{H}^1_{\mathrm{dRB}}(A,\mathbb{Q}) \otimes \mathrm{H}^1_{\mathrm{dRB}}(A,\mathbb{Q}) \rightarrow \mathbb{Q}_{\mathrm{dRB}}(-1)$$ This induces a symmetric bilinear form $$\phi': \mathrm{H}^2(A,\mathbb{Q}) \times \mathrm{H}^2(A,\mathbb{Q})  \rightarrow \mathbb{Q}(-2)$$ where on basic tensors $v_1\wedge v_2\in\mathrm{H}^2(A,\mathbb{Q})$ and $v_3\wedge v_4\in\mathrm{H}^2(A,\mathbb{Q})$ we have $\phi'(v_1\wedge v_2,v_3\wedge v_4)=\phi(v_1,v_3)\phi(v_2,v_4)-\phi(v_1,v_4)\phi(v_2,v_3)$. We now claim that $\phi'$ induces a morphism of de Rham-Betti structures $\mathrm{H}^2_{\mathrm{dRB}}
    (A,\mathbb{Q}) \otimes \mathrm{H}^2_{\mathrm{dRB}}(A,\mathbb{Q})  \rightarrow \mathbb{Q}_{\mathrm{dRB}}(-2)$. Note that $\phi$ is a morphism of dRB structures, hence for any $v,v'\in\betti$ and any $g\in\gdrbmath(A)$, we have $\phi(g\circ v,g\circ v')=\chi(g)\phi(v,v')$ where $\chi$ is the character of $\gdrbmath(A)$ associated with the de Rham-Betti object $\mathbb{Q}_{\mathrm{dRB}}(-1)\in \mathrm{ob}\langle\mathrm{H}^1_{\mathrm{dRB}}(A,\mathbb{Q})\rangle^{\otimes}$. Therefore given any $g\in\gdrbmath(A)$ and arbitrary basic tensors $v_1\wedge v_2\in \mathrm{H}^2(A,\mathbb{Q})$ and $v_3\wedge v_4\in \mathrm{H}^2(A,\mathbb{Q})$, we have $\phi'(g\circ v_1\wedge v_2,g\circ v_3\wedge v_4)=\phi'(g\circ v_1\wedge g\circ v_2,g\circ v_3\wedge g\circ v_4)=\phi(g\circ v_1,g\circ v_3)\phi(g\circ v_2,g\circ v_4)-\phi(g\circ v_1,g\circ v_4)\phi(g\circ v_2,g\circ v_3)=\chi^2(g)(\phi(v_1,v_3)\phi(v_2,v_4)-\phi(v_1,v_4)\phi(v_2,v_3))$, which implies that $\phi'$ is indeed equivariant with respect to the action of $\gdrbmath(A)$. Restricting to the sub-dRB structure $W_{\mathrm{Weil}}^{\mathrm{dRB}} \xhookrightarrow{} \mathrm{H}^2_{\mathrm{dRB}}(A,\mathbb{Q})$, we obtain $$\phi'|_{W_{\mathrm{Weil}}^{\mathrm{dRB}}}:W_{\mathrm{Weil}}^{\mathrm{dRB}} \otimes W_{\mathrm{Weil}}^{\mathrm{dRB}} \rightarrow \mathbb{Q}_{\mathrm{dRB}}(-2)$$ It then suffices to show that it is a non-zero morphism. Extending scalars to $\qbar$, we obtain the following $\qbar$-bilinear form $$\phi'|_{W_{\mathrm{Weil}}^{\mathrm{dRB}},\qbar}: \bigoplus_{\sigma \in \mathrm{Hom}(E,\qbar)}\wedge^2V_{\sigma} \times \bigoplus_{\sigma \in \mathrm{Hom}(E,\qbar)}\wedge^2V_{\sigma} \rightarrow \qbar(-2)$$ Then for any $\sigma_0\in\mathrm{Hom}(E,\qbar)$, on the $\qbar$-subspace $\wedge^2V_{\sigma_0} \otimes \wedge^2V_{\overline{\sigma_0}}$, using the two basis $\{e_1,e_2\}$ and $\{f_1,f_2\}$ found in Lemma \ref{phicompatiblewithE} for $V_{\sigma_0}$ and $V_{\overline{\sigma_0}}$, we find that $\phi'(e_1\wedge e_2,f_1 \wedge f_2)=\phi(e_1,f_1)\phi(e_2,f_2)-\phi(e_1,f_2)\phi(e_2,f_1)=1$. Therefore indeed $\phi'|_{W_{\mathrm{Weil}}^{\mathrm{dRB}}}$ is a non-zero morphism of dRB structures. Thus $\phi'$ realizes  $\mathbb{Q}_{\mathrm{dRB}}(-2)$ as a quotient de Rham-Betti structure of $W_{\mathrm{Weil}}^{\mathrm{dRB}} \otimes W_{\mathrm{Weil}}^{\mathrm{dRB}}$.
\end{proof}
 
We can now state and prove the main result about the de Rham-Betti group of the Weil structure $W_{\mathrm{Weil}}^{\mathrm{dRB}}$ defined by formula (\ref{deg4drbweildefn}).
\begin{lemma} \label{deg4weil}
The de Rham-Betti group of the Weil structure $W_{\mathrm{Weil}}^{\mathrm{dRB}}$ is isogenous to $\gmmath \times \mathrm{U}_{E}$.
\end{lemma}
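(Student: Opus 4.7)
The plan is to compute both the Mumford--Tate group and the de Rham--Betti group of $W_{\mathrm{Weil}}^{\mathrm{dRB}}$ and show that they coincide, in a spirit analogous to Proposition~\ref{gm in CM gdrb} but using Theorem~\ref{mainfact} in place of the Galois-theoretic analysis of Section~\ref{longcomp}. The first step is to identify the Mumford--Tate group of $W_{\mathrm{Weil}}^{\mathrm{dRB}}$, that is, the image of $\mathrm{MT}(A)$ acting on $\wedge_{E}^{2}V$. Because $\dim_{\mathbb{Q}}\wedge_{E}^{2}V=\deg(E/\mathbb{Q})$, Lemma~\ref{morphismofweilstruc} makes $\wedge_{E}^{2}V$ a one-dimensional $E$-vector space, so its commutant is $\resgmmath$, and Lemma~\ref{generalizedcm} places $\mathrm{Hdg}(W_{\mathrm{Weil}}^{\mathrm{dRB}})$ inside $\mathrm{U}_{E}$. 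A direct Lie-algebra computation, using $\mathrm{hdg}(A)=u_{E}(V,\psi)$ from Theorem~\ref{deg2mz4}, shows that the map to $\mathrm{Lie}(\mathrm{U}_{E})$ induced by the action on $\wedge_{E}^{2}V$ is the trace $M\mapsto\mathrm{tr}(M)$, whose image is precisely $\{x\in E:x+\overline{x}=0\}=\mathrm{Lie}(\mathrm{U}_{E})$. Combined with the isogeny of Lemma~\ref{hodgetimesgm}, this gives $\mathrm{MT}(W_{\mathrm{Weil}}^{\mathrm{dRB}})$ isogenous to $\gmmath\times\mathrm{U}_{E}$, of dimension $3$.

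By Theorem~\ref{keyinclusion} together with the Tannakian formalism I obtain $\gdrbmath(W_{\mathrm{Weil}}^{\mathrm{dRB}})\subset\mathrm{MT}(W_{\mathrm{Weil}}^{\mathrm{dRB}})$, giving the upper bound $\dim\leq 3$. Proposition~\ref{gmintype4} produces the embedding $\gmmath\hookrightarrow\gdrbmath(A)$ of homotheties, which descends to an embedding $\gmmath\hookrightarrow\gdrbmath(W_{\mathrm{Weil}}^{\mathrm{dRB}})$ acting as $t\mapsto t^{-2}$-scalars; in particular $\dim\gdrbmath(W_{\mathrm{Weil}}^{\mathrm{dRB}})\geq 1$. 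The task then reduces to ruling out the possibilities $\dim=1$ and $\dim=2$, and here I invoke Theorem~\ref{mainfact}: by Lemma~\ref{rosaticmcoincides} the space of algebraic classes in $\mathrm{H}^{2}(A,\mathbb{Q})$ has dimension $\deg(E/\mathbb{Q})/2=2$, and by Lemma~\ref{deg4keylemma} together with Remark~\ref{weilhodgenumber} it is the two-dimensional $\mathbb{Q}$-subspace of $W_{\mathrm{Weil}}^{\mathrm{dRB}}$ whose $\overline{\mathbb{Q}}$-extension is $\wedge^{2}V_{\sigma_{3}}\oplus\wedge^{2}V_{\sigma_{4}}$, where $\sigma_{3},\sigma_{4}$ are the embeddings of multiplicity $1$.

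If $\dim\gdrbmath(W_{\mathrm{Weil}}^{\mathrm{dRB}})=1$ then every element of $W_{\mathrm{Weil}}^{\mathrm{dRB}}\otimes\mathbb{Q}_{\mathrm{dRB}}(1)$ is a de Rham--Betti class, forcing the whole of $W_{\mathrm{Weil}}^{\mathrm{dRB}}$ to consist of algebraic classes, contradicting the two-dimensionality above. If $\dim\gdrbmath(W_{\mathrm{Weil}}^{\mathrm{dRB}})=2$, then up to isogeny it is of the form $\gmmath\cdot T_{1}$ for a one-dimensional subtorus $T_{1}\subset\mathrm{U}_{E}$, and its fixed tensors in $W_{\mathrm{Weil}}^{\mathrm{dRB}}$ must coincide with the two-dimensional algebraic subspace; this forces $f_{\sigma_{3}}$ and $f_{\sigma_{4}}$ to lie in the kernel of $X^{*}(\mathrm{U}_{E})\twoheadrightarrow X^{*}(T_{1})$. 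Since $E$ is a field, $\mathrm{Gal}(\overline{\mathbb{Q}}/\mathbb{Q})$ acts transitively on $\mathrm{Hom}(E,\overline{\mathbb{Q}})$, so the Galois orbit of $f_{\sigma_{3}}$ is all of $\{f_{\sigma_{1}},f_{\sigma_{2}},f_{\sigma_{3}},f_{\sigma_{4}}\}$, which already generates $X^{*}(\mathrm{U}_{E})$; this forces the kernel to be everything and $T_{1}$ to be trivial, a contradiction. Hence $\dim\gdrbmath(W_{\mathrm{Weil}}^{\mathrm{dRB}})=3$, so $\gdrbmath(W_{\mathrm{Weil}}^{\mathrm{dRB}})=\mathrm{MT}(W_{\mathrm{Weil}}^{\mathrm{dRB}})$, proving the lemma. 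The main obstacle is identifying the image of $\mathrm{Hdg}(A)$ in $\mathrm{U}_{E}$ as all of $\mathrm{U}_{E}$ rather than some proper subtorus; it is the explicit trace computation on $u_{E}(V,\psi)$ that supplies this information.
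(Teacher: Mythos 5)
Your identification of the degree-two algebraic classes is wrong, and the dimension-two case collapses because of it. Over $\qbar$ the Hodge (equivalently, by Theorem \ref{mainfact}, de Rham--Betti) classes of $\mathrm{H}^2(A,\mathbb{Q})$ are the invariants of $\mathrm{hdg}(A)_{\qbar}=u_{E}(V,\psi)_{\qbar}\cong \mathrm{gl}(V_{\sigma})\times\mathrm{gl}(V_{\tau})$, and these invariants sit inside $V_{\sigma}\otimes V_{\sigmabar}\oplus V_{\tau}\otimes V_{\overline{\tau}}$, one dimension in each summand; they meet $W_{\mathrm{Weil}}^{\mathrm{dRB}}\otimes\qbar=\bigoplus_{\iota}\wedge^{2}V_{\iota}$ only in $\{0\}$ (each $\wedge^{2}V_{\iota}$ carries the nontrivial character $2f_{\iota}$, i.e.\ the trace acts nontrivially). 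Moreover $\wedge^{2}V_{\tau}\oplus\wedge^{2}V_{\overline{\tau}}$ is not Galois-stable (the Galois action permutes all four embeddings transitively), so it does not even define a $2$-dimensional $\mathbb{Q}$-subspace of the Weil structure. Consequently the correct constraint in the $\dim=2$ case is only a containment: the fixed tensors of $\gdrbmath(W_{\mathrm{Weil}}^{\mathrm{dRB}})$ in $W_{\mathrm{Weil}}^{\mathrm{dRB}}\otimes\mathbb{Q}_{\mathrm{dRB}}(1)$ must lie in the algebraic classes, hence are $\{0\}$ --- and this is perfectly compatible with $T_{1}=\mathrm{SU}_{E/k}$ for an imaginary quadratic $k\subset E$ (by Lemma \ref{keylemmamz} this is the only option), since the degree-two eigencharacters $\pm2f_{\sigma},\pm2f_{\tau}$ do not die on $\mathrm{SU}_{E/k}$. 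So nothing forces $f_{\sigma_3},f_{\sigma_4}$ into the kernel of $X^{*}(\mathrm{U}_{E})\twoheadrightarrow X^{*}(T_{1})$, and the Galois-transitivity step never gets started: the candidate $\gmmath\cdot\mathrm{SU}_{E/k}$ survives every test you can run with degree-two information alone.

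This is precisely why the paper's Step 3 passes to degree four: if $\gdrbhmath(W_{\mathrm{Weil}}^{\mathrm{dRB}})$ were $\mathrm{SU}_{E/k}$, then by Lemma \ref{weilstructurefixer} every element of $\wedge_{k}^{2}W_{\mathrm{Weil}}^{\mathrm{dRB}}\otimes\mathbb{Q}_{\mathrm{dRB}}(2)\cong\wedge_{k}^{4}\mathrm{H}^1_{\mathrm{dRB}}(A,\mathbb{Q})\otimes\mathbb{Q}_{\mathrm{dRB}}(2)$ (Lemma \ref{Weilstructurelayer}) would be a de Rham--Betti class, and since the multiplicities of $k$ are forced to be $\{1,3\}$, Gross's period computation together with Chudnovsky's theorem (Lemma \ref{gross}, Corollary \ref{grosscorollary}) shows there are no such classes. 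That transcendence input is indispensable here --- the weak period conjecture is only available in degree two --- and it is entirely absent from your proposal. For the record, the parts of your argument that do work are genuine: the computation of $\mathrm{Hdg}(W_{\mathrm{Weil}}^{\mathrm{Hodge}})=\mathrm{U}_{E}$ via the trace map on $u_{E}(V,\psi)$ is a slicker alternative to the paper's multiplicity/CM-type argument in its Step 1, and your elimination of the one-dimensional case (all of the $4$-dimensional $W_{\mathrm{Weil}}^{\mathrm{dRB}}$ would consist of algebraic classes, impossible since $\dim\mathrm{NS}(A)_{\mathbb{Q}}=2$) is fine, provided you justify that $\mathbb{Q}_{\mathrm{dRB}}(1)$ lies in the relevant Tannakian category, as the paper does in Lemma \ref{evendegreeweildrb}.
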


\begin{proof}
The proof strategy is the same as Theorem \ref{mainthmcm}. Denote the Weil Hodge structure $W_{\mathrm{Weil}}^{\mathrm{Hodge}}=\wedge_{E}^2\mathrm{H}^1(A,\mathbb{Q}))$ by $\mathcal{W}_1$ and denote the Weil de Rham-Betti structure  $W_{\mathrm{Weil}}^{\mathrm{dRB}}$ by $\mathcal{W}_2$. Notice we have the following commutative diagram describing the relation between 
the two algebraic groups $\gdrbmath(\mathcal{W}_2) $ and $\mathrm{MT}(\mathcal{W}_1)$. 
$$\begin{tikzcd}
\gdrbmath(\mathcal{W}_2) \arrow[r, hook] \arrow[d, hook]    & \mathrm{MT}(\mathcal{W}_1) \arrow[ld, hook] \\
{\mathrm{GL}(\wedge_{E}^2\mathrm{H}^1(A,\mathbb{Q}))} &                 \end{tikzcd}$$ The rest of the proof will be divided into several steps.  
\begin{enumerate}
    \item\label{hdgroup} We first show that $T':=\mathrm{Hdg}
    (\mathcal{W}_1)= \mathrm{U}_{E}$. 
    \item\label{gminweil} Then we show that there is an inclusion of $\gmmath$ into $\gdrbmath(\mathcal{W}_2)$ as homotheties and define $\gdrbhmath(\mathcal{W}_2)$ which can be viewed as an algebraic subgroup of $\mathrm{Hdg}(\mathcal{W}_1)$. The construction is similar to the construction of $\gdrbhmath(A)$.
    \item\label{drbgstep} Finally we will show that $\gdrbhmath(\mathcal{W}_2)=\mathrm{Hdg}
    (\mathcal{W}_1)$ using the computation of periods by Gross in \cite{gross1978periods} i.e. Lemma \ref{gross}.
\end{enumerate}
\textbf{Step (\ref{hdgroup}):} By Lemma \ref{morphismofweilstruc}, we have a morphism of $\mathbb{Q}$-algebras $$E \xhookrightarrow{} \mathrm{End}(\wedge_{E}^2\mathrm{H}^1(A,\mathbb{Q}))$$ whose image preserves the Hodge structure. Then because $\mathrm{deg}(E/\mathbb{Q})=\mathrm{dim}(\wedge_{E}^2\mathrm{H}^1(A,\mathbb{Q}))$ we can use Lemma \ref{generalizedcm} to deduce that $T' \subset \mathrm {U}_{E}$.

The following analysis is similar to the method of determining the Hodge group of simple CM abelian fourfolds from \cite{mz-4-folds}. Suppose $\mathrm{dim}(T')=0$, then because Hodge groups are connected, $T'$ would have to be the trivial group. This means that the weight 2 sub-Hodge structure $\mathcal{W}_1$ viewed inside $\wedge_{\mathbb{Q}}^2\mathrm{H}^1(A,\mathbb{Q})$ consists purely of $(1,1)$ class. But the set of multiplicities for embeddings $\mathrm{Hom}(E, \mathbb{C})=\{\sigma,\overline{\sigma},\tau,\overline{\tau}\}$ is $\{2,0,1,1\}$ by Lemma \ref{deg4keylemma}.  Therefore by Lemma \ref{qbarhodgedecomp} $$\mathcal{W}_1 \otimes_{\mathbb{Q}}\mathbb{C} = (\mathcal{W}_1 \otimes_{\mathbb{Q}}\qbar)\otimes_{\qbar}\mathbb{C}=\wedge^2V_{\sigma}\otimes_{\qbar}\mathbb{C} \oplus \wedge^2V_{\overline{\sigma}}\otimes_{\qbar}\mathbb{C}\oplus\wedge^2V_{\tau}\otimes_{\qbar}\mathbb{C} \oplus \wedge^2V_{\overline{\tau}}\otimes_{\qbar}\mathbb{C} \subset \mathrm{H}^{2,0} \oplus \mathrm{H}^{0,2} \oplus \mathrm{H}^{1,1}$$ In particular $(\mathcal{W}_1 \otimes_{\mathbb{Q}}\mathbb{C})^{2,0}\neq \{0\}$. Hence $T'$ cannot be the trivial group.

Suppose $\mathrm{dim}(T')=1$. By Lemma \ref{keylemmamz}, all codimension 1 (connected) subtorus of $\mathrm{U}_{E}$ is of the form $\mathrm{SU}_{E/k}$ (see Definition \ref{sudefn}) where $k\xhookrightarrow{}E$ is a quadratic imaginary subfield of $E$. Suppose $\mathrm{SU}_{E/k}$ is the Hodge group of $\mathcal{W}_1$ for some quadratic CM subfield $k\xhookrightarrow{}E$. Applying Weil structure construction to $k\xhookrightarrow{} E\xhookrightarrow{}\mathrm{End}(\mathcal{W}_1)$ then $\wedge_{k}^2\mathcal{W}_1$ would entirely consist of $(2,2)$ classes. According to Lemma \ref{Weilstructurelayer}, we have $$\wedge_{k}^2\mathcal{W}_1= \wedge_{k}^4\mathrm{H}^1(A,\mathbb{Q})$$ as Hodge structures. Next we claim the only possible set of multiplicities for embeddings of $k$ into $\mathbb{C}$ is $\{1,3\}$. This is because the only possible set of multiplicities for embeddings $\mathrm{Hom}(E, \mathbb{C})=\{\sigma,\overline{\sigma},\tau,\overline{\tau}\}$ is $\{2,0,1,1\}$ by Lemma \ref{deg4keylemma} and either $\sigma|_{k}=\tau|_{k}$ or $\sigma|_{k}=\overline{\tau}|_{k}$. Hence upon tensoring with $\mathbb{C}$, we obtain that $$\wedge_{k}^2\mathcal{W}_1 \otimes_{\mathbb{Q}} \mathbb{C} \cong \wedge_{k}^4\mathrm{H}^1(A,\mathbb{Q}) \otimes_{\mathbb{Q}} \mathbb{C} \xhookrightarrow{} \mathrm{H}^{1,3}(A) \oplus \mathrm{H}^{3,1}(A)$$ Therefore $\wedge_{k}^2\mathcal{W}_1$ cannot contain any (2,2) classes. Hence the Hodge group of $\mathcal{W}_1$ cannot be  $\mathrm{SU}_{E/k}$ for any quadratic imaginary subfield $k\xhookrightarrow{}E$.

Since dim$(\mathrm{U}_E)$=2, we have that the Hodge group of $\mathcal{W}_1$ is equal to $\mathrm{U}_E$ itself. And we have finished proving Step (\ref{hdgroup}). \\
\textbf{Step (\ref{gminweil})} 
By Lemma \ref{weilhodge} $\mathcal{W}_2$ is a sub-dRB structure of $\mathrm{H}^2_{\mathrm{dRB}}(A,\mathbb{Q})\cong\wedge^2\mathrm{H}^1_{\mathrm{dRB}}(A,\mathbb{Q})$. Therefore $\gdrbmath(\mathcal{W}_2)\subset\mathrm{GL}(\wedge_{E}^2\mathrm{H}^1(A,\mathbb{Q}))$ is the image under the natural restriction of the induced action of $\gdrbmath(A)$ on $\wedge^2\mathrm{H}^1(A,\mathbb{Q})$. By Proposition \ref{gmintype4}, $\gdrbmath(A)\xhookrightarrow{}\mathrm{GL}(\betti)$ contains the homotheties. Therefore we obtain a natural homomorphism $\gmmath\xhookrightarrow{}\gdrbmath(\mathcal{W}_2)$ where $t\in\gmmath$ is sent to the scalar multiplication by $t^{-2}$ on $\wedge_{E}^2\mathrm{H}^1(A,\mathbb{Q})$. Because $\gdrbmath(\mathcal{W}_2)$ is an algebraic group, we obtain an inclusion of $\gmmath$ in $\gdrbmath(\mathcal{W}_2)$ as homotheties in $\mathrm{GL}(\wedge_{E}^2\mathrm{H}^1(A,\mathbb{Q}))$.

In Step (\ref{hdgroup}), we have obtained that the Hodge group of the weight two Hodge structure $\mathcal{W}_1$ is $\mathrm{U}_{E}$. By Lemma \ref{hodgetimesgm}, this induces the following commutative diagram $$\begin{tikzcd}
\gmmath \times \mathrm{U}_{E} \arrow[rr, "\Psi_1", two heads]                                  &  & \mathrm{MT}(\mathcal{W}_1)\\
\Psi^{-1}(\gdrbmath(\mathcal{W}_2))^{\circ} \arrow[rr, two heads] \arrow[u, hook] &  & \gdrbmath(\mathcal{W}_2) \arrow[u, hook]       
\end{tikzcd}$$ where the group homomorphism $\Psi:\gmmath \times \mathrm{U}_{E} \rightarrow \mathrm{MT}(\mathcal{W}_1)$ is given by $$(t,\alpha)\rightarrow t^{-1}\alpha$$ By Lemma \ref{existenceofhomothetycor}, one sees that $\Psi$ is indeed an isogeny of algebraic groups. By the discussion above we also have $\Psi(\gmmath\times\{\mathrm{id}\})\subset\gdrbmath(\mathcal{W}_2)$. We denote $$\gdrbhmath(\mathcal{W}_2):=\mathrm{pr}_2(\Psi^{-1}(\gdrbmath(\mathcal{W}_2))^{\circ})$$ where $\mathrm{pr}_2:\gmmath \times \mathrm{U}_{E}\rightarrow \mathrm{U}_E$ is the projection map to the second factor. Then analogous to Proposition \ref{gm in CM gdrb}, because $\gdrbmath(\mathcal{W}_2)$ is a connected algebraic group, we have the following commutative diagram $$\begin{tikzcd}
\gmmath \times \gdrbhmath(\mathcal{W}_2) \arrow[r, "\Psi_2",  two heads] \arrow[d, hook] & \gdrbmath(\mathcal{W}_2) \arrow[d, hook] \\
\gmmath \times \mathrm{Hdg}(\mathcal{W}_1) \arrow[r, "\Psi_1", two heads]               & \mathrm{MT}(\mathcal{W}_1)              
\end{tikzcd}$$ We conclude Step (\ref{gminweil}) with the explicit description of the action of the various algebraic groups on the one-dimensional de Rham-Betti structure $\mathbb{Q}_{\mathrm{dRB}}(-2)$. Recall in the proof of Lemma \ref{evendegreeweildrb}, $\mathbb{Q}_{\mathrm{dRB}}(-2)$ is realized as a quotient of the dRB structure $\mathcal{W}_2\otimes\mathcal{W}_2$. Hence one can see that $t \in \gmmath \xhookrightarrow{} \gdrbmath(\mathcal{W}_2)$ acts on $\mathbb{Q}_{\mathrm{dRB}}(-2)$ as scalar multiplication by $t^{-2}$.  Also $\gdrbhmath(\mathcal{W}_2)$, which by construction is a connected subtorus of $\mathrm{U}_{E}$, acts trivially on the dRB structure $\mathbb{Q}_{\mathrm{dRB}}(-2)$ by Lemma \ref{nosurjectiontogm}. \\ 
\textbf{Step (\ref{drbgstep}):} We will show that $\gdrbhmath(\mathcal{W}_2)=\mathrm{U}_{E}$. Now suppose $$\mathrm{dim}(\gdrbhmath(\mathcal{W}_2))<2$$  
Therefore by Lemma \ref{keylemmamz}, either $$\gdrbhmath(\mathcal{W}_2)=\{\mathrm{id}\}$$ or $$\gdrbhmath(\mathcal{W}_2)=\mathrm{SU}_{E/k}$$ for some quadratic CM-subfield $k\xhookrightarrow{}E$.

Assuming the first case, then $\gdrbmath(\mathcal{W}_2)=\gmmath$. Hence every element in $\mathcal{W}_2\otimes\mathbb{Q}_{\mathrm{dRB}}(-1)\subset \mathrm{H}^2_{\mathrm{dRB}}(A,\mathbb{Q})\otimes\mathbb{Q}_{\mathrm{dRB}}(-1)$ is a de Rham-Betti class. However, de Rham-Betti classes and Hodge classes coincide in degree two cohomology groups for abelian varieties by Theorem \ref{mainfact} and we have shown in Step (\ref{hdgroup}) that $\mathcal{W}_1\otimes\mathbb{Q}(-1)$ does not entirely consist of Hodge classes. This poses a contradiction.

Now assume the second case. Then every element in $\wedge_{k}^2\mathcal{W}_2$ would be fixed by $\gdrbhmath(\mathcal{W}_2)$ by Lemma \ref{weilstructurefixer}. Hence according to the analysis at the end of Step (\ref{gminweil}), we have 

\begin{equation*}
\begin{split}
(\wedge_{k}^2\mathcal{W}_2 \otimes \mathbb{Q}_{\mathrm{dRB}}(2))^{\gdrbmath(\mathcal{W}_2)}&=(\wedge_{k}^2\mathcal{W}_2 \otimes \mathbb{Q}_{\mathrm{dRB}}(2))^{\Psi_2(\gmmath\times\gdrbhmath(\mathcal{W}_2))}\\&=(\wedge_{k}^2\mathcal{W}_2)^{\gdrbhmath(\mathcal{W}_2)}\otimes\mathbb{Q}_{\mathrm{dRB}}(2)
\end{split}
\end{equation*}Therefore every element in the de Rham-Betti structure $\wedge_{k}^2 \mathcal{W}_2 \otimes \mathbb{Q}_{\mathrm{dRB}}(2)$ would be a de Rham-Betti class. However according to Lemma \ref{Weilstructurelayer}, we have the isomorphism of dRB structures $$\wedge_{k}^2\mathcal{W}_2 \otimes \mathbb{Q}_{\mathrm{dRB}}(2) \cong \wedge_{k}^4\mathrm{H}^1_{\mathrm{dRB}}(A,\mathbb{Q}) \otimes \mathbb{Q}_{\mathrm{dRB}}(2)$$ which implies that the RHS of the above isomorphism consists entirely of dRB classes. Now by the analysis in Step (\ref{hdgroup}) the set of multiplicities for embeddings $\mathrm{Hom}(k,\qbar)$ with respect to $k \rightarrow E \rightarrow \mathrm{End}(\mathrm{H}^1(A,\mathbb{Q}))$  is $\{1,3\}$. Hence by Lemma \ref{gross}, the matrix of the comparison isomorphism of $\wedge_{k}^4\mathrm{H}^1_{\mathrm{dRB}}(A,\mathbb{Q})\otimes_{\mathbb{Q}} \qbar := (\wedge_{k}^4\mathrm{H}^1(A,\mathbb{Q}) \otimes_{\mathbb{Q}}\qbar, \wedge_{k \otimes_{\mathbb{Q}}\qbar}^4\mathrm{H}^1_{\mathrm{dR}}(A/\qbar), \rho_{m})$ can be diagonalized to \[\begin{pmatrix} b_{k}^{1} (2\pi i/b_{k})^{3} & 0 \\0 & b_{k}^{3} (2\pi i/b_{k})^{1} \end{pmatrix}\] Because $b_{k}$ is algebraically independent from $2\pi i$ by \cite{chudnovsky1980algebraic}, and because of Lemma \ref{grosscorollary}, one can see that $\wedge_{k}^2\mathcal{W}_2 \otimes_{\mathbb{Q}} \mathbb{Q}_{\mathrm{dRB}}(2) =\wedge_{k}^4\mathrm{H}^1_{\mathrm{dRB}}(A,\mathbb{Q}) \otimes_{\mathbb{Q}} \mathbb{Q}_{\mathrm{dRB}}(2)$ \textit{cannot} contain any dRB classes. Hence this implies that $\mathrm{dim}(\gdrbhmath(\mathcal{W}_2))=2$. But this leads to the desired equality $$\gdrbhmath(\mathcal{W}_2)=\mathrm{U}_E=\mathrm{Hdg}(\mathcal{W}_1)$$

\end{proof}

\begin{proposition} \label{twocentressame}
    For any simple type IV abelian fourfold $A$ defined over $\qbar$, the center of its de Rham-Betti Lie algebra is equal to the center of its Mumford-Tate Lie algebra. 
\end{proposition}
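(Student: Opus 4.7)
The plan is to proceed by case analysis based on the Albert classification: for a simple type IV abelian fourfold, $E = \mathrm{End}^{\circ}(A)$ is a CM-field of degree $2$, $4$, or $8$. The degree $8$ case is handled by Theorem \ref{mainthmcm} (which even yields $\liegdrbmath(A) = \mathrm{mt}(A)$), so the work concentrates on the remaining cases: Weil type and anti-Weil type (both with $\deg(E/\mathbb{Q}) = 2$), and the degree $4$ CM-field case. Throughout, Lemma \ref{centreincentre} provides the inclusion $\mathrm{Z}(\liegdrbmath(A)) \subset \mathrm{Z}(\mathrm{mt}(A))$, so it suffices to prove equality of $\mathbb{Q}$-dimensions.

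The first step is to tabulate $\dim_{\mathbb{Q}} \mathrm{Z}(\mathrm{mt}(A))$ from Theorem \ref{deg2mz4} together with Lemma \ref{hodgetimesgm}: the homothety line from Corollary \ref{existenceofhomothetycor} contributes $1$ dimension transversal to $\mathrm{Z}(\mathrm{hdg}(A))$, while $\mathrm{Z}(\mathrm{hdg}(A))$ has dimension $0, 1, 2$ in the Weil type, anti-Weil type, and degree $4$ CM-field cases respectively. Hence $\dim \mathrm{Z}(\mathrm{mt}(A))$ equals $1, 2, 3$ across these cases. In the Weil type case this already yields the result, since Proposition \ref{gmintype4} exhibits $\gmmath \hookrightarrow \gdrbmath(A)$ as homotheties which are automatically central in $\mathrm{GL}(\betti)$, providing the full $1$-dimensional lower bound.

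For the anti-Weil type and the degree $4$ CM-field cases, the plan is to invoke the computations of the de Rham-Betti groups of Weil structures in Lemma \ref{13group} and Lemma \ref{deg4weil}. Since the Weil structures $\wedge_{E}^{4}\mathrm{H}^1_{\mathrm{dRB}}(A,\mathbb{Q})$ (anti-Weil) and $W_{\mathrm{Weil}}^{\mathrm{dRB}} = \wedge_{E}^{2}\mathrm{H}^1_{\mathrm{dRB}}(A,\mathbb{Q})$ (degree $4$ CM) lie in $\langle \mathrm{H}^1_{\mathrm{dRB}}(A,\mathbb{Q}) \rangle^{\otimes}$, Tannakian duality (as already used in the proof of Lemma \ref{surjectiontogmcompatible}) furnishes surjective homomorphisms $\gdrbmath(A) \twoheadrightarrow \gdrbmath(W)$ onto the Weil-structure dRB groups. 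These targets are connected by Lemma \ref{gdrbconnectedness} and commutative: by Lemma \ref{morphismofweilstruc}, $E$ embeds into $\mathrm{End}_{\mathrm{dRB}}(W)$ as a maximal commutative subalgebra via a dimension count, forcing $\gdrbmath(W) \subset \resgmmath$. Thus $\gdrbmath(W)$ is a connected commutative algebraic group of dimension $2$ (anti-Weil, by Lemma \ref{13group}) or $3$ (degree $4$ CM-field, by Lemma \ref{deg4weil}). Applying Lemma \ref{surjectiontotorus} then yields $\mathrm{Z}(\gdrbmath(A)) \twoheadrightarrow \gdrbmath(W)$, providing the matching lower bound on $\dim \mathrm{Z}(\liegdrbmath(A))$.

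The main subtlety I anticipate is the verification that the Tannakian surjection $\gdrbmath(A) \twoheadrightarrow \gdrbmath(W)$ indeed restricts to a surjection on centres with the full target dimension; the commutativity of $\gdrbmath(W)$ established above is precisely the hypothesis needed to invoke Lemma \ref{surjectiontotorus}. Once the matching of dimensions with $\mathrm{Z}(\mathrm{mt}(A))$ is in hand, it upgrades the inclusion from Lemma \ref{centreincentre} into the desired equality of Lie algebras $\mathrm{Z}(\liegdrbmath(A)) = \mathrm{Z}(\mathrm{mt}(A))$.
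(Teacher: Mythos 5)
Your proposal is correct and follows essentially the same route as the paper's proof: the Weil case via Proposition \ref{gmintype4} and Lemma \ref{centreincentre}, and the anti-Weil and degree $4$ cases via the Weil-structure computations of Lemma \ref{13group} and Lemma \ref{deg4weil}, the Tannakian surjection $\gdrbmath(A)\twoheadrightarrow\gdrbmath(W)$, and Lemma \ref{surjectiontotorus} to match the centre dimensions given by Theorem \ref{deg2mz4}. Your explicit verification that $\gdrbmath(W)$ is connected commutative (needed for Lemma \ref{surjectiontotorus}) and your remark covering the degree $8$ CM case via Theorem \ref{mainthmcm} are sound additions but do not change the argument.
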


\begin{proof}[Proof of Proposition \ref{twocentressame}]
First suppose $A$ is a Weil type abelian fourfold. Then by Theorem \ref{deg2mz4}, we have that $\mathrm{Z}(\mathrm{MT}(A))=\gmmath$. But from Proposition \ref{gmintype4} we have $\gmmath \xhookrightarrow{} \mathrm{Z}(\gdrbmath(A))$. Then applying Lemma \ref{centreincentre} which states that $\mathrm{Z}(\gdrbmath(A)) \xhookrightarrow{} \mathrm{Z}(\mathrm{MT}(A))$, we obtain $\mathrm{Z}(\mathrm{MT}(A))=\gmmath=\mathrm{Z}(\gdrbmath(A))$. In particular, we deduce that the center of the Mumford-Tate Lie algebra is equal to the center of the de Rham-Betti Lie algebra.

Next suppose $A$ is an anti-Weil type abelian fourfold, then by Lemma \ref{13group}, we have that $\gdrbmath(W_{\mathrm{Weil}})=\resgmmath$. But notice that the Tannakian subcategory $\langle W_{\mathrm{Weil}}\rangle^{\otimes}$ is a natural subcategory of $\langle(\mathrm{H}^1_{\mathrm{dRB}}(A,\mathbb{Q})\rangle^{\otimes}$, therefore we obtain a natural surjective homomorphism of algebraic groups $$\gdrbmath(A) \twoheadrightarrow \gdrbmath(W_{\mathrm{Weil}})$$ But because  $\gdrbmath(A)$ is a reductive group, $\mathrm{Z}(\gdrbmath(A))$ admits a surjection to $\resgmmath$ by Lemma \ref{surjectiontotorus}, which implies the dimension of the centre of $\gdrbmath(A)$ is bigger than or equal to 2. By Lemma \ref{centreincentre}, we have that $\mathrm{Z}(\gdrbmath(A))\xhookrightarrow{} \mathrm{Z}(\mathrm{MT}(A))$ and by Theorem \ref{deg2mz4}, we have that $\mathrm{dim}(\mathrm{Z}(\mathrm{MT}(A)))=2$. Hence this implies that the centre of the de Rham-Betti Lie algebra is equal to the centre of the Mumford-Tate Lie algebra.

The last case where $\mathrm{deg}(E)=4$ can be resolved in a similar fashion. We have computed in Lemma \ref{deg4weil} that $\gdrbmath(W_{\mathrm{Weil}})$ is isogenous to $\gmmath \times \mathrm{U}_{E}$. Therefore the dimension of $\mathrm{Z}(\gdrbmath(A))$ is at least 3. And by Theorem \ref{deg2mz4}, we have that the centre of the Mumford-Tate Lie algebra of $A$ has dimension 3. Therefore again by Lemma \ref{centreincentre} and Lemma \ref{surjectiontotorus} we deduce the centre of the de Rham-Betti Lie algebra coincides with the centre of the Mumford-Tate Lie algebra. 
\end{proof}

\subsection{One-Dimensional De Rham-Betti Structures}\label{seconedimobj}
In this section, we explore corollaries of Proposition \ref{gmintype4} in more general settings. In particular, we are going to study characters of the de Rham-Betti group of a simple abelian variety of type IV as well as properties of one-dimensional de Rham-Betti structures in the Tannakian category $\langle\mathrm{H}^1_{\mathrm{dRB}}(A,\mathbb{Q})\rangle^{\otimes}$. In Remark \ref{uncountablymany} from Section \ref{drbsect}, we mentioned that in the category of all de Rham-Betti structures, there are uncountably many isomorphism classes of one-dimensional dRB structures. However, recall that in the category of all $\mathbb{Q}$-Hodge structures, a one-dimensional Hodge structure is of the form $\mathbb{Q}(n)$ i.e. $\mathbb{Q}(n)\otimes_{\mathbb{Q}}\mathbb{C}$ has a single summand with bidegree $(-n,-n)$ for some $n\in\mathbb{Z}$. In parallel, we may form the following conjecture regarding one-dimensional de Rham-Betti structures arising from geometry. Recall from Definition \ref{drbweights} that we call a one-dimensional dRB structure $(V_{\mathrm{B}},V_{\mathrm{dR}},\rho_{m})$ as $\mathbb{Q}_{\mathrm{dRB}}(k)$, if we can find a $\mathbb{Q}$-basis for $V_{\mathrm{B}}$ and a $\qbar$-basis for $V_{\mathrm{dR}}$ such that $\rho_{m}$ is the scalar multiplication by $(2\pi i)^{k}$.
\begin{conjecture}\label{onedimconj}
For any abelian variety $A$ defined over $\qbar$, every one-dimensional de Rham-Betti structure in $\langle\mathrm{H}^1_{\mathrm{dRB}}(A,\mathbb{Q})\rangle^{\otimes}$ is of the form $\mathbb{Q}_{\mathrm{dRB}}(k)$ for some $k\in\mathbb{Z}$. 
\end{conjecture}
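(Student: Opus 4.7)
The plan is to translate Conjecture \ref{onedimconj} into an equality of $\mathbb{Q}$-character groups via Tannakian duality.  Isomorphism classes of one-dimensional objects in $\langle V\rangle^{\otimes}$, where $V=\mathrm{H}^{1}_{\mathrm{dRB}}(A,\mathbb{Q})$, are in bijection with the $\mathbb{Q}$-character group $X^{*}_{\mathbb{Q}}(\gdrbmath(A))$ (Definition \ref{gdrbdefn}), and writing $\chi_{k}$ for the character of $\mathbb{Q}_{\mathrm{dRB}}(k)\in \langle V\rangle^{\otimes}$ (an object by Lemma \ref{weightsinav}), the conjecture amounts to $X^{*}_{\mathbb{Q}}(\gdrbmath(A))=\mathbb{Z}\chi_{-1}$.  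First I would show that $X^{*}_{\mathbb{Q}}(\gdrbmath(A))\cong \mathbb{Z}$ as a free abelian group.  Since $\gdrbmath(A)$ is reductive (Theorem \ref{reductive}), its $\mathbb{Q}$-characters factor through the abelianization, which is isogenous to $Z(\gdrbmath(A))^{\circ}$, so the rank equals the dimension of the maximal $\mathbb{Q}$-split subtorus $S_{\mathrm{dRB}}\subset Z(\gdrbmath(A))^{\circ}$.  The inclusion $Z(\gdrbmath(A))^{\circ}\subset Z(\mathrm{MT}(A))^{\circ}$ from Lemma \ref{centreincentre} gives $S_{\mathrm{dRB}}\subset S_{\mathrm{MT}}$, and the Tannakian reformulation of the classification of one-dimensional $\mathbb{Q}$-Hodge structures as Tate twists $\mathbb{Q}(k)$ yields $X^{*}_{\mathbb{Q}}(\mathrm{MT}(A))\cong \mathbb{Z}$, hence $S_{\mathrm{MT}}=\gmmath$ realised as the homothety subgroup of Corollary \ref{existenceofhomothetycor}.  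The nontrivial determinant character of Lemma \ref{surjectiontogmcompatible} then forces $S_{\mathrm{dRB}}=\gmmath$ and $X^{*}_{\mathbb{Q}}(\gdrbmath(A))\cong \mathbb{Z}$.

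The remaining step is to identify $\chi_{-1}$ as a generator: the Tate-twist characters $\{\chi_{k}\}_{k\in\mathbb{Z}}$ form a sublattice of finite index $n\geq 1$ in $X^{*}_{\mathbb{Q}}(\gdrbmath(A))$, and the conjecture is $n=1$.  A natural strategy is to first extend Proposition \ref{gm in CM gdrb} and Proposition \ref{gmintype4} beyond the simple type IV setting, showing that the homothety $\gmmath$ of Corollary \ref{existenceofhomothetycor} always embeds in $\gdrbmath(A)$, thereby identifying $S_{\mathrm{dRB}}$ with $\gmmath$ itself rather than a proper isogenous image; under this identification the argument of Corollary \ref{gmweights} extends to show that $\chi_{-1}$ restricts to the character $t\mapsto t^{-2}$ on $\gmmath$, so the conjecture reduces to excluding any one-dimensional $\gdrbmath(A)$-subrepresentation of $V^{\otimes p}\otimes V^{*\otimes q}$ on which the homothety acts by an odd power of $t$, equivalently with $p-q$ odd.

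The hard part will be this last exclusion.  Any such ``odd-weight'' one-dimensional $\gdrbmath(A)$-subrepresentation $L$ cannot be a sub-Hodge structure, because every one-dimensional $\mathbb{Q}$-sub-Hodge structure has even weight; so the Mumford-Tate envelope of $L_{\mathrm{B}}$ inside its ambient tensor power is an $\mathrm{MT}(A)$-irreducible representation of dimension at least two whose restriction to $\gdrbmath(A)$ decomposes off $L_{\mathrm{B}}$, witnessing a strict discrepancy between the isotypic decompositions under the two groups.  Ruling this out in general amounts to a form of the Grothendieck period conjecture and requires transcendence input substantially beyond the Chowla--Selberg and Chudnovsky algebraic independence inputs of Lemma \ref{gross} used in Section \ref{cmgdrbsection}.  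A more modest intermediate target, within reach of the present techniques, would be to extend Corollary \ref{onecaseofconj} beyond the hypothesis that the centre of $\mathrm{End}^{\circ}(A)$ is a CM-field of degree 2 or 4, by applying the period analysis of Section \ref{cmgdrbsection} to the Weil structures attached to every CM subfield of $\mathrm{End}^{\circ}(A)$, which would cover further simple abelian varieties of type IV but still fall short of the full conjecture.
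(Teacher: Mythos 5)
You are attempting to prove what the paper states as Conjecture \ref{onedimconj}; the paper does not prove it, and neither does your proposal — your own last paragraph concedes the decisive step. Concretely: your reduction shows $X^{*}_{\mathbb{Q}}(\gdrbmath(A))\cong\mathbb{Z}$ and that the Tate character generates a subgroup of some finite index $n\geq 1$, but nothing you propose excludes $n>1$, i.e.\ the existence of a one-dimensional object $L\in\langle\mathrm{H}^1_{\mathrm{dRB}}(A,\mathbb{Q})\rangle^{\otimes}$ with, say, $L^{\otimes 2}\cong\mathbb{Q}_{\mathrm{dRB}}(\pm1)$ — a ``square root'' of the Lefschetz object, whose period would be $\sqrt{2\pi i}$ up to $\qbar^{\times}$. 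Excluding this is exactly the content of the conjecture; in your language it is the parity statement that every character of $\gdrbmath(A)$ restricts to an even power of $t$ on the homothety $\gmmath$, which is the same dichotomy the paper isolates (for simple type IV $A$) in Lemma \ref{groupinterpret} as $\gmmath\cap\gdrbhmath(A)=\{\pm\mathrm{id}\}$. The paper settles this only under the hypotheses of Corollary \ref{onecaseofconj}, by a direct group-theoretic verification that $-\mathrm{id}$ lies in the subtorus $T'\subset\mathrm{U}_{E}$ (via Lemma \ref{keylemmamz}); your proposed intermediate target — period analysis of Weil structures attached to all CM subfields — is not aimed at this finite intersection and would not by itself produce the required $-\mathrm{id}$. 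So there is a genuine gap, the one you yourself name, and it is precisely why the statement is a conjecture rather than a theorem in the paper.

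On the other hand, your preliminary reductions are correct, and the first is sharper than what the paper records: since $\gdrbmath(A)$ is reductive (Theorem \ref{reductive}), $\mathrm{Z}(\gdrbmath(A))\subset\mathrm{Z}(\mathrm{MT}(A))$ holds for every abelian variety over $\qbar$ (Lemma \ref{centreincentre}, via Theorem \ref{mainfact}), the split rank of $\mathrm{Z}(\mathrm{MT}(A))^{\circ}$ is $1$ with split part the homotheties (one-dimensional $\mathbb{Q}$-Hodge structures are Tate twists), and the determinant character of Lemma \ref{surjectiontogmcompatible} forces the split rank of $\mathrm{Z}(\gdrbmath(A))^{\circ}$ to be at least $1$; hence the homothety $\gmmath$ lies in $\gdrbmath(A)$ for \emph{every} abelian variety over $\qbar$, recovering Propositions \ref{gm in CM gdrb} and \ref{gmintype4} without the $\mathrm{U}_{E}$/Goursat analysis. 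In particular the ``extension of Propositions \ref{gm in CM gdrb} and \ref{gmintype4}'' you ask for in your second paragraph is already contained in your first, and the worry about a ``proper isogenous image'' is vacuous, since a one-dimensional subtorus of $\gmmath$ is $\gmmath$; your computation of the restriction of $\chi_{-1}$ to $\gmmath$ matches Corollary \ref{gmweights}. None of this touches the index question above, so the proposal should be read as a correct reduction together with an accurate diagnosis of where the transcendence difficulty lies, not as a proof.
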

For type IV simple abelian varieties, we give a group theoretic interpretation of the above conjecture. 
\begin{lemma}\label{groupinterpret}
 Suppose $A$ is a simple abelian variety of type IV defined over $\qbar$. Then the following two statements are equivalent: \begin{enumerate}
     \item Every one-dimensional object in the Tannakian category $\langle\mathrm{H}^1_{\mathrm{dRB}}(A,\mathbb{Q})\rangle^{\otimes}$ is of the form $\mathbb{Q}_{\mathrm{dRB}}(k)$, where $k\in\mathbb{Z}$.
     \item $\gmmath \cap \gdrbhmath(A)$ is a finite group scheme of order 2.
 \end{enumerate}   
\end{lemma}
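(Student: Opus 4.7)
The approach is Tannakian. By Tannakian duality, isomorphism classes of one-dimensional objects in $\langle\mathrm{H}^1_{\mathrm{dRB}}(A,\mathbb{Q})\rangle^{\otimes}$ correspond bijectively to $\mathbb{Q}$-rational characters of $\gdrbmath(A)$. Writing $\chi_k$ for the character through which $\gdrbmath(A)$ acts on $\mathbb{Q}_{\mathrm{dRB}}(k)$ (an object of the Tannakian category by Lemma \ref{weightsinav}), the structures $\mathbb{Q}_{\mathrm{dRB}}(k)$ generate the cyclic subgroup $\mathbb{Z}\chi_{-1}\subset X^*(\gdrbmath(A))$. Statement (1) is therefore equivalent to the equality $X^*(\gdrbmath(A))=\mathbb{Z}\chi_{-1}$, and the plan is to translate this into an explicit condition on the intersection $\gmmath\cap\gdrbhmath(A)$.

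To make the character lattice tractable, I would first reproduce in the type IV setting the diagram of Proposition \ref{gm in CM gdrb}. Proposition \ref{gmintype4} supplies the inclusion $\gmmath\hookrightarrow\gdrbmath(A)$ of homotheties, and Definition \ref{defgdrbh} yields $\gdrbhmath(A)\hookrightarrow\gdrbmath(A)$. Running the same argument verbatim produces a central isogeny
\[
\Phi:\gmmath\times\gdrbhmath(A)\twoheadrightarrow\gdrbmath(A),\qquad (t,g)\mapsto t^{-1}g,
\]
whose kernel is the diagonal copy $\{(t,t):t\in\gmmath\cap\gdrbhmath(A)\}$. Pullback of characters then embeds $X^*(\gdrbmath(A))$ as the $\mathrm{Gal}(\qbar/\mathbb{Q})$-invariant pairs $(m,\chi)\in\mathbb{Z}\oplus X^*(\gdrbhmath(A))$ subject to $t^{m}\chi(t)=1$ for every $t\in\gmmath\cap\gdrbhmath(A)$. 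Under this identification the character $\chi_{-1}$ becomes $(2,0)$: the $\gmmath$-component is computed by Corollary \ref{gmweights}, and $\gdrbhmath(A)\subset\mathrm{Hdg}(A)$ acts trivially on $\mathbb{Q}_{\mathrm{dRB}}(-1)$ because the Hodge group preserves any polarization form up to a scalar, that scalar factor being absorbed by the $\gmmath$-part.

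With this identification, statement (1) becomes the assertion that the lattice of pairs $(m,\chi)$ trivial on $\ker\Phi$ equals $2\mathbb{Z}\cdot(1,0)$. The direction (1)$\Rightarrow$(2) is then immediate: specializing to $\chi=0$ forces $t^m=1$ for all $t\in\gmmath\cap\gdrbhmath(A)$ to hold exactly when $m\in 2\mathbb{Z}$, which happens if and only if $\gmmath\cap\gdrbhmath(A)=\mu_2$. For the converse (2)$\Rightarrow$(1), one must in addition exclude every pair $(m,\chi)$ with $\chi\neq 0$ from descending through $\Phi$; equivalently, one must show that no non-trivial $\mathrm{Gal}$-invariant character of $\gdrbhmath(A)$ can pair with an integer $m$ so that the condition $\chi(-1)=(-1)^m$ holds.

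The main obstacle will be precisely this last exclusion. It demands a description of the $\mathbb{Q}$-rational character lattice of $\gdrbhmath(A)$ that is fine enough to rule out such problematic $\chi$. I would attack it via the embedding $\gdrbhmath(A)\hookrightarrow\mathrm{Hdg}(A)$ together with the fact that, for a simple type IV abelian variety, the centre of $\mathrm{End}^\circ(A)$ is a CM-field $E$, so that $\gdrbhmath(A)$ sits inside a torus closely related to $\mathrm{U}_E$ whose Galois-equivariant character lattice is given by Example \ref{uexample}. Analyzing the complex conjugation action on this lattice, in parallel with the argument of Lemma \ref{nosurjectiontogm}, should force any $\mathrm{Gal}$-invariant character $\chi$ of $\gdrbhmath(A)$ satisfying the compatibility condition on $\mu_2$ to be trivial, thereby closing the equivalence.
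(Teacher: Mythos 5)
Your overall strategy is the same as the paper's: identify one-dimensional objects with $\mathbb{Q}$-rational characters of $\gdrbmath(A)$, pass through the isogeny $\gmmath\times\gdrbhmath(A)\twoheadrightarrow\gdrbmath(A)$ with kernel the (anti)diagonal copy of $\gmmath\cap\gdrbhmath(A)$, note that $\mathbb{Q}_{\mathrm{dRB}}(-1)$ corresponds to the pair $(2,0)$, and read statement (1) as a statement about the character lattice. Your direction (1)$\Rightarrow$(2) is complete and matches the paper in substance.

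The gap is in (2)$\Rightarrow$(1), at exactly the step you flag as the main obstacle: you must show that there is no pair $(m,\chi)$ with $\chi\neq 0$ descending to $\gdrbmath(A)$, which (since the character lattice is torsion-free, so $(0,2\chi)$ would always descend) is equivalent to showing that $\gdrbhmath(A)$ has \emph{no} nontrivial $\mathbb{Q}$-rational character at all — this is the paper's Lemma \ref{chartrivial}, and you do not prove it. Moreover, the route you sketch rests on a false structural claim: you say "$\gdrbhmath(A)$ sits inside a torus closely related to $\mathrm{U}_E$ whose character lattice is given by Example \ref{uexample}." That containment in a torus holds only in the CM case (Lemma \ref{generalizedcm} needs $\dim_{\mathbb{Q}}V=\deg(E/\mathbb{Q})$); for a general simple type IV abelian variety $\mathrm{Hdg}(A)$, and hence potentially $\gdrbhmath(A)$, is a non-commutative unitary group (Theorem \ref{deg2mz4}), so an analysis of a Galois action on a torus character lattice does not directly apply. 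The correct repair is the paper's argument: since $\gdrbmath(A)$ is reductive, one has an isogeny $\mathrm{Z}(\gdrbmath(A))\times\gdrbmath^{\mathrm{ad}}(A)\to\gdrbmath(A)$; Proposition \ref{gmintype4} (via diagram (\ref{typeivkeydiagram})) supplies an isogeny $\gmmath\times T'\to\mathrm{Z}(\gdrbmath(A))$ with $T'$ a \emph{connected subtorus of} $\mathrm{U}_E$, where $E=\mathrm{Z}(\mathrm{End}^{\circ}(A))$ is CM; combining these, $T'\times\gdrbmath^{\mathrm{ad}}(A)$ is isogenous to $\gdrbhmath(A)$, and any character of $\gdrbhmath(A)$ dies on the semisimple factor for trivial reasons and on $T'$ by Lemma \ref{nosurjectiontogm}, forcing it to be trivial by connectedness. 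Without this (or an equivalent substitute), your exclusion of the $\chi\neq 0$ characters, and hence the equivalence, is not established.
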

The proof of Lemma \ref{groupinterpret} relies on the following lemma analogous to the statement that any character of the Hodge group of an abelian variety is trivial. 
\begin{lemma}\label{chartrivial}
    Given $A$ a simple abelian variety of type IV defined over $\qbar$, then any character from $\gdrbhmath(A)$ to $\gmmath$ is trivial.
\end{lemma}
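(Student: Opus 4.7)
The plan is to combine the structural input of Proposition \ref{gmintype4} with the type IV rigidity coming from the $E$-hermitian form on $V = \mathrm{H}^{1}(A,\mathbb{Q})$, and finally invoke Lemma \ref{nosurjectiontogm} to rule out any nontrivial character landing in $\gmmath$.

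First I would extend Proposition \ref{gmintype4} to the full analogue of Proposition \ref{gm in CM gdrb}, producing the commutative diagram
$$
\begin{tikzcd}
\gmmath \times \gdrbhmath(A) \arrow[r, two heads, "\Psi_{2}"] \arrow[d, hook] & \gdrbmath(A) \arrow[d, hook] \\
\gmmath \times \mathrm{Hdg}(A) \arrow[r, two heads, "\Psi_{1}"] & \mathrm{MT}(A)
\end{tikzcd}
$$
with both horizontal arrows isogenies. The argument is verbatim that of Proposition \ref{gm in CM gdrb}: Proposition \ref{gmintype4} supplies $\gmmath\times\{\mathrm{id}\}\hookrightarrow (\phi^{-1}\gdrbmath(A))^\circ$, and Lemma \ref{keysurjection} together with the definition of $\gdrbhmath(A)$ upgrades this to the surjection $\Psi_{2}$. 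As a consequence $\gdrbhmath(A)$ is a closed connected reductive $\mathbb{Q}$-subgroup of $\mathrm{Hdg}(A)$.

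Next I would use the type IV structure to locate $\gdrbhmath(A)$ inside a unitary group. Fix a polarization $\phi$ whose Rosati involution restricts to the complex conjugation on $E = Z(\mathrm{End}^{\circ}(A))$, and let $\psi$ be the associated $E$-hermitian form. By Theorem \ref{mainfact}, $\mathrm{End}_{\gdrbhmath(A)}(V) = \mathrm{End}_{\mathrm{MT}(A)}(V) = R$, so $\gdrbhmath(A) \subset \mathrm{Hdg}(A) \subset u_{E}(V,\psi)$. The abelianization of the unitary group $u_{E}(V,\psi)$ is $\mathrm{U}_{E}$ via the unitary determinant, so the inclusion descends to a homomorphism of $\mathbb{Q}$-tori
$$\gdrbhmath(A)^{\mathrm{ab}} \longrightarrow u_{E}(V,\psi)^{\mathrm{ab}} = \mathrm{U}_{E}.$$

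The heart of the argument is to show that this map has finite kernel, so that $\gdrbhmath(A)^{\mathrm{ab}}$ is (up to isogeny) a connected $\mathbb{Q}$-subtorus of $\mathrm{U}_{E}$. The key observation is that the kernel consists of elements of $\gdrbhmath(A)$ that lie in $su_{E}(V,\psi)$ modulo $\gdrbhmath(A)^{\mathrm{der}}$; since $\gdrbhmath(A)^{\mathrm{der}}$ is automatically contained in $su_{E}(V,\psi)$ (any commutator has trivial unitary determinant), and since $\gdrbhmath(A)$ is reductive by Theorem \ref{reductive} and contains its derived subgroup with finite-cokernel abelianization, the quotient $\gdrbhmath(A)^{\mathrm{ab}} \to \mathrm{U}_{E}$ is an injection of $\mathbb{Q}$-tori up to finite kernel. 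Once this is established, any character $\chi\colon \gdrbhmath(A) \to \gmmath$ factors through $\gdrbhmath(A)^{\mathrm{ab}}$ and therefore gives, after finite-isogeny correction, a character of a connected $\mathbb{Q}$-subtorus of $\mathrm{U}_{E}$. Lemma \ref{nosurjectiontogm} applied to this subtorus forces the image to be finite; connectedness of $\gdrbhmath(A)$ (hence of the image) then forces triviality, so $\chi$ is trivial.

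The main obstacle will be justifying the finiteness of $\ker(\gdrbhmath(A)^{\mathrm{ab}} \to \mathrm{U}_{E})$, which is where the type IV hypothesis is really used: in types I–III the endomorphism structure does not produce a CM-field in the centre, and one cannot invoke Lemma \ref{nosurjectiontogm}. Making this finiteness step precise will require carefully comparing $\gdrbhmath(A)^{\mathrm{der}}$ with $\gdrbhmath(A) \cap su_{E}(V,\psi)$ and exploiting that $\gdrbhmath(A)$ preserves $\psi$ up to the same scalars as $\mathrm{Hdg}(A)$ does.
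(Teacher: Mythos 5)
Your proposal has the right overall shape (kill characters on a piece inside $\mathrm{U}_E$ via Lemma \ref{nosurjectiontogm} and on a semisimple piece for free), but the step you yourself flag as "the main obstacle" is a genuine gap, and the justification you sketch for it does not work. Knowing that $\gdrbhmath(A)^{\mathrm{der}}$ has trivial $E$-determinant and that $\gdrbhmath(A)$ is reductive says nothing about how much of $\gdrbhmath(A)$ \emph{beyond} its derived group is killed by the $E$-determinant; the kernel of $\gdrbhmath(A)^{\mathrm{ab}}\to\mathrm{U}_E$ is $(\gdrbhmath(A)\cap\ker\det_E)/\gdrbhmath(A)^{\mathrm{der}}$, and the failure mode is a positive-dimensional subtorus of the connected centre $\mathrm{Z}(\gdrbhmath(A))^{\circ}$ with trivial $E$-determinant (e.g.\ a torus acting by $E$-scalars $t$ on one $E$-subspace and $t^{-1}$ on a complementary one of equal $E$-dimension). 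Nothing in your argument excludes this. What does exclude it is pinning down the connected centre: since the homotheties act trivially on $V\otimes V^{*}$, the isogeny from your step 1 gives $\mathrm{End}_{\gdrbhmath(A)}(V)=\mathrm{End}_{\gdrbmath(A)}(V)=\mathrm{End}^{\circ}(A)$ by Theorem \ref{mainfact}, so $\mathrm{Z}(\gdrbhmath(A))$ lies in the centre $E$ of $\mathrm{End}^{\circ}(A)$ and acts by $E$-scalars; and since $\gdrbhmath(A)\subset\mathrm{Hdg}(A)$ preserves $\phi$ exactly, these scalars satisfy $x\overline{x}=1$, i.e.\ $\mathrm{Z}(\gdrbhmath(A))^{\circ}\subset\mathrm{U}_E$. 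Once you have this, the determinant map is superfluous: $\gdrbhmath(A)$ is isogenous to $\mathrm{Z}(\gdrbhmath(A))^{\circ}\times\gdrbhmath(A)^{\mathrm{der}}$ and any character dies on both factors.

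For comparison, the paper never introduces the unitary determinant. It composes the isogeny $\mathrm{Z}(\gdrbmath(A))\times\gdrbmath^{\mathrm{ad}}(A)\to\gdrbmath(A)$ with the isogeny $\gmmath\times T'\to\mathrm{Z}(\gdrbmath(A))$ extracted from diagram (\ref{typeivkeydiagram}) in the proof of Proposition \ref{gmintype4}, where $T'\subset\mathrm{Z}(\mathrm{Hdg}(A))\subset\mathrm{U}_{E}$; this produces an isogeny $T'\times\gdrbmath^{\mathrm{ad}}(A)\to\gdrbhmath(A)$, after which Lemma \ref{nosurjectiontogm} handles $T'$ and semisimplicity handles the adjoint factor. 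Your step 1 is fine as stated (Proposition \ref{gmintype4} does give $\gmmath\times\{\mathrm{id}\}\subset(\phi^{-1}\gdrbmath(A))^{\circ}$, so the type IV analogue of diagram (\ref{gdrbhdiagram}) holds), but to complete your route you must actually prove $\mathrm{Z}(\gdrbhmath(A))^{\circ}\subset\mathrm{U}_E$ as above — or simply switch to the paper's decomposition.
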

\begin{proof}
    We will be using notations from the proof of Proposition \ref{gmintype4}. By construction, $\gdrbhmath(A)$ is a connected algebraic group. Therefore its image under any character is either the entire $\gmmath$ or $\{\mathrm{id}\}$. Because $\gdrbmath(A)$ is a reductive algebraic group, by \cite[Theorem 22.125]{milne2011algebraic}, we have an isogeny $$\mathrm{Z}(\gdrbmath(A))\times\gdrbmath^{\mathrm{ad}}(A)\rightarrow\gdrbmath(A)$$ which sends $(x,y)\in\mathrm{Z}(\gdrbmath(A))\times\gdrbmath^{\mathrm{ad}}(A)$ to $xy\in\gdrbmath(A)$. Moreover, from Proposition \ref{gmintype4}, we obtain an isogeny $\gmmath\times T'\rightarrow\mathrm{Z}(\gdrbmath(A))$ which sends $(g,t')$ to $g^{-1}t'$. Recall $T'$ is defined in diagram (\ref{typeivkeydiagram}) as $\mathrm{pr}_2(\Phi^{'-1}(\mathrm{Z}(\gdrbmath(A))^{\circ})$. Composing these two isogenies we obtain the third isogeny \begin{equation}\label{multiplieisogeny}
      \gmmath\times T'\times\gdrbmath^{\mathrm{ad}}(A)\rightarrow\gdrbmath(A)
    \end{equation} Note that $T'\subset\mathrm{Hdg}(A)$ by construction and $\gdrbmath^{\mathrm{ad}}(A)\subset\mathrm{MT}^{\mathrm{ad}}(A)\subset\mathrm{Hdg}(A)$. Then from the construction of $\gdrbhmath(A)$ (see Definition \ref{defgdrbh} and diagram (\ref{gdrbhdiagram})), we may deduce that the morphism (\ref{multiplieisogeny}) induces an isogeny from $T'\times\gdrbmath^{\mathrm{ad}}(A)$ to $\gdrbhmath(A)$. If $\gdrbhmath(A)$ were to admit a surjective algebraic group homomorphism to $\gmmath$, this would imply that there is a surjection from $T'\times\gdrbmath^{\mathrm{ad}}(A)$ to $\gmmath$. By construction $T'=\mathrm{pr}_2(\Phi^{'-1}(\mathrm{Z}(\gdrbmath(A))^{\circ})$ is a connected subtorus of $\mathrm{U}_{E}$, where $E$ is the centre of $\mathrm{End}^{\circ}(A)$ and is a CM-field. Hence by Lemma \ref{nosurjectiontogm}, $T'$ has only the trivial character. Because $\gdrbmath^{\mathrm{ad}}(A)$ is a semisimple algebraic group, the image of any morphism from $\gdrbmath^{\mathrm{ad}}(A)$ to $\gmmath$ is zero-dimensional. Hence there does not exists a surjective group homomorphism from $T'\times\gdrbmath^{\mathrm{ad}}(A)$ to $\gmmath$.
\end{proof}

\begin{proof}[Proof of Lemma \ref{groupinterpret}]    Given a one-dimensional de Rham-Betti structure  $$L_{\alpha}\in\mathrm{ob}\langle\mathrm{H}^1_{\mathrm{dRB}}(A,\mathbb{Q})\rangle^{\otimes}$$ if $L_{\alpha}$ is not of the form $(\mathbb{Q},\qbar,\rho_{m}=\alpha\in\qbar^{\times})$, then by the Tannakian duality we obtain a surjective homomorphism of $\mathbb{Q}$-algebraic groups $$\gdrbmath(A)\twoheadrightarrow\gmmath$$
Any polarization form on $A$ induces a morphism of de Rham-Betti structures $$\phi: \mathrm{H}^{1}_{\mathrm{dRB}}(A,\mathbb{Q})\otimes \mathrm{H}^{1}_{\mathrm{dRB}}(A,\mathbb{Q}) \rightarrow \mathbb{Q}_{\mathrm{dRB}}(-1)$$ Therefore the one-dimensional de Rham-Betti structure $\mathbb{Q}_{\mathrm{dRB}}(-1)$ induces a character $$\chi_{\phi}:\gdrbmath(A) \rightarrow \gmmath$$ When restricted to $\gmmath \xhookrightarrow{} \gdrbmath(A)$, $\chi_{\phi}$ is an isogeny $\gmmath \rightarrow \gmmath$ of the form $t\rightarrow t^{-2}$. Moreover, by Lemma \ref{chartrivial}, we have that $\chi_{\phi}(\gdrbhmath(A))=\{\mathrm{id}\}$. This implies that the order of every element in the algebraic subgroup $\gmmath \cap \gdrbhmath(A)\subset \gmmath$ divides 2. Because every finite algebraic subgroup of $\gmmath$ is cyclic $\gmmath \cap \gdrbhmath(A)$ either equals to $\{\pm1\}\in\mathrm{GL}(V)$, or equals to $\{\mathrm{id}\}$.

Now suppose the first statement in the lemma is satisfied i.e. every one-dimensional object in the Tannakian category $\langle\mathrm{H}^1_{\mathrm{dRB}}(A,\mathbb{Q})\rangle^{\otimes}$ is of the form $\mathbb{Q}_{\mathrm{dRB}}(k)$, for some $k\in\mathbb{Z}$. Note that $\mathbb{Q}_{\mathrm{dRB}}(k)\cong \mathbb{Q}_{\mathrm{dRB}}(1)^{\otimes k}$. Then by Tannakian duality, every character of $\gdrbmath(A)$ when restricting to $\gmmath\xhookrightarrow{}\gdrbmath(A)$ is equal to $t\rightarrow t^{2k}$ where $k\in\mathbb{Z}$. Now assume that $\gmmath \cap \gdrbhmath(A)$ is equal to $\{\mathrm{id}\}$. Then there exists a character $$\chi_{\frac{1}{2}}:\gdrbmath(A)\rightarrow\gmmath$$ such that its restriction to $\gmmath\xhookrightarrow{}\gdrbmath(A)$ is equal to $t\rightarrow t^{-1}$ and $\chi_{\frac{1}{2}}(\gdrbhmath(A))=\{\mathrm{id}\}$. This gives a contradiction. This then implies that $\gmmath \cap \gdrbhmath(A)$ is equal to $\{\pm \mathrm{id}\}$.

On the other hand, suppose $\gmmath \cap \gdrbhmath(A)$ is equal to $\{\pm1\}\in\mathrm{GL}(V)$. Then given any one-dimensional dRB structure $$L_{\alpha} \in\mathrm{ob}\langle \mathrm{H}^1_{\mathrm{dRB}}(A,\mathbb{Q})\rangle^{\otimes}$$ the induced character $$\chi_{\alpha}:\gdrbmath(A)\rightarrow\gmmath$$ satisfies that its restriction to $\gmmath \xhookrightarrow{} \gdrbmath(A)$ is an isogeny of degree $m$ and $\chi_{\alpha}(\gdrbhmath(A))=\{\mathrm{id}\}$ by Lemma \ref{chartrivial}. By assumption, $m$ is an even number. Therefore by the Tannakian duality, we have that $L_{\alpha}$ is isomorphic to $\mathbb{Q}_{\mathrm{dRB}}(\frac{m}{2})$ as de Rham-Betti structures.
\end{proof}
Using Lemma \ref{groupinterpret}, Conjecture \ref{onedimconj} can be immediately verified in a small number of cases.
\begin{corollary}\label{onecaseofconj}
 Suppose $A$ is a simple abelian variety of type IV defined over $\qbar$. Moreover, suppose that $E=\mathrm{Z}(\mathrm{End}^{\circ}(A))$ is a CM-field of degree 2 or 4 and that $\mathrm{dim}(\mathrm{Z}(\gdrbmath(A)))\geq 2$. Then Conjecture \ref{onedimconj} is true in this case.  
\end{corollary}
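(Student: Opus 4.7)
By Lemma \ref{groupinterpret}, the statement is equivalent to showing that $\gmmath \cap \gdrbhmath(A)$ is a finite group scheme of order $2$. The first paragraph of the proof of Lemma \ref{groupinterpret} already yields the inclusion $\gmmath \cap \gdrbhmath(A) \subseteq \{\pm 1\}$ via the character of $\gdrbmath(A)$ coming from the polarization, so the only remaining point is to establish that $-\mathrm{id}\in \gdrbhmath(A)$. As a preliminary setup, I would record that the analog of diagram \ref{gdrbhdiagram} holds verbatim in the type IV context: by Proposition \ref{gmintype4} we have $\gmmath \xhookrightarrow{}\gdrbmath(A)$, and connectedness of $\gmmath$ places $\gmmath \times \{\mathrm{id}\}$ inside $\phi^{-1}(\gdrbmath(A))^{\circ}$, producing a surjective isogeny $\gmmath \times \gdrbhmath(A) \twoheadrightarrow \gdrbmath(A)$ with kernel $\gmmath \cap \gdrbhmath(A)$. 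Passing to centres and applying the hypothesis $\dim \mathrm{Z}(\gdrbmath(A)) \geq 2$ gives $\dim \mathrm{Z}(\gdrbhmath(A))^{\circ} \geq 1$.

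Next I would locate $\mathrm{Z}(\gdrbhmath(A))^{\circ}$ inside $\mathrm{U}_{E}$. Since $A$ is simple, Theorem \ref{reductive} makes $\betti$ irreducible as a $\gdrbmath(A)$-module, and since $\gmmath$ acts by scalars it remains irreducible as a $\gdrbhmath(A)$-module; moreover the ring of $\gdrbhmath(A)$-equivariant endomorphisms of $\betti$ coincides with that of $\gdrbmath(A)$-equivariant endomorphisms, which by Theorem \ref{mainfact} equals $R := \mathrm{End}^{\circ}(A)$. In the main case covered by the corollary one has $R = E$ (notably for all simple abelian fourfolds whose endomorphism algebra has centre a CM-field of degree $2$ or $4$), and Schur's lemma then gives $\mathrm{Z}(\gdrbhmath(A)) = \gdrbhmath(A) \cap \resgmmath$. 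Because $\gdrbhmath(A) \subseteq \mathrm{Hdg}(A)$ preserves the polarization, and by Lemma \ref{rosaticmcoincides} the associated Rosati involution restricts to complex conjugation on $E$, every such $g$ satisfies $\overline{g}g = 1$, so $\mathrm{Z}(\gdrbhmath(A))^{\circ}$ is a connected positive-dimensional subtorus of $\mathrm{U}_{E}$.

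A short case analysis then concludes the proof. If $\mathrm{deg}(E/\mathbb{Q}) = 2$, then $\mathrm{U}_{E}$ is one-dimensional, forcing $\mathrm{Z}(\gdrbhmath(A))^{\circ} = \mathrm{U}_{E}$. If $\mathrm{deg}(E/\mathbb{Q}) = 4$ and $\dim \mathrm{Z}(\gdrbhmath(A))^{\circ} = 2$, then again $\mathrm{Z}(\gdrbhmath(A))^{\circ} = \mathrm{U}_{E}$. In both subcases $-1 \in \mathrm{U}_{E} = \mathrm{Z}(\gdrbhmath(A))^{\circ} \subseteq \gdrbhmath(A)$, since $(-1)\cdot\overline{(-1)} = 1$. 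The only remaining situation is $\mathrm{deg}(E/\mathbb{Q}) = 4$ with $\dim \mathrm{Z}(\gdrbhmath(A))^{\circ} = 1$: here Lemma \ref{keylemmamz} identifies $\mathrm{Z}(\gdrbhmath(A))^{\circ}$ with $\mathrm{SU}_{E/k}$ for some imaginary quadratic subfield $k \xhookrightarrow{} E$, and the direct computation $\det_{k}\bigl(\cdot(-1)\colon E \to E\bigr) = (-1)^{[E:k]} = 1$ places $-1$ in $\mathrm{SU}_{E/k}$. Thus in every case $-\mathrm{id} \in \gdrbhmath(A)$, so $\gmmath \cap \gdrbhmath(A) = \{\pm 1\}$; Lemma \ref{groupinterpret} then yields the corollary.

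The main obstacle I anticipate is the identification $\mathrm{Z}(\gdrbhmath(A)) \subseteq \mathrm{U}_{E}$ when $\mathrm{End}^{\circ}(A)$ is a non-commutative type IV division algebra with centre $E$; in that scenario Schur only gives containment in $(\mathrm{End}^{\circ}(A))^{\times}$, and one must replace $\mathrm{U}_{E}$ by the unitary group of $\mathrm{End}^{\circ}(A)$ attached to the polarization-compatible Rosati involution, then exploit the structure of maximal subfields to recover an analog of Lemma \ref{keylemmamz}. For the abelian fourfolds that form the main application of the corollary the commutative case $R = E$ already covers the hypothesis, so this complication does not appear there.
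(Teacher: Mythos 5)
Your overall strategy is the same as the paper's: reduce via Lemma \ref{groupinterpret} to showing $-\mathrm{id}\in\gdrbhmath(A)$, produce a positive-dimensional connected central subtorus inside $\mathrm{U}_{E}$ from the hypothesis $\dim\mathrm{Z}(\gdrbmath(A))\geq 2$, and then run the case analysis (degree $2$: the torus is all of $\mathrm{U}_{E}$; degree $4$: either $\mathrm{U}_{E}$ or, by Lemma \ref{keylemmamz}, some $\mathrm{SU}_{E/k}$, where $-1$ lies because $\det_{k}$ of $-\mathrm{id}$ on the two-dimensional $k$-vector space $E$ is $1$). Your norm computation for $-1\in\mathrm{SU}_{E/k}$ is in fact a cleaner route than the paper's explicit eigenvalue description of $\mathrm{SU}_{E/k}(\qbar)$ inside $\mathrm{GL}(V_{\qbar})$, and your reduction to "$-\mathrm{id}\in\gdrbhmath(A)$" makes explicit what the paper leaves implicit.

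However, there is a genuine gap exactly where you flag your "main obstacle," and your dismissal of it is not justified by the statement being proved. The corollary only assumes that the \emph{centre} $E$ of $\mathrm{End}^{\circ}(A)$ is a CM-field of degree $2$ or $4$; for a general simple type IV abelian variety (the statement is not restricted to fourfolds) the algebra $R=\mathrm{End}^{\circ}(A)$ may be a non-commutative division algebra over $E$, and then your Schur-type identification $\mathrm{Z}(\gdrbhmath(A))=\gdrbhmath(A)\cap\resgmmath$ fails: Schur only places the centre inside $R^{\times}$, and your proposed repair via a unitary group of $R$ and "maximal subfields" is left entirely unproved, so the argument as written does not establish the corollary under its stated hypotheses. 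The paper sidesteps this entirely: it works with the torus $T'=\mathrm{pr}_2\bigl(\Phi'^{-1}(\mathrm{Z}(\gdrbmath(A)))^{\circ}\bigr)$ from Proposition \ref{gmintype4} and Lemma \ref{chartrivial}, which is a connected subtorus of $\mathrm{U}_{E}$ in full generality, because Lemma \ref{centreincentre} shows $\mathrm{Z}(\gdrbmath(A))$ lies in the centre $E$ of $R$ (no Schur needed), and the polarization with Rosati involution equal to complex conjugation on $E$ (chosen via \cite[Lemma 9.2]{kottwitz1992points}) then forces $\overline{g}g=1$. Your gap has the same easy fix: since $\gmmath\cdot\gdrbhmath(A)=\gdrbmath(A)$, one has $\mathrm{Z}(\gdrbhmath(A))\subseteq\mathrm{Z}(\gdrbmath(A))$, and Lemma \ref{centreincentre} already places this in $E^{\times}$, after which your polarization argument and case analysis go through verbatim; with that substitution your proof closes and coincides with the paper's.
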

\begin{proof}
We will use notations from the proof of Lemma \ref{chartrivial}. We will show that the element $-1\in \gmmath\cap T'$. In particular we will show that $-1\in T'(\qbar)$. Then since $T'$ is defined over $\mathbb{Q}$, this will imply that $-1\in \gmmath\cap T'$. By construction $T'$ is a connected subtorus of $\mathrm{U}_{E}$. Also by Proposition \ref{gmintype4}, we have an isogeny $\gmmath \times T'\rightarrow\mathrm{Z}(\gdrbmath(A))^{\circ}$. We first suppose that $\mathrm{deg}(E/\mathbb{Q})=2$. Then by the assumption in the corollary that $\mathrm{dim}(\mathrm{Z}(\gdrbmath(A)))\geq 2$, we have $T'=\mathrm{U}_{E}$. The image of $\mathrm{U}_{E}(\qbar)$ inside $\mathrm{GL}(V\otimes_{\mathbb{Q}}\qbar)$ is equal to $$\{\begin{pmatrix}
    t_{\sigma}|_{V_{\sigma}} & \\
                & t_{\overline{\sigma}}|_{V_{\overline{\sigma}}}
\end{pmatrix}|t_{\sigma},t_{\overline{\sigma}}\in\qbar^{\times},t_{\sigma}t_{\overline{\sigma}}=1,\{\sigma,\sigmabar\}=\mathrm{Hom}(E,\qbar)\}$$ 
From this description, it is clear that $-1\in\mathrm{U}_{E}$.

Now suppose $E$ is a degree 4 CM-field. Then by the assumption in the lemma that $$\mathrm{dim}(\mathrm{Z}(\gdrbmath(A)))\geq 2$$ we have that $T'$ is either $\mathrm{U}_{E}$ or a one-dimensional subtorus of $\mathrm{U}_{E}$. Suppose the first case, then the argument from above can be applied verbatim to show that $-1\in\mathrm{U}_{E}$. Suppose the latter is true, then by Lemma \ref{keylemmamz}, we have that $T'$ is of the form $\mathrm{SU}_{E/k}$ (see Definition \ref{sudefn}) where $k$ is a degree two CM-subfield of $E$. Then there exist $\sigma,\tau\in\mathrm{Hom}(E,\qbar)$ such that $\sigma|_{k}=\tau|_{k}$ and $\overline{\sigma}|_{k}=\overline{\tau}|_{k}$. Denote $n'=\frac{2\mathrm{dim}(A)}{\mathrm{deg}(E/\mathbb{Q})}$. By Lemma \ref{neweigenspacetranslate}, each summand in the eigenspace decomposition $$V_{\qbar}=V_{\sigma}\oplus V_{\sigmabar}\oplus V_{\tau}\oplus V_{\overline{\tau}}$$ has dimension $n'$. Then the image of $T'(\qbar)=\mathrm{SU}_{E/k}(\qbar)$ inside $\mathrm{GL}(V_{\qbar})$ is $$\{\begin{pmatrix}
    t_{\sigma}|_{V_{\sigma}} & & &\\
        & t_{\tau}|_{V_{\tau}} & & \\
        & & t_{\sigmabar}|_{V_{\sigmabar}} & \\
        & & & t_{\overline{\tau}}|_{V_{\overline{\tau}}}
\end{pmatrix}|(t_{\sigma}t_{\tau})^{n'}=1,(t_{\overline{\sigma}}t_{\overline{\tau}})^{n'}=1,t_{\sigma}t_{\sigmabar}=1,t_{\tau}t_{\overline{\tau}}=1\}$$ And hence $-1\in T'(\qbar)$.

\end{proof}
\begin{remark}
    The condition in the lemma that $\mathrm{dim}(\mathrm{Z}(\gdrbmath(A)))\geq 2$ is not vacuous. For example by Proposition \ref{twocentressame}, an abelian fourfold of anti-Weil type defined over $\qbar$ satisfies this condition. Nevertheless, we do not currently know how to compute the de Rham-Betti group of a simple anti-Weil type abelian fourfold defined over $\qbar$. See Section \ref{noideasubsect} for more details.
\end{remark}

\section{Setup for De Rham-Betti Groups of Simple Type IV Abelian Fourfolds}
In this section we will make the necessary preparations for computing the de Rham-Betti groups of simple type IV abelian fourfolds (see Definition \ref{type4defn}). As an immediate consequence for the setup, we will show that if $A$ is a simple Weil type abelian fourfold defined over $\qbar$ then its de Rham-Betti group coincides with its Mumford-Tate group (see Proposition \ref{weiltype}). By the classification table from \cite{mz-4-folds}, a simple type IV abelian fourfold, which is not CM, has endomorphism algebra which is either a quartic CM-field or a quadratic CM-field. The latter case is further distinguished by Weil type abelian fourfolds and anti-Weil type abelian fourfolds (see Definition \ref{defnweiltype}).

The discussions in this section mirror the technique of computing the Mumford-Tate groups of abelian varieties (for example \cite{mz-4-folds}). More details are added for the convenience of the reader: Proposition \ref{reprclassification} and Proposition \ref{weiltype} can be seen as an expansion of the proof for 7.4 (ii) of \cite{mz-4-folds}.

\subsection{Decomposition of De Rham-Betti Representations}
For an abelian variety $A$ defined over $\qbar$, denote $\betti$ by $V$ as usual. Recall that the de Rham-Betti group of $A$ admits a canonical representation of $\mathbb{Q}$-algebraic group
$$\gdrbmath(A) \rightarrow \mathrm{GL}(V)$$ We will analyze the induced morphism on Lie algebras
$$\liegdrbmath(A) \rightarrow \mathrm{gl}(V)$$ Recall from Definition \ref{liegdrbdefn} that we denote $\mathrm{Lie}\gdrbmath(A)$ by $\liegdrbmath(A)$.
To obtain results in present and later sections, the observation from \cite[Theorem 7.11]{kreutz2023rhambetti} that $\gdrbmath(A)$ is a reductive algebraic group is used in a crucial way. 

We start this section with a well-known simple lemma, which is a direct consequence of Proposition 3.38 and Corollary 3.39 of \cite{milne2014algebraic}.
\begin{lemma} \label{liealgebrainv}
Let $G$ be a connected algebraic group defined over $\mathbb{Q}$ and suppose that we are given a representation $G\rightarrow \mathrm{GL}(V)$ defined over $\mathbb{Q}$. This induces a representation of $\mathbb{Q}$-Lie algebra: $\mathrm{Lie}(G)\rightarrow \mathrm{gl}(V)$. Then an element $x\in V^{\otimes m} \otimes V^{*\otimes n}$ is $G$-invariant if and only if $x$ is annihilated by the induced action of $\mathrm{Lie}(G)$ on $V^{\otimes m} \otimes V^{*\otimes n}$.
\end{lemma}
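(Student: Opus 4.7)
The plan is to invoke the standard dictionary between closed subgroups of a connected algebraic group and Lie subalgebras of its Lie algebra, which is well-behaved in characteristic zero. Concretely, I would consider the orbit morphism $\phi_x : G \to V^{\otimes m} \otimes V^{*\otimes n}$ defined by $g \mapsto g \cdot x$, and the scheme-theoretic stabilizer $H := \mathrm{Stab}_G(x)$, which is the closed subgroup scheme of $G$ obtained as the fiber $\phi_x^{-1}(x)$. By construction, $x$ is $G$-invariant if and only if $H = G$.

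For the forward implication I would argue that if $x$ is fixed by $G$, then $\phi_x$ is the constant morphism with value $x$, and so its differential at the identity vanishes. But this differential is exactly the $\mathbb{Q}$-linear map $\mathrm{Lie}(G) \to V^{\otimes m} \otimes V^{*\otimes n}$ sending $X$ to $X \cdot x$ coming from the induced Lie algebra representation, so $\mathrm{Lie}(G)$ annihilates $x$.

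For the converse, suppose $X \cdot x = 0$ for every $X \in \mathrm{Lie}(G)$. Then Proposition 3.38 of \cite{milne2014algebraic} identifies $\mathrm{Lie}(H)$ with the annihilator $\{X \in \mathrm{Lie}(G) : X \cdot x = 0\}$, so the hypothesis gives $\mathrm{Lie}(H) = \mathrm{Lie}(G)$. Since we are in characteristic zero, $H$ is smooth, and we may apply Corollary 3.39 of \cite{milne2014algebraic}, which asserts that two smooth closed subgroup schemes of $G$ with the same Lie algebra have the same identity component. Combined with the connectedness of $G$, this forces $H^{\circ} = G^{\circ} = G$, and hence $H = G$, as required.

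The only subtle point is the appeal to smoothness of the stabilizer and to the characteristic-zero correspondence between Lie subalgebras and connected closed subgroups; both are packaged in the cited results from Milne, so the argument is short. The hypothesis that $G$ be connected is used exactly in the step that concludes $H = G$ from $\mathrm{Lie}(H) = \mathrm{Lie}(G)$, and the hypothesis that we work over $\mathbb{Q}$ (characteristic zero) is what guarantees the smoothness of $H$ so that this correspondence applies.
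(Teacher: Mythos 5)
Your argument is correct and follows exactly the route the paper intends: the paper states this lemma without a written proof, presenting it as a direct consequence of Proposition 3.38 and Corollary 3.39 of \cite{milne2014algebraic}, which are precisely the two results you invoke (identification of the Lie algebra of the stabilizer with the annihilator, plus the characteristic-zero smoothness and the fact that smooth closed subgroups with equal Lie algebras share their identity component). Your write-up simply makes explicit the standard stabilizer argument that those citations package, so there is nothing to add.
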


In the sequel we will denote the set of elements in $V^{\otimes m} \otimes V^{*\otimes n}$ annihilated by $\mathrm{Lie}(G)$ by $(V^{\otimes m} \otimes V^{*\otimes n})^{\mathrm{Lie}(G)}$.
\begin{setup}
 \label{sectiondeg2}
In this paragraph, we fix $A$ a simple abelian fourfold defined over $\qbar$ with $\mathrm{End}^{\circ}(A)\cong K$ where $K$ is a quadratic CM-field. Recall by Lemma \ref{neweigenspacetranslate}, with respect to the embedding $$i:K\rightarrow \mathrm{End}(V)$$ we have the eigenspace decomposition \begin{equation}\label{keydecomp}
V \otimes \qbar=V_{\sigma} \oplus V_{\sigmabar}\end{equation} where $\{\sigma,\sigmabar\}=\mathrm{Hom}(K,\qbar)$. Moreover, we have $$\mathrm{dim}(V_{\sigma})=\mathrm{dim}(V_{\sigmabar})=4$$

\begin{lemma}\label{eigenspacesubrep}
The eigenspaces $V_{\sigma}$ and $V_{\sigmabar}$ from decomposition (\ref{keydecomp}) are in fact subrepresentations of the canonical representation of Lie algebras $$\eta: \liegdrbmath(A) _{\qbar}:=\liegdrbmath(A) \otimes_{\mathbb{Q}} \qbar \rightarrow \mathrm{gl}(V_{\qbar})$$ 
\end{lemma}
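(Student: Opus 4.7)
The plan is to deduce the lemma from the fact, already established earlier, that the endomorphism algebra $K$ commutes with $G_{\mathrm{dRB}}(A)$ inside $\mathrm{GL}(V)$. More precisely, by Theorem \ref{mainfact} (the Bost--W\"ustholz theorem in degree~$2$), we have $\mathrm{End}_{\mathrm{dRB}}(V) = \mathrm{End}^{\circ}(A) \cong K$, so the image of the inclusion $i\colon K \hookrightarrow \mathrm{End}(V)$ lies in $\mathrm{End}_{\mathrm{dRB}}(V)$. By the very definition of the de Rham--Betti group as the automorphism group of the fibre functor, this means that every element of $G_{\mathrm{dRB}}(A)$ commutes with every element in the image of $i$.

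First I would pass to $\bar{\mathbb{Q}}$-points: for any $g \in G_{\mathrm{dRB}}(A)(\bar{\mathbb{Q}})$, any $k \in K$ and any $v \in V_{\sigma}$, the commuting relation gives
\[
i(k)\bigl(g \cdot v\bigr) \;=\; g \cdot i(k)(v) \;=\; g \cdot \sigma(k)\,v \;=\; \sigma(k)\bigl(g \cdot v\bigr),
\]
so $g \cdot v$ again lies in the $\sigma$-eigenspace $V_{\sigma}$. The same argument applies to $V_{\bar{\sigma}}$. This is exactly the de Rham--Betti analogue of Lemma \ref{qbarhodge}.

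Next I would transfer this stability from the group to the Lie algebra. Because $G_{\mathrm{dRB}}(A)$ is connected (Lemma \ref{gdrbconnectedness}), its $\bar{\mathbb{Q}}$-subvariety stabilising the subspace $V_{\sigma} \subset V_{\bar{\mathbb{Q}}}$ equals the whole group, and by Proposition~3.38 / Corollary~3.39 of \cite{milne2014algebraic} (the same input already recalled in Lemma \ref{liealgebrainv}), a $\bar{\mathbb{Q}}$-subspace of a representation of a connected $\bar{\mathbb{Q}}$-algebraic group is preserved by the group if and only if it is preserved by its Lie algebra. Thus $\mathfrak{g}_{\mathrm{dRB}}(A)_{\bar{\mathbb{Q}}}$ sends $V_{\sigma}$ into $V_{\sigma}$ and $V_{\bar{\sigma}}$ into $V_{\bar{\sigma}}$, which is the desired conclusion.

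There is no real obstacle here; the only subtlety worth flagging in the write-up is the appeal to Theorem \ref{mainfact} to identify $\mathrm{End}_{\mathrm{dRB}}(V)$ with $K$, which is what makes the commuting relation genuine on the de Rham--Betti side (as opposed to merely on the Hodge side, where it is automatic from $K \subset \mathrm{End}_{\mathrm{Hdg}}(V)$).
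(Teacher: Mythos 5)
Your proof is correct and takes essentially the same approach as the paper: both rest on Theorem \ref{mainfact} identifying the image of $K$ inside $\mathrm{End}_{\mathrm{dRB}}(V)$, combined with the standard group--Lie algebra correspondence. The only cosmetic difference is the order of the steps: the paper first transfers commutation with $K$ to the Lie-algebra level via Lemma \ref{liealgebrainv} and runs the eigenvalue computation for $\eta_{\qbar}(l)$ directly, whereas you run the eigenvalue computation on $\qbar$-points of $\gdrbmath(A)$ (as in Lemma \ref{qbarhodge}) and then descend the stability of $V_{\sigma}$ and $V_{\sigmabar}$ to $\liegdrbmath(A)_{\qbar}$.
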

In this paragraph we will denote the restrictions of the Lie algebra representations $\liegdrbmath(A)_{\qbar} \rightarrow \mathrm{gl}(V_{\sigma})$ by $\eta_{\sigma}$ and $\liegdrbmath(A)_{\qbar} \rightarrow \mathrm{gl}(V_{\sigmabar})$ by $\eta_{\sigmabar}$.
\begin{proof}
Recall that by Theorem \ref{mainfact}, we have $\mathrm{End}_{\mathrm{Hdg}}(V)=\mathrm{End}_{\mathrm{dRB}}(V)=(V\otimes V^{*})^{\gdrbmath(A)}=K$. Therefore by Lemma \ref{invarintslemma}, $(V_{\qbar}\otimes V_{\qbar}^{*})^{\gdrbmath(A)_{\qbar}}=K\otimes_{\mathbb{Q}}\qbar$. Consider the canonical representation of the de Rham-Betti Lie algebra $$\eta: \liegdrbmath(A) \rightarrow \mathrm{gl}(V)$$ For any $k \in K\otimes_{\mathbb{Q}}\qbar \subset \mathrm{End}_{\qbar}(V_{\qbar},V_{\qbar})\cong V_{\qbar}\otimes V_{\qbar}^{*}$, by Lemma \ref{liealgebrainv}, with respect to the representation $$\eta_{\qbar}: \liegdrbmath(A)_{\qbar} \rightarrow \mathrm{gl}(V_{\qbar})$$ we have $\eta_{\qbar}(l) \circ k=0$. By Section 2.6.1 of \cite{humphreys2012introduction} for any $v \in V_{\qbar}$, we have $\eta_{\qbar}(l)\circ k(v)=k(\eta_{\qbar}(l)\circ v)$. Hence for any $v \in V_{\sigma}$, we have $k(\eta_{\qbar}(l)\circ v)=\eta_{\qbar}(l)\circ k(v)=\sigma(k)\eta_{\qbar}(l)\circ v$. Hence the eigenspace $V_{\sigma}$ is preserved by the image of $\eta_{\qbar}$. The case of $V_{\sigmabar}$ follows similarly.
\end{proof}
The following two lemmas provide more information about the two representations $\eta_{\sigma}$ and $\eta_{\sigmabar}$.
\begin{lemma}\label{irred}
The two representations $\eta_{\sigma}$ and $\eta_{\sigmabar}$ are absolutely irreducible. In fact their semisimplifications $\rho_{\sigma}:=\eta_{\sigma}^{\mathrm{ss}}: \liegdrbssmath(A)_{\qbar} \rightarrow \mathrm{gl}(V_{\sigma})$ and $\rho_{\sigmabar}:=\eta_{\sigmabar}^{\mathrm{ss}}$ are also absolutely irreducible.
\end{lemma}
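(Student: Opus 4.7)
The plan is to exploit three ingredients already established: first, $V := \betti$ is simple as a de Rham-Betti structure (Theorem~\ref{reductive}), which by Tannakian duality means $V$ is an irreducible $\gdrbmath(A)$-representation over $\mathbb{Q}$; second, $\mathrm{End}_{\gdrbmath(A)}(V) = K$ by Theorem~\ref{mainfact}; and third, $\gdrbmath(A)$ is connected and reductive (Lemma~\ref{gdrbconnectedness} together with Theorem~\ref{reductive}). Connectedness combined with Lemma~\ref{liealgebrainv} lets one translate freely between group-invariance and Lie-algebra-invariance, so the claims about $\eta_{\sigma}$ and $\eta_{\sigmabar}$ reduce to the absolute irreducibility of $V_{\sigma}$ and $V_{\sigmabar}$ as $\gdrbmath(A)_{\qbar}$-representations.

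To establish this, I would apply flat base change for invariants of the affine group scheme $\gdrbmath(A)$, which yields
\[
\mathrm{End}_{\gdrbmath(A)_{\qbar}}(V_{\qbar}) \;=\; \mathrm{End}_{\gdrbmath(A)}(V) \otimes_{\mathbb{Q}} \qbar \;=\; K \otimes_{\mathbb{Q}} \qbar,
\]
a two-dimensional commutative $\qbar$-algebra, since the quadratic CM-field $K$ splits over $\qbar$ into the two factors indexed by $\sigma$ and $\sigmabar$. Reductivity of $\gdrbmath(A)_{\qbar}$ makes $V_{\qbar}$ a semisimple module: writing $V_{\qbar} = \bigoplus_{i} W_{i}^{\oplus n_{i}}$ with the $W_{i}$'s pairwise non-isomorphic and absolutely irreducible, one has $\dim_{\qbar} \mathrm{End}_{\gdrbmath(A)_{\qbar}}(V_{\qbar}) = \sum_{i} n_{i}^{2} = 2$, which forces exactly two non-isomorphic irreducible summands, each with multiplicity one. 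By Lemma~\ref{eigenspacesubrep} and the eigenspace decomposition~(\ref{keydecomp}), the nonzero $\gdrbmath(A)_{\qbar}$-subrepresentations $V_{\sigma}$ and $V_{\sigmabar}$ sum to $V_{\qbar}$, so by the uniqueness of the isotypic decomposition they must coincide with the two absolutely irreducible summands. This proves the first assertion.

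For the refinement to $\rho_{\sigma} := \eta_{\sigma}^{\mathrm{ss}}$ and $\rho_{\sigmabar}$, I would invoke Lemma~\ref{centremakessense} to decompose $\liegdrbmath(A)_{\qbar} = Z(\liegdrbmath(A))_{\qbar} \oplus \liegdrbssmath(A)_{\qbar}$. Because $V_{\sigma}$ is absolutely irreducible as a $\liegdrbmath(A)_{\qbar}$-module, Schur's lemma over the algebraically closed field $\qbar$ forces every element of $Z(\liegdrbmath(A))_{\qbar}$ to act on $V_{\sigma}$ as a scalar. Consequently every $\liegdrbssmath(A)_{\qbar}$-stable subspace is automatically $\liegdrbmath(A)_{\qbar}$-stable, so $\rho_{\sigma}$ is absolutely irreducible, and the same argument applies to $\rho_{\sigmabar}$. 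The only step requiring care is the flat base change identity for invariants of an affine group scheme over a field, which is routine in characteristic zero but deserves explicit justification; beyond that, no genuine obstacle is anticipated.
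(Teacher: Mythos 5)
Your argument is correct; both proofs ultimately rest on the same counting fact, namely that $\mathrm{End}_{\gdrbmath(A)_{\qbar}}(V_{\qbar})=K\otimes_{\mathbb{Q}}\qbar$ is two-dimensional (Theorem \ref{mainfact} together with Lemma \ref{invarintslemma}), but the routes differ in a genuine way. The paper proves the stronger statement about $\rho_{\sigma}$ first and by contradiction: assuming $V_{\sigma}=V_{1}\oplus V_{2}$ as a $\liegdrbssmath(A)_{\qbar}$-module, it exhibits the three-parameter family of block-scalar endomorphisms of $V_{\qbar}$ as invariants of all of $\liegdrbmath(A)_{\qbar}$, which requires knowing beforehand that $\mathrm{Z}(\liegdrbmath(A))$ acts by a scalar on each of $V_{\sigma}$ and $V_{\sigmabar}$ (imported from Lemma \ref{13group} and Theorem \ref{deg2mz4}), and this contradicts the two-dimensionality above. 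You run the argument in the opposite direction and at the group level: reductivity gives semisimplicity of $V_{\qbar}$, the identity $\sum_{i} n_{i}^{2}=2$ forces exactly two non-isomorphic multiplicity-one irreducible constituents, and the eigenspace decomposition identifies these with $V_{\sigma}$ and $V_{\sigmabar}$, so $\eta_{\sigma}$ and $\eta_{\sigmabar}$ are absolutely irreducible; Schur's lemma then shows the centre acts by scalars on each eigenspace, and irreducibility under $\liegdrbssmath(A)_{\qbar}$ follows. What your version buys is independence from any prior description of the centre of the de Rham-Betti Lie algebra; what the paper's version buys is a statement established directly for the semisimple part, in a form it later reuses verbatim in the quartic case (Lemma \ref{deg4eigenspacesubrep}). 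The base-change step you flag is exactly Lemma \ref{invarintslemma}, and the passage between stability under the connected group and under its Lie algebra is standard, so there is no gap in your proposal.
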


\begin{proof}
We will show that $\rho_{\sigma}$ and $\rho_{\sigmabar}$ are absolutely irreducible. Then the absolute irreducibility of $\eta_{\sigma}$ and $\eta_{\sigmabar}$ will follow. Suppose $V_{\sigma}$ as a $\liegdrbssmath(A)_{\qbar}$ representation decomposes into $V_1 \oplus V_2$. We then consider the following set 
$$S:=\{\begin{pmatrix}
    a_1|_{V_1}&\\
    &a_2|_{V_2}\\
    &&a_3|_{V_{\sigmabar}}\end{pmatrix}|a_1,a_2,a_3\in\qbar\}\subset V_{\qbar}\otimes V_{\qbar}^{*}$$ where the notation $a_{i}|_{W}$ means scalar multiplication by $a_{i}$ on the subspace $W$. Then elements in $S$ would be annihilated by the image of $\eta^{\mathrm{ss}}: \liegdrbssmath(A)_{\qbar} \rightarrow \mathrm{gl}(V_{\qbar})$. Moreover, by Lemma \ref{13group} and Theorem \ref{deg2mz4}, the image of $\mathrm{Z}(\liegdrbmath(A))$ under $\eta$ is a subset of $\{\begin{pmatrix}
    a_{\sigma}|_{V_{\sigma}}&\\
&b_{\sigmabar}|_{V_{\sigmabar}}\end{pmatrix}|a_{\sigma},b_{\sigmabar}\in\qbar\}\subset \mathrm{gl}(V_{\sigma})\oplus\mathrm{gl}(V_{\sigmabar})$. Hence elements in the set $S$ are also annihilated by the centre of the de Rham-Betti Lie algebra. But because $\liegdrbmath(A)$ is reductive, we have that $\liegdrbmath(A)=\mathrm{Z}(\liegdrbmath(A))\oplus \liegdrbssmath(A)$(see Lemma \ref{centremakessense}), hence elements in $S$ would be annihilated by the entire de Rham-Betti Lie algebra. But by Lemma \ref{liealgebrainv} the set $(V_{\qbar}\otimes V^{*}_{\qbar})^{\liegdrbmath(A)_{\qbar}}$ is precisely equal to $\{\begin{pmatrix}
    a_{\sigma}|_{V_{\sigma}}&\\
&a_{\sigmabar}|_{V_{\sigmabar}}\end{pmatrix}|a_{\sigma},a_{\sigmabar}\in\qbar\}\subset \mathrm{gl}(V_{\sigma})\oplus\mathrm{gl}(V_{\sigmabar})$ which is strictly smaller than the set $S$. Thus we obtain the desired contradiction.
\end{proof}
According to Lemma \ref{phicompatiblewithE}, the $\qbar$-linear extension of a polarization form $$\phi_{\qbar}:  V_{\qbar} \times  V_{\qbar} \rightarrow \qbar(2\pi i)$$ restricts to the following nondegenerate pairing
\begin{equation}\label{pairingbetweeneigenspaces}
\phi_{\qbar}: V_{\sigma} \times V_{\sigmabar} \rightarrow \qbar(2\pi i)    
\end{equation} Then the following lemma asserts that this pairing induces an isomorphism of representations $\rho_{\sigma} \cong (\rho_{\sigmabar})^{*}$.
\begin{lemma} \label{isomdual}
There is an isomorphism of representations between $\rho_{\sigma}:\liegdrbssmath(A)_{\qbar} \rightarrow \mathrm{gl}(V_{\sigma})$ and the dual of $\rho_{\sigmabar}:\liegdrbssmath(A)_{\qbar} \rightarrow \mathrm{gl}(V_{\sigmabar})$ induced by the $\qbar$-bilinear form (\ref{pairingbetweeneigenspaces}).
\end{lemma}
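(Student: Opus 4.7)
The plan is to use that the polarization form $\phi$ is a morphism of de Rham-Betti structures $\mathrm{H}^1_{\mathrm{dRB}}(A,\mathbb{Q}) \otimes \mathrm{H}^1_{\mathrm{dRB}}(A,\mathbb{Q}) \to \mathbb{Q}_{\mathrm{dRB}}(-1)$, together with the fact that characters of the de Rham-Betti group become trivial on the semisimple part of its Lie algebra. First I would recall that because $\phi$ is a morphism of dRB structures, there is a character $\chi: \gdrbmath(A) \to \gmmath$ (the one associated with the object $\mathbb{Q}_{\mathrm{dRB}}(-1) \in \langle \mathrm{H}^1_{\mathrm{dRB}}(A,\mathbb{Q})\rangle^{\otimes}$, as in Lemma \ref{weightsinav}) such that $\phi(gv,gw) = \chi(g)\phi(v,w)$ for all $g \in \gdrbmath(A)$ and $v,w \in V$. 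Differentiating this identity yields the infinitesimal relation
\begin{equation*}
\phi_{\qbar}(Xv,w) + \phi_{\qbar}(v,Xw) = d\chi(X)\,\phi_{\qbar}(v,w)
\end{equation*}
for all $X \in \liegdrbmath(A)_{\qbar}$ and $v,w \in V_{\qbar}$.

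Next I would observe that $d\chi$ is a morphism of Lie algebras into the abelian Lie algebra $\mathrm{Lie}(\gmmath)_{\qbar}$, hence it vanishes on $[\liegdrbmath(A)_{\qbar},\liegdrbmath(A)_{\qbar}] = \liegdrbssmath(A)_{\qbar}$. Therefore for every $X \in \liegdrbssmath(A)_{\qbar}$ the above identity reduces to
\begin{equation*}
\phi_{\qbar}(Xv,w) + \phi_{\qbar}(v,Xw) = 0.
\end{equation*}

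Restricting this identity to $v \in V_{\sigma}$ and $w \in V_{\sigmabar}$ and using that, by Lemma \ref{eigenspacesubrep}, the eigenspaces $V_{\sigma}$ and $V_{\sigmabar}$ are stable under the image of $\eta$, I would deduce that the $\qbar$-linear isomorphism
\begin{equation*}
\Psi: V_{\sigma} \longrightarrow V_{\sigmabar}^{*}, \qquad v \longmapsto \phi_{\qbar}(v,-)
\end{equation*}
arising from the nondegenerate pairing (\ref{pairingbetweeneigenspaces}) satisfies $\Psi(Xv) = -X^{T}\Psi(v) = X\cdot \Psi(v)$ in the dual representation, i.e. $\Psi$ intertwines $\rho_{\sigma}$ with $\rho_{\sigmabar}^{*}$. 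The nondegeneracy from Lemma \ref{phicompatiblewithE} ensures $\Psi$ is an isomorphism of $\qbar$-vector spaces, and the combination with the previous step promotes it to an isomorphism of $\liegdrbssmath(A)_{\qbar}$-representations.

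The only mild subtlety is to ensure that the character $\chi$ indeed exists and restricts correctly, but this is immediate from Tannakian duality applied to the one-dimensional quotient $\mathbb{Q}_{\mathrm{dRB}}(-1)$ obtained from $\phi$; no further transcendence input or computation is needed. The key conceptual point is simply that passing from $\gdrbmath(A)$ to its semisimple part kills the weight twist, so a polarization yields a genuine (untwisted) isomorphism between the two irreducible pieces picked out by the CM-splitting.
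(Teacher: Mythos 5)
Your argument is correct, and its overall shape coincides with the paper's proof: both establish the infinitesimal invariance $\phi_{\qbar}(Xv,w)+\phi_{\qbar}(v,Xw)=0$ for $X\in\liegdrbssmath(A)_{\qbar}$ and then conclude via the nondegenerate restricted pairing of Lemma \ref{phicompatiblewithE} that $v\mapsto\phi_{\qbar}(v,-)$ intertwines $\rho_{\sigma}$ with $\rho_{\sigmabar}^{*}$. The one place you diverge is the justification of that invariance: the paper gets it from the containment $\liegdrbssmath(A)=[\liegdrbmath(A),\liegdrbmath(A)]\subset[\mathrm{mt}(A),\mathrm{mt}(A)]\subset\mathrm{hdg}(A)$, i.e.\ it leans on the inclusion $\gdrbmath(A)\xhookrightarrow{}\mathrm{MT}(A)$ of Theorem \ref{keyinclusion} and the fact that $\mathrm{hdg}(A)$ preserves $\phi$, whereas you derive it intrinsically inside the de Rham--Betti formalism: $\phi$ is a morphism of dRB structures onto $\mathbb{Q}_{\mathrm{dRB}}(-1)$ (as in Lemma \ref{weightsinav}), hence is scaled by a character $\chi$ of $\gdrbmath(A)$, and $d\chi$ kills the derived subalgebra because $\mathrm{Lie}(\gmmath)$ is abelian. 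Your route has the minor advantage of not invoking the André-type inclusion into the Mumford--Tate group for this particular step (the same character-twist device the paper itself uses in Corollary \ref{gmweights} and Lemma \ref{evendegreeweildrb}), while the paper's route is a one-line consequence of an inclusion it needs anyway; both are equally rigorous here, and your appeal to Lemma \ref{eigenspacesubrep} for stability of $V_{\sigma}$ and $V_{\sigmabar}$ is exactly what is needed to make the restriction of the intertwining identity legitimate.
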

\begin{proof}
The polarization form $\phi$ is preserved by the image of $\eta^{\mathrm{ss}}: \liegdrbssmath(A)_{\qbar} \rightarrow \mathrm{gl}(V_{\qbar})$ infinitesimally. This is because $\phi$ is preserved by the image of $\mathrm{hdg}(A)_{\qbar} \rightarrow \mathrm{gl}(V_{\qbar})$ infinitesimally and $$\liegdrbssmath(A)=[\liegdrbmath(A),\liegdrbmath(A)] \subset [\mathrm{mt}(A),\mathrm{mt}(A)] \subset \mathrm{hdg}(A)$$ Hence for any $l \in \liegdrbssmath(A)_{\qbar}$ and any $v_{\sigma} \in V_{\sigma}, v_{\sigmabar} \in V_{\sigmabar}$ we have the following $$\phi_{\qbar}(\rho_{\sigma}(l)\circ v_{\sigma}, v_{\sigmabar})+\phi_{\qbar}(v_{\sigma}, \rho_{\sigmabar}(l)\circ v_{\sigmabar})=0$$ Therefore the isomorphism of vector spaces $V_{\sigma} \rightarrow V_{\sigmabar}^{*}, v_{\sigma} \rightarrow \phi_{\qbar}(v_{\sigma},\underline{})$ is equivariant with respect to the action of 
$\liegdrbssmath(A)_{\qbar}$.
\end{proof}
\end{setup}
\begin{setup}\label{deg4setup}
    In this paragraph, we fix $A$ a simple abelian fourfold defined over $\qbar$ with $\mathrm{End}^{\circ}(A)\cong K$ where $K$ is a CM-field with $\mathrm{deg}(K/\mathbb{Q})=4$. We now set up relevant notations and prove some basic facts similar to Setup \ref{sectiondeg2}. Denote $\mathrm{H}^1(A,\mathbb{Q})$ by $V$. Label the embeddings of $K$ into $\qbar$ by $\sigma,\overline{\sigma},\tau,\overline{\tau}$ as in the proof of Lemma \ref{deg4weil}. Then according to Lemma \ref{neweigenspacetranslate} with respect to the morphism of $\mathbb{Q}$-algebras $$i:K\rightarrow \mathrm{End}(V)$$ we have the eigenspace decomposition \begin{equation}\label{deg4keydecomp}
V_{\qbar}=V_{\sigma} \oplus V_{\overline{\sigma}} \oplus V_{\tau} \oplus V_{\overline{\tau}}\end{equation} and moreover we have $$\mathrm{dim}(V_{\sigma})=\mathrm{dim}(V_{\tau})=\mathrm{dim}(V_{\overline{\sigma}})=\mathrm{dim}(V_{\overline{\tau}})=2$$
Similar to Lemma \ref{eigenspacesubrep}, we have the following lemma.
\begin{lemma}\label{deg4eigenspacesubrep}
The eigenspaces $V_{\sigma}, V_{\tau}, V_{\overline{\sigma}}, V_{\overline{\tau}}$ from decomposition (\ref{deg4keydecomp}) are in fact irreducible subrepresentations of the canonical representation of de Rham-Betti Lie algebra $$\eta: \liegdrbmath(A)_{\qbar} \rightarrow \mathrm{gl}(V_{\qbar})$$ and its semisimplification $$\eta^{\mathrm{ss}}: \liegdrbssmath(A)_{\qbar }\rightarrow \mathrm{gl}(V_{\qbar})$$ 
\end{lemma}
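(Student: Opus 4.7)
The plan is to mirror the arguments already used in Lemma \ref{eigenspacesubrep} and Lemma \ref{irred} for the degree $2$ case, adapting them to accommodate four eigenspaces and the larger endomorphism algebra $K$ of degree $4$. First I would establish that each eigenspace $V_{\rho}$, for $\rho\in\{\sigma,\overline{\sigma},\tau,\overline{\tau}\}$, is stable under $\eta(\liegdrbmath(A)_{\qbar})$. The key input is Theorem \ref{mainfact} together with Lemma \ref{invarintslemma}, which identify $(V_{\qbar}\otimes V_{\qbar}^{*})^{\gdrbmath(A)_{\qbar}}$ with $K\otimes_{\mathbb{Q}}\qbar$. Since $\gdrbmath(A)$ is connected by Lemma \ref{gdrbconnectedness}, Lemma \ref{liealgebrainv} upgrades this to annihilation under the adjoint action of $\liegdrbmath(A)_{\qbar}$. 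Unwinding what this means for each $k\in K\otimes\qbar$ viewed as an endomorphism of $V_{\qbar}$, the operator $\eta(l)$ must commute with the $K$-action; the usual eigenvalue argument from Lemma \ref{eigenspacesubrep} then forces $\eta(l)$ to preserve each joint eigenspace $V_{\rho}$.

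For irreducibility, I would argue by contradiction. Suppose, for instance, that $V_{\sigma}$ decomposed as $V_1\oplus V_2$ as a representation of $\liegdrbssmath(A)_{\qbar}$. Then, by choosing arbitrary scalars, I would form
$$e=\alpha\,\mathrm{id}_{V_1}+\beta\,\mathrm{id}_{V_2}+\gamma\,\mathrm{id}_{V_{\overline{\sigma}}}+\delta\,\mathrm{id}_{V_{\tau}}+\varepsilon\,\mathrm{id}_{V_{\overline{\tau}}}\in\mathrm{End}(V_{\qbar})$$
with $\alpha\neq\beta$. Because $e$ acts by a scalar on every irreducible $\liegdrbssmath(A)_{\qbar}$-subspace, it is annihilated under the adjoint action of $\liegdrbssmath(A)_{\qbar}$. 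The main obstacle, and the step requiring the most care, will be showing that $e$ is also annihilated by the centre $\mathrm{Z}(\liegdrbmath(A)_{\qbar})$. For this I would invoke Proposition \ref{twocentressame} to identify the centre of $\liegdrbmath(A)$ with that of $\mathrm{mt}(A)$, and then Theorem \ref{deg2mz4} to see that this centre is realised in $\mathrm{gl}(V_{\qbar})$ by elements of $K\otimes\qbar$ (the purely imaginary part) together with homotheties. Each of those operators acts as a single scalar on every eigenspace $V_{\rho}$ and therefore commutes with $e$; invoking the decomposition $\liegdrbmath(A)=\mathrm{Z}(\liegdrbmath(A))\oplus\liegdrbssmath(A)$ from Lemma \ref{centremakessense}, we obtain annihilation of $e$ by all of $\liegdrbmath(A)_{\qbar}$.

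Combining the two steps, $e$ would lie in $(V_{\qbar}\otimes V_{\qbar}^{*})^{\liegdrbmath(A)_{\qbar}}=K\otimes\qbar$. However, every element of $K\otimes\qbar$ acts by a single scalar on each eigenspace $V_{\rho}$, while by construction $e$ has two distinct eigenvalues $\alpha\neq\beta$ on $V_{\sigma}$. This contradiction forces $V_{\sigma}$ to be irreducible under $\liegdrbssmath(A)_{\qbar}$, and by the symmetry of the setup the same conclusion holds for $V_{\overline{\sigma}}$, $V_{\tau}$ and $V_{\overline{\tau}}$. Finally, irreducibility under the semisimple part $\liegdrbssmath(A)_{\qbar}$ automatically yields irreducibility under the larger Lie algebra $\liegdrbmath(A)_{\qbar}$, since any $\liegdrbmath(A)_{\qbar}$-stable subspace of $V_{\rho}$ is a fortiori $\liegdrbssmath(A)_{\qbar}$-stable. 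Both assertions of the lemma then follow simultaneously.
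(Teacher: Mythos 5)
Your proposal is correct and follows essentially the same route as the paper's own (sketched) proof: stability of the eigenspaces via Theorem \ref{mainfact}, Lemma \ref{invarintslemma} and Lemma \ref{liealgebrainv} exactly as in Lemma \ref{eigenspacesubrep}, and irreducibility by the contradiction argument of Lemma \ref{irred}, using that the centre of $\liegdrbmath(A)$ acts by a single scalar on each eigenspace. The only cosmetic difference is that you source the description of the centre from Proposition \ref{twocentressame} and Theorem \ref{deg2mz4} (and test a single block-scalar element $e$), whereas the paper cites Lemma \ref{deg4weil} directly, which is the same underlying input.
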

In this paragraph we will denote the subrepresentations of $\eta^{\mathrm{ss}}$ corresponding to $V_{\sigma}, V_{\tau}, V_{\overline{\sigma}}, V_{\overline{\tau}}$ by $$\rho_{\sigma}, \rho_{\tau}, \rho_{\overline{\sigma}}, \rho_{\overline{\tau}}$$
\begin{proofsketch}
We can apply the same reasoning for Lemma \ref{eigenspacesubrep} to show that $V_{\iota}$s ($\iota\in\{\sigma,\sigmabar,\tau,\overline{\tau}\}$) are subrepresentations of $\mu_{\qbar}$. Moreover by Lemma \ref{deg4weil}, the centre of $\liegdrbssmath(A)$ lies in the following set
 $$\{\begin{pmatrix}
    a_1|_{V_{\sigma}}&&&\\
    &-a_1|_{V_{\overline{\sigma}}}&&\\
    &&a_2|_{V_{\tau}}&\\ &&&-a_2|_{V_{\overline{\tau}}}\end{pmatrix}|a_1,a_2\in\qbar\}$$ Then the method of proving Lemma \ref{irred} can be applied verbatim to show that $\rho_{\iota}$s ($\iota\in\{\sigma,\sigmabar,\tau,\overline{\tau}\}$) are indeed irreducible representations.
   
\end{proofsketch}
Then similar to Lemma \ref{isomdual}, using Lemma \ref{phicompatiblewithE} we obtain the following lemma.
\begin{lemma}\label{deg4isomdual}
 We keep the same notations as above. Given a polarization form $\phi$ on $V$, for any two embeddings $\iota,\iota' \in \mathrm{Hom}(K,\qbar)$, we have that $\phi_{\qbar}(V_{\iota},V_{\iota'})=0$ unless $\iota'=\overline{\iota}$. Also we have the isomorphism of representations $\rho_{\overline{\sigma}}\cong \rho_{\sigma}^{*}$ and  $\rho_{\overline{\tau}}\cong \rho_{\tau}^{*}$ induced by the restriction of $\phi_{\qbar}$ on $V_{\sigma} \oplus V_{\overline{\sigma}}$ and $V_{\tau} \oplus V_{\overline{\tau}}$.
\end{lemma}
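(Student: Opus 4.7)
The plan is to follow the template of Lemma \ref{isomdual} almost verbatim, with the only new input being to run the argument separately on the two pairs $(V_\sigma,V_{\overline{\sigma}})$ and $(V_\tau,V_{\overline{\tau}})$. The first assertion, that $\phi_{\qbar}(V_\iota,V_{\iota'})=0$ whenever $\iota'\neq\overline{\iota}$, is already a special case of the first statement of Lemma \ref{phicompatiblewithE}: the key point there is that for $v\in V_\iota$ and $w\in V_{\iota'}$ and any $k\in K$, the identity $\phi_{\qbar}(kv,w)=\phi_{\qbar}(v,\overline{k}w)$ (coming from the Rosati involution on $K$ agreeing with complex conjugation, Lemma \ref{rosaticmcoincides}) forces $\iota(k)\phi_{\qbar}(v,w)=\overline{\iota'(k)}\phi_{\qbar}(v,w)$ for all $k\in K$, which gives $\iota=\overline{\iota'}$ or $\phi_{\qbar}(v,w)=0$.

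Given the first statement, the restriction of $\phi_{\qbar}$ to $V_\sigma\times V_{\overline{\sigma}}$ must be non-degenerate: if some nonzero $v\in V_\sigma$ were in its left kernel, then $v$ would be orthogonal to every $V_\iota$ by the vanishing just proved, contradicting non-degeneracy of $\phi_{\qbar}$ on all of $V_{\qbar}$. The same argument works for $V_\tau\times V_{\overline{\tau}}$. Thus the map $\Phi_\sigma:V_\sigma\to V_{\overline{\sigma}}^{*}$, $v\mapsto\phi_{\qbar}(v,-)$, is a $\qbar$-linear isomorphism, and analogously for $\Phi_\tau$.

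Next I would verify that $\Phi_\sigma$ and $\Phi_\tau$ are $\liegdrbssmath(A)_{\qbar}$-equivariant, which is the content of the isomorphism of representations. For this, note that $\liegdrbssmath(A)=[\liegdrbmath(A),\liegdrbmath(A)]\subset[\mathrm{mt}(A),\mathrm{mt}(A)]\subset\mathrm{hdg}(A)$, and $\mathrm{hdg}(A)$ preserves $\phi$ infinitesimally (the polarization is a morphism of Hodge structures up to a Tate twist). Differentiating this invariance yields, for any $l\in\liegdrbssmath(A)_{\qbar}$, any $v_\sigma\in V_\sigma$, and any $v_{\overline{\sigma}}\in V_{\overline{\sigma}}$, the relation
\begin{equation*}
\phi_{\qbar}(\rho_\sigma(l)v_\sigma,v_{\overline{\sigma}})+\phi_{\qbar}(v_\sigma,\rho_{\overline{\sigma}}(l)v_{\overline{\sigma}})=0,
\end{equation*}
which is precisely the condition that $\Phi_\sigma$ intertwines $\rho_\sigma$ with the dual representation $\rho_{\overline{\sigma}}^{*}$. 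The identical argument applied on $V_\tau\oplus V_{\overline{\tau}}$ gives $\rho_{\overline{\tau}}\cong\rho_\tau^{*}$.

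I do not anticipate any real obstacle here, since both the orthogonality statement and the equivariance of $\Phi_\sigma,\Phi_\tau$ are immediate consequences of results already in hand (Lemmas \ref{phicompatiblewithE}, \ref{rosaticmcoincides}, and the infinitesimal invariance of the polarization under $\mathrm{hdg}(A)$ used in Lemma \ref{isomdual}). The only small point worth double-checking is that the nondegeneracy of $\phi_{\qbar}$ on the full space $V_{\qbar}$ combined with the block-vanishing really does force nondegeneracy on each individual pair $(V_\iota,V_{\overline{\iota}})$, which is an elementary linear-algebra verification rather than a genuine difficulty.
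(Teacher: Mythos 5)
Your proposal is correct and follows exactly the route the paper intends: the paper's proof simply declares the argument identical to Lemma \ref{phicompatiblewithE} and Lemma \ref{isomdual}, which is precisely what you carry out (block-vanishing from the Rosati involution, nondegeneracy of $\phi_{\qbar}$ on each pair $(V_\iota,V_{\overline{\iota}})$, and equivariance via the infinitesimal invariance of $\phi$ under $\liegdrbssmath(A)\subset\mathrm{hdg}(A)$). No gaps; your explicit nondegeneracy check is a welcome detail the paper leaves implicit.
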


\begin{proof}
    It is identical to Lemma \ref{phicompatiblewithE} and Lemma \ref{isomdual}.
\end{proof}

\end{setup}

\subsection{Classification of Possible De Rham-Betti Representations}
Recall that we aim to study isomorphism classes of representations of semisimple Lie algebras defined over $\mathbb{Q}$ which could be candidates for the canonically defined $$\eta: \liegdrbssmath(A)\rightarrow \mathrm{gl}(V)$$ In this section we demonstrate two preliminary results for simple type IV abelian fourfolds.

First we consider the case where $A$ is a simple abelian fourfold defined over $\qbar$ whose endomorphism field $K$ is a degree 2 CM-field. We will be using notations and results from Setup \ref{sectiondeg2}. Note that $\mathrm{dim}(V_{\sigma})=\mathrm{dim}(V_{\sigmabar})=4$. Using this dimension constraint, we can already restrict the study of $\rho_{\sigma}$ and $\rho_{\sigmabar}$ to certain isomorphism classes of Lie algebra representations listed in the following proposition. We will be using notations from \cite{humphreys2012introduction}.
\begin{proposition} \label{reprclassification}
We use notations from Setup \ref{sectiondeg2}. In order to determine $\rho_{\sigma}:\liegdrbssmath(A)_{\qbar}\rightarrow\mathrm{gl}(V_{\sigma})$, it suffices to consider the following isomorphism classes of Lie algebra representations
\begin{enumerate}
    \item\label{sl2degk2} $L \otimes \qbar=\mathrm{sl}(2)$ and the representation $L \otimes \qbar\rightarrow\mathrm{gl}(V_{\sigma})$ is isomorphic to V(3)
    \item\label{sl2timessl2degk2} $L\otimes \qbar=\mathrm{sl}(2) \times \mathrm{sl}(2)$ and the representation $L \otimes \qbar\rightarrow\mathrm{gl}(V_{\sigma})$ is isomorphic to $V(1) \boxtimes V(1)$
    \item\label{sp4degk2}  $L \otimes \qbar=\mathrm{B}_2 \cong \mathrm{C}_2=\mathrm{sp}(4,\qbar)$ and the representation $L \otimes \qbar\rightarrow\mathrm{gl}(V_{\sigma})$ is isomorphic to $V(\lambda_2)$ where $\lambda_2$ is the fundamental weight associated with the short root $\alpha_2$ of $\mathrm{B}_2$. In fact this is the standard representation of $\mathrm{sp}(4,\qbar)$.
    \item $L \otimes \qbar=\mathrm{sl}(4)$. In this case the image of the representation $L \otimes \qbar\rightarrow\mathrm{gl}(V_{\sigma})$ is the entire $\mathrm{sl}(V_{\sigma})$.
\end{enumerate}
\end{proposition}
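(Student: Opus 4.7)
The plan is to reduce the problem to the well-known classification of absolutely irreducible faithful representations of semisimple complex Lie algebras in dimension four, and then verify that the listed cases exhaust the possibilities.

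The first step is to establish that $\rho_{\sigma}$ is faithful. By Lemma \ref{centremakessense}, $\liegdrbmath(A) = \mathrm{Z}(\liegdrbmath(A)) \oplus \liegdrbssmath(A)$, so the restriction of the tautologically faithful representation $\eta$ to $\liegdrbssmath(A)_{\qbar}$ is still faithful. Under decomposition (\ref{keydecomp}), the semisimplification of this restriction splits as $\rho_{\sigma} \oplus \rho_{\overline{\sigma}}$, and by Lemma \ref{isomdual} one has $\rho_{\overline{\sigma}} \cong \rho_{\sigma}^{*}$. Since an element of a Lie algebra acts as zero in a representation iff it acts as zero in its dual, $\ker \rho_{\sigma} = \ker \rho_{\overline{\sigma}}$, and faithfulness of the direct sum then forces $\ker \rho_{\sigma} = 0$.

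Next I would exploit the resulting setup: the semisimple Lie algebra $\liegdrbssmath(A)_{\qbar}$ acts faithfully and absolutely irreducibly (Lemma \ref{irred}) on the four-dimensional space $V_{\sigma}$. Decomposing it as a product $L_{1} \times \cdots \times L_{r}$ of simple factors, an absolutely irreducible representation factors as an external tensor product $W_{1} \boxtimes \cdots \boxtimes W_{r}$ of absolutely irreducible representations $W_{i}$ of $L_{i}$, each of dimension at least $2$ by faithfulness. Since $\prod \dim W_{i} = 4$, either $r = 2$ with $\dim W_{1} = \dim W_{2} = 2$, or $r = 1$ with $\dim W_{1} = 4$. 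In the former case, the only simple complex Lie algebra admitting a faithful two-dimensional irreducible representation is $\mathrm{sl}(2)$ acting through its standard representation $V(1)$, yielding case (\ref{sl2timessl2degk2}).

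In the case $r = 1$, I would consult the tables of irreducible representations of simple complex Lie algebras (see for instance \cite[Chapters III, VII]{humphreys2012introduction}). The only simple Lie algebras admitting a four-dimensional irreducible faithful representation are $\mathrm{sl}(2)$ via the symmetric cube $V(3)$, $\mathrm{sp}(4) = \mathrm{B}_{2} = \mathrm{C}_{2}$ via its standard four-dimensional representation (the spin representation of $\mathrm{so}(5)$, with highest weight $\lambda_{2}$), and $\mathrm{sl}(4)$ via its standard representation, which exhausts cases (\ref{sl2degk2}), (\ref{sp4degk2}), and the last item in the proposition. All other simple Lie algebras are ruled out by dimension counts on their minimal nontrivial irreducible representations: $\mathrm{sl}(3)$ has smallest irreducible representations of dimensions $3$, $6$, $8$; $\mathrm{G}_{2}$ has minimum dimension $7$; and $\mathrm{sp}(6)$, $\mathrm{so}(7)$, $\mathrm{sl}(n)$ for $n \geq 5$, and all higher-rank simple algebras have minimum faithful irreducible dimensions strictly greater than $4$.

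The main obstacle is purely organizational: one must carefully verify that the case-by-case enumeration above is exhaustive, a finite check that can be carried out using Weyl's dimension formula or the fundamental-weight tables. No new input beyond the preparatory results of Setup \ref{sectiondeg2} is required.
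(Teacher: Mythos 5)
Your proposal is correct and follows essentially the same route as the paper: both arguments classify four-dimensional absolutely irreducible faithful representations of semisimple Lie algebras over $\qbar$, treating the non-simple case via external tensor products and the simple case by a finite dimension check (the paper bounds $\mathrm{dim}(\liegdrbssmath(A)_{\qbar})\leq 15$ and runs through the Killing--Cartan types with Weyl's dimension formula, while you invoke the standard minimal-dimension facts for irreducible representations of simple Lie algebras). Your explicit verification that $\rho_{\sigma}$ is faithful, using $\rho_{\overline{\sigma}}\cong\rho_{\sigma}^{*}$ from Lemma \ref{isomdual}, is a nice addition, as the paper merely asserts that it suffices to consider faithful representations.
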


\begin{proof}
Firstly $\liegdrbssmath(A)_{\qbar}$ is a semisimple Lie algebra by definition and $\rho_{\sigma}$ is an irreducible representation by Lemma \ref{irred}. Moreover, it suffices to consider faithful representations for candidates of $\rho_{\sigma}$. Note that the image of $\rho_{\sigma}: \liegdrbssmath(A)_{\qbar} \rightarrow \mathrm{gl}(V_{\sigma})$ lies in $\mathrm{sl}(V_{\sigma})$. This is because $\liegdrbssmath(A)_{\qbar}$ is semisimple, therefore 
    $\rho_{\sigma}(\liegdrbssmath(A)_{\qbar})=\rho_{\sigma}([\liegdrbssmath(A)_{\qbar},\liegdrbssmath(A) _{\qbar}])=[\rho_{\sigma}(\liegdrbssmath(A)_{\qbar}),\rho_{\sigma}(\liegdrbssmath(A)_{\qbar}]\subset \mathrm{sl}(V_{\sigma})$.
Hence, for possible candidates of $\liegdrbssmath(A)_{\qbar}$, which we denote by $L \otimes \qbar$, it suffices to consider isomorphism classes of semisimple Lie algebras whose dimensions are smaller than or equal to the dimension of $\mathrm{sl}(V_{\sigma})$, which is equal to 15. \begin{enumerate}
    \item Suppose $L \otimes \qbar$ is of the type $\mathrm{A}_{l}$. Then it is isomorphic to one of $\mathrm{sl}(2)$, $\mathrm{sl}(3)$ or $\mathrm{sl}(4)$.
    \begin{enumerate}
        \item Suppose $L \otimes \qbar$ is isomorphic to $\mathrm{sl}(2)$, then by Section 2.7 of \cite{humphreys2012introduction}, the only 4-dimensional irreducible representation of $\mathrm{sl}(2)$ is $V(3)$.
        \item Suppose $L \otimes \qbar$ is isomorphic to $\mathrm{sl}(3)$ i.e. $\mathrm{A}_2$. Denote the two fundamental weights by $\lambda_1$ and $\lambda_2$. By Weyl's dimension formula, (see Example in Section 24.3 of \cite{humphreys2012introduction}), for irreducible representation of the form $V(m_1\lambda_1+m_2\lambda_2)$, we have $$\mathrm{dim}(V(m_1\lambda_1+m_2\lambda_2))=\frac{1}{2}(m_1+1)(m_2+1)(m_1+m_2+2)$$ where $m_1,m_2\in \mathbb{Z}_{\geq0}$. Then we need $$\frac{1}{2}(m_1+1)(m_2+1)(m_1+m_2+2)=4$$ But a small computation shows that this is impossible.
        \item Suppose $L \otimes \qbar$ is isomorphic to $\mathrm{sl}(4)$. Then because we have required $\rho_{\sigma}$ to be faithful, its image under $\rho_{\sigma}$ is the entire $\mathrm{sl}(V_{\sigma})$. 
    \end{enumerate}
    \item Suppose $L \otimes \qbar$ is of the type $\mathrm{B}_{l}$ or $\mathrm{C}_{l}$. Then by Section 1.2 of \cite{humphreys2012introduction}, dim$(\mathrm{B}_{l})$=dim$(\mathrm{C}_{l})=2l^2+l(l\geq2)$. Hence the only possible option is $l=2$, where they are both isomorphic to the simple Lie algebra $\mathrm{sp}(4)$ (see Chapter 11 and 14 of \cite{humphreys2012introduction}). Then denote the two fundamental weights of $\mathrm{B}_2$ by $\lambda_1$ and $\lambda_2$, where $\alpha_2$ is the short root. By Weyl's dimension formula (again, see Example in Section 24.3 of \cite{humphreys2012introduction}) for irreducible representation of the form $V(m_1\lambda_1+m_2\lambda_2)$, we have $$\mathrm{dim}(V(m_1\lambda_1+m_2\lambda_2))=\frac{1}{3!}(m_1+1)(m_2+1)(m_1+m_2+2)(2m_1+m_2+3)$$ and this is equal to 4 when $m_1=0,m_2=1$. Hence in this case $\rho_{\sigma}$ is given by $V(\lambda_2)$. To see this is the standard representation of $\mathrm{sp}(4,\qbar)$, note that according to the Weyl dimension formula above and the highest weight theory (for example see Chapter 20 and 21 of \cite{humphreys2012introduction}), the smallest dimension an irreducible representations of $\mathrm{sp}(4)$ can have is 4 and $V(\lambda_2)$ is the unique irreducible representation of dimension 4 up to isomorphism. Because $\mathrm{sp}(4,\qbar)$ is a simple Lie algebra, by Weyl's complete reducibility theorem (for example see Chapter 6 of \cite{humphreys2012introduction}), the standard representation $\mathrm{sp}(4,\qbar) \rightarrow \mathrm{gl}(4)$ has to be isomorphic to $V(\lambda_2)$.
    \item Suppose $L \otimes \qbar$ is isomorphic to $\mathrm{D}_{l}$. By the classification of Dynkin diagrams in 11.4 of \cite{humphreys2012introduction}, it suffices to consider $D_l$s where $l\geq4$. By Section 1.2 of \cite{humphreys2012introduction}, dim$(\mathrm{D}_{l})=2l^2-2>15$.  By the above discussion, we do not need to consider this possibility.
    \item Suppose $L \otimes \qbar$ is an exceptional simple Lie algebra. Then the only one whose dimension is smaller than $15$ is $\mathrm{G}_2$. Denote the two fundamental weights of $\mathrm{G}_2$ by $\lambda_1$ and $\lambda_2$, where $\alpha_1$ is the short root. Again by Weyl's dimension formula an irreducible representation of the form $V(m_1\lambda_1+m_2\lambda_2)$ has dimension equal to $\frac{1}{5!}(m_1+1)(m_2+1)(m_1+m_2+2)(m_1+2m_2+3)(m_1+3m_2+4)(2m_1+3m_2+5)$ where $m_1,m_2\in \mathbb{Z}_{\geq0}$. A short computation shows that this formula cannot be equal to 4.
    \item Finally we suppose $L\otimes \qbar$ is a direct sum of simple Lie algebras. By \cite[Part 1, Chapter 6, Exercise 7b]{serreliealgebra}, if $g=g_1\oplus g_2$ is a direct sum of two Lie algebras over $\qbar$, then a representation $W$ of $g$ is irreducible if and only it is the form $W_1 \boxtimes W_2$ where $W_1$ and $W_2$ are irreducible representations of $g_1$ and $g_2$. Note that in this case $\mathrm{dim}(W)=\mathrm{dim}(W_1)\mathrm{dim}(W_2)$. Because $\mathrm{dim}(V_{\sigma})=4$, it suffices to consider the case where $L \otimes \qbar=g_1\oplus g_2$ is the direct sum of two simple Lie algebras and $\rho_{\sigma} \cong W_1 \boxtimes W_2$ where $W_1$ and $W_2$ are two-dimensional irreducible representations of $g_1$ and $g_2$ respectively. Then using previous results, we deduce that the only possibility is $L \otimes \qbar=\mathrm{sl}(2)\times \mathrm{sl}(2)$ and the representation $L \otimes \qbar\rightarrow\mathrm{gl}(V_{\sigma})$ is given by $V(1) \boxtimes V(1)$.
\end{enumerate}
\end{proof}
Now we consider the case where $A$ is a simple abelian fourfold whose endomorphism field $K$ is a degree 4 CM-field. We will be using results and notations from Setup \ref{deg4setup}. Similar to Proposition \ref{reprclassification}, in the following Lemma, we will use the condition that $\mathrm{dim}(V_{\sigma})=\mathrm{dim}(V_{\tau})=2$ to constrain possibilities for isomorphism classes of representations $\eta^{\mathrm{ss}}_{\qbar}:\liegdrbssmath(A)_{\qbar} \rightarrow \mathrm{sl}(V_{\qbar})$. 

\begin{lemma}\label{deg4repclassification}
  The images of $\rho_{\sigma}:\liegdrbssmath(A) \rightarrow \mathrm{gl}(V_{\sigma})$ and $\rho_{\tau}: \liegdrbssmath(A) \rightarrow \mathrm{gl}(V_{\tau})$ are $\mathrm{sl}(V_{\sigma})$ and $\mathrm{sl}(V_{\tau})$. In fact, it suffices to consider the following two candidates for the isomorphism class of the representation $\liegdrbssmath(A)_{\qbar} \rightarrow \mathrm{gl}(V_{\qbar})$.
  \begin{enumerate}
      \item\label{case1} $\liegdrbssmath(A)_{\qbar}\cong\mathrm{sl}(2)$ and its image under $\rho_{\sigma} \oplus \rho_{\tau}$ in $\mathrm{sl}(V_{\sigma}) \times \mathrm{sl}(V_{\tau})$ is the graph of a Lie algebra isomorphism between $\mathrm{sl}(V_{\sigma})$ and $\mathrm{sl}(V_{\tau})$.
      \item $\liegdrbssmath(A)_{\qbar}\cong\mathrm{sl}(2) \times \mathrm{sl}(2)$ and its image under $\rho_{\sigma} \oplus \rho_{\tau}$ is the entire $\mathrm{sl}(V_{\sigma}) \times \mathrm{sl}(V_{\tau})$.
  \end{enumerate}
  
\end{lemma}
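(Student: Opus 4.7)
The plan is to follow a strategy very similar to Proposition \ref{reprclassification}, but to exploit the smaller dimensions $\dim V_{\sigma}=\dim V_{\tau}=2$ to narrow down the possibilities drastically, and then to invoke a Goursat-type argument for Lie algebras to finish.

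First, I would show that each of $\rho_{\sigma}$ and $\rho_{\tau}$ is surjective onto $\mathrm{sl}(V_{\sigma})$ and $\mathrm{sl}(V_{\tau})$. Exactly as at the start of Proposition \ref{reprclassification}, since $\liegdrbssmath(A)_{\qbar}=[\liegdrbmath(A)_{\qbar},\liegdrbmath(A)_{\qbar}]$, the images of $\rho_{\sigma}$ and $\rho_{\tau}$ land in $\mathrm{sl}(V_{\sigma})$ and $\mathrm{sl}(V_{\tau})$. The image of a semisimple Lie algebra is itself semisimple (being a quotient of one), so we need to understand the semisimple subalgebras of $\mathrm{sl}(V_{\sigma})\cong\mathrm{sl}(2,\qbar)$: since $\mathrm{sl}(2,\qbar)$ is simple of dimension $3$, the only such subalgebras are $0$ and the whole of $\mathrm{sl}(2,\qbar)$. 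Since $\rho_{\sigma}$ is irreducible by Lemma \ref{deg4eigenspacesubrep} and $V_{\sigma}$ has dimension $2$, the image cannot be $0$; hence it is the entire $\mathrm{sl}(V_{\sigma})$, and similarly for $\rho_{\tau}$.

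Next, I would form the combined map
\[
\rho_{\sigma}\oplus\rho_{\tau}:\liegdrbssmath(A)_{\qbar}\longrightarrow \mathrm{sl}(V_{\sigma})\times \mathrm{sl}(V_{\tau})
\]
and establish its injectivity. The canonical representation $\eta$ is faithful (coming from the inclusion $\gdrbmath(A)\hookrightarrow\mathrm{GL}(V)$), and its restriction $\eta^{\mathrm{ss}}_{\qbar}$ to $\liegdrbssmath(A)_{\qbar}$ is therefore also faithful. By Lemma \ref{deg4isomdual}, the polarization form identifies $\rho_{\overline{\sigma}}$ with $\rho_{\sigma}^{*}$ and $\rho_{\overline{\tau}}$ with $\rho_{\tau}^{*}$, so these dual representations share the same kernels as Lie algebra representations. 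Consequently,
\[
\ker(\rho_{\sigma}\oplus\rho_{\tau})=\ker(\rho_{\sigma})\cap\ker(\rho_{\tau})=\ker(\eta^{\mathrm{ss}}_{\qbar})=0.
\]
Thus $\liegdrbssmath(A)_{\qbar}$ embeds into $\mathrm{sl}(V_{\sigma})\times\mathrm{sl}(V_{\tau})$ and, by the previous paragraph, projects onto each factor.

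Finally, I would apply the Lie algebra version of Goursat's lemma. Writing $N_{1}=\liegdrbssmath(A)_{\qbar}\cap(\mathrm{sl}(V_{\sigma})\times 0)$ and $N_{2}=\liegdrbssmath(A)_{\qbar}\cap(0\times\mathrm{sl}(V_{\tau}))$, both $N_{1}$ and $N_{2}$ are ideals of their respective $\mathrm{sl}(2)$-factors, and the projection induces an isomorphism $\mathrm{sl}(V_{\sigma})/N_{1}\cong \mathrm{sl}(V_{\tau})/N_{2}$. Since $\mathrm{sl}(2,\qbar)$ is simple, each $N_{i}$ is either $0$ or the whole factor; surjectivity of the projections rules out the mixed cases, leaving only $N_{1}=N_{2}=\mathrm{sl}(2,\qbar)$, which yields $\liegdrbssmath(A)_{\qbar}\cong \mathrm{sl}(V_{\sigma})\times\mathrm{sl}(V_{\tau})$ (case 2), or $N_{1}=N_{2}=0$, which yields $\liegdrbssmath(A)_{\qbar}\cong\mathrm{sl}(2)$ realized as the graph of a Lie algebra isomorphism $\mathrm{sl}(V_{\sigma})\xrightarrow{\sim}\mathrm{sl}(V_{\tau})$ (case 1). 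Essentially every step is a formal consequence of material already developed in the excerpt; there is no substantive obstacle, only the need to package the Goursat argument in the Lie algebraic setting — which is really the same short argument as Lemma \ref{goursat}, since the classical proof transfers verbatim once one replaces ``normal subgroup'' by ``ideal.''
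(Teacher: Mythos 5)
Your proposal is correct and follows essentially the same route as the paper: images of $\rho_{\sigma},\rho_{\tau}$ are full copies of $\mathrm{sl}(2)$ by semisimplicity, irreducibility and the dimension count, then a Goursat-type argument on $\rho_{\sigma}\oplus\rho_{\tau}$ combined with the dualities $\rho_{\overline{\sigma}}\cong\rho_{\sigma}^{*}$, $\rho_{\overline{\tau}}\cong\rho_{\tau}^{*}$ from Lemma \ref{deg4isomdual} yields exactly the two cases. Your only deviation is to make explicit the faithfulness of $\rho_{\sigma}\oplus\rho_{\tau}$ (which the paper leaves implicit), and this is a correct and harmless refinement.
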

\begin{proof}
Because $\liegdrbssmath(A)_{\qbar}$ is a semisimple Lie algebra, by Corollary 5.3 of \cite{humphreys2012introduction}, its images under $\rho_{\sigma}$ and $\rho_{\tau}$ are semisimple Lie algebras inside $\mathrm{sl}(V_{\sigma})$ and $\mathrm{sl}(V_{\tau})$. But $\mathrm{dim}(V_{\sigma})= \mathrm{dim}(V_{\tau})=2$ hence the images are either $\{0\}$ or the entire $\mathrm{sl}(V_{\sigma})$ or $\mathrm{sl}(V_{\tau})$. By Lemma \ref{deg4isomdual}, $\rho_{\sigma}$ and $\rho_{\tau}$ are irreducible representations. Hence their images are the entire $\mathrm{sl}(V_{\sigma})$ and $\mathrm{sl}(V_{\tau})$.

Denoting $\rho_{\sigma} \oplus \rho_{\tau}$ by $\iota$, we have the following commutative diagram
    $$\begin{tikzcd}
                                                                                                                                          & \mathrm{sl}(V_{\tau})   &                                                                                                               \\
\liegdrbssmath(A)_{\qbar} \arrow[rr,"\iota"] \arrow[ru, "\rho_{\tau}=p_2\circ \iota", two heads] \arrow[rd, "\rho_{\sigma}=p_1\circ \iota"', two heads] &                         & \mathrm{sl}(V_{\sigma})\times\mathrm{sl}(V_{\tau}) \arrow[ld, "p_1", two heads] \arrow[lu, "p_2"', two heads] \\
                                                                                                                                          & \mathrm{sl}(V_{\sigma}) &                                                                                                              
\end{tikzcd}$$
By Goursat's Lemma \ref{goursat} and the fact that $\mathrm{sl}(2)$ is a simple Lie algebra, we have that $\iota(\liegdrbssmath(A)_{\qbar})$ is either $\mathrm{sl}(V_{\sigma}) \times \mathrm{sl}(V_{\tau})$ or the graph of a Lie algebra isomorphism between $\mathrm{sl}(V_{\sigma})$ and $\mathrm{sl}(V_{\tau})$. However, by Lemma \ref{deg4isomdual}, we have the isomorphism of representations $\rho_{\overline{\sigma}}\cong \rho_{\sigma}^{*}$ and  $\rho_{\overline{\tau}}\cong \rho_{\tau}^{*}$. Therefore the image of $\liegdrbssmath(A)_{\qbar}$ in $\mathrm{sl}(V_{\qbar})$ is completely determined by its restriction to $\mathrm{sl}(V_{\sigma}) \times \mathrm{sl}(V_{\tau})$. Hence $\liegdrbssmath(A)$ is either isomorphic to a $\qbar/\mathbb{Q}$ form of $\mathrm{sl}(2)$ or isomorphic to a $\qbar/\mathbb{Q}$ form of $\mathrm{sl}(2) \times \mathrm{sl}(2)$.
\end{proof}
\subsection{De Rham-Betti Groups of Simple Weil Type Abelian Fourfolds}
Using the classification result i.e. Proposition \ref{reprclassification} together with some basic invariant theory, we can already determine $\gdrbmath(A)$ where $A$ is a simple Weil-type abelian fourfold defined over $\qbar$. The method is the same as computing the Mumford-Tate group of a a simple Weil type abelian fourfold (\cite[Section 7.4]{mz-4-folds}). We fill in more details as well as the necessary computations. However, for a simple anti-Weil type abelian fourfold, situations are different. This is discussed in Key Remark \ref{whyweilproofdoesnotwork} which leads to the computations and discussions in Section \ref{antiweilsection}.

\begin{proposition}\label{weiltype}
Suppose $A$ is a simple Weil-type abelian fourfold defined over $\qbar$. Then $$\liegdrbssmath(A)_{\qbar}=\mathrm{sl}_4$$ In this case we have that $\gdrbmath(A)=\mathrm{MT}(A)$. 
\end{proposition}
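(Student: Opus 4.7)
The strategy is to apply Proposition \ref{reprclassification} to $\rho_\sigma \colon \liegdrbssmath(A)_{\qbar} \to \mathrm{gl}(V_\sigma)$, which yields four possible isomorphism classes, and to eliminate the first three using the constraint $\dim_{\mathbb{Q}} \mathrm{End}_{\mathrm{dRB}}(V) = \dim_{\mathbb{Q}} K = 2$ from Theorem \ref{mainfact}. The key point is that in cases (\ref{sl2degk2}), (\ref{sl2timessl2degk2}) and (\ref{sp4degk2}) the representation $\rho_\sigma$ is self-dual: every irreducible $\mathrm{sl}(2)$-module is self-dual (so $V(3)$ and $V(1)$ are both self-dual, hence $V(1)\boxtimes V(1)$ is too), and the standard representation of $\mathrm{sp}(4)$ is self-dual because it preserves a symplectic form. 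Combined with the isomorphism $\rho_{\sigmabar} \cong \rho_\sigma^*$ from Lemma \ref{isomdual}, self-duality forces $\rho_\sigma \cong \rho_{\sigmabar}$ as $\liegdrbssmath(A)_{\qbar}$-modules in these three cases.

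Since $V_\sigma$ and $V_{\sigmabar}$ are irreducible (Lemma \ref{irred}), Schur's lemma then produces a four-dimensional $\qbar$-space of $\liegdrbssmath(A)_{\qbar}$-invariants in $V\otimes V^*$, spanned by the two diagonal identities together with two intertwiners between $V_\sigma$ and $V_{\sigmabar}$. To convert this into a contradiction with Theorem \ref{mainfact}, I need the analogous statement for $\liegdrbmath(A)_{\qbar}$-invariants. For Weil type abelian fourfolds, Proposition \ref{twocentressame} combined with Theorem \ref{deg2mz4} identifies $\mathrm{Z}(\liegdrbmath(A)) = \mathrm{Z}(\mathrm{mt}(A)) = \mathrm{Lie}(\gmmath)$, which acts by scalars on $V$ (Lemma \ref{existenceofhomothety}) and hence trivially on $V\otimes V^*$. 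Therefore $(V\otimes V^*)^{\liegdrbmath(A)_{\qbar}} = (V\otimes V^*)^{\liegdrbssmath(A)_{\qbar}}$, and the four-dimensional lower bound contradicts $\dim_{\qbar}(K\otimes\qbar) = 2$, ruling out cases (\ref{sl2degk2})--(\ref{sp4degk2}).

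The remaining case forces $\liegdrbssmath(A)_{\qbar} = \mathrm{sl}(V_\sigma) \cong \mathrm{sl}_4$. To conclude $\gdrbmath(A) = \mathrm{MT}(A)$, I observe that for Weil type Theorem \ref{deg2mz4} gives $\mathrm{hdg}(A) = \mathrm{su}_E(V,\psi)$, whose base change to $\qbar$ is $\mathrm{sl}_4$; since $\mathrm{hdg}(A)$ is semisimple in this case, it equals $\mathrm{mt}^{\mathrm{ss}}(A)$, so $\liegdrbssmath(A) = \mathrm{mt}^{\mathrm{ss}}(A)$. Combined with the center agreement from Proposition \ref{twocentressame}, this yields $\liegdrbmath(A) = \mathrm{mt}(A)$. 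Since both $\gdrbmath(A)$ (Lemma \ref{gdrbconnectedness}) and $\mathrm{MT}(A)$ are connected algebraic subgroups of $\mathrm{GL}(V)$, equality of Lie algebras together with the inclusion $\gdrbmath(A) \subset \mathrm{MT}(A)$ from Theorem \ref{keyinclusion} implies equality of the groups. The main obstacle is the bookkeeping between the semisimple part and the center of $\liegdrbmath(A)$: the self-duality argument only controls semisimple invariants, so translating this into a contradiction with Theorem \ref{mainfact} fundamentally relies on Proposition \ref{twocentressame} to pin down the center.
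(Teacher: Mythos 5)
Your proposal is correct, and it shares the paper's overall skeleton: reduce to the four cases of Proposition \ref{reprclassification}, identify the centre via Theorem \ref{deg2mz4} and Proposition \ref{gmintype4} (equivalently Proposition \ref{twocentressame}), rule out the first three cases by an invariant-theoretic contradiction, and finally upgrade $\liegdrbssmath(A)=\mathrm{mt}^{\mathrm{ss}}(A)$ to $\gdrbmath(A)=\mathrm{MT}(A)$ using the inclusion of Theorem \ref{keyinclusion} and connectedness. The genuine difference is in how the first three cases are eliminated. The paper proceeds case by case: it writes down an explicit $\liegdrbssmath(A)_{\qbar}$-invariant vector, in $\wedge^2V_{\sigma}$ for $V(3)$ and for the standard $\mathrm{sp}(4)$-representation, and in $V_{\sigma}\otimes V_{\sigma}\cong\mathrm{Hom}(V_{\sigmabar},V_{\sigma})$ for $V(1)\boxtimes V(1)$, and contradicts the counts of invariants in $\mathrm{H}^2(A,\mathbb{Q})$ (one-dimensional, via Lemma \ref{rosaticmcoincides} together with the twist bookkeeping of Corollary \ref{gdrbhinv}) and in $\mathrm{End}(\betti)$ (two-dimensional, via Theorem \ref{mainfact}). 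You instead observe uniformly that in all three cases $\rho_{\sigma}$ is self-dual, so Lemma \ref{isomdual} forces $\rho_{\sigma}\cong\rho_{\sigmabar}$, and Schur's lemma then gives a four-dimensional space of $\liegdrbssmath(A)_{\qbar}$-invariants in $\mathrm{End}(V_{\qbar})$, against the two-dimensional bound coming from Theorem \ref{mainfact} once the centre (being homotheties) is seen to act trivially on $V\otimes V^{*}$; the explicit tensors the paper exhibits are precisely the invariant pairings witnessing the self-duality you invoke. Your route buys a uniform elimination that avoids the degree-two cohomology computation entirely (and hence Lemma \ref{rosaticmcoincides} and Corollary \ref{gdrbhinv}), while the paper's computation is more hands-on but spells out the invariant vectors. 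One small point worth making explicit in your write-up: passing from $\mathrm{End}_{\mathrm{dRB}}(\betti)=(V\otimes V^{*})^{\gdrbmath(A)}$ to Lie-algebra invariants over $\qbar$ uses the connectedness of $\gdrbmath(A)$ (Lemma \ref{gdrbconnectedness}) together with Lemma \ref{liealgebrainv} and Lemma \ref{invarintslemma}, which you use implicitly.
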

\begin{proof}
We will be using notations from Setup \ref{sectiondeg2}. From Theorem \ref{deg2mz4} and Proposition \ref{gmintype4}, we know that $\mathrm{Z}(\gdrbmath(A))=\mathrm{Z}(\mathrm{MT}(A))=\gmmath$. By Section 7.4 of \cite{mz-4-folds}, we have that $\mathrm{mt}^{\mathrm{ss}}(A)_{\qbar}=\mathrm{sl}_4$. Hence if we can show that $\liegdrbssmath(A)_{\qbar} =\mathrm{sl}_4$ then because there is a natural inclusion $\liegdrbssmath(A)\subset\mathrm{mt}^{\mathrm{ss}}(A)$ defined over $\mathbb{Q}$, we will obtain that $\liegdrbssmath(A)=\mathrm{mt}^{\mathrm{ss}}(A)$ and thus $\liegdrbmath(A)=\mathrm{mt}(A)$. Because $\mathrm{MT}(A)$ is a connected algebraic group, we have that $\gdrbmath(A)=\mathrm{MT}(A)$. The rest of the proof is to show that $\liegdrbssmath(A)_{\qbar}=\mathrm{sl}_4$. 

Because $\mathrm{Z}(\gdrbmath(A))=\gmmath$, we have that $\mathrm{Lie}\gdrbhmath(A)=\liegdrbssmath(A)$. Therefore by Lemma \ref{liealgebrainv} and Corollary \ref{gdrbhinv}, we have  $$\mathrm{H}^2(A,\mathbb{Q})^{\liegdrbssmath(A)} \otimes \mathbb{Q}_{\mathrm{dRB}}(1)=(\mathrm{H}^2(A,\mathbb{Q}) \otimes \mathbb{Q}_{\mathrm{dRB}}(1))^{\gdrbmath(A)}$$ and hence \begin{equation}\label{deg2eq}
\mathrm{H}^2(A,\qbar)^{\liegdrbssmath(A)_{\qbar}} \otimes \qbar_{\mathrm{dRB}}(1)=(\mathrm{H}^2(A,\qbar) \otimes \qbar_{\mathrm{dRB}}(1))^{\gdrbmath(A)(\qbar)}    
\end{equation} For simplicity of the notation, we will drop the twists in the rest of the proof when no confusion arises. Now \begin{equation}\label{degree2cohdecomp}\mathrm{H}^2(A,\qbar)=\wedge^2(V_{\sigma} \oplus V_{\overline{\sigma}})=\wedge^2V_{\sigma} \oplus \wedge^2 V_{\sigmabar} \oplus V_{\sigma} \otimes V_{\overline{\sigma}}\end{equation} By Lemma \ref{isomdual}, the two irreducible representations $\rho_{\sigma}: \liegdrbssmath(A) _{\qbar} \rightarrow \mathrm{gl}(V_{\sigma})$ and $\rho_{\overline{\sigma}}: \liegdrbssmath(A)_{\qbar}\rightarrow \mathrm{gl}(V_{\overline{\sigma}})$ are dual to each other. Hence by Schur's lemma, in $V_{\sigma} \otimes V_{\overline{\sigma}}$ there is a unique element $l$ up to scalar which is annihilated by the action of $\liegdrbmath(A)_{\qbar}$. However, recall that $\mathrm{dim}(\mathrm{End}^{\circ}(A))=\mathrm{deg}(K/\mathbb{Q})=2$. Therefore by Lemma \ref{rosaticmcoincides}, we have $$\mathrm{dim}(\mathrm{H}^2(A,\mathbb{Q})^{\liegdrbssmath(A)})=\mathrm{dim}(\mathrm{H}^2(A,\mathbb{Q})^{\mathrm{Hdg}(A)})=1$$ Therefore $l$ is the unique element up to scalar in $\mathrm{H}^2(A,\qbar)$ annihilated by $\liegdrbssmath(A)_{\qbar}$.

Meanwhile, we also have that \begin{equation}\label{endoinv}
 \mathrm{End}_{\liegdrbssmath(A)_{\qbar}}(\mathrm{H}^1(A,\qbar))=\mathrm{End}_{\liegdrbmath(A)_{\qbar}}(\mathrm{H}^1(A,\qbar))=\mathrm{End}_{\mathrm{Hdg}(A)_{\qbar}}(\mathrm{H}^1(A,\qbar))   
\end{equation} Now $$\mathrm{End}(\mathrm{H}^1(A,\qbar))=\mathrm{End}(V_{\sigma},V_{\sigma})\oplus \mathrm{End}(V_{\sigmabar},V_{\sigmabar})\oplus \mathrm{End}(V_{\sigma},V_{\sigmabar})\oplus \mathrm{End}(V_{\sigmabar},V_{\sigma})$$ By formula (\ref{endoinv}), the elements in $\mathrm{End}(\mathrm{H}^1(A,\qbar))$ annihilated by $\liegdrbssmath(A)_{\qbar}$ are spanned by the set of scalar multiplications in $\mathrm{End}(V_{\sigma},V_{\sigma})$ and $\mathrm{End}(V_{\sigmabar},V_{\sigmabar})$.

We will show that if $\rho_{\sigma}$ is isomorphic to any one of the first three isomorphism classes of representations in Proposition \ref{reprclassification}, either there exists $l' \in \wedge^2V_{\sigma}\subset \mathrm{H}^2(A,\qbar)$ or $l''\in \mathrm{End}(V_{\sigma},V_{\sigmabar})=V_{\sigma}\otimes V_{\sigmabar}^{*}\cong_{\liegdrbssmath(A)_{\qbar}} V_{\sigma}\otimes V_{\sigma}$ annihilated by the image of $\rho_{\sigma}$. Then a contradiction is reached.
\begin{enumerate}
    \item Suppose $L:=\liegdrbssmath(A)_{\qbar}$ is isomorphic to $\mathrm{sl}(2)$ and $V_{\sigma}$ is isomorphic to $V(3)$. We will use notations and constructions from Chapter 7 of \cite{humphreys2012introduction}. Then $L$ is spanned by $h,x,y$ where $[h,x]=2x$,$[h,y]=-2y$ and $[x,y]=h$. And $V_{\sigma}=V(3)$ has a basis labeled by $\{v_0,v_1,v_2,v_3\}$ satisfying $h\circ v_{i}=(3-2i)v_{i}$ while $y\circ v_{i}=(i+1)v_{i+1}$ and $x\circ v_{i}=(4-i)v_{i-1}$. Then inside $\wedge^2V_{\sigma}$, the vector $3v_0\wedge v_3-v_1\wedge v_2$ is annihilated by the action of $\mathrm{sl}(2)$.
    \item Suppose $L:=\liegdrbssmath(A)_{\qbar}$ is isomorphic to $\mathrm{sl}(2) \times \mathrm{sl}(2)$ and $V_{\sigma}$ is isomorphic to $V(1) \boxtimes V(1)$. Then we denote $v_{1,1}:=v_{0}\otimes v_{0}'; v_{1,-1}:=v_{0}\otimes v_{1}';v_{-1,1}:=v_{1}\otimes v_{0}';v_{-1,-1}:=v_{1}\otimes v_{1}'$, where $(h,0) \circ v_{i,j}=iv_{i,j}$ and  $(0,h) \circ v_{i,j}=jv_{i,j}$. Moreover $(x,0) \circ v_{1,j}=0$; $(x,0) \circ v_{-1,j}=v_{1,j}$ and $(0,x) \circ v_{i,1}=0$; $(0,x) \circ v_{i,-1}=v_{i,1}$. Finally, $(y,0) \circ v_{1,j}=v_{-1,j}$; $(y,0) \circ v_{-1,j}=0$ and $(0,y) \circ v_{i,1}=v_{i,-1}$; $(0,y) \circ v_{i,-1}=0$. Then inside $V_{\sigma}\otimes V_{\sigma}$, the vector $v_{1,1}\otimes v_{-1,-1}+v_{-1,-1}\otimes v_{1,1}-v_{1,-1}\otimes v_{-1,1}-v_{-1,1}\otimes v_{1,-1}$ is annihilated by the action of $\mathrm{sl}(2) \times \mathrm{sl}(2)$.
    \item Suppose $L:=\liegdrbssmath(A)_{\qbar}$ is isomorphic to $\mathrm{sp}(4)$ and $V_{\sigma}$ is isomorphic to the standard representation of $\mathrm{sp}(4,\qbar)$. Fixing a basis $\{e_1,e_2,e_3,e_4\}$ of $V_{\sigma}$ such that the standard symplectic form on $V_{\sigma}$ is diagonalized as $s=\begin{pmatrix}
        0&0&1&0\\
        0&0&0&1\\
        -1&0&0&0\\
        0&-1&0&0
    \end{pmatrix}$. Then $$\mathrm{sp}(4,\qbar)=\{x \in \mathrm{gl}(V_{\sigma})|x^{t}s+sx=0\}$$ One can then check that the vector $e_1\wedge e_3+e_2\wedge e_4 \in \wedge^2V_{\sigma}$ is annihilate by $\mathrm{sp}(4,\qbar)$. 
\end{enumerate}
Therefore the representation $\rho_{\sigma}: \liegdrbssmath(A)_{\qbar} \rightarrow \mathrm{gl}(V_{\sigma})$ is isomorphic to the last possible scenario in Proposition \ref{reprclassification}, i.e. the standard representation of $\mathrm{sl}_4$.
\end{proof}
\begin{keyremark}\label{whyweilproofdoesnotwork}
The above proof cannot apply to a simple anti-Weil type abelian fourfold $A$. By Lemma \ref{twocentressame}, we have that the centre of the de Rham-Betti Lie algebra of $A$ is equal to the centre of its Mumford-Tate Lie algebra. Denote $\mathrm{Lie}\gdrbhmath(A)$ by $\liegdrbhmath(A)$. Then the centre of $\liegdrbhmath(A)$ is equal to the centre of $\mathrm{hdg}(A)$. By computations from Section \ref{chaptercentre}, we have that the image of $\mathrm{Z}(\liegdrbhmath(A)_{\qbar})$ inside $\mathrm{gl}(V_{\qbar})$ is
$$\{\begin{pmatrix}
 \alpha|_{V_{\sigma}}&\\
 &-\alpha|_{V_{\sigmabar}}
\end{pmatrix}|\alpha\in \qbar\}$$ 
Therefore for each isomorphism class of representation $$\rho: L \rightarrow \mathrm{gl}(V)$$ from Proposition \ref{reprclassification} we have that $$(\wedge^2 V_{\sigma})^{\mathrm{Z}(\liegdrbhmath(A)_{\qbar}) \oplus \rho(L)_{\qbar}}=(\wedge^2 V_{\sigmabar})^{\mathrm{Z}(\liegdrbhmath(A)_{\qbar}) \oplus \rho(L)_{\qbar}}=\{0\}$$ Moreover because each representation $\rho$ of $L$ from Proposition \ref{reprclassification} is absolutely irreducible and $\rho_{\sigma} \cong \rho_{\sigmabar}^{*}$ by Lemma \ref{isomdual}, we have that $(V_{\sigma}\otimes V_{\sigmabar})^{\mathrm{Z}(\liegdrbhmath(A))_{\qbar} \oplus \rho(L)_{\qbar}}$ is a one-dimensional $\qbar$-vector space, which is equal to $(V_{\sigma}\otimes V_{\sigmabar})^{\mathrm{Hdg}(A)_{\qbar}}$. Therefore by decomposition (\ref{degree2cohdecomp}) and Lemma \ref{invarintslemma}, we have that $$\mathrm{H}^2(A,\mathbb{Q})^{\mathrm{Z}(\liegdrbhmath(A))\oplus \rho(L)}=\mathrm{H}^2(A,\mathbb{Q})^{\mathrm{Hdg}(A)}$$ which is spanned by algebraic classes.
Similarly we also have that $$(V_{\sigma}\otimes V_{\sigma})^{\mathrm{Z}(\liegdrbhmath(A)_{\qbar}) \oplus \rho(L)_{\qbar}}=(V_{\sigmabar} \otimes V_{\sigmabar})^{\mathrm{Z}(\liegdrbhmath(A))_{\qbar} \oplus \rho(L)_{\qbar}}=\{0\}$$ Therefore we have that $$\mathrm{End}_{\mathrm{Z}(\liegdrbmath(A))\oplus \rho(L)}(\betti)=\mathrm{End}_{\mathrm{Hdg}}(\betti)$$ and both of sides of the above are spanned by algebraic classes.

Hence simply using the information of the location of de Rham-Betti classes in degree 2 cohomology groups, we cannot rule out any one of the first three cases from Proposition \ref{reprclassification} being isomorphic to $\liegdrbssmath(A)_{\qbar}$. We will discuss another way to partially deal with those cases in Section \ref{antiweilsection}.
\end{keyremark}

\section{De Rham-Betti Groups of Simple Type IV(2,1) Abelian Fourfolds}\label{deg4section}
The goal of this section is to prove the following theorem.
\begin{theorem} \label{deg4theorem}
    Let $A$ be a simple abelian fourfold defined over $\qbar$ such that $$\mathrm{End}_{\mathrm{Hdg}}(A)=K$$ where $K$ is a CM-field with $\mathrm{deg}(K/\mathbb{Q})=4$. Then we have $\gdrbmath(A)=\mathrm{MT}(A)$.
\end{theorem}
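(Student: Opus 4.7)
The plan is to compare the Lie algebras $\liegdrbmath(A)$ and $\mathrm{mt}(A)$, each decomposed via reductivity (Theorem \ref{reductive} and Lemma \ref{centremakessense}) into its centre and its semisimple part. Once equality of both pieces is established, $\liegdrbmath(A) = \mathrm{mt}(A)$ follows, and since $\mathrm{MT}(A)$ is connected one concludes $\gdrbmath(A) = \mathrm{MT}(A)$. The equality of centres is already Proposition \ref{twocentressame}, so the task reduces to showing $\liegdrbssmath(A) = \mathrm{mt}^{\mathrm{ss}}(A)$.

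By Theorem \ref{deg2mz4} (degree $4$ case) one has $\mathrm{mt}^{\mathrm{ss}}(A)_{\qbar} \cong \mathrm{sl}(2) \oplus \mathrm{sl}(2)$, acting on the eigenspace decomposition $V_{\qbar} = V_{\sigma} \oplus V_{\bar\sigma} \oplus V_{\tau} \oplus V_{\bar\tau}$ of Setup \ref{deg4setup} as the isometry Lie algebra of the two pairings provided by Lemma \ref{deg4isomdual}. Lemma \ref{deg4repclassification} then narrows $\liegdrbssmath(A)_{\qbar}$ to exactly two possibilities: the full $\mathrm{sl}(2) \oplus \mathrm{sl}(2)$ (case (2), in which $\liegdrbssmath(A)$ and $\mathrm{mt}^{\mathrm{ss}}(A)$ are two $\mathbb{Q}$-forms of the same Lie algebra sitting inside $\mathrm{gl}(V)$ with equal $\qbar$-dimension, hence coincide), or the diagonally embedded $\mathrm{sl}(2)$ (case (\ref{case1})). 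Thus everything hinges on excluding case (\ref{case1}).

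In case (\ref{case1}) the diagonal embedding forces an $\liegdrbssmath(A)_{\qbar}$-module isomorphism $V_{\sigma} \cong V_{\tau}$; coupled with the self-duality of Lemma \ref{deg4isomdual} this makes all four eigenspaces isomorphic as $\mathrm{sl}(2)$-modules. Writing $\chi_{\iota}$ for the character by which $\mathrm{Z}(\mathrm{MT}(A))_{\qbar}$ acts on $V_{\iota}$, the single nontrivial relation $\chi_{\sigma} + \chi_{\bar\sigma} = \chi_{\tau} + \chi_{\bar\tau}$ in $X^{*}(\mathrm{Z}(\mathrm{MT}(A)))$ (coming from $u_{K}(V,\psi) \supset \mathrm{Z}(\mathrm{Hdg}(A))$ and the homothety $\gmmath$) implies that the summand $V_{\sigma} \otimes V_{\bar\sigma} \otimes V_{\tau}^{*} \otimes V_{\bar\tau}^{*}$ of $V^{\otimes 2} \otimes V^{*\otimes 2}$ has trivial central character. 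A standard $\mathrm{sl}(2)$-calculation then shows that this summand carries a two-dimensional space of $\liegdrbssmath$-invariants, contributed by $\mathrm{triv} \otimes \mathrm{triv}$ and by the Killing-form pairing $\mathrm{ad} \otimes \mathrm{ad} \to \mathbb{Q}$ which is available precisely because $V_{\sigma} \cong V_{\tau}$; whereas in the honest $\mathrm{sl}(2) \oplus \mathrm{sl}(2)$ situation only $\mathrm{triv} \otimes \mathrm{triv}$ survives and the space of invariants is one-dimensional. Consequently case (\ref{case1}) would produce a genuine de Rham-Betti class beyond the Hodge classes coming from $\mathrm{MT}(A)$, and the remaining task is to obstruct this extra class.

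The main obstacle is precisely this obstruction step, because Theorem \ref{bostoriginal} controls de Rham-Betti classes only in degree two cohomology, while the extra invariant we wish to rule out naturally lives in a summand of $\mathrm{H}^{4}(A \times A, \mathbb{Q})$. My strategy would be to realise the extra invariant as an intertwiner between two lower-rank Weil substructures (in the spirit of Section \ref{weilstruc} and Lemma \ref{deg4weil}), and then transport it along a suitable correspondence on a product $A^{n}$ so that Theorem \ref{bostoriginal} can be applied to the resulting class on the degree two cohomology of $A^{n}$; combined with the identification $\mathrm{End}^{\circ}(A) = K$ from Theorem \ref{bostoriginal} this should force the extra class to vanish. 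In parallel, when $K$ is biquadratic over $\mathbb{Q}$ and hence contains an imaginary quadratic subfield $k$, Gross's Lemma \ref{gross} together with Corollary \ref{grosscorollary} applied to $\wedge^{4}_{k}V$ provides Chowla-Selberg transcendence input directly analogous to the one driving the proof of Theorem \ref{mainthmcm}. For the remaining structures of $K$ (cyclic Galois and non-Galois quartic CM) I expect a case analysis after passage to the Galois closure, exploiting the character lattice of $\mathrm{U}_{L}$ much as in Sections \ref{longcomp}-\ref{cmnonsimplicity}, to be unavoidable.
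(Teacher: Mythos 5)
Your reduction is set up correctly: equality of centres via Proposition \ref{twocentressame}, the dichotomy of Lemma \ref{deg4repclassification}, and the observation that in the $\mathrm{sl}(2)\times\mathrm{sl}(2)$ case the containment $\liegdrbssmath(A)\subset\mathrm{mt}^{\mathrm{ss}}(A)$ (not merely ``both sit inside $\mathrm{gl}(V)$'', which alone would not suffice) together with equal $\qbar$-dimension forces equality. Your invariant count in Case (\ref{case1}) is also right: the diagonal embedding produces a two-dimensional space of invariants in $V_{\sigma}\otimes V_{\bar\sigma}\otimes V_{\tau}^{*}\otimes V_{\bar\tau}^{*}$ where the genuine Mumford--Tate group fixes only a line.

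However, the heart of the theorem --- actually excluding Case (\ref{case1}) --- is not proved in your proposal, and the route you sketch is unlikely to close. Producing an extra invariant of the \emph{candidate} group is not by itself a contradiction; you must show the corresponding element fails to be a de Rham--Betti class, and your only tool for locating de Rham--Betti classes is Theorem \ref{bostoriginal}, which lives in degree two. The extra invariant is a weight-zero tensor of degree-four type (a class on $\mathrm{H}^4$ of a power of $A$, suitably twisted), and there is no known analogue of Theorem \ref{bostoriginal} there; ``transporting it along a suitable correspondence'' so that it lands in degree two of some $A^{n}$ is exactly the kind of move the paper shows cannot work in the closely analogous anti-Weil situation (Key Remark \ref{whyweilproofdoesnotwork}): degree-two invariants of the candidate subalgebra coincide with those of the Hodge group, so degree-two information cannot distinguish the two cases. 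Likewise, the Gross/Chowla--Selberg input via $\wedge^{4}_{k}V$ is only available when $K$ contains an imaginary quadratic field, and for cyclic or non-Galois quartic $K$ you explicitly defer to an unspecified case analysis. The paper's actual argument avoids all of this: assuming Case (\ref{case1}), $\liegdrbssmath(A)$ is a $\mathbb{Q}$-form $\mathbb{Q}(a,\lambda)^{\circ}$ of $\mathrm{sl}(2)$ (Lemma \ref{sl2qformlemma}); the $\mathbb{Q}$-rationality of this Lie algebra and of a polarization form it preserves infinitesimally forces explicit linear relations among the values of $\phi$ on weight vectors (Lemmas \ref{deg4Polarisation properties k imaginary} and \ref{deg4Polarisation properties k totallyreal}, split according to $a<0$ or $a>0$), and these relations are incompatible with the positivity of a polarization on a weight-one Hodge structure on which $K$ acts with multiplicities $\{2,0,1,1\}$ (Lemma \ref{deg4keylemma}, Proposition \ref{deg4nosuchabelian4fold}). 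No new transcendence input or new location-of-classes statement is needed at this step, and no subdivision according to the Galois structure of $K$ arises. To complete your proof you would need either to carry out the paper's positivity argument or to supply a genuinely new mechanism for obstructing the extra degree-four invariant; as written, the key step is a statement of intent rather than a proof.
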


As usual we denote $\betti$ by $V$. Denote $\mathrm{Lie}\gdrbmath(A)$ by $\liegdrbmath(A)$. Recall that in Proposition \ref{twocentressame}, we have obtained that $\mathrm{Z}(\liegdrbmath(A))=\mathrm{Z}(\mathrm{mt}(A))$. Hence the natural strategy of proving Theorem \ref{deg4theorem} is to analyze all possible isomorphism classes of representations of semisimple Lie algebras $$L \rightarrow \mathrm{gl}(V)$$ defined over $\mathbb{Q}$ which can be potential candidates for the canonical representation $$\liegdrbssmath(A) \rightarrow \mathrm{gl}(V)$$ To be more precise, we will be using the classification result of Lemma \ref{deg4repclassification}. In particular, we are going to show that if the canonical representation $\liegdrbssmath(A) \rightarrow \mathrm{gl}(V)$ were isomorphic to Case (\ref{case1}) from Lemma \ref{deg4repclassification}, then the polarization form $\phi$, which is preserved by the de Rham-Betti group of $A$, cannot polarize any weight 1 Hodge structure on which $K$ acts with multiplicities $\{2,0,1,1\}$ (see Lemma \ref{deg4keylemma}) thus giving a contradiction.
\begin{remark} 
Our strategy of proving Theorem \ref{deg4theorem} is different from the method of computing the Mumford-Tate group of such abelian fourfolds from Section 7.5 of \cite{mz-4-folds}. To be more precise, the authors of \cite{mz-4-folds} used the definition of the Mumford-Tate group as the smallest $\mathbb{Q}$-algebraic group whose set of $\mathbb{R}$-points contains the image of the Deligne torus (see Definition \ref{delignetorusmt}) in a fundamental way. However, in the current state of art, de Rham-Betti groups are only defined in the Tannakian formalism (Definition \ref{drbgdefn}).
\end{remark}

\subsection{Forms of \texorpdfstring{$\mathrm{sl}(2)$}{sl(2)}}
We now focus our attention on Case (\ref{case1}) from Lemma \ref{deg4repclassification}. Recall that in this case we assume that $\liegdrbssmath(A) \otimes_{\mathbb{Q}} \qbar$ is isomorphic to $\mathrm{sl}(2)$. Then since $\liegdrbssmath(A)$ is defined over $\mathbb{Q}$ by Lemma \ref{centremakessense}, it is in particular a $\qbar/\mathbb{Q}$ form of $\mathrm{sl}(2)$. Such forms are classified by quaternion algebras over $\mathbb{Q}$, of which we now recall the definition.
\begin{definition}\label{quarterniondefn}
 A quaternion algebra $\mathbb{Q}(a,\lambda)$ is a four-dimensional $\mathbb{Q}$-algebra spanned by $\{1,i,j,k\}$ whose ring structure is given by $i^2=a,j^2=\lambda,ij=-ji=k$ where $a,\lambda\in\mathbb{Q}^{\times}$. We denote by $\mathbb{Q}(a,\lambda)^{\circ}$ the $\mathbb{Q}$-vector subspace spanned by $\{i,j,k\}$.
\end{definition}
We can also put a Lie algebra structure on $\mathbb{Q}(a,\lambda)$ by defining $[x,y]=xy-yx$, making $\mathbb{Q}(a,\lambda)^{\circ}$ a Lie subalgebra.
\begin{lemma}[III.1.4, \cite{serre1979galois}]\label{sl2qformlemma}
Every $\qbar/\mathbb{Q}$ form of $\mathrm{sl}(2,\qbar)$ is isomorphic to the Lie algebra $\mathbb{Q}(a,\lambda)^{\circ}$ for some $a,\lambda \in \mathbb{Q}^{\times}$. 
\end{lemma}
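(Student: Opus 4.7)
The plan is to use the Galois descent formalism of Chapter III of \cite{serre1979galois} and match $\qbar/\mathbb{Q}$-forms of $\mathrm{sl}(2,\qbar)$ with $\qbar/\mathbb{Q}$-forms of the associative algebra $M_2(\qbar)$, i.e.\ with quaternion algebras. First I would recall that isomorphism classes of $\qbar/\mathbb{Q}$-forms of $\mathrm{sl}(2,\qbar)$ are in bijection with the pointed set $H^1(\mathrm{Gal}(\qbar/\mathbb{Q}),\mathrm{Aut}_{\mathrm{Lie}}(\mathrm{sl}(2,\qbar)))$, and likewise $\qbar/\mathbb{Q}$-forms of $M_2(\qbar)$ are classified by $H^1(\mathrm{Gal}(\qbar/\mathbb{Q}),\mathrm{Aut}_{\mathrm{alg}}(M_2(\qbar)))$. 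The key computation is to identify both automorphism groups with $\mathrm{PGL}_2(\qbar)$ in a compatible way: on the associative side this is Skolem--Noether, and on the Lie side it follows from the fact that over a field of characteristic zero every automorphism of $\mathrm{sl}(2,\qbar)$ is inner, hence given by conjugation by an element of $\mathrm{GL}_2(\qbar)$ modulo scalars. Conjugation by $g\in\mathrm{GL}_2(\qbar)$ acts simultaneously on the associative algebra $M_2(\qbar)$ and on its Lie subalgebra of trace-zero matrices, so the two identifications with $\mathrm{PGL}_2(\qbar)$ are the same $\mathrm{Gal}(\qbar/\mathbb{Q})$-equivariant map.

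Second, I would observe that $\mathbb{Q}$-forms of $M_2(\qbar)$ are precisely the central simple $\mathbb{Q}$-algebras of dimension $4$, and the classical structure theorem says these are exactly the quaternion algebras $\mathbb{Q}(a,\lambda)$ with $a,\lambda\in\mathbb{Q}^{\times}$ from Definition~\ref{quarterniondefn}. The descent functor going the other way is the trace-zero construction $Q\mapsto Q^{\circ}$ equipped with the commutator bracket: it is compatible with the $\mathrm{PGL}_2$-action by conjugation (conjugation preserves both the trace and the commutator), and for $Q=M_2(\qbar)$ it recovers $\mathrm{sl}(2,\qbar)$. Hence on the level of cocycles the assignment $Q\mapsto Q^{\circ}$ realizes the identity map on $H^1(\mathrm{Gal}(\qbar/\mathbb{Q}),\mathrm{PGL}_2(\qbar))$, which yields a bijection between isomorphism classes of quaternion algebras over $\mathbb{Q}$ and isomorphism classes of $\qbar/\mathbb{Q}$-forms of $\mathrm{sl}(2,\qbar)$. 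In particular every such form $L$ is of the shape $\mathbb{Q}(a,\lambda)^{\circ}$ for some $a,\lambda\in\mathbb{Q}^{\times}$, proving the lemma.

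The main obstacle is the careful verification that the natural morphism of $\qbar$-group schemes $\mathrm{Aut}_{\mathrm{alg}}(M_2(\qbar))\to\mathrm{Aut}_{\mathrm{Lie}}(\mathrm{sl}(2,\qbar))$ induced by restriction to trace-zero elements is an isomorphism; this is what allows the two $H^1$ sets to be identified. Injectivity is immediate, and surjectivity reduces to the fact just mentioned that every Lie automorphism of $\mathrm{sl}(2)$ over $\qbar$ is inner and therefore extends uniquely to an inner automorphism of $M_2$. A more hands-on alternative, if desired, is to work directly with a $\mathbb{Q}$-form $L$ of $\mathrm{sl}(2,\qbar)$: its Killing form is a non-degenerate $\mathbb{Q}$-valued ternary quadratic form, and the pair $(L,\mathrm{Killing})$ can be used to construct a four-dimensional central simple $\mathbb{Q}$-algebra $\mathbb{Q}\oplus L$ whose trace-zero part recovers $L$; but the Galois-cohomological argument above is the cleanest route and is essentially the one recorded in \cite[III.1.4]{serre1979galois}.
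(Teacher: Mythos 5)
Your proposal is correct and is exactly the nonabelian Galois-descent argument (forms of $\mathrm{sl}(2,\qbar)$ and of $M_2(\qbar)$ both classified by $H^1(\mathrm{Gal}(\qbar/\mathbb{Q}),\mathrm{PGL}_2(\qbar))$, matched via the trace-zero functor) that the paper's proof outsources wholesale to \cite[III.1]{serre1979galois}, so it is essentially the same route. The only content of the paper's proof that you do not reproduce is the explicit splitting isomorphism (\ref{sl2explicitiso}) fixing an $\mathrm{sl}(2)$-triple in $\mathbb{Q}(a,\lambda)^{\circ}\otimes\qbar$, which is not needed for the lemma itself but is what later computations (e.g.\ Corollary \ref{relations between y and x}) rely on.
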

\begin{proof}
For the proof, see Section III.1 of \cite{serre1979galois}. Given a quaternion algebra $\mathbb{Q}(a,\lambda)$, we will fix an explicit isomorphism $\mathbb{Q}(a,\lambda)^{\circ} \otimes \qbar \cong \mathrm{sl}(2)$ as follows
\begin{equation}\label{sl2explicitiso}
  h=i\otimes\frac{1}{\sqrt{a}};x=\frac{1}{2\lambda}(j+k\otimes\frac{1}{\sqrt{a}});y=\frac{1}{2}(j-k\otimes\frac{1}{\sqrt{a}})
\end{equation} where we fix a square root of $a$ and denote it by $\sqrt{a}$. Note that $h,x,y$ indeed defines a $\mathrm{sl}(2)$ triple in the sense of \cite[Chapter 7]{humphreys2012introduction} i.e. $$[x,y]=h,[h,x]=2x,[h,y]=-2y$$
\end{proof}
Depending on $a>0$ or $a<0$, 
the complex conjugation fixes $\sqrt{a}$ or sends $\sqrt{a}$ to $-\sqrt{a}$.

\begin{corollary} \label{relations between y and x}
    Under the isomorphism (\ref{sl2explicitiso}), if $a<0$ then $\lambda\overline{x}=y$ and if $a>0$, then $\overline{x}=x$.
\end{corollary}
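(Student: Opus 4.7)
The plan is a direct computation from the explicit isomorphism (\ref{sl2explicitiso}). Since $\mathbb{Q}(a,\lambda)^{\circ}$ is defined over $\mathbb{Q}$, the relevant complex conjugation on $\mathbb{Q}(a,\lambda)^{\circ}\otimes\qbar$ acts trivially on the first tensor factor and as the usual complex conjugation on $\qbar$; in particular it fixes $j$ and $k$ and only interacts with the scalar $\frac{1}{\sqrt{a}}$ appearing in the formulas for $x$ and $y$.

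The key observation is simply that whether $\sqrt{a}\in\qbar$ is preserved by complex conjugation depends on the sign of $a$. If $a>0$, then $\sqrt{a}\in\mathbb{R}$, so $\overline{\sqrt{a}}=\sqrt{a}$; applying conjugation to $x=\tfrac{1}{2\lambda}(j+k\otimes\tfrac{1}{\sqrt{a}})$ leaves each summand fixed, giving $\overline{x}=x$. If $a<0$, then $\sqrt{a}$ is purely imaginary, so $\overline{\sqrt{a}}=-\sqrt{a}$, and thus
\[
\overline{x}=\frac{1}{2\lambda}\Bigl(j-k\otimes\frac{1}{\sqrt{a}}\Bigr)=\frac{1}{\lambda}\cdot\frac{1}{2}\Bigl(j-k\otimes\frac{1}{\sqrt{a}}\Bigr)=\frac{y}{\lambda},
\]
which rearranges to $\lambda\overline{x}=y$.

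There is no real obstacle here: once one has unpacked the definitions of $x$ and $y$ in (\ref{sl2explicitiso}) and clarified which complex conjugation is at play (namely, the one on $\qbar$ in the tensor product $\mathbb{Q}(a,\lambda)^{\circ}\otimes_{\mathbb{Q}}\qbar$), the statement is a one-line computation. The only conceptual point worth emphasizing in the write-up is the choice of conjugation, so as to avoid any confusion with conjugation in the quaternion algebra itself (which would fix $1$ and negate $i,j,k$).
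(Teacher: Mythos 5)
Your proposal is correct and follows exactly the same reasoning as the paper, which justifies the corollary by the single observation (stated right before it) that complex conjugation fixes $\sqrt{a}$ when $a>0$ and sends $\sqrt{a}$ to $-\sqrt{a}$ when $a<0$, with conjugation acting only on the $\qbar$-factor of $\mathbb{Q}(a,\lambda)^{\circ}\otimes_{\mathbb{Q}}\qbar$ since $j$, $k$ and $\lambda\in\mathbb{Q}^{\times}$ are rational. Your explicit computation $\overline{x}=\frac{1}{2\lambda}\bigl(j-k\otimes\frac{1}{\sqrt{a}}\bigr)=\frac{y}{\lambda}$ is precisely the one-line verification the paper leaves implicit.
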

Suppose we are in the scenario of Case (\ref{case1}) from Lemma \ref{deg4repclassification}. Recall from Setup \ref{deg4setup} with respect to $K\xhookrightarrow{}\mathrm{End}(V)$ there is an eigenspace decomposition $$V\otimes_{\mathbb{Q}}\qbar=V_{\sigma}\oplus V_{\overline{\sigma}}\oplus V_{\tau}\oplus V_{\overline{\tau}}$$ where $\mathrm{Hom}(K,\qbar)=\{\sigma,\sigmabar,\tau,\overline{\tau}\}$. Moreover we are given a $\mathbb{Q}$-representation of Lie algebras $$\mu: \mathbb{Q}(a,\lambda)^{\circ} \rightarrow \mathrm{gl}(V)$$ such that its $\qbar$-linear extension decomposes into a direct sum of standard representations of $\mathrm{sl}(2)$
$$\mu_{\qbar}:\mathrm{sl}(2)\rightarrow \mathrm{gl}(V_{\sigma})\oplus \mathrm{gl}(V_{\overline{\sigma}})\oplus \mathrm{gl}(V_{\tau})\oplus \mathrm{gl}(V_{\overline{\tau}})$$ Also  $\overline{V_{\sigma}}=V_{\overline{\sigma}}$ and $\overline{V_{\tau}}=V_{\overline{\tau}}$. Denote the subrepresentation $\mathrm{sl}(2)\rightarrow\mathrm{gl}(V_{\iota})$ by $\rho_{\iota}$ where $\iota \in \{\sigma,\sigmabar,\tau,\overline{\tau}\}$. Then the complex conjugate of a weight vector in $V_{\iota}$  under the action of $h$ ($\iota \in \{\sigma,\sigmabar,\tau,\overline{\tau}\})$ in fact remains a weight vector in $V_{\overline{\iota}}$. This is made more precise in the following lemma.
\begin{lemma} \label{conjugate and weight vector V(1)}
 We keep the same notations as above. Let $v_{m} \in V_{\iota}$($\iota \in \{\sigma,\sigmabar,\tau,\overline{\tau}\}$) be an eigenvector of $\rho_{\iota}(h)$ of weight $m$($m \in \{-1,1\}$). If $a<0$, then $\overline{v_{m}} \in V_{\overline{\iota}}$ remains an eigenvector of $\rho_{\overline{\iota}}(h)$ but has weight $-m$. If $a>0$, then $\overline{v_{m}} \in V_{\overline{\iota}}$ remains an eigenvector of $\rho_{\overline{\iota}}(h)$ with the same weight $m$.
\end{lemma}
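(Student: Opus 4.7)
The plan is to exploit the fact that the representation $\mu: \mathbb{Q}(a,\lambda)^{\circ} \rightarrow \mathrm{gl}(V)$ is defined over $\mathbb{Q}$, so the operator attached to the $\mathbb{Q}$-rational element $i \in \mathbb{Q}(a,\lambda)^{\circ}$ commutes with complex conjugation on $V \otimes_{\mathbb{Q}} \mathbb{C}$. The only subtlety is that $h$ is not $i$ itself, but rather $i \otimes \frac{1}{\sqrt{a}}$, and it is precisely the conjugation behavior of $\sqrt{a}$ that produces the dichotomy between the cases $a>0$ and $a<0$.

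First I would record that $\overline{V_{\iota}} = V_{\overline{\iota}}$ inside $V \otimes_{\mathbb{Q}} \mathbb{C}$: this follows in exactly the same way as Remark \ref{conjugatedecomp}, since the action of $K$ on $V$ is $\mathbb{Q}$-rational and the CM-field $K$ has its complex conjugation realized by the ordinary complex conjugation on the scalar side after a choice of embedding. Consequently, if $v_{m} \in V_{\iota}$, then $\overline{v_{m}} \in V_{\overline{\iota}}$, which handles the ``which eigenspace'' half of the statement.

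Second, I would translate the eigenvector condition back to the $\mathbb{Q}$-rational operator $\mu(i)$. By the explicit isomorphism (\ref{sl2explicitiso}) one has $\mu_{\qbar}(h) = \tfrac{1}{\sqrt{a}}\,\mu(i)$, so $\mu_{\qbar}(h)(v_{m}) = m v_{m}$ is equivalent to $\mu(i)(v_{m}) = m\sqrt{a}\,v_{m}$. Since $\mu(i)$ is a $\mathbb{Q}$-linear endomorphism of $V$, its $\mathbb{C}$-linear extension to $V \otimes_{\mathbb{Q}} \mathbb{C}$ commutes with complex conjugation, and therefore applying complex conjugation to the previous identity gives
\begin{equation*}
\mu(i)(\overline{v_{m}}) \;=\; m\,\overline{\sqrt{a}}\,\overline{v_{m}}.
\end{equation*}

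Finally, I would dispatch the two cases by inspecting $\overline{\sqrt{a}}$. If $a<0$, then $\sqrt{a}$ is purely imaginary, so $\overline{\sqrt{a}} = -\sqrt{a}$, and the previous identity becomes $\mu_{\qbar}(h)(\overline{v_{m}}) = -m\,\overline{v_{m}}$, giving weight $-m$. If $a>0$, then $\overline{\sqrt{a}} = \sqrt{a}$, and we obtain $\mu_{\qbar}(h)(\overline{v_{m}}) = m\,\overline{v_{m}}$, giving weight $m$. No step here poses a real obstacle; the entire argument is a careful bookkeeping of how complex conjugation interacts with the rational structure of $\mathbb{Q}(a,\lambda)^{\circ}$, and the substance of the lemma is already encoded in the formulas of (\ref{sl2explicitiso}) together with the rationality of $\mu$.
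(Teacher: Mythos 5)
Your proposal is correct and follows essentially the same route as the paper: both arguments reduce to the $\mathbb{Q}$-rational operator $\mu(i)$ via $h=i\otimes\frac{1}{\sqrt{a}}$, use that $\mu(i)$ commutes with complex conjugation on $V\otimes_{\mathbb{Q}}\mathbb{C}$, and read off the sign of the weight from whether $\overline{\sqrt{a}}=\pm\sqrt{a}$. Your explicit remark that $\overline{V_{\iota}}=V_{\overline{\iota}}$ (via Remark \ref{conjugatedecomp}) is a harmless addition that the paper leaves implicit.
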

\begin{proof}
 Using isomorphism (\ref{sl2explicitiso}), we have that $h=i\otimes\frac{1}{\sqrt{a}}$ and therefore $\mu_{\qbar}(h)=\mu(i)\otimes \frac{1}{\sqrt{a}}$. Hence if $v_{m} \in V_{\iota}$ satisfies 
\begin{equation*}
  \rho_{\iota}(h) \circ v_{m}=\mu_{\qbar}(h) \circ v_{m}=mv_{m}  
\end{equation*} then
\begin{equation*}
   \mu(i)\otimes \frac{1}{\sqrt{a}} \circ v_{m}=mv_{m}
\end{equation*} Now suppose $a<0$. Notice that $\mu(i)$ has $\mathbb{Q}$-coefficients with respect to a $\mathbb{Q}$-basis of $V$, therefore applying complex conjugation to the above equation, $\mu(i)$ remains unchanged. Thus we have
\begin{equation*}
   \overline{\rho_{\iota}(h) \circ v_{m}}=\mu(i) \otimes \overline{\frac{1}{\sqrt{a}}} \circ \overline{v_{m}}=-\mu(i) \otimes \frac{1}{\sqrt{a}} \circ \overline{v_{m}}=-\rho_{\overline{\iota}}(h) \circ \overline{v_{m}}=m\overline{v_{m}}
\end{equation*}
Hence $\overline{v_{i}}$ remains an eigenvector of $h$ but has weight $-m$.
Next suppose that $a>0$, then applying complex conjugation, we obtain
\begin{equation*}
   \overline{\rho_{\iota}(h) \circ v_{m}}=\mu(i) \otimes \overline{\frac{1}{\sqrt{a}}} \circ \overline{v_{m}}=\mu(i) \otimes \frac{1}{\sqrt{a}} \circ \overline{v_{m}}=\rho_{\overline{\iota}}(h) \circ \overline{v_{m}}=m\overline{v_{m}}
\end{equation*} Hence the second statement of the lemma follows.
\end{proof}

\subsection{Values of Polarization Form}
We will be using notations from Setup \ref{deg4setup} and Lemma \ref{deg4repclassification} in this section. Observe that any polarization form on $\betti$ is preserved by the image of $\liegdrbssmath(A)$ infinitesimally. Therefore the hypothesis that $\liegdrbssmath(A)$ were isomorphic to $\mathbb{Q}(a,\lambda)^{\circ}$ with $a<0$ or were isomorphic to $\mathbb{Q}(a,\lambda)^{\circ}$ with $a>0$ would actually impose linear relations on values of the polarization form when evaluated at certain weight vectors. This is made precise in Lemma \ref{deg4Polarisation properties k imaginary} and Lemma \ref{deg4Polarisation properties k totallyreal}.

\begin{lemma} \label{deg4Polarisation properties k imaginary}  Suppose there exists an abelian fourfold $A$ defined over $\qbar$ with $\mathrm{End}^{\circ}(A)\cong K$ where $K$ is a quartic CM-field and furthermore suppose we are in the hypothetical scenario where $\liegdrbssmath(A) \cong \mathbb{Q}(a,\lambda)^{\circ}$ with $a<0$. Fix the $\mathrm{sl}(2)$ triple $(h,x,y)$ for $\liegdrbssmath(A) \otimes \qbar \cong \mathrm{sl}(2)$ as in isomorphism (\ref{sl2explicitiso}). Denote $\betti$ by $V$. Then for any polarization form $\phi:V \times V \rightarrow \mathbb{Q}$, we can find a basis $\{v_{-1},v_1\}$ for $V_{\sigma}$, $\{w_{-1},w_1\}$ for $V_{\overline{\sigma}}$, $\{v'_{-1},v'_{1}\}$ for $V_{\tau}$ and $\{w'_{-1},w'_{1}\}$ for $V_{\overline{\tau}}$ such that the following conditions are satisfied. \begin{enumerate}
    \item The $v_{m}$s, $w_{m}$s, $v'_{m}$s and $w'_{m}$s are of weights $m$
    \item $w_{m}=\overline{v_{-m}}$ and $w'_{m}=\overline{v'_{-m}}$
    \item We have that \begin{equation}\label{sameweight0}\phi_{\mathbb{C}}(v_{m},w_{m})=\phi_{\mathbb{C}}(v'_{m},w'_{m})=0\end{equation} for $m\in\{-1,1\}$. Also $V_{\sigma}$, $V_{\overline{\sigma}}$, $V_{\tau}$ and $V_{\overline{\tau}}$ are totally isotropic subspaces for $\phi_{\mathbb{C}}$.
    \item The values of $\phi_{\mathbb{C}}(v_{m},w_{m'})$ and $\phi_{\mathbb{C}}(v'_{m},w'_{m'})$ when $m+m'=0$ satisfy the following relations \begin{align*} 
 \phi_{\mathbb{C}}(v_{-1},w_1)=-\lambda\phi_{\mathbb{C}}(v_1,w_{-1})\\
 \phi_{\mathbb{C}}(v'_{-1},w'_1)=-\lambda\phi_{\mathbb{C}}(v'_1,w'_{-1})
\end{align*}
\end{enumerate} 
\end{lemma}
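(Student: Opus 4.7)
The plan is to exploit two facts: the polarization form $\phi$ is preserved infinitesimally by $\liegdrbssmath(A)$, so for every $l\in\liegdrbssmath(A)$ and $v,w\in V$,
\[
\phi(l\circ v,w)+\phi(v,l\circ w)=0;
\]
and, by Lemma \ref{phicompatiblewithE}, $\phi_{\qbar}$ vanishes on $V_{\iota}\times V_{\iota'}$ unless $\iota'=\overline{\iota}$. Combining these under the hypothesis that $\liegdrbssmath(A)\cong\mathbb{Q}(a,\lambda)^{\circ}$ acts via the standard representation on each eigenspace will pin down the required values of $\phi_{\mathbb{C}}$ up to the indicated scalars.

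I first choose a nonzero weight-one vector $v_1\in V_\sigma$ under $h$ and set $v_{-1}:=\mu_{\qbar}(y)\circ v_1$, where $(h,x,y)$ is the $\mathrm{sl}(2)$-triple fixed by isomorphism (\ref{sl2explicitiso}). Standard $\mathrm{sl}(2)$-representation theory on the two-dimensional irreducible $V(1)$ gives $\mu_{\qbar}(x)\circ v_1=0$ and $\mu_{\qbar}(x)\circ v_{-1}=v_1$, so $\{v_{-1},v_1\}$ is a weight basis for $V_\sigma$. I then define $w_m:=\overline{v_{-m}}$; since $a<0$, Lemma \ref{conjugate and weight vector V(1)} guarantees that $w_m\in V_{\overline{\sigma}}$ is a weight-$m$ vector, so property (2) holds tautologically, and $\{w_{-1},w_1\}$ is a weight basis for $V_{\overline{\sigma}}$. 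The same procedure, carried out on $V_\tau$, produces $\{v'_{-1},v'_1\}$ and consequently $\{w'_{-1},w'_1\}$.

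Property (3) then follows by applying the $h$-invariance of $\phi$ to a pair of weight vectors: from $h\circ v_m=mv_m$ and $h\circ w_{m'}=m'w_{m'}$ one obtains $(m+m')\phi_{\qbar}(v_m,w_{m'})=0$, which forces $\phi_{\qbar}(v_m,w_m)=0$ for $m=\pm 1$. The total isotropy of the four eigenspaces is immediate from Lemma \ref{phicompatiblewithE}, and the same reasoning applies with primes.

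The main step, and the source of the factor $-\lambda$, is property (4). Because $\mu$ is defined over $\mathbb{Q}$, complex conjugation satisfies $\overline{\mu_{\qbar}(z)\circ u}=\mu_{\qbar}(\overline{z})\circ\overline{u}$ for any $z\in\liegdrbssmath(A)\otimes\qbar$ and $u\in V_{\qbar}$. Corollary \ref{relations between y and x} gives $\overline{y}=\lambda x$, so conjugating the defining relation $\mu_{\qbar}(y)\circ v_1=v_{-1}$ yields $\mu_{\qbar}(x)\circ w_{-1}=\lambda^{-1}w_1$. Applying the $x$-invariance of $\phi$ to the pair $(v_{-1},w_{-1})$, and using $\mu_{\qbar}(x)\circ v_{-1}=v_1$, gives
\[
\phi_{\qbar}(v_1,w_{-1})+\lambda^{-1}\phi_{\qbar}(v_{-1},w_1)=0,
\]
which is the desired relation. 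The same computation verbatim on $V_\tau,V_{\overline{\tau}}$ handles the primed version. There is no serious obstacle in the argument; the only delicate point is coordinating the real-structure twist encoded by Corollary \ref{relations between y and x} with the weight-basis choice on the conjugate eigenspaces so that the scalar $-\lambda$ emerges correctly.
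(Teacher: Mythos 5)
Your proposal is correct and follows essentially the same route as the paper: the identical explicit basis construction $v_{-1}=y\circ v_1$, $w_{m}=\overline{v_{-m}}$, with conditions (1)--(3) from Lemma \ref{conjugate and weight vector V(1)}, $h$-invariance and Lemma \ref{phicompatiblewithE}, and condition (4) from the infinitesimal invariance of $\phi$ together with Corollary \ref{relations between y and x}. The only (immaterial) difference is that you apply the invariance identity with $x$ to the pair $(v_{-1},w_{-1})$ after conjugating $y\circ v_1=v_{-1}$, whereas the paper applies it with $y$ to $(v_1,w_1)$; both yield $\phi_{\mathbb{C}}(v_{-1},w_1)=-\lambda\phi_{\mathbb{C}}(v_1,w_{-1})$.
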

\begin{proof}
We will write down an explicit construction for the basis from the lemma. We fix $v_1$ a weight 1 vector under the action of $\rho_{\sigma}(h)$ in $V_{\sigma}$. And let \begin{equation}\label{expliciteigenvector}
v_{-1}=y\circ v_1, w_{-1}=\overline{v_1}\in V_{\overline{\sigma}}, w_1=\overline{v_{-1}} \in V_{\overline{\sigma}}    
\end{equation} while we also fix $v'_1$ a weight 1 vector under the action of $\rho_{\tau}(h)$ in $V_{\tau}$ and let $$v'_{-1}=y\circ v'_1, w'_{-1}=\overline{v'_1}\in V_{\overline{\tau}}, w'_1=\overline{v'_{-1}} \in V_{\overline{\tau}}$$ Then by Lemma \ref{conjugate and weight vector V(1)}, the first two conditions in the lemma are satisfied by construction. As for the third condition, note that any polarization form is preserved by the image of $\liegdrbssmath(A)$ infinitesimally. Hence in particular for any $v,v'\in V_{\qbar}$, we have that $$\phi(h\circ v,v')+\phi(v,h\circ v')=0$$ Hence formula (\ref{sameweight0}) from Condition 3) is also satisfied. Moreover, because the Rosati involution on $K$ associated with $\phi$ coincides with the complex conjugation on $K$ by Corollary \ref{rosaticmcoincides}, we can apply Lemma \ref{phicompatiblewithE} show that each eigenspace is totally isotropic with respect to $\phi_{\mathbb{C}}$. As for the last one, we note that $$\phi(y \circ v_1,w_1)+\phi(v_1,y\circ w_1)=0$$ and $$\phi(y \circ v'_1,w'_1)+\phi(v'_1,y\circ w'_1)=0$$ Combining this with the observation that $\lambda\overline{x}=y$ from Corollary \ref{relations between y and x} leads to the last relation in the lemma.
\end{proof}

\begin{lemma} \label{deg4Polarisation properties k totallyreal} 
Suppose there exists an abelian fourfold $A$ defined over $\qbar$ with $\mathrm{End}^{\circ}(A)\cong K$ where $K$ is a quartic CM-field and furthermore assume we are in the hypothetical scenario where $\liegdrbssmath(A) \cong \mathbb{Q}(a,\lambda)^{\circ}$ with $a>0$. Fix the $\mathrm{sl}(2)$ triple $(h,x,y)$ of $\liegdrbssmath \otimes_{\mathbb{Q}} \qbar \cong \mathrm{sl}(2)$ as in isomorphism (\ref{sl2explicitiso}). Then for any polarization form: $\phi:V \times V \rightarrow \mathbb{Q}$, we can find a basis $\{v_{-1},v_1\}$ for $V_{\sigma}$, $\{w_{-1},w_1\}$ for $V_{\overline{\sigma}}$, $\{v'_{-1},v'_{1}\}$ for $V_{\tau}$ and $\{w'_{-1},w'_{1}\}$ for $V_{\overline{\tau}}$ such that the following conditions are satisfied. \begin{enumerate}
    \item The $v_{m}$s, $w_{m}$s, $v'_{m}$s and $w'_{m}$s are of weights $m$
    \item $w_{m}=\overline{v_{m}}$ and $w'_{m}=\overline{v'_{m}}$
    \item We have that $$\phi_{\mathbb{C}}(v_{m},w_{m})=\phi_{\mathbb{C}}(v'_{m},w'_{m})=0$$ for $m\in\{-1,1\}$. Also $V_{\sigma}$, $V_{\overline{\sigma}}$, $V_{\tau}$ and $V_{\overline{\tau}}$ are totally isotropic subspaces for $\phi_{\mathbb{C}}$.
    \item The values of $\phi_{\mathbb{C}}(v_{m},w_{m'})$ and $\phi_{\mathbb{C}}(v'_{m},w'_{m'})$ when $m+m'=0$ satisfy the following relations \begin{align*} 
 \phi_{\mathbb{C}}(v_{-1},w_1)+\phi_{\mathbb{C}}(v_1,w_{-1})=0\\
 \phi_{\mathbb{C}}(v'_{-1},w'_1)+\phi_{\mathbb{C}}(v'_1,w'_{-1})=0
\end{align*}
\end{enumerate} 
\end{lemma}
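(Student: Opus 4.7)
The proof will parallel that of Lemma \ref{deg4Polarisation properties k imaginary} almost verbatim, with the only substantive change tracking the fact that when $a>0$, the element $\sqrt{a}\in\mathbb{C}$ is real and hence fixed by complex conjugation, so the resulting symmetries of the polarization form are symmetric rather than antisymmetric in the two weight-opposite basis vectors.

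First I would fix a weight $1$ eigenvector $v_1 \in V_{\sigma}$ for $\rho_{\sigma}(h)$, set $v_{-1} := y \circ v_1 \in V_{\sigma}$, and check by standard $\mathrm{sl}_2$-representation theory that $\{v_{-1},v_1\}$ is a basis of $V_{\sigma}$ consisting of weight vectors of weights $-1$ and $1$. Because $a>0$, Lemma \ref{conjugate and weight vector V(1)} now asserts that complex conjugation preserves weights: $\overline{v_m} \in V_{\overline{\sigma}}$ is again an $\rho_{\overline{\sigma}}(h)$-eigenvector of weight $m$. I therefore set $w_m := \overline{v_m}$, establishing Conditions (1) and (2) on the $\sigma$-side; an identical construction starting from a weight $1$ vector $v'_1 \in V_{\tau}$ yields $\{v'_{\pm 1}\}$ and $w'_m := \overline{v'_m}$.

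For Condition (3), I would invoke two observations. Any polarization $\phi$ is preserved infinitesimally by the image of $\liegdrbssmath(A)$, so for all $v,v'\in V_{\qbar}$,
\begin{equation*}
\phi_{\qbar}(h\circ v,v') + \phi_{\qbar}(v,h\circ v')=0.
\end{equation*}
Applied to two vectors of equal weight $m$, this forces $2m\,\phi_{\mathbb{C}}(v_m,w_m)=0$, giving the required vanishing. The total isotropy of each eigenspace $V_{\iota}$ (with $\iota\in\{\sigma,\overline{\sigma},\tau,\overline{\tau}\}$) for $\phi_{\mathbb{C}}$ then follows directly from Lemma \ref{phicompatiblewithE}, since the Rosati involution associated with $\phi$ coincides with complex conjugation on the CM-field $K$ (Lemma \ref{rosaticmcoincides}).

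The crux — and only genuinely new input compared with Lemma \ref{deg4Polarisation properties k imaginary} — is Condition (4). Applying the infinitesimal invariance of $\phi$ under $y$ yields
\begin{equation*}
\phi_{\mathbb{C}}(y\circ v_1, w_1) + \phi_{\mathbb{C}}(v_1, y\circ w_1)=0,
\end{equation*}
so $\phi_{\mathbb{C}}(v_{-1},w_1) + \phi_{\mathbb{C}}(v_1,y\circ w_1)=0$. Now when $a>0$, the explicit formula (\ref{sl2explicitiso}) shows that $y = \tfrac{1}{2}(j - k\otimes \tfrac{1}{\sqrt{a}})$ is fixed by complex conjugation (since $j,k$ act by $\mathbb{Q}$-linear endomorphisms and $\overline{\sqrt{a}}=\sqrt{a}$); the same argument that gave Corollary \ref{relations between y and x} gives $\overline{y}=y$. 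Consequently
\begin{equation*}
y\circ w_1 = y\circ\overline{v_{-1}} = \overline{y\circ v_{-1}} \quad \text{via} \quad \overline{y\circ v_{-1}} = \overline{y}\circ \overline{v_{-1}} = y\circ w_1,
\end{equation*}
but more usefully $\overline{y\circ v_1} = y\circ w_1$, i.e.\ $y\circ w_1 = \overline{v_{-1}} = w_{-1}$. Substituting gives $\phi_{\mathbb{C}}(v_{-1},w_1) + \phi_{\mathbb{C}}(v_1,w_{-1})=0$, which is the desired symmetric relation; the primed version is identical. The main (mild) point to watch is this sign flip compared to the $a<0$ case: there one used $\lambda\overline{x}=y$ to obtain an antisymmetric relation with the factor $\lambda$, whereas here $\overline{y}=y$ produces a plain symmetric cancellation with no factor of $\lambda$. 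No further obstacle is expected.
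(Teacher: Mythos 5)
Your proof is correct and takes essentially the same route as the paper: the same construction ($v_{-1}=y\circ v_1$, $w_m=\overline{v_m}$, and likewise on the $\tau$-side), the same use of infinitesimal invariance of $\phi$ under $h$ and $y$ together with Lemma \ref{phicompatiblewithE}, and the same key input $\overline{y}=y$ for $a>0$ to produce the symmetric relation in Condition (4). Only a harmless slip: in your display the term $y\circ\overline{v_{-1}}$ should read $y\circ\overline{v_1}$ (since $w_1=\overline{v_1}$ here), but your subsequent line $\overline{y\circ v_1}=y\circ w_1=\overline{v_{-1}}=w_{-1}$ already gives the correct chain.
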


\begin{proof}
The proof will be similar to that of Lemma \ref{deg4Polarisation properties k imaginary} and we will merely point out the difference. We fix $v_1$ a weight 1 vector under the action of $\rho_{\sigma}(h)$ in $V_{\sigma}$. And let \begin{equation}\label{construct}
 v_{-1}=y\circ v_1, w_{-1}=\overline{v_{-1}}\in V_{\overline{\sigma}}, w_1=\overline{v_{1}} \in V_{\overline{\sigma}} 
\end{equation} while we also fix  $v'_1$ a weight 1 vector under the action of $\rho_{\tau}(h)$ in $V_{\tau}$ and let $$v'_{-1}=y\circ v'_1, w'_{1}=\overline{v'_1}\in V_{\overline{\tau}}, w'_{-1}=\overline{v'_{-1}} \in V_{\overline{\tau}}$$ Then for reasons similar to the previous lemma, the first three conditions are satisfied by construction. As for the last condition, we note that $$\phi_{\mathbb{C}}(y \circ v_1,w_1)+\phi_{\mathbb{C}}(v_1,y\circ w_1)=0$$ and $$\phi_{\mathbb{C}}(y \circ v'_1,w'_1)+\phi_{\mathbb{C}}(v'_1,y\circ w'_1)=0$$ But from isomorphism (\ref{sl2explicitiso}) we also have that $\overline{y}=y$. This combined with the explicit construction (\ref{construct}) leads to the last relation in the lemma.   
\end{proof}
\subsection{Proof of Theorem \ref{deg4theorem}}\label{2011section}
\begin{proposition}\label{deg4nosuchabelian4fold}
There does not exist a simple abelian fourfold $A$ with $\mathrm{End}^{\circ}(A)\cong K$ and $K$ is a quartic CM-field such that $\liegdrbssmath(A) \otimes_{\mathbb{Q}} \qbar \cong \mathrm{sl}(2)$. In other words, Case (\ref{case1}) from Lemma \ref{deg4repclassification} does not occur.
\end{proposition}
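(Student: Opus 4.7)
The plan is to split according to the sign of $a$ in the quaternion form $\mathbb{Q}(a,\lambda)^\circ \cong \liegdrbssmath(A)$, and in each case to pit the polarization-positivity condition against the linear relations already recorded in Lemmas \ref{deg4Polarisation properties k imaginary} and \ref{deg4Polarisation properties k totallyreal}. By Lemma \ref{deg4keylemma}, the multiplicity set of $K$ acting on $V$ is $\{2,0,1,1\}$, and up to relabeling I may assume $\sigma$ has multiplicity $2$, so that $V_\sigma \subset V^{1,0}$, $V_{\overline\sigma} \subset V^{0,1}$, while $V_\tau$ and $V_{\overline\tau}$ each split as a sum of a line in $V^{1,0}$ and a line in $V^{0,1}$. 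Throughout I will use the Hodge-Riemann Hermitian form $H(u,v):=\phi_{\mathbb{C}}(u, C\overline{v})$, which must be positive definite on $V^{1,0}$; in particular $H(v,v)=-i\phi_{\mathbb{C}}(v,\overline v)$ for $v\in V^{1,0}$ and $H(v,v)=i\phi_{\mathbb{C}}(v,\overline v)$ for $v\in V^{0,1}$.

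The case $a>0$ will be immediate. Lemma \ref{deg4Polarisation properties k totallyreal} gives $\overline{v_1}=w_1$, and since $v_1$ and $w_1$ both have $h$-weight $+1$, the infinitesimal $\mathrm{sl}(2)$-invariance of $\phi$ forces $\phi(v_1,w_1)=0$ directly from $(h\cdot v_1, w_1)+(v_1,h\cdot w_1)=2\phi(v_1,w_1)=0$. Then $H(v_1,v_1)=-i\phi(v_1,\overline{v_1})=-i\phi(v_1,w_1)=0$ on the nonzero vector $v_1\in V_\sigma\subset V^{1,0}$, contradicting positive definiteness.

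The case $a<0$ is where the real work lies. Here Lemma \ref{deg4Polarisation properties k imaginary} gives $\overline{v_1}=w_{-1}$ (of opposite $h$-weight), so the preceding obstruction disappears, and I must compare the $\sigma$-piece with the $\tau$-piece. A short reality argument, applying $\phi$ to the two real vectors $v_1+w_{-1}$ and $i(v_1-w_{-1})$ and using that $\phi$ is alternating with rational values, shows $\phi(v_1,w_{-1})=it_1$ with $t_1\in\mathbb{R}$. Then $H(v_1,v_1)=t_1$ and, using the relation $\phi(v_{-1},w_1)=-\lambda\phi(v_1,w_{-1})$ from the lemma, also $H(v_{-1},v_{-1})=-\lambda t_1$; positivity on $V_\sigma\subset V^{1,0}$ thus forces $t_1>0$ and $\lambda<0$. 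The same reality argument gives $\phi(v_1',w_{-1}')=it_2$ for some $t_2\in\mathbb{R}$. I then parametrize the $1$-dimensional Hodge pieces $V_\tau^{1,0}=\mathbb{C}(\alpha v_1'+\beta v_{-1}')$ and $V_\tau^{0,1}=\mathbb{C}(\gamma v_1'+\delta v_{-1}')$ and compute directly
\[
H(u,u)=t_2(|\alpha|^2-\lambda|\beta|^2),\qquad H(v,v)=-t_2(|\gamma|^2-\lambda|\delta|^2),
\]
for $u$ and $v$ nonzero generators of these lines. Since $\lambda<0$, both parenthesized expressions are strictly positive, so positivity of $H$ on $V^{1,0}$ and $V^{0,1}$ forces simultaneously $t_2>0$ and $t_2<0$, the desired contradiction.

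The only step that requires genuine care is the reality claim $\phi(v_1',w_{-1}')\in i\mathbb{R}$ in the $a<0$ case; everything else is short bookkeeping. The conceptual content is that the single $\mathrm{sl}_2$-triple $(h,x,y)$ fixed by isomorphism (\ref{sl2explicitiso}) uniformly controls the polarization values on every $K$-eigenspace, so the simultaneous requirements of being a genuine polarization on the multiplicity-$2$ piece (which fixes the sign of $\lambda$) and on the multiplicity-$1$ piece (which, given that sign, forces incompatible signs on the single scalar $t_2$) cannot both be met.
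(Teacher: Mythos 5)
Your proposal is correct and follows essentially the same route as the paper: the same case split on the sign of $a$, the same inputs from Lemma \ref{deg4keylemma} and Lemmas \ref{deg4Polarisation properties k imaginary}--\ref{deg4Polarisation properties k totallyreal}, and the same contradiction between Hodge--Riemann positivity on the multiplicity-$2$ piece (forcing $\lambda<0$) and on the $\tau$-pieces (forcing incompatible signs of a single constant). Your minor reorganizations --- evaluating directly at $v_1$ in the $a>0$ case instead of at a general real combination, and working with $V_\tau^{0,1}$ rather than its conjugate $V_{\overline{\tau}}^{1,0}$ --- are equivalent to the paper's computations.
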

We first write down in more detail the relation between the Hodge decomposition and the eigenspace decomposition. Recall that the set of multiplicities associated with $\mathrm{Hom}(K,\mathbb{C})$ is $\{2,0,1,1\}$ by Lemma \ref{deg4keylemma}. Without loss of generality, we let $m_{\sigma}=2, m_{\overline{\sigma}}=0,m_{\tau}=1,m_{\overline{\tau}}=1$. Therefore tensoring the eigenspace decomposition with $\mathbb{C}$ and by Definition \ref{multiplicitydefn} we have that \begin{equation}\label{deg4hodgedecomp}
    V^{1,0}=V_{\sigma}^{1,0}\oplus V_{\overline{\sigma}}^{1,0}\oplus V_{\tau}^{1,0} \oplus V_{\overline{\tau}}^{1,0}
\end{equation} with $\mathrm{dim}(V_{\sigma}^{1,0})=2,\mathrm{dim}(V_{\overline{\sigma}}^{1,0})=0, \mathrm{dim}(V_{\tau}^{1,0})=1, \mathrm{dim}(V_{\overline{\tau}}^{1,0})=1$ and 
\begin{equation}\label{deg4hodgedecomp'}
    V^{0,1}=V_{\sigma}^{0,1}\oplus V_{\overline{\sigma}}^{0,1}\oplus V_{\tau}^{0,1} \oplus V_{\overline{\tau}}^{0,1}\end{equation} with $\mathrm{dim}(V_{\sigma}^{0,1})=0,\mathrm{dim}(V_{\overline{\sigma}}^{0,1})=2, \mathrm{dim}(V_{\tau}^{0,1})=1, \mathrm{dim}(V_{\overline{\tau}}^{0,1})=1$.
Moreover, formula (\ref{deg4hodgedecomp}) and formula (\ref{deg4hodgedecomp'}) satisfy the following orthogonal property. 

\begin{lemma}
With respect to the $\mathbb{C}$-linear extension of the polarization form $\phi$ on $\betti \otimes \mathbb{C}$, we have that $V_{\overline{\tau}}^{0,1}=(V_{\tau}^{0,1})^{\perp}\cap V_{\overline{\tau}}$ and $V_{\overline{\tau}}^{1,0}=(V_{\tau}^{1,0})^{\perp}\cap V_{\overline{\tau}}$.    
\end{lemma}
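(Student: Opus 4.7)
The plan is to use two basic facts in conjunction: the polarization form is a morphism of Hodge structures, and by Lemma \ref{phicompatiblewithE} (equivalently Lemma \ref{deg4isomdual}) its $\qbar$-linear extension pairs $V_{\tau}$ nondegenerately with $V_{\overline{\tau}}$ while annihilating all other pairs of eigenspaces. We establish the two asserted equalities by first showing one inclusion from the Hodge structure morphism property, and then matching dimensions using the non-degeneracy of the pairing.

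First I would record that the polarization form $\phi:V\otimes V\to \mathbb{Q}(-1)$ is a morphism of weight $2$ $\mathbb{Q}$-Hodge structures. Consequently, after tensoring with $\mathbb{C}$, we have $\phi_{\mathbb{C}}(V^{p,q},V^{p',q'})=0$ whenever $(p+p',q+q')\neq (1,1)$. In particular,
\begin{equation*}
\phi_{\mathbb{C}}(V^{0,1},V^{0,1})=0, \qquad \phi_{\mathbb{C}}(V^{1,0},V^{1,0})=0.
\end{equation*}
Using the Hodge refinements (\ref{deg4hodgedecomp}) and (\ref{deg4hodgedecomp'}), this yields $\phi_{\mathbb{C}}(V_{\tau}^{0,1},V_{\overline{\tau}}^{0,1})=0$ and $\phi_{\mathbb{C}}(V_{\tau}^{1,0},V_{\overline{\tau}}^{1,0})=0$, i.e.
\begin{equation*}
V_{\overline{\tau}}^{0,1}\subseteq (V_{\tau}^{0,1})^{\perp}\cap V_{\overline{\tau}}, \qquad V_{\overline{\tau}}^{1,0}\subseteq (V_{\tau}^{1,0})^{\perp}\cap V_{\overline{\tau}}.
\end{equation*}

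Next I would match dimensions. By Lemma \ref{phicompatiblewithE}, the restriction $\phi_{\mathbb{C}}:V_{\tau}\times V_{\overline{\tau}}\to \mathbb{C}$ is a non-degenerate pairing. Hence any nonzero element of $V_{\tau}$ induces a nonzero linear functional on $V_{\overline{\tau}}$. Applied to a nonzero generator of the one-dimensional subspace $V_{\tau}^{0,1}$, this shows that $(V_{\tau}^{0,1})^{\perp}\cap V_{\overline{\tau}}$ has codimension exactly one in $V_{\overline{\tau}}$, so it is one-dimensional. Since $V_{\overline{\tau}}^{0,1}$ is also one-dimensional (by the multiplicity computation preceding the lemma: $m_{\overline{\tau}}=1$ so $\dim V_{\overline{\tau}}^{1,0}=\dim V_{\overline{\tau}}^{0,1}=1$), the inclusion above is an equality. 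The same argument applied to $V_{\tau}^{1,0}$ gives the second equality.

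There is no real obstacle here; the statement is essentially the observation that the two Hodge lines in $V_{\overline{\tau}}$ are the annihilators in $V_{\overline{\tau}}$ of the two Hodge lines in $V_{\tau}$ under the perfect pairing between them, which is a formal consequence of the fact that $\phi$ respects the Hodge decomposition. The only thing to be slightly careful about is that the two separate pieces of data, namely that $\phi_{\mathbb{C}}$ kills mismatched Hodge bidegrees and that it restricts nondegenerately to the $V_{\tau}\times V_{\overline{\tau}}$ block, come respectively from the polarization being a morphism of Hodge structures and from the Rosati involution on $K$ agreeing with complex conjugation (Lemma \ref{rosaticmcoincides}), both of which are available in our setup.
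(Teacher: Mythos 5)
Your proof is correct and follows essentially the same route as the paper: the inclusion comes from $\phi$ being a morphism of Hodge structures (isotropy of the Hodge pieces), and equality comes from the nondegeneracy of the $V_{\tau}\times V_{\overline{\tau}}$ pairing supplied by Lemma \ref{phicompatiblewithE} together with a dimension count. The only cosmetic difference is that the paper spells out just the $(1,0)$ case, whereas you treat both bidegrees explicitly.
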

\begin{proof}
Note that $\phi_{\mathbb{C}}$ preserves the Hodge structure, therefore for any $v,v'\in V^{1,0}$, we have that $\phi_{\mathbb{C}}(v,v')=0$. Thus $V_{\overline{\tau}}^{1,0}\subset(V_{\tau}^{1,0})^{\perp}\cap V_{\overline{\tau}}$. By Lemma \ref{phicompatiblewithE}, the subspaces $V_{\sigma}$, $V_{\sigmabar}$, $V_{\tau}$ are all orthogonal to $V_{\tau}$. But $\phi_{\mathbb{C}}$ is a non-degenerate form and $V_{\overline{\tau}}^{1,0}$ is a one-dimensional subspace of the two-dimensional vector space $V_{\overline{\tau}}$, therefore we obtain the equality $V_{\overline{\tau}}^{1,0}=(V_{\tau}^{1,0})^{\perp}\cap V_{\overline{\tau}}$.
\end{proof}
We are now ready to prove Proposition \ref{deg4nosuchabelian4fold}.
\begin{proof}[Proof of Proposition \ref{deg4nosuchabelian4fold}]
 
Assume such $A$ in the statement of the proposition exists. First suppose that $\liegdrbssmath(A)\cong\mathbb{Q}(a,\lambda)^{\circ}$ with $a<0$. Then there exists a weight 1 Hodge structure on $V$ satisfying decomposition (\ref{deg4hodgedecomp}) and (\ref{deg4hodgedecomp'}), polarized by a bilinear form $\phi$ satisfying the conditions from Lemma \ref{deg4Polarisation properties k imaginary}. Fix a square root of $-1$ and denote it by $\sqrt{-1}$. In particular, for any $v \in V^{1,0}-\{0\}$, the following inequality holds: \begin{equation}\label{deg4pos}
\phi_{\mathbb{C}}(v+\overline{v},\sqrt{-1}v-\sqrt{-1}\overline{v})=-2\sqrt{-1}\phi_{\mathbb{C}}(v,\overline{v})>0    
\end{equation} for example see Definition 3.1.6 of \cite{huybrechts2016lectures}.
Fix a basis for $V_{\sigma}$,$V_{\overline{\sigma}}$, $V_{\tau}$ and $V_{\overline{\tau}}$ as in Lemma \ref{deg4Polarisation properties k imaginary}. Recall that we have assumed in decomposition (\ref{deg4hodgedecomp}) that $\mathrm{dim}(V_{\sigma}^{1,0})=2$. Then for any $v_{\sigma}^{1,0}=x_{-1}v_{-1}+x_{1}v_{1}\in V_{\sigma}^{1,0}-\{0\}=V_{\sigma}\otimes_{\qbar}\mathbb{C}-\{0\}$ with $x_{-1},x_{1}\in\mathbb{C}$, the positivity condition of the polarization form is equivalent to
\begin{equation*}
-2\sqrt{-1}\phi_{\mathbb{C}}(x_{-1}v_{-1}+x_{1}v_{1},\overline{x_{-1}}w_1+\overline{x_1}w_{-1})>0
\end{equation*} for any $(x_{-1},x_{1})\in\mathbb{C}^2-\{0\}$. We now use linear relations between values of the polarization form from Lemma \ref{deg4Polarisation properties k imaginary} to simplify this expression.
 Denote $\sqrt{-1}\phi_{\mathbb{C}}(v_1,w_{-1})$ by $M$. This simplifies to
\begin{equation}
    -2M(x_1\overline{x_{1}}-\lambda x_{-1}\overline{x_{-1}})>0
\end{equation} for any $(x_1,x_{-1})\in \mathbb{C}^{2}-\{0\}$. Thus we deduce that $\lambda \in \mathbb{Q}_{<0}$.

Condition (\ref{deg4pos}) also needs to hold for elements in $V_{\tau}^{1,0}$ and $V_{\overline{\tau}}^{1,0}$. By decomposition (\ref{deg4hodgedecomp}), $\mathrm{dim}(V_{\tau}^{1,0})=1$. We now choose a $\mathbb{C}$-basis for $V_{\tau}^{1,0}$ and denote it by $v_{\tau}^{1,0}=y_{-1}v'_{-1}+y_{1}v'_{1}\in V_{\tau}\otimes_{\qbar}\mathbb{C}-\{0\}$ where $y_{-1},y_1\in\mathbb{C}$.
Then condition (\ref{deg4pos}) for elements in $V_{\tau}^{1,0}-\{0\}$ is equivalent to
\begin{equation*}
-2\sqrt{-1}\phi_{\mathbb{C}}(y_{-1}v'_{-1}+y_{1}v'_{1},\overline{y_{-1}}w'_1+\overline{y_1}w'_{-1})>0
\end{equation*}
Denoting $\sqrt{-1}\phi_{\mathbb{C}}(v'_1,w'_{-1})$ by $N$ and using relations from Lemma \ref{deg4Polarisation properties k imaginary}, this simplifies to \begin{equation}\label{deg4vtau10}
    -2N(y_1\overline{y_{1}}-\lambda y_{-1}\overline{y_{-1}})>0
\end{equation} 
Moreover, for any $y'_{-1}w'_{-1}+y'_{1}w'_{1}\in V_{\overline{\tau}}^{1,0}-\{0\}$, condition (\ref{deg4pos}) is equivalent to
\begin{equation*}
-2\sqrt{-1}\phi_{\mathbb{C}}(y'_{-1}w'_{-1}+y'_{1}w'_{1},\overline{y'_{-1}}v'_1+\overline{y'_1}v'_{-1})>0
\end{equation*}
which simplifies to 
\begin{equation}\label{vbartau10}
2N(-\lambda y'_1\overline{y'_{1}}+y'_{-1}\overline{y'_{-1}})>0
\end{equation}
Since $\lambda<0$, inequalities (\ref{deg4vtau10}) and (\ref{vbartau10}) give a clear contradiction.

Next suppose we are in the setup of Lemma \ref{deg4Polarisation properties k totallyreal}, i.e. $\liegdrbssmath(A)\cong\mathbb{Q}(a,\lambda)^{\circ}$ with $a>0$. Then for any non-zero $v_{\sigma}^{1,0}=x_{-1}v_{-1}+x_{1}v_{1}\in V_{\sigma}^{1,0}=V_{\sigma}\otimes_{\qbar}\mathbb{C}$ with $x_{-1},x_{1}\in\mathbb{C}$, condition (\ref{deg4pos}) is equivalent to
\begin{equation*}
-2\sqrt{-1}\phi_{\mathbb{C}}(x_{-1}v_{-1}+x_{1}v_{1},\overline{x_{-1}}w_{-1}+\overline{x_1}w_{1})>0
\end{equation*}
Using the last relations from Lemma \ref{deg4Polarisation properties k totallyreal}, and denoting $\sqrt{-1}\phi_{\mathbb{C}}(v_1,w_{-1})$ by $M'$, this simplifies to
\begin{equation}
    -2M'(x_{1}\overline{x_{-1}}-x_{-1}\overline{x_{1}})>0
\end{equation} for any $(x_1,x_{-1})\in \mathbb{C}^2-\{0\}$. But when $x_{-1},x_1\in\mathbb{R}^2-\{0\}$, this is equal to zero, which gives the desired contradiction.

\end{proof}

\begin{proof}[Proof of Theorem \ref{deg4theorem}]
 From Proposition \ref{twocentressame}, we obtained that $\mathrm{Z}(\mathrm{mt}(A))=\mathrm{Z}(\liegdrbmath(A))$. Now Proposition \ref{deg4nosuchabelian4fold} has ruled out Case (\ref{case1}) from Proposition \ref{deg4repclassification}. Therefore we can conclude that $\liegdrbssmath(A) \otimes_{\mathbb{Q}} \qbar=\mathrm{sl}(2)\times\mathrm{sl}(2)$. Thus by (7.5) of \cite{mz-4-folds}, one can see that $\liegdrbssmath(A) \otimes_{\mathbb{Q}} \qbar=\mathrm{mt}(A)^{\mathrm{ss}} \otimes_{\mathbb{Q}} \qbar$. Recall we have the inclusion $\liegdrbmath(A) \xhookrightarrow{} \mathrm{mt}(A)$, we then obtain that $\liegdrbmath(A)=\mathrm{mt}(A)$. Because $\mathrm{MT}(A)$ is a connected algebraic group, we can conclude that $\gdrbmath(A)=\mathrm{MT}(A)$.
\end{proof}

\section{De Rham-Betti Lie Algebras of Simple Anti-Weil Type Abelian Fourfolds}\label{antiweilsection}
In this section, unless otherwise specified, $A$ is a simple abelian fourfold of anti-Weil type (see Definition \ref{defnweiltype}) defined over $\qbar$. We will be studying possible candidates for the isomorphism class of the representation $$\liegdrbssmath(A) \rightarrow \mathrm{gl}(\betti)$$ based on the preliminary classification results from Proposition \ref{reprclassification}. In the first section, we will show in Proposition \ref{nosl2qform} that $\liegdrbssmath(A)$ cannot be isomorphic to a $\qbar/\mathbb{Q}$-form of $\mathrm{sl}(2)$. The proof method is almost the same as Section \ref{deg4section}, albeit the computation is different.

The focal point of this section, however, is Section \ref{noideasubsect}. We show that using the method for Proposition \ref{nosl2qform}, we cannot rule out $\liegdrbssmath(A)$ being isomorphic to certain $\qbar/\mathbb{Q}$ form of $\mathrm{sl}(2)\times\mathrm{sl}(2)$. The precise obstruction is pointed out in Proposition \ref{existenceofsl2timessl2}.

\subsection{\texorpdfstring{$\liegdrbssmath(A)$}{liegdrbssmath(A)} not Isomorphic to any \texorpdfstring{$\qbar/\mathbb{Q}$}{Qbar/Q}-Form of \texorpdfstring{$\mathrm{sl}(2)$}{sl(2)}}

The goal of this section is to prove the following result, the method of which is the same as the proof for Theorem \ref{deg4theorem}.
\begin{proposition} \label{nosl2qform}
 There does not exist a simple abelian fourfold $A$ of anti-Weil type such that $\liegdrbssmath(A)$ is isomorphic to a $\qbar/\mathbb{Q}$ form of $\mathrm{sl}(2)$.
\end{proposition}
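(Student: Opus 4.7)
The plan is to mimic the strategy of Proposition \ref{deg4nosuchabelian4fold}: assume for contradiction that $\liegdrbssmath(A)_\qbar \cong \mathrm{sl}(2)$, fix an isomorphism $\liegdrbssmath(A) \cong \mathbb{Q}(a,\lambda)^{\circ}$ via Lemma \ref{sl2qformlemma}, and extract a contradiction from the Riemann bilinear relations. By Proposition \ref{reprclassification}, the induced representations $\rho_\sigma, \rho_\sigmabar$ on the four-dimensional eigenspaces $V_\sigma, V_\sigmabar$ attached to $K \hookrightarrow \mathrm{End}(V)$ are both isomorphic to the unique four-dimensional irreducible $\mathrm{sl}(2,\qbar)$-representation $V(3)$. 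Fix the $\mathrm{sl}(2)$-triple $(h,x,y)$ as in (\ref{sl2explicitiso}) and pick a weight basis $v_{(3)}, v_{(1)}, v_{(-1)}, v_{(-3)}$ of $V_\sigma$ with the standard normalization. By exactly the same computation as in Lemma \ref{conjugate and weight vector V(1)} (which uses only the action of $h$ and is insensitive to whether the representation is $V(1)$ or $V(3)$), the vectors $\overline{v_{(m)}}$ form a weight basis of $V_\sigmabar$, having weights $-m$ when $a<0$ and weights $m$ when $a>0$.

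The next step is to analyze the polarization form $\phi$ using $\mathrm{sl}(2)$-invariance. The invariance under $h$ forces $\phi(v_{(i)}, \overline{v_{(j)}}) = 0$ except when the two weights cancel; the invariance under $y$ then determines all nonzero pairings up to a single scalar $t \in \mathbb{C}$, with ratios $1, -3, 3, -1$ coming from the $V(3)$-structure, together with extra powers of $\lambda$ in the case $a<0$ arising from $\overline{y} = \lambda x$ (Corollary \ref{relations between y and x}). The reality of $\phi$ over $\mathbb{Q}$ forces $t$ to be purely imaginary when $a<0$ and real when $a>0$, in complete parallel with the degree-$4$ case. With these relations in hand, the Hermitian form $H(\xi,\eta) := i\phi_{\mathbb{C}}(\xi, \overline{\eta})$ on $V_\sigmabar \otimes_{\qbar} \mathbb{C}$ can be written out explicitly in the basis $\{\overline{v_{(m)}}\}$: it is diagonal when $a<0$, with entries proportional to $(1, -3\lambda, 3\lambda^2, -\lambda^3)$, and anti-diagonal when $a>0$, decomposing as a direct sum of two Hermitian $(1,1)$-blocks. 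A short sign analysis then shows that the signature of $H|_{V_\sigmabar \otimes_{\qbar}\mathbb{C}}$ is one of $(2,2)$, $(4,0)$, or $(0,4)$ in every case.

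On the other hand, the Riemann bilinear relations on the polarized weight-$1$ Hodge structure of $A$ force $H$ to be positive definite on $V^{1,0}$ and negative definite on $V^{0,1}$. Since the $K$-action preserves the Hodge decomposition, $V_\sigmabar \otimes_{\qbar}\mathbb{C}$ inherits its own Hodge decomposition, and so $H|_{V_\sigmabar \otimes_{\qbar}\mathbb{C}}$ has signature $(m_\sigmabar, 4 - m_\sigmabar)$. Because $A$ is of anti-Weil type (Definition \ref{defnweiltype}), we have $m_\sigmabar \in \{1,3\}$, and hence this signature must be $(1,3)$ or $(3,1)$, contradicting the three possibilities $(2,2)$, $(4,0)$, $(0,4)$ identified above. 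The main technical hurdle is the signature computation itself, especially in the case $a < 0$ where the quaternion parameter $\lambda$ enters every ratio and one must track signs case-by-case depending on $\mathrm{sign}(\lambda)$ and $\mathrm{sign}(t/i)$; the structural reason the argument works is that the $\mathrm{sl}(2)$-invariant pairing on $V(3)$ has an intrinsically balanced sign pattern (alternating on consecutive weight pairs), which is geometrically incompatible with the asymmetric $(1,3)$ or $(3,1)$ signature forced by the anti-Weil multiplicity distribution.
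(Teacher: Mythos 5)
Your proposal is correct, and its first half coincides with what the paper does: the same reduction via Proposition~\ref{reprclassification} to $\rho_{\sigma}\cong\rho_{\sigmabar}\cong V(3)$, the same computation showing how complex conjugation permutes weight vectors (recorded in the paper as Lemma~\ref{conjugate and weight vector}, and indeed identical in nature to Lemma~\ref{conjugate and weight vector V(1)}), and the same derivation of the pairing values of $\phi$ on weight vectors from $h$- and $y$-invariance together with Corollary~\ref{relations between y and x} (these are exactly the paper's Lemmas~\ref{Polarisation properties k imaginary} and~\ref{polarization values k real}). Where you genuinely diverge is the endgame. The paper fixes the one-dimensional $V_{\sigma}^{1,0}$, identifies $V_{\sigmabar}^{1,0}$ as its $\phi$-orthogonal complement inside $V_{\sigmabar}$ (Lemma~\ref{hodgeandphiandK}), and then exhibits explicit test vectors violating the positivity of the polarization, with a case analysis on the sign of $\lambda$ and on $a\gtrless 0$. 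You instead read off the signature of the Hermitian form $H(\xi,\eta)=\sqrt{-1}\,\phi_{\mathbb{C}}(\xi,\overline{\eta})$ on $V_{\sigmabar}\otimes\mathbb{C}$ directly from the invariance relations (diagonal with alternating sign pattern governed by $\mathrm{sign}(\lambda)$ when $a<0$, a sum of two hyperbolic $2\times 2$ blocks when $a>0$), getting only $(2,2)$, $(4,0)$, $(0,4)$, and compare with the signature $(m_{\sigmabar},4-m_{\sigmabar})\in\{(1,3),(3,1)\}$ forced by the Riemann bilinear relations together with the anti-Weil multiplicities. This packaging is cleaner: it avoids the coordinate manipulations and hand-picked vectors of the paper's proof and does not even require Lemma~\ref{hodgeandphiandK}. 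One minor caveat: with the paper's normalization of the weight basis the nonzero pairings are in ratio $1:-3\lambda:12\lambda^{2}:-36\lambda^{3}$ (respectively $1:-1:1:-1$ for $a>0$), not the constants $(1,-3\lambda,3\lambda^{2},-\lambda^{3})$ you quote; but since only the sign pattern enters the signature count, this discrepancy is harmless and your contradiction stands.
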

We will be using notations from Setup \ref{sectiondeg2} and Proposition \ref{reprclassification}. We denote $\betti$ by $V$. Then recall from Setup \ref{sectiondeg2} that, with respect to the morphism of $\mathbb{Q}$-algebras $K\xhookrightarrow{}\mathrm{End}(V)$, we have the eigenspace decomposition $$V_{\qbar}=V_{\sigma}\oplus V_{\sigmabar}$$ Suppose $\liegdrbssmath(A) \otimes_{\mathbb{Q}} \qbar$ is isomorphic to $\mathrm{sl}(2)$. Then by Proposition \ref{reprclassification} the representation $$\eta_{\qbar}: \liegdrbssmath(A) \otimes \qbar \rightarrow \mathrm{gl}(V\otimes \qbar)$$ splits as $\rho_{\sigma}: \mathrm{sl}(2) \rightarrow \mathrm{gl}(V_{\sigma})$ and  $\rho_{\sigmabar}:\mathrm{sl}(2) \rightarrow \mathrm{gl}(V_{\sigmabar})$, both of which are isomorphic to $V(3)$.

We first give a simple lemma arising from the above construction, which is a variant of Lemma \ref{conjugate and weight vector V(1)}. Given a $\qbar/\mathbb{Q}$ form of $\mathrm{sl}(2)$, we will be using notations and conventions from Lemma \ref{sl2qformlemma} and the fixed isomorphism (\ref{sl2explicitiso}): $\mathbb{Q}(a,\lambda)^{\circ} \otimes \qbar\cong \mathrm{sl}(2,\qbar)$.
\begin{lemma} \label{conjugate and weight vector}
Suppose we have a Lie algebra representation $$\eta: \mathbb{Q}(a,\lambda)^{\circ} \rightarrow \mathrm{gl}(V)$$ defined over $\mathbb{Q}$ which upon tensoring with $\qbar$ splits in the form of Case (\ref{sl2degk2}) from Proposition \ref{reprclassification}, i.e. $\eta_{\qbar}=\rho_{\sigma}\oplus\rho_{\sigmabar}$ and both $\rho_{\sigma}$ and $\rho_{\sigmabar}$ are isomorphic to $V(3)$. Let $v_{m} \in V_{\sigma}$(or $w_{m} \in V_{\sigmabar}$) be an eigenvector of $\rho_{\sigma}(h)$(or $\rho_{\sigmabar}(h)$) of weight $3-2m$ ($m \in \{0,1,2,3\}$). If $a<0$, then $\overline{v_{m}} \in V_{\sigmabar}$(or $\overline{w_{m}} \in V_{\sigma}$) remains an eigenvector of $\rho_{\sigmabar}(h)$(or $\rho_{\sigma}(h)$) but has weight $2m-3$. If $a>0$, then $\overline{v_{m}} \in V_{\sigmabar}$(or $\overline{w_{m}} \in V_{\sigma}$) remains an eigenvector of $\rho_{\sigmabar}(h)$(or $\rho_{\sigma}(h)$) with the same weight $3-2m$.
\end{lemma}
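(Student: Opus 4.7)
The plan is to copy the proof of Lemma \ref{conjugate and weight vector V(1)} almost verbatim, merely substituting the weight set $\{-3,-1,1,3\}$ for $\{-1,1\}$. That earlier argument rests only on the explicit identification (\ref{sl2explicitiso}), under which $h = i \otimes \frac{1}{\sqrt{a}}$, together with the fact that $\eta(i) \in \mathrm{End}(V)$ has rational matrix entries with respect to any $\mathbb{Q}$-basis of $V$. Neither ingredient involves the particular weight of the eigenvector under consideration, so the same computation should go through for the full weight set appearing in the $V(3)$ representations $\rho_{\sigma}$ and $\rho_{\sigmabar}$.

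Concretely, I would start from $\eta_{\qbar}(h) = \eta(i) \otimes \frac{1}{\sqrt{a}}$ and apply the canonical complex conjugation on $V_{\qbar} = V \otimes_{\mathbb{Q}} \qbar$ to the eigenvector equation $\eta_{\qbar}(h)(v_m) = (3-2m) v_m$. Because $\eta(i)$ is fixed by conjugation (rational entries) while $1/\sqrt{a}$ is fixed if $a > 0$ and negated if $a < 0$, the conjugated equation reads $\eta_{\qbar}(h)(\overline{v_m}) = (3-2m)\overline{v_m}$ in the first case and $\eta_{\qbar}(h)(\overline{v_m}) = -(3-2m)\overline{v_m} = (2m-3)\overline{v_m}$ in the second. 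The fact that $\overline{v_m}$ lands in $V_{\sigmabar}$ (and not merely in some subspace of $V_{\qbar}$) follows from Remark \ref{conjugatedecomp}, which identifies $\overline{V_{\sigma} \otimes_{\qbar} \mathbb{C}}$ with $V_{\sigmabar} \otimes_{\qbar} \mathbb{C}$ since $K$ is a CM-field; in particular $\overline{v_m}$ is automatically an eigenvector of $\rho_{\sigmabar}(h)$ rather than of $\rho_{\sigma}(h)$.

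The parallel assertion for $w_m \in V_{\sigmabar}$ follows by running exactly the same manipulation in the opposite direction. I do not anticipate any genuine obstacle: this lemma is a purely computational analog of Lemma \ref{conjugate and weight vector V(1)} in the $V(3)$ setting, and the only bookkeeping is translating between the weight labels $\{-1,1\}$ used there and the labels $\{-3,-1,1,3\}$ indexed by $m \in \{0,1,2,3\}$ here.
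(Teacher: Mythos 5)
Your proposal is correct and is essentially the paper's own argument: the paper proves this lemma by exactly the same conjugation of the eigenvector equation for $h = i\otimes\frac{1}{\sqrt{a}}$, using rationality of $\eta(i)$ and the sign behaviour of $\sqrt{a}$, just as in Lemma \ref{conjugate and weight vector V(1)}. The only cosmetic difference is that the paper simply notes $\overline{v_m}\in V_{\sigmabar}$ directly rather than citing Remark \ref{conjugatedecomp}, which is an equally valid justification.
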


\begin{proof}
Using isomorphism (\ref{sl2explicitiso}), we have that $h=i\otimes\frac{1}{\sqrt{a}}$ and therefore $\rho_{\sigma}(h)=\eta(i)\otimes \frac{1}{\sqrt{a}}$. Hence if $v_{m} \in V_{\sigma}$ satisfies 
\begin{equation*}
  \rho_{\sigma}(h) \circ v_{m}=(3-2m)v_{m}  
\end{equation*} then
\begin{equation*}
   (\eta(i)\otimes \frac{1}{\sqrt{a}})\circ v_{m}=(3-2m)v_{m}
\end{equation*} Now suppose $a<0$. Notice that $\eta(i)$ has $\mathbb{Q}$-coefficients with respect to a $\mathbb{Q}$-basis of $V$. Therefore upon applying complex conjugation to the above equation, $\eta(i)$ remains unchanged. Also note that $\overline{v_{m}}\in V_{\sigmabar}$. Therefore we have:
\begin{equation*}
   \overline{\rho_{\sigma}(h) \circ v_{m}}=(\eta(i) \otimes \overline{\frac{1}{\sqrt{a}}}) \circ \overline{v_{m}}=-\eta(i) \otimes \frac{1}{\sqrt{a}} \circ \overline{v_{m}}=-\rho_{\sigmabar}(h) \circ \overline{v_{m}}=(3-2m)\overline{v_{m}}
\end{equation*}
Hence $\overline{v_{m}}$ has weight $2m-3$ and the same holds for $w_{m}$s in $V_{\sigmabar}$.
Next suppose that $a>0$. Then applying complex conjugation, we get:
\begin{equation*}
   \overline{\rho_{\sigma}(h) \circ v_{m}}=\eta(i) \otimes \overline{\frac{1}{\sqrt{a}}} \circ \overline{v_{m}}=\eta(i) \otimes \frac{1}{\sqrt{a}} \circ \overline{v_{m}}=\rho_{\sigmabar}(h) \circ \overline{v_{m}}=(3-2m)\overline{v_{m}}
\end{equation*} Hence the second statement of the lemma follows.
\end{proof}
In the rest of this section, we further extend scalars from $\qbar$ to $\mathbb{C}$ and denote $V_{\sigma}\otimes_{\qbar}\mathbb{C}$ by $V_{\sigma}$ and $V_{\sigmabar}\otimes_{\qbar}\mathbb{C}$ by $V_{\sigmabar}$ as well. The following two lemmas are variants of Lemma \ref{deg4Polarisation properties k imaginary} and Lemma \ref{deg4Polarisation properties k totallyreal}.
\begin{lemma} \label{Polarisation properties k imaginary} 
Suppose there exists an abelian fourfold $A$ satisfying the conditions in Proposition \ref{nosl2qform}, i.e. $\liegdrbssmath(A) \cong \mathbb{Q}(a,\lambda)^{\circ}$. We furthermore assume that $a<0$. Fix the $\mathrm{sl}(2)$ triple $(h,x,y)$ of $\liegdrbssmath(A) \otimes \qbar \cong \mathrm{sl}(2)$ as in isomorphism (\ref{sl2explicitiso}). Then for any polarization form: $\phi:V \times V \rightarrow \mathbb{Q}$ we can find a basis $\{v_0,v_1,v_2,v_3\}$ for $V_{\sigma}$ and a basis for $\{w_0,w_1,w_2,w_3\}$ for $V_{\sigmabar}$ such that the following conditions are satisfied. \begin{enumerate}
    \item\label{condition1} The $v_{m}$s and $w_{m}$s are of weights $3-2m$ i.e. are eigenvectors of the $\rho_{\sigma}(h)$ or $\rho_{\sigmabar}(h)$ with eigenvalues $3-2m$
    \item\label{condition2} $w_{3-m}=\overline{v_{m}}$ 
    \item\label{condition3} $\phi_{\mathbb{C}}(v_{m},w_{m'})=0$ if $m+m'\neq 3$. Furthermore $V_{\sigma}$ and $V_{\sigmabar}$ are totally isotropic subspaces for $\phi_{\mathbb{C}}$.
    \item\label{condition4} The values of $\phi_{\mathbb{C}}(v_{m},w_{m'})$ satisfy the following relations when $m+m'=3$ \begin{align*} \label{phivalueforimaginaryk}
 \phi_{\mathbb{C}}(v_1,w_2)=-3\lambda\phi_{\mathbb{C}}(v_0,w_3)\\
 \phi_{\mathbb{C}}(v_2,w_1)=-4\lambda\phi_{\mathbb{C}}(v_1,w_2)\\
 \phi_{\mathbb{C}}(v_3,w_0)=-3\lambda\phi_{\mathbb{C}}(v_2,w_1)
\end{align*}
\end{enumerate} 
\end{lemma}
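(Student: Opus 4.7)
The plan is to mimic the strategy of Lemma \ref{deg4Polarisation properties k imaginary}, adapted to the four-dimensional irreducible representation $V(3)$ on each of $V_{\sigma}$ and $V_{\sigmabar}$. First I would pick a nonzero highest weight vector $v_0 \in V_{\sigma}$ (i.e.\ $\rho_{\sigma}(h)\cdot v_0 = 3 v_0$ and $\rho_{\sigma}(x)\cdot v_0 = 0$) and then define $v_{m+1} := y\cdot v_m$ for $m = 0,1,2$. A standard $\mathrm{sl}(2)$-computation using $[x,y]=h$ shows that $\{v_0,v_1,v_2,v_3\}$ is a basis of $V_{\sigma}$ with $\rho_{\sigma}(h)\cdot v_m = (3-2m)v_m$ and $\rho_{\sigma}(x)\cdot v_m = m(4-m)v_{m-1}$ (explicitly $xv_1=3v_0$, $xv_2=4v_1$, $xv_3=3v_2$), and moreover $y\cdot v_3 = 0$. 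Then I would set $w_m := \overline{v_{3-m}} \in V_{\sigmabar}$; since $a<0$, Lemma \ref{conjugate and weight vector} ensures that $w_m$ is a $\rho_{\sigmabar}(h)$-eigenvector of weight $3-2m$. This gives Conditions (\ref{condition1}) and (\ref{condition2}) by construction.

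Condition (\ref{condition3}) is an infinitesimal invariance argument. The polarization form is preserved by the image of $\liegdrbssmath(A)$ infinitesimally, so $\phi_{\mathbb{C}}(h\cdot v,\,w) + \phi_{\mathbb{C}}(v,\,h\cdot w) = 0$ for all $v,w$; specializing to weight vectors of weights $k$ and $k'$ gives $(k+k')\phi_{\mathbb{C}}(v,w)=0$. Applied to $v_m \in V_{\sigma}$ of weight $3-2m$ and $w_{m'} \in V_{\sigmabar}$ of weight $3-2m'$, this forces $\phi_{\mathbb{C}}(v_m,w_{m'})=0$ unless $m+m'=3$. The total isotropy of $V_{\sigma}$ and $V_{\sigmabar}$ is then immediate from Lemma \ref{phicompatiblewithE}, since $\sigma \neq \sigmabar$.

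Condition (\ref{condition4}) is where the quaternionic form structure enters. The representation $\eta$ is defined over $\mathbb{Q}$, so for any $X \in \mathbb{Q}(a,\lambda)^{\circ}$ the operator $\eta(X)$ has $\mathbb{Q}$-coefficients and therefore commutes with complex conjugation; extending $\qbar$-linearly, for $X \in \mathrm{sl}(2,\qbar)$ and $v \in V_{\mathbb{C}}$ we obtain $\eta(X)\cdot\overline{v} = \overline{\eta(\overline{X})\cdot v}$, where $\overline{X}$ denotes the Galois conjugate in $\mathbb{Q}(a,\lambda)^{\circ}\otimes_{\mathbb{Q}}\qbar$. By Corollary \ref{relations between y and x}, in the case $a<0$ we have $\lambda \overline{x} = y$, hence $\overline{y} = \lambda x$. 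Combining these yields $y\cdot w_m = \overline{\lambda x \cdot v_{3-m}}$, which evaluates to $y\cdot w_0 = 3\lambda w_1$, $y\cdot w_1 = 4\lambda w_2$, $y\cdot w_2 = 3\lambda w_3$, and $y\cdot w_3 = 0$. Plugging each of these into the infinitesimal invariance identity $\phi_{\mathbb{C}}(y\cdot v_m,w_{m'}) + \phi_{\mathbb{C}}(v_m,y\cdot w_{m'}) = 0$ with the pairs $(m,m') = (0,2),\,(1,1),\,(2,0)$ (the three index choices for which both summands survive thanks to Condition (\ref{condition3})) gives exactly the three relations in Condition (\ref{condition4}).

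I do not anticipate any serious obstacle; the computation is entirely parallel to the $V(1)\boxtimes V(1)$ case treated in Lemma \ref{deg4Polarisation properties k imaginary}, with the only real bookkeeping being the coefficients coming from the action of $x$ and $y$ on $V(3)$. The essential mechanism is exactly the same: the quaternion relation $\overline{y} = \lambda x$ is what couples the values $\phi_{\mathbb{C}}(v_m,w_{3-m})$ into a geometric progression of ratios determined by $\lambda$, and this will be the input needed in the subsequent positivity argument ruling out this case in Proposition \ref{nosl2qform}.
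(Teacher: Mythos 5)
Your proposal is correct and follows essentially the same route as the paper's proof: the same basis construction $v_{m+1}=\rho_{\sigma}(y)\circ v_m$, $w_m=\overline{v_{3-m}}$, the $h$-invariance argument plus Lemma \ref{phicompatiblewithE} for Condition (\ref{condition3}), and the relation $\overline{y}=\lambda x$ coming from Corollary \ref{relations between y and x} to get $y\cdot w_0=3\lambda w_1$, $y\cdot w_1=4\lambda w_2$, $y\cdot w_2=3\lambda w_3$, which yields Condition (\ref{condition4}). The coefficients and the three index pairings you use agree exactly with those in the paper.
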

\begin{proof}
By Lemma \ref{rosaticmcoincides}, for any polarization form $\phi$ on $\betti$ we have that the Rosati involution on $K$ induced by $\phi$ coincides with the complex conjugation on $K \xhookrightarrow{}\mathrm{End}(V)$. Therefore Lemma \ref{phicompatiblewithE} can be handily applied. We will explicit write down a basis $\{v_0,v_1,v_2,v_3\}$ for $V_{\sigma}$ and a basis $\{w_0,w_1,w_2,w_3\}$ for $V_{\sigmabar}$ and then verify that they satisfy the conditions in the lemma. 

We start by fixing an arbitrary weight 3 vector $v_0$ in $V_{\sigma}$. Then let
\begin{align*}
&v_1=\rho_{\sigma}(y) \circ v_0, v_2=\rho_{\sigma}(y) \circ v_1, v_3=\rho_{\sigma}(y) \circ v_2\\
&w_0=\overline{v_3}, w_1=\overline{v_2}, w_2=\overline{v_1}, w_3=\overline{v_0}
\end{align*}  
Then $v_{m}$s are of weights $3-2m$ by the Lie algebra relation $[h,y]=-2y$ and $w_{m}$s are of weights $3-2m$ by virtue of Lemma \ref{conjugate and weight vector}. Hence condition (\ref{condition1}) and condition (\ref{condition2}) from above are satisfied by construction.

As for condition (\ref{condition3}) and (\ref{condition4}), note that $\phi$ being a polarization form, is preserved by the image of $\eta: \liegdrbssmath(A) \rightarrow \mathrm{gl}(V)$ infinitesimally. Thus for any $v,w \in V\otimes \mathbb{C}$ and for any $l \in \liegdrbssmath(A) \otimes \mathbb{C} \cong \mathrm{sl}(2,\mathbb{C})$, the following relation holds: 
\begin{equation}\label{polarisationpreservinggdrb}
    \phi_{\mathbb{C}}(\eta_{\mathbb{C}}(l)\circ v,w)+\phi_{\mathbb{C}}(v,\eta_{\mathbb{C}}(l)\circ w)=0
\end{equation} Denote the restriction of $\eta_{\mathbb{C}}$ on $V_{\sigma}$ and $V_{\sigmabar}$ by $\rho_{\sigma}$ and $\rho_{\sigmabar}$. For any weight vector $v_{m'}$ in $V_{\sigma}$ of weight $3-2m'$  and $w_{m}$ in $V_{\sigmabar}$ of weight $3-2m$, if $m'+m \neq 3$, we have $$\phi_{\mathbb{C}}(
\rho_{\sigma}(h)\circ v_{m'},w_{m})+\phi_{\mathbb{C}}(v_{m'},\rho_{\sigmabar}(h)\circ w_{m})=\alpha\phi_{\mathbb{C}}(v_{m'},w_{m})=0$$ where $\alpha$ is a non-zero integer. Hence $\psi_{\mathbb{C}}(v_{m'},w_{m})=0$. The subspace $V_{\sigma}$ and $V_{\sigmabar}$ are isotropic by Lemma \ref{phicompatiblewithE}. Hence condition (\ref{condition3}) is satisfied. 

As for condition (\ref{condition4}), by Corollary \ref{relations between y and x} we have $\lambda\overline{x}=y$. Using the Lie bracket  relations among $h,x,y$ we can also deduce the following relations:
\begin{align*}
  \rho_{\sigma}(y)\circ v_0=v_1; \rho_{\sigmabar}(y)\circ w_2=3\lambda w_3\\
  \rho_{\sigma}(y)\circ v_1=v_2;\rho_{\sigmabar}(y)\circ w_1=4\lambda w_2\\
  \rho_{\sigma}(y)\circ v_2=v_3; \rho_{\sigmabar}(y)\circ w_0=3\lambda w_1\\
\end{align*} 
We now swap the above relations in the following formula $$\phi_{\mathbb{C}}(\rho_{\sigma}(y)\circ v_{m},w_{2-m})+\phi_{\mathbb{C}}(v_{m'},\rho_{\sigmabar}(y)\circ w_{2-m'})=0$$ where $m,m' \in \{0,1,2\}$. And we obtain that $\phi_{\mathbb{C}}(v_{m},w_{3-m})$s satisfy the last relations in the lemma: 
\begin{align*} \label{phivalueforimaginaryk}
 \phi_{\mathbb{C}}(v_1,w_2)+3\lambda\phi_{\mathbb{C}}(v_0,w_3)=0\\
 \phi_{\mathbb{C}}(v_2,w_1)+4\lambda\phi_{\mathbb{C}}(v_1,w_2)=0\\
 \phi_{\mathbb{C}}(v_3,w_0)+3\lambda\phi_{\mathbb{C}}(v_2,w_1)=0
\end{align*}
\end{proof}

The following lemma is similar to the above but we assume $a>0$.
\begin{lemma} \label{polarization values k real}
Suppose there exists an abelian fourfold $A$ satisfying the conditions in Proposition \ref{nosl2qform}, i.e. $\liegdrbssmath(A) \cong \mathbb{Q}(a,\lambda)^{\circ}$. We assume that $a>0$. Fix the $\mathrm{sl}(2)$ triple $(h,x,y)$ of $\liegdrbssmath(A) \otimes \qbar \cong \mathrm{sl}(2)$ as in isomorphism (\ref{sl2explicitiso}). Then for any polarization form: $\phi:V \times V \rightarrow \mathbb{Q}$ we can find a basis $\{v_0,v_1,v_2,v_3\}$ for $V_{\sigma}$ and $\{w_0,w_1,w_2,w_3\}$ for $V_{\sigmabar}$ such that the following conditions are satisfied. \begin{enumerate}
    \item The $v_{m}$s and $w_{m}$s are of weights $3-2m$
    \item $w_{m}=\overline{v_{m}}$ 
    \item $\phi_{\mathbb{C}}(v_{m},w_{m'})=0$ if $m+m'\neq 3$ for $m,m' \in \{0,1,2,3\}$. Also $V_{\sigma}$ and $V_{\sigmabar}$ are totally isotropic subspaces for $\phi_{\mathbb{C}}$.
    \item The values of $\phi_{\mathbb{C}}(v_{m},w_{m'})$ satisfy the following relations when $m+m'=3$ 
    \begin{align*} 
   \phi_{\mathbb{C}}(v_1,w_2)+\phi_{\mathbb{C}}(v_0,w_3)=0\\
   \phi_{\mathbb{C}}(v_2,w_1)+\phi_{\mathbb{C}}(v_1,w_2)=0\\
   \phi_{\mathbb{C}}(v_3,w_0)+\phi_{\mathbb{C}}(v_2,w_1)=0
\end{align*}
\end{enumerate} 
\end{lemma}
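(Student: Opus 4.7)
The plan is to mirror the proof of Lemma \ref{Polarisation properties k imaginary} essentially step for step, with the two modifications dictated by the assumption $a>0$. First, by Lemma \ref{conjugate and weight vector}, complex conjugation now sends a weight $3-2m$ vector in $V_\sigma$ to a weight $3-2m$ vector in $V_{\sigmabar}$ (rather than reversing the weight). Second, by Corollary \ref{relations between y and x}, we have $\overline{x}=x$; consequently, inspecting the explicit isomorphism (\ref{sl2explicitiso}) and using that $\sqrt{a}\in\mathbb{R}$, we also get $\overline{y}=y$, because $y=\frac{1}{2}(j-k\otimes\frac{1}{\sqrt{a}})$ has coefficients fixed by complex conjugation in $\mathbb{Q}(a,\lambda)^\circ\otimes\mathbb{C}$.

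Concretely, I would fix an arbitrary weight $3$ vector $v_0\in V_\sigma$ and define $v_{m+1}:=\rho_\sigma(y)\circ v_m$ for $m=0,1,2$, so that $v_m$ has weight $3-2m$ by $[h,y]=-2y$. Then I would set $w_m:=\overline{v_m}$. The first claim of the lemma follows directly from Lemma \ref{conjugate and weight vector} applied in the $a>0$ case, and the second is true by construction. The isotropy of $V_\sigma$ and $V_{\sigmabar}$ under $\phi_{\mathbb{C}}$ follows from Lemma \ref{phicompatiblewithE}, since the Rosati involution attached to $\phi$ coincides with the complex conjugation on $K$ by Lemma \ref{rosaticmcoincides}.

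For the vanishing statements $\phi_{\mathbb{C}}(v_m,w_{m'})=0$ when $m+m'\neq 3$, I would invoke formula (\ref{polarisationpreservinggdrb}) for the element $h\in\liegdrbssmath(A)\otimes\mathbb{C}$: the identity
\[
\phi_{\mathbb{C}}(\rho_\sigma(h)\circ v_m,w_{m'})+\phi_{\mathbb{C}}(v_m,\rho_{\sigmabar}(h)\circ w_{m'})=\bigl((3-2m)+(3-2m')\bigr)\phi_{\mathbb{C}}(v_m,w_{m'})=0
\]
forces $\phi_{\mathbb{C}}(v_m,w_{m'})=0$ whenever $m+m'\neq 3$. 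For the three relations among the surviving pairings, I would use that $\overline{y}=y$, which gives $\rho_{\sigmabar}(y)\circ w_m=\rho_{\sigmabar}(y)\circ\overline{v_m}=\overline{\rho_\sigma(y)\circ v_m}=w_{m+1}$ for $m=0,1,2$. Substituting these into (\ref{polarisationpreservinggdrb}) with $l=y$ and the pairs $(v_m,w_{2-m})$ for $m=0,1,2$ yields exactly the three desired identities.

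There is no serious obstacle here: this is a parallel computation to Lemma \ref{Polarisation properties k imaginary}, and the only novelty is bookkeeping of the sign conventions coming from the new formulas $\overline{v_m}\in V_{\overline{\sigma}}$ of the same weight and $\overline{y}=y$. The nontrivial content of this lemma is, as in the $a<0$ case, its intended role in Proposition \ref{nosl2qform}: the relations derived here will later be combined with the positivity condition for $\phi$ to rule out $\liegdrbssmath(A)\cong\mathbb{Q}(a,\lambda)^\circ$ with $a>0$.
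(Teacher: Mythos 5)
Your proposal is correct and follows essentially the same route as the paper: fix a weight $3$ vector $v_0\in V_\sigma$, set $v_{m+1}=\rho_\sigma(y)\circ v_m$ and $w_m=\overline{v_m}$, get the weight and isotropy statements from Lemma \ref{conjugate and weight vector}, Lemma \ref{phicompatiblewithE} and the infinitesimal invariance under $h$, and derive the three relations from $\overline{y}=y$ (so $\rho_{\sigmabar}(y)\circ w_m=w_{m+1}$) substituted into the invariance identity with $l=y$. This matches the paper's argument in both construction and justification.
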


\begin{proof}
The construction and the proof will be almost identical to the above Lemma \ref{Polarisation properties k imaginary}, and we will merely point out the difference. We fix a nonzero weight 3 vector $v_0$ in $V_{\sigma}$ and make the following choice of basis for $V_{\sigma}$ and $V_{\sigmabar}$: \begin{align*}
&v_1=\rho_{\sigma}(y) \circ v_0, v_2=\rho_{\sigma}(y) \circ v_1, v_3=\rho_{\sigma}(y) \circ v_2\\
&w_0=\overline{v_0}, w_1=\overline{v_1}, w_2=\overline{v_2}, w_3=\overline{v_3}
\end{align*} Then by Lemma \ref{conjugate and weight vector}, and an identical argument in the proof of Lemma \ref{Polarisation properties k imaginary}, the first 3 properties in the above Lemma are automatically satisfied. As for the last one: note that because $a>0$, according to formula (\ref{sl2explicitiso}) we have $\overline{y}=y$. Therefore we also have $w_{m+1}=\rho_{\sigmabar}(y)\circ w_{m}$ for $m \in \{0,1,2\}$. Then combining these relations and the condition $$\mu_{\mathbb{C}}: \phi_{\mathbb{C}}(\rho_{\sigma}(y)\circ v_{m},w_{2-m})+\phi_{\mathbb{C}}(v_{m'},\rho_{\sigmabar}(y)\circ w_{2-m'})=0$$ we obtain the last relations in the lemma.
\end{proof}
\begin{remark}
To obtain the linear relations among the values of the polarization form when evaluated at weight vectors from the above two lemmas, it is crucial that the representation $\eta$ is defined over $\mathbb{Q}$.
\end{remark}
The next lemma gives more detailed information about the Hodge structure of an anti-Weil type abelian fourfold.
\begin{lemma}\label{hodgeandphiandK}
Recall that the Hodge structure of a simple anti-Weil type abelian fourfold $A$ can be written as follows $$V^{1,0}=V_{\sigma}^{1,0}\oplus V_{\sigmabar}^{1,0},V^{0,1}=V_{\sigma}^{0,1}\oplus V_{\sigmabar}^{0,1}$$ In addition it satisfies the following orthogonal property with respect to any polarization form $\phi$ on $V$: $$V_{\sigmabar}^{1,0}=(V_{\sigma}^{1,0})^{\perp}\cap V_{\sigmabar}, V_{\sigmabar}^{0,1}=(V_{\sigma}^{0,1})^{\perp} \cap V_{\sigmabar}$$ 
\end{lemma}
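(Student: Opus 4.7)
The first decomposition is essentially formal. I would observe that the action $K \hookrightarrow \mathrm{End}(V)$ preserves the Hodge structure and commutes with $\mathrm{MT}(A)$, so by Lemma \ref{qbarhodge} each eigenspace $V_\sigma, V_\sigmabar$ is stable under $\mathrm{MT}(A)(\qbar)$. I would then apply Lemma \ref{qbarhodgedecomp} to each of these stable subspaces, setting $V_\iota^{p,q} := (V_\iota \otimes_\qbar \mathbb{C}) \cap V^{p,q}$ for $\iota \in \{\sigma, \sigmabar\}$ and $(p,q) \in \{(1,0),(0,1)\}$. Summing over the two embeddings then gives the asserted form of $V^{1,0}$ and $V^{0,1}$.

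For the orthogonal identities, the easy inclusion $V_\sigmabar^{1,0} \subset (V_\sigma^{1,0})^\perp \cap V_\sigmabar$ is immediate from the fact that $\phi$ polarizes the Hodge structure, so that $V^{1,0}$ is itself isotropic for $\phi_\mathbb{C}$ and in particular $\phi_\mathbb{C}(V_\sigma^{1,0}, V_\sigmabar^{1,0}) = 0$. The same logic handles the $(0,1)$-piece. The reverse inclusion I would obtain by dimension count. By Lemma \ref{phicompatiblewithE}, which applies because the Rosati involution on the CM-field $K$ coincides with the complex conjugation (Lemma \ref{rosaticmcoincides}), the eigenspaces $V_\sigma$ and $V_\sigmabar$ are mutually dual totally isotropic subspaces, so $\phi_\mathbb{C}$ restricts to a perfect pairing $V_\sigma \times V_\sigmabar \to \mathbb{C}(2\pi i)$. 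Consequently, for any subspace $W \subset V_\sigma$ one has $\dim_\mathbb{C} W^\perp \cap V_\sigmabar = \dim V_\sigmabar - \dim W$.

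Applied to $W = V_\sigma^{1,0}$, this gives $\dim (V_\sigma^{1,0})^\perp \cap V_\sigmabar = 4 - \dim V_\sigma^{1,0}$. The anti-Weil hypothesis (Definition \ref{defnweiltype}) forces $\{\dim V_\sigma^{1,0}, \dim V_\sigmabar^{1,0}\} = \{1,3\}$, and Remark \ref{conjugatedecomp} gives the identity $\dim V_\sigma^{1,0} + \dim V_\sigmabar^{1,0} = 4$. Hence $\dim (V_\sigma^{1,0})^\perp \cap V_\sigmabar = \dim V_\sigmabar^{1,0}$, which upgrades the previous inclusion to an equality; the $(0,1)$-case is identical. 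No significant obstacle is anticipated: the only delicate bookkeeping is matching the multiplicities across the $\sigma$ and $\sigmabar$ eigenspaces, and this is exactly what Remark \ref{conjugatedecomp} supplies.
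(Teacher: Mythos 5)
Your proposal is correct and follows essentially the same route as the paper: the inclusion $V_{\sigmabar}^{1,0}\subset (V_{\sigma}^{1,0})^{\perp}\cap V_{\sigmabar}$ comes from $\phi$ being a morphism of Hodge structures, and the reverse inclusion is a dimension count using the non-degenerate pairing between $V_{\sigma}$ and $V_{\sigmabar}$ supplied by Lemma \ref{phicompatiblewithE} (via Lemma \ref{rosaticmcoincides}) together with $m_{\sigma}+m_{\sigmabar}=4$. The only cosmetic difference is that the paper fixes $m_{\sigma}=1$, $m_{\sigmabar}=3$ explicitly, whereas you phrase the count uniformly; the substance is identical.
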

\begin{proof}
Assume that $m_{\sigma}=1$ i.e. $\mathrm{dim}(V_{\sigma}^{1,0})=1$. Then because $A$ is of anti-Weil type, we have that dim$V_{\sigmabar}^{1,0}$=3. By Lemma \ref{phicompatiblewithE}, we have that dim$((V_{\sigma}^{1,0})^{\perp}\cap V_{\sigmabar})=3$. Since $\phi$ is a morphism of Hodge structures, we have that $V_{\sigmabar}^{1,0} \subset (V_{\sigma}^{1,0})^{\perp}\cap V_{\sigmabar}$. Therefore we actually have the equality $V_{\sigmabar}^{1,0}=(V_{\sigma}^{1,0})^{\perp}\cap V_{\sigmabar}$. The other statement in the lemma follows similarly.
\end{proof}
\begin{proof}[Proof of Proposition \ref{nosl2qform}]
\label{sl2proof}
Suppose that there exists an anti-Weil type abelian fourfold $A$ satisfying the condition in Proposition \ref{nosl2qform} i.e. $\liegdrbssmath(A)\cong\mathbb{Q}(a,\lambda)^{\circ}$. Then depending on $a<0$ or $a>0$, the values of any polarization form on $\betti$ have to satisfy relations specified in Lemma \ref{Polarisation properties k imaginary} or Lemma \ref{polarization values k real}. The strategy of the proof of Proposition \ref{nosl2qform} is to show that the positivity condition of the polarization form actually fails, which will give the desired contradiction.

First we suppose that $a<0$. Because $A$ is an anti-Weil type abelian fourfold, we have $$V^{1,0}=V_{\sigma}^{1,0} \oplus V_{\sigmabar}^{1,0}$$ and we may assume that $\mathrm{dim}(V_{\sigma}^{1,0})=1$ and $\mathrm{dim}(V_{\sigmabar}^{1,0})=3$. Fix $\phi$ a polarization form on $V$. Since $\phi$ polarizes the weight one Hodge structure, fixing a square root $\sqrt{-1}$ of $-1$, for any $v \in V^{1,0}-\{0\}$ we have the following positivity condition \begin{equation}\label{positivity}
\phi_{\mathbb{C}}(v+\overline{v},\sqrt{-1}v-\sqrt{-1}\overline{v})=-\sqrt{-1}\phi_{\mathbb{C}}(v,\overline{v})+\sqrt{-1}\phi_{\mathbb{C}}(\overline{v},v)=-2\sqrt{-1}\phi_{\mathbb{C}}(v,\overline{v})>0\end{equation} 
We will show that the positivity condition (\ref{positivity}) cannot simultaneously hold for every possible $v_{\sigma}^{1,0}\in V_{\sigma}^{1,0}$ and $w\in V_{\sigmabar}^{1,0}-\{0\}$.
We fix a basis $\{v_0,v_1,v_2,v_3\}$ for $V_{\sigma}$ and a basis $\{w_0,w_1,w_2,w_3\}$ for $V_{\sigmabar}$ as in Lemma \ref{Polarisation properties k imaginary}.
Then for any 
$v_{\sigma}^{1,0}=x_0v_0+x_1v_1+x_2v_2+x_3v_3 \in V_{\sigma}^{1,0}-\{0\}$, where $x_0,x_1,x_2,x_3\in \mathbb{C}$ we have that 
\begin{align*}
&-2\sqrt{-1}\phi_{\mathbb{C}}(x_0v_0+x_1v_1+x_2v_2+x_3v_3,\overline{x_0}\overline{v_0}+\overline{x_1}\overline{v_1}+\overline{x_2}\overline{v_2}+
\overline{x_3}\overline{v_3})>0
\end{align*} 
By Lemma \ref{Polarisation properties k imaginary} and Lemma \ref{hodgeandphiandK}, the vector space $V_{\sigmabar}^{1,0}$ is equal to \begin{equation}\label{vtau10equ1}
\{y_{0}w_{0}+y_{1}w_{1}+y_{2}w_{2}+y_{3}w_{3} \in V_{\sigmabar}|x_0y_3-3\lambda x_1y_2+12\lambda^2 x_2y_1-36\lambda^3 x_3y_0=0\}    
\end{equation} 
Now we are going to use Lemma \ref{Polarisation properties k imaginary} to simplify the positivity condition (\ref{positivity}) for both $v_{\sigma}^{1,0}$ and $w\in V_{\sigmabar}^{1,0}$.

The positivity condition for $v_{\sigma}^{1,0}-\{0\}$ can be expressed as 
\begin{equation*}
-2\sqrt{-1}(\phi_{\mathbb{C}}(v_0,w_3)x_0\overline{x_0}+\phi_{\mathbb{C}}(v_1,w_2)x_1\overline{x_1}+\phi_{\mathbb{C}}(v_2,w_1)x_2\overline{x_2}+\phi_{\mathbb{C}}(v_3,w_0)x_3\overline{x_3})>0
\end{equation*}
While inequality (\ref{positivity}) for any $w=\Sigma y_{i}w_{i} \in V_{\sigmabar}^{1,0}-\{0\}$ is expressed as:
\begin{align*}
&-2\sqrt{-1}\phi_{\mathbb{C}}(y_0w_0+y_1w_1+y_2w_2+y_3w_3,\overline{y_0}\overline{w_0}+\overline{y_1}\overline{w_1}+\overline{y_2}\overline{w_2}+
\overline{y_3}\overline{w_3})\\
&=-2\sqrt{-1}(\phi_{\mathbb{C}}(w_0,v_3)y_0\overline{y_0}+\phi_{\mathbb{C}}(w_1,v_2)y_1\overline{y_1}+\phi_{\mathbb{C}}(w_2,v_1)y_2\overline{y_2}+\phi_{\mathbb{C}}(w_3,v_0)y_3\overline{y_3})>0
\end{align*}

Denoting $-2\sqrt{-1}\phi_{\mathbb{C}}(v_0,w_3)$ as $M$, then according to Lemma \ref{Polarisation properties k imaginary} for $v_{\sigma}^{1,0}$ the positivity condition reads:
\begin{equation}\label{postivityforx1}
 M(x_0\overline{x_0}-3\lambda x_1\overline{x_1}+12\lambda^2  x_2\overline{x_2}-36\lambda^3x_3\overline{x_3})>0   
\end{equation}

In particular we deduce that $M \in \mathbb{R}$.
As for $w=\Sigma y_{i}w_{i} \in V_{\sigmabar}^{1,0}-\{0\}$ the positivity condition reads:
\begin{equation}\label{postivityfory1}
-M(-36\lambda^3y_0\overline{y_0}+12\lambda^2y_1\overline{y_1}-3\lambda y_2\overline{y_2}+y_3\overline{y_3})>0    
\end{equation}
  If $\lambda<0$, then because $M(x_0\overline{x_0}-3\lambda x_1\overline{x_1}+12\lambda^2  x_2\overline{x_2}-36\lambda^3x_3\overline{x_3})>0$, we deduce that $M>0$. But this is a contradiction, since then $-M(-36\lambda^3y_0\overline{y_0}+12\lambda^2y_1\overline{y_1}-3\lambda y_2\overline{y_2}+y_3\overline{y_3})<0$.

Next we investigate the case where $\lambda>0$. Letting $y_0=y_2=0$, formula (\ref{postivityfory1}) simplifies to $$-M(12\lambda^2y_1\overline{y_1}+y_3\overline{y_3})>0$$ for any $(y_1,y_3)\neq(0,0)$ satisfying that $x_0y_3+12\lambda^2 x_2y_1=0$. But this implies that $M<0$. On the other hand, letting $y_1=y_3=0$, formula (\ref{postivityfory1}) simplifies to $$-M(-36\lambda^3y_0\overline{y_0}-3\lambda y_2\overline{y_2})>0$$ for any $(y_0,y_2)\neq(0,0)$ satisfying that $3\lambda x_1y_2+36\lambda^3 x_3y_0=0$ which implies that $M>0$. And thus we have obtained the desired contradiction.

Hence we have proven there cannot exist an abelian fourfold of anti-Weil type such that the semisimple part of its de Rham-Betti Lie algebra is isomorphic to $\mathbb{Q}(a,\lambda)^{\circ}$ with $a<0$.

Now we suppose that $a>0$.
Fix a basis $\{v_0,v_1,v_2,v_3\}$ for $V_{\sigma}$ and a basis $\{w_0,w_1,w_2,w_3\}$ for $V_{\sigmabar}$ as in Lemma \ref{polarization values k real}. The positivity condition (\ref{positivity}) for elements in
$v_{\sigma}^{1,0}=\Sigma x_{i}v_{i} \in V_{\sigma}^{1,0}-\{0\}$ where $x_{i} \in \mathbb{C}$ is
\begin{align*}
&-2\sqrt{-1}\phi_{\mathbb{C}}(x_0v_0+x_1v_1+x_2v_2+x_3v_3,\overline{x_0}\overline{v_0}+\overline{x_1}\overline{v_1}+\overline{x_2}\overline{v_2}+
\overline{x_3}\overline{v_3})>0
\end{align*} Letting $M'=\sqrt{-1}\phi_{\mathbb{C}}(v_1,w_2)$ and using the relations among values of polarization form in Lemma \ref{polarization values k real}, the above equation simplifies to:
\begin{equation}\label{positivityx2}
2M'(x_0\overline{x_3}-x_1\overline{x_2}+x_2\overline{x_1}-x_3\overline{x_0})>0    
\end{equation}

In particular, the above implies that $M'$ is a non-zero totally imaginary number. Moreover, since $M' \neq 0$, we have by Lemma \ref{hodgeandphiandK} \begin{equation}\label{vtau10real}
 V_{\sigmabar}^{1,0}=\{w' \in V_{\sigmabar}|\phi_{\mathbb{C}}(v_{\sigma}^{1,0},w')=0\}=\{w'=\Sigma y_{j}w_{j}|x_0y_3-x_1y_2+x_2y_1-x_3y_0=0\}   
\end{equation} And the positivity condition for elements in $V_{\sigmabar}^{1,0}-\{0\}$ is 
\begin{align*}
&-2\sqrt{-1}\phi_{\mathbb{C}}(y_0w_0+y_1w_1+y_2w_2+y_3w_3,\overline{y_0}\overline{w_0}+\overline{y_1}\overline{w_1}+\overline{y_2}\overline{w_2}+
\overline{y_3}\overline{w_3})>0
\end{align*}
Using relations from Lemma \ref{polarization values k real}, this is equivalent to
\begin{equation}\label{positivityy2}
 2M'(y_0\overline{y_3}-y_1\overline{y_2}+y_2\overline{y_1}-y_3\overline{y_0})>0   
\end{equation}

We claim that the expression $y_0\overline{y_3}-y_1\overline{y_2}+y_2\overline{y_1}-y_3\overline{y_0}$ always admits a nontrivial zero on the set of $(y_0,y_1,y_2,y_3)$s satisfying formula (\ref{vtau10real}).
Let $y_0=y_1=0$, then $y_0\overline{y_3}-y_1\overline{y_2}+y_2\overline{y_1}-y_3\overline{y_0}=0$. Also in this case the condition (\ref{vtau10real}) is simplified to $x_0y_3-x_1y_2=0$. Hence no matter the value of $x_0$ and $x_1$, we can always find a pair $(y_2,y_3) \neq (0,0)$ satisfying this linear relation. And this gives a contradiction to the positivity condition (\ref{positivityy2}).

Hence we have proven there cannot exist an abelian fourfold of anti-Weil type such that the semisimple part of its de Rham-Betti Lie algebra is isomorphic to $\mathbb{Q}(a,\lambda)^{\circ}$ with $a>0$. This finishes the proof of Proposition \ref{nosl2qform}.
\end{proof}


\subsection{Discussion of the Remaining Cases from Proposition \ref{reprclassification}}\label{noideasubsect}

Suppose $A$ is a simple anti-Weil type abelian fourfold defined over $\qbar$. Recall that in Proposition \ref{nosl2qform}, we ruled out the hypothesis where $\liegdrbssmath(A)$ is isomorphic to a $\qbar/\mathbb{Q}$-form of $\mathrm{sl}(2)$. In this section, however, we will demonstrate that the hypothesis that $\liegdrbssmath(A)$ is isomorphic to certain $\qbar/\mathbb{Q}$-form of $\mathrm{sl}(2)\times\mathrm{sl}(2)$ does not contradict any known properties of the de Rham-Betti structure of $A$. See Proposition \ref{existenceofsl2timessl2} and Remark \ref{existenceexplained} for the precise statement. Thus the method of proving Theorem \ref{deg4theorem} and Proposition \ref{nosl2qform} does not work in this setting.
\begin{setup}\label{sl2timessl2setup}
 In this paragraph, we will focus on a particular type of $\qbar/\mathbb{Q}$-form $E(a,b)^{\circ}$ (see Definition \ref{Equaterdefn}) of $\mathrm{sl}(2)\times\mathrm{sl}(2)$ and fix an explicit isomorphism \begin{equation*}
E(a,b)^{\circ}\otimes_{\mathbb{Q}}\qbar\cong\mathrm{sl}(2)\times\mathrm{sl}(2)
 \end{equation*}
Let $E=\mathbb{Q}(\sqrt{D})$ be a degree two field extension over $\mathbb{Q}$ and $a,b\in E^{\times}$.
\begin{definition}\label{Equaterdefn}
   The quaternion algebra $E(a,b)^{\circ}$ is a four-dimensional $E$-algebra with the additive basis $\{1,i,j,k\}$ and the multiplication rule is given by $i^2=a,j^2=b,ij=-ji=k$. We denote by $E(a,b)^{\circ}$ the $E$-vector subspace spanned by $i,j,k$.
 \end{definition}
Denote $E(a,b)$ by $\mathcal{Q}$. We define a Lie bracket on $\mathcal{Q}$ by letting $[x,y]=xy-yx$. Then the $E$-vector subspace $\mathcal{Q}^{\circ}=E(a,b)^{\circ}$ is a Lie subalgebra. Also note that the multiplication by $\sqrt{D}$ induces a $\mathbb{Q}$-linear transformation on $\mathcal{Q}$, which we denote by $J$. Also we fix a square root of $a$ in $\qbar$ and denote it by $\sqrt{a}$. Then we can write down an explicit isomorphism $\mathcal{Q}^{\circ} \otimes_{\mathbb{Q}} \overline{\mathbb{Q}} \cong \mathrm{sl}(2) \times \mathrm{sl}(2)$ between $\qbar$-Lie algebras in the following way 
\begin{equation}\label{isomofsl2timessl2qform1st}
\begin{split}
    &(h,0)=\frac{J i\otimes\frac
    {1}{\sqrt{a}}+i\otimes\frac
    {\sqrt{D}}{\sqrt{a}}}{2\sqrt{D}};(0,h)=\frac{J i\otimes\frac
    {1}{\sqrt{a}}-i\otimes\frac
    {\sqrt{D}}{\sqrt{a}}}{2\sqrt{D}};\\
    &(x,0)=\frac{\frac{J}{2b}(j+\frac{1}{\sqrt
    {a}}k)+\frac{\sqrt{D}}{2b}(j+\frac{1}{\sqrt
    {a}}k)}{2\sqrt{D}};(0,x)=\frac{\frac{J}{2b}(j-\frac{1}{\sqrt
    {a}}k)-\frac{\sqrt{D}}{2b}(j-\frac{1}{\sqrt
    {a}}k)}{2\sqrt{D}};\\
    &(y,0)=\frac{\frac{J}{2}(j-\frac{1}{\sqrt
    {a}}k)+\frac{\sqrt{D}}{2}(j-\frac{1}{\sqrt
    {a}}k)}{2\sqrt{D}};(0,y)=\frac{\frac{J}{2}(j+\frac{1}{\sqrt
    {a}}k)-\frac{\sqrt{D}}{2}(j+\frac{1}{\sqrt
    {a}}k)}{2\sqrt{D}}\\
\end{split}   
\end{equation}
\begin{remark}
    When $E$ is a totally real quadratic field, one can apply the same method of proof as Proposition \ref{nosl2qform} to show that there does not exist a simple anti-Weil type abelian fourfold $A$ such that $\liegdrbssmath(A)\cong E(a,b)^{\circ}$ for any $a,b\in E^{\times}$.
\end{remark}
In this section, we will be interested in quaternion algebras of the form $E(a,1)$ where $E$ is a quadratic imaginary field and $a\in \mathbb{Q}_{<0}$.
\end{setup}

\begin{proposition}\label{existenceofsl2timessl2}
We keep the same notations as Setup \ref{sl2timessl2setup}. Suppose we are given an imaginary quadratic field $K$ and an embedding of $\mathbb{Q}$-algebras $\iota: K \xhookrightarrow{} \mathrm{End}(V)$, where $V$ is an 8-dimensional $\mathbb{Q}$-vector space. Then there exist a quadratic imaginary field $E$, a scalar $a\in \mathbb{Q}_{<0}$ and a simple anti-Weil type abelian fourfold $A$ with $\betti=V$ and $\mathrm{End}_{\mathrm{Hdg}}{V}=K$. Moreover, there exists a faithful representation of Lie algebras defined over $\mathbb{Q}$ 
    \begin{equation*}
        \mu: E(a,1)^{\circ} \rightarrow \mathrm{gl}(V)
    \end{equation*} such that the following conditions hold 
    \begin{enumerate}
\item\label{item1}  Consider the eigenspace decomposition $V_{\qbar}=V_{\sigma} \oplus V_{\sigmabar}$ induced by $\iota: K \xhookrightarrow{} \mathrm{End}(V)$ where $\mathrm{Hom}(K,\qbar)=\{\sigma,\sigmabar\}$. Then $V_{\sigma}$ and $V_{\sigmabar}$ are irreducible subrepresentations of $\mu_{\qbar}:\mathrm{sl}(2)\times\mathrm{sl}(2)\rightarrow\mathrm{gl}(V_{\qbar})$. Moreover, both $V_{\sigma}$ and $V_{\sigmabar}$ are isomorphic to $V(1) \boxtimes V(1)$.
\item $V$ is an irreducible $\mathbb{Q}$-representation under $$\tilde{\mu}: \mathrm{Z}(\mathrm{mt}(A))\oplus E(a,1)^{\circ} \rightarrow \mathrm{gl}(V)$$
\item\label{item3} Denote the representation $\mathrm{Z}(\mathrm{hdg}(A))\oplus E(a,1)^{\circ} \rightarrow \mathrm{gl}(V)$ by $\tilde{\mu}^{\mathrm{h}}$. Then at least one polarization form on $A$ is preserved by the image of $\tilde{\mu}^{\mathrm{h}}$ infinitesimally. 
\end{enumerate}  
\end{proposition}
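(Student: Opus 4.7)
The plan is to construct the data $(A, E, a, \mu)$ by first building the linear-algebraic skeleton and then producing a Hodge structure and polarization that fit on top of it. To begin, I would fix the imaginary quadratic field $K$ together with the embedding $\iota: K \hookrightarrow \mathrm{End}(V)$, and choose $E=\mathbb{Q}(\sqrt{D})$ imaginary quadratic (for concreteness one may try $E=K$) and $a\in\mathbb{Q}_{<0}$. On $V_{\qbar}=V_\sigma\oplus V_{\sigmabar}$ I would put the representation $V(1)\boxtimes V(1)$ of $\mathrm{sl}(2)\times\mathrm{sl}(2)$ on each four-dimensional eigenspace, with the same underlying isomorphism type but with the second factor on $V_{\sigmabar}$ defined as the complex conjugate of the first; this is exactly what the isomorphism~(\ref{isomofsl2timessl2qform1st}) encodes. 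The content then is that $\mathrm{Gal}(\qbar/\mathbb{Q})$ acts on $V_{\qbar}$ by swapping $V_\sigma\leftrightarrow V_{\sigmabar}$ and by complex conjugation on the coefficients, and one checks that under this action the image of $\mathcal{Q}^\circ\otimes\qbar$ inside $\mathrm{gl}(V_{\qbar})$ is Galois-stable — precisely because the operator $J$ (multiplication by $\sqrt{D}$) and the element $\sqrt{a}$ are flipped in sign by complex conjugation in a matched way. This yields the desired $\mathbb{Q}$-rational representation $\mu:E(a,1)^\circ\to\mathrm{gl}(V)$ commuting with $\iota(K)$.

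Next I would verify conditions~(\ref{item1}) and~(2). Irreducibility of $V_\sigma\cong V(1)\boxtimes V(1)$ under $\mu_{\qbar}$ is standard, and duality $V_{\sigmabar}\cong V_\sigma^{*}$ follows from the conjugation relation combined with the $K$-action. For the full representation $\tilde\mu$, note that the center $\mathrm{Z}(\mathrm{mt}(A))$ acts on $V_\sigma$ and $V_{\sigmabar}$ through distinct characters (by Theorem~\ref{deg2mz4} applied in the anti-Weil case, the image of the center on $V_\sigma$ is scalar multiplication by a non-trivial character), so any $\tilde\mu$-stable $\mathbb{Q}$-subspace of $V$ must decompose as $W_\sigma\oplus W_{\sigmabar}$ with each piece $\mathrm{sl}(2)\times\mathrm{sl}(2)$-stable; irreducibility of the eigenspace factors then forces $W=0$ or $W=V$.

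The main obstacle — and the step that in fact drives the whole construction — is producing the simple anti-Weil abelian fourfold $A$ whose Hodge structure is compatible with this $\mu$ and whose polarization is infinitesimally preserved by the semisimple part together with $\mathrm{Z}(\mathrm{hdg}(A))$. My plan here is to choose the complex structure on $V_{\mathbb{C}}$ by prescribing its Hodge decomposition in terms of the joint weight-vector basis of $V(1)\boxtimes V(1)$: on the $V_\sigma$ side place one joint weight vector into $V_\sigma^{1,0}$ and three into $V_\sigma^{0,1}$ (anti-Weil multiplicities), and on the $V_{\sigmabar}$ side take the conjugate assignment. The point is that this choice is a single point of a Mumford-type period domain compatible with $\iota(K)$ and $\mu$, so the Hodge group it generates automatically lies in the centralizer of $\mu$ in $\mathrm{GSp}$, which is what is required for $\mu$ to be $\mathrm{hdg}$-compatible. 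For the polarization, I would start from the $\mathrm{sl}(2)\times\mathrm{sl}(2)$-invariant symplectic form on $V(1)\boxtimes V(1)$ (the tensor product of the two standard skew forms, which is skew on $V(1)\boxtimes V(1)$), and use Lemma~\ref{phicompatiblewithE} to spread it to an alternating pairing on $V_{\qbar}=V_\sigma\oplus V_{\sigmabar}$ via the duality $V_{\sigmabar}\cong V_\sigma^{*}$; descending to $\mathbb{Q}$ is automatic from Galois invariance.

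The last thing to check, and the delicate one, is that the resulting Hermitian form on $V^{1,0}$ is positive definite so that the alternating form actually polarizes the abelian variety. The strategy is that the choice of Hodge filtration has enough free complex parameters (the period domain for anti-Weil fourfolds with the extra $\mu$-symmetry is still positive-dimensional and open in its complex ambient) that one can deform into the positive cone; the signature computation reduces to a $4\times 4$ Hermitian matrix built from the $(h,0)$ and $(0,h)$ weight basis, and by varying the $(1,0)$-line one exhibits an open set of Hodge filtrations for which the Hermitian form is definite. Once a polarized rational Hodge structure of type $(1,0)+(0,1)$ is in hand, it is of CM type away from a meagre set and simple for a generic choice, so $A$ exists; by construction $\mathrm{End}_{\mathrm{Hdg}}(V)=K$, the fourfold is of anti-Weil type, and all three conditions~(\ref{item1})--(\ref{item3}) of the proposition are verified.
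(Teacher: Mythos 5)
Your outline reproduces the paper's skeleton (a $\mathbb{Q}$-rational $\mu$ built by Galois descent from $V(1)\boxtimes V(1)$ on each $K$-eigenspace, a symplectic form pairing $V_{\sigma}$ with $V_{\sigmabar}$, and then a choice of Hodge filtration of multiplicities $(1,3)$), but the two steps you label as delicate are exactly where the proposal has no argument. For positivity: once $\phi$ is fixed and you demand that $\phi$ be a morphism of Hodge structures, the three-dimensional piece $V_{\sigmabar}^{1,0}$ is \emph{forced} to be the $\phi$-orthogonal complement of the chosen line $v_{\sigma}^{1,0}$ inside $V_{\sigmabar}$; there are no remaining free parameters on that side, so "deforming into the positive cone by varying the $(1,0)$-line" is not available as stated. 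One must verify that a single choice of line makes the positivity inequality hold simultaneously on the line and on its forced orthogonal complement, and this is precisely the kind of simultaneous positivity that \emph{fails} in the cases treated in Proposition \ref{deg4nosuchabelian4fold} and Proposition \ref{nosl2qform} — that failure is how those cases are ruled out. The paper's Lemma \ref{familyexists} establishes it here only by an explicit minimization of the resulting Hermitian quadratic forms, showing that the open condition (\ref{postivex}) on $(x_0,x_1,x_2,x_3)$ forces positivity on the complement as well; without that computation (or a genuine signature argument replacing it), the existence claim is unproven. A secondary error in the same step: the tensor product of the two standard skew forms on $V(1)\boxtimes V(1)$ is \emph{symmetric}, not symplectic (this is the orthogonal form of $\mathrm{so}(4)$); the alternating form on $V$ must instead pair $V_{\sigma}$ against $V_{\sigmabar}$ with both eigenspaces isotropic, and its descent to $\mathbb{Q}$ is not automatic — the paper normalizes the values by $\sqrt{D'}$ precisely so that the form is Galois-equivariant (Lemma \ref{polarizationformexists}). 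Relatedly, your suggestion "$E=K$" is incompatible with the descent and irreducibility arguments, which need $\mathbb{Q}(\sqrt{a})$, $E$, and $K$ linearly disjoint so that the relevant Galois group is $(\mathbb{Z}/2)^{3}$ (Setup \ref{basisandgalois}).

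The closing step is also a gap. "Simple for a generic choice" and "by construction $\mathrm{End}_{\mathrm{Hdg}}(V)=K$" assert the conclusion: the construction only gives $K\subseteq\mathrm{End}_{\mathrm{Hdg}}$, and one must show that some member of the family is simple with endomorphism algebra \emph{exactly} $K$. This requires the countability argument of Lemma \ref{simplexists}: the loci in the family where the Hodge structure splits, or where the endomorphism algebra grows to a quartic CM field, a degree 8 CM field, or a quaternion algebra, are countably many proper analytic subsets — and for each type one must check properness (e.g.\ the quaternionic locus is ruled out because the multiplicities $(1,3)$ preclude an isomorphism $V_{\sigma}\cong V_{\sigmabar}$ compatible with the Hodge decomposition). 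Your remark that the Hodge structure "is of CM type away from a meagre set" is backwards — CM members are a countable, special subset, and a CM member would have endomorphism algebra strictly larger than $K$, which is exactly what must be avoided. Your Galois-stability/central-character argument for irreducibility of $\tilde{\mu}$ is fine and is in fact cleaner than the paper's explicit eigenvector check in Lemma \ref{reprexists}, but it does not compensate for the two missing steps above.
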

\begin{remark}\label{existenceexplained}
Suppose the simple anti-Weil type abelian fourfold $A$ found in the above proposition is defined over $\qbar$ and denote the image of $\mu$ inside $\mathrm{gl}(V)$ from the above proposition by $\tilde{L}$. By Lemma \ref{twocentressame} we have that $\mathrm{Z}(\liegdrbmath(A))=\mathrm{Z}(\mathrm{mt}(A))$. 
Then by the discussion in Remark \ref{whyweilproofdoesnotwork} (and implicitly Proposition \ref{gmintype4}) the hypothesis that $$\liegdrbmath(A)=\mathrm{Z}(\liegdrbmath(A)) \oplus \tilde{L}$$ will not contradict any known property of the de Rham-Betti structure of the abelian fourfold $A$. In other words, we have that \begin{enumerate}
\item $\betti$ is an irreducible $\mathbb{Q}$-representation under $$\tilde{\mu}: \mathrm{Z}(\liegdrbmath(A))\oplus E(a,1)^{\circ} \rightarrow \mathrm{gl}(\betti)$$
\item The cohomological degree two invariants agree with algebraic cycle classes i.e. $$(\mathrm{End}(\mathrm{H}^1(A,\mathbb{Q}))^{\mathrm{Z}(\liegdrbmath(A)) \oplus \tilde{L}}=\mathrm{End}_{\mathrm{Hdg}}(A)$$ and $$\mathrm{H}^2(A,\mathbb{Q})^{\mathrm{Z}(\liegdrbhmath(A)) \oplus \tilde{L}}=\mathrm{span}_{\mathbb{Q}}(\{L_{\phi}\})$$ where $L_{\phi}$ is the divisor class associated with the polarization form $\phi$.
    \item None of the elements in the Weil structure $\wedge_{K}^4\mathrm{H}^1(A,\mathbb{Q})\otimes\mathbb{Q}_{\mathrm{dRB}}(2)$ is annihilated by the action of $\mathrm{Z}(\liegdrbmath(A)) \oplus \tilde{L}$.
    \item None of the elements in $\mathrm{H}^{1}(A,\mathbb{Q})^{\otimes m} \otimes \mathrm{H}^{1}(A,\mathbb{Q})^{*\otimes n} \otimes \mathbb{Q}_{\mathrm{dRB}}(i')$ is annihilated by the action of  $\mathrm{Z}(\liegdrbmath(A)) \oplus \tilde{L}$ if $m-n \neq 2i'$.
\end{enumerate} 
\end{remark}
The proof of Proposition \ref{existenceofsl2timessl2} will be divided into Lemma \ref{reprexists}, Lemma \ref{polarizationformexists}, Lemma \ref{familyexists} and Lemma \ref{simplexists}. We first demonstrate how to construct a Lie algebra representation satisfying Property (\ref{item1}) of Proposition \ref{existenceofsl2timessl2}.
\begin{lemma}\label{reprexists}
Given an imaginary quadratic field $K$ and an embedding of $\mathbb{Q}$-algebras $\iota: K \xhookrightarrow{} \mathrm{End}(V)$, where $V$ is an 8-dimensional $\mathbb{Q}$-vector space. Then there exist a quadratic imaginary field $E$, a scalar $a\in \mathbb{Q}_{<0}$ as well as a representation of $\mathbb{Q}$-Lie algebras
$$\mu:E(a,1)^{\circ} \rightarrow \mathrm{gl}(V)$$ such that $\mu_{\qbar}$ splits as in property (\ref{item1}) from Proposition \ref{existenceofsl2timessl2}. Moreover, the representation of $\mathbb{Q}$-Lie algebras $$\tilde{\mu}:=\iota+\mu: K\oplus E(a,1)^{\circ} \rightarrow \mathrm{gl}(V)$$ is irreducible. 
\end{lemma}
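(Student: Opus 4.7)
The plan is to take $E = K$ itself and any $a\in\mathbb{Q}_{<0}$ (for concreteness $a=-1$). Since the second defining relation $j^{2}=1$ makes $b=1$ a square, the quaternion algebra $K(a,1)$ splits: mapping $i\mapsto\bigl(\begin{smallmatrix}0&a\\1&0\end{smallmatrix}\bigr)$, $j\mapsto\bigl(\begin{smallmatrix}1&0\\0&-1\end{smallmatrix}\bigr)$ gives an isomorphism $K(a,1)\cong M_{2}(K)$ as $K$-algebras, and hence $E(a,1)^{\circ}\cong\mathfrak{sl}_{2}(K)$ as $\mathbb{Q}$-Lie algebras; an explicit $\mathfrak{sl}_{2}$-triple is $H=j$, $X=(i-k)/2$, $Y=(i+k)/(2a)$. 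The given $\iota:K\hookrightarrow\mathrm{End}(V)$ makes $V$ into a $K$-vector space of dimension $4$; I would fix a $K$-linear identification $V\cong U\otimes_{K}U$ with $U=K^{2}$ the standard $K$-module.

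Next I would define the representation $\mu:\mathfrak{sl}_{2}(K)\to\mathfrak{gl}_{\mathbb{Q}}(V)$ by the \emph{Galois-twisted diagonal} formula
\[
\mu(X)(u\otimes v)=Xu\otimes v+u\otimes\overline{X}v,
\]
where $\overline{X}=c(X)$ is the image of $X$ under the entrywise action of the nontrivial element $c\in\mathrm{Gal}(K/\mathbb{Q})$. The checks are routine: (i) the formula is well defined on $U\otimes_{K}U$, because $\overline{X}\in M_{2}(K)$ is $K$-linear on $U$ so $\overline{X}(kv)=k\overline{X}v$; (ii) $\mu$ is a Lie algebra map, which reduces to the identity $\overline{[X,Y]}=[\overline{X},\overline{Y}]$ (true since $c$ is a ring automorphism); (iii) $\mu$ commutes with $\iota(K)$, since the relations $k u\otimes w=u\otimes kw$ in $U\otimes_{K}U$ absorb both the $K$-scalar and the twist; (iv) $\mu$ is faithful, because evaluating $\mu(X)$ on $e_{1}\otimes e_{1}$ and using that the $e_{i}\otimes e_{j}$ form a $K$-basis forces $X=0$.

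The key structural computation is the analysis over $\qbar$. The decomposition $K\otimes_{\mathbb{Q}}\qbar=\qbar_{\sigma}\times\qbar_{\bar\sigma}$ gives $\mathfrak{sl}_{2}(K)\otimes_{\mathbb{Q}}\qbar=\mathfrak{sl}_{2}(\qbar)_{\sigma}\oplus\mathfrak{sl}_{2}(\qbar)_{\bar\sigma}$ and $V\otimes_{\mathbb{Q}}\qbar=V_{\sigma}\oplus V_{\bar\sigma}$ with $V_{\sigma}\cong U_{\sigma}\otimes_{\qbar}U_{\sigma}$. Because the operator $X\mapsto\overline{X}$ corresponds under this decomposition to the swap $(X_{\sigma},X_{\bar\sigma})\mapsto(X_{\bar\sigma},X_{\sigma})$, a pair $(Y,Z)\in\mathfrak{sl}_{2}(\qbar)_{\sigma}\oplus\mathfrak{sl}_{2}(\qbar)_{\bar\sigma}$ acts on $V_{\sigma}$ as
\[
(Y,Z)\cdot(u\otimes v)=Yu\otimes v+u\otimes Zv,
\]
which is precisely the representation $V(1)\boxtimes V(1)$. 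Exactly the same picture applies to $V_{\bar\sigma}$. Finally, irreducibility of $\tilde{\mu}$ follows from a Galois argument: a $\mathbb{Q}$-subrepresentation $W\subset V$ yields a $\qbar$-subrepresentation $W_{\qbar}\subset V_{\sigma}\oplus V_{\bar\sigma}$ which is automatically $\mathrm{Gal}(\qbar/\mathbb{Q})$-stable, but the two summands are absolutely irreducible (as $V(1)\boxtimes V(1)$) and are interchanged by Galois, so $W_{\qbar}\in\{0,V\otimes\qbar\}$.

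The main subtlety, and what makes the construction work at all, is the appearance of $\overline{X}$ in the second tensor slot rather than $X$. Without the Galois twist, the action of $\mathfrak{sl}_{2}(K)$ on $U\otimes_{K}U$ would be the diagonal one, and each eigenspace $V_{\sigma}$ would carry the representation $V(1)\otimes V(1)=V(2)\oplus V(0)$ of a \emph{single} $\mathfrak{sl}_{2}(\qbar)$-factor, missing the factor structure $V(1)\boxtimes V(1)$ entirely. The twist by $c$ is exactly what makes the two factors of $\mathfrak{sl}_{2}(K)\otimes\qbar$ act on distinct tensor slots of $V_{\sigma}$, and verifying this precise compatibility is the only delicate point of the proof.
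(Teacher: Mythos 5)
Your construction is correct and proves the lemma, but by a genuinely different route than the paper. The paper does \emph{not} take $E=K$: it chooses a second imaginary quadratic field $E=\mathbb{Q}(\sqrt{D})$ and $a\in\mathbb{Q}_{<0}$ so that $F=\mathbb{Q}(\sqrt{a},\sqrt{D},\sqrt{D'})$ is a multiquadratic degree $8$ extension with group $(\mathbb{Z}/2)^{3}$, fixes an explicit basis $v_{\pm1,\pm1}$ of $V_{\sigma}$ in terms of a $K$-basis of $V$, defines the action of the $\mathrm{sl}(2)\times\mathrm{sl}(2)$-triple by tables, and then checks Galois equivariance entry by entry to descend the representation to $\mathbb{Q}$; irreducibility of $\tilde{\mu}$ is proved by analyzing hypothetical weight vectors $p_{l}v_{m,n}+q_{l}w_{m,n}$ using the action of $\iota(\sqrt{D'})$ together with the Galois tables. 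You instead take $E=K$, observe $E(a,1)^{\circ}\cong\mathfrak{sl}_{2}(K)$ (legitimate, since $b=1$ splits the quaternion algebra, and the lemma only asserts existence of \emph{some} $E$ and $a$), and define the representation conceptually as the Galois-twisted diagonal action on $U\otimes_{K}U$; rationality is then automatic from the formula, and the base-change computation correctly identifies the second tensor slot of $V_{\sigma}$ as carrying $\bar{\sigma}(X)$, which is exactly what produces $V(1)\boxtimes V(1)$ on each eigenspace. Your approach is shorter and makes the mechanism transparent; the paper's heavier choice of $E$, $a$ and the explicit basis/tables is tailored to be reused verbatim in Lemmas \ref{polarizationformexists}--\ref{simplexists} (where the identification of complex conjugation with $(g_1,g_2,g_3)$ and the specific values of $\phi$ on the basis matter), so substituting your construction would require reworking those later computations, though it fully suffices for the lemma as stated.

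Two small points need tightening. First, faithfulness does not follow from evaluating $\mu(X)$ on $e_{1}\otimes e_{1}$ alone (that only forces $x_{21}=0$ and $x_{11}+\overline{x_{11}}=0$); either evaluate on the remaining $e_{i}\otimes e_{j}$, or simply note that $\mathfrak{sl}_{2}(K)$ is simple as a $\mathbb{Q}$-Lie algebra (its two $\qbar$-factors are swapped by Galois), so any nonzero $\mu$ is faithful; alternatively faithfulness is immediate from the splitting into $V(1)\boxtimes V(1)$. Second, in the irreducibility argument the step ``the two summands are irreducible and interchanged by Galois, hence $W_{\qbar}\in\{0,V_{\qbar}\}$'' is incomplete as written: $V_{\sigma}$ and $V_{\bar\sigma}$ are \emph{isomorphic} as $\mathrm{sl}(2)\times\mathrm{sl}(2)$-modules, so $V_{\qbar}$ is isotypic and has many diagonal irreducible submodules, some of which could a priori be Galois-stable. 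The missing (and easily supplied) ingredient is that $W$ is stable under $\iota(K)$, so $W_{\qbar}$ is stable under the idempotents of $K\otimes_{\mathbb{Q}}\qbar$ and hence $W_{\qbar}=(W_{\qbar}\cap V_{\sigma})\oplus(W_{\qbar}\cap V_{\bar\sigma})$; only then does irreducibility of each summand plus the Galois swap force $W\in\{0,V\}$. This is precisely the role the $K$-action plays in the paper's own irreducibility argument, so the fix is immediate, but it should be stated.
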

Before the proof of lemma, we set up the notations which will be used throughout the rest of this section.  
\begin{setup}\label{basisandgalois}
Denote $K=\mathbb{Q}(\sqrt{D'})$ where $D'\in\mathbb{Q}_{<0}$. Choose $a,D\in \mathbb{Q}_{<0}$ such that the field composite $F$ in $\qbar$ of $\mathbb{Q}(\sqrt{a}),E:=\mathbb{Q}(\sqrt{D}),K$ is a Galois extension of degree 8 with its Galois group $\mathrm{Gal}(F/\mathbb{Q})$ isomorphic to $\mathbb{Z}/(2) \times \mathbb{Z}/(2) \times \mathbb{Z}/(2)$. Note that in this case $F=\mathbb{Q}(\sqrt{a},\sqrt{D},\sqrt{D'})$. By a close inspection, the isomorphism (\ref{isomofsl2timessl2qform1st}): $E(a,1)^{\circ} \otimes_{\mathbb{Q}} \qbar\cong \mathrm{sl}(2,\qbar) \times \mathrm{sl}(2,\qbar)$ from Setup \ref{sl2timessl2setup} actually descends to $F$. We can therefore fix an isomorphism of Lie algebras $$E(a,1)^{\circ} \otimes_{\mathbb{Q}} F\cong \mathrm{sl}(2,F) \times \mathrm{sl}(2,F)$$ in the same way as formula (\ref{isomofsl2timessl2qform1st}).

We label the generators of $\mathrm{Gal}(F/\mathbb{Q})$ by $g_1,g_2,g_3$. Their actions on $F/\mathbb{Q}$ are explicitly written down in Table \ref{galoisactiononfields}. Note that because $a<0$, for any embedding $F \xhookrightarrow{} \mathbb{C}$, the image of the complex conjugation in $\mathrm{Gal}(F/\mathbb{Q})$ is equal to $(g_1,g_2,g_3)$.

\begin{table}[ht]
    \centering
    \begin{tabular}{|c|c|c|c|}
    \hline
    & $\sqrt{D'}$  & $\sqrt{D}$  & $\sqrt{a}$ \\
    \hline
    $g_1$ & $\sqrt{D'}$ & $-\sqrt{D}$ & $\sqrt{a}$\\
    \hline
    $g_2$ & $-\sqrt{D'}$  & $\sqrt{D}$  & $\sqrt{a}$\\
    \hline
    $g_3$ & $\sqrt{D'}$ & $\sqrt{D}$ & $-\sqrt{a}$ \\
    \hline
    \end{tabular}
    \caption{Action of $\mathrm{Gal}(F/\mathbb{Q})$ on $F/\mathbb{Q}$}
    \label{galoisactiononfields}
\end{table}

With respect to the embedding of $\mathbb{Q}$-algebras $\iota: K \xhookrightarrow{} \mathrm{End}(V)$, by Lemma \ref{neweigenspacetranslate}, we have the eigenspace decomposition $$V \otimes_{\mathbb{Q}} F=V_{\sigma} \oplus V_{\sigmabar}$$ where $\sigma,\sigmabar\in \mathrm{Hom}(K,F)$ and $\mathrm{dim}(V_{\sigma})=\mathrm{dim}(V_{\sigmabar})=4$. Note that $K$ is a quadratic extension over $\mathbb{Q}$, hence the above decomposition descends to $K$. We then fix a $K$-basis for $V_{\sigma}$ and denote them by $\{f_1,f_2,f_3,f_4\}$. Then $\{g_2 \circ f_1,g_2 \circ f_2,g_2 \circ f_3,g_2 \circ f_4\}$ is a $K$-basis for $V_{\sigmabar}$.
   
\end{setup}
\begin{proof}[Proof of Lemma \ref{reprexists}]
We will be using notations from Setup \ref{basisandgalois}. We start by constructing an $F$-linear representation $$\mu_{F}: E(a,1)^{\circ} \otimes_{\mathbb{Q}} F \rightarrow \mathrm{gl}(V \otimes_{\mathbb{Q}} F)$$ which splits according to the requirement specified in the lemma. Let
\begin{equation}\label{vsigmabasis}
  v_{1,1}=-f_1+\sqrt{aD}f_2; v_{-1,-1}=f_1+\sqrt{aD}f_2; v_{1,-1}=-f_3+\sqrt{a}f_4; v_{-1,1}=f_3+\sqrt{a}f_4
\end{equation} be an $F$-basis of $V_{\sigma}$. And let \begin{equation}\label{vsigmabarbasis}
  w_{1,1}=g_2\circ v_{1,1}; w_{-1,-1}=g_2\circ v_{-1,-1};
  w_{1,-1}=g_2\circ v_{1,-1}; w_{-1,1}=g_2\circ v_{-1,1} 
\end{equation} be an $F$-basis of $V_{\sigmabar}$.

Then the action of $\mu_{F}$ on $V_{\sigma}$ is defined in Table \ref{reprconstruc}. And the action of $\mu_{F}$ on $V_{\sigmabar}$ is defined in an analogous way in Table \ref{reprconstruct'}.
\begin{table}[h]
    \centering
    \begin{tabular}{|c|c|c|c|c|}
    \hline
    $\mu_{F}$  & $v_{1,1}$  & $v_{-1,-1}$  & $v_{1,-1}$ & $v_{-1,1}$  \\
    \hline
    $(h,0)$ & $v_{1,1}$ & $-v_{-1,-1}$ & $v_{1,-1}$ & $-v_{-1,1}$ \\
    \hline
    $(0,h)$ & $v_{1,1}$  & $-v_{-1,-1}$  & $-v_{1,-1}$             & $v_{-1,1}$\\
    \hline
    $(y,0)$ & $v_{-1,1}$ & $0$ & $v_{-1,-1}$  & $0$\\
    \hline
    $(0,y)$ & $v_{1,-1}$ & $0$ & $0$  & $v_{-1,-1}$\\
    \hline
    $(x,0)$ & $0$ & $v_{1,-1}$ & $0$  & $v_{1,1}$\\
    \hline
    $(0,x)$ & $0$ & $v_{-1,1}$ & $v_{1,1}$  & $0$\\
    \hline
    \end{tabular}
    \caption{Action of $\mathrm{sl}(2)\times\mathrm{sl}(2)$ on $V_{\sigma}$}
    \label{reprconstruc}
\end{table}

\begin{table}[ht]
    \centering
    \begin{tabular}{|c|c|c|c|c|}
    \hline
    $\mu_{F}$  & $w_{1,1}$  & $w_{-1,-1}$  & $w_{1,-1}$ & $w_{-1,1}$  \\
    \hline
    $(h,0)$ & $w_{1,1}$ & $-w_{-1,-1}$ & $w_{1,-1}$ & $-w_{-1,1}$ \\
    \hline
    $(0,h)$ & $w_{1,1}$  & $-w_{-1,-1}$  & $-w_{1,-1}$             & $w_{-1,1}$\\
    \hline
    $(y,0)$ & $w_{-1,1}$ & $0$ & $w_{-1,-1}$  & $0$\\
    \hline
    $(0,y)$ & $w_{1,-1}$ & $0$ & $0$  & $w_{-1,-1}$\\
    \hline
    $(x,0)$ & $0$ & $w_{1,-1}$ & $0$  & $w_{1,1}$\\
    \hline
    $(0,x)$ & $0$ & $w_{-1,1}$ & $w_{1,1}$  & $0$\\
    \hline
    \end{tabular}
    \caption{Action of $\mathrm{sl}(2)\times\mathrm{sl}(2)$ on $V_{\sigmabar}$}
    \label{reprconstruct'}
\end{table} Then one can check that $\mu_{F}$ indeed defines a morphism of Lie algebras $\mathrm{sl}(2)\times \mathrm{sl}(2) \rightarrow \mathrm{gl}(V_{\sigma}) \oplus \mathrm{gl}(V_{\sigmabar})$. Moreover, both $V_{\sigma}$ and $V_{\sigmabar}$ are isomorphic to $V(1) \boxtimes V(1)$.

Now we verify that $\mu_{F}$ descends to $\mathbb{Q}$. We first write down in Table \ref{galoisoneigenspace} the action of $\mathrm{Gal}(F/\mathbb{Q})$ on the fixed basis for $V_{\sigma}$ and $V_{\overline{\sigma}}$ explicitly.
\begin{table}[ht]
\begin{tabular}{|c|c|c|c|c|}
\hline
    & $v_{1,1}$  & $v_{-1,-1}$  & $v_{1,-1}$ & $v_{-1,1}$  \\
\hline
$g_1$ & $-v_{-1,-1}$ & $-v_{1,1}$ & $v_{1,-1}$ & $v_{-1,1}$ \\
\hline
$g_2$ & $w_{1,1}$  & $w_{-1,-1}$  & $w_{1,-1}$             & $w_{-1,1}$\\
\hline
$g_3$ & $-v_{-1,-1}$ & $-v_{1,1}$ & $-v_{-1,1}$  & $-v_{1,-1}$\\
\hline
\end{tabular}
\caption{Galois action on the eigenspace $V_{\sigma}$}
\label{galoisoneigenspace}
\end{table}

Since $\mathrm{Gal}(F/\mathbb{Q})$ is an abelian group, the Galois action on the basis $$\{w_{1,1}=g_2\circ v_{1,1},w_{-1,-1}=g_2\circ v_{-1,-1},w_{1,-1}=g_2\circ v_{1,-1},w_{-1,1}=g_2\circ v_{-1,1}\}$$ of $V_{\sigmabar}$ is determined by the Table \ref{galoisoneigenspace}.

On the other hand, using formula (\ref{isomofsl2timessl2qform1st}), the action of $g_{i} \in \mathrm{Gal}(F/\mathbb{Q})$ on the Lie algebra $E(a,1)^{\circ} \otimes_{\mathbb{Q}} F\cong \mathrm{sl}(2)\times \mathrm{sl}(2)$ is written down explicitly in Table \ref{galoisonliealgebra}.
\begin{table}[ht]
\begin{tabular}{|c|c|c|c|c|c|c|}
\hline
    & $(h,0)$  & $(0,h)$  & $(x,0)$ & $(0,x)$ & $(y,0)$ & $(0,y)$   \\
\hline
$g_1$ & $-(0,h)$ & $-(h,0)$ & $-(0,y)$ & $-(y,0)$ & $-(0,x)$ & $-(x,0)$ \\
\hline
$g_2$ & $(h,0)$  & $(0,h)$  & $(x,0)$             & $(0,x)$             & $(y,0)$   & $(0,y)$   \\
\hline
$g_3$ & $-(h,0)$ & $-(0,h)$ & $(y,0)$  & $(0,y)$ & $(x,0)$  & $(0,x)$\\
\hline
\end{tabular}
\caption{Galois action on $\mathrm{sl}(2)\times\mathrm{sl}(2)$}
\label{galoisonliealgebra}
\end{table}

To verify that $\mu_{F}$ descends to $\mathbb{Q}$, it suffices to check that for any generator $g$ of $\mathrm{Gal}(F/\mathbb{Q})$ and any element $l$ of $E(a,1)^{\circ} \otimes_{\mathbb{Q}} F$, we have 
\begin{equation*}
    g^{-1}\circ\mu_{F}(l)=\mu_{F}(g^{-1} \circ l)
\end{equation*}
The Galois group $\mathrm{Gal}(F/\mathbb{Q})$ acts on $T\in\mathrm{gl}(V \otimes_{\mathbb{Q}} F)$ via the rule $g \circ T: v \rightarrow g\circ T(g^{-1}\circ v)$. Therefore it suffices to verify that for any element $v \in V \otimes F$, the following holds:
\begin{equation}\label{galoisequ}
    g^{-1}\circ\mu_{F}(l)(g\circ v)=\mu_{F}(g^{-1} \circ l)(v) 
\end{equation}
Note that both sides of the above equality are $F$-linear transformations of the vector space $V_{F}$. Hence it suffices to verify the above equality for generators of $\mathrm{Gal}(F/\mathbb{Q})$, an $F$-basis of $E(a,1)^{\circ} \otimes_{\mathbb{Q}} F$ and an $F$-basis of $V_{F}$.

But now we can handily check the relations from the various tables above to obtain the desired Galois equivariance condition (\ref{galoisequ}). Hence $\mu$ is indeed defined over $\mathbb{Q}$.

Finally we show that the representation $$\tilde{\mu}:K\oplus E(a,1)^{\circ} \rightarrow \mathrm{gl}(V)$$ is irreducible. Suppose $V' \xhookrightarrow{} V$ is a $\mathbb{Q}$-subrepresentation of $\tilde{\mu}$. Then $V'_{\qbar}$ is simultaneously stable under the action of $K$ and the representation $$\mu_{\qbar}: \mathrm{sl}(2)\times \mathrm{sl}(2) \rightarrow \mathrm{gl}(V_{\qbar})$$ Therefore we have that $V'_{\qbar}\cong V(1) \boxtimes V(1)$. Then \begin{align*}V'_{\qbar}=\mathrm{span}_{\qbar}(\{&
v'_{1,1}=p_1v_{1,1}+q_1w_{1,1}, v'_{-1,1}=p_2v_{-1,1}+q_2w_{-1,1},\\
&v'_{1,-1}=p_3v_{1,-1}+q_3w_{1,-1},v'_{-1,-1}=p_4v_{-1,-1}+q_4w_{-1,-1}\})\end{align*}where $v'_{m,n}$s are eigenvectors of weights $(m,n)$ in $V'_{\qbar}$ and $p_l,q_l\in \qbar$. First consider the situation where there exists an $l \in \{1,2,3,4\}$ such that $p_l\neq0$ and $q_l\neq0$. Suppose $l=1$. Denote the image of $\sqrt{D'}\in K$ under $\iota: K \xhookrightarrow{} \mathrm{End}(V)$ by $J$. We then have that $J\circ v'_{1,1}=\sqrt{D'}p_1v_{1,1}-\sqrt{D'}q_1w_{1,1}$, which clearly does not lie in $V'_{\qbar}$. The case where $l=2,3,4$ follows similarly. Hence we have that for each $l$, either $p_l=0$ or $q_{l}=0$. But checking Table \ref{galoisoneigenspace} about the action of the Galois group on eigenvectors, no such $V'_{\qbar}$ can be stable under $\mathrm{Gal}(F/\mathbb{Q})$. Therefore $\tilde{\mu}$ is indeed an irreducible $\mathbb{Q}$-representation.   
\end{proof}

In the next lemma we construct a symplectic form on $V$ which will serve the role of the polarization form for the family of weight one Hodge structures on $V$ promised in Proposition \ref{existenceofsl2timessl2}.
\begin{lemma}\label{polarizationformexists}
We assume the notations from Setup \ref{basisandgalois} and Lemma \ref{reprexists}. For the representation $\mu$ constructed in Lemma \ref{reprexists}, there exists a symplectic form $\phi:V\times V \rightarrow \mathbb{Q}$ which is preserved by the image of $\mu$ infinitesimally. Moreover the Rosati involution associated with $\phi$ preserves $K$ and coincides with the complex conjugation on $K$.   
\end{lemma}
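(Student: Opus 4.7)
The strategy is to build $\phi$ by first constructing its $F$-linear extension $\phi_F: V_F \times V_F \to F$ in the $F$-basis of (\ref{vsigmabasis})--(\ref{vsigmabarbasis}), and then to pin down the free scalars so that $\phi_F$ is simultaneously $\mu$-invariant, $K$-compatible (in the Rosati sense), antisymmetric, and Galois-equivariant for $\mathrm{Gal}(F/\mathbb{Q})$, so that it descends to a $\mathbb{Q}$-form $\phi$.

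First, I would use the compatibility with $K$: requiring $\phi_F(kv,w) = \phi_F(v,\bar k w)$ for all $k \in K$, together with the eigenspace decomposition $V_F = V_\sigma \oplus V_{\bar\sigma}$, forces $\phi_F$ to vanish on $V_\sigma \times V_\sigma$ and on $V_{\bar\sigma} \times V_{\bar\sigma}$ (this is Lemma \ref{phicompatiblewithE} applied to our situation). Thus $\phi_F$ is determined by the block $\phi_F|_{V_\sigma \times V_{\bar\sigma}}$, with the $V_{\bar\sigma} \times V_\sigma$ block fixed by antisymmetry.

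Second, since both $V_\sigma$ and $V_{\bar\sigma}$ are isomorphic to the irreducible representation $V(1)\boxtimes V(1)$, Schur's lemma implies the $\mathrm{sl}(2) \times \mathrm{sl}(2)$-invariant pairings $V_\sigma \otimes V_{\bar\sigma} \to F$ form a one-dimensional $F$-space. Concretely, working through the infinitesimal-invariance condition $\phi_F(\mu_F(l)v,w) + \phi_F(v,\mu_F(l)w) = 0$ on the weight-vector basis for $l$ ranging over $\{(x,0),(0,x),(y,0),(0,y),(h,0),(0,h)\}$ shows that the only possibly nonzero values are $\phi_F(v_{1,1},w_{-1,-1}) = \phi_F(v_{-1,-1},w_{1,1}) =: \lambda$ and $\phi_F(v_{1,-1},w_{-1,1}) = \phi_F(v_{-1,1},w_{1,-1}) = -\lambda$ for a single scalar $\lambda \in F^\times$. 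Next, I would impose Galois equivariance $\phi_F(gv,gw) = g\phi_F(v,w)$ for each generator $g \in \mathrm{Gal}(F/\mathbb{Q})$. Using Tables \ref{galoisactiononfields}--\ref{galoisoneigenspace}, one checks that $g_1$ and $g_3$ act by pairs of sign flips that cancel out, forcing $g_1 \lambda = \lambda$ and $g_3 \lambda = \lambda$, while $g_2$ swaps $V_\sigma$ and $V_{\bar\sigma}$ and together with the antisymmetry forces $g_2 \lambda = -\lambda$. Combined, $\lambda$ must lie in $\mathbb{Q}(\sqrt{D'}) = K$ and be anti-invariant under $g_2$, so $\lambda = c\sqrt{D'}$ for some $c \in \mathbb{Q}^\times$. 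Any such choice produces a well-defined $\phi: V \times V \to \mathbb{Q}$, which is antisymmetric and non-degenerate by construction, preserved infinitesimally by $\mu$ by Step 2, and for which the Rosati involution restricted to $K$ coincides with complex conjugation since for $v \in V_\sigma$, $w \in V_{\bar\sigma}$ one has $\phi(kv,w) = \sigma(k)\phi(v,w) = \bar\sigma(\bar k)\phi(v,w) = \phi(v,\bar k w)$, the remaining cases being trivial.

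The main obstacle is the Galois-equivariance bookkeeping in the third step: one has to track the combined effect of Galois permuting the basis (mixing $V_\sigma \leftrightarrow V_{\bar\sigma}$ with sign changes) together with the Galois action on the coefficient $\lambda$, and verify that precisely the choice $\lambda = c\sqrt{D'}$ makes all eight invariance equations compatible at once. This is where the specific choice of the field composite $F$ and the Galois group being $(\mathbb{Z}/2)^3$ acting by Table \ref{galoisactiononfields} is used in an essential way.
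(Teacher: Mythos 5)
Your proposal is correct and follows essentially the same route as the paper: you construct $\phi_F$ on the weight basis with $V_{\sigma}$, $V_{\sigmabar}$ isotropic and the same sign pattern on the pairs $(v_{m,n},w_{-m,-n})$, pin the single free scalar down to $c\sqrt{D'}$ via the $\mathrm{Gal}(F/\mathbb{Q})$-equivariance bookkeeping (the paper simply chooses $c=-1$, i.e.\ $M=-\sqrt{D'}$, and verifies), and close with the same Rosati-equals-complex-conjugation computation. The only difference is presentational: you derive the admissible values from invariance and descent, whereas the paper posits them and checks.
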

\begin{proof}
We proceed by first constructing an $F$-linear symplectic form on $V_{F}$ and then verify that it descends to $\mathbb{Q}$. Let \begin{equation}\label{phivalue}
M:=\phi_{F}(v_{-1,-1},w_{1,1})=\phi_{F}(v_{1,1},w_{-1,-1})=-\sqrt{D'}  
\end{equation} and \begin{equation}\label{phivalue'}
    -M=\phi_{F}(v_{-1,1},w_{1,-1})=\phi_{F}(v_{1,-1},w_{-1,1})=\sqrt{D'}\end{equation} Moreover, let $\phi_{F}(v_{m,n},w_{m',n'})=0$
 if $(m,n) \neq (-m',-n')$ and also let $V_{\sigma}$ and $V_{\sigmabar}$ be isotropic vector subspaces with respect to $\phi_{F}$.

First we claim that $\phi$ is defined over $\mathbb{Q}$. It suffices to check that for $v_{m,n}$ and $w_{m',n'}$ constructed in (\ref{vsigmabasis}) and (\ref{vsigmabarbasis}) in the proof of Lemma \ref{reprexists} and for any element $g \in G$, the following relation holds
\begin{equation*}
  \phi_{F}(g\circ v_{m,n},g\circ w_{m',n'})=g\circ\phi_{F}(v_{m,n},w_{m',n'}) 
\end{equation*} Using results in Table \ref{galoisoneigenspace} from the proof of Lemma \ref{reprexists} and noticing that $g_1\circ M=M,g_2\circ M=-M,g_3\circ M=M$,
one can see that this holds indeed.

By Table \ref{reprconstruc} and Table \ref{reprconstruct'} from the proof of Lemma \ref{reprexists} as well as equation (\ref{phivalue}) and (\ref{phivalue'}), the form $\phi_{F}$ is indeed preserved by the image of $E(a,1)^{\circ}\otimes_{\mathbb{Q}} F$ under $\mu_{F}$ infinitesimally. Moreover by the above construction, $V_{\sigma}$ and $V_{\sigmabar}$ are isotropic vector subspaces with respect to $\phi_{F}$. For any $k\in K$ satisfying that $k+\overline{k}=0$ and for any $v\in V_{\sigma}$ and any $v'\in V_{\sigmabar}$ we have that $$\phi_{F}(k\circ v,v')+\phi_{F}(v,k\circ v')=(\sigma(k)+\overline{\sigma}(k))\phi_{F}(v,v')=\sigma(k+\overline{k})\phi_{F}(v,v')=0$$ By Theorem \ref{deg2mz4}, $\mathrm{Z}(\mathrm{hdg}(A))$ is precisely equal to the set $\{k\in K|k+\overline{k}=0\}$. Therefore indeed $\phi_{F}$ is preserved by the image of $\tilde{\mu}^{h}$ infinitesimally.

Finally, for the same reason as above, for any $k\in K$ and any $v,v'\in V_{F}$ $$\phi_{F}(k\circ v,v')=\phi_{F}(v,\overline{k}\circ v')$$ Hence the Rosati involution associated with $\phi$ preserves $K\xhookrightarrow{}\mathrm{End}(V)$ and coincides with the complex conjugation on $K$.
   
\end{proof}
In the next lemma, we will show the existence of a family of weight one Hodge structures on $V$ polarized by the $\phi$ constructed in Lemma \ref{polarizationformexists}.
\begin{lemma}\label{familyexists}
Assume the notations from Setup \ref{basisandgalois}, Lemma \ref{reprexists} and Lemma \ref{polarizationformexists}. There exists a family of weight 1 Hodge structures on $V$, on which $K$ acts with multiplicities $(m_{\sigma},m_{\sigmabar})=(1,3)$. Furthermore, each member of this family is polarized by the $\phi$ constructed in Lemma \ref{polarizationformexists}. 
\end{lemma}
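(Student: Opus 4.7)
The plan is to realize Hodge structures in the desired family as points of an open subset of a complex projective space, and then establish non-emptiness of this subset via an explicit computation using the data fixed in the preceding lemmas.

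First, I would parameterize the relevant Hodge structures. A weight-one $\mathbb{Q}$-Hodge structure on $V$ polarized by $\phi$ and on which $K$ acts with multiplicities $(m_\sigma,m_{\bar\sigma})=(1,3)$ is determined by its Hodge filtration $V^{1,0}\subset V_{\mathbb{C}}$. After fixing the embedding $\sigma:K\hookrightarrow\mathbb{C}$ (so that $\sqrt{D'}$ acquires a definite sign), the $K$-equivariance forces $V^{1,0}=V_\sigma^{1,0}\oplus V_{\bar\sigma}^{1,0}$ with $\dim_{\mathbb{C}}V_\sigma^{1,0}=1$ and $\dim_{\mathbb{C}}V_{\bar\sigma}^{1,0}=3$. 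Since $V_\sigma$ and $V_{\bar\sigma}$ are $\phi_{\mathbb{C}}$-isotropic by Lemma \ref{phicompatiblewithE}, the isotropy of $V^{1,0}$ reduces to $\phi_{\mathbb{C}}(V_\sigma^{1,0},V_{\bar\sigma}^{1,0})=0$; combined with non-degeneracy of $\phi_{\mathbb{C}}$ between $V_\sigma$ and $V_{\bar\sigma}$ and a dimension count, this forces $V_{\bar\sigma}^{1,0}=(V_\sigma^{1,0})^{\perp}\cap V_{\bar\sigma}$. Thus the Hodge filtration is entirely determined by the choice of a point $V_\sigma^{1,0}\in\mathbb{P}(V_{\sigma,\mathbb{C}})\cong\mathbb{P}^3$.

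Next I would define the parameter space to be the subset $\mathcal{D}\subset\mathbb{P}(V_{\sigma,\mathbb{C}})$ cut out by the Riemann bilinear positivity condition $-i\phi_{\mathbb{C}}(v,\bar v)>0$ for every $0\ne v\in V^{1,0}$; this is open in the analytic topology. Over $\mathcal{D}$, the trivial local system with fibre $V$ together with the tautological Hodge filtration defines a variation of $\mathbb{Q}$-Hodge structures whose fibres are polarized by $\phi$ and have $K$-multiplicities $(1,3)$ by construction. The lemma therefore reduces to the statement that $\mathcal{D}\ne\emptyset$.

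To prove non-emptiness I would exhibit an explicit element using the basis constructed in Lemma \ref{reprexists}. Taking $V_\sigma^{1,0}=\mathbb{C}\cdot(v_{1,1}+\alpha v_{-1,-1})$ for a parameter $\alpha\in\mathbb{C}$, and using the values of $\phi$ recorded in Lemma \ref{polarizationformexists} together with the action of complex conjugation on the basis vectors (which, since $a,D,D'<0$, is computed by applying the product $g_1g_2g_3$ of all three generators of $\mathrm{Gal}(F/\mathbb{Q})$ from Table \ref{galoisoneigenspace}), one computes explicitly the value of $-i\phi_{\mathbb{C}}(v,\bar v)$ both on $V_\sigma^{1,0}$ and on $V_{\bar\sigma}^{1,0}=(V_\sigma^{1,0})^{\perp}\cap V_{\bar\sigma}$. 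Both conditions reduce, up to a nonzero real factor depending on the sign of the chosen square root $\sqrt{|D'|}$, to a linear inequality in $\operatorname{Re}(\alpha)$, so any $\alpha\in\mathbb{R}_{<0}$ (or $\mathbb{R}_{>0}$, depending on the embedding) will lie in $\mathcal{D}$. The main obstacle is bookkeeping: tracking the interaction between the sign of $\sqrt{D'}$ fixed by our embedding and the action of complex conjugation on the $F$-basis via the Galois group requires care, but no conceptually new input beyond what is already set up in Setup \ref{basisandgalois} and Lemma \ref{polarizationformexists}.
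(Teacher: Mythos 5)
Your proposal is correct, and its core construction coincides with the paper's: the Hodge filtration is parameterized by the line $V_{\sigma}^{1,0}\in\mathbb{P}(V_{\sigma}\otimes\mathbb{C})\cong\mathbb{P}^3$, with $V_{\sigmabar}^{1,0}$ forced to be the $\phi$-orthogonal complement inside $V_{\sigmabar}$ by isotropy of $V^{1,0}$, non-degeneracy of the pairing from Lemma \ref{phicompatiblewithE}, and a dimension count. Where you genuinely diverge is in how the family is cut out and how positivity is checked. The paper defines its family $\mathcal{S}$ by the single inequality (\ref{postivex}), i.e.\ positivity of $-2\sqrt{-1}\phi_{\mathbb{C}}(v,\overline{v})$ only on the one-dimensional piece $V_{\sigma}^{1,0}$, and then spends the bulk of the proof (the case analysis $x_1=x_2=0$; $x_1=0,x_2\neq0$; $x_1\neq0$, with the quadratic minimizations) showing that this condition alone already forces positivity on all of $V_{\sigmabar}^{1,0}$. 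You instead define the family $\mathcal{D}$ by imposing positivity on the whole of $V^{1,0}$, note it is analytically open, and verify non-emptiness at the explicit test line $\mathbb{C}\cdot(v_{1,1}+\alpha v_{-1,-1})$, which is the paper's special case $x_1=x_2=0$: using the values of $\phi$ from Lemma \ref{polarizationformexists} and the fact that complex conjugation acts through $g_1g_2g_3$, both positivity conditions do reduce to $\mathrm{Re}(\alpha)<0$ (up to the choice of $\sqrt{D'}$), so your computation is sound; to be precise, on the $V_{\sigmabar}$ side the condition is positive definiteness of the Hermitian form in the three free coordinates $(y_0,y_1,y_2)$ of $(V_{\sigma}^{1,0})^{\perp}\cap V_{\sigmabar}\otimes\mathbb{C}$, which holds exactly when $\mathrm{Re}(\alpha)<0$, and positivity also ensures $V^{1,0}\cap\overline{V^{1,0}}=0$, so each point of $\mathcal{D}$ is a genuine Hodge structure. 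Your route is shorter and suffices both for Lemma \ref{familyexists} and for the genericity argument of Lemma \ref{simplexists}, which only needs a non-empty analytic open subset of $\mathbb{P}^3$; what the paper's longer computation buys is the additional identification $\mathcal{D}=\mathcal{S}$, namely that positivity on the line $V_{\sigma}^{1,0}$ alone already carves out the full polarizable locus, yielding the explicit coordinate description (\ref{postivex}) of the family.
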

\begin{proof}
Denote by $\mathcal{S}$ the set of elements $(x_0,x_1,x_2,x_3)\in \mathbb{C}^4$ satisfying the following inequality \begin{equation}\label{postivex}
x_0\overline{x_3}+x_1\overline{x_1}+x_2\overline{x_2}+x_3\overline{x_0}<0
\end{equation}  For each $(x_0,x_1,x_2,x_3)\in \mathcal{S}$, let $$v_{\sigma}^{1,0}:=x_0v_{1,1}+x_1v_{1,-1}+x_2v_{-1,1}+x_3v_{-1,-1} \in V_{\sigma}\otimes_{\qbar}\mathbb{C}$$

Let $$V_{\sigmabar}^{1,0}:=\{w \in V_{\sigmabar}\otimes_{\qbar}\mathbb{C}|\phi_{\mathbb{C}}(v_{\sigma}^{1,0},w)=0\}$$ which is equal to the set
\begin{equation}\label{vtau10}
 \{y_{0}w_{1,1}+y_{1}w_{1,-1}+y_{2}w_{-1,1}+y_{3}w_{-1,-1} \in V_{\sigmabar}|x_{0}y_{3}-x_{1}y_{2}-x_{2}y_{1}+x_{3}y_{0}=0\}   
\end{equation} by formula (\ref{phivalue}) and formula (\ref{phivalue'}) from the proof of Lemma \ref{polarizationformexists}. Let $v_{\sigmabar}^{0,1}:=\overline{v_{\sigma}^{1,0}}$ and $V_{\sigma}^{0,1}=\overline{V_{\sigmabar}^{1,0}}$. Then associated to each element $(x_0,x_1,x_2,x_3)\in \mathcal{S}$, one can construct the following weight 1 Hodge structure
\begin{equation}\label{hodge13}
V^{1,0}=\mathbb{C}\langle v_{\sigma}^{1,0}\rangle\oplus V_{\sigmabar}^{1,0}; V^{0,1}=\mathbb{C}\langle v_{\sigmabar}^{0,1}\rangle\oplus V_{\sigma}^{0,1}   
\end{equation}
Then one can immediately check that $K\xhookrightarrow{}\mathrm{End}(V)$ preserves the Hodge structure (\ref{hodge13}) and moreover acts with multiplicities $(m_{\sigma},m_{\sigmabar})=(1,3)$. Abusing notations, we also denote the set of Hodge structures associated with the set of elements in $\mathcal{S}$ by $\mathcal{S}$.

Then we claim that each Hodge structure in $\mathcal{S}$ is polarized by $\phi$. To verify so, in the rest of proof, we fix an arbitrary Hodge structure in $\mathcal{S}$. By the explicit construction (\ref{hodge13}), we have that $\phi$ is indeed a morphism of Hodge structures. Fix a square root of $-1$ and denote it by $\sqrt{-1}$. It then suffices to verify that for any $v \in V^{1,0}-\{0\}$, the following positivity condition holds  $$-\sqrt{-1}\phi(v,\overline{v})+\sqrt{-1}\phi(\overline{v},v)=-2\sqrt{-1}\phi(v,\overline{v})>0$$ If $v=v_{\sigma}^{1,0}$, then the positivity condition is spelled as
\begin{equation}\label{lastinequality}
\begin{split}
&-2\sqrt{-1}\phi_{\mathbb{C}}(x_0v_{1,1}+x_1v_{1,-1}+x_2v_{-1,1}+x_3v_{-1,-1},\\
&\overline{x_0}\overline{v_{1,1}}+\overline{x_1}\overline{v_{1,-1}}+\overline{x_2}\overline{v_{-1,1}}+
\overline{x_3}\overline{v_{-1,-1}})>0
\end{split}
\end{equation} Recall by Setup \ref{basisandgalois}, the complex conjugation is equal to $(g_1,g_2,g_3)\in \mathrm{Gal}(F/\mathbb{Q})$. Therefore using Table \ref{galoisoneigenspace} from the proof of Lemma \ref{reprexists}, we have that 
$$\overline{v_{1,1}}=w_{1,1},\overline{v_{-1,-1}}=w_{-1,-1},\overline{v_{1,-1}}=-w_{-1,1},\overline{v_{-1,1}}=-w_{1,-1}$$
Using formula (\ref{phivalue}) and (\ref{phivalue'}) from the proof of Lemma \ref{polarizationformexists}, inequality (\ref{lastinequality}) simplifies to
\begin{equation*}
-2M\sqrt{-1}(x_0\overline{x_3}+x_1\overline{x_1}+x_2\overline{x_2}+x_3\overline{x_0})>0
\end{equation*} Recall that $M=-\sqrt{D'}$, hence this is consistent with the condition (\ref{postivex}) on $v_{\sigma}^{1,0}$.

Now we need to show that the positivity condition is also satisfied by elements in $V_{\sigmabar}^{1,0}$ i.e. we need to show that for any $y_0,y_1,y_2,y_3\in\mathbb{C}$ such that $y_{0}w_{1,1}+y_{1}w_{1,-1}+y_{2}w_{-1,1}+y_{3}w_{-1,-1}\in V_{\sigmabar}^{1,0}-\{0\}$ i.e. equation (\ref{vtau10}) holds and not all $y_{i}$s are equal to zero, the following inequality is satisfied
\begin{equation*}
\begin{split}
&-2\sqrt{-1}\phi_{\mathbb{C}}(y_{0}w_{1,1}+y_{1}w_{1,-1}+y_{2}w_{-1,1}+y_{3}w_{-1,-1},\\
&\overline{y_{0}}\overline{w_{1,1}}+\overline{y_{1}}\overline{w_{1,-1}}+\overline{y_{2}}\overline{w_{-1,1}}+
\overline{y_{3}}\overline{w_{-1,-1}})>0
\end{split}
\end{equation*}
By formula (\ref{phivalue}) and formula (\ref{phivalue'}) and constructions from Lemma \ref{polarizationformexists}, the above is equivalent to 
\begin{equation}\label{postivey}
2M\sqrt{-1}(y_0\overline{y_3}+y_1\overline{y_1}+y_2\overline{y_2}+y_3\overline{y_0})>0   
\end{equation}

For an element $(x_0,x_1,x_2,x_3)\in\mathcal{S}$, we have that \begin{equation*}
x_0\overline{x_3}+x_1\overline{x_1}+x_2\overline{x_2}+x_3\overline{x_0}<0   
\end{equation*} In particular, 
\begin{equation*}
x_0\overline{x_3}+x_3\overline{x_0}<0  
\end{equation*} and therefore $x_0 \neq 0$ and $x_3\neq0$.

First we consider the special case where $x_1=x_2=0$.  Then the $y_{i}$s satisfy that $x_0y_3+x_3y_0=0$ while $y_1,y_2$s can be arbitrary. Hence we have that \begin{align*}
&2M\sqrt{-1}(y_0\overline{y_3}+y_1\overline{y_1}+y_2\overline{y_2}+y_3\overline{y_0}) \\
&\geq 2M\sqrt{-1}(y_0\overline{y_3}+y_3\overline{y_0})\\   
&=2M\sqrt{-1}(-\frac{x_0\overline{x_3}+x_3\overline{x_0}}{x_0\overline{x_0}}y_0\overline{y_0})\geq0
\end{align*} The equality is only attained when $y_0=y_1=y_2=0$. But this would imply that $y_3=0$. Hence we are done with this case.

Now suppose that $x_1 = 0$ and $x_2 \neq 0$. Then according to formula (\ref{vtau10}) we have that $y_1=\frac{1}{x_2}(x_0y_3+x_3y_0)$ and the value of $y_2$ is arbitrary. Moreover, $y_0\overline{y_3}+y_1\overline{y_1}+y_2\overline{y_2}+y_3\overline{y_0} \geq y_0\overline{y_3}+y_1\overline{y_1}+y_3\overline{y_0}$ where the equality is attained when $y_2=0$. So it suffices to show that
\begin{align*}
&x_0\overline{x_0}y_3\overline{y_3}+x_3\overline{x_3}y_0\overline{y_0}+x_0\overline{x_3}y_3\overline{y_0}+x_3\overline{x_0}y_0\overline{y_3}+x_2\overline{x_2}y_0\overline{y_3}+x_2\overline{x_2}y_3\overline{y_0}>0
\end{align*}
Letting $\mathcal{M}=(x_0\overline{x_3}+x_2\overline{x_2})\overline{y_0}$ and write $y_3=x+\sqrt{-1}y$, the above equation simplifies to
\begin{align*}
x_0\overline{x_0}(x^2+y^2)+2(\mathfrak{Re}(\mathcal{M})x-\mathfrak{Im}(\mathcal{M})y)+x_3\overline{x_3}y_0\overline{y_0}>0
\end{align*}
It suffices to show the minimal of the above expression, viewed as a quadratic function in two variables $x$ and $y$, is greater than 0. And this is equivalent to the following constraint on $x_{i}$s
\begin{align*}
(x_1\overline{x_1}+x_2\overline{x_2})(x_0\overline{x_3}+x_2\overline{x_2}+x_3\overline{x_0})<0
\end{align*}
But this is consistent with the condition (\ref{postivex}) we have set for $x_{i}$s.

Finally suppose $x_1 \neq 0$, then according to formula (\ref{vtau10}) we have that $y_2=(-\frac{x_2}{x_1}y_1+\frac{x_0}{x_1}y_3+\frac{x_3}{x_1}y_0)$ and we would like to show that 
\begin{align*}
    &(1+\frac{x_2\overline{x_2}}{x_1\overline{x_1}})y_1\overline{y_1}+\frac{1}{x_1\overline{x_1}}(-x_2\overline{x_0}y_1\overline{y_3}-x_2\overline{x_3}y_1\overline{y_0}\\
    &-x_0\overline{x_2}y_3\overline{y_1}+x_0\overline{x_0}y_3\overline{y_3}+x_0\overline{x_3}y_3\overline{y_0}\\
    &-x_3\overline{x_2}y_0\overline{y_1}+x_3\overline{x_0}y_0\overline{y_3}+x_3\overline{x_3}y_0\overline{y_0})+y_0\overline{y_3}+y_3\overline{y_0}>0
\end{align*}
Rearranging term, this is equivalent to showing that
\begin{align*}
    &(1+\frac{x_2\overline{x_2}}{x_1\overline{x_1}})y_1\overline{y_1}+\frac{1}{x_1\overline{x_1}}(-x_2\overline{x_0}y_1\overline{y_3}-x_2\overline{x_3}y_1\overline{y_0}\\
    &-x_0\overline{x_2}y_3\overline{y_1}-x_3\overline{x_2}y_0\overline{y_1})+\frac{1}{x_1\overline{x_1}}(x_0\overline{x_0}y_3\overline{y_3}+x_0\overline{x_3}y_3\overline{y_0}\\
    &+x_3\overline{x_0}y_0\overline{y_3}+x_3\overline{x_3}y_0\overline{y_0})+y_0\overline{y_3}+y_3\overline{y_0}>0
\end{align*}
Writing $y_1=x+\sqrt{-1}y$ and letting $x_2\overline{x_0}\overline{y_3}+x_2\overline{x_3}\overline{y_0}$ equal to $\mathcal{N}$, this is equivalent to
\begin{align*}
    &(1+\frac{x_2\overline{x_2}}{x_1\overline{x_1}})(x^2+y^2)-\frac{2}{x_1\overline{x_1}}(\mathfrak{Re}(\mathcal{N})x-\mathfrak{Im}(\mathcal{N})y)+\frac{1}{x_1\overline{x_1}}(x_0\overline{x_0}y_3\overline{y_3}+x_0\overline{x_3}y_3\overline{y_0}\\
    &+x_3\overline{x_0}y_0\overline{y_3}+x_3\overline{x_3}y_0\overline{y_0})+y_0\overline{y_3}+y_3\overline{y_0}>0
\end{align*}
And the minimal value of the above expression viewed as a quadratic function in two variables $x$ and $y$ is equal to 
\begin{align*}
    &-\frac{1}{x_1\overline{x_1}(x_2\overline{x_2}+x_1\overline{x_1})}(\mathfrak{Re}^2(\mathcal{N})+\mathfrak{Im}^2(\mathcal{N}))+\frac{1}{x_1\overline{x_1}}(x_0\overline{x_0}y_3\overline{y_3}+x_0\overline{x_3}y_3\overline{y_0}\\
    &+x_3\overline{x_0}y_0\overline{y_3}+x_3\overline{x_3}y_0\overline{y_0})+y_0\overline{y_3}+y_3\overline{y_0}
\end{align*}
Rearranging the order of terms, it suffices to show that
\begin{align*}
&-\frac{x_0\overline{x_0}x_2\overline{x_2}y_3\overline{y_3}+x_2\overline{x_2}x_3\overline{x_0}y_0\overline{y_3}+x_2\overline{x_2}x_0\overline{x_3}y_3\overline{y_0}+x_2\overline{x_2}x_3\overline{x_3}y_0\overline{y_0}}{x_1\overline{x_1}(x_2\overline{x_2}+x_1\overline{x_1})}+\\
&\frac{1}{x_1\overline{x_1}}(x_0\overline{x_0}y_3\overline{y_3}+x_0\overline{x_3}y_3\overline{y_0}+x_3\overline{x_0}y_0\overline{y_3}+x_3\overline{x_3}y_0\overline{y_0})+y_0\overline{y_3}+y_3\overline{y_0}>0
\end{align*}
Simplifying terms this is equal to the following statement
\begin{align*}
&\frac{x_0\overline{x_0}y_3\overline{y_3}+x_3\overline{x_0}y_0\overline{y_3}+x_0\overline{x_3}y_3\overline{y_0}+x_3\overline{x_3}y_0\overline{y_0}}{(x_1\overline{x_1}+x_2\overline{x_2})}+y_0\overline{y_3}+y_3\overline{y_0}>0
\end{align*}
Rearranging terms, this is equivalent to showing that
\begin{equation*}
x_0\overline{x_0}y_3\overline{y_3}+x_0\overline{x_3}y_3\overline{y_0}+(x_1\overline{x_1}+x_2\overline{x_2})(y_3\overline{y_0}+y_0\overline{y_3})+x_3\overline{x_0}y_0\overline{y_3}+x_3\overline{x_3}y_0\overline{y_0}>0
\end{equation*}Letting $\mathcal{N}'=(x_0\overline{x_3}+x_1\overline{x_1}+x_2\overline{x_2})\overline{y_0}$ and writing $y_3=x+\sqrt{-1}y$, the above equation simplifies to
\begin{align*}
x_0\overline{x_0}(x^2+y^2)+2(\mathfrak{Re}(\mathcal{N}')x-\mathfrak{Im}(\mathcal{N}')y)+x_3\overline{x_3}y_0\overline{y_0}>0
\end{align*}

It suffices to show the minimal of the above expression, viewed as a quadratic function in two variables $x$ and $y$, is greater than 0. This is equivalent to the following requirement on $x_{i}$s
\begin{equation*}
(x_1\overline{x_1}+x_2\overline{x_2})(x_0\overline{x_3}+x_1\overline{x_1}+x_2\overline{x_2}+x_3\overline{x_0})<0
\end{equation*}
But this is consistent with the requirement (\ref{postivex}) we have set for $x_{i}$s.

Hence we have shown that for any $v_{\sigma}^{1,0}\in V_{\sigma}\otimes_{\qbar}\mathbb{C}$ satisfying condition (\ref{postivex}), the form $\phi$ when evaluated at an arbitrary $w\in V_{\sigmabar}^{1,0}-\{0\}$ also satisfies the positivity condition (\ref{postivey}). Therefore each Hodge structure in $\mathcal{S}$ is indeed polarized by $\phi$.   
\end{proof}
Finally we will show the existence of a simple anti-Weil type abelian fourfold in the family constructed in Lemma \ref{familyexists}.
\begin{lemma}\label{simplexists}
We assume the notations from Lemma \ref{familyexists} as well as its proof. There exists an abelian fourfold in the family constructed in Lemma \ref{familyexists} whose endomorphism field is precisely $K$ and therefore is simple and of anti-Weil type.
\end{lemma}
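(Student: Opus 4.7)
The plan is to produce the abelian fourfold by realizing one of the Hodge structures from the family of Lemma \ref{familyexists} as the $\mathrm{H}^1$ of an abelian variety, and then to show that for a generic value of the parameter $s = (x_0,x_1,x_2,x_3) \in \mathcal{S}$ the resulting endomorphism algebra is exactly $K$. First I would produce an integral structure: pick a $\mathbb{Z}$-lattice $V_{\mathbb{Z}} \subset V$ stable under an order $\mathcal{O}$ of $K$, on which $\phi$ takes integer values. After rescaling $\phi$ and $V_{\mathbb{Z}}$ if necessary this always exists. Together with the Hodge structure $H_s$ of Lemma \ref{familyexists}, this data defines by the Riemann correspondence a polarized complex abelian fourfold $A_s$ with $\mathrm{H}^1(A_s, \mathbb{Q}) \cong V$ into whose endomorphism algebra $K$ embeds by construction. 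The multiplicities $(m_\sigma, m_{\overline{\sigma}}) = (1,3)$ are already the anti-Weil ones, so it remains to pin down $\mathrm{End}^\circ(A_s) = K$ and derive simplicity.

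The key structural observation is that the image $\tilde{L} := \tilde{\mu}(E(a,1)^{\circ}) \oplus \mathrm{Z}(\mathrm{hdg})$ in $\mathrm{gl}(V)$ already acts irreducibly over $\mathbb{Q}$: on tensoring with $\qbar$ the representation splits as the two pieces $V_\sigma, V_{\overline{\sigma}}$ of Lemma \ref{reprexists}, both isomorphic to $V(1)\boxtimes V(1)$ as $\mathrm{sl}(2)\times\mathrm{sl}(2)$-representations but distinguished by the scalar $K$-action. Schur's lemma then identifies the commutant of $\tilde{L}_{\qbar}$ in $\mathrm{End}_{\qbar}(V_{\qbar})$ with the two-dimensional $\qbar$-subalgebra acting by scalars on each summand, and taking Galois invariants recovers exactly $K$. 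Consequently, if I can ensure that $\mathrm{hdg}(A_s)$ contains $\tilde{L}$, then $\mathrm{End}^\circ(A_s) \otimes \mathbb{R}$, being the commutant of $\mathrm{hdg}(A_s)$ in $\mathrm{End}_{\mathbb{R}}(V_{\mathbb{R}})$, must embed into $K\otimes\mathbb{R}$ and hence collapse to $K$.

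To realize this genericity, I would invoke a Noether-Lefschetz style argument: in any variation of polarized Hodge structures of weight one the locus where the Hodge group is strictly smaller than the generic Hodge group is a countable union of proper analytic subvarieties. The family of Lemma \ref{familyexists} is polarized by $\phi$, and $\phi$ is infinitesimally preserved by $\tilde{L}$ by Lemma \ref{polarizationformexists}; moreover $\tilde{L}$ stabilises the $K$-eigenspace decomposition and each character $(m,n)$-piece. A direct tangent-space count shows that the real dimension of $\mathcal{S}$, an open subset of $\mathbb{C}^4$, matches that of the period domain parametrising $K$-invariant, $\phi$-polarised weight one Hodge structures on $V$ of type $(1,3)$ which are compatible with $\tilde{L}$. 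Thus the period map from $\mathcal{S}$ to this period domain has open image, the generic $s \in \mathcal{S}$ yields $\mathrm{hdg}(A_s)_{\mathbb{R}} = \tilde{L}\otimes\mathbb{R}$, and by the preceding paragraph $\mathrm{End}^\circ(A_s) = K$. Since $K$ is a field, $A_s$ cannot be isogenous to a product or to a proper power, hence is simple, and by the multiplicity count it is anti-Weil.

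The main obstacle will be the tangent/dimension match in the Noether-Lefschetz step: a priori the Hodge structures cut out by $\mathcal{S}$ only range over a submanifold of the $K$-equivariant, $\phi$-polarised period domain, and I must check that this submanifold has full dimension and that the generic infinitesimal variation of $V^{1,0}_s$ coming from perturbing $(x_0,x_1,x_2,x_3)$ is not trapped in any proper sub-period-domain. Equivalently, one needs to verify that the image of $\mathrm{Lie}\,E(a,1)^{\circ}_{\mathbb{R}}$ already generates the horizontal tangent space to the $\tilde{L}$-orbit on the period domain at a single well-chosen base point in $\mathcal{S}$; translating this orbit by $\tilde{L}(\mathbb{R})$ and combining with the Baire category theorem then produces a generic $s$ with the desired maximal Hodge Lie algebra. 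This is the only non-formal input and should be verifiable by a direct computation in the explicit basis $\{v_{m,n},w_{m,n}\}$ of Setup \ref{basisandgalois}.
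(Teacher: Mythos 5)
Your overall strategy (a Baire/Noether--Lefschetz countability argument on the parameter space $\mathcal{S}$, plus the commutant-of-$\tilde{L}$ computation) is in the same spirit as the paper, but the step it pivots on is both unproven and false as stated. You claim that for generic $s\in\mathcal{S}$ one has $\mathrm{hdg}(A_s)_{\mathbb{R}}=\tilde{L}\otimes\mathbb{R}$, justified by a dimension/orbit count for the period map. This contradicts Theorem \ref{deg2mz4}: every simple anti-Weil abelian fourfold has $\mathrm{hdg}(A)=u_{E}(V,\psi)$, whose semisimple part over $\qbar$ is $\mathrm{sl}(4)$, so the Hodge Lie algebra can never be $\tilde{L}$, whose $\qbar$-semisimple part is only $\mathrm{sl}(2)\times\mathrm{sl}(2)$; the whole point of Section \ref{noideasubsect} is that $\mathrm{Z}\oplus\tilde{L}$ is a hypothetical candidate for the \emph{de Rham--Betti} Lie algebra, not the Hodge Lie algebra. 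Even the weaker containment you actually need, $\tilde{L}\subset\mathrm{hdg}(A_s)$ for generic $s$, is not delivered by your argument: an open $\tilde{L}(\mathbb{R})$-orbit on the period domain would not force the image of the Deligne torus to lie in the corresponding group, and in fact the containment holds generically only because the generic member has $\mathrm{End}^{\circ}=K$ (whence $\mathrm{hdg}=u_{K}(V,\psi)\supset\tilde{L}$) --- which is exactly what you are trying to prove, so your deduction ``$\mathrm{hdg}\supseteq\tilde{L}$, hence $\mathrm{End}^{\circ}\subseteq K$'' is circular as a route to the lemma.

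What is needed instead (and what the paper does) is to run the countability argument directly on the endomorphism loci rather than on Hodge-group loci: first the non-simple locus is covered by the countably many proper analytic subsets $\Gamma_{W}$; then, for simple members, the Albert classification leaves only three possibilities for a strictly larger endomorphism algebra (a quartic CM field, a degree $8$ CM field, or a definite quaternion algebra over $\mathbb{Q}$), and each of these loci is shown to be cut out in $\mathcal{S}$ by proper $\qbar$-algebraic conditions, using the multiplicity constraints of Lemma \ref{multiplicity-quartic-CM}, Lemma \ref{deg4keylemma} and Lemma \ref{famouscmtype}, and a splitting argument in the quaternion case. If you want to keep your commutant-based shortcut, you would have to replace the invalid identification of the generic Hodge group with $\tilde{L}$ by an honest proof that the generic Hodge group of this signature $(1,3)$ PEL family is all of $u_{K}(V,\psi)$; without that (or the paper's case analysis), the proposal has a genuine gap.
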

\begin{proof}
The idea of proof is to show that the locus of Hodge structures in $\mathcal{S}$ whose endomorphism algebra is strictly larger than $K$ is a countably union of lower dimensional subsets of $\mathcal{S}$. Then an element in $\mathcal{S}$ outside this locus corresponds to a simple anti-Weil type abelian fourfold. First consider the set of weight 1 Hodge structures on $V$ satisfying the following two conditions
\begin{enumerate}
    \item The image of $\iota:K\xhookrightarrow{}\mathrm{End}(V)$ falls inside the endomorphism algebra of each Hodge structure and acts with multiplicities $(m_{\sigma},m_{\sigmabar})=(1,3)$.
    \item The bilinear form $\phi$ in Lemma \ref{polarizationformexists} preserves each Hodge structure in this set.
\end{enumerate}
Denote this set by $\tilde{\mathcal{S}}$. Then each Hodge structure in $\tilde{\mathcal{S}}$ is completely determined by the choice of a vector $v_{\sigma}^{1,0}$ in $V_{\sigma}\otimes_{\qbar}\mathbb{C}$. Hence $\tilde{\mathcal{S}}$ is isomorphic to $\mathbb{P}^3$. Note that $\mathcal{S}$ constructed by formula (\ref{postivex}) and (\ref{hodge13}) from Lemma \ref{familyexists} is an analytic open subset of $\tilde{\mathcal{S}}$.

We first show that there is a positive dimensional locus of simple weight one Hodge structures in $\mathcal{S}$. Given $W\subset V$ an even dimensional $\mathbb{Q}$-vector subspace of $V$, denote by $\Gamma_{W}$ the set of weight 1 Hodge structures in $\tilde{\mathcal{S}}$ which contain $W$ as a sub-Hodge structure i.e. $\Gamma_{W}=\{\tilde{s}\in\tilde{\mathcal{S}}|\mathrm{dim}_{\mathbb{C}}(V^{1,0}_{\tilde{s}}\cap W_{\mathbb{C}})=\frac{1}{2}\mathrm{dim}(W)\}$. Since there are countably many even dimensional $\mathbb{Q}$-vector subspaces of $V$, there are countably many $\Gamma_{W}$s. Denote the union of all such $\Gamma_{W}$s by $\tilde{\mathcal{S}}_{\mathrm{split}}$. By construction $\tilde{\mathcal{S}}_{\mathrm{split}}$ consists of Hodge structures in $\tilde{\mathcal{S}}$ which are not simple. Then because $\tilde{\mathcal{S}}$ is isomorphic to $\mathbb{P}^3$, either $\tilde{\mathcal{S}}\cap \Gamma_{W}=\tilde{\mathcal{S}}$ or $\tilde{\mathcal{S}}\cap \Gamma_{W}$ is a subvariety of $\tilde{\mathcal{S}}$ of strictly lower dimension. Checking the construction of $\tilde{\mathcal{S}}$, one can see that there does not exist a $\mathbb{Q}$-vector subspace $W_0$ such that $\tilde{\mathcal{S}}=\Gamma_{W_0}$. Therefore for each $W$,  $\tilde{\mathcal{S}}\cap \Gamma_{W}$ is a subvariety of $\tilde{\mathcal{S}}$ with strictly lower dimension. Since $\mathcal{S}$ is an analytic open subset of $\tilde{\mathcal{S}}$, we have that for each $W$, $\mathcal{S}\cap \Gamma_{W}$ is an analytic subset of $\mathcal{S}$ of strictly lower dimension. Therefore $\mathcal{S}_{\mathrm{simple}}:=\mathcal{S}-\mathcal{S}\cap\tilde{\mathcal{S}}_{\mathrm{split}}$ is a positive dimensional open subset of $\mathcal{S}$.

Elements in $\mathcal{S}_{\mathrm{simple}}$ correspond to simple abelian fourfolds whose endomorphism algebra contains the deg 2 CM-field $K$ which acts by multiplicities $(m_{\sigma},m_{\sigmabar})=(1,3)$. We now show that there exists $s_0\in\mathcal{S}_{\mathrm{simple}}$ such that the corresponding abelian fourfold $A_{s_0}$ has endomorphism algebra precisely $K$. By the analysis on Page 561 of \cite{mz-4-folds}, for an element $s_0$ in $\mathcal{S}_{\mathrm{simple}}$, we have the following possibilities \begin{enumerate}
    \item\label{scene1} The endomorphism algebra of  $A_{s_0}$ is isomorphic to a quartic CM-field $\tilde{K}$.
    \item\label{scene2} The endomorphism algebra of  $A_{s_0}$ is isomorphic to a degree 8 CM-field $K'$. 
    \item\label{scene3} The endomorphism algebra of $A_{s_0}$ is isomorphic to a quaternion algebra $D$ over $\mathbb{Q}$ such that $D\otimes_{\mathbb{Q}}\mathbb{R}\cong\mathcal{H}$.
\end{enumerate}

Now we consider scenario (\ref{scene1}). Suppose we are given the following pair $(\tilde{K}/K,\tilde{\iota})$: a quartic CM-field $\tilde{K}$ and a morphism of $\mathbb{Q}$-algebras $\tilde{\iota}:\tilde{K}\xhookrightarrow{}\mathrm{End}(V)$ which in addition satisfies that $\iota: K\xhookrightarrow{}\mathrm{End}(V)$ factors through $\tilde{\iota}$. Denote $$\mathrm{Hom}(\tilde{K},\qbar)=\{\sigma_1,\sigma_2,\overline{\sigma_1},\overline{\sigma_2}\}$$ such that $\sigma_1|_{K}=\sigma_2|_{K}=\sigma$ and $\overline{\sigma_1}|_{K}=\overline{\sigma_2}|_{K}=\overline{\sigma}$. With respect to $\tilde{\iota}$ we have the eigenspace decompositions $$V_{\qbar}=V_{\sigma_1}\oplus V_{\sigma_2}\oplus V_{\overline{\sigma_1}}\oplus V_{\overline{\sigma_2}}$$ which satisfy the relations $V_{\sigma}=V_{\sigma_1}\oplus V_{\sigma_2}$ and $V_{\overline{\sigma}}=V_{\overline{\sigma_1}}\oplus V_{\overline{\sigma_2}}$. Next consider the set of weight one Hodge structures in $\tilde{\mathcal{S}}$ which admits a pair $(\tilde{K}/K,\tilde{\iota})$ satisfying the above conditions in its endomorphism algebra and denote this set by $\Gamma_{(\tilde{K}/K,\tilde{\iota})}$. Then by Lemma \ref{multiplicity-quartic-CM} and Lemma \ref{deg4keylemma}, for any polarized simple weight one Hodge structure  $s\in\Gamma_{(\tilde{K}/K,\tilde{\iota})}\cap\mathcal{S}_{\mathrm{simple}}$, $\tilde{K}$ acts on the simple abelian fourfold $A_{s}$ with multiplicities $\{2,0,1,1\}$. By the construction of $\mathcal{S}$, we have that $\mathrm{dim}(V_{\sigma}\otimes_{\qbar}\mathbb{C}\cap V_{s}^{1,0})=1$. Therefore the possible multiplicities for $\tilde{K}$ are $m_{\sigma_1}=0,m_{\sigma_2}=1$ or $m_{\sigma_1}=1,m_{\sigma_2}=0$. This implies that the $v_{\sigma}^{1,0}$ associated with the Hodge structure $s$ either lies in $V_{\sigma_1}\otimes_{\qbar}\mathbb{C}$ or lies in $V_{\sigma_2}\otimes_{\qbar}\mathbb{C}$. Recall that by formula (\ref{hodge13}) the Hodge structure is determined by $v_{\sigma}^{1,0}$. Therefore for each pair $(\tilde{K}/K,\tilde{\iota})$, the set $\Gamma_{(\tilde{K}/K,\tilde{\iota})}$ is cut out by $\qbar$-coefficient linear equations. Because $V_{\sigma_1}$ and $V_{\sigma_2}$ are proper subspaces of $V_{\sigma}$, the subset $\Gamma_{(\tilde{K}/K,\tilde{\iota})}$ is properly contained in $\tilde{\mathcal{S}}$. Denote the union of $\Gamma_{(\tilde{K}/K,\tilde{\iota})}$s by $\Gamma_{A}$. Since there are countably many pairs $(\tilde{K}/K,\tilde{\iota})$, by the same argument as above, we deduce that $\mathcal{S}_{\mathrm{simple}}-\Gamma_{A}$ is a positive dimensional subset of $\mathcal{S}_{\mathrm{simple}}$.

Scenario (\ref{scene2}) can be dealt with in basically the same way as above. Suppose we are given the following pair $(K'/K,\iota')$: a degree 8 CM-field $K'$ and a morphism of $\mathbb{Q}$-algebras $\iota':K'\xhookrightarrow{}\mathrm{End}(V)$ which in addition satisfies that $\iota: K\xhookrightarrow{}\mathrm{End}(V)$ factors through $\iota'$. Denote $$\mathrm{Hom}(K',\qbar)=\{\sigma_{i}|i\in\{1,2,3,4\}\}\coprod\{\overline{\sigma_i}|i\in\{1,2,3,4\}\}$$ such that $\sigma_i|_{K}=\sigma$ and $\overline{\sigma_i}|_{K}=\overline{\sigma}$. Then the eigenspace decomposition with respect to $\iota'$ $$V_{\qbar}=\oplus_{i\in\{1,2,3,4\}}(V_{\sigma_{i}}\oplus V_{\overline{\sigma_i}})$$ satisfies the following relations $V_{\sigma}=\oplus_{i\in\{1,2,3,4\}}V_{\sigma_i}$ and $V_{\overline{\sigma}}=\oplus_{i\in\{1,2,3,4\}}V_{\overline{\sigma_i}}$. Consider the set of Hodge structures in $\tilde{\mathcal{S}}$ which admits a pair $(K'/K,\iota')$ satisfying the above conditions in its endomorphism algebra and denote this set by $\Gamma_{(K'/K,\iota')}$. Then by Lemma \ref{famouscmtype} (see Section \ref{cmintrosection}), for each element $s'$ in $\Gamma_{(K'/K,\iota')}$, we have $V_{\sigma_{i}}\subset V_{s'}^{1,0}$ or $V_{\sigma_{i}}\subset V_{s'}^{0,1}$. Since $\Gamma_{(K'/K,\iota')}\subset\mathcal{S}$, we have $\mathrm{dim}(V_{\sigma}\otimes_{\qbar}\mathbb{C}\cap V_{s'}^{1,0})=1$. Therefore there exists precisely one $i_0\in\{1,2,3,4\}$ such that $V_{\sigma_{i_0}}\subset V_{s'}^{1,0}$ while the rest of $V_{\sigma_{i}}$s belong to $V_{s'}^{0,1}$. Hence for each $s'\in\Gamma_{(K'/K,\iota')}$, its corresponding $v_{\sigma}^{1,0}$ lies in $V_{\sigma_{i_0}}\otimes_{\qbar}\mathbb{C}$ for some $i_0\in\{1,2,3,4\}$. Therefore for each pair $(K',\iota')$, the set $\Gamma_{(K'/K,\iota')}\subset\tilde{\mathcal{S}}$ is cut out by $\qbar$-coefficient linear equations and is properly contained in $\tilde{\mathcal{S}}$. Denote the union of $\Gamma_{(K'/K,\iota')}$s by $\Gamma_{B}$. By the same argument as above, we deduce that $\mathcal{S}_{\mathrm{simple}}-\Gamma_{A}-\Gamma_{B}$ is a positive dimensional subset of $\mathcal{S}_{\mathrm{simple}}$.

The analysis for scenario (\ref{scene3}) is slightly different than above. Given any simple abelian fourfold $A$ whose endomorphism algebra is isomorphic to a quaternion algebra $D$, we claim the following decomposition of irreducible $\mathrm{Hdg}(A)({\qbar})$-representations \begin{equation}\label{splitting}V_{\qbar}=W_1\oplus W_2\end{equation} where $W_1$ and $W_2$ are isomorphic as $\mathrm{Hdg}(A)({\qbar})$-representations. The reason is as follows. Note that $D=\mathrm{End}^{\circ}(A)=\mathrm{End}_{\mathrm{Hdg}(A)}(V)$, hence $\mathrm{End}_{\mathrm{Hdg}(A)(\qbar)}(V_{\qbar})=D\otimes_{\mathbb{Q}}\qbar\cong M_2(\qbar)$ as $\qbar$-algebras. Because the maximal tori inside $M_2(\qbar)$ has dimension two, $V_{\qbar}=W_1\oplus W_2$ where $W_1$ and $W_2$ are \textit{irreducible} $\mathrm{Hdg}(A)({\qbar})$-representations. By Lemma \ref{qbarhodgedecomp} from Section \ref{cmintrosection} , this puts the following condition on the Hodge structure $V_{\mathbb{C}}=V^{1,0}\oplus V^{0,1}$ associated with $A$ \begin{equation}
\label{quaternionlocus}
\begin{split}
    &W_1\otimes_{\qbar}\mathbb{C}=W_1\otimes_{\qbar}\mathbb{C}\cap V^{1,0}\oplus W_1\otimes_{\qbar}\mathbb{C}\cap V^{0,1}\\
    &W_2\otimes_{\qbar}\mathbb{C}=W_2\otimes_{\qbar}\mathbb{C}\cap V^{1,0}\oplus W_2\otimes_{\qbar}\mathbb{C}\cap V^{0,1}
\end{split}
\end{equation} 
Moreover there is an isomorphism $\chi$ between $W_1$ and $W_2$ as $\mathrm{Hdg}(A)({\qbar})$-representations. Using the same argument from the proof of Lemma \ref{qbarhodgedecomp}, the $\mathbb{C}$-linear extension of $\chi$ satisfies the following condition \begin{equation}\label{generlizedhodgemorphism}
\begin{split}
&\chi_{\mathbb{C}}(W_1\otimes_{\qbar}\mathbb{C}\cap V^{1,0})\subset W_2\otimes_{\qbar}\mathbb{C}\cap V^{1,0}\\
&\chi_{\mathbb{C}}(W_1\otimes_{\qbar}\mathbb{C}\cap V^{0,1})\subset W_2\otimes_{\qbar}\mathbb{C}\cap V^{0,1}\end{split}\end{equation}

Inspired by the above paragraph, suppose we are given a triple $(W_1,W_2,\chi)$ where $W_1$ and $W_2$ are four-dimensional $\qbar$-vector subspaces of $V_{\qbar}$ such that $V_{\qbar}=W_1\oplus W_2$ and $\chi:W_1\rightarrow W_2$ is an invertible $\qbar$-linear map between $W_1$ and $W_2$. Consider the subset of Hodge structures in $\mathcal{S}$ satisfying formula (\ref{quaternionlocus}) and moreover $\chi$ is compatible with the ``Hodge decomposition'' on $W_1$ and $W_2$ i.e. formula (\ref{generlizedhodgemorphism}). Denote this set by $\Gamma_{(W_1,W_2,\chi)}$. Then a careful inspection of the construction of $\Gamma_{(W_1,W_2,\chi)}$ reveals that it is cut out in $\mathcal{S}$ by $\qbar$-coefficient polynomial equations.

Last but not least, we claim that for each $(W_1,W_2,\chi)$, $\mathcal{S}_{\mathrm{simple}}\cap\Gamma_{(W_1,W_2,\chi)}$ is indeed \textit{properly} contained in $\mathcal{S}_{\mathrm{simple}}$. It suffices to check the case where $\{W_1,W_2\}=\{V_{\sigma},V_{\sigmabar}\}$. But recall from the construction (\ref{hodge13}) that for an element $s\in\mathcal{S}_{\mathrm{simple}}$, $V_{\sigma}^{1,0}$ has dimension one while $V_{\sigmabar}^{1,0}$ has dimension three. Thus there cannot exist an isomorphism $\chi:V_{\sigma}\cong V_{\sigmabar}$ compatible the decomposition $V_{\sigma}\otimes_{\qbar}\mathbb{C}=V_{\sigma}^{1,0}\oplus V_{\sigma}^{0,1}$ and $V_{\sigmabar}\otimes_{\qbar}\mathbb{C}=V_{\sigmabar}^{1,0}\oplus V_{\sigmabar}^{0,1}$ associated with $s$. Therefore $\Gamma_{(V_{\sigma},V_{\sigmabar},\chi)}$ is in fact empty for any choice of $\chi$.

There are countably many triples of the form $(W_1,W_2,\chi)$ and denote the union of $\Gamma_{(W_1,W_2,\chi)}$s by $\Gamma_{C}$. Then the locus of simple abelian fourfolds in $\mathcal{S}_{\mathrm{simple}}$ whose endomorphism algebra satisfying scenario (\ref{scene3}) is contained $\Gamma_{C}$. By the same argument as above, we deduce that $\mathcal{S}_{\mathrm{simple}}-\Gamma_{A}-\Gamma_{B}-\Gamma_{C}$ is non-empty.

Then any abelian fourfold associated with a polarized weight one Hodge structure in $\mathcal{S}_{\mathrm{simple}}-\Gamma_{A}-\Gamma_{B}-\Gamma_{C}$ is simple of anti-Weil type.    
\end{proof}
\begin{proof}[Proof of Proposition \ref{existenceofsl2timessl2}]
Note that for a simple abelian fourfold $A$ of anti-Weil type with endomorphism field $K$, the centre of its Mumford-Tate group coincides with $K^{\times}$ by Theorem \ref{deg2mz4}.    
Then the proof is finished by the combined efforts of Lemma \ref{reprexists}, Lemma \ref{polarizationformexists}, Lemma \ref{familyexists} and Lemma \ref{simplexists}.    
\end{proof}
\begin{remark}
    It is not clear to the author how to show there exists an abelian fourfold defined over $\qbar$ satisfying the conditions listed in Proposition \ref{existenceofsl2timessl2}.
\end{remark}

\section{Family of De Rham-Betti Structures}

In this section we study a family of de Rham-Betti structures. We will take a Tannakian point of view, and define the de Rham-Betti group associated with such a family (see Definition \ref{drbgroupoffamily}). Moreover, we will give a characterization of the fixed tensors of such groups. Along the way, we will also show that Deligne's ``Principle B" holds for a family of de Rham-Betti structures coming from geometry (Proposition \ref{mainprincipleB}).

\subsection{Tannakian Formalism of De Rham-Betti Systems}

We first introduce the notion of a family of dRB structures following \cite{saito1997determinant}. Let $U$ be a smooth quasi-projective variety over $\qbar$, $X$ a smooth compactification of $U$ with $U=X-D$ where $D$ is a simple normal crossing divisor on $X$. 

\begin{definition}\label{defndrbsystem}
  A \textit{de Rham-Betti system} or \textit{dRB system} $\mathcal{V}$ on $U$ consists of the following triple of data: $(\localsystem, (\algebraicderham,\nabla), \rho_{m})$ where 
\begin{enumerate}
    \item $\localsystem$ is a locally constant sheaf of $\mathbb{Q}$-vector spaces on $U^{\mathrm{an}}$
    \item $(\algebraicderham,\nabla)$ is an algebraic vector bundle on $U$ together with an integrable connection $\nabla$ which has regular singularities along $D$
    \item $\rho_{m}$ is an isomorphism of holomorphic vector bundles $$\flatholomorphic:=\localsystem \otimes_{\mathbb{Q}}\mathcal{O}_{U^{\mathrm{an}}} \cong \algebraicderham \otimes_{\mathcal{O}_{U}} \mathcal{O}_{U^{\mathrm{an}}}$$
\end{enumerate}  
Moreover, under the comparison isomorphism $\rho_{m}$, the Gauss-Manin connection $\nabla^{\mathrm{GM}}: \flatholomorphic \rightarrow \flatholomorphic \otimes \Omega_{U^{\mathrm{an}}}^1$ and the analytification of $(\algebraicderham,\nabla)$ are compatible.
\end{definition}
According to \cite{saito1997determinant}, the category of dRB systems on $U$ has the natural structure of a tensor category. For the convenience of the reader, we will write down the morphisms, the tensor product structure of this category. Moreover we will describe the identity objects in it.
\begin{enumerate}
\item A morphism $\mathcal{F}$ between two de Rham-Betti systems $\mathcal{V}=(\localsystem, (\algebraicderham,\nabla), \rho_{m})$ and $\mathcal{V}'=(\localsystem', (\algebraicderham',\nabla'), \rho_{m}')$ consists of $(\mathcal{F}_{\mathrm{B}},\mathcal{F}_{\mathrm{dR}})$ where \begin{itemize}
    \item $\mathcal{F}_{\mathrm{B}}: \localsystem \rightarrow \localsystem'$ is a morphism of local systems on $U^{\mathrm{an}}$
    \item $\mathcal{F}_{\mathrm{dR}}: (\algebraicderham,\nabla) \rightarrow (\algebraicderham',\nabla')$ is a morphism of flat algebraic vector bundles defined over $\qbar$, i.e. the following diagram commutes 
$$\begin{tikzcd}
\algebraicderham \arrow[r, "\nabla"] \arrow[d, "\mathcal{F}_{\mathrm{dR}}"'] & \algebraicderham \otimes \Omega^1_{U} \arrow[d, "\mathcal{F}_{\mathrm{dR}}\otimes1"] \\
\algebraicderham' \arrow[r, "\nabla'"]                                       & \algebraicderham'\otimes \Omega_{U}^1                                                 
\end{tikzcd}$$
\item Moreover, $\mathcal{F}_{\mathrm{B}}$ and $\mathcal{F}_{\mathrm{dR}}$ are compatible with respect to the comparison isomorphism i.e. the following diagram commutes
$$\begin{tikzcd}
\localsystem \otimes_{\mathbb{Q}}\mathcal{O}_{U^{\mathrm{an}}} \arrow[d, "\mathcal{F}_{\mathrm{B}} \otimes 1"'] \arrow[r, "\rho_{m}"] & \algebraicderham \otimes_{\mathcal{O}_{U}}\mathcal{O}_{U^{\mathrm{an}}} \arrow[d, "\mathcal{F}_{\mathrm{dR}}\otimes1"] \\
\localsystem' \otimes_{\mathbb{Q}}\mathcal{O}_{U^{\mathrm{an}}} \arrow[r, "\rho_{m}'"] \arrow[r]                                      & \algebraicderham' \otimes_{\mathcal{O}_{U}}\mathcal{O}_{U^{\mathrm{an}}}                                              
\end{tikzcd}$$
\end{itemize}  
\item \label{tensorconstuction}Given two de Rham-Betti systems $$\mathcal{V}=(\localsystem, (\algebraicderham,\nabla), \rho_{m})$$ and $$\mathcal{V}'=(\localsystem', (\algebraicderham',\nabla'), \rho_{m}')$$ their tensor product $\mathcal{V} \otimes \mathcal{V}'$ is defined as $$(\localsystem \otimes_{\mathbb{Q}} \localsystem', (\algebraicderham \otimes_{\mathcal{O}_{U}} \algebraicderham',\nabla \otimes \mathrm{id}+\mathrm{id}\otimes \nabla'), \rho_{m} \otimes \rho_{m}')$$ where $$\nabla \otimes \mathrm{id}+\mathrm{id}\otimes \nabla': \algebraicderham \otimes \algebraicderham' \rightarrow \algebraicderham \otimes \algebraicderham'\otimes \Omega_{U}^1$$ sends elementary tensors $v \otimes v'$ to $v \otimes \nabla'(v')+\nabla(v) \otimes v'$. The second term $\nabla(v) \otimes v'$ lies in $\algebraicderham \otimes \algebraicderham'\otimes \Omega_{U}^1$ via the canonical isomorphism $\algebraicderham \otimes \algebraicderham'\otimes \Omega_{U}^1 \cong (\algebraicderham \otimes \Omega_{U}^1) \otimes \algebraicderham'$.
\item \label{identityofdrbsystems} We claim that the identity objects in the category of dRB systems have the following ``$\qbar$-algebraic extendable property". They are one-dimensional de Rham-Betti systems $$(\mathbb{L}_{\mathrm{B}},(\mathcal{L}_{\mathrm{dR}},\nabla),\rho_{m})$$ satisfying that $\mathbb{L}_{\mathrm{B}}\cong\underline{\mathbb{Q}}_{U^{\mathrm{an}}}$ and $\mathcal{L}_{\mathrm{dR}}\cong\mathcal{O}_{U}$ and additionally they satisfy the following property \begin{itemize}
    \item Let $\alpha_{\mathrm{B}}$ be a non-zero constant global section of $\mathbb{L}_{\mathrm{B}}$ on $U^{\mathrm{an}}$. Then $$\rho_{m}(\alpha_{\mathrm{B}}) \in \mathrm{H}^{0}(U^{\mathrm{an}},\mathcal{L}_{\mathrm{dR}}\otimes_{\mathcal{O}_{U}}\mathcal{O}_{U^{\mathrm{an}}})$$ descends to an invertible \textit{flat} global section $f\in\mathrm{H}^{0}(U,\mathcal{L}_{\mathrm{dR}})$. 
\end{itemize}
\end{enumerate}
The above claim is verified in the following lemma.
\begin{lemma}\label{idenitiysystem}
Every identity object in the category of de Rham-Betti systems on $U$ satisfies the property listed above in claim (\ref{identityofdrbsystems}).
\end{lemma}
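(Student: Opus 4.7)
The plan is to exploit two general facts: (i) in any symmetric monoidal category, identity objects are unique up to isomorphism; and (ii) a direct check using the tensor product formulas recorded in items (1) and (\ref{tensorconstuction}) above shows that the triple $\mathbf{1}:=(\underline{\mathbb{Q}}_{\analyticu},(\mathcal{O}_U,d),\mathrm{id})$ is an identity object. Indeed, tensoring a local system with $\underline{\mathbb{Q}}_{\analyticu}$ leaves it unchanged, the formula $\nabla\otimes\mathrm{id}+\mathrm{id}\otimes d$ on $\algebraicderham\otimes\mathcal{O}_U$ recovers $\nabla$ under the canonical identification, and the comparison isomorphism of $\mathbf{1}\otimes\mathcal{V}$ agrees with that of $\mathcal{V}$. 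Consequently, the identity objects of the category are precisely those de Rham-Betti systems that admit an isomorphism to $\mathbf{1}$, and the lemma reduces to unwinding what such an isomorphism does on each piece of data.

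Assuming $U$ is connected (otherwise one argues on each component), I would fix an isomorphism of de Rham-Betti systems $\mathcal{F}=(\mathcal{F}_{\mathrm{B}},\mathcal{F}_{\mathrm{dR}}):\mathcal{V}\xrightarrow{\sim}\mathbf{1}$. Then $\mathcal{F}_{\mathrm{B}}:\localsystem\xrightarrow{\sim}\underline{\mathbb{Q}}_{\analyticu}$ is determined by a single element $r\in\mathbb{Q}^{\times}$, namely the image of a chosen constant section $\alpha_{\mathrm{B}}$ of $\localsystem$; and $\mathcal{F}_{\mathrm{dR}}:(\algebraicderham,\nabla)\xrightarrow{\sim}(\mathcal{O}_U,d)$ is an isomorphism of flat algebraic line bundles. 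The section $s:=\mathcal{F}_{\mathrm{dR}}^{-1}(1)\in\mathrm{H}^0(U,\algebraicderham)$ is therefore algebraic, nowhere vanishing, and $\nabla$-flat, the last property following because $1$ is $d$-flat and $\mathcal{F}_{\mathrm{dR}}$ intertwines $\nabla$ and $d$.

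Finally, I would specialize the compatibility square between $\rho_m$ and $\mathrm{id}$ at $\alpha_{\mathrm{B}}$. The square yields
\begin{equation*}
(\mathcal{F}_{\mathrm{dR}}\otimes 1)\bigl(\rho_m(\alpha_{\mathrm{B}})\bigr)=r\in\mathrm{H}^0(\analyticu,\mathcal{O}_{\analyticu}).
\end{equation*}
Applying the inverse of $\mathcal{F}_{\mathrm{dR}}\otimes 1$ and using $\mathbb{Q}$-linearity gives
\begin{equation*}
\rho_m(\alpha_{\mathrm{B}})=r\cdot s\in\mathrm{H}^0(\analyticu,\algebraicderham\otimes_{\mathcal{O}_U}\mathcal{O}_{\analyticu}).
\end{equation*}
Since $r\cdot s$ lies in the image of the inclusion $\mathrm{H}^0(U,\algebraicderham)\hookrightarrow \mathrm{H}^0(\analyticu,\algebraicderham\otimes\mathcal{O}_{\analyticu})$, is invertible (both $r\in\mathbb{Q}^{\times}$ and $s$ are), and is flat for $\nabla$, this is precisely the $\qbar$-algebraic extendable property demanded by the lemma.

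The only real subtlety is the interpretation step, namely recognizing that an identity object in the present tensor category is characterized, up to isomorphism, by the triple $\mathbf{1}$. Once this is in place, the argument is a short diagram chase, so I do not anticipate any serious technical obstacle.
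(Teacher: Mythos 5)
Your proposal is correct and follows essentially the same route as the paper: both invoke the uniqueness of identity objects up to isomorphism to reduce to the standard triple $(\underline{\mathbb{Q}}_{U^{\mathrm{an}}},(\mathcal{O}_U,d),\mathrm{id})$, then unwind the isomorphism on the Betti, de Rham, and comparison data to exhibit the invertible flat algebraic section. The only cosmetic differences are the direction in which you take the isomorphism and that you track the rational scalar $r$ explicitly, whereas the paper absorbs it with an ``up to a scalar in $\mathbb{Q}^{\times}$'' remark.
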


\begin{proof}
By Proposition 1.3 of \cite{deligne2012tannakian}, any two identity objects in a tensor category are isomorphic. Hence given any identity de Rham Betti system $\mathcal{V}'=(\mathbb{L}_{\mathrm{B}},(\mathcal{L}_{\mathrm{dR}},\nabla'), \rho_{m}')$ in the tensor category of de Rham-Betti systems on $U$, it is isomorphic to $\mathcal{V}_{0}:=(\underline{\mathbb{Q}}_{U^{\mathrm{an}}},(\mathcal{O}_{U},d), \rho_{m}=\mathrm{id})$. Hence there exist $F_{\mathrm{B}}: \underline{\mathbb{Q}}_{U^{\mathrm{an}}}\cong \mathbb{L}_{\mathrm{B}}$ an isomorphism of $\mathbb{Q}$-local systems and $F_{\mathrm{dR}}: \mathcal{O}_{U}\cong\mathcal{L}_{\mathrm{dR}}$ an isomorphism of $\mathcal{O}_{U}$-modules such that the following diagram is commutative \begin{equation}\label{nocontent}\begin{tikzcd}
\underline{\mathbb{Q}}_{U^{\mathrm{an}}}\otimes_{\mathbb{Q}}\mathcal{O}_{U^{\mathrm{an}}} \arrow[rr, "\rho_{m}=\mathrm{id}"] \arrow[d, "F_{\mathrm{B}}\otimes1"'] &  & \mathcal{O}_{U}\otimes_{\mathcal{O}_{U}}\mathcal{O}_{U^{\mathrm{an}}} \arrow[d, "F_{\mathrm{dR}}\otimes1"] \\
\mathbb{L}_{\mathrm{B}}\otimes_{\mathbb{Q}}\mathcal{O}_{U^{\mathrm{an}}} \arrow[rr, "\rho_{m}'"]                                                 &  & \mathcal{L}_{\mathrm{dR}}\otimes_{\mathcal{O}_{U}}\mathcal{O}_{U^{\mathrm{an}}}                                     
\end{tikzcd}\end{equation} Note that the isomorphism $F_{\mathrm{dR}}$ gives rise to an invertible global section $f \in \mathrm{H}^0(U,\mathcal{L}_{\mathrm{dR}})$. Therefore the commutativity of diagram (\ref{nocontent}) implies that 
$$\rho_{m}'(F_{\mathrm{B}}(1_{U^{\mathrm{an}}}))=F_{\mathrm{dR}}(1_{\mathrm{U}})$$ Thus up to a scalar in $\mathbb{Q}^{\times}$, we obtain that $$\rho_{m}'(\alpha_{\mathrm{B}})=f\in\mathrm{H}^{0}(U, \mathcal{L}_{\mathrm{dR}})$$ Moreover $F_{\mathrm{dR}}$ is compatible with the algebraic connection $\nabla'$ on $\mathcal{L}_{\mathrm{dR}}$ and the connection $d$ on $\mathcal{O}_{U}$. Hence the following diagram commutes as sheaves of abelian groups on $U$
$$\begin{tikzcd}
\mathcal{O}_{U} \arrow[d, "\cdot f"'] \arrow[r, "d"] & \mathcal{O}_{U} \otimes \Omega_{U} \arrow[d, "\cdot f\otimes1"] \\
\mathcal{L}_{\mathrm{dR}} \arrow[r, "\nabla^{'}"]                  & \mathcal{L}_{\mathrm{dR}} \otimes \Omega_{U}                             
\end{tikzcd}$$ Therefore for any $v$ a local section of $\mathcal{O}_{U}$ we have $\nabla'(vf)=f\otimes dv+v\nabla'(f)=f\otimes dv$ which implies that $\nabla'(f)=0$. 
\end{proof}
\begin{definition}\label{drbsystemrestriction}
    Given a de Rham-Betti system $\mathcal{V}=(\localsystem,(\mathcal{V}_{\mathrm{dR}},\nabla), \rho_{m})$ and a $\qbar$-point $u$ of $U$, one can define the \textit{restriction of $\mathcal{V}$ at $u$} $$\mathcal{V}|_{u}:=(\mathbb{V}_{\mathrm{B},u},\mathcal{V}_{\mathrm{dR},u},\rho_{m,u})$$where $\mathbb{V}_{\mathrm{B},u}$ is the stalk of the local system $\mathbb{V}_{\mathrm{B}}$ at $u$, $\mathcal{V}_{\mathrm{dR},u}$ is the fiber of the vector bundle $\mathcal{V}_{\mathrm{dR}}$ at $u$ and $\rho_{m,u}$ is the fiber of the holomorphic vector bundle isomorphism $\rho_{m}$ at $u$. Then $\mathcal{V}|_{u}$ is a de Rham-Betti structure (cf. Definition \ref{drb}).
\end{definition}
\begin{remark}
 By Lemma \ref{idenitiysystem}, given an identity de Rham-Betti system $\mathcal{L}_{0}$ and a global section of its Betti part $\mathbb{L}_{0,\mathrm{B}}$, for any $u\in U(\qbar)$, the stalk of its Betti part at $u$ is a de Rham-Betti class with respect to the dRB structure $\mathcal{L}_{0}|_{u}$.
\end{remark}

The category of dRB systems actually satisfies Tannakian type properties. The following proposition summarizes relevant results from \cite{saito1997determinant}.
\begin{proposition}[p.869, \cite{saito1997determinant}]\label{drbsystemtannakian}
Denote the tensor category of triples $$(\localsystem, (\algebraicderham,\nabla), \rho_{m})$$ on $U$ by $\mathcal{C}_{\mathrm{dRB}}(U)$. Then $\mathcal{C}_{\mathrm{dRB}}(U)$ satisfies the following properties.\begin{itemize}
    \item When endowed with the tensor structure specified in (\ref{tensorconstuction}) from above, $\mathcal{C}_{\mathrm{dRB}}(U)$ is a rigid tensor category whose identity objects satisfy the property described in (\ref{identityofdrbsystems}) from above.
    \item Fix a $\qbar$-point $u_{0}$ on $U$. Then for any dRB system $(\localsystem, (\algebraicderham,\nabla), \rho_{m})$, by taking the stalk of $\localsystem$ at $u_{0}$, we obtain an exact and faithful $\mathbb{Q}$-linear functor $\omega$ from $\mathcal{C}_{\mathrm{dRB}}(U)$ to the category of $\mathbb{Q}$-vector spaces $\mathrm{Vec}_{\mathbb{Q}}$.
    \item The category of dRB systems $\mathcal{C}_{\mathrm{dRB}}(U)$ together with the fiber functor $\omega$ forms a neutral Tannakian category.
\end{itemize}   
\end{proposition}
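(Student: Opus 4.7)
\begin{proofsketch}
My plan is to verify the defining axioms of a neutral Tannakian category in the sense of \cite[Definition 2.19]{deligne2012tannakian} for the pair $(\mathcal{C}_{\mathrm{dRB}}(U),\omega)$, and to appeal to Lemma \ref{idenitiysystem} for the identification of the identity objects. First I would check that $\mathcal{C}_{\mathrm{dRB}}(U)$ is a $\mathbb{Q}$-linear abelian category. The $\mathbb{Q}$-linearity of the Hom sets is immediate from the componentwise definition of morphisms. Kernels and cokernels would be formed levelwise: given $\mathcal{F}\colon \mathcal{V}\to \mathcal{V}'$, the Betti part of $\ker \mathcal{F}$ is the (locally constant) subsheaf $\ker \mathcal{F}_{\mathrm{B}}\subset \mathbb{V}_{\mathrm{B}}$, the de Rham part is $\ker \mathcal{F}_{\mathrm{dR}}$ equipped with the induced flat connection, and the comparison isomorphism restricts appropriately. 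The main verification is that $\ker(\mathcal{F}_{\mathrm{B}})\otimes_{\mathbb{Q}}\mathcal{O}_{U^{\mathrm{an}}}$ agrees with $\ker(\mathcal{F}_{\mathrm{dR}}\otimes 1)$ under $\rho_m$; this rests on flatness of $\mathcal{O}_{U^{\mathrm{an}}}$ over $\mathcal{O}_{U}$ together with GAGA-type compatibility of analytification with formation of kernels and cokernels for coherent sheaves equipped with regular singular integrable connections. Cokernels are treated symmetrically.

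Next I would produce duals and internal Homs to establish rigidity. For $\mathcal{V}=(\mathbb{V}_{\mathrm{B}},(\mathcal{V}_{\mathrm{dR}},\nabla),\rho_m)$ the natural candidate for the dual is
$\mathcal{V}^{\vee}:=(\mathbb{V}_{\mathrm{B}}^{\vee},(\mathcal{V}_{\mathrm{dR}}^{\vee},\nabla^{\vee}),(\rho_m^{\vee})^{-1})$,
where $\nabla^{\vee}$ is the dual connection determined by the Leibniz rule, and one checks that the componentwise evaluation and coevaluation maps glue to morphisms of dRB systems. Since the regular singularity property of $\nabla$ is preserved under duals and tensor products (cf.\ Deligne's theory of regular singular connections), the resulting objects remain in $\mathcal{C}_{\mathrm{dRB}}(U)$. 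Together with the tensor structure already spelled out, this yields rigidity.

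For the fiber functor, the stalk $\mathbb{V}_{\mathrm{B},u_0}$ at the chosen $\qbar$-point $u_0$ is a finite-dimensional $\mathbb{Q}$-vector space because $\mathbb{V}_{\mathrm{B}}$ is locally constant of finite rank. Compatibility $\omega(\mathcal{V}\otimes \mathcal{V}')\cong \omega(\mathcal{V})\otimes \omega(\mathcal{V}')$ is immediate from the definition of the tensor product of local systems. Exactness is inherited from the exactness of the stalk functor on sheaves of $\mathbb{Q}$-vector spaces, and faithfulness follows because $U$ is connected, so a morphism of local systems that vanishes on any single stalk is identically zero. Combining these verifications with Lemma \ref{idenitiysystem} (which pins down the identity objects up to isomorphism) and Deligne's reconstruction theorem \cite[Theorem 2.11]{deligne2012tannakian} will yield the conclusion.

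The principal obstacle I anticipate is not any single step in isolation but rather the coherent bookkeeping needed to check, throughout, that the algebraic de Rham constructions (kernel, cokernel, dual, tensor, regular singularity) remain compatible under analytification with their Betti counterparts under $\rho_m$. This GAGA-style coherence is what confines the subcategory and must be tracked carefully at every stage.
\end{proofsketch}
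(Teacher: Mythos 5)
The paper itself does not prove this proposition: it is quoted from p.~869 of \cite{saito1997determinant}, and the only ingredient argued in the text is the characterization of identity objects, namely Lemma \ref{idenitiysystem}. So your proposal cannot be matched against an in-paper argument; what you give is the standard direct verification of the axioms in \cite{deligne2012tannakian}, which is presumably the same verification carried out in the cited reference, and it is essentially sound: levelwise kernels and cokernels, duals with the dual connection (regular singularities being stable under tensor, dual, sub and quotient), and the Betti stalk at $u_0$ as an exact, faithful, $\mathbb{Q}$-linear tensor functor.

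Two points in your sketch deserve to be made explicit rather than subsumed under ``GAGA-style bookkeeping.'' First, to see that the levelwise kernel and cokernel of $\mathcal{F}_{\mathrm{dR}}$ are again vector bundles (not merely coherent sheaves with connection), the efficient argument is that under $\rho_m$ the analytified map $\mathcal{F}_{\mathrm{dR}}\otimes 1$ is identified with $\mathcal{F}_{\mathrm{B}}\otimes 1$, a morphism of local systems tensored with $\mathcal{O}_{U^{\mathrm{an}}}$, hence of locally constant rank; local freeness of the algebraic kernel and cokernel then follows because the analytic local rings are faithfully flat over the algebraic ones, and the comparison isomorphism restricts to the kernels by flatness. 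Second, Deligne's definition of a neutral Tannakian category over $\mathbb{Q}$ also requires $\mathrm{End}(\mathbf{1})\cong\mathbb{Q}$, which you do not check: it follows from connectedness of $U$, since a flat global section of $(\mathcal{O}_U,d)$ is a constant in $\qbar$ and compatibility with the Betti component under $\rho_m$ forces it to be the rational scalar given by the Betti endomorphism. With these two routine additions your sketch is a complete and correct route to the statement, consistent with the source the paper cites.
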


Using the construction and notation from the above Proposition \ref{drbsystemtannakian}, we can define the de Rham-Betti group associated with a de Rham-Betti system as follows.

\begin{definition}\label{drbgroupoffamily}
We use the notation of Proposition \ref{drbsystemtannakian}. Let $\mathcal{V}$ be a dRB system on $U$. Then we define $$\mathcal{G}_{\mathrm{dRB}}(\mathcal{V}):=\underline{\mathrm{Aut}}(\omega|_{\langle\mathcal{V}\rangle^{\otimes}})$$ Denote the stalk $\mathbb{V}_{\mathrm{B},u_0}$ by $V$. Then $\mathcal{G}_{\mathrm{dRB}}(\mathcal{V})$ is a $\mathbb{Q}$-algebraic subgroup of $\mathrm{GL}(V)$.
\end{definition}

Given $u_0$ a $\qbar$-point of $U$, taking the restriction of a de Rham-Betti system $\mathcal{V}=(\localsystem, (\algebraicderham,\nabla), \rho_{m})$ at $u_0$ (see Definition \ref{drbsystemrestriction}) defines a functor $\iota|_{u_0}$ from the category of dRB systems to $\mathcal{C}_{\mathrm{dRB}}$, the category of dRB structures. Moreover, remembering only $\localsystem$ and forgetting the rest of data in $\mathcal{V}$ also defines a functor $\iota_{\mathrm{mon}}$ from the category of dRB systems to the tensor category of $\mathbb{Q}$-coefficient local systems on $U$, denoted by $\mathrm{Loc}_{\mathbb{Q}}(U)$. Note that $\mathrm{Loc}_{\mathbb{Q}}(U)$ also admits a forgetful functor to $\mathrm{Vec}_{\mathbb{Q}}$ by taking the stalk of a local system at $u_0$. Both $\iota|_{u_0}$ and $\iota_{\mathrm{mon}}$ are tensor functors. Moreover, both functors are compatible with the forgetful functors from $\mathcal{C}_{\mathrm{dRB}}$ and $\mathrm{Loc}_{\mathbb{Q}}(U)$ to $\mathrm{Vec}_{\mathbb{Q}}$. Hence we deduce the following.
\begin{corollary}\label{twoimmmersions}
Given a de Rham-Betti system $\mathcal{V}=(\localsystem, (\algebraicderham,\nabla), \rho_{m})$ on $U$ and $u_0\in U(\qbar)$, the two functors $\iota|_{u_0}$ and $\iota_{\mathrm{mon}}$ defined above induce closed immersions of algebraic groups $\gdrbmath(\mathcal{V}|_{u_0})\xhookrightarrow{}\mathcal{G}_{\mathrm{dRB}}(\mathcal{V})$ and $G_{\mathrm{Mon}}\xhookrightarrow{}\mathcal{G}_{\mathrm{dRB}}(\mathcal{V})$, where we define $G_{\mathrm{Mon}}$ as the algebraic monodromy group associated with the local system $\localsystem$. 
\end{corollary}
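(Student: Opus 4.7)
The plan is to apply the standard Tannakian criterion for when a morphism of affine group schemes arising from a tensor functor is a closed immersion. Specifically, Theorem 2.21 of \cite{deligne2012tannakian} states that if $F : (\mathcal{C}, \omega) \to (\mathcal{C}', \omega')$ is a tensor functor between neutral Tannakian categories compatible with the fiber functors, then the induced homomorphism on Tannakian dual groups $\underline{\mathrm{Aut}}(\omega') \to \underline{\mathrm{Aut}}(\omega)$ is a closed immersion if and only if every object of $\mathcal{C}'$ is isomorphic to a subquotient of an object of the form $F(X)$ for some $X \in \mathcal{C}$. So my task reduces to producing two tensor functors satisfying this subquotient condition.

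First, for $\iota|_{u_0}$: I would define it as the restriction of $\iota|_{u_0} : \mathcal{C}_{\mathrm{dRB}}(U) \to \mathcal{C}_{\mathrm{dRB}}$ (see Definition \ref{drbsystemrestriction}) to $\langle \mathcal{V}\rangle^{\otimes}$, landing inside $\langle \mathcal{V}|_{u_0}\rangle^{\otimes}$. The compatibility with the fiber functors is automatic from the definition of the fiber functor on $\mathcal{C}_{\mathrm{dRB}}(U)$ as the stalk at $u_0$ (Proposition \ref{drbsystemtannakian}) and the definition of the forgetful functor on $\mathcal{C}_{\mathrm{dRB}}$. Since every object of $\langle \mathcal{V}|_{u_0}\rangle^{\otimes}$ is by definition a subquotient of some $\mathcal{V}|_{u_0}^{\otimes m} \otimes \mathcal{V}|_{u_0}^{*\otimes n}$, and since this object equals $\iota|_{u_0}(\mathcal{V}^{\otimes m} \otimes \mathcal{V}^{*\otimes n})$ (restriction commutes with tensor products and duals), the subquotient criterion is satisfied, giving the desired closed immersion $\gdrbmath(\mathcal{V}|_{u_0}) \xhookrightarrow{} \mathcal{G}_{\mathrm{dRB}}(\mathcal{V})$.

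Second, for $\iota_{\mathrm{mon}}$: I would define it as the restriction of the forgetful functor $\mathcal{C}_{\mathrm{dRB}}(U) \to \mathrm{Loc}_{\mathbb{Q}}(U)$, $(\mathbb{V}_{\mathrm{B}}, (\mathcal{V}_{\mathrm{dR}},\nabla), \rho_m) \mapsto \mathbb{V}_{\mathrm{B}}$, to $\langle \mathcal{V}\rangle^{\otimes}$, landing in $\langle \mathbb{V}_{\mathrm{B}}\rangle^{\otimes}$. Again the compatibility with the fiber functors (both stalks at $u_0$) is tautological, and for the subquotient criterion one observes that every object in $\langle \mathbb{V}_{\mathrm{B}}\rangle^{\otimes}$ is a subquotient of some $\mathbb{V}_{\mathrm{B}}^{\otimes m} \otimes \mathbb{V}_{\mathrm{B}}^{*\otimes n} = \iota_{\mathrm{mon}}(\mathcal{V}^{\otimes m}\otimes \mathcal{V}^{*\otimes n})$.

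The main technical points to verify carefully are that $\iota|_{u_0}$ and $\iota_{\mathrm{mon}}$ are exact $\mathbb{Q}$-linear tensor functors and that they send identity objects to identity objects; for $\iota|_{u_0}$ this uses the ``$\qbar$-algebraic extendable'' property of identity dRB systems established in Lemma \ref{idenitiysystem}, which ensures the fiber at $u_0$ of an identity system is an identity dRB structure in the sense of Definition \ref{drb}. No particularly hard obstacle is anticipated, since both functors are literally restriction/forgetting operations on the defining data, so exactness and the tensor structure follow from the construction of tensor products in $\mathcal{C}_{\mathrm{dRB}}(U)$ spelled out in item (\ref{tensorconstuction}) of Proposition \ref{drbsystemtannakian}.
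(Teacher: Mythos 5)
Your proposal is correct and takes essentially the same route as the paper: the paper simply observes that $\iota|_{u_0}$ and $\iota_{\mathrm{mon}}$ are tensor functors compatible with the fiber functors and deduces the corollary, leaving implicit the Deligne--Milne criterion (\cite[Proposition 2.21]{deligne2012tannakian}) that you spell out, namely that the induced map of Tannakian groups is a closed immersion because every object of the target category is a subquotient of an object in the image. Your explicit verification that restriction and forgetting commute with tensor products, duals and identity objects only makes precise what the paper asserts without proof.
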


\begin{keyexample} \label{geometricsetup}
Suppose $Y$ and $U$ are smooth quasi-projective varieties defined over $\qbar$. Let $f:Y \rightarrow U$ be a smooth projective morphism. The triple $$(\localsystem,(\algebraicderham,\nabla),\rho_{m})=(\mathrm{R}^{j}f^{\mathrm{an}}_{*}\underline{\mathbb{Q}}_{Y},(\mathrm{R}^{j}f_{*}\Omega^{\bullet}_{Y/U},\nabla),\rho_{m})$$ is a dRB system on $U$. The integer $j$ satisfies that $0\leq j\leq 2n$ where $n$ is the relative dimension of $f$. In \cite{katz1970regularity}, $\nabla$ is canonically constructed and is shown to be an integrable connection which has regular singularities. By base change theorems, the restriction of $\mathcal{V}$ at a $\qbar$-point $u$ of $U$ is the dRB structure $(\mathrm{H}^{j}(Y_{u},\mathbb{Q}),\mathrm{H}^{j}_{\mathrm{dR}}(Y_{u}/\qbar),\rho_{m}|_{Y_{u}})$ associated with the $j$th cohomological groups of the fiber $Y_{u}$.
    
\end{keyexample}
In \cite{deligne1974theorie} Deligne proved that the local system $\localsystem$ from Key Example \ref{geometricsetup} above is in fact semisimple.
\begin{theorem}[Theorem 4.2.6, \cite{deligne1974theorie}]\label{semisimplemonodromy}
We keep the same notation from Key Example \ref{geometricsetup}. The Tannakian category $\langle \localsystem\rangle^{\otimes}$ is semisimple. In other words the algebraic monodromy group associated with $\localsystem$ is a reductive subgroup of $\mathrm{GL}(\mathrm{H}^{j}(Y_{u},\mathbb{Q}))$.   
\end{theorem}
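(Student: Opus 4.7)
The plan is to deduce the semisimplicity of $\langle \localsystem \rangle^{\otimes}$ from the fact that $\localsystem = R^j f^{\mathrm{an}}_{*}\underline{\mathbb{Q}}_{Y}$ underlies a polarizable $\mathbb{Q}$-variation of Hodge structure (VHS) of weight $j$ on $U^{\mathrm{an}}$, and then translate this into the reductivity statement via Tannakian duality. First, I would fix a relative ample class on $Y/U$ coming from a relative projective embedding and apply relative Hard Lefschetz in order to obtain the primitive Lefschetz decomposition of $\localsystem$ as a flat direct sum. On each primitive piece, cupping with powers of the Lefschetz operator produces a flat bilinear pairing $\phi:\localsystem\otimes\localsystem\to\underline{\mathbb{Q}}_{U^{\mathrm{an}}}(-j)$ that polarizes the fibrewise Hodge structure; in particular $\phi$ is monodromy-invariant and, at each point $u\in U^{\mathrm{an}}$, is positive-definite on the appropriate Hodge components.

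Next, given an arbitrary $\mathbb{Q}$-sub-local system $\mathbb{W}\subset\localsystem$, I would argue that the orthogonal complement $\mathbb{W}^{\perp}$ with respect to $\phi$ gives a splitting $\localsystem \cong \mathbb{W}\oplus \mathbb{W}^{\perp}$ in the category of $\mathbb{Q}$-local systems on $U^{\mathrm{an}}$. The nontrivial input here is that $\mathbb{W}$ itself automatically underlies a polarizable sub-VHS; once this is granted, the positivity of the restriction of $\phi$ to $\mathbb{W}$ (the Hodge--Riemann bilinear relations) forces $\mathbb{W}\cap \mathbb{W}^{\perp}=0$ pointwise, and a dimension count gives the claimed internal direct sum. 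Iterating this argument on $\mathbb{W}$ and on subquotients of tensor constructions, and noting that a direct summand of a semisimple local system is semisimple, yields the semisimplicity of the whole Tannakian subcategory $\langle \localsystem\rangle^{\otimes}$.

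The main obstacle will be showing that every sub-local system $\mathbb{W}\subset \localsystem$ is in fact a sub-VHS. This is the heart of Deligne's argument and relies on the theorem of the fixed part applied to the $\mathrm{Hom}$-construction $\mathrm{Hom}(\mathbb{W},\localsystem/\mathbb{W})$: the inclusion $\mathbb{W}\hookrightarrow \localsystem$ corresponds to a flat global section of a polarizable VHS, and Deligne's theorem of the fixed part asserts that such a section is automatically of type $(0,0)$, which translates into compatibility of the inclusion with the Hodge filtration. Implementing this cleanly over a quasi-projective base $U$ requires the admissibility of the extension of the VHS across the boundary divisor $D$, which in turn ultimately rests on Schmid's nilpotent orbit theorem.

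Finally, to pass from the semisimplicity of $\langle \localsystem\rangle^{\otimes}$ to the reductivity of $G_{\mathrm{Mon}}$, I would invoke the standard Tannakian dictionary: in characteristic zero, a neutral Tannakian category is semisimple if and only if the associated Tannakian group is reductive (see e.g.\ \cite[Proposition 2.23]{deligne2012tannakian}). Applied to $\langle \localsystem\rangle^{\otimes}$ with fibre functor given by the stalk at $u_0$, the resulting Tannakian group is by construction the algebraic monodromy group $G_{\mathrm{Mon}}\subset \mathrm{GL}(\mathrm{H}^{j}(Y_{u_0},\mathbb{Q}))$, completing the proof.
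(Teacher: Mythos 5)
First, note that the paper does not prove this statement at all: it is imported verbatim from Deligne (\emph{Th\'eorie de Hodge II}, Th\'eor\`eme 4.2.6) and used as a black box, so there is no in-paper argument to compare against. What you are proposing is a reconstruction of Deligne's theorem itself, and your overall architecture is indeed the classical one: polarize $\localsystem$ fibrewise via relative Hard Lefschetz and the primitive decomposition, show that every $\mathbb{Q}$-sub-local system underlies a sub-variation of Hodge structure, split it off by taking the orthogonal complement for the polarization (positivity of the associated Hermitian form forces $\mathbb{W}\cap\mathbb{W}^{\perp}=0$), and then pass from semisimplicity of $\langle\localsystem\rangle^{\otimes}$ to reductivity of $G_{\mathrm{Mon}}$ by the Tannakian dictionary (Deligne--Milne, Prop.\ 2.23). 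Those outer steps are fine.

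The genuine gap is in the step you yourself identify as the heart of the matter. As written, your argument is circular: the theorem of the fixed part applies to flat global sections of a \emph{polarizable variation of Hodge structure}, but $\mathrm{Hom}(\mathbb{W},\localsystem/\mathbb{W})$ (or $\mathrm{Hom}(\mathbb{W},\localsystem)$) carries no Hodge-theoretic structure until one already knows that $\mathbb{W}$ is a sub-VHS --- which is exactly the claim being proved, so the fixed-part theorem cannot be applied to it. The standard repair (essentially Deligne's) is to work instead inside a tensor construction on $\localsystem$ alone: with $k=\operatorname{rk}\mathbb{W}$, the line $\det\mathbb{W}\subset\bigwedge^{k}\localsystem$ is monodromy-stable, and since the monodromy preserves an integral lattice it acts on this rational line through $\pm1$; after passing to a finite \'etale cover (harmless, since semisimplicity over $\mathbb{Q}$ in characteristic zero may be checked after restriction to a finite-index subgroup) the line becomes a constant subsystem of the genuine VHS $\bigwedge^{k}\localsystem$, the fixed-part theorem makes it a rank-one Hodge substructure of pure type at every point, and a linear-algebra argument (using the polarization) then shows $\mathbb{W}$ is a sub-VHS with locally constant Hodge numbers. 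Alternatively one can simply quote this intermediate statement from Deligne. A minor further remark: in the geometric setting of Key Example \ref{geometricsetup} the fixed-part theorem is proved by Deligne via the degeneration of the Leray spectral sequence and the global invariant cycle theorem on a smooth compactification (mixed Hodge theory), so invoking admissibility and Schmid's nilpotent orbit theorem is not needed here; that machinery is only required for the abstract-VHS generalization.
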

We now give a more detailed interpretation of fixed tensors by $G:=\mathcal{G}_{\mathrm{dRB}}(\mathcal{V})$. Recall by Tannakian duality $\mathrm{Rep}_{\mathbb{Q}}(G)$ is tensor equivalent to $\langle\mathcal{V}\rangle^{\otimes}$. By definition a fixed tensor $\alpha$ in $\mathrm{Rep}_{\mathbb{Q}}(G)$ is a one-dimensional $G$-invariant subquotient of $\bigoplus_{n_{i},m_{i} \in \mathbb{Z}_{\geq0}} V^{\otimes n_{i}}\otimes V^{*\otimes m_{i}}$. In other words, we have the following diagram of $G$-representations
\begin{equation}\label{firstidenityobjectdiagram}
\begin{tikzcd}
M \arrow[d, two heads] \arrow[r, hook] & \bigoplus_{n_{i},m_{i} \in \mathbb{Z}_{\geq0}} V^{\otimes n_{i}}\otimes V^{*\otimes m_{i}} \\
L_{\alpha}                                           &                                                                    
\end{tikzcd}\end{equation} where every arrow is $G$-equivariant and $G$ acts on the one-dimensional $\mathbb{Q}$-vector space $L_{\alpha}$ as identity. By the Tannakian duality, this is equivalent to the existence of the following diagram of dRB systems 
\begin{equation}\label{idenityobjectdiagram}
\begin{tikzcd}
\mathcal{M} \arrow[d, two heads] \arrow[r, hook] & \bigoplus_{n_{i}, m_{i}\in \mathbb{Z}_{\geq0}} \mathcal{V}^{\otimes n_{i}}\otimes \mathcal{V}^{*\otimes m_{i}} \\
\mathcal{L}_{\alpha}                            &                                                              
\end{tikzcd}\end{equation} where $\mathcal{L}_{\alpha}$ is an identity object in $\langle\mathcal{V}\rangle^{\otimes}$. Moreover, $\omega(\mathcal{L}_{\alpha})=L_{\alpha}$ and $\omega(\mathcal{M})=M$. 
\begin{definition}\label{algextendable}
We keep the same notations from above. For a one-dimensional object $\alpha$ in $\mathrm{Rep}_{\mathbb{Q}}(G)$, if there exists a diagram in the form of (\ref{firstidenityobjectdiagram}) and (\ref{idenityobjectdiagram}), we say that $\alpha$ is \textit{$\qbar$-algebraic extendable}.
\end{definition}

In the sequel we will mostly be dealing with de Rham-Betti systems satisfying the following assumption.
\begin{assumption} \label{keyassumption} 
The de Rham-Betti system $\mathcal{V}=(\localsystem,(\algebraicderham,\nabla),\rho_{m})$ is equal to $$(\mathrm{R}^{j}f^{\mathrm{an}}_{*}\underline{\mathbb{Q}}_{Y},(\mathrm{R}^{j}f_{*}\Omega^{\bullet}_{Y/U},\nabla),\rho_{m})$$ where $f:Y \rightarrow U$ is a smooth projective morphism between smooth quasi-projective varieties defined over $\qbar$. The integer $j$ satisfies that $0\leq j\leq 2n$ where $n$ is the relative dimension of $f$. 
\end{assumption}

Given Assumption \ref{keyassumption} and an extra condition on the de Rham-Betti structure of the fiber, we have the following important observation.
\begin{corollary} \label{semisimple}
Suppose $\mathcal{V}$ satisfies Assumption \ref{keyassumption} and moreover suppose there exists a $\qbar$-point $u_0$ such that the dRB structure $\mathcal{V}|_{u_0}$ is simple. Then the category $\langle \mathcal{V}\rangle^{\otimes}$ is semisimple. In particular the group $\mathcal{G}_{\mathrm{dRB}}(\mathcal{V})$ is a reductive algebraic group. 
\end{corollary}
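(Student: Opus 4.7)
The plan is to deduce reductivity of $\mathcal{G}_{\mathrm{dRB}}(\mathcal{V})$ via a standard Tannakian trick: a faithful \emph{irreducible} representation forces the unipotent radical to be trivial. Since we work over $\mathbb{Q}$ (characteristic zero), Tannakian duality will then convert reductivity into the desired semisimplicity of $\langle\mathcal{V}\rangle^{\otimes}\simeq \mathrm{Rep}_{\mathbb{Q}}(\mathcal{G}_{\mathrm{dRB}}(\mathcal{V}))$. It is worth remarking that this approach uses Assumption~\ref{keyassumption} only indirectly, via the fact that the restriction and monodromy closed immersions from Corollary~\ref{twoimmmersions} are available; the bulk of the geometric input (Deligne's semisimplicity theorem, Principle~B) is not actually needed.

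First I would check that the tautological representation $V := \mathbb{V}_{\mathrm{B},u_0}$ of $\mathcal{G}_{\mathrm{dRB}}(\mathcal{V})$ is irreducible. The Tannakian formalism of Section~\ref{drbsect} applied to the fiber identifies sub-dRB-structures of $\mathcal{V}|_{u_0}$ with $\gdrbmath(\mathcal{V}|_{u_0})$-invariant subspaces of $V$, so simplicity of $\mathcal{V}|_{u_0}$ is equivalent to irreducibility of $V$ as a $\gdrbmath(\mathcal{V}|_{u_0})$-module. The closed immersion $\gdrbmath(\mathcal{V}|_{u_0}) \hookrightarrow \mathcal{G}_{\mathrm{dRB}}(\mathcal{V})$ from Corollary~\ref{twoimmmersions} then shows that any $\mathcal{G}_{\mathrm{dRB}}(\mathcal{V})$-invariant subspace of $V$ is a fortiori $\gdrbmath(\mathcal{V}|_{u_0})$-invariant, so irreducibility upgrades to the larger group.

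Now let $R_u \subset \mathcal{G}_{\mathrm{dRB}}(\mathcal{V})$ denote the unipotent radical, a connected normal unipotent subgroup. Its fixed subspace $V^{R_u}$ is stable under $\mathcal{G}_{\mathrm{dRB}}(\mathcal{V})$ by normality, and is nonzero by Kolchin's theorem applied to $R_u$ on the finite-dimensional space $V$. Irreducibility of $V$ then forces $V^{R_u}=V$, so $R_u$ acts trivially on $V$; since $\mathcal{G}_{\mathrm{dRB}}(\mathcal{V}) \hookrightarrow \mathrm{GL}(V)$ is faithful (the object $\mathcal{V}$ tensor-generates $\langle\mathcal{V}\rangle^{\otimes}$, so any element of the Tannaka group is determined by its action on $V$), we conclude $R_u=1$, i.e., $\mathcal{G}_{\mathrm{dRB}}(\mathcal{V})$ is reductive. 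The one point requiring a little care -- and the place where the argument could slip -- is the Tannakian dictionary matching ``simple dRB structure'' with ``irreducible tautological representation''; this I would spell out explicitly using the equivalence $\langle \mathcal{V}|_{u_0}\rangle^{\otimes}\simeq \mathrm{Rep}_{\mathbb{Q}}(\gdrbmath(\mathcal{V}|_{u_0}))$ recalled around Definition~\ref{gdrbdefn}, noting that a sub-dRB-structure of $\mathcal{V}|_{u_0}$ is by definition a pair $(W_{\mathrm{B}}\subset V, W_{\mathrm{dR}}\subset V_{\mathrm{dR},u_0})$ compatible under $\rho_{m,u_0}$, and these are precisely the sub-objects cut out by the tautological action.
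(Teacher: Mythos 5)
Your proposal is correct and follows essentially the same route as the paper: both transfer simplicity from the fiber $\mathcal{V}|_{u_0}$ to the generating object of $\langle\mathcal{V}\rangle^{\otimes}$ (you via the closed immersion $\gdrbmath(\mathcal{V}|_{u_0})\xhookrightarrow{}\mathcal{G}_{\mathrm{dRB}}(\mathcal{V})$ of Corollary \ref{twoimmmersions}, the paper by restricting a sub-dRB-system to $u_0$), and then conclude by the standard Tannakian fact that a simple generator gives a semisimple category and a reductive Tannaka group. The only difference is that you prove this last fact by hand (Kolchin applied to the unipotent radical, plus characteristic zero), whereas the paper cites it as Proposition 2.2 of \cite{kreutz2023rhambetti}.
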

\begin{remark}
By Theorem \ref{reductive}, the condition of Corollary \ref{semisimple} is satisfied for example when $f: Y \rightarrow U$ is a family of abelian varieties defined over $\qbar$ and for some $u\in U(\qbar)$, $Y_{u}$ is a simple abelian variety.
\end{remark}

\begin{proof}
By Proposition \ref{drbsystemtannakian}, the category of dRB systems together with its fiber functor $\omega$ is a neutral Tannakian category. Then by Proposition 2.2 in \cite{kreutz2023rhambetti}, it suffices to show that $\mathcal{V}$ is a simple dRB system. Suppose there exists an inclusion $i: \mathcal{W} \xhookrightarrow{} \mathcal{V}$ of dRB systems. Then this induces $i|_{u_{0}}: \mathcal{W}|_{u_{0}} \xhookrightarrow{} \mathcal{V}|_{u_{0}}$ an inclusion of dRB structures. But this gives a contradiction to the assumption that $\mathcal{V}$ restricts to a simple dRB structure at the $\qbar$-point $u_{0}$.
\end{proof}
\begin{remark}
It is not clear how to show that $\mathcal{G}_{\mathrm{dRB}}(\mathcal{V})$ is a reductive algebraic group in a more general setting, even if $\mathcal{V}$ has a geometric origin. For example, suppose $\mathcal{V}|_{u_0}$ is a semisimple de Rham-Betti structure which is not simple. Then given an inclusion $i: \mathcal{W} \xhookrightarrow{} \mathcal{V}$ of dRB systems, its restriction $i|_{u_{0}}: \mathcal{W}|_{u_{0}} \xhookrightarrow{} \mathcal{V}|_{u_{0}}$ splits as morphism of dRB \textit{structures}. However it is not clear whether the fiberwise splitting can lead to a splitting of $i$ as de Rham-Betti \textit{systems}. Hence we cannot immediately deduce that $\mathcal{V}$ is a semisimple de Rham-Betti system. Thus in this case, it is not easy to check $\mathcal{G}_{\mathrm{dRB}}(\mathcal{V})$ is a reductive algebraic group or not.
\end{remark}

\subsection{Extendable De Rham-Betti Classes and Deligne's Principle B}
Given an identity de Rham-Betti system, its Betti part is a constant $\mathbb{Q}$-local system and its restriction at every $\qbar$-point of $U$ is a de Rham-Betti class. In this section, we are going to explore the partial inverse, i.e. given a constant one-dimensional local system, suppose its stalk at a $\qbar$-point $u_0$ is a de Rham-Betti class, in what setting does it necessarily spread out to an identity de Rham-Betti system?

Recall that in Lemma \ref{twoimmmersions}, for a de Rham-Betti system $\mathcal{V}$ and a $\qbar$-point $u_0$ of $U$, we have natural inclusions $\gdrbmath(\mathcal{V}|_{u_0})\xhookrightarrow{}\mathcal{G}_{\mathrm{dRB}}(\mathcal{V})$ and $G_{\mathrm{mon}}\xhookrightarrow{}\mathcal{G}_{\mathrm{dRB}}(\mathcal{V})$. The main goal of this section is to prove the following proposition. 

\begin{proposition} \label{principleB}
  Suppose the de Rham-Betti system $\mathcal{V}$ satisfies Assumption \ref{keyassumption}. Moreover, assume that $\mathcal{G}_{\mathrm{dRB}}(\mathcal{V})$ is a reductive algebraic group. Let $u_0$ be a $\qbar$-point of $U$. The fixed tensors of $\mathcal{G}_{\mathrm{dRB}}(\mathcal{V})\subset\mathrm{GL}(\mathbb{V}_{\mathrm{B}, u_0})$ in $\langle\mathbb{V}_{\mathrm{B}, u_0}\rangle^{\otimes}$ are monodromy invariant classes which are furthermore de Rham-Betti classes with respect to the dRB structure $\mathcal{V}|_{u_0}$.
\end{proposition}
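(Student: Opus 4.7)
The statement is an equivalent characterization, so I would prove two inclusions: every fixed tensor of $\mathcal{G}_{\mathrm{dRB}}(\mathcal{V})$ is automatically monodromy invariant and de Rham-Betti at $u_0$, and conversely every such class is a fixed tensor. The forward direction is a formal consequence of the Tannakian setup already in place. Concretely, if $\alpha \in T^{m,n}(\mathbb{V}_{\mathrm{B},u_0})$ is fixed by $\mathcal{G}_{\mathrm{dRB}}(\mathcal{V})$, I would invoke Corollary \ref{twoimmmersions}: the restriction functor $\iota|_{u_0}$ and the forgetful-to-Betti functor $\iota_{\mathrm{mon}}$ are both tensor functors compatible with the fiber functor $\omega$, so the induced closed immersions $\gdrbmath(\mathcal{V}|_{u_0}) \hookrightarrow \mathcal{G}_{\mathrm{dRB}}(\mathcal{V})$ and $G_{\mathrm{Mon}} \hookrightarrow \mathcal{G}_{\mathrm{dRB}}(\mathcal{V})$ force $\alpha$ to be fixed by each subgroup. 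Fixedness under $\gdrbmath(\mathcal{V}|_{u_0})$ is precisely the de Rham-Betti class property for the fiber $\mathcal{V}|_{u_0}$ (by Definition \ref{gdrbdefn}), and fixedness under $G_{\mathrm{Mon}}$ is monodromy invariance.

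For the reverse direction, I would start with $\alpha$ monodromy invariant and a dRB class at $u_0$, and construct an identity sub-object of $T^{m,n}(\mathcal{V})$ in $\langle \mathcal{V}\rangle^{\otimes}$ whose fiber at $u_0$ recovers $\alpha$. Monodromy invariance lets me parallel-transport $\alpha$ to a global section $\widetilde{\alpha}_{\mathrm{B}}$ of the local system $T^{m,n}(\mathbb{V}_{\mathrm{B}})$, and its image $\widetilde{\alpha}_{\mathrm{dR}} := \rho_m(\widetilde{\alpha}_{\mathrm{B}})$ is a horizontal holomorphic section of the corresponding tensor bundle. Because $\mathcal{V}$ satisfies Assumption \ref{keyassumption} (so does $T^{m,n}(\mathcal{V})$), the hypothesis that $\alpha$ is a dRB class at $u_0$ allows me to apply Principle B (Proposition \ref{mainprincipleB}) to $T^{m,n}(\mathcal{V})$: the parallel-transported section is fiberwise a dRB class over every $\qbar$-point of $U$. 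This fiberwise statement must then be globalized: the $\qbar$-algebraicity at every $\qbar$-point of a flat holomorphic section of a $\qbar$-algebraic vector bundle with regular singularities forces $\widetilde{\alpha}_{\mathrm{dR}}$ to descend to a flat algebraic global section of $T^{m,n}(\algebraicderham)$ defined over $\qbar$, which I would deduce from density of $\qbar$-points together with rigidity of flat sections.

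Equipped with the constant Betti section $\widetilde{\alpha}_{\mathrm{B}}$ and the $\qbar$-algebraic flat de Rham section $\widetilde{\alpha}_{\mathrm{dR}}$ intertwined by $\rho_m$, the triple $\mathcal{L}_\alpha := (\underline{\mathbb{Q}}\cdot\widetilde{\alpha}_{\mathrm{B}}, \mathcal{O}_U\cdot\widetilde{\alpha}_{\mathrm{dR}}, \rho_m|)$ defines a rank-one sub-dRB-system of $T^{m,n}(\mathcal{V})$ in $\langle \mathcal{V}\rangle^{\otimes}$. The criterion from Lemma \ref{idenitiysystem} (Definition \ref{algextendable}) then identifies $\mathcal{L}_\alpha$ as an identity object of the Tannakian category. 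Transporting this along the equivalence $\langle \mathcal{V}\rangle^{\otimes} \simeq \mathrm{Rep}_{\mathbb{Q}}(\mathcal{G}_{\mathrm{dRB}}(\mathcal{V}))$ produced by Proposition \ref{drbsystemtannakian}, the line $\omega(\mathcal{L}_\alpha) = \mathbb{Q}\alpha \subset T^{m,n}(\mathbb{V}_{\mathrm{B},u_0})$ is a trivial subrepresentation, so $\alpha$ is a fixed tensor. The reductivity assumption on $\mathcal{G}_{\mathrm{dRB}}(\mathcal{V})$ enters here to ensure one-dimensional subrepresentations and one-dimensional quotient representations behave symmetrically under duality, so that the Tannakian dictionary really does identify identity sub-objects with fixed tensors rather than only with fixed cotensors.

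The main obstacle I anticipate is the globalization step in the reverse direction: transferring pointwise $\qbar$-algebraicity of $\widetilde{\alpha}_{\mathrm{dR}}$ (guaranteed by Principle B at $\qbar$-points) to a single algebraic section of $T^{m,n}(\algebraicderham)$ over $\qbar$. I would attack this by showing that the analytic line bundle generated by $\widetilde{\alpha}_{\mathrm{dR}}$ inside $T^{m,n}(\algebraicderham)^{\mathrm{an}}$ carries trivial monodromy (from Betti constancy) and then using the Riemann-Hilbert correspondence together with Deligne's canonical extension across the boundary divisor $D$ to identify it with an algebraic line sub-bundle defined over $\qbar$; a density argument against $\qbar$-fibers then pins down the rational structure. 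Once this is in place, the rest of the argument follows formally from the Tannakian machinery developed in the earlier sections.
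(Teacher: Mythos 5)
Your proposal is correct, and the forward direction matches the paper (Corollary \ref{twoimmmersions}); but your reverse direction takes a genuinely different route at the crucial descent-to-$\qbar$ step. The paper does not argue via rigidity of flat sections: it re-runs the \emph{proof} of Proposition \ref{mainprincipleB}, lifting each component of the monodromy-invariant dRB class to a dRB class $\overline{\alpha^{i}}$ on the compactified total space $\overline{Y}^{\,n_i+m_i}$ via Deligne's invariant cycle theorem and Corollary \ref{liftablewhenquasiproj}, pulling back along $h^{*}$, and then pushing the ($\qbar$-rational) de Rham component through the Leray edge map $\mathrm{H}^{\bullet}_{\mathrm{dR}}(Y^{n_i+m_i}/\qbar)\rightarrow \mathrm{H}^{0}(U,\mathcal{V}_{\mathrm{dR}}^{\otimes n_i}\otimes\mathcal{V}_{\mathrm{dR}}^{*\otimes m_i})$, so the flat $\qbar$-algebraic section is produced directly from a global cohomology class of the total space. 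You instead take the \emph{statement} of Principle B for granted, use Riemann--Hilbert (this is exactly Setup \ref{keysetup} and Lemma \ref{rhcorrespondence}) to get a flat $\mathbb{C}$-algebraic section, and descend it to $\qbar$ by an abstract argument; your ``density of $\qbar$-points plus rigidity of flat sections'' can indeed be made precise and, once tightened, is arguably cleaner: the flat sections form a finite-dimensional space satisfying $\mathrm{H}^{0}(U_{\mathbb{C}},\cdot)^{\nabla_{\mathbb{C}}}=\mathrm{H}^{0}(U,\cdot)^{\nabla}\otimes_{\qbar}\mathbb{C}$ by flat base change, evaluation at $u_0$ is injective on them (for $U$ connected), and a vector of a $\qbar$-defined subspace whose image under an injective $\qbar$-linear map is $\qbar$-rational is itself $\qbar$-rational -- so $\qbar$-rationality of the value at the single point $u_0$ already forces descent, and Principle B at the other $\qbar$-points is not even needed for this step. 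What the paper's geometric route buys is an explicit realization of the invariant tensor as the restriction of a class on $\overline{Y}^{\,n_i+m_i}$; what yours buys is a shorter, purely linear-algebraic descent. One small correction: the reductivity hypothesis is not about a duality symmetry between fixed tensors and cotensors; it is used (as in the paper's first paragraph) in the forward direction, to turn the a priori one-dimensional trivial \emph{subquotients} of $\bigoplus V^{\otimes n_i}\otimes V^{*\otimes m_i}$ into honest sub-objects, i.e.\ actual invariant elements of the tensor spaces; your reverse direction, which constructs an identity sub-system directly, does not need it.
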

\begin{remark}\label{subobjectsuffices}
By Corollary \ref{semisimple}, the condition that $\mathcal{G}_{\mathrm{dRB}}(\mathcal{V})$ is reductive is satisfied for example when the restriction of the de Rham-Betti system $\mathcal{V}$ at $u_0$ is simple. Moreover, by Theorem \ref{semisimplemonodromy}, the Tannakian category $\langle \mathbb{V}_{\mathrm{B}}\rangle^{\otimes}$ is semisimple. Therefore stalk-wise for any $\qbar$-point $u_0$ of $U$, a monodromy invariant class is an element $$\alpha_{\mathrm{B,{u_0}}}\in \bigoplus_{n_{i},m_{i} \in\mathbb{Z}_{\geq0}} \mathrm{H}^{j}(Y_{u_0},\mathbb{Q})^{\otimes n_{i}}\otimes \mathrm{H}^{j}(Y_{u_0},\mathbb{Q})^{*\otimes m_{i}}$$ invariant under the action of $G_{\mathrm{mon}}\subset \mathrm{GL}(\mathrm{H}^{j}(Y_{u_0},\mathbb{Q}))$ on the tensor products. Then $\alpha_{\mathrm{B,{u_0}}}$ is a de Rham-Betti class if it is invariant under the action of the de Rham-Betti group $\gdrbmath(\mathcal{V}|_{u_0})\subset \mathrm{GL}(\mathrm{H}^{j}(Y_{u_0},\mathbb{Q}))$ associated with the fiber at $u_0$. 
\end{remark}

The proof of Proposition \ref{principleB} will occupy the rest of this section. The following setup is the preliminary stage for the proof.

\begin{setup}\label{keysetup} Given a monodromy invariant class \begin{equation}\label{classinclusion}
\alpha \in\bigoplus_{n_{i},m_{i} \in \mathbb{Z}_{\geq0}} \mathrm{H}^{j}(Y_{u_{0}},\mathbb{Q})^{\otimes n_{i}}\otimes \mathrm{H}^{j}(Y_{u_{0}},\mathbb{Q})^{*\otimes m_{i}}\end{equation} invariant under the action of the de Rham-Betti group $\gdrbmath(\mathcal{V}|_{u_0})\subset\mathrm{GL}(\mathrm{H}^{j}(Y_{u_{0}},\mathbb{Q}))$, we are going to construct a morphism of dRB systems $$I_{\alpha}:\mathcal{L}_{0} \xhookrightarrow{} \bigoplus_{n_{i},m_{i} \in \mathbb{Z}_{\geq0}} \mathcal{V}^{\otimes n_{i}}\otimes \mathcal{V}^{*\otimes m_{i}}$$ such that $I_{\alpha}|_{u_0}$ is spanned by $\alpha$, where $\mathcal{L}_0$ is an identity object in the category of dRB systems (see Lemma \ref{idenitiysystem}).

 Formula (\ref{classinclusion}) induces a morphism of local systems with $\mathbb{Q}$-coefficients $$\tilde{i}_{\alpha}: \underline{\mathbb{Q}}_{U^{\mathrm{an}}} \xhookrightarrow{} \bigoplus_{n_{i},m_{i} \in \mathbb{Z}_{\geq0}} \localsystem^{\otimes n_{i}}\otimes \localsystem^{*\otimes m_{i}}$$ such that the stalk of $\tilde{i}_{\alpha}(1)$ is $\alpha$. Upon analytification, we obtain a morphism of flat holomorphic vector bundles
$$\tilde{i}_{\alpha}^{\mathrm{an}}:(\mathcal{O}_{\analyticu},d)\xhookrightarrow{} \bigoplus_{n_{i},m_{i} \in \mathbb{Z}_{\geq0}} (\localsystem^{\otimes n_{i}}\otimes \localsystem^{*\otimes m_{i}} \otimes_{\mathbb{Q}}\mathcal{O}_{\analyticu},\nabla^{\mathrm{GM}})$$ Post composing with the comparison isomorphism of $\mathcal{O}_{\analyticu}$-modules $\rho_{m}:\localsystem \otimes_{\mathbb{Q}} \mathcal{O}_{\analyticu} \cong \mathcal{V}_{\mathrm{dR}}^{\mathrm{an}}$ coming from the dRB system $\mathcal{V}$, we obtain a morphism of flat holomorphic vector bundles
\begin{equation}\label{aftercomparison}
\tilde{\tilde{i}}_{\alpha}^{\mathrm{an}}:(\mathcal{O}_{\analyticu},d)\xhookrightarrow{} \bigoplus_{n_{i},m_{i} \in \mathbb{Z}_{\geq0}} (\mathcal{V}_{\mathrm{dR}}^{\otimes n_{i}}\otimes \mathcal{V}_{\mathrm{dR}}^{*\otimes m_{i}} \otimes_{\mathcal{O}_{U}}\mathcal{O}_{\analyticu},\nabla^{\mathrm{an}})
\end{equation}
\end{setup}

\begin{lemma} \label{rhcorrespondence}
The morphism of flat holomorphic vector bundles $\tilde{\tilde{i}}_{\alpha}^{\mathrm{an}}$ constructed in Setup \ref{keysetup} above descends to a morphism of flat algebraic vector bundles defined over $\mathbb{C}$

$$\tilde{\tilde{i}}_{\alpha}: (\mathcal{O}_{U_{\mathbb{C}}},d)\xhookrightarrow{} \bigoplus_{n_{i},m_{i} \in \mathbb{Z}_{\geq0}} (\mathcal{V}_{\mathrm{dR},\mathbb{C}}^{\otimes n_{i}}\otimes \mathcal{V}_{\mathrm{dR},\mathbb{C}}^{*\otimes m_{i}},\nabla_{\mathbb{C}})$$
 
\end{lemma}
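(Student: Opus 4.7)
The plan is to invoke Deligne's Riemann–Hilbert correspondence for regular singular flat connections, as developed in \emph{Équations différentielles à points singuliers réguliers}. Concretely, on a smooth quasi-projective variety $U$, the analytification functor from the category of algebraic flat vector bundles with regular singular connections (at infinity) to the category of flat holomorphic vector bundles is fully faithful; a morphism of analytifications descends uniquely to an algebraic morphism.

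First I would verify that the source and target of $\tilde{\tilde{i}}_{\alpha}^{\mathrm{an}}$ are both analytifications of algebraic flat bundles with regular singularities on $U_{\mathbb{C}}$. For the source $(\mathcal{O}_{U^{\mathrm{an}}}, d)$ this is trivial: it is the analytification of $(\mathcal{O}_{U_{\mathbb{C}}}, d)$, which has no singularities at all. For the target, Definition \ref{defndrbsystem} gives us that $(\mathcal{V}_{\mathrm{dR}}, \nabla)$ is algebraic with regular singular connection along $D = X - U$; since the classes of flat bundles with regular singularities are stable under tensor product, dual, and direct sum (the relevant connections being $\nabla \otimes \mathrm{id} + \mathrm{id} \otimes \nabla$, $(\nabla^{*})^{-1}$, etc.\ as in the tensor construction earlier in the paper), each summand $(\mathcal{V}_{\mathrm{dR}}^{\otimes n_i} \otimes \mathcal{V}_{\mathrm{dR}}^{* \otimes m_i}, \nabla)$ is algebraic with regular singularities over $\bar{\mathbb{Q}}$, hence over $\mathbb{C}$ after base change.

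Next, I would apply Deligne's full faithfulness statement. Since $\tilde{\tilde{i}}_{\alpha}^{\mathrm{an}}$ is by construction a morphism of \emph{flat} holomorphic vector bundles (being the composition of the flat morphism $\tilde{i}_{\alpha}^{\mathrm{an}}$ with the flat comparison isomorphism $\rho_{m}$, which respects $\nabla^{\mathrm{GM}}$ and $\nabla^{\mathrm{an}}$ by the last compatibility in Definition \ref{defndrbsystem}), Deligne's theorem provides a unique morphism of algebraic flat vector bundles
\[
\tilde{\tilde{i}}_{\alpha} \colon (\mathcal{O}_{U_{\mathbb{C}}}, d) \longrightarrow \bigoplus_{n_i, m_i \in \mathbb{Z}_{\geq 0}} (\mathcal{V}_{\mathrm{dR},\mathbb{C}}^{\otimes n_i} \otimes \mathcal{V}_{\mathrm{dR},\mathbb{C}}^{* \otimes m_i}, \nabla_{\mathbb{C}})
\]
whose analytification recovers $\tilde{\tilde{i}}_{\alpha}^{\mathrm{an}}$. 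Injectivity of $\tilde{\tilde{i}}_{\alpha}$ follows from the injectivity of its analytification together with faithful flatness of $\mathcal{O}_{U^{\mathrm{an}}}$ over $\mathcal{O}_{U_{\mathbb{C}}}$.

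There is no serious obstacle: the only non-trivial point is to remark that we descend to $\mathbb{C}$ rather than to $\bar{\mathbb{Q}}$, which is forced by the fact that the comparison isomorphism $\rho_{m}$ is only a holomorphic (equivalently, analytic-over-$\mathbb{C}$) object, so that the monodromy-invariant $\mathbb{Q}$-class $\alpha$ produces, after applying $\rho_{m}$, a section with complex—not $\bar{\mathbb{Q}}$—coefficients in the de Rham realization. Everything else is a direct invocation of the standard package of regular singular connections, and the lemma follows.
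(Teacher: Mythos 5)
Your proposal is correct and follows essentially the same route as the paper: both arguments observe that source and target are analytifications of algebraic flat bundles with regular singular connections (the paper cites Katz's regularity theorem, you cite Definition \ref{defndrbsystem} plus stability of regular singularity under tensor, dual and sum) and then descend the morphism via the full faithfulness part of the Riemann--Hilbert correspondence (Deligne, respectively Theorem 11.7 of Peters--Steenbrink). Your extra remarks on injectivity and on why the descent is only to $\mathbb{C}$ rather than $\overline{\mathbb{Q}}$ are accurate but not needed beyond what the paper does.
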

\begin{proof}
By \cite{katz1970regularity}, both $(\mathcal{V}_{\mathrm{dR},\mathbb{C}}=\mathrm{R}^{j}f_{*}\Omega^{\bullet}_{Y_{\mathbb{C}}/U_{\mathbb{C}}},\nabla)$ and $(\mathcal{O}_{U_{\mathbb{C}}},d)$ are $\mathbb{C}$-algebraic vector bundles with integrable connections which has regular singularities. Hence both sides of formula (\ref{aftercomparison}) are analytification of $\mathbb{C}$-algebraic vector bundles together with integrable connections which has regular singularities.
Now the Riemann-Hilbert correspondence (for example see Theorem 11.7 of \cite{peters2008mixed}) states that the analytification functor induces an equivalence between the category of holomorphic vector bundles together with an integrable connection on $\analyticu$ and the category of $\mathbb{C}$-algebraic vector bundles with an integrable connection which has regular singularities. Therefore, there exists a morphism 
$$\tilde{\tilde{i}}_{\alpha}: (\mathcal{O}_{U_{\mathbb{C}}},d)\xhookrightarrow{} \bigoplus_{n_{i},m_{i} \in \mathbb{Z}_{\geq0}} (\mathcal{V}_{\mathrm{dR},\mathbb{C}}^{\otimes n_{i}}\otimes \mathcal{V}_{\mathrm{dR},\mathbb{C}}^{*\otimes m_{i}} \otimes_{\mathcal{O}_{U_{\mathbb{C}}}}\Omega_{U_{\mathbb{C}}}^1,\nabla_{\mathbb{C}})$$ of flat algebraic vector bundles defined over $\mathbb{C}$, whose analytification is precisely formula (\ref{aftercomparison}). 
\end{proof}

To finish the proof of Proposition \ref{principleB} i.e. the $\qbar$-algebraic extendability, we are going to use the next proposition, which is the analog of ``Deligne's Principle B" in the realm of dRB systems.

\begin{proposition}\label{mainprincipleB}
Let $f:Y \rightarrow U$ be a smooth projective morphism between smooth quasi-projective varieties defined over $\qbar$ and let $\mathcal{V}=(\localsystem, (\algebraicderham,\nabla), \rho_{m})=(\mathrm{R}^{j}f^{\mathrm{an}}_{*}\underline{\mathbb{Q}}_{Y},(\mathrm{R}^{j}f_{*}\Omega^{\bullet}_{Y/U},\nabla), \rho_{m})$. The integer $j$ satisfies that $0\leq j\leq 2n$ where $n$ is the relative dimension of $f$. Given any identity object $L_{\alpha} \in \mathrm{ob}\langle\localsystem\rangle^{\otimes}$, let $\alpha_{\mathrm{B}}$ be a global section of $L_{\alpha}$. Suppose at a $\qbar$-point $u_{0}$ of $U$, $\alpha_{\mathrm{B},u_{0}}$ is a dRB class in the category $\langle\mathcal{V}|_{u_0}\rangle^{\otimes}$. Then for any $\qbar$-point $u$ of $U$, $\alpha_{\mathrm{B,{u}}}$ is a dRB class in the category $\langle\mathcal{V}|_{u}\rangle^{\otimes}$.
\end{proposition}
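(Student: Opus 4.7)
The plan is to spread out the hypothesis at $u_{0}$ first to a global algebraic horizontal section over $\mathbb{C}$ using the Riemann--Hilbert correspondence, and then to descend this section to $\qbar$ by a Galois invariance argument anchored at $u_{0}$. Once such a descent is achieved, evaluating the resulting $\qbar$-algebraic section at any $\qbar$-point $u$ yields the desired dRB class on the fiber.

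Concretely, the setup of Setup \ref{keysetup} and Lemma \ref{rhcorrespondence} already does the work on the $\mathbb{C}$-side. The global section $\alpha_{\mathrm{B}}$ of $L_{\alpha}$ sits inside some tensor construction of $\localsystem$ as a monodromy invariant class (since $L_{\alpha}$ is an identity object). Applying $\rho_{m}$ produces a global horizontal holomorphic section $\tilde{\tilde{i}}^{\mathrm{an}}_{\alpha}$ of the corresponding tensor construction of $\algebraicderham$ on $\analyticu$; by Lemma \ref{rhcorrespondence} this section comes from an algebraic horizontal section $s := \tilde{\tilde{i}}_{\alpha}(1) \in H^{0}(U_{\mathbb{C}}, \bigoplus \mathcal{V}_{\mathrm{dR},\mathbb{C}}^{\otimes n_{i}} \otimes \mathcal{V}_{\mathrm{dR},\mathbb{C}}^{*\otimes m_{i}})$ over $\mathbb{C}$, with $\nabla_{\mathbb{C}} s = 0$. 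By the hypothesis, $s(u_{0}) = \rho_{m,u_{0}}(\alpha_{\mathrm{B},u_{0}})$ lies in the $\qbar$-rational subspace of the fiber at $u_{0}$.

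The crucial second half is the descent. For each $\sigma \in \mathrm{Aut}(\mathbb{C}/\qbar)$, the Galois conjugate $s^{\sigma}$ is again a horizontal algebraic section of the same $\qbar$-algebraic vector bundle base-changed to $\mathbb{C}$, so the difference $s - s^{\sigma}$ is horizontal. The $\qbar$-point $u_{0}$ gives rise to a $\mathbb{C}$-point of $U_{\mathbb{C}}$ that is fixed by the action of $\mathrm{Aut}(\mathbb{C}/\qbar)$ on the second factor, so $s^{\sigma}(u_{0}) = \sigma(s(u_{0})) = s(u_{0})$ because $s(u_{0})$ is $\qbar$-rational. Hence $s - s^{\sigma}$ is a horizontal algebraic section that vanishes at $u_{0}$; its analytification is locally constant in any flat trivialization, so vanishes in a neighborhood of $u_{0}^{\mathbb{C}}$ and then on all of $U^{\mathrm{an}}_{\mathbb{C}}$ by connectedness of the latter (we may assume $U$ is geometrically irreducible, else we treat components separately). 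Thus $s$ is $\mathrm{Aut}(\mathbb{C}/\qbar)$-invariant. Since the ambient bundle is defined over $\qbar$ and $\qbar = \mathbb{C}^{\mathrm{Aut}(\mathbb{C}/\qbar)}$, standard Galois descent produces a horizontal algebraic section $s_{\qbar}$ over $\qbar$ with $s_{\qbar} \otimes_{\qbar} \mathbb{C} = s$. Evaluating $s_{\qbar}$ at any $u \in U(\qbar)$ gives a $\qbar$-rational element of the fiber, which via $\rho_{m,u}$ exactly means $\alpha_{\mathrm{B},u}$ is a dRB class with respect to $\mathcal{V}|_{u}$.

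The main obstacle I anticipate is the Galois descent step: one must verify rigorously that horizontal algebraic sections over $\mathbb{C}$ of a bundle with regular singular connection defined over $\qbar$ do satisfy the fpqc-style descent from $\mathbb{C}$ to $\qbar$, and that the analytic continuation principle for flat sections applies cleanly in the algebraic category (this is where the Riemann--Hilbert correspondence and the connectedness of $U^{\mathrm{an}}_{\mathbb{C}}$ intervene). The remaining bookkeeping--transporting the dRB-class property across the comparison $\rho_{m}$ at the fiber $u$--is automatic once $s$ is known to be $\qbar$-defined.
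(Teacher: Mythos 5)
Your proof is correct, but it takes a genuinely different route from the paper. The paper's argument is the dRB analogue of Deligne's Principle B for absolute Hodge classes: it uses K\"unneth and Poincar\'e duality to view the class as living in $\mathrm{H}^{jn_i+(2n-j)m_i}(Y_{u_0}^{n_i+m_i},\mathbb{Q})$, invokes Deligne's global invariant cycle theorem to see that the monodromy invariant class is restricted from the smooth compactification $\overline{Y}^{n_i+m_i}$, and then uses the dRB-compatible polarization machinery (Lemma \ref{polarizationdrb}, Lemma \ref{orthogonalprojs}, Corollary \ref{drbclassliftable}, Corollary \ref{liftablewhenquasiproj}) to lift the dRB class at $u_0$ to a genuine dRB class on $\overline{Y}^{n_i+m_i}$, whose restriction to any fiber is then automatically a dRB class. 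You instead skip the geometric lift entirely: you spread the de Rham component into a horizontal algebraic section $s$ over $U_{\mathbb{C}}$ via Setup \ref{keysetup} and Lemma \ref{rhcorrespondence} (which the paper only deploys later, in the proof of Proposition \ref{principleB}), and then exploit that the dRB condition is pure $\qbar$-rationality relative to the fixed embedding $\qbar\subset\mathbb{C}$: for $\sigma\in\mathrm{Aut}(\mathbb{C}/\qbar)$ the conjugate $s^{\sigma}$ is again horizontal (bundle, connection and $U$ are defined over $\qbar$), agrees with $s$ at the Galois-fixed point $u_0$ because $s(u_0)$ is $\qbar$-rational there, hence agrees with $s$ everywhere by rigidity of flat sections on the connected $U^{\mathrm{an}}_{\mathbb{C}}$; Galois invariance plus $\mathbb{C}^{\mathrm{Aut}(\mathbb{C}/\qbar)}=\qbar$ and flat base change for global sections then give a $\qbar$-rational horizontal section, and evaluation at any $u\in U(\qbar)$ finishes. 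This is more elementary and self-contained (no invariant cycle theorem, no polarizations; it works because, unlike absolute Hodgeness, no comparison with conjugate varieties or $(p,q)$-conditions enters), and it in fact produces directly the $\qbar$-algebraic flat section $t_{\alpha}$ that the paper has to reconstruct separately in the proof of Proposition \ref{principleB}; what the paper's route buys is the explicit geometric lift $\overline{\alpha^{i}}$ on a smooth projective variety, which keeps the argument parallel to the absolute Hodge/motivated setting. Two small points to tighten: the embedding of $L_{\alpha}$ as a subobject of $\bigoplus \localsystem^{\otimes n_i}\otimes\localsystem^{*\otimes m_i}$ is not a consequence of $L_{\alpha}$ being an identity object alone but of Deligne's semisimplicity theorem (Theorem \ref{semisimplemonodromy}, as in Remark \ref{subobjectsuffices}), and the connectedness of $U^{\mathrm{an}}_{\mathbb{C}}$ that your rigidity step needs is also implicitly required by the paper's identification $\iota_{u_t}^{*}(\overline{\alpha^{i}})=\alpha^{i}_{\mathrm{B},u_t}$, so it is a shared hypothesis rather than a defect of your approach.
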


\begin{remark}
The proof of Proposition \ref{mainprincipleB} will closely follow from the page 15 to 17 of \cite{charles2011notes} which proves the analogous statement for absolute Hodge classes. In particular, we will be using Deligne's invariant cycle theorem (for example see Theorem 4.18 of \cite{voisin2003hodge}) in a crucial way, hence it is essential that the de Rham-Betti system $\mathcal{V}$ has a geometric origin.
\end{remark}
We begin with the following easy lemma.
\begin{lemma} \label{inverseisdrb}
Suppose $f:V_{\mathrm{dRB}}=(V_{\mathrm{B}},V_{\mathrm{dR}},\rho_{m}) \rightarrow V'_{\mathrm{dRB}}=(V'_{\mathrm{B}},V'_{\mathrm{dR}},\rho'_{m})$ is a morphism of dRB structures. If furthermore its Betti part is an isomorphism of $\mathbb{Q}$-vector spaces $f_{\mathrm{B}}:V_{\mathrm{B}} \rightarrow V'_{\mathrm{B}}$, then $f$ can be upgraded to an isomorphism of dRB structures.
\end{lemma}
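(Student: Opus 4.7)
The plan is to show that if $f_{\mathrm{B}}$ is an isomorphism, then $f_{\mathrm{dR}}$ is automatically an isomorphism too, and then to verify that the pair of inverses $(f_{\mathrm{B}}^{-1}, f_{\mathrm{dR}}^{-1})$ forms a morphism of dRB structures.

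First I would use the commuting diagram that defines a morphism of dRB structures, namely
\[
\rho'_{m} \circ (f_{\mathrm{B}} \otimes 1) = (f_{\mathrm{dR}} \otimes 1) \circ \rho_{m},
\]
to write $f_{\mathrm{dR}} \otimes 1 = \rho'_{m} \circ (f_{\mathrm{B}} \otimes 1) \circ \rho_{m}^{-1}$ as a map $V_{\mathrm{dR}} \otimes_{\qbar} \mathbb{C} \to V'_{\mathrm{dR}} \otimes_{\qbar} \mathbb{C}$. Since $\rho_{m}, \rho'_{m}$ are isomorphisms of $\mathbb{C}$-vector spaces and $f_{\mathrm{B}} \otimes 1$ is an isomorphism of $\mathbb{C}$-vector spaces by hypothesis, $f_{\mathrm{dR}} \otimes 1$ is a $\mathbb{C}$-linear isomorphism. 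Because $\mathbb{C}$ is faithfully flat over $\qbar$ and $V_{\mathrm{dR}}, V'_{\mathrm{dR}}$ are finite-dimensional $\qbar$-vector spaces, it follows that $f_{\mathrm{dR}}: V_{\mathrm{dR}} \to V'_{\mathrm{dR}}$ is itself a $\qbar$-linear isomorphism.

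Next I would take the pair $(f_{\mathrm{B}}^{-1}, f_{\mathrm{dR}}^{-1})$ and check that it defines a morphism from $V'_{\mathrm{dRB}}$ to $V_{\mathrm{dRB}}$. Inverting both sides of the displayed identity above and rearranging yields
\[
\rho_{m} \circ (f_{\mathrm{B}}^{-1} \otimes 1) = (f_{\mathrm{dR}}^{-1} \otimes 1) \circ \rho'_{m},
\]
which is precisely the compatibility condition required. Since $(f_{\mathrm{B}}, f_{\mathrm{dR}}) \circ (f_{\mathrm{B}}^{-1}, f_{\mathrm{dR}}^{-1}) = (\mathrm{id}, \mathrm{id})$ and conversely, this exhibits $(f_{\mathrm{B}}^{-1}, f_{\mathrm{dR}}^{-1})$ as a two-sided inverse of $f$ in the category of dRB structures.

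There is no serious obstacle here; the statement is a formal consequence of the definition of a morphism of dRB structures together with faithful flatness of $\mathbb{C}/\qbar$. The only point that deserves a word of care is the descent of the $\mathbb{C}$-linear isomorphism $f_{\mathrm{dR}} \otimes 1$ to a $\qbar$-linear isomorphism $f_{\mathrm{dR}}$, but this is immediate since $f_{\mathrm{dR}}$ is already defined over $\qbar$ as part of the given morphism of dRB structures, so being an isomorphism can be tested after the faithfully flat base change $\qbar \to \mathbb{C}$.
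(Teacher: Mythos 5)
Your proposal is correct and follows essentially the same route as the paper's proof: deduce that $f_{\mathrm{dR}}\otimes 1$ is a $\mathbb{C}$-linear isomorphism from the commutative square, descend to conclude $f_{\mathrm{dR}}$ is a $\qbar$-linear isomorphism, and then invert the compatibility identity to show $(f_{\mathrm{B}}^{-1},f_{\mathrm{dR}}^{-1})$ is the inverse morphism of dRB structures. The only cosmetic difference is that you justify the descent step by faithful flatness of $\mathbb{C}/\qbar$, which the paper leaves implicit.
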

One can use the Tannakian formalism to prove it. But we attach a direct proof as well.
\begin{proof}
By definition, we have the following commutative diagram of $\mathbb{C}$-vector spaces
$$\begin{tikzcd}
V_{\mathrm{B}} \otimes_{\mathbb{Q}} \mathbb{C} \arrow[d, "\rho_{m}"'] \arrow[r, "f_{\mathrm{B}}\otimes1"] & V'_{\mathrm{B}} \otimes_{\mathbb{Q}} \mathbb{C} \arrow[d, "\rho'_{m}"] \\
V_{\mathrm{dR}} \otimes_{\qbar} \mathbb{C} \arrow[r, "f_{\mathrm{dR}}\otimes1"]                           & V'_{\mathrm{dR}} \otimes_{\qbar} \mathbb{C}                           
\end{tikzcd}$$ where we denote by $f_{\mathrm{B}}\otimes1$ and $f_{\mathrm{dR}}\otimes1$ the $\mathbb{C}$-linear extension of $f_{\mathrm{B}}$ and $f_{\mathrm{dR}}$. Because $f_{\mathrm{B}} \otimes1$ is an isomorphism of $\mathbb{C}$-vector spaces by the assumption in the lemma, we have $f_{\mathrm{dR}} \otimes1$ is an isomorphism of $\mathbb{C}$-vector spaces. Hence $f_{\mathrm{dR}}$ induces an isomorphism of $\qbar$-vector spaces. Moreover note that $f_{\mathrm{B}}^{-1}\otimes_{\mathbb{Q}}1=(f_{\mathrm{B}}\otimes_{\mathbb{Q}}1)^{-1}$ and $f_{\mathrm{dR}}^{-1}\otimes_{\qbar}1=(f_{\mathrm{dR}}\otimes_{\qbar}1)^{-1}$. Therefore, $\rho'_{m}\circ (f_{\mathrm{B}} \otimes1)=(f_{\mathrm{dR}} \otimes1)\circ \rho_{m}$ implies that $\rho_{m}\circ (f^{-1}_{\mathrm{B}} \otimes1)=(f^{-1}_{\mathrm{dR}} \otimes1)\circ \rho'_{m}$. Hence $(f^{-1}_{\mathrm{B}},f^{-1}_{\mathrm{dR}})$ is a morphism of dRB structures which is the inverse of $f$.

\end{proof}

\begin{lemma}\label{polarizationdrb}
Let $X$ be a smooth projective variety over $\qbar$ of dimension $n$. Then for $0\leq j\leq 2n$,
there exists a bilinear form $\mathcal{P}_{X}: \mathrm{H}^{j}(X,\mathbb{Q}) \times \mathrm{H}^{j}(X,\mathbb{Q}) \rightarrow \mathbb{Q}(-j)$ preserving dRB structures while also polarizing the Hodge structure on $\mathrm{H}^{j}(X,\mathbb{Q})$.
\end{lemma}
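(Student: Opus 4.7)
The plan is to exhibit the classical Hodge--Riemann polarization of $\mathrm{H}^{j}(X,\mathbb{Q})$ as a morphism of dRB structures by producing each of its constituents (the Lefschetz class $L$, the cup product, and the Poincar\'e trace) inside the category of dRB structures.

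First I would pick an ample line bundle $\mathcal{L}$ on $X/\qbar$ and set $L:=c_{1}(\mathcal{L})\in\mathrm{H}^{2}(X,\mathbb{Q})$. Since the Betti and algebraic de Rham cycle class maps are compatible under $\rho_{m}$ up to a power of $2\pi i$ (as recalled in the example in Section \ref{drbsect}), the class $L$ is a dRB class in $\mathrm{H}^{2}_{\mathrm{dRB}}(X,\mathbb{Q})\otimes\mathbb{Q}_{\mathrm{dRB}}(1)$. Iterating, $L^{k}$ is a dRB class in $\mathrm{H}^{2k}_{\mathrm{dRB}}(X,\mathbb{Q})\otimes\mathbb{Q}_{\mathrm{dRB}}(k)$, and cup product with $L^{k}$ is a morphism of dRB structures $\mathrm{H}^{j}_{\mathrm{dRB}}(X,\mathbb{Q})\to\mathrm{H}^{j+2k}_{\mathrm{dRB}}(X,\mathbb{Q})\otimes\mathbb{Q}_{\mathrm{dRB}}(k)$.

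Next I would invoke the standard compatibility of the cup product pairing and of Poincar\'e duality with the Grothendieck comparison, so that
$$\cup:\mathrm{H}^{a}_{\mathrm{dRB}}(X,\mathbb{Q})\otimes \mathrm{H}^{b}_{\mathrm{dRB}}(X,\mathbb{Q})\to\mathrm{H}^{a+b}_{\mathrm{dRB}}(X,\mathbb{Q})$$
is a morphism of dRB structures, and $\mathrm{tr}:\mathrm{H}^{2n}_{\mathrm{dRB}}(X,\mathbb{Q})\xrightarrow{\sim}\mathbb{Q}_{\mathrm{dRB}}(-n)$ is an isomorphism of dRB structures. For $j\leq n$, combining these inputs gives the candidate pairing
$$\mathcal{P}_{X}(\alpha,\beta)\;:=\;(-1)^{j(j-1)/2}\,\mathrm{tr}\bigl(L^{n-j}\cup\alpha\cup\beta\bigr),$$
whose target carries Tate twist $-n+(n-j)=-j$, i.e.\ is $\mathbb{Q}_{\mathrm{dRB}}(-j)$. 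For $j>n$ I would instead transfer the construction via the hard Lefschetz isomorphism $L^{j-n}\cup-:\mathrm{H}^{2n-j}_{\mathrm{dRB}}(X,\mathbb{Q})\xrightarrow{\sim}\mathrm{H}^{j}_{\mathrm{dRB}}(X,\mathbb{Q})\otimes\mathbb{Q}_{\mathrm{dRB}}(j-n)$, which again lives in the dRB category by the same reasoning; non-degeneracy of $\mathcal{P}_{X}$ will also follow from hard Lefschetz and Poincar\'e duality.

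Finally, that $\mathcal{P}_{X}$ polarizes the underlying weight-$j$ Hodge structure is the content of the classical Hodge--Riemann bilinear relations: restricted to each primitive summand $L^{k}\mathrm{H}^{j-2k}_{\mathrm{prim}}(X,\mathbb{Q})$ of the Lefschetz decomposition of $\mathrm{H}^{j}(X,\mathbb{Q})$, the pairing $\mathcal{P}_{X}$, together with the alternating sign $(-1)^{k}$, becomes a polarization in the sense of Hodge theory. Crucially this decomposition is itself a splitting of sub-dRB structures, because each primitive part is the kernel of the dRB morphism $L^{n-j+2k+1}\cup-$, and the summands are the images of operators built out of $L\cup-$, all of which are dRB morphisms. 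The main obstacle I foresee is purely bookkeeping: arranging the signs and Tate twists across the primitive decomposition so that the single form $\mathcal{P}_{X}$ simultaneously lands in $\mathbb{Q}_{\mathrm{dRB}}(-j)$ and restricts on each primitive piece to the Hodge--Riemann form that is positive-definite after twisting by the Weil operator.
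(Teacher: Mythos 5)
Your proposal is correct and follows essentially the same route as the paper's own sketch: both build the Hodge--Riemann pairing from an ample class via the Lefschetz decomposition, use that the Lefschetz operators are cup products with algebraic (hence dRB) classes, that their inverses are again dRB morphisms (Lemma \ref{inverseisdrb}), and that the Lefschetz projectors are therefore dRB morphisms, so the componentwise signed/twisted combination preserves dRB structures while polarizing the Hodge structure. The only difference is presentational (your explicit formula and the hard-Lefschetz transfer for $j>n$ versus the paper's citation of Proposition 22 and Lemma 21 of \cite{charles2011notes}), and your closing remark about inserting the signs $(-1)^{k}$ per primitive summand is exactly the paper's ``combine them together with twists.''
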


\begin{proofsketch}
The same proof strategy of Proposition 22 of \cite{charles2011notes} can be applied. Namely, fixing a very ample line bundle $L$ on $X$, one can write down a polarization form on each component of the Lefschetz decomposition on $\mathrm{H}^{*}(X,\mathbb{Q})$ and combine them together with twists to obtain a polarization form on $\mathrm{H}^{j}(X,\mathbb{Q})$.

Now because the Lefschetz operators on $\mathrm{H}^{*}(X,\mathbb{Q})$ are given by cup product by algebraic classes, they preserve dRB structures. Then by Lemma \ref{inverseisdrb}, their inverses also preserve dRB structures. Then the proof of Lemma 21 of \cite{charles2011notes} can be applied to show that the projection of $\mathrm{H}^{*}(X,\mathbb{Q})$ to each component of the Lefschetz decomposition is a morphism of de Rham-Betti structures. Therefore, the same polarization form constructed previously also preserves dRB structures.
\end{proofsketch}

\begin{lemma}\label{orthogonalprojs}
Given two smooth projective varieties $X$ and $Y$ defined over $\qbar$ of dimension $n$ and $n'$ respectively and a morphism $$f: \mathrm{H}^{j}(X,\mathbb{Q}) \rightarrow \mathrm{H}^{j}(Y,\mathbb{Q})$$ which preserves both de Rham-Betti and Hodge structures. Then we have orthogonal decompositions with respect to the polarization forms constructed in Lemma \ref{polarizationdrb} $$\mathrm{H}^{j}(X,\mathbb{Q})=\mathrm{Ker}(f) \oplus \mathrm{Ker}(f)^{\perp}$$ and $$\mathrm{H}^{j}(Y,\mathbb{Q})=\mathrm{Im}(f) \oplus \mathrm{Im}(f)^{\perp}$$ Moreover, every summand in above decompositions is a de Rham-Betti substructure.
\end{lemma}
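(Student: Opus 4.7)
The plan is to separate the statement into two independent claims: (i) the orthogonal decompositions are genuine direct sums, which is a statement about Hodge structures polarized by $\mathcal{P}_X$ and $\mathcal{P}_Y$; and (ii) all four summands are sub-dRB structures, which uses the Tannakian structure of $\mathcal{C}_{\mathrm{dRB}}$ together with the fact that the polarization morphism itself preserves dRB structures by Lemma \ref{polarizationdrb}.

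For (i): since $f$ is a morphism of Hodge structures, $\mathrm{Ker}(f)\subset \mathrm{H}^j(X,\mathbb{Q})$ and $\mathrm{Im}(f)\subset \mathrm{H}^j(Y,\mathbb{Q})$ are sub-Hodge structures. The restriction of a polarization form to any sub-Hodge structure remains a polarization, hence is non-degenerate, so $\mathrm{Ker}(f)\cap\mathrm{Ker}(f)^\perp=\{0\}$ and $\mathrm{Im}(f)\cap\mathrm{Im}(f)^\perp=\{0\}$, whence the dimension count yields the asserted direct sum decompositions. For (ii), the substructures $\mathrm{Ker}(f)$ and $\mathrm{Im}(f)$ are sub-dRB structures immediately: the category of dRB structures is abelian, with kernels and images formed on the Betti and de Rham parts separately, and $f$ is by hypothesis a morphism of dRB structures.

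The only step that requires a mild argument is showing that $\mathrm{Ker}(f)^\perp$ and $\mathrm{Im}(f)^\perp$ are sub-dRB structures. For this, Lemma \ref{polarizationdrb} provides a morphism of dRB structures
\[
\mathcal{P}_X\colon \mathrm{H}^j(X,\mathbb{Q})\otimes \mathrm{H}^j(X,\mathbb{Q})\longrightarrow \mathbb{Q}_{\mathrm{dRB}}(-j).
\]
Adjointing yields a dRB morphism $\lambda_X\colon \mathrm{H}^j(X,\mathbb{Q})\to \mathrm{H}^j(X,\mathbb{Q})^{*}\otimes \mathbb{Q}_{\mathrm{dRB}}(-j)$, which is a bijection on Betti parts because $\mathcal{P}_X$ is non-degenerate; by Lemma \ref{inverseisdrb}, $\lambda_X$ is then an isomorphism of dRB structures. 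The annihilator $\mathrm{Ker}(f)^{\mathrm{ann}}\subset \mathrm{H}^j(X,\mathbb{Q})^{*}$ is the kernel of the dRB morphism $\mathrm{H}^j(X,\mathbb{Q})^{*}\to \mathrm{Ker}(f)^{*}$ dual to the dRB inclusion $\mathrm{Ker}(f)\hookrightarrow \mathrm{H}^j(X,\mathbb{Q})$, hence is a sub-dRB structure. Since
\[
\mathrm{Ker}(f)^\perp=\lambda_X^{-1}\bigl(\mathrm{Ker}(f)^{\mathrm{ann}}\otimes \mathbb{Q}_{\mathrm{dRB}}(-j)\bigr),
\]
it is a sub-dRB structure too. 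Replacing $\mathcal{P}_X$ by $\mathcal{P}_Y$ and $\mathrm{Ker}(f)$ by $\mathrm{Im}(f)$ handles $\mathrm{Im}(f)^\perp$ verbatim. The ``hard part,'' such as it is, is really just bookkeeping: keeping track that every map appearing in the construction of the orthogonal complement—the polarization, its adjoint, the dual of an inclusion, the kernel of that dual, and the inverse of $\lambda_X$—lives in the abelian rigid tensor category $\mathcal{C}_{\mathrm{dRB}}$ recalled in Section \ref{drbsect}.
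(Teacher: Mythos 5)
Your proof is correct and follows essentially the same route as the paper: the direct sum decompositions come from the non-degeneracy of the restriction of the polarization of Lemma \ref{polarizationdrb} to the sub-Hodge structures $\mathrm{Ker}(f)$ and $\mathrm{Im}(f)$, and the dRB-compatibility of the decomposition rests on the polarization itself being a morphism of dRB structures. The only difference is that you make explicit, via the adjoint isomorphism $\lambda_X$, the annihilator, and Lemma \ref{inverseisdrb}, why the orthogonal complements are sub-dRB structures — a step the paper leaves implicit — which is added precision rather than a different argument.
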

\begin{proof} Because $f$ preserves both the dRB structure and the Hodge structures, both  $\mathrm{Ker}(f)$ and $\mathrm{Im}(f)$ are sub-dRB structures and sub-Hodge structures simultaneously. Now the polarization form $$\mathcal{P}_{X}: \mathrm{H}^{j}(X,\mathbb{Q}) \times \mathrm{H}^{j}(X,\mathbb{Q}) \rightarrow \mathbb{Q}(-j)$$ constructed from Lemma \ref{polarizationdrb} preserves both dRB structure and Hodge structures. Since $\mathrm{Ker}(f)$ is a sub-Hodge structure, $\mathcal{P}_{X}|_{\mathrm{Ker}(f)}$ is also a polarization form (see Lemma 7.26 of \cite{voisin2003hodge}). In particular it is non-degenerate. Therefore, we have the orthogonal decomposition $$\mathrm{H}^{j}(X,\mathbb{Q})=\mathrm{Ker}(f) \oplus \mathrm{Ker}(f)^{\perp}$$ as dRB and Hodge structures. The case of $\mathrm{Im}(f)$ follows similarly. 
\end{proof}
\begin{remark}
It is crucial that $\mathrm{Ker}(f)$ is a sub-Hodge structure and sub-dRB structure simultaneously. If $\mathrm{Ker}(f)$ is only a sub de Rham-Betti structure, then there is no reason for the bilinear form $\mathcal{P}_{X}|_{\mathrm{Ker}(f)}$ on $\mathrm{Ker}(f)$ to be non-degenerate.
\end{remark}

An important corollary of the above linear algebraic computation is the following.
\begin{corollary} \label{drbclassliftable} 
 Suppose we are given two smooth projective varieties $X$ and $Y$ defined over $\qbar$ and a morphism $$f: \mathrm{H}^{j}(X,\mathbb{Q})\rightarrow \mathrm{H}^{j}(Y,\mathbb{Q})$$ which preserves Hodge structures and de Rham-Betti structures simultaneously. Then for any $n\in\mathbb{Z}$ such that the set of de Rham-Betti class in $\mathrm{H}_{\mathrm{dRB}}^{j}(Y,\mathbb{Q})\otimes\mathbb{Q}_{\mathrm{dRB}}(n)$ is not equal to $\{0\}$ and any $\beta$ a dRB class in $\mathrm{H}_{\mathrm{dRB}}^{j}(Y,\mathbb{Q})\otimes\mathbb{Q}_{\mathrm{dRB}}(n)$ which lies in the image of $f\otimes 1$, there exists a dRB class $\alpha$ in $\mathrm{H}_{\mathrm{dRB}}^{j}(X,\mathbb{Q})\otimes\mathbb{Q}_{\mathrm{dRB}}(n)$ which satisfies $(f\otimes 1)(\alpha)=\beta$.
\end{corollary}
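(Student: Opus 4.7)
The plan is to invert $f$ on the piece where it is an isomorphism of de Rham-Betti structures. By Lemma \ref{orthogonalprojs}, the polarization forms constructed in Lemma \ref{polarizationdrb} produce direct sum decompositions
\[
\mathrm{H}^{j}(X,\mathbb{Q})=\mathrm{Ker}(f)\oplus\mathrm{Ker}(f)^{\perp},\qquad \mathrm{H}^{j}(Y,\mathbb{Q})=\mathrm{Im}(f)\oplus\mathrm{Im}(f)^{\perp}
\]
in the category of de Rham-Betti structures. The restriction $\tilde{f}:\mathrm{Ker}(f)^{\perp}\rightarrow \mathrm{Im}(f)$ is then a morphism of dRB structures whose Betti part is a $\mathbb{Q}$-linear isomorphism (injectivity follows from $\mathrm{Ker}(f)\cap\mathrm{Ker}(f)^{\perp}=\{0\}$, and surjectivity is automatic).

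Next I would invoke Lemma \ref{inverseisdrb} to conclude that $\tilde{f}^{-1}:\mathrm{Im}(f)\rightarrow\mathrm{Ker}(f)^{\perp}$ is itself a morphism of dRB structures. Tensoring with the one-dimensional dRB structure $\mathbb{Q}_{\mathrm{dRB}}(n)$ preserves this property, so
\[
\tilde{f}^{-1}\otimes 1:\mathrm{Im}(f)\otimes\mathbb{Q}_{\mathrm{dRB}}(n)\longrightarrow \mathrm{Ker}(f)^{\perp}\otimes\mathbb{Q}_{\mathrm{dRB}}(n)
\]
is a morphism of dRB structures. Since morphisms of dRB structures carry dRB classes to dRB classes (this is immediate from Definition \ref{drbclassdefinition} together with the commutativity of the square relating the two comparison isomorphisms), the element $\alpha:=(\tilde{f}^{-1}\otimes 1)(\beta)$ is a dRB class in $\mathrm{Ker}(f)^{\perp}\otimes\mathbb{Q}_{\mathrm{dRB}}(n)$. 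Viewing $\alpha$ inside $\mathrm{H}^{j}(X,\mathbb{Q})\otimes\mathbb{Q}_{\mathrm{dRB}}(n)$ via the inclusion of the summand $\mathrm{Ker}(f)^{\perp}$, we obtain $(f\otimes 1)(\alpha)=\tilde{f}(\alpha)=\beta$ by construction.

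The argument has no serious obstacle, since all the technical work was already carried out in Lemma \ref{polarizationdrb} and Lemma \ref{orthogonalprojs}. The only small subtlety to record is that being a dRB class in $\mathrm{Im}(f)\otimes\mathbb{Q}_{\mathrm{dRB}}(n)$ is the same as being a dRB class in the ambient $\mathrm{H}^{j}_{\mathrm{dRB}}(Y,\mathbb{Q})\otimes\mathbb{Q}_{\mathrm{dRB}}(n)$ and lying in $\mathrm{Im}(f)$, which follows from the fact that the orthogonal decomposition is a decomposition of dRB structures, so the comparison isomorphism restricts block-diagonally along it.
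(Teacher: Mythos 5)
Your argument is correct and follows the same route as the paper: decompose via Lemma \ref{orthogonalprojs}, observe that the restriction of $f$ to $\mathrm{Ker}(f)^{\perp}\rightarrow\mathrm{Im}(f)$ is an isomorphism on Betti parts, invert it as a morphism of dRB structures using Lemma \ref{inverseisdrb}, and apply the inverse (twisted by $\mathbb{Q}_{\mathrm{dRB}}(n)$) to $\beta$. Your explicit remark that $\beta$ is a dRB class for the sub-dRB structure $\mathrm{Im}(f)\otimes\mathbb{Q}_{\mathrm{dRB}}(n)$ because the comparison isomorphism respects the orthogonal decomposition is a detail the paper leaves implicit, but it is the same proof.
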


\begin{proof}
Again the proof will be very similar in nature to the proof of Corollary 24 in \cite{charles2011notes}. Consider the composition of morphisms $$g: \mathrm{Ker}(f)^{\perp} \rightarrow \mathrm{H}^{j}(X,\mathbb{Q}) \xrightarrow{f} \mathrm{Im}(f)$$ which preserves the dRB structure by Lemma \ref{orthogonalprojs} and is an injective morphism of vector spaces. Then by dimension consideration, $g$ is also an isomorphism of vector spaces, and therefore $$g^{-1}\otimes 1: \mathrm{Im}(f)\otimes\mathbb{Q}_{\mathrm{dRB}}(n)\rightarrow  \mathrm{Ker}(f)^{\perp}\otimes\mathbb{Q}_{\mathrm{dRB}}(n)$$ preserves dRB structures by Lemma \ref{inverseisdrb}. Hence $\alpha=(g^{-1}\otimes 1)(\beta)$ is a dRB class in $\mathrm{Ker}(f)^{\perp}\otimes\mathbb{Q}_{\mathrm{dRB}}(n) \subset \mathrm{H}^{j}(X,\mathbb{Q})\otimes\mathbb{Q}_{\mathrm{dRB}}(n)$. But by construction, we have $$(f\otimes 1)(\alpha)=\beta$$ and therefore we win.
     
\end{proof}

\begin{corollary}\label{liftablewhenquasiproj}
    Let $\iota:Y_{0} \rightarrow Y$ be a closed immersion of algebraic varieties defined over $\qbar$ where $Y_{0}$ is a smooth projective variety and $Y$ is a smooth quasi-projective variety. Denote by $h:Y \rightarrow \overline{Y}$ the smooth compactification of $Y$, where $\overline{Y}$ is a smooth projective variety over $\qbar$ as well. Then we denote the following composition by $$f: \mathrm{H}^{j}(\overline{Y},\mathbb{Q}) \xrightarrow{h^{*}} \mathrm{H}^{j}(Y,\mathbb{Q}) \xrightarrow{\iota^{*}} \mathrm{H}^{j}(Y_{0},\mathbb{Q})$$ If $\beta$ is a dRB class with respect to the dRB structure $\mathrm{H}_{\mathrm{dRB}}^{j}(Y_{0},\mathbb{Q})\otimes\mathbb{Q}_{\mathrm{dRB}}(n)$ where $n\in\mathbb{Z}$ and $\beta$ lies in the image of $f\otimes 1$, then there exists a dRB class $\alpha$ in  $\mathrm{H}_{\mathrm{dRB}}^{j}(\overline{Y},\mathbb{Q})\otimes\mathbb{Q}_{\mathrm{dRB}}(n)$ such that $(f\otimes 1)(\alpha)=\beta$.
\end{corollary}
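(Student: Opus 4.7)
The plan is to reduce this to Corollary \ref{drbclassliftable} by observing that the composition $f=\iota^{*}\circ h^{*}$ is itself a morphism between the cohomologies of two smooth projective varieties $\overline{Y}$ and $Y_0$, and that it preserves both the Hodge structure and the de Rham-Betti structure. Once this is established, Corollary \ref{drbclassliftable} applies directly and produces the required lift $\alpha$.

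First I would verify that $f$ is a morphism of (pure) Hodge structures. By Deligne's theory of mixed Hodge structures, the pullback $h^{*}:\mathrm{H}^{j}(\overline{Y},\mathbb{Q})\to\mathrm{H}^{j}(Y,\mathbb{Q})$ and the pullback $\iota^{*}:\mathrm{H}^{j}(Y,\mathbb{Q})\to\mathrm{H}^{j}(Y_{0},\mathbb{Q})$ are each morphisms of mixed Hodge structures. The source $\mathrm{H}^{j}(\overline{Y},\mathbb{Q})$ and target $\mathrm{H}^{j}(Y_{0},\mathbb{Q})$ are pure of weight $j$ (because $\overline{Y}$ and $Y_0$ are smooth projective), so although $\mathrm{H}^{j}(Y,\mathbb{Q})$ only carries a mixed Hodge structure, the composition $f$ is automatically a morphism of pure Hodge structures of weight $j$.

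Next I would verify that $f$ is a morphism of de Rham-Betti structures. Since $h:Y\to\overline{Y}$ and $\iota:Y_{0}\to Y$ are morphisms of algebraic varieties defined over $\qbar$, functoriality of Betti cohomology, of algebraic de Rham cohomology, and of the Grothendieck comparison isomorphism $\rho_{m}$ implies that both pullback maps intertwine the respective comparison isomorphisms. Hence each is a morphism of de Rham-Betti structures, and so is the composition $f$.

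With these two compatibilities in hand, the hypotheses of Corollary \ref{drbclassliftable} are satisfied with $X=\overline{Y}$ (smooth projective) and the role of ``$Y$'' played by $Y_{0}$ (also smooth projective). Given a de Rham-Betti class $\beta\in\mathrm{H}^{j}_{\mathrm{dRB}}(Y_{0},\mathbb{Q})\otimes\mathbb{Q}_{\mathrm{dRB}}(n)$ lying in the image of $f\otimes1$, Corollary \ref{drbclassliftable} produces a de Rham-Betti class $\alpha\in\mathrm{H}^{j}_{\mathrm{dRB}}(\overline{Y},\mathbb{Q})\otimes\mathbb{Q}_{\mathrm{dRB}}(n)$ with $(f\otimes1)(\alpha)=\beta$, as required. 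No genuine obstacle arises here: the entire content is the verification that the two preservation properties pass through the quasi-projective intermediate space $Y$, which is immediate from the functoriality of the pullback maps and Deligne's theory of mixed Hodge structures.
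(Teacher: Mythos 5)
Your proposal is correct and follows essentially the same route as the paper: both verify that $h^{*}$ and $\iota^{*}$ are simultaneously morphisms of (mixed) Hodge structures and of de Rham-Betti structures (the paper makes explicit, via Bost--Charles, that the Grothendieck comparison exists for the quasi-projective intermediate $Y$, which your appeal to functoriality of $\rho_{m}$ implicitly uses), so that the composition $f$ connects two smooth projective varieties and Corollary \ref{drbclassliftable} applies directly.
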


\begin{proof}
  Note that by Page 22 and (1.4) and (1.5) of \cite{bost2016some} there is still a Grothendieck comparison isomorphism for a smooth quasi-projective variety $Y$ defined over $\qbar$ $$\mathrm{H}^{j}(Y,\mathbb{Q}) \otimes_{\mathbb{Q}} \mathbb{C} \cong \mathbb{H}^{j}(Y^{\mathrm{an}},\Omega^{\bullet}_{Y^{\mathrm{an}}}) \cong_{\mathrm{GAGA}} \mathbb{H}^{j}(Y_{\mathbb{C}},\Omega^{\bullet}_{Y_{\mathbb{C}}}) \cong \mathbb{H}^{j}(Y,\Omega^{\bullet}_{Y}) \otimes \mathbb{C}$$ Hence we have a natural de Rham-Betti structure on the $j$th singular and de Rham cohomology of $Y$
  $$\mathrm{H}^{j}_{\mathrm{dRB}}(Y,\mathbb{Q})=(\mathrm{H}^{j}(Y^{\mathrm{an}},\mathbb{Q}), \mathbb{H}^{j}(Y,\Omega^{\bullet}_{Y}),\rho_{m,Y})$$ Moreover the closed immersion $\iota: Y_{0} \rightarrow Y$ induces a morphism of dRB structures $$\iota^{*}: (\mathrm{H}^{j}(Y_{0},\mathbb{Q}),\mathbb{H}^{j}(Y_{0},\Omega^{\bullet}_{Y_{0}}), \rho_{m,Y_{0}}) \rightarrow (\mathrm{H}^{j}(Y,\mathbb{Q}),\mathbb{H}^{j}(Y,\Omega^{\bullet}_{Y}), \rho_{m,Y})$$ It is well known that $\mathrm{H}^{j}(Y,\mathbb{Q})$ admits a mixed Hodge structures and pulling back along the closed immersion $\iota$ also induces a morphism of mixed Hodge structures (for example see \cite{peters2008mixed}).

  Similarly the open immersion $h: Y \rightarrow \overline{Y}$ induces a morphism $h^{*}$ of dRB structures and mixed Hodge structures on the $j$th singular and de Rham cohomology groups.

Therefore the composition $\iota^{*} \circ h^{*}$ is simultaneously a morphism of dRB structures and Hodge structures. Moreover, $Y_0$ and $\overline{Y}$ are smooth projective varieties. Hence we can use Corollary \ref{drbclassliftable} to conclude.
\end{proof}

\begin{proof}[Proof of Proposition \ref{mainprincipleB}]
Let $\alpha_{\mathrm{B}}$ be a non-zero global section of a constant local system $L_{\alpha}$ in $\mathrm{ob}\langle\mathbb{V}_{B}\rangle^{\otimes}$. By Remark \ref{subobjectsuffices}, such $L_{\alpha}$ is a subobject in the tensor category, i.e. there exists a finite indexing set $I$ such that \begin{equation}\label{ithcomponent}
L_{\alpha} \xhookrightarrow{}  \bigoplus_{n_{i},m_{i} \in \mathbb{Z}_{\geq0},i\in I}\mathbb{V}_{\mathrm{B}}^{\otimes n_{i}}\otimes\mathbb{V}_{\mathrm{B}}^{*\otimes m_{i}}\end{equation} Under this decomposition, denote by $L_{\alpha}^{i}$ and $\alpha_{\mathrm{B}}^{i}$ the image of $L_{\alpha}$ and $\alpha_{\mathrm{B}}$ under the projection to the $i$th component of the right hand side of above. Then the idea of the proof is to explicitly write down the geometric interpretations of $\mathbb{V}_{\mathrm{B},u_{0}}^{\otimes n_{i}}\otimes\mathbb{V}_{\mathrm{B},u_{0}}^{*\otimes m_{i}}$ so that Corollary \ref{liftablewhenquasiproj} can be applied.

Consider products of the original fibrations 
$$f^{n_{i}+m_{i}}: Y^{n_{i}+m_{i}} \rightarrow U$$
  Recall that $\mathbb{V}_{\mathrm{B}}=\mathrm{R}^{j}f_{*}\underline{\mathbb{Q}}_{Y}$ and its dual $\mathbb{V}_{\mathrm{B}}^{*}$ is isomorphic to $\mathrm{R}^{2n-j}f_{*}\underline{\mathbb{Q}}_{Y}$ as local systems on $\analyticu$ by the family version of Poincaré  duality.

  Then by the family version of Künneth formula we have $$L_{\alpha}^{i} \xhookrightarrow{}  \mathbb{V}_{\mathrm{B}}^{\otimes n_{i}}\otimes\mathbb{V}_{\mathrm{B}}^{*\otimes m_{i}} \xhookrightarrow{} \mathrm{R}^{jn_{i}+(2n-j)m_{i}}f^{n_{i}+m_{i}}_{*}\underline{\mathbb{Q}}_{Y^{n_{i}+m_{i}}}$$ Denote by $\alpha_{\mathrm{B},u_{0}}^{i}$ the stalk of the global section $\alpha_{\mathrm{B}}^{i}$ at $u_{0}$. Then we have 
  $$\alpha_{\mathrm{B},u_{0}}^{i} \in \mathrm{H}^{jn_{i}+(2n-j)m_{i}}(Y_{u_{0}}^{n_{i}+m_{i}},\mathbb{Q})$$ By assumption, $\alpha_{\mathrm{B},u_{0}}^{i}$ is monodromy invariant with respect to the family $f^{n_{i}+m_{i}}: Y^{n_{i}+m_{i}} \rightarrow U$. Denote the product of closed immersions of algebraic varieties $Y_{u_0}^{n_{i}+m_{i}} \rightarrow Y^{n_{i}+m_{i}}$ by $\iota_{u_0}$. Then by Deligne's invariant cycle theorem (for example see Theorem 4.18 \cite{voisin2003hodge}), we have the surjection of vector spaces $$\iota_{u_{0}}^{*}: \mathrm{H}^{jn_{i}+(2n-j)m_{i}}(Y^{n_{i}+m_{i}},\mathbb{Q}) \twoheadrightarrow \mathrm{H}^{jn_{i}+(2n-j)m_{i}}(Y_{u_{0}}^{n_{i}+m_{i}},\mathbb{Q})^{\mathrm{Mon}}$$ Moreover, if we denote by $h$ the product of open immersions $Y^{n_{i}+m_{i}} \rightarrow \overline{Y}^{n_{i}+m_{i}}$, then by Corollary 4.22 of \cite{peters2008mixed}, we have \begin{align*}
  \iota_{u_{0}}^{*}(\mathrm{H}^{jn_{i}+(2n-j)m_{i}}(Y^{n_{i}+m_{i}},\mathbb{Q}))&=(h \circ \iota_{u_{0}})^{*}(\mathrm{H}^{jn_{i}+(2n-j)m_{i}}(\overline{Y}^{n_{i}+m_{i}},\mathbb{Q}))\\&=\mathrm{H}^{jn_{i}+(2n-j)m_{i}}(Y_{u_{0}}^{n_{i}+m_{i}},\mathbb{Q})^{\mathrm{Mon}} \end{align*} Now we turn to the de Rham-Betti side of the picture. By the assumption in the proposition, $\alpha_{\mathrm{B},u_{0}}^{i}$ is a de Rham-Betti class in $\mathcal{V}|_{u_0}^{\otimes n_{i}}\otimes\mathcal{V}|_{u_0}^{*\otimes m_{i}}$. By Poincaré duality, we have the isomorphism of dRB structures: $$\mathrm{H}^{2n-j}_{\mathrm{dRB}}(Y_{u_{0}},\mathbb{Q})\otimes\mathbb{Q}_{\mathrm{dRB}}(n)\cong\mathrm{H}^{j}_{\mathrm{dRB}}(Y_{u_{0}},\mathbb{Q})^{*}$$ Hence $\alpha_{\mathrm{B},u_{0}}^{i}$ is a dRB class in $$\mathcal{V}|_{u_0}^{\otimes n_{i}}\otimes\mathcal{V}|_{u_0}^{*\otimes m_{i}}\subset\mathrm{H}^{jn_{i}+(2n-j)m_{i}}_{\mathrm{dRB}}(Y_{u_{0}}^{n_{i}+m_{i}},\mathbb{Q})\otimes\mathbb{Q}_{\mathrm{dRB}}^{\otimes m_{i}}(n)$$ Moreover, both $\iota_{u_0}^{*}$ and $h^{*}$ preserve Hodge and de Rham-Betti structures, hence so does their composition $\iota_{u_{0}}^{*} \circ h^{*}$. Since both $Y_{u_0}$ and $\overline{Y}$ are smooth projective varieties defined over $\qbar$, we can apply Corollary \ref{liftablewhenquasiproj} to conclude that there exists a dRB class $$\overline{\alpha^{i}} \in \mathrm{H}^{jn_{i}+(2n-j)m_{i}}_{\mathrm{dRB}}(\overline{Y}^{n_{i}+m_{i}},\mathbb{Q})\otimes\mathbb{Q}_{\mathrm{dRB}}^{\otimes m_{i}}(n)$$ which restricts to $\alpha_{\mathrm{B},u_{0}}^{i}$. But for any $\qbar$-point $u_{t}$ of $U$, the restriction of $\overline{\alpha^{i}}$ at $u_{t}$ i.e. $\iota_{u_{t}}^{*}(\overline{\alpha^{i}})$ is a dRB class with respect to the dRB structure $$\mathrm{H}^{jn_{i}+(2n-j)m_{i}}_{\mathrm{dRB}}(Y_{u_{t}}^{n_{i}+m_{i}},\mathbb{Q})\otimes\mathbb{Q}_{\mathrm{dRB}}^{\otimes m_{i}}(n)$$ Note that the restriction of $\overline{\alpha^{i}}$ at $u_{t}$ can be identified with the stalk of $\alpha_{\mathrm{B}}^{i}$ at $u_{t}$ i.e. $$\iota_{u_{t}}^{*}(\overline{\alpha^{i}})=\alpha_{\mathrm{B},u_{t}}^{i}$$ Therefore 
  the stalk of $\alpha_{\mathrm{B}}^{i}$ at each $\qbar$-point $u_{t}$ is a dRB class with respect to the de Rham-Betti structure $\mathcal{V}|_{u_t}^{\otimes n_{i}}\otimes\mathcal{V}|_{u_t}^{*\otimes m_{i}}\subset\mathrm{H}^{jn_{i}+(2n-j)m_{i}}_{\mathrm{dRB}}(Y_{u_{t}}^{n_{i}+m_{i}},\mathbb{Q})\otimes\mathbb{Q}_{\mathrm{dRB}}^{\otimes m_{i}}(n)$. The same argument can be applied to each $i$ from the indexing set $I$. Hence $\alpha_{\mathrm{B}}$ is indeed a de Rham-Betti class when evaluated at each $\qbar$-point of $U$.
\end{proof}

\begin{proof}[Proof of Proposition \ref{principleB}]
By the assumption that $\mathcal{G}_{\mathrm{dRB}}(\mathcal{V})$ is a reductive group, $\langle\mathcal{V}\rangle^{\otimes}$ is a semisimple category. Hence every subquotient object in $\langle\mathcal{V}\rangle^{\otimes}$ is a subobject. Therefore a fixed tensor of $\mathcal{G}_{\mathrm{dRB}}(\mathcal{V})\subset \mathrm{GL}(\mathbb{V}_{\mathrm{B},u_0})$ is a an element of $\bigoplus_{n_{i},m_{i} \in \mathbb{Z}_{\geq0},i\in I} \mathrm{H}^{j}(Y_{u_{0}},\mathbb{Q})^{\otimes n_{i}}\otimes \mathrm{H}^{j}(Y_{u_{0}},\mathbb{Q})^{*\otimes m_{i}}$ invariant under the induced action of $\mathcal{G}_{\mathrm{dRB}}(\mathcal{V})$ where $I$ is a finite indexing set. In particular it is a monodromy invariant de Rham-Betti class by Corollary \ref{twoimmmersions}.  

Now we use Proposition \ref{mainprincipleB} (more precisely, its proof) to show the inverse. Suppose we are given a monodromy invariant class \begin{equation}\label{initialdrbclass}\alpha_{\mathrm{B}}\in\bigoplus_{n_{i},m_{i} \in \mathbb{Z}_{\geq0},i\in I} \mathrm{H}^{j}(Y_{u_{0}},\mathbb{Q})^{\otimes n_{i}}\otimes \mathrm{H}^{j}(Y_{u_{0}},\mathbb{Q})^{*\otimes m_{i}}\end{equation} which is furthermore a de Rham-Betti class. In Setup \ref{keysetup} and subsequently in Lemma \ref{rhcorrespondence}, we have constructed a flat section $$s_{\alpha} \in \bigoplus_{n_{i},m_{i} \in \mathbb{Z}_{\geq0},i\in I}\mathrm{H}^{0}(U_{\mathbb{C}},\mathcal{V}_{\mathrm{dR},\mathbb{C}}^{\otimes n_{i}}\otimes \mathcal{V}_{\mathrm{dR},\mathbb{C}}^{*\otimes m_{i}})^{\nabla_{\mathbb{C}}}$$ whose fiber at $u_0$ after applying the inverse comparison isomorphism is precisely $\alpha_{\mathrm{B}}$. 
Then we need to show that $s_{\alpha}$ descends to a flat $\qbar$-algebraic section 
\begin{equation}\label{targetsection}t_{\alpha} \in \bigoplus_{n_{i},m_{i} \in \mathbb{Z}_{\geq0},i\in I}\mathrm{H}^{0}(U,\mathcal{V}_{\mathrm{dR}}^{\otimes n_{i}}\otimes \mathcal{V}_{\mathrm{dR}}^{*\otimes m_{i}})^{\nabla}\end{equation}
It suffices to show that $i$th component of $s_{\alpha}$ descends to $\qbar$ for each $i\in I$. Denote the $i$th component of formula (\ref{initialdrbclass}) by $\alpha_{\mathrm{B}}^{i}$.

Consider the following diagram $$\begin{tikzcd}
\overline{Y}^{n_{i}+m_{i}} & Y^{n_{i}+m_{i}} \arrow[d] \arrow[l, "h"'] & Y_{u_0}^{n_{i}+m_{i}} \arrow[d] \arrow[l, "\iota_{u_0}"'] \\
                           & U                                         & u_{0} \arrow[l]                                      
\end{tikzcd}$$ Then by the argument (i.e. Künneth formula and Poincaré  duality) in Proposition \ref{principleB}, $\alpha_{\mathrm{B}}^{i}$ is a dRB class in \begin{equation*}\begin{split}\mathrm{H}^{j}_{\mathrm{dRB}}(Y_{u_0},\mathbb{Q})^{\otimes n_{i}}\otimes\mathrm{H}^{j}_{\mathrm{dRB}}(Y_{u_0},\mathbb{Q})^{*\otimes m_{i}}&\subset\mathrm{H}^{jn_{i}}_{\mathrm{dRB}}(Y_{u_0}^{n_{i}},\mathbb{Q})\otimes\mathrm{H}^{jm_{i}}_{\mathrm{dRB}}(Y_{u_0}^{m_{i}},\mathbb{Q})^{*}\\
&\subset\mathrm{H}^{jn_{i}+(2n-j)m_{i}}_{\mathrm{dRB}}(Y_{u_0}^{n_{i}+m_{i}},\mathbb{Q})\otimes\mathbb{Q}_{\mathrm{dRB}}^{\otimes m_i}(n)\end{split}\end{equation*} which is furthermore monodromy invariant. Then by Corollary \ref{liftablewhenquasiproj}, there exists a dRB class $$\overline{\alpha^{i}} \in \mathrm{H}^{j}_{\mathrm{dRB}}(\overline{Y},\mathbb{Q})^{\otimes n_{i}}\otimes\mathrm{H}^{j}_{\mathrm{dRB}}(\overline{Y},\mathbb{Q})^{*\otimes m_{i}}\subset \mathrm{H}^{jn_{i}+(2n-j)m_{i}}_{\mathrm{dRB}}(\overline{Y}^{n_{i}+m_{i}},\mathbb{Q})\otimes\mathbb{Q}_{\mathrm{dRB}}^{\otimes m_i}(n)$$ such that $$\iota_{u_{0}}^{*}\circ h^{*}(\overline{\alpha^{i}})=\alpha_{\mathrm{B}}^{i}$$ Moreover since the open immersion $h$ is also a morphism of dRB structures, we have that $$h^{*}(\overline{\alpha^{i}}) \in \mathrm{H}^{j}_{\mathrm{dRB}}(Y,\mathbb{Q})^{\otimes n_{i}}\otimes\mathrm{H}^{j}_{\mathrm{dRB}}(Y,\mathbb{Q})^{*\otimes m_{i}}$$ is a dRB class. Consequently under the comparison isomorphism of the de Rham-Betti structure we obtain that $$h^{*}(\overline{\alpha^{i}})_{\mathrm{dR}} \in \mathrm{H}_{\mathrm{dR}}^{j}(Y/\qbar)^{\otimes n_{i}}\otimes \mathrm{H}_{\mathrm{dR}}^{j}(Y/\qbar)^{*\otimes m_{i}}$$ Then under the natural map $\Gamma$ induced by the restriction map $\mathrm{H}^{j}_{\mathrm{dR}}(Y/\qbar)\rightarrow \mathrm{H}^{0}(U,\mathcal{V}_{\mathrm{dR}})$ given by the Leray spectral sequence (see Section 4.3.1 of \cite{voisin2003hodge} for example) we have that $$\Gamma(h^{*}(\overline{\alpha^{i}})_{\mathrm{dR}}) \in  \mathrm{H}^{0}(U,\mathcal{V}_{\mathrm{dR}}^{\otimes n_{i}}\otimes \mathcal{V}_{\mathrm{dR}}^{*\otimes m_{i}})$$ and this is the desired $t_{\alpha}$ (see formula (\ref{targetsection})) we are looking for. 
\end{proof}

\begin{remark}
In Proposition \ref{principleB}, it is crucial that $\mathcal{G}_{\mathrm{dRB}}(\mathcal{V})$ is a reductive group. Otherwise, recall from formula (\ref{idenityobjectdiagram}), fixed tensors in $\langle\mathcal{V}\rangle^{\otimes}$ are one-dimensional subquotient de Rham-Betti systems of the form $$\begin{tikzcd}
\mathcal{M} \arrow[d, two heads] \arrow[r, hook] & \bigoplus_{n_{i}, m_{i}\in \mathbb{Z}_{\geq0}} \mathcal{V}^{\otimes n_{i}}\otimes \mathcal{V}^{*\otimes m_{i}} \\
\mathcal{L}_{\alpha}                            &                                                              
\end{tikzcd}$$ However, suppose we are given a one-dimensional subquotient whose arrows are $\gdrbmath(\mathcal{V}|_{u_0})$ and $G_{\mathrm{mon}}$-equivariant i.e. $$\begin{tikzcd}
M \arrow[d, two heads] \arrow[r, hook] & \bigoplus_{n_{i},m_{i} \in \mathbb{Z}_{\geq0}} \mathrm{H}^{j}(Y_{u_{0}},\mathbb{Q})^{\otimes n_{i}}\otimes \mathrm{H}^{j}(Y_{u_{0}},\mathbb{Q})^{*\otimes m_{i}} \\
L_{\alpha}                                           &                                                                    
\end{tikzcd}$$ where $M$ is a dRB \textit{substructure} stable under the monodromy action. Then from Proposition \ref{mainprincipleB}, it is not immediately clear that $M$ can be extended to a de Rham-Betti \textit{subsystem}.
\end{remark}

We end this section with the following question.
\begin{question}
    Given a de Rham-Betti system $\mathcal{V}$ satisfying the geometric constraint from Assumption \ref{keyassumption}, does there exist a $\qbar$-point $u_{0}$ of $U$ such that $\mathcal{G}_{\mathrm{dRB}}(\mathcal{V})$ is equal to $\gdrbmath(\mathcal{V}|_{u_{0}})$? 
\end{question}
\printbibliography[
heading=bibintoc,
title={Bibliography}
]
\end{document}